\definecolor{nicered}{rgb}{.647,.129,.149}
\definecolor{PUorange}{RGB}{245,128,37}
\definecolor{MITred}{RGB}{163, 31, 52}
\definecolor{nicegreen}{RGB}{245,128,37}
\definecolor{niceblue}{RGB}{245,128,37}
\newcommand{\DS}{\displaystyle}
\newcommand{\cA}{\mathcal{A}}
\newcommand{\cB}{\mathcal{B}}
\newcommand{\cC}{\mathcal{C}}
\newcommand{\cD}{\mathcal{D}}
\newcommand{\cF}{\mathcal{F}}
\newcommand{\cG}{\mathcal{G}}
\newcommand{\cH}{\mathcal{H}}
\newcommand{\cL}{\mathcal{L}}
\newcommand{\cN}{\mathcal{N}}
\newcommand{\cP}{\mathcal{P}}
\newcommand{\cS}{\mathcal{S}}
\newcommand{\cU}{\mathcal{U}}
\newcommand{\cV}{\mathcal{V}}
\newcommand{\cX}{\mathcal{X}}
\newcommand{\cY}{\mathcal{Y}}
\newcommand{\sg}{\mathsf{subG}}
\newcommand{\subE}{\mathsf{subE}}
\newcommand{\bA}{\mathbf{A}}
\newcommand{\bB}{\mathbf{B}}
\newcommand{\bH}{\mathbf{H}}
\newcommand{\bQ}{\mathbf{Q}}
\newcommand{\bX}{\mathbf{X}}
\newcommand{\bY}{\mathbf{Y}}
\newcommand{\R}{\mathrm{ I}\kern-0.18em\mathrm{ R}}
\newcommand{\h}{\mathrm{ I}\kern-0.18em\mathrm{ H}}
\newcommand{\K}{\mathrm{ I}\kern-0.18em\mathrm{ K}}
\newcommand{\p}{\mathrm{ I}\kern-0.18em\mathrm{ P}}
\newcommand{\E}{\mathrm{ I}\kern-0.18em\mathrm{ E}}
\newcommand{\1}{\mathrm{ 1}\kern-0.24em\mathrm{ I}}
\newcommand{\N}{\mathrm{ I}\kern-0.18em\mathrm{ N}}
\newcommand{\field}[1]{\mathbb{#1}}
\newcommand{\q}{\field{Q}}
\newcommand{\Z}{\field{Z}}
\newcommand{\X}{\field{X}}
\newcommand{\Y}{\field{Y}}
\newcommand{\x}{\mathcal{X}}
\newcommand{\Bern}{\mathsf{Ber}}
\newcommand{\Bin}{\mathsf{Bin}}
\newcommand{\Lap}{\mathsf{Lap}}
\newcommand{\tr}{\mathsf{Tr}}
\newcommand{\pn}{\p_{\kern-0.25em n}}
\newcommand{\pnm}{\p_{\kern-0.25em n,m}}
\newcommand{\psubm}{\p_{\kern-0.25em m}}
\newcommand{\psubp}{\p_{\kern-0.25em p}}
\newcommand{\cfi}{\cF_{\kern-0.25em \infty}}
\newcommand{\e}{\mathrm{e}}
\newcommand{\Var}{\mathrm{Var}}
\newcommand{\indep}{\perp\kern-0.95em{\perp}}
\newcommand{\supp}{\mathop{\mathrm{supp}}}
\newcommand{\rank}{\mathop{\mathrm{rank}}}
\newcommand{\vol}{\mathop{\mathrm{vol}}}
\newcommand{\conv}{\mathop{\mathrm{conv}}}
\newcommand{\card}{\mathop{\mathrm{card}}}
\newcommand{\argmin}{\mathop{\mathrm{argmin}}}
\newcommand{\argmax}{\mathop{\mathrm{argmax}}}
\newcommand{\ud}{\mathrm{d}}
\newcommand{\var}{\mathrm{var}}
\newcommand{\MSE}{\mathsf{MSE}}
\newcommand{\eps}{\varepsilon}
\newcommand{\hint}[1]{\texttt{[Hint:#1]}}
\newlength{\minipagewidth}
\newcommand\iid{i.i.d\@ifnextchar.{}{.\@\xspace} } 
\newcommand\MoveEqLeft[1][2]{\kern #1em & \kern -#1em} 
\newcommand{\leadeq}[2][4]{\MoveEqLeft[#1] #2 \nonumber}
\newcommand\independent{\protect\mathpalette{\protect\independenT}{\perp}}
\def\independenT#1#2{\mathrel{\rlap{$#1#2$}\mkern2mu{#1#2}}}
\newcommand{\MIT}[1]{{\color{MITred} #1}}
\newtheorem{thm}{Theorem}[chapter]
\newtheorem{propdef}[thm]{Definition-Proposition}
\newtheorem{cor}[thm]{Corollary}
\newtheorem{lem}[thm]{Lemma}
\theoremstyle{definition}
\newtheorem{defn}[thm]{Definition}
\newtheorem{prop}[thm]{Proposition}
\newtheorem{rem}[thm]{Remark}
\newtheorem{example}[thm]{Example}
\newtheorem{exercise}{Problem}[chapter]
\numberwithin{equation}{chapter}
\newenvironment{assumption}[1]{\begin{trivlist}
\item[\hskip \labelsep {\bfseries Assumption #1}]}{\end{trivlist}}
\newlength\dlf@normtxtw
\newsavebox{\feline@chapter}
\newcommand\feline@chapter@marker[1][4cm]{%
  \sbox\feline@chapter{%
    \resizebox{!}{#1}{\fboxsep=1pt%
      \colorbox{MITred}{\color{white}\bfseries\sffamily\thechapter}%
    }}%
  \rotatebox{90}{%
    \resizebox{%
      \heightof{\usebox{\feline@chapter}}+\depthof{\usebox{\feline@chapter}}}%
    {!}{\scshape\so\@chapapp}}\quad%
  \raisebox{\depthof{\usebox{\feline@chapter}}}{\usebox{\feline@chapter}}%
}
\newcommand\feline@chm[1][4cm]{%
  \sbox\feline@chapter{\feline@chapter@marker[#1]}%
  \makebox[0pt][l]{
    \makebox[1cm][r]{\usebox\feline@chapter}%
  }}
  \renewcommand\printchapternum{\null\hfill\feline@chm[2.5cm]\par}
 \renewcommand\section{\@startsection {section}{1}{\z@}%
                                     {\bigskipamount}%
                                     {\bigskipamount}%
                                     {\centering \normalsize\sffamily\fontseries{bx}\selectfont\mathversion{bold}\MakeUppercase}}
\newcommand{\dHyp}{\{-1,1\}^d}
\begin{document}

\title{{\normalfont\huge\sffamily\bfseries\color{MITred}
High-Dimensional Statistics} \\ ~ \\
\textbf{Lecture Notes}
}
\author{Philippe Rigollet and Jan-Christian H\"utter}
\date{October 29, 2023}

\begin{titlingpage}
\thispagestyle{empty}
\maketitle
\vfill
\begin{center}
\includegraphics[width=1.0in]{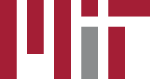}
\end{center}
\end{titlingpage}

\frontmatter

\chapter*[Preface]{Preface}
\addcontentsline{toc}{chapter}{Preface}

These lecture notes were written for the course  18.657, \textit{High Dimensional Statistics} at MIT. They build on a set of notes that was prepared at Princeton University in 2013-14 that was modified (and hopefully improved) over the years.

Over the past decade, statistics have undergone drastic changes with the development of high-dimensional statistical inference. Indeed, on each individual, more and more features are measured to a point that their number usually far exceeds the number of observations. This is the case in biology and specifically genetics where millions of (combinations of) genes are measured for a single individual. High-resolution imaging, finance, online advertising, climate studies \dots the list of intensive data-producing fields is too long to be established exhaustively. Clearly not all measured features are relevant for a given task and most of them are simply noise. But which ones? What can be done with so little data and so much noise? Surprisingly, the situation is not that bad and on some simple models we can assess to which extent meaningful statistical methods can be applied. Regression is one such simple model.

Regression analysis can be traced back to 1632 when Galileo Galilei used a procedure to infer a linear relationship from noisy data. It was not until the early 19th century that Gauss and Legendre developed a systematic procedure: the least-squares method. Since then, regression has been studied in so many forms that much insight has been gained and recent advances on high-dimensional statistics would not have been possible without standing on the shoulders of giants. In these notes, we will explore one, obviously subjective giant on whose shoulders high-dimensional statistics stand: nonparametric statistics.

The works of Ibragimov and  Has'minskii in the seventies followed by many researchers from the Russian school have contributed to developing a large toolkit to understand regression with an infinite number of parameters. Much insight from this work can be gained to understand high-dimensional or sparse regression and it comes as no surprise that Donoho and Johnstone have made the first contributions on this topic in the early nineties. 

Therefore, while not obviously connected to high dimensional statistics, we will talk about nonparametric estimation.
I borrowed this disclaimer (and the template) from my colleague Ramon van Handel. It does apply here.
\begin{quote}
I have no illusions about the state of these notes---they were written
rather quickly, sometimes at the rate of a chapter a week.  I have no
doubt that many errors remain in the text; at the very least many of the
proofs are extremely compact, and should be made a little clearer as is
befitting of a pedagogical (?) treatment.  If I have another opportunity 
to teach such a course, I will go over the notes again in detail and 
attempt the necessary modifications.  For the time being, however, the
notes are available as-is.
\end{quote}

As any good set of notes, they should be perpetually improved and updated but a two or three year horizon is more realistic. Therefore, if you have any comments, questions, 
suggestions, omissions,  and of course mistakes, please let me know.  I can be contacted by e-mail at 
\texttt{rigollet@math.mit.edu}.

\vskip.2cm

\textit{Acknowledgements.} These notes were improved thanks to the careful reading and comments of Mark Cerenzia, Youssef El Moujahid, Georgina Hall, Gautam Kamath,  Hengrui Luo, Kevin Lin, Ali Makhdoumi, Yaroslav Mukhin, Mehtaab Sawhney, Ludwig Schmidt, Bastian Schroeter, Vira Semenova, Mathias Vetter, Yuyan Wang, Jonathan Weed,  Chiyuan Zhang and Jianhao Zhang.

These notes were written under the partial support of the National Science Foundation, CAREER award DMS-1053987.

\vskip.2cm

\textit{Required background.} I assume that the reader has had  basic courses in probability and  mathematical statistics.  Some 
elementary background in analysis and measure theory is helpful but not required. Some basic notions of linear algebra, especially spectral decomposition of matrices is required for the later chapters.

Since the first version of these notes was posted a couple of manuscripts on high-dimensional probability by Ramon van Handel~\cite{Han17} and Roman Vershynin~\cite{Ver18} were published. Both are of outstanding quality---much higher than the present notes---and very related to this material. I strongly recommend the reader to learn about this fascinating topic in parallel with high-dimensional statistics.

\chapter*[Preface]{Notation}
\addcontentsline{toc}{chapter}{Notation}

\renewcommand\arraystretch{1.4}
\medskip

\noindent\textsc{Functions, sets, vectors}

\begin{tabular}{ p{2cm} l}
$[n]$ & Set of integers $[n]=\{1, \ldots, n\}$\\ 
$\cS^{d-1}$ & Unit sphere in dimension $d$\\
$\1(\,\cdot\,)$ & Indicator function\\
$|x|_q$& $\ell_q$ norm of $x$ defined by $|x|_q=\big(\sum_{i}|x_i|^q\big)^{\frac1q}$ for $q>0$\\
$|x|_0$& $\ell_0$ norm of $x$ defined to be the number of nonzero coordinates of $x$\\
$f^{(k)}$& $k$-th derivative of $f$\\
$e_j$& $j$-th vector of the canonical basis\\
$A^c$& complement of set $A$\\
$\conv(S)$ & Convex hull of set $S$.\\
$a_n \lesssim b_n$ & $a_n \le C b_n$ for a numerical constant $C>0$\\
$S_n$& symmetric group on \( n \) elements\\
\end{tabular}

\medskip

\noindent\textsc{Matrices}

\medskip

\begin{tabular}{ p{2cm} l}
$I_p$& Identity matrix of $\R^p$\\
$\tr(A)$& trace of a square matrix $A$\\
\( \cS_d \)& Symmetric matrices in \( \R^{d \times d} \)\\
\( \cS_d^+ \)& Symmetric positive semi-definite matrices in \( \R^{d \times d} \)\\
\( \cS_d^{++} \)& Symmetric positive definite matrices in \( \R^{d \times d} \)\\
\( \bA \preceq \bB \)& Order relation given by \( \bB - \bA \in \cS^+ \)\\
\( \bA \prec \bB \)& Order relation given by \( \bB - \bA \in \cS^{++} \)\\
$M^\dagger$ & Moore-Penrose pseudoinverse of $M$\\
$\nabla_xf(x)$& Gradient of $f$ at $x$\\
$\nabla_xf(x)|_{x=x_0}$& Gradient of $f$ at $x_0$\\
\end{tabular}

\medskip

\noindent\textsc{Distributions}

\medskip

\begin{tabular}{ p{2cm} l}
$\cN(\mu, \sigma^2)$ & Univariate Gaussian distribution with mean $\mu \in \R$ and variance $\sigma^2>0$\\
$\cN_d(\mu, \Sigma)$ & $d$-variate distribution with mean $\mu \in \R^d$ and covariance matrix $\Sigma \in \R^{d \times d}$\\
$\sg(\sigma^2)$ & Univariate sub-Gaussian distributions with variance proxy $\sigma^2>0$\\
$\sg_d(\sigma^2)$ & $d$-variate sub-Gaussian distributions with   variance proxy $\sigma^2>0$\\
$\subE(\sigma^2)$ & sub-Exponential distributions with variance proxy $\sigma^2>0$\\
$\Bern(p)$& Bernoulli distribution with parameter $p \in [0,1]$\\
$\Bin(n,p)$& Binomial distribution with parameters $n \ge 1, p \in [0,1]$\\
$\Lap(\lambda)$& Double exponential (or Laplace) distribution with parameter $\lambda>0$\\
$P_X$ & Marginal distribution of $X$
\end{tabular}

\medskip

\noindent\textsc{Function spaces}

\medskip

\begin{tabular}{ p{2cm} l}
$W(\beta, L)$ & Sobolev class of functions\\
$\Theta(\beta, Q)$ & Sobolev ellipsoid of $\ell_2(\N)$\\
\end{tabular}
\renewcommand\arraystretch{1}

\cleardoublepage
\tableofcontents

\mainmatter
\renewcommand{\theequation}{\arabic{equation}}
\renewcommand{\thefigure}{\arabic{figure}}
\chapter*[Introduction]{Introduction}
\addcontentsline{toc}{chapter}{Introduction}

This course is mainly about learning a regression function from a collection of observations. In this chapter, after defining this task formally, we give an overview of the course and the questions around regression. We adopt the statistical learning point of view where the task of \emph{prediction} prevails. Nevertheless, many interesting questions will remain unanswered when the last page comes: testing, model selection, implementation,\dots

\section*{Regression analysis and prediction risk}

\subsection*{Model and definitions}
Let $(X, Y) \in \cX \times \cY$ where $X$ is  called \emph{feature} and lives in a topological  space $\cX$ and $Y \in \cY \subset \R$ is called \emph{response} or sometimes \emph{label} when $\cY$ is a discrete set, e.g., $\cY=\{0,1\}$. Often $\cX \subset \R^d$, in which case $X$ is called \emph{vector of covariates} or simply \emph{covariate}.  Our goal will be to predict $Y$ given $X$ and for our problem to be meaningful, we need $Y$ to depend non-trivially on $X$. Our task would be done if we had access to the conditional distribution of $Y$ given $X$. This is the world of the probabilist. The statistician does not have access to this valuable information but rather has to estimate it, at least partially. The regression function gives a simple summary of this conditional distribution, namely, the conditional expectation.

Formally, the \emph{regression function of $Y$ onto $X$} is defined by:
$$
f(x)=\E[Y|X=x]\,, \quad x \in \cX\,.
$$
As we will see, it arises naturally in the context of prediction.

\subsection*{Best prediction and prediction risk}

Suppose for a moment that you know the conditional distribution of $Y$ given $X$. Given the realization of $X=x$, your goal is to predict the realization of $Y$. Intuitively, we would like to find a measurable\footnote{all topological spaces are equipped with their Borel $\sigma$-algebra} function $g\,:\,\cX \to \cY$ such that $g(X)$ is close to $Y$, in other words, such that $|Y-g(X)|$ is small. But $|Y-g(X)|$ is a random variable, so it is not clear what ``small" means in this context. A somewhat arbitrary answer can be given by declaring a random variable $Z$ small if $\E [Z^2]=[\E Z]^2 + \var[ Z]$ is small. Indeed, in this case, the expectation of $Z$ is small and the fluctuations of $Z$ around this value are also small. The function $R(g)=\E[Y-g(X)]^2$ is called the \emph{$L_2$ risk of $g$} defined for $\E Y^2<\infty$.

For any measurable function $g\,:\, \cX \to \R$, the $L_2$ risk of $g$ can be decomposed as
\begin{align*}
\E[Y&-g(X)]^2=\E[Y-f(X)+f(X)-g(X)]^2\\
&=\E[Y-f(X)]^2+\E[f(X)-g(X)]^2+2\E[Y-f(X)][f(X)-g(X)]
\end{align*}
The cross-product term satisfies
\begin{align*}
\E[Y-f(X)][f(X)-g(X)]&=\E\big[\E\big([Y-f(X)][f(X)-g(X)]\big|X\big)\big]\\
&=\E\big[[\E(Y|X)-f(X)][f(X)-g(X)]\big]\\
&=\E\big[[f(X)-f(X)][f(X)-g(X)]\big]=0\,.
\end{align*}
The above two equations yield
$$
\E[Y-g(X)]^2=\E[Y-f(X)]^2+\E[f(X)-g(X)]^2\ge \E[Y-f(X)]^2\,,
$$
with equality iff $f(X)=g(X)$ almost surely. 

We have proved that  the regression function $f(x)=\E[Y|X=x], x \in \cX$, enjoys the   \emph{best prediction} property, that is
$$
\E[Y-f(X)]^2=\inf_{g}\E[Y-g(X)]^2\,,
$$
where the infimum is taken over all measurable functions $g\,:\, \cX \to \R$.

\subsection*{Prediction and estimation}

As we said before, in a statistical problem, we do not have access to the conditional distribution of $Y$ given $X$ or even to the regression function $f$  of $Y$ onto $X$. Instead, we observe a \emph{sample} $\cD_n=\{(X_1, Y_1), \ldots, (X_n, Y_n)\}$ that consists of independent copies of $(X,Y)$. The  goal of regression function estimation is to use this data to construct an estimator $\hat f_n\,:\,\cX \to \cY$ that has small $L_2$ risk $R(\hat f_n)$. 

Let $P_X$ denote the marginal distribution of $X$ and for any  $h\,:\, \cX \to \R$, define 
$$
\|h\|_2^2=\int_\cX h^2 \ud P_X\,.
$$
Note that $\|h\|_2^2$ is the Hilbert norm associated to the inner product
$$
\langle h, h'\rangle_2=\int_\cX hh'\ud P_X\,.
$$
When the reference measure is clear from the context, we will simply write $\|h\|_2 =\|h\|_{L_2(P_X)}$ and $\langle h, h'\rangle_{2}:=\langle h, h'\rangle_{L_2(P_X)}$.

It follows from the proof of the best prediction property above that
\begin{align*}
R(\hat f_n)&=\E[Y-f(X)]^2+\|\hat f_n-f\|_2^2\\
&=\inf_{g}\E[Y-g(X)]^2+\|\hat f_n-f\|_2^2
\end{align*}
In particular, the prediction risk will always be at least equal to the positive constant $\E[Y-f(X)]^2$. Since we tend to prefer a measure of accuracy to be able to go to zero (as the sample size increases), it is equivalent to study the \emph{estimation error} $\|\hat f_n-f\|_2^2$. 
Note that if $\hat f_n$ is random, then  $\|\hat f_n-f\|_2^2$ and $R(\hat f_n)$ are \emph{random} quantities and we need  deterministic summaries to quantify their size. It is customary to use one of the two following options. Let $
\{\phi_n\}_n$ be a sequence of positive numbers that tends to zero as $n$ goes to infinity.
\begin{enumerate}
\item \textbf{Bounds in expectation}. They are of the form:
$$
\E\|\hat f_n-f\|_2^2 \le \phi_n\,,
$$
where the expectation is taken with respect to the sample $\cD_n$. They indicate the \emph{average behavior} of the estimator over multiple realizations of the sample. Such bounds have been established in nonparametric statistics where typically $\phi_n=O(n^{-\alpha})$ for some $\alpha \in (1/2,1)$ for example.

Note that such bounds do not characterize the size of the deviation of the random variable $\|\hat f_n-f\|_2^2$ around its expectation. As a result, it may be therefore appropriate to accompany such a bound with the second option below.
\item \textbf{Bounds with high-probability}. They are of the form:
$$
\p\big[\|\hat f_n-f\|_2^2 > \phi_n(\delta)\big]\le \delta\,, \quad \forall \delta \in (0, 1/3)\,.
$$
Here $1/3$ is arbitrary and can be replaced by another positive constant. Such bounds control the tail of the distribution of $\|\hat f_n-f\|_2^2$. They show how large the quantiles of the random variable $\|f-\hat f_n\|_2^2$ can be. Such bounds are favored in learning theory, and are sometimes called PAC-bounds (for Probably Approximately Correct).
\end{enumerate}
Often, bounds with high probability follow from a bound in expectation and a concentration inequality that bounds the following probability
$$
\p\big[\|\hat f_n-f\|_2^2-\E\|\hat f_n-f\|_2^2  > t\big]
$$
by a quantity that decays to zero exponentially fast. Concentration of measure is a fascinating but wide topic and we will only briefly touch it. We recommend the reading of \cite{BouLugMas13} to the interested reader. This book presents many aspects of concentration that are particularly well suited to the applications covered in these notes.

\subsection*{Other measures of error}
We have chosen the $L_2$ risk somewhat arbitrarily. Why not the $L_p$ risk defined by $g \mapsto \E|Y-g(X)|^p$ for some $p \ge 1$? The main reason for choosing the $L_2$ risk is that it greatly simplifies the mathematics of our problem: it is a Hilbert space! In particular, for any estimator $\hat f_n$, we have the remarkable identity:
$$
R(\hat f_n)=\E[Y-f(X)]^2+\|\hat f_n-f\|_2^2\,.
$$
This equality allowed us to consider only the part $\|\hat f_n-f\|_2^2$ as a measure of error. While this decomposition may not hold for other risk measures, it may be desirable to explore other distances (or pseudo-distances). This leads to two distinct ways to measure error. Either by bounding a pseudo-distance $d(\hat f_n, f)$ (\emph{estimation error}) or by bounding the risk $R(\hat f_n)$  for  choices  other than the $L_2$ risk. These two measures coincide up to the additive constant $\E[Y-f(X)]^2$ in the case described above. However, we show below that these two quantities may live independent lives. Bounding the estimation error is more customary in statistics whereas risk bounds are preferred in learning theory. 

Here is a list of choices for the pseudo-distance employed in the estimation error.
\begin{itemize}
\item \textbf{Pointwise error}. Given a point $x_0$, the pointwise error measures only the error at this point. It uses the pseudo-distance:
$$
d_0(\hat f_n, f)=|\hat f_n(x_0)-f(x_0)|\,.
$$
\item \textbf{Sup-norm error}. Also known as the $L_\infty$-error and defined by 
$$
d_\infty(\hat f_n, f)=\sup_{x \in \cX}|\hat f_n(x)-f(x)|\,.
$$
It controls the worst possible pointwise error.
\item \textbf{$L_p$-error}. It generalizes both the $L_2$ distance and the sup-norm error by taking for any $p  \ge 1$, the pseudo distance
$$
d_p(\hat f_n, f)=\int_{\cX}|\hat f_n-f|^p\ud P_X\,.
$$
The choice of $p$ is somewhat arbitrary and mostly employed as a mathematical exercise.
\end{itemize}
Note that these three examples can be split into two families: global   (Sup-norm and $L_p$) and local (pointwise). 

For specific problems, other considerations come into play. For example, if $Y\in \{0,1\}$ is a label, one may be interested in the \emph{classification risk} of a classifier $h\,:\, \cX \to \{0,1\}$. It is defined by
$$
R(h)=\p(Y \neq h(X))\,.
$$
We will not cover this problem in this course.

Finally,  we will devote a large part of these notes to the study of linear models. For such models, $\cX=\R^d$ and $f$ is linear (or affine), i.e., $f(x)=x^\top \theta$ for some unknown $\theta \in \R^d$. In this case, it is traditional to measure error directly on the coefficient $\theta$. For example, if $\hat f_n(x)=x^\top \hat \theta_n$ is a candidate linear estimator, it is customary to measure the distance of $\hat f_n$ to $f$ using a (pseudo-)distance between $\hat \theta_n$ and $\theta$ as long as $\theta$ is identifiable. 

\section*{Models and methods}
\subsection*{Empirical risk minimization}

In our considerations on measuring the performance of an estimator $\hat f_n$, we have carefully avoided the question of how to construct $\hat f_n$. This is of course one of the most important task of statistics. As we will see, it can be carried out in a fairly mechanical way by following one simple principle: \emph{Empirical Risk Minimization} (ERM\footnote{ERM may also mean \emph{Empirical Risk Minimizer}}). Indeed, an overwhelming proportion of statistical methods consist in replacing an (unknown) expected value ($\E$) by a (known) empirical mean ($\frac{1}{n}\sum_{i=1}^n$). For example, it is well known that a good candidate to estimate the expected value $\E X$ of a random variable $X$ from a sequence of i.i.d copies $X_1, \ldots, X_n$ of $X$, is their empirical average 
$$
\bar X=\frac{1}{n}\sum_{i=1}^n X_i\,.
$$
In many instances, it corresponds to the maximum likelihood estimator of $\E X$. Another example is the sample variance, where $\E(X-\E(X))^2$ is estimated by
$$
\frac{1}{n}\sum_{i=1}^n (X_i-\bar X)^2\,.
$$
It turns out that this principle can be extended even if an optimization follows the substitution. Recall that the $L_2$ risk is defined by $R(g)=\E[Y-g(X)]^2$. See the expectation? Well, it can be replaced by an average to form the \emph{empirical risk} of $g$ defined by
$$
R_n(g)=\frac{1}{n}\sum_{i=1}^n \big(Y_i-g(X_i)\big)^2\,.
$$
We can now proceed to minimizing this risk. However, we have to be careful. Indeed, $R_n(g)\ge0$ for all $g$. Therefore any function $g$ such that $Y_i=g(X_i)$ for all $i=1, \ldots, n$ is a minimizer of the empirical risk. Yet, it may not be the best choice (Cf. Figure~\ref{fig:sin}). To overcome this limitation, we need to leverage some prior knowledge on $f$: either it may belong to a certain class $\cG$ of functions (e.g., linear functions) or it is smooth (e.g., the $L_2$-norm of its second derivative is small). In both cases, this extra knowledge can be incorporated to ERM using either a \emph{constraint}:
$$
\min_{g \in \cG}R_n(g)
$$
or a \emph{penalty}:
$$
\min_{g}\Big\{R_n(g) + \mathrm{pen}(g)\Big\}\,,
$$
or both
$$
\min_{g \in \cG}\Big\{R_n(g) + \mathrm{pen}(g)\Big\}\,,
$$
These schemes belong to the general idea of \emph{regularization}. We will see many variants of regularization throughout the course.

Unlike traditional (low dimensional) statistics, \emph{computation} plays a key role in high-dimensional statistics. Indeed, what is the point of describing an estimator with good prediction properties if it takes years to compute it on large datasets? As a result of this observation, much of the modern estimators, such as the Lasso estimator for sparse linear regression can be computed efficiently using simple tools from convex optimization. We will not describe such algorithms for this problem but will comment on the computability of estimators when relevant. 

In particular computational considerations have driven the field of \emph{compressed sensing} that is closely connected to the problem of sparse linear regression studied in these notes. We will only briefly mention some of the results and refer the interested reader to the book~\cite{FouRau13} for a comprehensive treatment.

\begin{figure}[t]
\centering
\includegraphics[width=\textwidth]{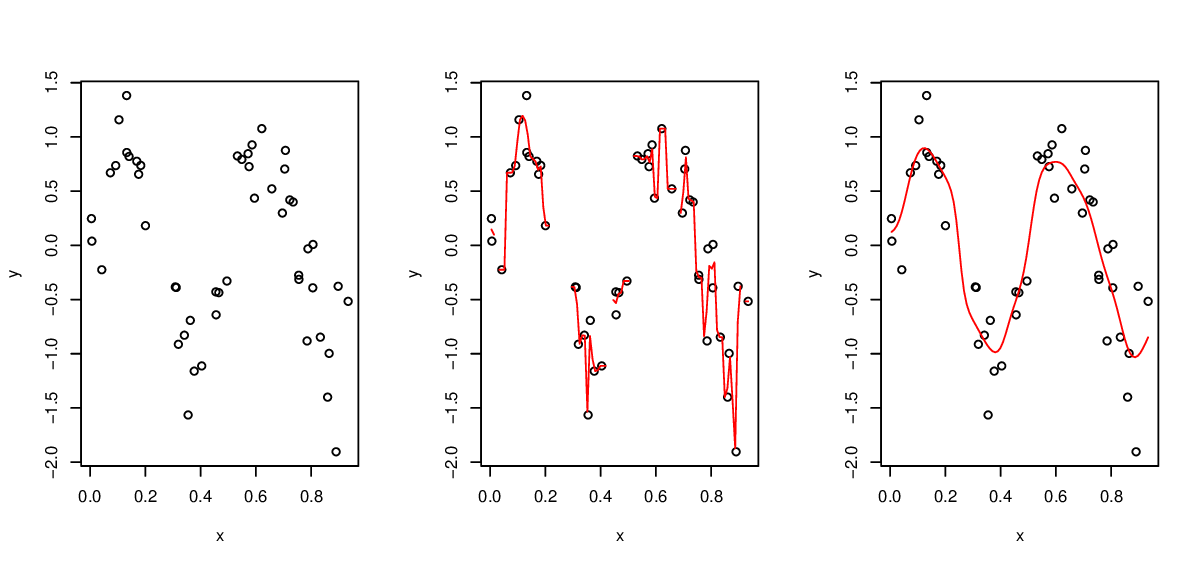}
\caption{It may not be the best choice idea to have $\hat f_n(X_i)=Y_i$ for all $i=1, \ldots, n$.} 
\label{fig:sin}
\end{figure}

\subsection*{Linear models}
When $\cX=\R^d$, an all time favorite constraint $\cG$ is the class of linear functions that are of the form $g(x)=x^\top\theta$, that is parametrized by $\theta \in \R^d$. Under this constraint, the estimator obtained by ERM is usually called \emph{least squares estimator} and is defined by $\hat f_n(x)=x^\top\hat \theta$, where 
$$
\hat \theta\in \argmin_{\theta \in \R^d} \frac{1}{n}\sum_{i=1}^n(Y_i-X_i^\top\theta)^2\,.
$$
Note that $\hat \theta$ may not be unique.  In the case of a \emph{linear model}, where we assume that the regression function is of the form $f(x)=x^\top \theta^*$ for some unknown $\theta^* \in \R^d$, we will need assumptions to ensure \emph{identifiability} if we want to prove bounds on $d(\hat \theta, \theta^*)$ for some specific pseudo-distance $d(\cdot\,,\cdot)$. Nevertheless, in other instances such as regression with fixed design, we can prove bounds on the prediction error that are valid for any $\hat \theta$ in the argmin. In the latter case, we will not even require that $f$ satisfies the linear model but our bound will be meaningful only if $f$ can be well approximated by a linear function. In this case, we talk about a \emph{misspecified model}, i.e., we try to fit a linear model to data that may not come from a linear model. Since linear models can have good approximation properties especially when the dimension $d$ is large, our hope is that the linear model is never too far from the truth.

In the case of a misspecified model, there is no hope to drive the estimation error $d(\hat f_n,f)$ down to zero even with a sample size that tends to infinity. Rather, we will pay a systematic approximation error. When $\cG$ is a linear subspace as above, and the pseudo distance is given by the squared $L_2$ norm $d(\hat f_n,f)=\|\hat f_n-f\|_2^2$, it follows from the Pythagorean theorem that
$$
\|\hat f_n-f\|_2^2=\|\hat f_n-\bar f\|_2^2+ \|\bar f-f\|_2^2\,,
$$
where $\bar f$ is the projection of $f$ onto the linear subspace $\cG$. The systematic approximation error is entirely contained in the \emph{deterministic} term  $\|\bar f-f\|_2^2$ and one can proceed to bound $\|\hat f_n-\bar f\|_2^2$ by a quantity that goes to zero as $n$ goes to infinity. In this case, bounds (e.g., in expectation) on the estimation error take the form
$$
\E\|\hat f_n-f\|_2^2\le \|\bar f-f\|_2^2+  \phi_{n}\,.
$$

The above inequality is called an \emph{oracle inequality}. Indeed, it says that if $\phi_{n}$ is small enough, then $\hat f_n$ the estimator mimics the \emph{oracle} $\bar f$. It is called ``oracle" because it cannot be constructed without the knowledge of the unknown $f$. It is clearly the best we can do when we restrict our attention to estimator in the class $\cG$. Going back to the gap in knowledge between a probabilist who knows the whole joint distribution of $(X,Y)$ and a statistician who only sees the data, the oracle sits somewhere in-between: it can only see the whole distribution through the lens provided by the statistician. In the case above, the lens is that of linear regression functions. Different oracles are more or less powerful and there is a tradeoff to be achieved. On the one hand, if the oracle is weak, then it's easy for the statistician to mimic it but it may be very far from the true regression function; on the other hand, if the oracle is strong, then it is harder to mimic but it is much closer to the truth. 

Oracle inequalities were originally developed as analytic tools to prove adaptation of some nonparametric estimators. With the development of aggregation \cite{Nem00, Tsy03, Rig06} and high dimensional statistics \cite{CanTao07, BicRitTsy09, RigTsy11}, they have become important finite sample results that characterize the interplay between the important parameters of the problem.

In some favorable instances, that is when the $X_i$s enjoy specific properties, it is even possible to estimate the vector $\theta$ accurately, as is done in parametric statistics. The techniques employed for this goal will essentially be the same as the ones employed to minimize the prediction risk. The extra assumptions on the $X_i$s will then translate in interesting properties on $\hat \theta$ itself, including uniqueness on top of the prediction properties of the function $\hat f_n(x)=x^\top \hat \theta$. 

\subsection*{High dimension and sparsity}

These lecture notes are about high-dimensional statistics and it is time they enter the picture. By high dimension, we informally mean that the model has more ``parameters" than there are observations. The word ``parameter" is used here loosely  and a more accurate description is perhaps \emph{degrees of freedom}. For example, the linear model $f(x)=x^\top\theta^*$ has one parameter $\theta^*$ but effectively $d$ degrees of freedom when $\theta^* \in \R^d$. The notion of degrees of freedom is actually well defined in the statistical literature but the formal definition does not help our informal discussion here. 

As we will see in Chapter~\ref{chap:GSM}, if the regression function is linear $f(x)=x^\top \theta^*$, $\theta^* \in \R^d$, and under some assumptions on the marginal distribution of $X$, then the least squares estimator $\hat f_n(x)=x^\top \hat \theta_n$ satisfies
\begin{equation}
\label{EQ:intro_oi_sparse1}
\E\|\hat f_n-f\|_2^2\le  C\frac{d}{n}\,,
\end{equation}
where $C>0$ is a constant and in Chapter~\ref{chap:minimax}, we will show that this cannot be improved apart perhaps for a smaller multiplicative constant. Clearly, such a bound is uninformative if $d\gg n$ and actually, in view of its optimality, we can even conclude that the problem is too difficult statistically. However, the situation is not hopeless if we assume that the problem has actually fewer degrees of freedom than it seems. In particular, it is now standard to resort to the \emph{sparsity} assumption to overcome this limitation.

A vector $\theta \in \R^d$ is said to be $k$-sparse for some $k \in \{0, \dots, d\}$ if it has at most $k$ non-zero coordinates. We denote by $|\theta|_0$ the number of nonzero coordinates of $\theta$, which is also known as sparsity or ``$\ell_0$-norm" though it is clearly not a norm (see footnote~\ref{foot:ellqnorms}). Formally, it is defined as
$$
|\theta|_0=\sum_{j=1}^d\1(\theta_j \neq 0)\,.
$$

Sparsity is just one of many ways to limit the size of the set of potential $\theta$ vectors to consider. One could consider vectors $\theta$ that have the following structure for example (see Figure~\ref{FIG:struct_theta}):
\begin{itemize}
\item Monotonic: $\theta_1 \ge \theta_2 \ge \dots \ge \theta_d$
\item Smooth: $|\theta_i -\theta_j|\le C|i-j|^\alpha$ for some $\alpha>0$
\item Piecewise constant: $\sum_{j=1}^{d-1}\1(\theta_{j+1}\neq\theta_j) \le k$
\item Structured in another basis: $\theta=\Psi \mu$, for some orthogonal matrix and $\mu$ is in one of the structured classes described above.
\end{itemize}
Sparsity plays a significant role in statistics because, often, structure translates into sparsity in a  certain basis. For example, a smooth function is sparse in the trigonometric basis and a piecewise constant function has sparse increments. Moreover, real images are approximately sparse in certain bases such as wavelet or Fourier bases. This is precisely the feature exploited in compression schemes such as JPEG or JPEG-2000: only a few coefficients in these images are necessary to retain the main features of the image. 

We say that $\theta$ is \emph{approximately sparse} if $|\theta|_0$ may be as large as $d$ but many coefficients $|\theta_j|$ are small rather than exactly equal to zero. There are several mathematical ways to capture this phenomenon, including  $\ell_q$-``balls" for $q\le 1$. For $q>0$, the unit $\ell_q$-ball of $\R^d$ is defined as
$$
\cB_q(R)=\big\{\theta \in \R^d\,:\, |\theta|_q^q=\sum_{j=1}^d |\theta_j|^q \le 1\big\}
$$
where $|\theta|_q$ is often called \emph{$\ell_q$}-norm\footnote{Strictly speaking, $|\theta|_q$ is a norm and the $\ell_q$ ball is a ball only for $q\ge 1$.\label{foot:ellqnorms}}. As we will see, the smaller $q$ is, the better vectors in the unit $\ell_q$ ball can be approximated by sparse vectors.
\begin{figure}
  \centering
  \psfrag{y1}[ct][br]{\footnotesize \begin{turn}{-90}$\theta_j$\end{turn}}
    \psfrag{y2}[ct][b]{\footnotesize \begin{turn}{-90}$\theta_j$\end{turn}}
      \psfrag{y3}[ct][b]{\footnotesize \begin{turn}{-90}$\theta_j$\end{turn}}
        \psfrag{y4}[ct][b]{\footnotesize \begin{turn}{-90}$\theta_j$\end{turn}}
\psfrag{x}[cb][t]{\footnotesize $j$}

\psfrag{Monotone}[l]{\footnotesize Monotone}
\psfrag{Smooth}[l]{\footnotesize Smooth}
\psfrag{Constant}[c]{\footnotesize \qquad \qquad Piecewise constant}
\psfrag{Basis}[c]{\footnotesize \qquad Smooth in a different basis}

 \includegraphics[width=\textwidth]{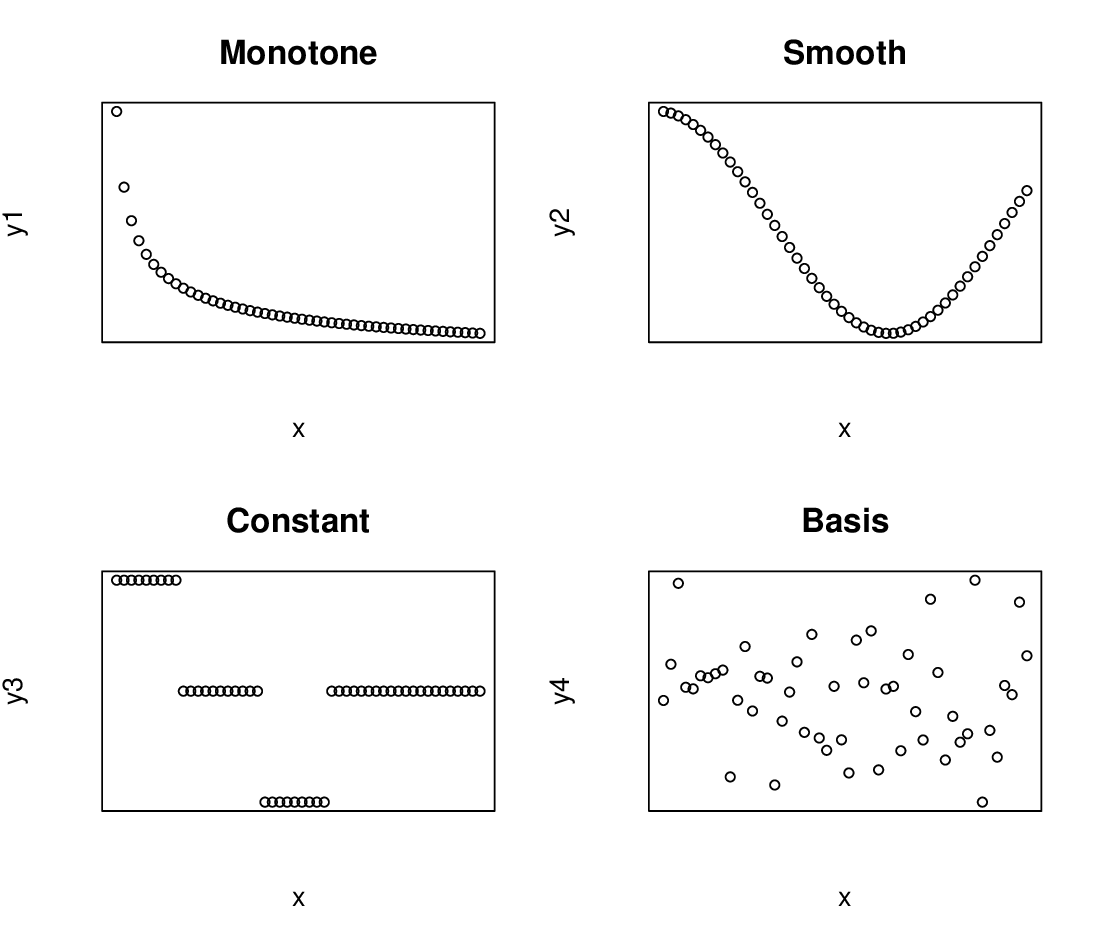}
\caption{Examples of structures vectors $\theta \in \R^{50}$}
\label{FIG:struct_theta}
\end{figure}

Note that the set of $k$-sparse vectors of $\R^d$ is a union of $\sum_{j=0}^k\binom{d}{j}$ linear subspaces with dimension at most $k$ and that are spanned by at most $k$ vectors in the canonical basis of $\R^d$. If we knew that $\theta^*$ belongs to one of these subspaces, we could simply drop irrelevant coordinates and obtain an oracle inequality such as~\eqref{EQ:intro_oi_sparse1}, with $d$ replaced by $k$. Since we do not know what subspace $\theta^*$ lives exactly, we will have to pay an extra term to \emph{find} in which subspace  $\theta^*$ lives. It turns out that this term is exactly of the order of
$$
\frac{\log\Big(\sum_{j=0}^k\binom{d}{j}\Big)}{n}\simeq C \frac{k \log\big(\frac{ed}{k}\big)}{n}
$$
Therefore, the price to pay for not knowing which subspace to look at is only a logarithmic factor.

\subsection*{Nonparametric regression}

Nonparametric does not mean that there is no parameter to estimate (the regression function is a parameter) but rather that the parameter to estimate is infinite-dimensional (this is the case of a function). In some instances, this parameter can be identified with an infinite sequence of real numbers, so that we are still in the realm of countable infinity. Indeed, observe that since $L_2(P_X)$ equipped with the inner product $\langle \cdot\,, \cdot\rangle_2$ is a separable Hilbert space,  it admits an orthonormal basis $\{\varphi_k\}_{k \in \Z}$ and any function $f \in  L_2(P_X)$ can be decomposed as 
$$
f=\sum_{k \in \Z} \alpha_k \varphi_k\,,
$$
where $\alpha_k=\langle f, \varphi_k\rangle_2$.

Therefore estimating a regression function $f$ amounts to estimating the infinite sequence $\{\alpha_k\}_{k \in \Z} \in \ell_2$. You may argue (correctly) that the basis $\{\varphi_k\}_{k \in \Z}$ is also unknown as it depends on the unknown $P_X$. This is absolutely correct but we will make the convenient assumption that $P_X$ is (essentially) known whenever this is needed.

Even if infinity is countable, we still have to estimate an infinite number of coefficients using a finite number of observations. It does not require much statistical intuition to realize that this task is impossible in general. What if we know  something about the sequence $\{\alpha_k\}_{k}$? For example, if we know that $\alpha_k=0$ for $|k| > k_0$, then there are only $2k_0+1$ parameters to estimate (in general, one would also have to ``estimate" $k_0$). In practice, we will not exactly see $\alpha_k=0$ for $|k| > k_0$, but rather that the sequence $\{\alpha_k\}_{k}$ decays to 0 at a certain polynomial rate. For example $|\alpha_k| \le C|k|^{-\gamma}$ for some $\gamma>1/2$ (we need this sequence to be in $\ell_2$). It corresponds to a smoothness assumption on the function $f$. In this case, the sequence $\{\alpha_k\}_{k}$ can be well approximated by a sequence with only a finite number of non-zero terms.

We can view this problem as a misspecified model. Indeed, for any cut-off $k_0$, define the oracle 
$$
\bar f_{k_0} = \sum_{|k| \le k_0} \alpha_k \varphi_k\,.
$$ 
Note that it depends on the unknown $\alpha_k$ and define the estimator 
$$
\hat f_n= \sum_{|k| \le k_0} \hat \alpha_k \varphi_k\,,
$$
where $\hat \alpha_k$ are some data-driven coefficients (obtained by least-squares for example). Then by the Pythagorean theorem and Parseval's identity,  we have
\begin{align*}
\|\hat f_n-f\|_2^2&=\|\bar f-f\|_2^2+ \|\hat f_n-\bar f\|_2^2\\
&= \sum_{|k|> k_0}\alpha_k^2+ \sum_{|k|\le k_0}(\hat \alpha_k-\alpha_k)^2
\end{align*}
We can even work further on this oracle inequality using the fact that $|\alpha_k| \le C|k|^{-\gamma}$. Indeed, we have\footnote{Here we illustrate a convenient notational convention that we will be using throughout these notes: a constant $C$ may be different from line to line. This will not affect the interpretation of our results since we are interested in the order of magnitude of the error bounds. Nevertheless, we will, as much as possible, try to make such constants explicit. As an exercise, try to find an expression of the second $C$ as a function of the first one and of $\gamma$.}
$$
\sum_{|k|> k_0}\alpha_k^2\le C^2\sum_{|k|> k_0}k^{-2\gamma} \le Ck_0^{1-2\gamma}\,.
$$
The so called \emph{stochastic term} $\E\sum_{|k|\le k_0}(\hat \alpha_k-\alpha_k)^2$  clearly increases with $k_0$ (more parameters to estimate) whereas the \emph{approximation term} $Ck_0^{1-2\gamma}$ decreases with $k_0$ (less terms discarded). We will see that  we can strike a compromise called \emph{bias-variance tradeoff}. 

The main difference between this and oracle inequalities is that we make assumptions on the regression function (here in terms of smoothness) in order to control the approximation error. Therefore oracle inequalities are more general but can be seen on the one hand as less quantitative. On the other hand, if one is willing to accept the fact that approximation error is inevitable then there is no reason to focus on it.  This is not the final answer to this rather philosophical question. Indeed, choosing the right $k_0$ can only be done with a control of the approximation error. Indeed, the best $k_0$ will depend on $\gamma$. We will see that even if the smoothness index $\gamma$ is unknown, we can select $k_0$ in a data-driven way that achieves almost the same performance as if $\gamma$ were known. This phenomenon is called \emph{adaptation} (to $\gamma$).

It is important to notice the main difference between the approach taken in nonparametric regression and the one in sparse linear regression. It is not so much about linear vs. non-linear models as we can always first take nonlinear transformations of the $x_j$'s in linear regression. Instead, sparsity or approximate sparsity is a much weaker notion than the decay of coefficients $\{\alpha_k\}_k$ presented above. In a way, sparsity only imposes that \emph{after ordering} the coefficients present a certain decay, whereas in nonparametric statistics, the order is set ahead of time: we assume that we have found a basis that is ordered in such a way that coefficients decay at a certain rate.

\subsection*{Matrix models}
In the previous examples, the response variable is always assumed to be a scalar. What if it is a higher dimensional signal? In Chapter~\ref{chap:matrix}, we consider various problems of this form: matrix completion a.k.a. the Netflix problem, structured graph estimation and covariance matrix estimation. All these problems can be described as follows. 

Let $M, S$ and $N$ be three matrices, respectively called \emph{observation}, \emph{signal} and \emph{noise}, and that satisfy
$$
M=S+N\,.
$$
Here $N$ is a random matrix such that $\E[N]=0$, the all-zero matrix. The goal is to estimate the signal matrix $S$ from the observation of $M$.

The structure of $S$ can also be chosen in various ways. We will consider the case where $S$ is sparse in the sense that it has many zero coefficients. In a way, this assumption does not leverage much of the matrix structure and essentially treats matrices as vectors arranged in the form of an array. This is not the case of \emph{low rank} structures where one assumes that the matrix $S$ has either low rank or can be well approximated by a low rank matrix. This assumption makes sense in the case where $S$ represents user preferences as in the Netflix example. In this example, the $(i,j)$th coefficient $S_{ij}$  of $S$ corresponds to the rating (on a scale  from 1 to 5) that user $i$ gave to movie $j$. The low rank assumption simply materializes the idea that there are a few canonical profiles of users and that each user can be represented as a linear combination of these users. 

At first glance, this problem seems much more difficult than sparse linear regression. Indeed, one needs to learn not only the sparse coefficients in a given basis, but also the basis of eigenvectors. Fortunately, it turns out that the latter task is much easier and is dominated by the former in terms of statistical price.

Another important example of matrix estimation is high-dimensional covariance estimation, where the goal is to estimate the covariance matrix of a random vector $X \in \R^d$, or its leading eigenvectors, based on  $n$ observations. Such a problem has many applications including principal component analysis, linear discriminant analysis and portfolio optimization. The main difficulty is that $n$ may be much smaller than the number of degrees of freedom in the covariance matrix, which can be of order $d^2$. To overcome this limitation, assumptions on the rank or the sparsity of the matrix can be leveraged.

\subsection*{Optimality and minimax lower bounds}

So far, we have only talked about upper bounds. For a linear model, where $f(x)=x^\top \theta^*$, we will prove in Chapter~\ref{chap:GSM} the following bound for a modified least squares estimator $\hat f_n=x^\top\hat \theta$ 
$$
\E\|\hat f_n-f\|_2^2\le C\frac{d}{n}\,.
$$
Is this the right dependence in $d$ and $n$? Would it be possible to obtain as an upper bound something like $C(\log d)/n$, $C/n$ or $\sqrt{d}/n^2$, by either improving our proof technique or using another estimator altogether? It turns out that the answer to this question is negative. More precisely, we can prove that for any estimator $\tilde f_n$, there exists a function $f$ of the form $f(x)=x^\top \theta^*$ such that 
$$
\E\|\hat f_n-f\|_2^2> c\frac{d}{n}
$$
for some positive constant $c$. Here we used a different notation for the constant to emphasize the fact that lower bounds guarantee optimality only \emph{up to a constant} factor. Such a lower bound on the risk is called \emph{minimax lower bound} for reasons that will become clearer in chapter~\ref{chap:minimax}.

How is this possible? How can we make a statement \emph{for all} estimators? We will see that these statements borrow from the theory of tests where we know that it is impossible to drive both the type I and the type II error to zero simultaneously (with a fixed sample size). Intuitively this phenomenon is related to the following observation: Given $n$ observations $X_1, \ldots, X_n$, it is hard to tell if they are distributed according to $\cN(\theta, 1)$ or to $\cN(\theta',1)$ for a Euclidean distance $|\theta-\theta'|_2$ is small enough. We will see that it is the case for example if $|\theta-\theta'|_2\le C\sqrt{d/n}$, which will yield our lower bound.

\renewcommand{\thefigure}{\arabic{chapter}.\arabic{figure}}
\renewcommand{\theequation}{\arabic{chapter}.\arabic{equation}}

\chapter{Sub-Gaussian Random Variables}
\label{chap:subGauss}
\section{Gaussian tails and MGF}
Recall that a random variable $X \in \R$ has Gaussian distribution iff it has a density $p$ with respect to the Lebesgue measure on $\R$ given by
$$
p(x)=\frac{1}{\sqrt{2\pi\sigma^2}}\exp\Big(-\frac{(x-\mu)^2}{2\sigma^2}\Big)\,, \quad x \in \R\,,
$$
where $\mu=\E(X) \in \R$ and $\sigma^2=\var(X)>0$ are the \emph{mean} and \emph{variance} of $X$. We write $X \sim \cN(\mu, \sigma^2)$. Note that $X=\sigma Z+\mu$ for $Z \sim \cN(0,1)$ (called standard Gaussian) and where the equality holds in distribution. Clearly, this distribution has unbounded support but it is well known that it has \emph{almost} bounded support in the following sense: $\p(|X-\mu|\le 3\sigma)\simeq 0.997$. This is due to the fast decay of the tails of $p$ as $|x|\to \infty$ (see Figure~\ref{fig:gaussdens}). This decay can be quantified using the following proposition (Mill's inequality).
\begin{figure}[t]
\centering
\includegraphics[width=0.7\textwidth]{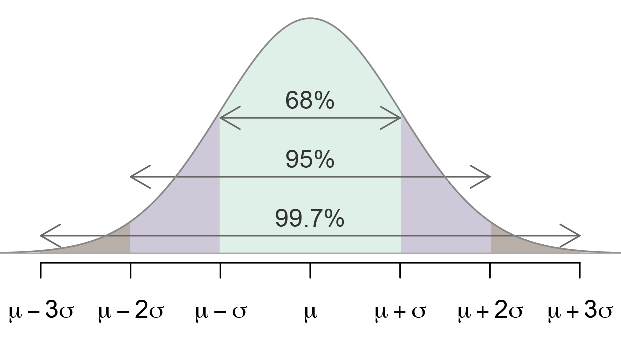}
\caption{Probabilities of falling within 1, 2, and 3 standard deviations close to the mean in a Gaussian distribution. Source \url{http://www.openintro.org/}}
\label{fig:gaussdens}
\end{figure}

\begin{prop}
\label{prop:tail_gauss}
Let $X$ be a Gaussian random variable with mean $\mu$ and variance $\sigma^2$ then for any $t>0$, it holds
$$
\p(X-\mu>t)\le \frac{\sigma}{\sqrt{2\pi}}\frac{\e^{-\frac{t^2}{2\sigma^2}}}{t}\,.
$$
By symmetry we also have
$$
\p(X-\mu<-t)\le \frac{\sigma}{\sqrt{2\pi}}\frac{\e^{-\frac{t^2}{2\sigma^2}}}{t}\,
$$
and
$$
\p(|X-\mu|>t)\le \sigma \sqrt{\frac{2}{\pi}}\frac{\e^{-\frac{t^2}{2\sigma^2}}}{t}\,.
$$
\end{prop}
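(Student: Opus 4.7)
The plan is to reduce everything to the standard normal case by writing $X = \mu + \sigma Z$ with $Z \sim \cN(0,1)$ and $\mathbb{P}(X - \mu > t) = \mathbb{P}(Z > t/\sigma)$. After this reduction, setting $u = t/\sigma > 0$, the bound to prove becomes
$$
\mathbb{P}(Z > u) \le \frac{1}{\sqrt{2\pi}\, u}\, \e^{-u^2/2},
$$
which, once rescaled, matches exactly the right-hand side in the statement.

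To establish this one-sided inequality for the standard normal, I would work directly with the integral representation
$$
\mathbb{P}(Z > u) = \frac{1}{\sqrt{2\pi}} \int_u^\infty \e^{-x^2/2}\, \ud x.
$$
The key trick is to insert the harmless factor $x/u \ge 1$ inside the integrand (valid because $x \ge u > 0$), yielding
$$
\mathbb{P}(Z > u) \le \frac{1}{\sqrt{2\pi}\, u} \int_u^\infty x\, \e^{-x^2/2}\, \ud x,
$$
and then exploit the fact that $x \e^{-x^2/2}$ has the explicit antiderivative $-\e^{-x^2/2}$, so the integral evaluates cleanly to $\e^{-u^2/2}$. Substituting back $u = t/\sigma$ gives the first display of the proposition.

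For the second inequality (the lower tail), I would simply invoke symmetry: $-Z$ has the same distribution as $Z$, so $\mathbb{P}(X - \mu < -t) = \mathbb{P}(Z > t/\sigma)$ and the same bound applies. For the two-sided inequality, I would apply a union bound:
$$
\mathbb{P}(|X - \mu| > t) \le \mathbb{P}(X - \mu > t) + \mathbb{P}(X - \mu < -t) \le 2 \cdot \frac{\sigma}{\sqrt{2\pi}}\, \frac{\e^{-t^2/(2\sigma^2)}}{t} = \sigma \sqrt{\frac{2}{\pi}}\, \frac{\e^{-t^2/(2\sigma^2)}}{t}.
$$

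There is no real obstacle here; the only ``trick'' is the multiplicative insertion of $x/u \ge 1$ to create an integrable expression. The rest is bookkeeping: the reduction to a standard Gaussian via scaling, and two invocations of symmetry/union bound for the remaining two displays.
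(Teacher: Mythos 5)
Your proposal is correct and follows essentially the same route as the paper: reduction to the standard normal by translation and scaling, insertion of the factor $x/u \ge 1$ into the integrand to obtain an explicitly integrable expression, and then symmetry plus a union bound for the remaining two displays.
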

\begin{proof}
Note that it is sufficient to prove the theorem for $\mu=0$ and $\sigma^2=1$ by simple translation and rescaling. We get for $Z\sim \cN(0,1)$,
\begin{align*}
\p(Z>t)&= \frac{1}{\sqrt{2\pi}}\int_t^\infty \exp\Big(-\frac{x^2}{2}\Big) \ud x\\
&\le\frac{1}{\sqrt{2\pi}}\int_t^\infty \frac{x}{t}\exp\Big(-\frac{x^2}{2}\Big)\ud x\\
&=\frac{1}{t\sqrt{2\pi}}\int_t^\infty -\frac{\partial}{\partial x}\exp\Big(-\frac{x^2}{2}\Big) \ud x\\
&=\frac{1}{t\sqrt{2\pi}}\exp(-t^2/2)\,.
\end{align*}
The second inequality follows from symmetry and the last one using the union bound:
$$
\p(|Z|>t)=\p(\{Z>t\}\cup\{Z<-t\})\le \p(Z>t)+\p(Z<-t)=2\p(Z>t)\,.
$$
\end{proof}
The fact that a Gaussian random variable $Z$ has tails that decay to zero exponentially fast can also be seen in the \emph{moment generating function} (MGF) 
$$
M\,:\,s \mapsto M(s)=\E[\exp(sZ)]\,.
$$
Indeed, in the case of a standard Gaussian random variable, we have
\begin{align*}
 M(s)=\E[\exp(sZ)]&=\frac{1}{\sqrt{2\pi}}\int e^{sz}e^{-\frac{z^2}{2}}\ud z\\
 &=\frac{1}{\sqrt{2\pi}}\int e^{-\frac{(z-s)^2}{2}+\frac{s^2}{2}}\ud z\\
 &=e^{\frac{s^2}{2}}\,.
\end{align*}
It follows that if $X\sim \cN(\mu,\sigma^2)$, then $\E[\exp(sX)]=\exp\big(s\mu+\frac{\sigma^2s^2}{2}\big)$.

\section{Sub-Gaussian random variables and Chernoff bounds}

\subsection{Definition and first properties}

Gaussian tails are practical when controlling the tail of an average of independent random variables. Indeed, recall that if $X_1, \ldots, X_n$ are \\iid $\cN(\mu,\sigma^2)$, then $\bar X=\frac{1}{n}\sum_{i=1}^nX_i\sim\cN(\mu,\sigma^2/n)$. Using Lemma~\ref{lem:subgauss} below for example, we get
$$
\p(|\bar X -\mu|>t)\le 2\exp\big(-\frac{nt^2}{2\sigma^2}\big)\,.
$$
Equating the right-hand side with some confidence level $\delta>0$, we find that with probability at least\footnote{We will often commit the statement ``at least" for brevity} $1-\delta$,
\begin{equation}
\label{EQ:subG:CI}
\mu \in \Big[\bar X - \sigma\sqrt{\frac{2 \log (2/\delta)}{n}}, \bar X + \sigma\sqrt{\frac{2 \log (2/\delta)}{n}}\Big]\,,
\end{equation}
This is almost the confidence interval that you used in introductory statistics. The only difference is that we used an approximation for the Gaussian tail whereas statistical tables or software use a much more accurate computation. Figure~\ref{FIG:conf_int_gauss} shows the ratio of the width of the confidence interval to that of the confidence interval computed by the software R. It turns out that intervals of the same form can be also derived for non-Gaussian random variables as long as they have sub-Gaussian tails.

\begin{figure}
  \centering
  \psfrag{y}[c]{\footnotesize  width }
    \psfrag{x}[cb]{\footnotesize $\delta$ in \%}
 \includegraphics[width=0.7\textwidth]{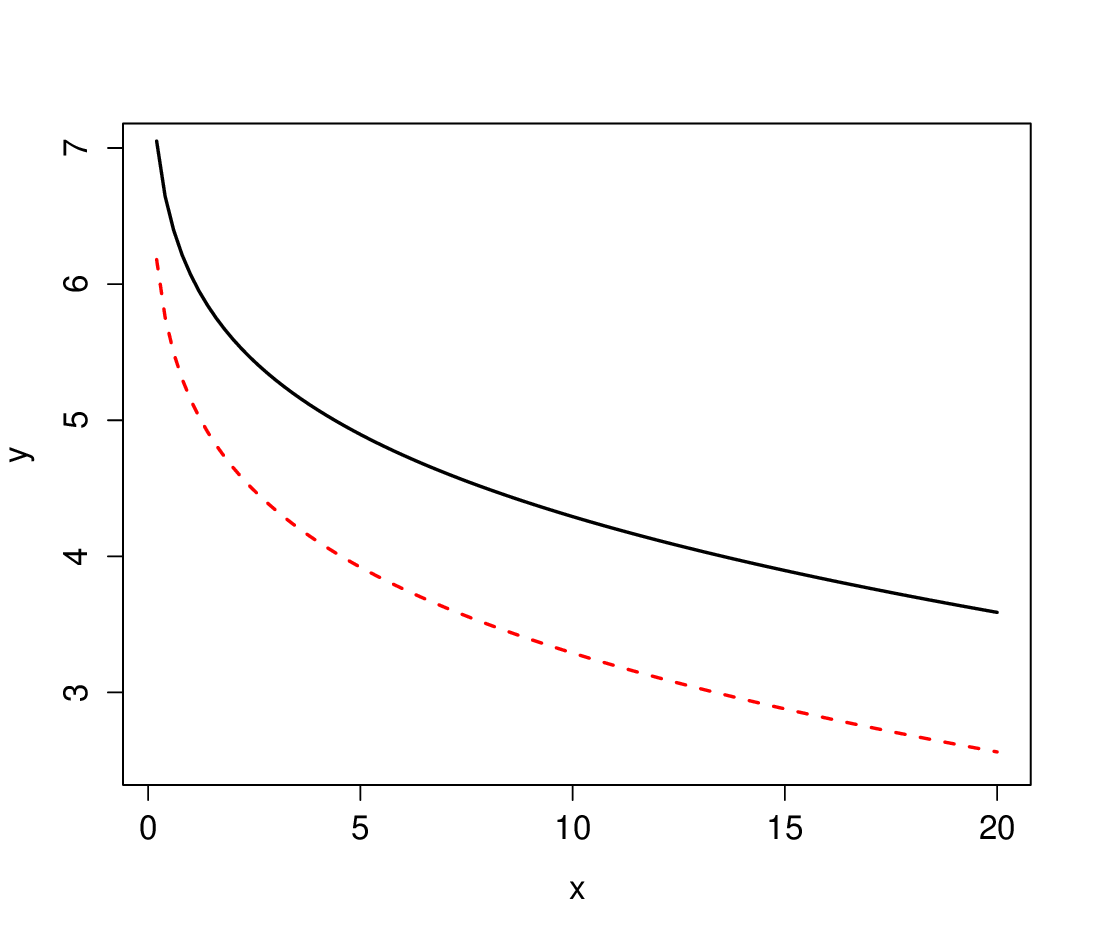}
\caption{Width of confidence intervals from exact computation in R (red dashed) and from the approximation \eqref{EQ:subG:CI} (solid black). Here $x=\delta$ and $y$ is the width of the confidence intervals.}
\label{FIG:conf_int_gauss}
\end{figure}

\begin{defn}
\label{def:subgauss}
A random variable $X \in \R$ is said to be \emph{sub-Gaussian} with variance proxy $\sigma^2$ if $\E[X]=0$ and its moment generating function satisfies
\begin{equation}
\label{EQ:defsubgauss}
\E[\exp(sX)]\le \exp\Big(\frac{\sigma^2s^2}{2}\Big)\,, \quad \forall\, s \in \R\,.
\end{equation}
In this case we write $X \sim \sg(\sigma^2)$. Note that $\sg(\sigma^2)$ denotes a class of distributions rather than a distribution. Therefore, we abuse notation when writing $X \sim \sg(\sigma^2)$.

More generally, we can talk about sub-Gaussian random vectors and matrices.
A random vector $X \in \R^d$ is said to be \emph{sub-Gaussian} with variance proxy $\sigma^2$ if $\E[X]=0$ and   $u^\top X$ is sub-Gaussian with  variance proxy $\sigma^2$ for any vector $u \in \cS^{d-1}$. In this case we write $X \sim \sg_d(\sigma^2)$. Note that if $X \sim \sg_d(\sigma^2)$, then for any $v$ such that $|v|_2\le 1$, we have $v^\top X\sim \sg(\sigma^2)$. Indeed, denoting $u=v/|v|_2 \in  \cS^{d-1}$, we have
$$
\E[e^{sv^\top X}]=\E[e^{s|v|_2u^\top X}]\le e^{\frac{\sigma^2 s^2|v|_2^2}{2}}\le e^{\frac{\sigma^2 s^2}{2}}\,.
$$
 A random matrix $X \in \R^{d\times T}$ is  said to be \emph{sub-Gaussian} with variance proxy $\sigma^2$ if $\E[X]=0$ and   $u^\top X v$ is sub-Gaussian with  variance proxy $\sigma^2$ for any unit vectors $u \in \cS^{d-1}, v \in \cS^{T-1}$. In this case we write $X \sim \sg_{d\times T}(\sigma^2)$.

\end{defn}
This property can equivalently be expressed in terms of bounds on the tail of the random variable $X$.
\begin{lem}
\label{lem:subgauss}
Let $X \sim \sg( \sigma^2)$. Then for any $t>0$, it holds
\begin{equation}
\label{EQ:LEM:subgausstails}
\p[X>t]\le \exp\Big(-\frac{t^2}{2\sigma^2}\Big)\,, \quad \text{and} \quad \p[X<-t]\le \exp\Big(-\frac{t^2}{2\sigma^2}\Big)\,.
\end{equation}
\end{lem}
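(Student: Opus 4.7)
The plan is to use the classical Chernoff bound: exponentiate, apply Markov's inequality, and then optimize the free parameter $s>0$ coming from the MGF.

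First I would fix $t>0$ and $s>0$, and observe that since $x\mapsto e^{sx}$ is strictly increasing,
\[
\p[X>t]=\p[e^{sX}>e^{st}]\le e^{-st}\E[e^{sX}]\le \exp\!\Big(\frac{\sigma^2 s^2}{2}-st\Big),
\]
where the first inequality is Markov's inequality and the second uses the defining bound \eqref{EQ:defsubgauss} of $\sg(\sigma^2)$. Since this holds for every $s>0$, I would then minimize the right-hand side over $s>0$.

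The exponent $\varphi(s)=\tfrac{\sigma^2 s^2}{2}-st$ is a convex quadratic in $s$ with $\varphi'(s)=\sigma^2 s-t$, so its (unconstrained) minimizer is $s^*=t/\sigma^2>0$, which lies in the admissible range. Plugging in gives $\varphi(s^*)=-t^2/(2\sigma^2)$, yielding the desired upper-tail bound $\p[X>t]\le \exp(-t^2/(2\sigma^2))$.

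For the lower tail, I would note that $-X$ satisfies $\E[e^{s(-X)}]=\E[e^{(-s)X}]\le \exp(\sigma^2 s^2/2)$ for all $s\in\R$, so $-X\sim\sg(\sigma^2)$ as well. Applying the upper-tail bound just proved to $-X$ gives $\p[X<-t]=\p[-X>t]\le \exp(-t^2/(2\sigma^2))$. There is no real obstacle here: the only subtlety worth flagging is that one must verify $s^*=t/\sigma^2$ is positive (so that Markov's inequality is being applied with a monotone increasing exponential), which is immediate from $t>0$.
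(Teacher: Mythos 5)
Your proposal is correct and follows essentially the same Chernoff-bound argument as the paper: Markov's inequality applied to $e^{sX}$, the sub-Gaussian MGF bound, and optimization of the quadratic exponent at $s^*=t/\sigma^2$. Your treatment of the lower tail by noting $-X\sim\sg(\sigma^2)$ is just a slightly more explicit phrasing of the paper's remark that the definition holds for all $s\in\R$.
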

\begin{proof}
Assume first that  $X \sim \sg( \sigma^2)$. We will employ a very useful technique called \textbf{Chernoff bound} that allows to translate a bound on the moment generating function into a tail bound. Using Markov's inequality, we have for any $s>0$,
$$
\p(X>t)\le \p\big(e^{sX}>e^{st}\big)\le \frac{\E\big[e^{sX}\big]}{e^{st}}\,.
$$
Next we use the fact that $X$ is sub-Gaussian to get
$$
\p(X>t)\le e^{\frac{\sigma^2 s^2}{2}-st}\,.
$$
The above inequality holds for any $s>0$ so to make it the tightest possible, we minimize with respect to $s>0$. Solving $\phi'(s)=0$, where $\phi(s)=\frac{\sigma^2 s^2}{2}-st$, we find that $\inf_{s>0}\phi(s)=-\frac{t^2}{2\sigma^2}$. This proves the first part of~\eqref{EQ:LEM:subgausstails}. The second inequality in this equation follows in the same manner (recall that~\eqref{EQ:defsubgauss} holds for any $s \in \R$).

\end{proof}

\subsection{Moments}
Recall that the absolute moments of $Z \sim \cN(0, \sigma^2)$ are given by
$$
\E[|Z|^k]=\frac{1}{\sqrt\pi}(2\sigma^2)^{k/2}\Gamma\Big(\frac{k+1}{2}\Big)
$$
where $\Gamma(\cdot)$ denotes the Gamma function defined by
$$
\Gamma(t)=\int_0^\infty x^{t-1}e^{-x}\ud x \,, \quad t>0\,.
$$
The next lemma shows that the tail bounds of Lemma~\ref{lem:subgauss} are sufficient to show that the absolute moments of $X \sim \sg( \sigma^2)$ can be bounded by those of $Z\sim \cN(0,\sigma^2)$ up to multiplicative constants.
\begin{lem}
\label{LEM:subgauss_moments}
Let $X$ be a  random variable such that 
$$
\p[|X|>t]\le 2\exp\Big(-\frac{t^2}{2\sigma^2}\Big)\,,
$$
then for any positive integer $k\ge 1$,
$$
\E [|X|^k] \le (2\sigma^2)^{k/2}k\Gamma(k/2)\,.
$$
In particular,
$$
\big(\E [|X|^k])^{1/k} \le \sigma e^{1/e} \sqrt{k}\,, \quad k\ge 2\,.
$$
and $\E[|X|] \le \sigma\sqrt{2\pi}$\,.
\end{lem}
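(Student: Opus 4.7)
The plan is to combine the standard layer-cake identity with the given sub-Gaussian tail bound, and then reduce the resulting integral to the Gamma function by a simple change of variable.

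First I would write
\[
\E[|X|^k] \;=\; \int_0^\infty \p[|X|^k > u] \,\ud u \;=\; \int_0^\infty k t^{k-1}\, \p[|X| > t] \,\ud t,
\]
and plug in the assumed tail bound $\p[|X|>t] \le 2\exp(-t^2/(2\sigma^2))$. This yields
\[
\E[|X|^k] \;\le\; 2k \int_0^\infty t^{k-1} \exp\!\Big(-\frac{t^2}{2\sigma^2}\Big)\ud t .
\]
Substituting $u = t^2/(2\sigma^2)$, so that $t = \sqrt{2\sigma^2 u}$ and $\ud t = \sqrt{2\sigma^2}\,u^{-1/2}/2 \,\ud u$, turns the integral into $k(2\sigma^2)^{k/2}\int_0^\infty u^{k/2-1} e^{-u}\,\ud u = k(2\sigma^2)^{k/2}\Gamma(k/2)$, giving the first displayed inequality.

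For the second inequality, I need to take the $k$-th root and show $(k\Gamma(k/2))^{1/k}\sqrt{2} \le e^{1/e}\sqrt{k}$, equivalently
\[
k^{1/k}\cdot \Gamma(k/2)^{1/k} \;\le\; e^{1/e}\sqrt{k/2}.
\]
The function $k \mapsto k^{1/k}$ attains its maximum over $k\ge 1$ at $k=e$, so $k^{1/k} \le e^{1/e}$. It remains to check $\Gamma(k/2) \le (k/2)^{k/2}$ for $k\ge 2$. For $k=2$ this is $\Gamma(1)=1\le 1$, and inductively using $\Gamma(t+1)=t\Gamma(t)$ together with monotonicity of $(t)^t$ handles integer and half-integer steps; alternatively one can invoke the elementary bound $\Gamma(t) \le t^{t-1}$ for $t \ge 1$, which is an immediate consequence of $\Gamma(t) = \int_0^\infty x^{t-1}e^{-x}\ud x$ being dominated on $[0,t]$ by $t^{t-1}$ and on $[t,\infty)$ by a quickly decaying exponential piece (or from log-convexity of $\Gamma$ with the anchor $\Gamma(1)=\Gamma(2)=1$). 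Combining these pieces gives the claimed bound $(\E[|X|^k])^{1/k}\le \sigma e^{1/e}\sqrt{k}$.

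Finally, setting $k=1$ in the first inequality gives $\E[|X|]\le \sqrt{2\sigma^2}\cdot 1\cdot \Gamma(1/2) = \sigma\sqrt{2\pi}$. The main (mild) obstacle is the clean extraction of the $e^{1/e}$ constant, which requires comparing $\Gamma(k/2)$ with $(k/2)^{k/2}$; everything else is a direct application of the layer-cake formula and Fubini.
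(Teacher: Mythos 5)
Your proof is correct and takes essentially the same route as the paper: both begin with the layer-cake identity for $\E[|X|^k]$, plug in the sub-Gaussian tail bound, reduce the resulting integral to $\Gamma(k/2)$ by the substitution $u=t^2/(2\sigma^2)$ (the paper substitutes $u=t^{2/k}/(2\sigma^2)$ directly into $\int_0^\infty\p(|X|>t^{1/k})\,\ud t$, which is the same computation), and then extract the constant $e^{1/e}$ via $k^{1/k}\le e^{1/e}$ and $\Gamma(k/2)\le(k/2)^{k/2}$. The $k=1$ case via $\sqrt{2}\,\Gamma(1/2)=\sqrt{2\pi}$ is also identical.
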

\begin{proof}
\begin{align*}
\E [|X|^k]&=\int_0^\infty \p(|X|^k>t)\ud t&\\
&=\int_0^\infty \p(|X|>t^{1/k})\ud t&\\
&\le 2\int_0^\infty e^{-\frac{t^{2/k}}{2\sigma^2}}\ud t&\\
&=(2\sigma^2)^{k/2}k\int_0^\infty e^{-u}u^{k/2-1}\ud u\,,& u=\frac{t^{2/k}}{2\sigma^2}\\
&=(2\sigma^2)^{k/2}k\Gamma(k/2)&
\end{align*}
The second statement follows from $\Gamma(k/2)\le (k/2)^{k/2}$ and $k^{1/k} \le e^{1/e}$ for any $k \ge 2$. It yields
$$
\big((2\sigma^2)^{k/2}k\Gamma(k/2)\big)^{1/k}\le k^{1/k}\sqrt{\frac{2\sigma^2k}{2}}\le e^{1/e}\sigma\sqrt{k}\,.
$$
Moreover, for $k=1$, we have $\sqrt{2}\Gamma(1/2)=\sqrt{2\pi}$.
\end{proof}
Using moments, we can prove the following reciprocal to Lemma~\ref{lem:subgauss}.
\begin{lem}
	If~\eqref{EQ:LEM:subgausstails} holds and \( \E[X] = 0 \), then for any $s>0$, it holds
$$
\E[\exp(sX)]\le e^{4\sigma^2 s^2}\,.
$$
As a result, we will sometimes write $X\sim \sg(\sigma^2)$ when it satisfies~\eqref{EQ:LEM:subgausstails} and \( \E[X] = 0 \).
\end{lem}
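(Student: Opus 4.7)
The plan is to use $\E[X]=0$ to subtract off the linear Taylor term and then estimate the nonnegative remainder $\psi(x) := e^{sx}-1-sx$ directly from the tail bound \eqref{EQ:LEM:subgausstails}. Writing $\E[e^{sX}] = 1 + \E[\psi(X)]$, note that $\psi$ is convex with $\psi(0)=\psi'(0)=0$, so it is increasing on $[0,\infty)$ and decreasing on $(-\infty,0]$ when $s>0$. Decomposing $\E[\psi(X)]$ on $\{X\ge 0\}$ and $\{X<0\}$ and applying the layer-cake formula on each piece yields
$$
\E[\psi(X)] = \int_0^\infty s(e^{st}-1)\,\p(X>t)\,dt + \int_0^\infty s(1-e^{-st})\,\p(X<-t)\,dt.
$$

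Inserting the two-sided tail bound $\p(X>t),\,\p(X<-t)\le 2e^{-t^2/(2\sigma^2)}$ makes the sum of the integrands collapse to $2s(e^{st}-e^{-st}) = 4s\sinh(st)$, so
$$
\E[\psi(X)] \le 4s\int_0^\infty \sinh(st)\,e^{-t^2/(2\sigma^2)}\,dt.
$$
Splitting $\sinh$ and completing the square shows this integral equals $2s\sigma\sqrt{2\pi}\,(2\Phi(s\sigma)-1)\,e^{s^2\sigma^2/2}$, where $\Phi$ is the standard normal c.d.f. The elementary bound $2\Phi(u)-1\le u\sqrt{2/\pi}$ (immediate from bounding the Gaussian density by its peak $1/\sqrt{2\pi}$) then gives the compact estimate
$$
\E[e^{sX}] \le 1 + 4\sigma^2 s^2\,e^{s^2\sigma^2/2}.
$$

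To finish I would verify the one-variable inequality $1 + 4v\,e^{v/2}\le e^{4v}$ with $v = s^2\sigma^2\ge 0$. Setting $f(v) := e^{4v}-1-4ve^{v/2}$, one checks $f(0)=f'(0)=0$ and $f''(v) = 16 e^{4v}-(4+v)e^{v/2}$; the latter is non-negative since it reduces to $16 e^{7v/2}\ge 4+v$, which holds at $v=0$ and the derivative inequality $56 e^{7v/2}\ge 1$ propagates this. Hence $f\ge 0$ on $[0,\infty)$, concluding the argument.

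The main obstacle is keeping track of signs in the layer-cake step for the $\{X<0\}$ contribution and recognizing the clean cancellation $(e^{st}-1)+(1-e^{-st}) = 2\sinh(st)$ produced by using the two-sided tail bound; once that is in place, the Gaussian shift and the final calculus lemma are routine. The slack factor $4$ in the exponent (rather than $1/2$) on the right-hand side is an artefact of the factor $2$ in the tail bound combined with the polynomial-versus-exponential comparison in the last step.
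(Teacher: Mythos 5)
Your argument is correct, and it is a genuinely different route from the one in the text. The paper's proof goes through moments: it invokes Lemma~\ref{LEM:subgauss_moments} to bound $\E[|X|^k]$ by $(2\sigma^2)^{k/2}k\Gamma(k/2)$, plugs this into the Taylor series $\E[e^{sX}]\le 1+\sum_{k\ge 2}s^k\E[|X|^k]/k!$, and then carries out a fairly delicate Gamma-function and factorial bookkeeping (splitting even and odd indices, using $\Gamma(k+1/2)\le k!$ and $2(k!)^2\le(2k)!$) to resum the series as $e^{2\sigma^2s^2}+\sqrt{\sigma^2s^2/2}\,(e^{2\sigma^2s^2}-1)\le e^{4\sigma^2s^2}$. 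You instead isolate the remainder $\psi(x)=e^{sx}-1-sx$, kill the linear term by $\E[X]=0$, apply the layer-cake identity $\E[\psi(X)]=\int_0^\infty s(e^{st}-1)\p(X>t)\,dt+\int_0^\infty s(1-e^{-st})\p(X<-t)\,dt$, and then the two-sided tail bound makes the integrand collapse to a $\sinh$ whose Gaussian-weighted integral is \emph{exact} after completing the square. This is more self-contained (no moment lemma required), and the origin of the constant is more transparent: it comes from the Gaussian normalizer $\sqrt{2\pi}$ and the elementary inequality $2\Phi(u)-1\le u\sqrt{2/\pi}$, rather than from a cascade of Gamma-function estimates. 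Your closing calculus lemma $1+4ve^{v/2}\le e^{4v}$ checks out (both $f$ and $f'$ vanish at $0$, and $f''=16e^{4v}-(4+v)e^{v/2}\ge 0$ since $16e^{7v/2}\ge 4+v$).

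One small point: \eqref{EQ:LEM:subgausstails} gives each one-sided tail $\p(X>t)$ and $\p(X<-t)$ bounded by $e^{-t^2/(2\sigma^2)}$ \emph{without} a prefactor of $2$ — the factor $2$ only appears in the two-sided bound $\p(|X|>t)\le 2e^{-t^2/(2\sigma^2)}$. You inserted the factor-$2$ version into each one-sided integral, which is harmless since it only weakens the bound, but it's why you end up at $1+4\sigma^2s^2e^{\sigma^2s^2/2}$ rather than $1+2\sigma^2s^2e^{\sigma^2s^2/2}$; with the sharper tail bound your method actually yields the stronger conclusion $\E[e^{sX}]\le e^{2\sigma^2s^2}$ (verify $1+2ve^{v/2}\le e^{2v}$ by the same second-derivative argument). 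This also explains the sentence in your last paragraph attributing the slack in the exponent partly to the "factor of 2 in the tail bound": that factor isn't actually present, so the slack is entirely from the polynomial-versus-exponential step, and your approach is in fact slightly sharper than the paper's.
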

\begin{proof}
We use the Taylor expansion of the exponential function as follows. Observe that by the dominated convergence theorem
\begin{align*}
\E\big[e^{sX}\big]&\le 1+\sum_{k=2}^\infty \frac{s^k\E[|X|^k]}{k!}&\\
&\le 1+\sum_{k=2}^\infty \frac{(2\sigma^2s^2)^{k/2}k\Gamma(k/2)}{k!}&\\
&= 1+\sum_{k=1}^\infty \frac{(2\sigma^2s^2)^{k}2k\Gamma(k)}{(2k)!}+\sum_{k=1}^\infty \frac{(2\sigma^2s^2)^{k+1/2}(2k+1)\Gamma(k+1/2)}{(2k+1)!}&\\
&\le1+\big(2+\sqrt{2\sigma^2s^2}\big)\sum_{k=1}^\infty \frac{(2\sigma^2s^2)^{k}k!}{(2k)!}&\\
&\le1+\big(1+\sqrt{\frac{\sigma^2s^2}{2}}\big)\sum_{k=1}^\infty \frac{(2\sigma^2s^2)^{k}}{k!}&\hspace{-2cm} 2(k!)^2\le(2k)!\\
&=e^{2\sigma^2s^2}+\sqrt{\frac{\sigma^2s^2}{2}}(e^{2\sigma^2s^2}-1)\\
&\le e^{4\sigma^2 s^2}\,.&
\end{align*}
\end{proof}
From the above Lemma, we see that sub-Gaussian random variables can be equivalently defined from their tail bounds and their moment generating functions, up to constants. 

\subsection{Sums of independent sub-Gaussian random variables}

Recall that if $X_1, \ldots, X_n$ are \\iid $\cN(0,\sigma^2)$, then for any $a \in \R^n$, 
$$
\sum_{i=1}^na_iX_i \sim \cN(0,|a|_2^2 \sigma^2).
$$
If we only care about the tails, this property is preserved for sub-Gaussian random variables.
\begin{thm}
\label{TH:proj_subG}
Let $X=(X_1, \ldots, X_n)$ be a vector of independent sub-Gaussian random variables that have variance proxy $\sigma^2$. Then, the random vector $X$ is  sub-Gaussian with variance proxy $\sigma^2$.
\end{thm}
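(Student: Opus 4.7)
The plan is to unfold the definition of a sub-Gaussian random vector and reduce the claim to a one-dimensional MGF calculation that exploits independence. By Definition~\ref{def:subgauss}, it suffices to fix an arbitrary unit vector $u \in \cS^{n-1}$ and show that the scalar random variable $u^\top X = \sum_{i=1}^n u_i X_i$ is sub-Gaussian with variance proxy $\sigma^2$, i.e.\ that its MGF is bounded by $\exp(\sigma^2 s^2 / 2)$ for every $s \in \R$.

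First I would note that each $X_i$ has zero mean (by the definition of $\sg(\sigma^2)$), and hence so does $u^\top X$ by linearity. Then, for any $s \in \R$, I would write
\[
\E\bigl[\exp(s\,u^\top X)\bigr] = \E\Bigl[\prod_{i=1}^n \exp(s u_i X_i)\Bigr] = \prod_{i=1}^n \E\bigl[\exp(s u_i X_i)\bigr],
\]
where the second equality uses the independence of $X_1, \ldots, X_n$.

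Next I would apply the sub-Gaussian assumption $X_i \sim \sg(\sigma^2)$ coordinatewise, with the scalar $s u_i$ playing the role of the MGF parameter, to obtain
\[
\prod_{i=1}^n \E\bigl[\exp(s u_i X_i)\bigr] \le \prod_{i=1}^n \exp\!\Bigl(\tfrac{\sigma^2 s^2 u_i^2}{2}\Bigr) = \exp\!\Bigl(\tfrac{\sigma^2 s^2}{2} \sum_{i=1}^n u_i^2\Bigr) = \exp\!\Bigl(\tfrac{\sigma^2 s^2}{2}\Bigr),
\]
using $|u|_2^2 = 1$ in the final equality. Since $u \in \cS^{n-1}$ was arbitrary, this shows $X \sim \sg_n(\sigma^2)$.

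There is no real obstacle here: the only thing to be careful about is that the sub-Gaussian MGF bound~\eqref{EQ:defsubgauss} holds for \emph{all} $s \in \R$ (not just $s > 0$), which is precisely what lets the argument go through for arbitrary signs of the coordinates $u_i$ and arbitrary $s$. Everything else is a routine combination of independence (to split the product) and the unit-norm constraint (to collapse the $\sum u_i^2$ back to $1$).
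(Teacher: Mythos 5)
Your proof is correct and follows exactly the same route as the paper: fix a unit vector $u$, use independence to factor the MGF of $u^\top X$ into a product, apply the coordinatewise sub-Gaussian MGF bound with parameter $s u_i$, and collapse $\sum_i u_i^2 = 1$. The additional remarks about zero mean and the validity of the MGF bound for all real $s$ are accurate but not needed beyond what the paper records.
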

\begin{proof}
Let $u \in \cS^{n-1}$ be a unit vector, then
$$
\E[e^{su^\top X}]=\prod_{i=1}^n \E[e^{su_i X_i}]\le \prod_{i=1}^ne^{\frac{\sigma^2s^2u_i^2}{2}}=e^{\frac{\sigma^2s^2|u|_2^2}{2}}=e^{\frac{\sigma^2s^2}{2}}\,.
$$
\end{proof}
Using a Chernoff bound, we immediately get the following corollary
\begin{cor}
\label{cor:chernoff}
Let $X_1, \ldots, X_n$ be $n$ independent random variables such that $X_i \sim \sg( \sigma^2)$. Then for any $a \in \R^n$, we have
$$
\p\Big[\sum_{i=1}^n a_i X_i>t\Big]\le \exp\Big(-\frac{t^2}{2\sigma^2|a|_2^2}\Big)\,,
$$
and
$$
\p\Big[\sum_{i=1}^n a_i X_i<-t\Big]\le \exp\Big(-\frac{t^2}{2\sigma^2|a|_2^2}\Big)
$$
\end{cor}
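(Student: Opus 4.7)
The plan is to reduce this corollary directly to Theorem~\ref{TH:proj_subG} combined with Lemma~\ref{lem:subgauss}, essentially by reading off the tail bound for a one-dimensional projection of the sub-Gaussian random vector $X = (X_1, \ldots, X_n)$.

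First, I would invoke Theorem~\ref{TH:proj_subG} to conclude that $X$ is a $\sg_n(\sigma^2)$ random vector. By the definition of a sub-Gaussian random vector (and the scaling remark immediately following Definition~\ref{def:subgauss}), for any $v \in \R^n$ with $|v|_2 \le 1$ the scalar $v^\top X$ satisfies $v^\top X \sim \sg(\sigma^2)$. Taking $v = a/|a|_2$ (assuming $a \ne 0$; the case $a=0$ is trivial), we get $\sum_{i=1}^n (a_i/|a|_2) X_i \sim \sg(\sigma^2)$, which by homogeneity of the MGF condition is the same as saying that the scalar random variable $S := \sum_{i=1}^n a_i X_i$ satisfies $\E[e^{sS}] \le \exp\bigl(\sigma^2 |a|_2^2 s^2/2\bigr)$ for all $s \in \R$, i.e.\ $S \sim \sg(\sigma^2 |a|_2^2)$.

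Next, I would apply Lemma~\ref{lem:subgauss} to $S$ with variance proxy $\sigma^2 |a|_2^2$. The lemma yields both one-sided tail bounds
\[
\p[S > t] \le \exp\Big(-\frac{t^2}{2\sigma^2 |a|_2^2}\Big), \qquad \p[S < -t] \le \exp\Big(-\frac{t^2}{2\sigma^2 |a|_2^2}\Big),
\]
which is exactly the claim of the corollary.

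Alternatively, one can short-circuit the appeal to Theorem~\ref{TH:proj_subG} and redo the Chernoff argument in one line: by independence and the sub-Gaussian MGF bound for each $X_i$, $\E[e^{sS}] = \prod_i \E[e^{sa_i X_i}] \le \prod_i e^{\sigma^2 s^2 a_i^2/2} = e^{\sigma^2 s^2 |a|_2^2/2}$, and then optimizing $s > 0$ in $\p[S>t] \le e^{-st}\E[e^{sS}]$ with $s = t/(\sigma^2 |a|_2^2)$ gives the result; the lower tail follows by applying the same argument to $-S$, using that $X_i \sim \sg(\sigma^2)$ implies $-X_i \sim \sg(\sigma^2)$ since~\eqref{EQ:defsubgauss} is symmetric in $s$. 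There is no real obstacle here — the main conceptual point is just that the sub-Gaussian class is closed under independent linear combinations with the natural variance-proxy arithmetic $\sigma^2 \mapsto \sigma^2 |a|_2^2$, mirroring the exact behavior of Gaussians.
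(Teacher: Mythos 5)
Your proof is correct and matches the paper's approach: the paper derives the corollary by noting that Theorem~\ref{TH:proj_subG} makes $X$ a $\sg_n(\sigma^2)$ vector and then applying a Chernoff bound (Lemma~\ref{lem:subgauss}) to the projection $a^\top X$, exactly as you do. Your alternative one-line derivation is also fine but is just an inlined version of the same computation.
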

Of special interest is the case where $a_i=1/n$ for all $i$. Then, we get that the average $\bar X=\frac{1}{n}\sum_{i=1}^n X_i$, satisfies
$$
\p(\bar X > t) \le e^{-\frac{nt^2}{2\sigma^2}} \quad \text{and}  \quad \p(\bar X < -t) \le e^{-\frac{nt^2}{2\sigma^2}}
$$
just like for the Gaussian average.

\subsection{Hoeffding's inequality}

The class of sub-Gaussian random variables is actually quite large. Indeed, Hoeffding's lemma below implies that all random variables that are bounded uniformly are actually sub-Gaussian with a variance proxy that depends on the size of their support.

\begin{lem}[Hoeffding's lemma (1963)]
Let $X$ be a random variable such that $\E(X)=0$ and $X \in [a,b]$ almost surely. Then, for any $s \in \R$, it holds
$$
\E[e^{sX}]\le e^{\frac{s^2(b-a)^2}{8}}\,.
$$
In particular, $X \sim  \sg(\frac{(b-a)^2}{4})$\,.
\end{lem}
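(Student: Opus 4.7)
The plan is to control $\E[e^{sX}]$ via the cumulant generating function $\psi(s) := \log \E[e^{sX}]$ and show that $\psi''(s) \le (b-a)^2/4$ uniformly in $s$. Since $\psi(0)=0$ and $\psi'(0) = \E[X]/\E[e^{0 \cdot X}] = \E[X] = 0$, a second-order Taylor expansion of $\psi$ around $0$ (with integral remainder) will then give
\[
\psi(s) = \int_0^s (s-u)\psi''(u)\,\ud u \le \frac{(b-a)^2}{4} \cdot \frac{s^2}{2} = \frac{s^2(b-a)^2}{8},
\]
which is precisely the claim. Exponentiating yields the MGF bound, and since $|X| \le \max(|a|,|b|) \le b-a$ (well, more precisely the bound holds with variance proxy $\sigma^2 = (b-a)^2/4$), we conclude $X \sim \sg((b-a)^2/4)$.

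The key computation is the bound $\psi''(s) \le (b-a)^2/4$. First I would compute $\psi'(s) = \E[X e^{sX}]/\E[e^{sX}]$ and $\psi''(s) = \E[X^2 e^{sX}]/\E[e^{sX}] - (\E[X e^{sX}]/\E[e^{sX}])^2$. The cleanest interpretation is that $\psi''(s)$ is the variance of $X$ under the \emph{tilted} (or Esscher-transformed) probability measure $\ud Q_s = e^{sX}/\E[e^{sX}]\,\ud \p$. Under $Q_s$, the random variable $X$ is still supported in $[a,b]$, so the problem reduces to a deterministic fact: any random variable taking values in $[a,b]$ has variance at most $(b-a)^2/4$.

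The main obstacle — really the only nontrivial step — is verifying this deterministic variance bound for bounded random variables. I would prove it by noting that for any $c \in \R$ one has $\Var(X) \le \E[(X-c)^2]$, and then choose the midpoint $c = (a+b)/2$. Since $X \in [a,b]$ implies $|X - (a+b)/2| \le (b-a)/2$ almost surely, we get $\E[(X-c)^2] \le (b-a)^2/4$, as needed.

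Everything else is a routine chain: bound on variance of bounded random variables $\Rightarrow$ bound on $\psi''$ $\Rightarrow$ bound on $\psi$ via Taylor $\Rightarrow$ bound on MGF. The final sub-Gaussianity assertion with proxy $(b-a)^2/4$ is then immediate by comparing with the definition $\E[e^{sX}] \le \exp(\sigma^2 s^2/2)$ with $\sigma^2 = (b-a)^2/4$.
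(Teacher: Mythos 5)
Your proof is correct and follows exactly the same route as the paper: both compute $\psi''$ as the variance of $X$ under the exponentially tilted measure, bound that variance by $(b-a)^2/4$ via the midpoint $(a+b)/2$, and integrate back using $\psi(0)=\psi'(0)=0$. Your single-integral Taylor remainder $\int_0^s(s-u)\psi''(u)\,\ud u$ is the paper's iterated integral $\int_0^s\int_0^\mu\psi''(\rho)\,\ud\rho\,\ud\mu$ after a change of order, so the two write-ups differ only cosmetically.
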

\begin{proof}
Define $\psi(s)=\log \E[e^{s X}]$, and observe that
and we can readily compute
$$
	\psi'(s)=\frac{
	\E[Xe^{s X}]
	}{\E[e^{s X}]},\qquad
	\psi''(s)=
	\frac{
	\E[X^2e^{s X}]
	}{\E[e^{s X}]}-
	\bigg[
	\frac{
	\E[Xe^{s X}]
	}{\E[e^{s X}]}
	\bigg]^2.
$$
Thus $\psi''(s)$ can be interpreted as the variance of the random 
variable $X$ under the probability measure
$d\mathbb{Q}=\frac{e^{s X}}{\E[e^{s X}]}d\p$. But since $X \in [a,b]$ almost surely, we have, under any probability,
$$
\var(X)=\var\big(X-\frac{a+b}{2}\big)\le \E\Big[\Big(X-\frac{a+b}{2}\Big)^2\Big] \le \frac{(b-a)^2}{4}\,.
$$
The fundamental theorem of calculus yields
$$
	\psi(s) = \int_0^s\int_0^\mu \psi''(\rho)\,d\rho\,d\mu
	\le \frac{s^2(b-a)^2}{8}
$$
using $\psi(0)=\log 1=0$ and $\psi'(0)=\E X=0$.
\end{proof}

Using a Chernoff bound, we get the following (extremely useful) result.
\begin{thm}[Hoeffding's inequality]
\label{TH:hoeffding}
Let $X_1, \ldots, X_n$ be $n$ independent random variables such that almost surely,
$$
X_i \in [a_i, b_i]\,, \qquad \forall\, i.
$$
Let $\bar X=\frac{1}{n}\sum_{i=1}^n X_i$, then for any $t>0$,
    $$ \p (\bar X - \E(\bar X) > t) \leq \exp \Big(-\frac{2n^2t^2}{\sum_{i=1}^n (b_i - a_i)^2} \Big),$$
and $$ \p(\bar X - \E(\bar X) <-t) \leq \exp \Big(-\frac{2n^2t^2}{\sum_{i=1}^n (b_i - a_i)^2} \Big)\,.$$
\end{thm}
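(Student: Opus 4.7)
The plan is to reduce Hoeffding's inequality to an application of Hoeffding's lemma plus a Chernoff bound, the two tools just developed. First I would center the variables: set $Y_i = X_i - \E X_i$, so that $\E Y_i = 0$ and $Y_i \in [a_i - \E X_i,\, b_i - \E X_i]$ almost surely. This is an interval of length $b_i - a_i$, so Hoeffding's lemma applied to each $Y_i$ gives
$$
\E[e^{s Y_i}] \le \exp\!\left(\frac{s^2(b_i-a_i)^2}{8}\right), \qquad s \in \R.
$$

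Next I would exploit independence. Since $Y_1, \ldots, Y_n$ are independent, the MGF of their sum factorizes, yielding
$$
\E\!\left[\exp\!\left(s \sum_{i=1}^n Y_i\right)\right] \le \exp\!\left(\frac{s^2 \sum_{i=1}^n (b_i-a_i)^2}{8}\right).
$$
Note that $\sum_i Y_i = n(\bar X - \E \bar X)$, so this bounds the MGF of the quantity of interest. One could at this point observe that the sum is sub-Gaussian with variance proxy $\frac{1}{4}\sum_i (b_i-a_i)^2$ and invoke a tail bound, but since the variance proxies of the $Y_i$ are not identical, I would simply redo the Chernoff argument directly rather than appeal to Corollary~\ref{cor:chernoff} verbatim.

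For the Chernoff step, for any $s > 0$, Markov's inequality yields
$$
\p(\bar X - \E \bar X > t) = \p\!\left(\sum_i Y_i > nt\right) \le e^{-snt}\,\E\!\left[\exp\!\left(s\sum_i Y_i\right)\right] \le \exp\!\left(\frac{s^2 \sum_i (b_i-a_i)^2}{8} - snt\right).
$$
Minimizing the exponent over $s > 0$ (it is a quadratic in $s$) gives the optimum at $s^\star = 4nt / \sum_i (b_i-a_i)^2$, and substituting this produces the exponent $-2n^2 t^2 / \sum_i (b_i-a_i)^2$. This gives the first inequality. The symmetric lower tail bound follows by applying the same argument to $-X_i \in [-b_i, -a_i]$, which have the same interval widths.

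There is really no serious obstacle here: the Hoeffding lemma already did the analytic heavy lifting by converting boundedness into a sub-Gaussian MGF bound, and everything else is the standard Chernoff optimization. The only mildly delicate point is keeping track of constants: the factor $1/8$ in Hoeffding's lemma (rather than $1/2$ in the definition of sub-Gaussianity) is precisely what produces the sharp constant $2$ in the exponent of the final bound, so I would be careful not to lose this factor when optimizing.
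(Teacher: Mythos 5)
Your proof is correct and is exactly the route the paper has in mind: the text states the theorem immediately after the remark ``Using a Chernoff bound, we get the following (extremely useful) result,'' i.e.\ Hoeffding's lemma plus the Chernoff optimization, which is what you carry out. The constant-tracking checks out, with $s^\star = 4nt/\sum_i(b_i-a_i)^2$ giving the exponent $-2n^2t^2/\sum_i(b_i-a_i)^2$.
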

Note that Hoeffding's lemma holds for \emph{any} bounded random variable. For example, if one knows that $X$ is a Rademacher random variable,
$$
 \E(e^{sX})=\frac{e^s+e^{-s}}{2}=\cosh(s)\le e^{\frac{s^2}{2}}.
$$
Note that 2 is the best possible constant in the above approximation. For such variables, $a=-1, b=1$, and $\E(X)=0$, so Hoeffding's lemma yields 
$$
 \E(e^{sX})\le e^{\frac{s^2}{2}}\,.
$$
Hoeffding's inequality is  very general but there is a price to pay for this generality. Indeed, if the random variables have small variance, we would like to see it reflected in the exponential tail bound (as for the Gaussian case) but the variance does not appear in Hoeffding's inequality. We need a more refined inequality.

\section{Sub-exponential random variables}
What can we say when a centered random variable is not sub-Gaussian? A typical example is the double exponential (or Laplace) distribution with parameter 1, denoted by $\Lap(1)$. Let $X \sim \Lap(1)$ and observe that 
$$
\p(|X|> t)=e^{-t}, \quad t \ge 0\,.
$$
In particular, the tails of this distribution do not decay as fast as the Gaussian ones (that decays as $e^{-t^2/2}$). Such tails are said to be \emph{heavier} than Gaussian. This tail behavior is also captured by the moment generating function of $X$. Indeed, we have
$$
\E\big[e^{sX}\big]=\frac{1}{1-s^2} \quad \text{if} \ |s| <1\,,
$$
and it is not defined for $s \ge 1$. It turns out that a rather weak condition on the moment generating function is enough to partially reproduce some of the bounds that we have proved for sub-Gaussian random variables. Observe that for $X \sim \Lap(1)$
$$
\E\big[e^{sX}\big]\le e^{2s^2}\quad \text{if} \ |s| <1/2\,,
$$
In particular, the moment generating function of the Laplace distribution is bounded by that of a Gaussian in a neighborhood of 0 but does not even exist away from zero. It turns out that all distributions that have tails at most as heavy as that of a Laplace distribution satisfy such a property.
\begin{lem}
Let $X$ be a centered random variable such that $\p(|X|>t)\le 2e^{-2t/\lambda}$ for some $\lambda>0$. Then, for any positive integer $k\ge 1$,
$$
\E [|X|^k] \le \lambda^{k}k!\,.
$$
Moreover, 
$$
\big(\E [|X|^k])^{1/k} \le 2\lambda k\,,
$$
and the moment generating function of $X$ satisfies
$$
\E\big[e^{sX}\big]\le e^{2s^2\lambda^2}\,, \qquad \forall |s| \le \frac{1}{2\lambda}\,.
$$
\end{lem}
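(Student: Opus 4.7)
The strategy is to prove the three assertions in sequence, each feeding into the next: first the moment bound via the layer cake representation, then the $k$-th root estimate as a direct consequence, and finally the MGF bound by Taylor expansion.

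For the first bound, I would mirror the proof of Lemma~\ref{LEM:subgauss_moments}. Starting from
\[
\E[|X|^k]=\int_0^\infty \p(|X|>t^{1/k})\,\ud t \le 2\int_0^\infty e^{-2t^{1/k}/\lambda}\,\ud t,
\]
I would apply the change of variables $u = 2t^{1/k}/\lambda$, so that $t = (\lambda u/2)^k$ and $\ud t = k(\lambda/2)^k u^{k-1}\,\ud u$. This yields
\[
\E[|X|^k] \le 2k(\lambda/2)^k \int_0^\infty e^{-u} u^{k-1}\,\ud u = 2k(\lambda/2)^k \Gamma(k) = 2(\lambda/2)^k k!.
\]
Since $2/2^k \le 1$ for $k\ge 1$, this is bounded by $\lambda^k k!$, giving the first claim.

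The second assertion follows immediately by taking $k$-th roots and using the elementary bound $(k!)^{1/k}\le k$ (from $k!\le k^k$), so $(\E[|X|^k])^{1/k}\le \lambda k \le 2\lambda k$.

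For the MGF, the key point is to exploit $\E[X]=0$ to eliminate the linear term in the Taylor expansion; without this, one cannot obtain the quadratic-in-$s$ control, which is the entire value of the bound. By dominated convergence (justified by the moment growth just proved, for $|s|\lambda<1$):
\[
\E[e^{sX}] = 1 + s\E[X] + \sum_{k=2}^\infty \frac{s^k \E[X^k]}{k!} \le 1 + \sum_{k=2}^\infty \frac{|s|^k \E[|X|^k]}{k!} \le 1 + \sum_{k=2}^\infty (|s|\lambda)^k,
\]
using $|\E[X^k]|\le \E[|X|^k]\le \lambda^k k!$. Summing the geometric series gives $1 + (s\lambda)^2/(1-|s|\lambda)$, and restricting to $|s|\le 1/(2\lambda)$ makes $1/(1-|s|\lambda)\le 2$. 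Therefore $\E[e^{sX}]\le 1 + 2s^2\lambda^2 \le e^{2s^2\lambda^2}$, where the final step is the standard $1+x\le e^x$.

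The only mild subtlety is justifying the interchange of sum and expectation for $|s|<1/\lambda$; this is handled by applying the monotone (or dominated) convergence theorem to the partial sums, using the factorial-growth moment bound to see the resulting series of non-negative terms converges.
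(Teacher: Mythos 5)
Your proof is correct and follows essentially the same route as the paper: the layer-cake integral with the change of variables $u=2t^{1/k}/\lambda$ for the moment bound, $k!\le k^k$ for the $k$-th root estimate, and the Taylor expansion of the MGF with $\E[X]=0$ killing the linear term followed by the geometric series bound. The only cosmetic difference is in the second step, where you take the root of $k!$ directly to get $\lambda k$ rather than factoring through $k^{1/k}\Gamma(k)^{1/k}\le 2k$ as the paper does, giving a slightly sharper intermediate constant; both yield the stated $2\lambda k$.
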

\begin{proof}
\begin{align*}
\E [|X|^k]&=\int_0^\infty \p(|X|^k>t)\ud t&\\
&=\int_0^\infty \p(|X|>t^{1/k})\ud t&\\
&\le \int_0^\infty 2e^{-\frac{2t^{1/k}}{\lambda}}\ud t&\\
&=2(\lambda/2)^{k}k\int_0^\infty e^{-u}u^{k-1}\ud u\,,& u=\frac{2t^{1/k}}{\lambda}\\
&\le \lambda^{k}k\Gamma(k)=\lambda^{k}k!&
\end{align*}
The second statement follows from $\Gamma(k)\le k^k$ and $k^{1/k} \le e^{1/e}\le 2$ for any $k \ge 1$. It yields
$$
\big(\lambda^{k}k\Gamma(k)\big)^{1/k}\le 2\lambda k\,.
$$
To control the MGF of $X$, we use the Taylor expansion of the exponential function as follows. Observe that by the dominated convergence theorem, for any $s$ such that $|s|\le 1/2\lambda$
\begin{align*}
\E\big[e^{sX}\big]&\le 1+\sum_{k=2}^\infty \frac{|s|^k\E[|X|^k]}{k!}&\\
&\le 1+\sum_{k=2}^\infty(|s|\lambda)^{k}&\\
&= 1+s^2\lambda^2\sum_{k=0}^\infty(|s|\lambda)^{k}&\\
&\le 1+ 2s^2\lambda^2& |s|\le \frac{1}{2\lambda}\\
&\le e^{2s^2\lambda^2}
\end{align*}
\end{proof}
This leads to the following definition
\begin{defn}
A random variable $X$ is said to be sub-exponential with parameter $\lambda$ (denoted $X \sim \subE(\lambda)$) if $\E[X]=0$ and  its moment generating function satisfies
$$
\E\big[e^{sX}\big]\le e^{s^2\lambda^2/2}\,, \qquad \forall |s| \le \frac{1}{\lambda}\,.
$$
\end{defn}

A simple and useful example of of a sub-exponential random variable is given in the next lemma.

\begin{lem}
\label{LEM:squaredsubG}
Let $X\sim\sg(\sigma^2)$ then the random variable $Z=X^2-\E[X^2]$ is sub-exponential: $Z\sim \subE(16\sigma^2)$.
\end{lem}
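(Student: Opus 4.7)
My plan is to control the moment generating function of $Z$ directly via its Taylor series, using moment bounds on $X$ that follow from sub-Gaussianity. First, since $X\sim\sg(\sigma^2)$, Lemma~\ref{lem:subgauss} gives the tail bound $\p(|X|>t)\le 2\exp(-t^2/(2\sigma^2))$, so Lemma~\ref{LEM:subgauss_moments} applies with $k$ replaced by $2k$ to yield
$$
\E[X^{2k}] \le (2\sigma^2)^{k}\,(2k)\,\Gamma(k) = 2\,(2\sigma^2)^k\,k!\,,
$$
using $k\,\Gamma(k)=\Gamma(k+1)=k!$.

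Next I would convert this into a moment bound on $Z=X^2-\E[X^2]$. Using the elementary inequality $(a+b)^k\le 2^{k-1}(a^k+b^k)$ for $a,b\ge 0$ and $k\ge 1$, together with Jensen's inequality $(\E X^2)^k\le \E[X^{2k}]$, I get
$$
\E[|Z|^k]\le \E[(X^2+\E X^2)^k]\le 2^{k-1}\bigl(\E[X^{2k}]+(\E X^2)^k\bigr)\le 2^k\,\E[X^{2k}]\le 2\,k!\,(4\sigma^2)^k.
$$

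The core step is then a Taylor expansion of the MGF, following the same template as the proof converting tail bounds to an MGF bound earlier in the chapter. Since $\E[Z]=0$, dominated convergence gives
$$
\E[e^{sZ}] \le 1 + \sum_{k=2}^\infty \frac{|s|^k \E[|Z|^k]}{k!} \le 1 + 2\sum_{k=2}^\infty (4|s|\sigma^2)^k.
$$
For $|s|\le 1/(16\sigma^2)$ we have $4|s|\sigma^2\le 1/4$, so summing the geometric series gives $\sum_{k=2}^\infty (4|s|\sigma^2)^k \le \tfrac{4}{3}(4|s|\sigma^2)^2 \le 32\,s^2\sigma^4$, which then yields $\E[e^{sZ}]\le 1+64\,s^2\sigma^4 \le \exp(64\,s^2\sigma^4)$. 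Finally I check this matches $\exp(s^2\lambda^2/2)$ with $\lambda=16\sigma^2$: indeed $64\,s^2\sigma^4 \le s^2(16\sigma^2)^2/2 = 128\,s^2\sigma^4$.

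\textbf{Main obstacle.} The calculations are largely routine given the earlier lemmas; the only real care needed is tracking the constants so that the geometric series both converges and produces an exponent small enough to fit under $s^2(16\sigma^2)^2/2$ on the prescribed range $|s|\le 1/(16\sigma^2)$. Picking the constraint $|s|\le 1/(16\sigma^2)$ (rather than the looser $1/(8\sigma^2)$ for which the series still converges) gives enough slack in the ratio test to absorb the factor $2$ in $\E[X^{2k}]$ and the factor $2^k$ lost in passing from $\E[X^{2k}]$ to $\E[|Z|^k]$.
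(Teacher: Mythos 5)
Your proof is correct and follows essentially the same route as the paper's: Taylor-expand the MGF, bound $\E[|Z|^k]$ via $(a+b)^k\le 2^{k-1}(a^k+b^k)$ together with Jensen, feed in the moment bound $\E[X^{2k}]\le 2(2\sigma^2)^k k!$ from Lemma~\ref{LEM:subgauss_moments}, and sum the geometric series on $|s|\le 1/(16\sigma^2)$. Your constants are actually a bit tighter in the intermediate step (geometric ratio $4|s|\sigma^2\le 1/4$ where the paper loosens to $8|s|\sigma^2\le 1/2$), but both land comfortably under $e^{s^2(16\sigma^2)^2/2}$.
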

\begin{proof}
We have, by the dominated convergence theorem,
\begin{align*}
\E[e^{sZ}]&=1+\sum_{k=2}^\infty\frac{s^k\E\big[X^2-\E[X^2]\big]^k}{k!}\\
&\le 1+\sum_{k=2}^\infty\frac{s^k2^{k-1}\big(\E[X^{2k}]+(\E[X^2])^{k}\big)}{k!}&\text{(Jensen)}\\
&\le 1+\sum_{k=2}^\infty\frac{s^k4^{k}\E[X^{2k}]}{2(k!)}&\text{(Jensen again)}\\
&\le 1+\sum_{k=2}^\infty\frac{s^k4^{k}2(2\sigma^2)^kk!}{2(k!)}&\text{(Lemma~\ref{LEM:subgauss_moments})}\\
&=1+(8s\sigma^2)^2\sum_{k=0}^\infty(8s\sigma^2)^k&\\
&=1+128s^2\sigma^4&\text{for} \quad |s|\le \frac{1}{16\sigma^2}\\
&\le e^{128s^2\sigma^4}\,.
\end{align*}
\end{proof}

Sub-exponential random variables also give rise to exponential deviation inequalities such as Corollary~\ref{cor:chernoff} (Chernoff bound) or Theorem~\ref{TH:hoeffding} (Hoeffding's inequality) for weighted sums of independent sub-exponential random variables. The significant difference here is that the larger deviations are controlled in by a weaker bound.


\subsection{Bernstein's inequality}
\begin{thm}[Bernstein's inequality] \label{TH:Bernstein}
Let $X_1,\ldots,X_n$ be independent random variables such that $\E(X_i) = 0$ and $X_i \sim \subE(\lambda)$. Define
    $$ \bar X = \frac{1}{n}\sum_{i=1}^n X_i\,,$$
 Then for any $t>0$ we have
    $$ 
    \p(\bar X >t) \vee  \p(\bar X <-t)\leq \exp \left[- \frac{n}{2}(\frac{t^2}{\lambda^2}\wedge \frac{t}{\lambda})\right].
    $$
\end{thm}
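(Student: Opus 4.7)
The plan is to follow the Chernoff-bound template used for Corollary~\ref{cor:chernoff} and Theorem~\ref{TH:hoeffding}, but now carrying along the constraint $|s|\le 1/\lambda$ imposed by the sub-exponential MGF bound. First I would write
\[
\p(\bar X > t) \;=\; \p\Big(\textstyle\sum_{i=1}^n X_i > nt\Big) \;\le\; e^{-snt}\,\E\bigl[e^{s\sum_i X_i}\bigr] \;=\; e^{-snt}\prod_{i=1}^n \E[e^{sX_i}]
\]
for every $s>0$, using Markov's inequality and independence. For $s\in(0,1/\lambda]$, the definition of $\subE(\lambda)$ gives $\E[e^{sX_i}]\le e^{s^2\lambda^2/2}$, yielding the constrained bound $\p(\bar X > t)\le \exp\bigl(n s^2\lambda^2/2 - snt\bigr)$.

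The key step is then to minimize $\varphi(s) = ns^2\lambda^2/2 - snt$ over the restricted interval $s\in(0,1/\lambda]$. The unconstrained minimizer is $s^\star = t/\lambda^2$, which lies in the feasible set iff $t\le\lambda$. I would split the analysis into two cases. In the \emph{small-deviation} regime $t\le\lambda$, plugging in $s=t/\lambda^2$ gives $\varphi(s^\star) = -nt^2/(2\lambda^2)$, which matches the Gaussian-like tail $\exp(-nt^2/(2\lambda^2))$. In the \emph{large-deviation} regime $t>\lambda$, the function $\varphi$ is decreasing on $(0,1/\lambda]$, so I take the boundary value $s=1/\lambda$, yielding $\varphi(1/\lambda) = n/2 - nt/\lambda$. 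A short check confirms that $n/2 - nt/\lambda \le -nt/(2\lambda)$ precisely when $t\ge\lambda$, so in this regime the exponent is at most $-nt/(2\lambda)$. Combining, in both cases the exponent is bounded by $-\tfrac{n}{2}\bigl(\tfrac{t^2}{\lambda^2}\wedge\tfrac{t}{\lambda}\bigr)$, which is exactly the claimed bound.

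For the lower tail $\p(\bar X < -t)$ I would simply apply the same argument to $-X_1,\ldots,-X_n$; these are also centered and satisfy the same sub-exponential MGF inequality since the definition is symmetric in $s$ on $|s|\le 1/\lambda$.

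I do not expect real obstacles: the only subtlety is the case split forced by the constrained optimization, which is the feature responsible for the characteristic ``Gaussian tail for small $t$, exponential tail for large $t$'' shape of Bernstein-type inequalities. No new machinery beyond what is already developed in the section is needed.
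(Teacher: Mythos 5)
Your proof is correct and follows essentially the same route as the paper: Chernoff bound, use of the sub-exponential MGF bound on the constrained interval, and optimization with the case split at $t=\lambda$. The paper merely normalizes to $\lambda=1$ at the outset and writes the optimal choice compactly as $s = 1 \wedge t$, which is exactly your two-case choice $s = t/\lambda^2$ for $t\le\lambda$ and $s=1/\lambda$ for $t>\lambda$ after rescaling.
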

\begin{proof}
	Without loss of generality, assume that $\lambda=1$ (we can always replace $X_i$ by $X_i/\lambda$ and $t$ by $t/\lambda$). Next, using a Chernoff bound, we get for any $s> 0$
$$
\p(\bar X >t)\le \prod_{i=1}^n\E\big[e^{sX_i}\big]e^{-snt}\,.
$$
Next, if $|s|\le 1$, then $\E\big[e^{sX_i}\big] \le e^{s^2/2}$ by definition of sub-exponential distributions. It yields
$$
\p(\bar X >t)\le e^{\frac{ns^2}{2}-snt}
$$
Choosing $s=1\wedge t$ yields
$$
\p(\bar X >t)\le e^{-\frac{n}{2}(t^2\wedge t)}
$$
We obtain the same bound for $\p(\bar X <-t)$ which concludes the proof.
\end{proof}
Note that usually, Bernstein's inequality refers to a slightly more precise result that is qualitatively the same as the one above: it exhibits a Gaussian tail $e^{-nt^2/(2\lambda^2)}$ and an exponential tail $e^{-nt/(2\lambda)}$. See for example Theorem~2.10 in \cite{BouLugMas13}.

\section{Maximal inequalities}

The exponential inequalities of the previous section are valid for linear combinations of independent random variables, and, in particular, for the average $\bar X$. In many instances, we will be interested in controlling the \emph{maximum} over the parameters of such linear combinations (this is because of empirical risk minimization). The purpose of this section is to present such results. 

\subsection{Maximum over a finite set}

We begin by the simplest case possible: the maximum over a finite set.

\begin{thm}
\label{TH:finitemax}
Let $X_1, \ldots, X_N$ be $N$ random variables such that  
$$
X_i \sim \sg( \sigma^2).
$$
Then,
$$
\E[\max_{1\le i \le N}X_i]\le \sigma\sqrt{2\log (N)}\,, \qquad \text{and}\qquad \E[\max_{1\le i \le N}|X_i|]\le \sigma\sqrt{2\log (2N)}
$$
Moreover, for any $t>0$,
$$
\p\big(\max_{1\le i \le N}X_i>t\big)\le Ne^{-\frac{t^2}{2\sigma^2 }}\,, \qquad \text{and}\qquad \p\big(\max_{1\le i \le N}|X_i|>t\big)\le 2Ne^{-\frac{ t^2}{2\sigma^2}}
$$
\end{thm}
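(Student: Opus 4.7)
The plan is to obtain the four inequalities by combining a union bound (for the two tail statements) with the classical \emph{MGF trick} via Jensen's inequality (for the two expectation statements). Importantly, no independence among the $X_i$ is needed: the marginal sub-Gaussianity plus a union bound suffices.

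For the tail bounds, I would start with the simpler one: by a union bound and Lemma~\ref{lem:subgauss} applied to each $X_i$,
\[
\p\bigl(\max_{1\le i\le N} X_i > t\bigr) \le \sum_{i=1}^N \p(X_i > t) \le N\,e^{-t^2/(2\sigma^2)}.
\]
For the absolute-value version, observe that $\{\max_i |X_i| > t\} = \bigcup_{i=1}^N \bigl(\{X_i>t\}\cup\{X_i<-t\}\bigr)$, which is a union of $2N$ events each of probability at most $e^{-t^2/(2\sigma^2)}$ by Lemma~\ref{lem:subgauss}, giving the factor $2N$.

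For the expectation bounds, the key step is to pull the maximum inside an exponential. For any $s>0$, Jensen's inequality applied to the convex function $x\mapsto e^{sx}$ gives
\[
\exp\bigl(s\,\E[\max_i X_i]\bigr) \le \E\bigl[\exp(s\max_i X_i)\bigr] = \E\bigl[\max_i e^{sX_i}\bigr] \le \sum_{i=1}^N \E[e^{sX_i}] \le N\exp\!\Bigl(\tfrac{\sigma^2 s^2}{2}\Bigr),
\]
where the last step uses $X_i\sim \sg(\sigma^2)$. Taking logarithms and dividing by $s$,
\[
\E[\max_i X_i] \le \frac{\log N}{s} + \frac{\sigma^2 s}{2},
\]
and optimizing over $s>0$ at $s=\sqrt{2\log N}/\sigma$ yields $\E[\max_i X_i]\le \sigma\sqrt{2\log N}$. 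For the absolute-value statement, I would write $\max_i |X_i| = \max\{X_1,\dots,X_N,-X_1,\dots,-X_N\}$, note that each $\pm X_i$ is also $\sg(\sigma^2)$ (the definition is symmetric in $s$), and apply exactly the same argument to this collection of $2N$ sub-Gaussian variables, giving the bound $\sigma\sqrt{2\log(2N)}$.

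There is no real obstacle here; the only minor subtlety worth flagging is that the argument never uses independence or even joint sub-Gaussianity of $(X_1,\dots,X_N)$. All we exploit is the marginal MGF bound plus the fact that the sum of $N$ nonnegative numbers dominates their maximum, which is why the $\log N$ (rather than some much smaller quantity) appears and why this bound is essentially optimal in the worst case over joint distributions.
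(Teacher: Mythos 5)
Your proof is correct and follows essentially the same route as the paper: union bound plus Lemma~\ref{lem:subgauss} for the tail statements, the Jensen/MGF trick with optimization over $s$ for the expectation, and the reduction of $\max_i|X_i|$ to a maximum over $2N$ sub-Gaussian variables. The only cosmetic difference is that you apply Jensen to the convex map $x\mapsto e^{sx}$ while the paper applies it to the concave map $\log$; these are the same inequality, and your observation that independence is not used matches the paper's remark after the theorem.
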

Note that the random variables in this theorem need not be independent.
\begin{proof}
For any $s>0$, 
\begin{align*}
\E[\max_{1\le i \le N}X_i]&=\frac{1}{s}\E\big[\log e^{s\max_{1\le i \le N}X_i}\big]&\\
&\le \frac{1}{s}\log \E\big[e^{s\max_{1\le i \le N}X_i}\big]&\text{(by Jensen)}\\
&= \frac{1}{s}\log\E\big[\max_{1\le i \le N}e^{sX_i}\big]&\\
&\le  \frac{1}{s}\log\sum_{1\le i \le N}\E\big[e^{sX_i}\big]&\\
&\le  \frac{1}{s}\log\sum_{1\le i \le N}e^{\frac{\sigma^2s^2}{2}}&\\
&=\frac{\log N}{s}+ \frac{\sigma^2s}{2}.\\
\end{align*}
Taking $s=\sqrt{2(\log N)/\sigma^2}$ yields the first inequality in  expectation.

The first inequality in probability is obtained by a simple union bound:
\begin{align*}
\p\big(\max_{1\le i \le N}X_i>t\big)&=\p\Big(\bigcup_{1\le i\le N} \{X_i>t\}\Big)\\
&\le \sum_{1\le i \le N}\p(X_i>t)\\
& \le Ne^{-\frac{t^2}{2\sigma^2 }}\,,
\end{align*}
where we used Lemma~\ref{lem:subgauss} in the last inequality.

The remaining two inequalities follow trivially by noting that
$$
\max_{1\le i \le N}|X_i|=\max_{1\le i \le 2N}X_i\,,
$$
where $X_{N+i}=-X_i$ for $i=1, \ldots, N$. 
\end{proof}

Extending these results to a maximum over an infinite set may be impossible. For example, if one is given an infinite sequence of \\iid $\cN(0, \sigma^2)$ random variables $X_1, X_2, \ldots$, then for any $N \ge 1$, we have for any $t>0$,
$$
\p(\max_{1\le i \le N} X_i <t)=[\p(X_1<t)]^N\to 0\,, \quad N \to \infty\,.
$$
On the opposite side of the picture, if all the $X_i$s are equal to the same random variable $X$, we have for any $t>0$,
$$
\p(\max_{1\le i \le N} X_i <t)=\p(X_1<t)>0 \quad \forall\: N \ge 1\,.
$$
In the Gaussian case, lower bounds are also available. They illustrate the effect of the correlation between the $X_i$s. 

Examples from statistics have structure and we encounter many examples where a maximum of random variables over an infinite set is in fact finite. This is due to the fact that the random variables that we are considering are not independent of each other. In the rest of this section, we review some of these examples.

\subsection{Maximum over a convex polytope}
We use the definition of a polytope from \cite{Gru03}: a convex polytope $\mathsf{P}$ is a compact set with a finite number of vertices $\cV(\mathsf{P})$ called extreme points. It satisfies $\mathsf{P}=\conv(\cV(\mathsf{P}))$, where $\conv(\cV(\mathsf{P}))$ denotes the convex hull of the vertices of $\mathsf{P}$.

Let $X \in \R^d$ be a random vector and consider the (infinite) family of random variables 
$$
\cF=\{\theta^\top X\,:\,\theta \in \mathsf{P}\}\,,
$$ 
where $\mathsf{P} \subset \R^d$ is a polytope with $N$ vertices. While the family $\cF$ is infinite, the maximum over $\cF$ can be reduced to the a finite maximum using the following useful lemma.
\begin{lem}
\label{lem:poly}
Consider a linear form $x \mapsto c^\top x$, $x, c \in \R^d$. Then for any convex polytope $\mathsf{P} \subset \R^d$,
$$
\max_{x \in \mathsf{P}}c^\top x=\max_{x \in \cV(\mathsf{P})}c^\top x
$$
where $\cV(\mathsf{P})$ denotes the set of vertices of $\mathsf{P}$.
\end{lem}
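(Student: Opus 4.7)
The plan is to prove both inequalities separately, with the nontrivial direction exploiting the fact that every point of $\mathsf{P}$ is a convex combination of vertices.

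First I would note the trivial direction: since $\cV(\mathsf{P}) \subset \mathsf{P}$, taking the maximum over a smaller set can only decrease the value, so
$$
\max_{x \in \cV(\mathsf{P})} c^\top x \le \max_{x \in \mathsf{P}} c^\top x.
$$

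For the reverse inequality, I would use the defining property $\mathsf{P} = \conv(\cV(\mathsf{P}))$. Let $\cV(\mathsf{P}) = \{v_1, \ldots, v_N\}$ be the finite set of vertices. Any $x \in \mathsf{P}$ can be written as a convex combination $x = \sum_{i=1}^N \lambda_i v_i$, where $\lambda_i \ge 0$ and $\sum_{i=1}^N \lambda_i = 1$. By linearity of $c^\top \cdot$,
$$
c^\top x = \sum_{i=1}^N \lambda_i\, c^\top v_i \le \Big(\sum_{i=1}^N \lambda_i\Big) \max_{1 \le i \le N} c^\top v_i = \max_{v \in \cV(\mathsf{P})} c^\top v.
$$
Taking the supremum over $x \in \mathsf{P}$ on the left gives the reverse inequality, and combining the two finishes the proof. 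Both maxima are attained since $\mathsf{P}$ is compact and $\cV(\mathsf{P})$ is finite.

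There is no real obstacle here; the statement is essentially a direct consequence of the representation $\mathsf{P} = \conv(\cV(\mathsf{P}))$ combined with the linearity of $c^\top \cdot$ and the elementary fact that a convex combination of real numbers cannot exceed their maximum. The only thing to double-check is that the supremum is indeed attained at a vertex, which follows because the maximum over the finite set $\cV(\mathsf{P})$ is attained by some $v_{i^\ast}$, and then $c^\top v_{i^\ast}$ equals $\max_{x \in \mathsf{P}} c^\top x$.
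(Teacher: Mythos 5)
Your proof is correct and takes essentially the same route as the paper: express an arbitrary $x \in \mathsf{P}$ as a convex combination of vertices, apply linearity of $c^\top \cdot$, and bound the resulting convex combination of reals by their maximum. The only cosmetic difference is that you state the easy inclusion direction explicitly before the chain of inequalities, whereas the paper folds both directions into a single two-sided chain at the end.
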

\begin{proof}
Assume that $\cV(\mathsf{P})=\{v_1, \ldots, v_N\}$.   For any $x \in \mathsf{P}=\conv(\cV(\mathsf{P}))$, there exist nonnegative numbers $\lambda_1, \ldots \lambda_N$ that sum up to 1 and such that $x=\lambda_1 v_1 + \cdots + \lambda_N v_N$. Thus
$$
c^\top x=c^\top\Big(\sum_{i=1}^N \lambda_i v_i\Big)=\sum_{i=1}^N \lambda_ic^\top v_i\le \sum_{i=1}^N \lambda_i\max_{x \in \cV(\mathsf{P})}c^\top x=\max_{x \in  \cV(\mathsf{P})}c^\top x\,.
$$
It yields
$$
\max_{x \in \mathsf{P}}c^\top x\le \max_{x \in  \cV(\mathsf{P})}c^\top x\le \max_{x \in  \mathsf{P}}c^\top x
$$
so the two quantities are equal.
\end{proof}
It  immediately yields the following theorem
\begin{thm}
\label{TH:polytope}
Let $\mathsf{P}$ be a polytope with $N$ vertices $v^{(1)}, \ldots, v^{(N)} \in \R^d$ and let $X \in \R^d$ be a random vector such that $[v^{(i)}]^\top X$, $i=1, \ldots, N$, are  sub-Gaussian random variables with  variance proxy $\sigma^2$. Then
$$
\E[\max_{\theta \in \mathsf{P}} \theta^\top X]\le \sigma \sqrt{2\log (N)}\,, \qquad \text{and}\qquad \E[\max_{\theta \in \mathsf{P}} |\theta^\top X|]\le \sigma \sqrt{2\log (2N)}\,.
$$
Moreover, for any $t>0$,
$$
\p\big(\max_{\theta \in \mathsf{P}} \theta^\top X>t\big)\le Ne^{-\frac{ t^2}{2\sigma^2}}\,, \qquad \text{and}\qquad \p\big(\max_{\theta \in \mathsf{P}}| \theta^\top X|>t\big)\le 2Ne^{-\frac{t^2}{2\sigma^2}}
$$
\end{thm}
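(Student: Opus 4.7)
The plan is to reduce the supremum over the infinite set $\mathsf{P}$ to a maximum over the $N$ vertices, and then invoke Theorem~\ref{TH:finitemax}. For each realization of $X$, the map $\theta \mapsto \theta^\top X$ is a linear form in $\theta$, so Lemma~\ref{lem:poly} applied pointwise in $\omega$ gives
$$
\max_{\theta \in \mathsf{P}} \theta^\top X \;=\; \max_{1 \le i \le N} [v^{(i)}]^\top X
$$
as an identity between random variables. By hypothesis, each $Y_i := [v^{(i)}]^\top X$ is sub-Gaussian with variance proxy $\sigma^2$, so applying the first bound of Theorem~\ref{TH:finitemax} to $Y_1, \ldots, Y_N$ yields both $\E[\max_\theta \theta^\top X] \le \sigma\sqrt{2\log N}$ and the tail bound $\p(\max_\theta \theta^\top X > t) \le N e^{-t^2/(2\sigma^2)}$.

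For the absolute-value versions I would write $|\theta^\top X| = \max(\theta^\top X, -\theta^\top X)$ and split the maximum as
$$
\max_{\theta \in \mathsf{P}} |\theta^\top X| \;=\; \max\!\Big(\max_{\theta \in \mathsf{P}} \theta^\top X,\; \max_{\theta \in \mathsf{P}} -\theta^\top X\Big).
$$
Both inner maxima are maxima of linear forms over $\mathsf{P}$, so Lemma~\ref{lem:poly} applies to each and reduces the outer quantity to $\max_{1 \le i \le N} |[v^{(i)}]^\top X|$, i.e.\ a maximum of $2N$ sub-Gaussian random variables with variance proxy $\sigma^2$ (namely the $\pm Y_i$). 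The second bound of Theorem~\ref{TH:finitemax} then delivers $\E[\max_\theta |\theta^\top X|] \le \sigma\sqrt{2\log(2N)}$ together with the tail bound with the factor $2N$.

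There is no real obstacle here: the content is entirely packaged in the two preceding results, and the only thing to verify carefully is that Lemma~\ref{lem:poly} can be applied pointwise in the randomness (which is immediate since the lemma is purely geometric and holds for every realization of $X$) and that $-Y_i \sim \sg(\sigma^2)$ whenever $Y_i \sim \sg(\sigma^2)$ (immediate from the symmetry of Definition~\ref{def:subgauss} in $s$).
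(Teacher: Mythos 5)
Your proof is correct and is exactly what the paper has in mind — the paper simply states that the theorem ``immediately'' follows from Lemma~\ref{lem:poly} and Theorem~\ref{TH:finitemax} without spelling out the details, and you have spelled them out correctly, including the pointwise application of the geometric lemma and the observation that $-Y_i \sim \sg(\sigma^2)$.
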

Of particular interest are polytopes that have a small number of vertices. A primary example is the $\ell_1$ ball of $\R^d$ defined  by
$$
\cB_1=\Big\{x \in \R^d\,:\, \sum_{i=1}^d|x_i|\le 1\Big\}\,.
$$
Indeed, it has exactly $2d$ vertices. 

\subsection{Maximum over the $\ell_2$ ball}

Recall that the unit $\ell_2$ ball of $\R^d$ is defined by the set of vectors $u$ that have Euclidean norm $|u|_2$ at most 1. Formally, it is defined by
$$
\cB_2=\Big\{x \in \R^d\,:\, \sum_{i=1}^d x_i^2\le 1\Big\}\,.
$$
Clearly, this ball is not a polytope and yet, we can control the maximum of random variables indexed by $\cB_2$. This is due to the fact that there exists a finite subset of $\cB_2$ such that the maximum over this finite set is of the same order as the maximum over the entire ball. 

\begin{defn}
Fix $K\subset \R^d$ and $\eps>0$. A set $\cN$ is called an $\eps$-net of $K$ with respect to a distance $d(\cdot, \cdot)$ on $\R^d$, if  $\cN \subset K$ and for any $z \in K$, there exists $x \in \cN$ such that $d(x,z)\le \eps$.
\end{defn}
Therefore, if $\cN$ is an $\eps$-net of $K$ with respect to a norm $\|\cdot\|$, then every point of $K$ is at distance at most $\eps$ from a point in $\cN$. Clearly, every compact set admits a finite $\eps$-net. The following lemma gives an upper bound on the size of the smallest $\eps$-net of $\cB_2$.
\begin{lem}
\label{lem:coveringell2}
Fix $\eps\in(0,1)$. Then the unit Euclidean ball $\cB_2$ has an $\eps$-net $\cN$ with respect to the Euclidean distance of cardinality $|\cN|\le (3/\eps)^d$
\end{lem}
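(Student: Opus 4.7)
I would prove this by the standard volumetric packing argument. The plan is to construct $\cN$ greedily as a maximal $\eps$-separated subset of $\cB_2$ (a set in which any two distinct points are at Euclidean distance strictly greater than $\eps$), then bound its cardinality by comparing volumes of disjoint balls packed into a slightly enlarged copy of $\cB_2$.

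\textbf{Step 1: produce $\cN$ and check the covering property.} Let $\cN \subset \cB_2$ be a maximal $\eps$-separated subset (existence is routine: start with any point and keep adding points at distance $>\eps$ from all previous ones; the process must terminate by compactness of $\cB_2$ combined with the volume bound below). By maximality of $\cN$, for every $z \in \cB_2$ there must exist $x \in \cN$ with $|z-x|_2 \le \eps$, for otherwise we could add $z$ to $\cN$ and keep it $\eps$-separated. Hence $\cN$ is an $\eps$-net.

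\textbf{Step 2: volume comparison.} The open Euclidean balls $B(x, \eps/2)$ for $x \in \cN$ are pairwise disjoint, since any two centers are at distance $>\eps$. Moreover each such ball is contained in the enlarged ball $(1 + \eps/2)\cB_2$, because $x \in \cB_2$ and $\eps/2 \le 1/2$. Writing $V_d$ for the volume of the unit ball in $\R^d$ and using the scaling $\vol(r\cB_2) = r^d V_d$, disjointness and containment yield
\begin{equation*}
|\cN| \cdot (\eps/2)^d V_d \;\le\; (1+\eps/2)^d V_d,
\end{equation*}
so that $|\cN| \le (1 + 2/\eps)^d$. Since $\eps \in (0,1)$ gives $1 + 2/\eps \le 3/\eps$, we conclude $|\cN| \le (3/\eps)^d$.

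\textbf{Main obstacle.} There is no real obstacle here; the argument is entirely standard. The only subtle points to get right are (i) justifying existence of a maximal $\eps$-separated set (so that $\cN$ is actually finite rather than merely well-defined as a supremum), which follows a posteriori from the volume bound itself, or alternatively from a compactness/Zorn argument, and (ii) choosing the correct radius $\eps/2$ so that the packed balls are disjoint yet still fit inside $(1+\eps/2)\cB_2$, which is what gives the clean constant $3$.
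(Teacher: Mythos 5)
Your proof is correct and matches the paper's argument essentially step for step: both construct a maximal $\eps$-separated subset of $\cB_2$ (the paper does so by the same greedy process you describe), observe that maximality gives the covering property, and then bound $|\cN|$ by packing disjoint radius-$(\eps/2)$ balls into $(1+\eps/2)\cB_2$ and comparing volumes. The only cosmetic difference is that the paper initializes with $x_1=0$, which is immaterial.
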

\begin{proof}
Consider the following iterative construction if the $\eps$-net. Choose $x_1=0$. For any $i\ge 2$, take $x_i$ to be any $x \in \cB_2$ such that $|x -x_j|_2>\eps$ for all $j<i$. If no such $x$ exists, stop the procedure. Clearly, this will create an $\eps$-net. We now control its size.

Observe that since $|x -y|_2>\eps$ for all $x,y \in \cN$, the Euclidean balls centered at $x \in \cN$ and with radius $\eps/2$ are disjoint. Moreover, 
$$
 \bigcup_{z \in \cN} \{z+\frac{\eps}{2} \cB_2\} \subset (1+\frac{\eps}{2}) \cB_2
$$
where $\{z+\eps\cB_2\}=\{z+\eps x\,, x \in \cB_2\}$. Thus, measuring volumes, we get
$$
\vol\big((1+\frac{\eps}{2}) \cB_2\big)
\ge \vol\Big( \bigcup_{z \in \cN} \{z+\frac{\eps}{2} \cB_2\}\Big)=\sum_{z \in \cN}\vol\big(\{z+\frac{\eps}{2} \cB_2\}\big)
$$
This is equivalent to
$$
\Big(1+\frac{\eps}{2}\Big)^d \ge |\cN|\Big(\frac{\eps}{2}\Big)^d\,.
$$
Therefore, we get the following bound
$$
|\cN|\le \Big(1+\frac{2}{\eps}\Big)^d\le  \Big(\frac{3}{\eps}\Big)^d\,.
$$
\end{proof}

\begin{thm}
\label{TH:supell2}
Let $X \in \R^d$ be a sub-Gaussian random vector with variance proxy $\sigma^2$. Then
$$
\E[\max_{\theta \in \cB_2} \theta^\top X]=\E[\max_{\theta \in \cB_2} |\theta^\top X|]\le  4\sigma\sqrt{d}\,.
$$
Moreover, for any $\delta>0$, with probability $1-\delta$, it holds
$$
\max_{\theta \in \cB_2} \theta^\top X=\max_{\theta \in \cB_2}| \theta^\top X|\le 4\sigma\sqrt{d}+2\sigma\sqrt{2\log(1/\delta)}\,.
$$
\end{thm}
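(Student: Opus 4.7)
The plan is to reduce the supremum over the continuous set $\cB_2$ to a maximum over a finite $\epsilon$-net, then apply Theorem~\ref{TH:finitemax} to the net. First I would observe the symmetry $\max_{\theta \in \cB_2} \theta^\top X = \max_{\theta \in \cB_2} |\theta^\top X|$, which follows from the fact that $\theta \in \cB_2$ iff $-\theta \in \cB_2$; this handles the two assertions simultaneously.

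Next, I would apply Lemma~\ref{lem:coveringell2} with $\eps = 1/2$ to obtain an $\eps$-net $\cN \subset \cB_2$ with $|\cN| \le 6^d$. The key discretization step is the following: writing $M = \max_{\theta \in \cB_2} \theta^\top X$, for every $\theta \in \cB_2$ there exists $z \in \cN$ with $|\theta - z|_2 \le 1/2$, and since $\theta - z \in 2 \cdot (1/2) \cB_2$, we get
$$\theta^\top X = z^\top X + (\theta - z)^\top X \le \max_{z \in \cN} z^\top X + \tfrac{1}{2} M.$$
Taking the supremum over $\theta \in \cB_2$ on the left yields $M \le \max_{z \in \cN} z^\top X + \tfrac{1}{2} M$, hence $M \le 2 \max_{z \in \cN} z^\top X$.

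For each $z \in \cN \subset \cB_2$, the random variable $z^\top X$ is sub-Gaussian with variance proxy $\sigma^2 |z|_2^2 \le \sigma^2$ by Definition~\ref{def:subgauss}. Theorem~\ref{TH:finitemax} then gives
$$\E[\max_{z \in \cN} z^\top X] \le \sigma\sqrt{2 \log |\cN|} \le \sigma\sqrt{2 d \log 6},$$
so $\E[M] \le 2\sigma\sqrt{2 d \log 6} \le 4\sigma\sqrt{d}$ since $2\sqrt{2\log 6} < 4$. For the high-probability bound, the same theorem yields $\p(\max_{z \in \cN} z^\top X > t) \le |\cN| e^{-t^2/(2\sigma^2)}$; setting this equal to $\delta$ and using $\sqrt{a+b} \le \sqrt{a} + \sqrt{b}$ gives $t \le \sigma\sqrt{2 d \log 6} + \sigma\sqrt{2 \log(1/\delta)}$. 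Multiplying by $2$ and invoking $2\sqrt{2\log 6} < 4$ delivers the claimed bound.

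The only mildly delicate point is the bookkeeping in the covering argument: one must verify that the residual $(\theta - z)^\top X$ can be reabsorbed into $M$ rather than producing a spurious supremum over a dilated ball. Once one notes $\theta - z \in \tfrac{1}{2}\cB_2$ so that $(\theta - z)^\top X \le \tfrac{1}{2} M$, the rest is a direct application of the finite maximum bound.
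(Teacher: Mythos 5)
Your proof is correct and follows essentially the same route as the paper: reduce to a $1/2$-net via Lemma~\ref{lem:coveringell2}, reabsorb the residual $(\theta-z)^\top X \le \tfrac12 M$ to get $M \le 2\max_{z\in\cN} z^\top X$, and then invoke Theorem~\ref{TH:finitemax}. The only cosmetic difference is that the paper writes the decomposition as a sum of two maxima and identifies $\max_{x\in\frac12\cB_2}x^\top X = \frac12 M$, whereas you phrase it pointwise and then take the supremum; the constants and numerics agree.
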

\begin{proof}
Let $\cN$ be a $1/2$-net of $\cB_2$ with respect to the Euclidean norm that satisfies $|\cN|\le 6^d$. Next, observe that for every $\theta \in \cB_2$, there exists $z \in \cN$ and $x$ such that $|x|_2\le 1/2$ and $\theta=z+x$. Therefore,
$$
\max_{\theta \in \cB_2} \theta^\top X\le \max_{z \in \cN}z^\top X+ \max_{x \in \frac12 \cB_2} x^\top X
$$
But 
$$
\max_{x \in \frac12 \cB_2} x^\top X=\frac{1}{2}\max_{x \in  \cB_2} x^\top X
$$
Therefore, using Theorem~\ref{TH:finitemax}, we get
$$
\E[\max_{\theta \in \cB_2} \theta^\top X]\le 2\E[\max_{z \in \cN}z^\top X] \le 2\sigma \sqrt{2\log (|\cN|)}\le 2\sigma \sqrt{2(\log 6)d}\le 4\sigma\sqrt{d}\,.
$$

\medskip

The bound with high probability then follows because
$$
\p\big(\max_{\theta \in \cB_2} \theta^\top X>t\big)\le \p\big(2\max_{z \in \cN}z^\top X>t\big)\le |\cN|e^{-\frac{t^2}{8\sigma^2 }}\le 6^de^{-\frac{t^2}{8\sigma^2 }}\,.
$$
To conclude the proof, we find $t$ such that
$$
e^{-\frac{t^2}{8\sigma^2 }+d\log(6)}\le \delta \ \Leftrightarrow \ t^2 \ge 8\log(6)\sigma^2d+8\sigma^2\log(1/\delta)\,.
$$
Therefore, it is sufficient to take $t=\sqrt{8\log(6)}\sigma\sqrt{d}+2\sigma\sqrt{2\log(1/\delta)}$\,.
\end{proof}

\section{Sums of independent random matrices}

In this section, we are going to explore how concentration statements can be extended to sums of matrices.
As an example, we are going to show a version of Bernstein's inequality, \ref{TH:Bernstein}, for sums of independent matrices, closely following the presentation in \cite{Tro12}, which builds on previous work in \cite{AhlWin02}.
Results of this type have been crucial in providing guarantees for low rank recovery, see for example \cite{Gro11}.

In particular, we want to control the maximum eigenvalue of a sum of independent random symmetric matrices \( \p(\lambda_{\mathrm{max}}(\sum_i \bX_i) > t)\).
The tools involved will closely mimic those used for scalar random variables, but with the caveat that care must be taken when handling exponentials of matrices because \( e^{A + B} \neq e^{A} e^{B} \) if \( A, B \in \R^d \) and \( AB \neq BA \).

\subsection{Preliminaries}

We denote the set of symmetric, positive semi-definite, and positive definite matrices in \( \R^{d \times d} \) by \( \cS_d, \cS_d^{+} \), and \( \cS_d^{++} \), respectively, and will omit the subscript when the dimensionality is clear.
Here, positive semi-definite means that for a matrix \( \bX \in \cS^+ \), \( v^\top \bX v \geq 0 \) for all \( v \in \R^d \), \( |v|_2 = 1 \), and positive definite means that the equality holds strictly.
This is equivalent to all eigenvalues of \( \bX \) being larger or equal (strictly larger, respectively) than 0, \( \lambda_j(\bX) \geq 0 \) for all \( j = 1, \dots, d \).

The cone of positive definite matrices induces an order on matrices by setting \( \bA \preceq \bB \) if \( \bB - \bA \in \cS^+ \).

Since we want to extend the notion of exponentiating random variables that was essential to our derivation of the Chernoff bound to matrices, we will make use of the \emph{matrix exponential} and the \emph{matrix logarithm}.
For a symmetric matrix \( \bX \), we can define a functional calculus by applying a function \( f : \R \to \R \) to the diagonal elements of its spectral decomposition, \ie, if \( \bX = \bQ \Lambda \bQ^\top \), then
\begin{equation}
	\label{eq:funccalc}
	f(\bX) := \bQ f(\Lambda) \bQ^\top,
\end{equation}
where
\begin{equation}
	\label{eq:al}
	[f(\Lambda)]_{i, j} = f(\Lambda_{i, j}), \quad i, j \in [d],
\end{equation}
is only non-zero on the diagonal and \( f(\bX) \) is well-defined because the spectral decomposition is unique up to the ordering of the eigenvalues and the basis of the eigenspaces, with respect to both of which \eqref{eq:funccalc} is invariant, as well.
From the definition, it is clear that for the spectrum of the resulting matrix, 
\begin{equation}
	\label{eq:specthm}
	\sigma(f(\bX)) = f(\sigma(\bX)).
\end{equation}
While inequalities between functions do not generally carry over to matrices, we have the following \emph{transfer rule}:
\begin{equation}
	\label{eq:g}
	f(a) \leq g(a) \text{ for } a \in \sigma(\bX) \implies f(\bX) \preceq g(\bX).
\end{equation}

We can now define the matrix exponential by \eqref{eq:funccalc} which is equivalent to defining it via a power series,
\begin{equation*}
	\exp(\bX) = \sum_{k=1}^{\infty} \frac{1}{k!} \bX^k.
\end{equation*}
Similarly, we can define the matrix logarithm as the inverse function of \( \exp \) on \( \cS \), \( \log(e^{A}) = A \), which defines it on \( \cS^+ \).

Some nice properties of these functions on matrices are their monotonicity:
For \( \bA \preceq \bB \),
\begin{equation}
	\label{eq:h}
	\tr \exp \bA \leq \tr \exp \bB,
\end{equation}
and for \( 0 \prec \bA \preceq \bB \),
\begin{equation}
	\label{eq:i}
	\log \bA \preceq \log \bB.
\end{equation}
Note that the analog of \eqref{eq:i} is in general not true for the matrix exponential.

\subsection{The Laplace transform method}

For the remainder of this section, let \( \bX_1, \dots, \bX_n \in \R^{d \times d} \) be independent random symmetric matrices.

As a first step that can lead to different types of matrix concentration inequalities, we give a generalization of the Chernoff bound for the maximum eigenvalue of a matrix.

\begin{prop}
	\label{prp:laptrafmat}
	Let \( \bY \) be a random symmetric matrix. Then, for all \( t \in \R \),
	\begin{equation*}
		\p ( \lambda_{\mathrm{max}}(\bY) \geq t) \leq \inf_{\theta > 0} \{ e^{- \theta t} \E[ \tr \, e^{\theta \bY} ]\}
	\end{equation*}
\end{prop}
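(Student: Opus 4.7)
The plan is to mimic the scalar Chernoff bound, using the monotonicity of $x \mapsto e^{\theta x}$ for $\theta > 0$ together with the spectral mapping theorem \eqref{eq:specthm} to reduce a tail bound for $\lambda_{\max}$ to a trace moment generating function bound.

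First, fix $\theta > 0$ and observe that since $x \mapsto e^{\theta x}$ is strictly increasing on $\R$, the event $\{\lambda_{\max}(\bY) \geq t\}$ coincides with the event $\{e^{\theta \lambda_{\max}(\bY)} \geq e^{\theta t}\}$. By the spectral mapping property \eqref{eq:specthm} applied to $f(x) = e^{\theta x}$, the eigenvalues of $e^{\theta \bY}$ are exactly $\{e^{\theta \lambda_j(\bY)}\}_{j=1}^d$, and since $\theta > 0$, the largest of these is $e^{\theta \lambda_{\max}(\bY)}$. Hence
$$
\p(\lambda_{\max}(\bY) \geq t) = \p\bigl(\lambda_{\max}(e^{\theta \bY}) \geq e^{\theta t}\bigr).
$$

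Second, I would bound $\lambda_{\max}(e^{\theta \bY})$ by $\tr\, e^{\theta \bY}$. This is the key observation: because $e^{\theta \bY}$ is positive definite (its eigenvalues $e^{\theta \lambda_j(\bY)}$ are all strictly positive), the trace, being the sum of all eigenvalues, dominates the maximum eigenvalue. Thus
$$
\p\bigl(\lambda_{\max}(e^{\theta \bY}) \geq e^{\theta t}\bigr) \leq \p\bigl(\tr\, e^{\theta \bY} \geq e^{\theta t}\bigr).
$$

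Third, I would apply Markov's inequality to the nonnegative random variable $\tr\, e^{\theta \bY}$ to obtain
$$
\p\bigl(\tr\, e^{\theta \bY} \geq e^{\theta t}\bigr) \leq e^{-\theta t}\, \E[\tr\, e^{\theta \bY}].
$$
Chaining the three displays yields the inequality for each fixed $\theta > 0$, and since the left-hand side does not depend on $\theta$, I may take the infimum over $\theta > 0$ on the right, giving the claim. The only mildly nontrivial step is the second one, the passage from $\lambda_{\max}$ to trace, but this is immediate from positivity of $e^{\theta \bY}$, so I expect no real obstacles.
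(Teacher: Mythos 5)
Your proof is correct and uses the same ingredients as the paper's (monotonicity of the exponential, the spectral mapping rule, positivity of the matrix exponential so that the trace dominates the top eigenvalue, and Markov's inequality); the only difference is cosmetic: you pass from the $\lambda_{\max}$ event to the trace event before applying Markov, whereas the paper applies Markov to $e^{\lambda_{\max}(\theta\bY)}$ first and then bounds $\lambda_{\max}(e^{\theta\bY})$ by $\tr e^{\theta\bY}$ inside the expectation. Both orderings are valid and yield the same bound.
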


\begin{proof}
	We multiply both sides of the inequality \( \lambda_{\mathrm{max}}(\bY) \geq t \) by \( \theta \), take exponentials, apply the spectral theorem \eqref{eq:specthm} and then estimate the maximum eigenvalue by the sum over all eigenvalues, the trace.
	\begin{align*}
		\label{eq:f}
		\p(  \lambda_{\mathrm{max}}(\bY) \geq t )
		= {} & \p( \lambda_{\mathrm{max}}(\theta \bY) \geq \theta t )\\
		= {} & \p( e^{\lambda_{\mathrm{max}}(\theta \bY)} \geq e^{\theta t} )\\
		\leq {} & e^{-\theta t} \E[ e^{\lambda_{\mathrm{max}}(\theta \bY)} ]\\
		= {} & e^{-\theta t} \E[  \lambda_{\mathrm{max}}(e^{\theta \bY}) ]\\
		\leq {} & e^{-\theta t} \E[ \tr (e^{\theta \bY}) ].
	\end{align*}
\end{proof}

Recall that the crucial step in proving Bernstein's and Hoeffding's inequality was to exploit the independence of the summands by the fact that the exponential function turns products into sums.
\begin{equation*}
	\E[ e^{ \sum_{i} \theta X_i } ] = \E[ \prod_{i} e^{\theta X_i} ] = \prod_i \E [ e^{\theta X_i}].
\end{equation*}
This property, \( e^{A + B} = e^{A}e^{B} \), no longer holds true for matrices, unless they commute.

We could try to replace it with a similar property, the \emph{Golden-Thompson inequality},
\begin{equation*}
	\tr [e^{\theta (\bX_1 + \bX_2)}] \leq \tr [e^{\theta \bX_1} e^{\theta \bX_2}].
\end{equation*}
Unfortunately, this does not generalize to more than two matrices, and when trying to peel off factors, we would have to pull a maximum eigenvalue out of the trace,
\begin{equation*}
	\tr [e^{\theta \bX_1} e^{\theta \bX_2}] \leq  \lambda_{\mathrm{max}}(e^{\theta \bX_2}) \tr [e^{\theta \bX_1}].
\end{equation*}
This is the approach followed by Ahlswede-Winter \cite{AhlWin02}, which leads to worse constants for concentration than the ones we obtain below.

Instead, we are going to use the following deep theorem due to Lieb \cite{Lie73}. A sketch of the proof can be found in the appendix of \cite{Rus02}.

\begin{thm}[Lieb]
	\label{thm:lieb}
	Let \( \bH \in \R^{d \times d} \) be symmetric.
	Then,
	\begin{equation*}
		\cS^{+}_d \to \R, \quad \bA \mapsto \tr e^{\bH + \log \bA}
	\end{equation*}
	is a concave function.
\end{thm}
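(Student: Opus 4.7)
My plan is to reduce the concavity of $\bA \mapsto \tr \exp(\bH + \log \bA)$ on $\cS_d^+$ to (a multilinear extension of) Lieb's concavity theorem via the Lie--Trotter product formula, and then pass to a limit of concave approximations. The attractive alternative of using the Gibbs variational principle, $\log \tr \exp(\bH + \log \bA) = \sup_{\rho}\{\tr(\rho \bH) + \tr(\rho \log \bA) - \tr(\rho \log \rho)\}$, only shows that the quantity is log-concave (since $\bA \mapsto \tr(\rho \log \bA)$ is concave for $\rho \succeq 0$ by operator concavity of $\log$), which is strictly weaker than concavity; that is why a deeper input is needed.

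\textbf{Step 1: Lie--Trotter reduction.} Applying the Lie--Trotter product formula to the symmetric matrices $\bH$ and $\log \bA$ gives
$$
\exp(\bH + \log \bA) = \lim_{n \to \infty} \bigl( e^{\bH/n} \bA^{1/n} \bigr)^n,
$$
so by continuity of the trace,
$$
\tr \exp(\bH + \log \bA) = \lim_{n \to \infty} f_n(\bA), \qquad f_n(\bA) := \tr\bigl[(e^{\bH/n} \bA^{1/n})^n\bigr].
$$
Since the pointwise limit of concave functions is concave, it suffices to prove that each $f_n$ is concave on $\cS_d^+$.

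\textbf{Step 2: Concavity of each $f_n$ via Lieb concavity.} Lieb's concavity theorem asserts that for any matrix $K$ and any $s \in [0,1]$, the map $(\bA,\bB) \mapsto \tr(K^* \bA^{s} K \bB^{1-s})$ is jointly concave on $\cS_d^+ \times \cS_d^+$. A standard multilinear extension yields joint concavity of $(\bA_1,\dots,\bA_n) \mapsto \tr[K_1 \bA_1^{s_1} K_2 \bA_2^{s_2} \cdots K_n \bA_n^{s_n}]$ on $(\cS_d^+)^n$, for any matrices $K_1,\dots,K_n$ and exponents $s_i \ge 0$ with $\sum_i s_i \le 1$. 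Applying this with $K_i = e^{\bH/n}$ and $s_i = 1/n$, then specializing to the diagonal $\bA_1 = \cdots = \bA_n = \bA$ (a linear substitution preserves concavity), we obtain concavity of $f_n$, and combining with Step~1 concludes the proof.

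\textbf{Main obstacle.} The genuinely non-trivial content is Lieb's concavity theorem itself (and its multilinear extension); the Lie--Trotter step is essentially packaging. Classical proofs of Lieb concavity proceed via complex interpolation (Stein) combined with Epstein's theorem on Herglotz functions, or via Ando's joint concavity of the operator geometric mean; there is also a more recent elementary proof by Tropp exploiting the concavity of matrix perspectives. Granting any such proof as input, the Lie--Trotter limit argument above packages it into the stated concavity of $\bA \mapsto \tr \exp(\bH + \log \bA)$.
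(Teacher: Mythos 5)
The paper itself gives no proof of this theorem; it states the result and defers to \cite{Lie73} for the original and to \cite{Rus02} for a sketch, so there is no in-text argument to compare against. Your Lie--Trotter strategy is a recognized route, and Step~1 is correct: $\exp(\bH + \log \bA) = \lim_n (e^{\bH/n}\bA^{1/n})^n$ and a pointwise limit of concave functions is concave.

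The gap is in Step~2. The ``standard multilinear extension'' you invoke, namely joint concavity of $(\bA_1,\dots,\bA_n)\mapsto \tr[K_1\bA_1^{s_1}\cdots K_n\bA_n^{s_n}]$ for \emph{arbitrary} $K_1,\dots,K_n$, is not even a well-posed concavity statement: for $n\ge 3$ that trace is generally complex-valued (taking $K_i = L\succ 0$ all equal, its complex conjugate is the same expression with $\bA_1,\dots,\bA_n$ reversed, which is a different value). On the diagonal $\bA_1=\cdots=\bA_n=\bA$ the trace is indeed real, since $\tr[(L\bA^{1/n})^n]=\tr[(\bA^{1/2n}L\bA^{1/2n})^n]\ge 0$, but that observation does not repair the off-diagonal statement you need for the ``linear substitution preserves concavity'' step. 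What is actually true and would close the argument is that $\bA\mapsto \tr[(B^{1/2}\bA^{1/n}B^{1/2})^n]$ is concave on $\cS_d^+$ for fixed $B\succeq 0$ (Epstein's theorem, or a carefully stated multilinear Lieb inequality with a symmetrized/nested product). That lemma, however, is of essentially the same depth as Lieb's concavity theorem itself, so the Lie--Trotter step is mostly repackaging; you acknowledge this, but the specific intermediate statement you lean on needs to be fixed before the packaging is valid.

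A smaller point: your dismissal of the variational route is not quite right. The Gibbs formula as you wrote it does not even yield log-concavity by the reason you give (a supremum of functions each concave in $\bA$ need not be concave); what actually gives a clean proof is the unnormalized variational identity
\begin{equation*}
\tr\, e^{\bH+\log \bA} \;=\; \tr \bA + \sup_{\bX\succ 0}\bigl\{\tr(\bX\bH) - D(\bX\Vert \bA)\bigr\},
\end{equation*}
with $D(\bX\Vert\bA)=\tr[\bX(\log\bX-\log\bA)]-\tr\bX+\tr\bA$ the quantum relative entropy. Since $D$ is \emph{jointly} convex, the bracketed expression is jointly concave in $(\bX,\bA)$, and partial maximization over $\bX$ preserves concavity in $\bA$; this gives full concavity, not merely log-concavity. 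This is the cleanest modern route (Tropp), though of course the joint convexity of $D$ is itself a theorem of comparable depth.
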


\begin{cor}
	\label{cor:subaddsingle}
	Let \( \bX, \bH \in \mathcal{S}_d \) be two symmetric matrices such that \( \bX \) is random and \( \bH \) is fixed.
	Then,
	\begin{equation*}
		\E [\tr e^{\bH + \bX}] \leq \tr e^{\bH + \log \E[e^{\bX}]}.
	\end{equation*}
\end{cor}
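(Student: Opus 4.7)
The plan is to reduce this to a direct application of Jensen's inequality using Lieb's theorem (Theorem~\ref{thm:lieb}). The key observation is that the desired inequality looks exactly like a concavity statement once we rewrite $\bX$ inside the exponential as $\log(e^{\bX})$. Since $\bX$ is symmetric, $e^{\bX}$ is symmetric positive definite (all its eigenvalues are of the form $e^{\lambda_j(\bX)} > 0$), so $e^{\bX} \in \cS_d^{++}$ and the matrix logarithm is well-defined on the image.

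First I would use the tautology $\bX = \log(e^{\bX})$ to rewrite the left-hand side as
$$
\E[\tr e^{\bH + \bX}] = \E[\tr e^{\bH + \log(e^{\bX})}].
$$
Then I would invoke Lieb's theorem with the fixed symmetric matrix $\bH$: the map $\bA \mapsto \tr e^{\bH + \log \bA}$ is concave on $\cS_d^{+}$. Jensen's inequality for concave functions on the convex cone $\cS_d^{+}$ (applied to the random matrix $\bA = e^{\bX}$, which takes values in $\cS_d^{++} \subset \cS_d^{+}$) then yields
$$
\E[\tr e^{\bH + \log(e^{\bX})}] \leq \tr e^{\bH + \log \E[e^{\bX}]},
$$
provided $\E[e^{\bX}]$ is well-defined as an element of $\cS_d^{++}$ (which follows from $e^{\bX} \succ 0$ almost surely, assuming the expectation exists entrywise, so that $\log \E[e^{\bX}]$ is also well-defined).

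The main conceptual step is of course Lieb's theorem itself, which we take as a black box. Beyond that, the only point that requires a small check is the justification of Jensen's inequality in the matrix-valued setting: one reduces it to the scalar case by noting that the concave functional $\Phi(\bA) = \tr e^{\bH + \log \bA}$ maps into $\R$, so the standard Jensen inequality $\E[\Phi(\bA)] \leq \Phi(\E[\bA])$ applies directly once concavity has been established. No subtleties about non-commutativity of matrix exponentials (such as those that doomed the Golden--Thompson route) enter at this final stage, since Lieb's theorem has already absorbed all of that difficulty.
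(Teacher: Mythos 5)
Your proposal is correct and follows the paper's proof exactly: substitute $\bX = \log(e^{\bX})$, invoke Lieb's theorem for concavity of $\bA \mapsto \tr e^{\bH + \log \bA}$, and apply Jensen's inequality to the random matrix $e^{\bX}$.
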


\begin{proof}
  Write \( \bY = e^{\bX} \) and use Jensen's inequality:
	\begin{align*}
		\label{eq:e}
		\E[\tr e^{\bH+\bX}]
		= {} & \E[ \tr e^{\bH + \log \bY}]\\
		\leq {} & \tr e^{\bH + \log(\E[\bY])}\\
		= {} & \tr e^{\bH + \log \E [e^\bX]}
	\end{align*}
\end{proof}

With this, we can establish a better bound on the moment generating function of sums of independent matrices.

\begin{lem}
	\label{lem:subaddcgf}
	Let \( \bX_1, \dots, \bX_n \) be \( n \) independent, random symmetric matrices.
	Then,
	\begin{equation*}
		\E[ \tr \exp(\sum_i \theta \bX_i)] \leq \tr \exp(\sum_i \log \E e^{\theta \bX_i})
	\end{equation*}
\end{lem}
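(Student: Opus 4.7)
The plan is to prove the bound by induction on $n$, peeling off one summand at a time using Corollary~\ref{cor:subaddsingle} together with the independence of the $\bX_i$. The key observation is that when we condition on all but one of the matrices, their sum is a fixed symmetric matrix, which is exactly the setting in which Corollary~\ref{cor:subaddsingle} applies.

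First I would set $\bS_k = \sum_{i=1}^{k} \theta \bX_i$ and use the tower property to write
$$
\E[\tr \exp(\bS_n)] = \E\bigl[ \E[ \tr \exp(\bS_{n-1} + \theta \bX_n) \mid \bX_1, \dots, \bX_{n-1}] \bigr].
$$
Conditionally on $\bX_1, \dots, \bX_{n-1}$, the matrix $\bH := \bS_{n-1}$ is a fixed symmetric matrix and $\theta \bX_n$ is independent of it, so Corollary~\ref{cor:subaddsingle} applied to the inner conditional expectation gives
$$
\E[\tr \exp(\bS_{n-1} + \theta \bX_n) \mid \bX_1, \dots, \bX_{n-1}] \leq \tr \exp\bigl(\bS_{n-1} + \log \E[e^{\theta \bX_n}]\bigr).
$$
Taking outer expectations yields
$$
\E[\tr \exp(\bS_n)] \leq \E\bigl[ \tr \exp(\bS_{n-1} + \bC_n) \bigr], \qquad \bC_n := \log \E[e^{\theta \bX_n}].
$$
Note that $\bC_n$ is a \emph{deterministic} symmetric matrix, so this expression has the same form as the left-hand side but with one fewer random summand (and a harmless deterministic shift).

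Next I would iterate this argument. Condition on $\bX_1, \dots, \bX_{n-2}$ and apply Corollary~\ref{cor:subaddsingle} again with $\bH = \bS_{n-2} + \bC_n$ fixed and the random matrix $\theta \bX_{n-1}$. This gives
$$
\E[\tr \exp(\bS_{n-1} + \bC_n)] \leq \E[\tr \exp(\bS_{n-2} + \bC_{n-1} + \bC_n)],
$$
and continuing this $n$ times (or writing it formally as an induction on $n$) we arrive at
$$
\E[\tr \exp(\bS_n)] \leq \tr \exp\Bigl( \sum_{i=1}^{n} \log \E[e^{\theta \bX_i}] \Bigr),
$$
which is exactly the desired inequality.

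The only delicate point is making sure each application of Corollary~\ref{cor:subaddsingle} is legitimate, i.e., that the ``fixed'' matrix $\bH$ in each step is symmetric and independent of the single random summand being peeled off. Symmetry is immediate since sums and logarithms of symmetric matrices are symmetric (note $\E[e^{\theta \bX_i}] \in \cS^{++}$ since $e^{\theta \bX_i} \in \cS^{++}$ pointwise, so the logarithm is defined), and independence holds by construction because the $\bX_i$ are mutually independent and the accumulated deterministic matrices $\bC_{i+1}, \dots, \bC_n$ contribute no randomness. I do not expect any real obstacle beyond this bookkeeping.
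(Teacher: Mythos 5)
Your proof is correct and follows essentially the same route as the paper: both condition on $\bX_1,\dots,\bX_{k-1}$ and apply Corollary~\ref{cor:subaddsingle} (which is Lieb's theorem plus Jensen) to peel off one summand at a time via the tower property. Your explicit remark that $\E[e^{\theta\bX_i}]\in\cS^{++}$ so that the matrix logarithm is well-defined is a nice bit of bookkeeping the paper leaves implicit.
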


\begin{proof}
  Without loss of generality, we can assume \( \theta = 1 \).
	Write \( \E_k \) for the expectation conditioned on \( \bX_1, \dots, \bX_k \), \( \E_k[\,.\,] = \E[\,.\,|\bX_1, \dots, \bX_k] \).
	By the tower property,
	\begin{align*}
		\E[\tr \exp (\sum_{i = 1}^{n} \bX_i)]
		= {} & \E_0 \dots \E_{n-1} \tr \exp (\sum_{i = 1}^{n} \bX_i)\\
		\leq {} & \E_0 \dots \E_{n-2} \tr \exp \big( \sum_{i = 1}^{n-1} \bX_i + \log \underbrace{\E_{n-1} e^{\bX_n}}_{= \E e^{\bX_n}} \big)\\
		\vdots\\
		\leq {} & \tr \exp ( \sum_{i = 1}^{n}\log  \E[e^{\bX_i}]),
	\end{align*}
	where we applied Lieb's theorem, Theorem \ref{thm:lieb} on the conditional expectation and then used the independence of \( \bX_1, \dots, \bX_n \).
\end{proof}

\subsection{Tail bounds for sums of independent matrices}

\begin{thm}[Master tail bound]
	\label{thm:mastertailbound}
	For all \( t \in \R \),
	\begin{equation*}
		\p ( \lambda_{\mathrm{max}}( \sum_{i} \bX_i ) \geq t ) \leq \inf_{\theta > 0}\{ e^{-\theta t} \tr \exp ( \sum_{i} \log \E e^{\theta \bX_i})\}
	\end{equation*}
\end{thm}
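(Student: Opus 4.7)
The plan is to simply combine the two ingredients developed in the preceding subsections: the Laplace-transform / Chernoff-style bound for the maximum eigenvalue of a single random symmetric matrix (Proposition \ref{prp:laptrafmat}), and the subadditivity-of-cumulants bound for sums of independent random symmetric matrices (Lemma \ref{lem:subaddcgf}). Neither step will be the hard part — the hard work has already been done in establishing Lieb's theorem and its consequences.

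First I would apply Proposition \ref{prp:laptrafmat} to the single random symmetric matrix $\bY := \sum_i \bX_i$. This yields, for every $\theta > 0$,
\begin{equation*}
\p\bigl(\lambda_{\max}(\textstyle\sum_i \bX_i) \geq t\bigr) \;\leq\; e^{-\theta t}\,\E\bigl[\tr \exp(\theta \textstyle\sum_i \bX_i)\bigr].
\end{equation*}
Next I would control the expectation on the right-hand side by invoking Lemma \ref{lem:subaddcgf} (with the scalar $\theta$ absorbed into the matrices, exactly as in the proof of that lemma), obtaining
\begin{equation*}
\E\bigl[\tr \exp(\theta \textstyle\sum_i \bX_i)\bigr] \;\leq\; \tr \exp\bigl(\textstyle\sum_i \log \E\, e^{\theta \bX_i}\bigr).
\end{equation*}
Combining the two displays gives the claimed bound for every fixed $\theta > 0$, and taking the infimum over $\theta > 0$ on the right-hand side (which is legitimate because the left-hand side does not depend on $\theta$) finishes the proof.

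The only subtlety worth flagging is ensuring that the absorption of $\theta$ into the matrices in Lemma \ref{lem:subaddcgf} is valid — but this is immediate since Lemma \ref{lem:subaddcgf} was stated and proved with an arbitrary $\theta$ (and in fact reduced to $\theta = 1$ by rescaling the $\bX_i$). So there is really no obstacle; the theorem is a one-line corollary of the two preceding results, which is why it is called the master tail bound: concrete matrix concentration inequalities (such as a matrix Bernstein inequality in the spirit of Theorem \ref{TH:Bernstein}) will be obtained by specializing the bound on $\log \E\, e^{\theta \bX_i}$ under appropriate assumptions on the summands.
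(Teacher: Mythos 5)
Your argument is exactly the one the paper uses: apply Proposition \ref{prp:laptrafmat} to $\bY=\sum_i\bX_i$, then bound $\E[\tr\exp(\theta\sum_i\bX_i)]$ via Lemma \ref{lem:subaddcgf}, and take the infimum over $\theta>0$. Correct, and matches the paper's one-line proof.
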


\begin{proof}
  Combine Lemma \ref{lem:subaddcgf} with Proposition \ref{prp:laptrafmat}.
\end{proof}

\begin{cor}
	\label{cor:tailboundwmaj}
	Assume that there is a function \( g : (0, \infty) \to [0, \infty] \) and fixed symmetric matrices \( \bA_i \) such that \( \E[e^{\theta \bX_i}] \preceq e^{g(\theta) \bA_i} \) for all \( i \).
	Set
	\begin{equation*}
		\rho = \lambda_{\mathrm{max}} ( \sum_{i} \bA_k ).
	\end{equation*}
	Then, for all \( t \in \R \),
	\begin{equation*}
		\p  ( \lambda_{\mathrm{max}}( \sum_{i} \bX_i	) \geq t) \leq d \inf_{\theta > 0}\{ e^{- \theta t + g(\theta) \rho}\}.
	\end{equation*}
\end{cor}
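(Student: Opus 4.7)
The plan is to apply the Master tail bound of Theorem~\ref{thm:mastertailbound} and then use the hypothesis together with the monotonicity properties of the matrix logarithm and of $\tr\exp$ recorded in \eqref{eq:i} and \eqref{eq:h} to replace the complicated quantity $\sum_i \log\E[e^{\theta \bX_i}]$ by something controlled by $\rho$.

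First, I would start from
$$
\p\big(\lambda_{\max}(\textstyle\sum_i \bX_i) \ge t\big) \le \inf_{\theta>0}\Big\{e^{-\theta t}\, \tr \exp\big(\textstyle\sum_i \log \E e^{\theta \bX_i}\big)\Big\},
$$
which is precisely Theorem~\ref{thm:mastertailbound}. Fix $\theta>0$. Since $\E[e^{\theta \bX_i}]\in\cS^{++}$ and $\E[e^{\theta \bX_i}]\preceq e^{g(\theta)\bA_i}$ by hypothesis, the operator monotonicity of the logarithm \eqref{eq:i} gives $\log \E[e^{\theta \bX_i}] \preceq g(\theta)\bA_i$ for every $i$. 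Summing these inequalities (the PSD cone is closed under addition) yields
$$
\sum_{i} \log \E[e^{\theta \bX_i}] \;\preceq\; g(\theta) \sum_i \bA_i.
$$

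Next, I would apply the trace-exponential monotonicity \eqref{eq:h} to both sides of this PSD inequality to obtain
$$
\tr\exp\Big(\sum_i \log \E[e^{\theta \bX_i}]\Big) \;\le\; \tr\exp\Big(g(\theta)\sum_i \bA_i\Big).
$$
Now it remains to bound the right-hand side by $d\,e^{g(\theta)\rho}$. By the spectral mapping \eqref{eq:specthm}, the eigenvalues of $\exp(g(\theta)\sum_i \bA_i)$ are exactly $e^{g(\theta)\lambda_j(\sum_i \bA_i)}$ for $j=1,\dots,d$. Since $g(\theta)\ge 0$, each such eigenvalue is at most $e^{g(\theta)\rho}$, so
$$
\tr\exp\Big(g(\theta)\sum_i \bA_i\Big) = \sum_{j=1}^d e^{g(\theta)\lambda_j(\sum_i \bA_i)} \le d\, e^{g(\theta)\rho}.
$$
Plugging this chain of inequalities back into the Master tail bound and taking the infimum over $\theta>0$ yields the claimed bound.

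There is no real obstacle here: the work was already done in proving Theorem~\ref{thm:mastertailbound} (which required Lieb's theorem), and this corollary is just a convenient packaging. The only point that requires a small amount of care is verifying that the hypothesis $\E[e^{\theta\bX_i}]\preceq e^{g(\theta)\bA_i}$ can be turned into a statement about logarithms and then summed; this is exactly where the operator monotonicity of $\log$ on $\cS^{++}$ (and not of $\exp$, for which the analog fails as noted after \eqref{eq:i}) is essential.
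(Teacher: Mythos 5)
Your proposal is correct and follows essentially the same route as the paper: invoke the master tail bound, use operator monotonicity of the logarithm and of $\tr\exp$ to replace $\sum_i \log\E e^{\theta\bX_i}$ by $g(\theta)\sum_i\bA_i$, and then bound $\tr\exp(g(\theta)\sum_i\bA_i)$ by $d\,e^{g(\theta)\rho}$ via the spectral theorem. The only cosmetic difference is that you spell out the eigenvalue sum explicitly where the paper passes through $\tr(\bX)\le d\,\lambda_{\max}(\bX)$; the substance is identical.
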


\begin{proof}
  Using the operator monoticity of \( \log \), \eqref{eq:i}, and the monotonicity of \( \tr \exp \), \eqref{eq:h}, we can plug the estimates for the matrix mgfs into the master inequality, Theorem \ref{thm:mastertailbound}, to obtain
	\begin{align*}
		\p( \lambda_{\mathrm{max}}( \sum_{i} \bX_i ) \geq t )
		\leq {} & e^{- \theta t} \tr \exp ( g(\theta) \sum_{i} \bA_i)\\
		\leq {} & d e^{-\theta t} \lambda_{\mathrm{max}} (\exp(g(\theta) \sum_{i}  \bA_i))\\
		= {} & d e^{-\theta t}  \exp(g(\theta) \lambda_{\mathrm{max}} (\sum_{i}  \bA_i)),
	\end{align*}
	where we estimated \( \tr(\bX) = \sum_{j=1}^{d} \lambda_j \leq d \lambda_{\mathrm{max}} \) and used the spectral theorem.
\end{proof}

Now we are ready to prove Bernstein's inequality We just need to come up with a dominating function for Corollary \ref{cor:tailboundwmaj}.

\begin{lem}
	\label{lem:bennettbound}
	If \( \E[\bX] = 0 \) and \( \lambda_{\mathrm{max}}(\bX) \leq 1 \), then
	\begin{equation*}
		\E[e^{\theta \bX}] \preceq \exp((e^\theta - \theta - 1) \E[\bX^2]).
	\end{equation*}
\end{lem}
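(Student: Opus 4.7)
The plan is to reduce everything to a scalar inequality and then push it through to matrices via the transfer rule \eqref{eq:g}. Specifically, the target is the scalar Bennett-type inequality
\[
	e^{\theta x} \le 1 + \theta x + (e^{\theta} - \theta - 1)\, x^2 \qquad \text{for all } x \le 1 \text{ and } \theta > 0,
\]
after which three applications of monotonicity principles for symmetric matrices will finish the argument.

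First I would prove this scalar inequality. Rewriting it as $\psi(\theta x) \le \psi(\theta)$ where $\psi(u) := (e^{u} - 1 - u)/u^2$ (extended continuously at $0$), it suffices to show that $\psi$ is non-decreasing on $\R$, since $\theta x \le \theta$ when $\theta > 0$ and $x \le 1$. A direct computation gives
\[
	\psi'(u) = \frac{N(u)}{u^3}, \qquad N(u) := u(e^{u}+1) - 2(e^{u}-1).
\]
Then $N(0) = N'(0) = 0$ and $N''(u) = u e^{u}$, so $N'$ is increasing on $(0,\infty)$ and decreasing on $(-\infty,0)$, which forces $N'\ge 0$ everywhere, hence $N$ is non-decreasing with $N(0)=0$. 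Thus $N(u)$ shares the sign of $u$ (and so of $u^3$), which yields $\psi'(u) \ge 0$. This is the one place where a small amount of computation is unavoidable, and it is the main technical step of the proof.

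Next I would transfer the scalar inequality to matrices. Since $\lambda_{\max}(\bX) \le 1$, every eigenvalue of $\bX$ lies in $(-\infty,1]$, so the pointwise inequality holds on $\sigma(\bX)$. The transfer rule \eqref{eq:g} applied to the two functions $f(x)=e^{\theta x}$ and $g(x)=1+\theta x+(e^\theta-\theta-1)x^2$ gives
\[
	e^{\theta \bX} \preceq \bI + \theta \bX + (e^{\theta}-\theta-1)\, \bX^2,
\]
where $g(\bX)$ computed via the functional calculus coincides with the corresponding matrix polynomial. Taking expectations preserves the PSD order (a scalarization against any fixed $v \in \R^d$), and using $\E[\bX] = 0$ yields
\[
	\E[e^{\theta \bX}] \preceq \bI + (e^{\theta}-\theta-1)\, \E[\bX^2].
\]

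Finally I would upgrade the right-hand side from an affine expression to a matrix exponential. Since $e^\theta - \theta - 1 \ge 0$ and $\E[\bX^2] \succeq 0$, the matrix $\bA := (e^\theta - \theta - 1)\,\E[\bX^2]$ is symmetric positive semidefinite. Applying the transfer rule once more to the scalar inequality $1 + a \le e^{a}$ (valid for all $a \in \R$) gives $\bI + \bA \preceq e^{\bA}$, and chaining this with the previous display completes the proof:
\[
	\E[e^{\theta \bX}] \preceq \bI + \bA \preceq e^{\bA} = \exp\bigl((e^{\theta}-\theta-1)\,\E[\bX^2]\bigr).
\]
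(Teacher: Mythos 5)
Your proof is correct and takes essentially the same route as the paper: prove the scalar Bennett-type inequality $e^{\theta x}\le 1+\theta x+(e^\theta-\theta-1)x^2$ on $(-\infty,1]$, transfer it to the semidefinite order via the functional calculus, take expectations to drop the linear term, and finish with $\bI+\bA\preceq e^{\bA}$. The only differences are cosmetic: you supply the monotonicity argument for the quotient function $(e^u-1-u)/u^2$ that the paper merely asks the reader to verify, and you apply the transfer rule \eqref{eq:g} once directly to the full scalar inequality rather than to $f(x)\le f(1)$ followed by conjugation by $\bX$ — a slightly cleaner but equivalent presentation.
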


\begin{proof}
  Define
	\begin{equation*}
		f(x) = \left\{
		\begin{aligned}
			\frac{e^{\theta x}-\theta x - 1}{x^2}, {} \quad & x \neq 0,\\
			\frac{\theta^2}{2}, \quad & x = 0,
		\end{aligned}
		\right.
	\end{equation*}
	and verify that this is a smooth and increasing function on \( \R \).
	Hence, \( f(x) \leq f(1) \) for \( x \leq 1 \).
	By the transfer rule, \eqref{eq:g}, \( f(\bX) \preceq f(I) = f(1) I \).
	Therefore,
	\begin{equation*}
		e^{\theta \bX} = I + \theta \bX + \bX f(\bX) \bX \preceq I + \theta \bX + f(1) \bX^2,
	\end{equation*}
	and by taking expectations,
	\begin{equation*}
		\E [e^{\theta \bX}] \preceq I + f(1) \E[\bX^2] \preceq \exp( f(1) \E[\bX^2]) = \exp((e^{\theta} - \theta - 1) \E[\bX^2]).
	\end{equation*}
\end{proof}

\begin{thm}[Matrix Bernstein]
	\label{thm:matbernstein}
	Assume \( \E \bX_i = 0 \) and \(  \lambda_{\mathrm{max}}(\bX_i) \leq R \) almost surely for all \( i \) and set
	\begin{equation*}
		\sigma^2 = \| \sum_{i} \E[\bX_i^2] \|_{\mathrm{op}}.
	\end{equation*}
	Then, for all \( t \geq 0 \),
	\begin{align*}
		\p ( \lambda_{\mathrm{max}}( \sum_{i} \bX_i) \geq t )
		\leq {} & d \exp(- \frac{\sigma^2}{R^2} h(Rt/\sigma^2))\\
		\leq {} & d \exp \left( \frac{-t^2/2}{\sigma^2 + Rt/3} \right)\\
		\leq {} & \left\{
		\begin{aligned}
			d \exp\left(-\frac{3t^2}{8 \sigma^2}\right), \quad & t \leq \frac{\sigma^2}{R},\\
			d \exp\left(-\frac{3t}{8R}\right), \quad & t \geq \frac{\sigma^2}{R},
		\end{aligned}
		\right.
	\end{align*}
	where \( h(u) = (1+u) \log (1+u) - u \), \( u > 0 \).
\end{thm}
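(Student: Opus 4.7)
The plan is to combine Lemma \ref{lem:bennettbound} with the generic master bound of Corollary \ref{cor:tailboundwmaj}, then optimize over $\theta$, and finally simplify the resulting closed form into the three successively weaker bounds.

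First I would reduce to $R=1$ by homogeneity: replace each $\bX_i$ with $\bX_i/R$ and $t$ with $t/R$. This renormalization turns $\sigma^2$ into $\sigma^2/R^2$, and the claimed bound in the variable $u := Rt/\sigma^2$ is invariant under this rescaling; so it suffices to prove $\p(\lambda_{\max}(\sum_i \bX_i) \ge t) \le d \exp(-\sigma^2 h(t/\sigma^2))$ under the assumption $\lambda_{\max}(\bX_i) \le 1$. Now Lemma \ref{lem:bennettbound} gives exactly the semidefinite MGF bound
\[
\E[e^{\theta \bX_i}] \preceq \exp\bigl((e^\theta - \theta - 1)\,\E[\bX_i^2]\bigr),
\]
which is the hypothesis of Corollary \ref{cor:tailboundwmaj} with $g(\theta) = e^\theta - \theta - 1$ and $\bA_i = \E[\bX_i^2]$. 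Since $\rho = \lambda_{\max}(\sum_i \E[\bX_i^2]) = \sigma^2$, the corollary yields
\[
\p\bigl(\lambda_{\max}(\textstyle\sum_i \bX_i) \ge t\bigr) \le d \inf_{\theta > 0} \exp\!\bigl(-\theta t + (e^\theta - \theta - 1)\sigma^2\bigr).
\]

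Next I would carry out the optimization. Differentiating the exponent in $\theta$ gives the first order condition $e^\theta = 1 + t/\sigma^2$, hence $\theta^\star = \log(1 + t/\sigma^2)$. Substituting, the exponent becomes
\[
-t\log(1+t/\sigma^2) + t - \sigma^2 \log(1+t/\sigma^2) = -\sigma^2\bigl[(1+u)\log(1+u) - u\bigr] = -\sigma^2 h(u),
\]
with $u = t/\sigma^2$. Undoing the rescaling recovers the first stated inequality with $u = Rt/\sigma^2$ and prefactor $\sigma^2/R^2$.

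For the second form, I would use the elementary analytic estimate $h(u) \ge \frac{u^2/2}{1 + u/3}$ for $u \ge 0$, which can be proved by checking that $\varphi(u) := h(u)(1 + u/3) - u^2/2$ satisfies $\varphi(0)=0$ together with $\varphi'(u) \ge 0$ (a short calculation using $\log(1+u) \ge u/(1+u/2)$ or direct Taylor comparison). Applying this to $u = Rt/\sigma^2$ and simplifying gives exactly $-t^2/2 \,/\,(\sigma^2 + Rt/3)$. Finally, the two-case bound follows by splitting at $t = \sigma^2/R$: if $t \le \sigma^2/R$ then $Rt/3 \le \sigma^2/3$ so the denominator is at most $\tfrac{4}{3}\sigma^2$, giving $3t^2/(8\sigma^2)$; if $t \ge \sigma^2/R$ then $\sigma^2 \le Rt$ so the denominator is at most $\tfrac{4}{3}Rt$, giving $3t/(8R)$.

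The only step that requires genuine care is the scalar inequality $h(u) \ge \tfrac{u^2/2}{1+u/3}$; everything else is bookkeeping once Lemma \ref{lem:bennettbound} and Corollary \ref{cor:tailboundwmaj} are in hand, since the heavy lifting (operator monotonicity, Lieb's theorem, the Laplace transform method) has already been packaged into those two results.
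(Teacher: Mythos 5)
Your proposal is correct and follows essentially the same route as the paper: rescale to $R=1$, feed Lemma~\ref{lem:bennettbound} into Corollary~\ref{cor:tailboundwmaj}, minimize the exponent at $\theta^\star=\log(1+t/\sigma^2)$ to get $-\sigma^2 h(u)$, then weaken via $h(u)\ge \frac{u^2/2}{1+u/3}$ and a case split at $t=\sigma^2/R$. Your write-up simply spells out a few verification steps that the paper's proof states tersely (the explicit optimization, the derivative check for $h\ge h_2$, and the two-case simplification), so there is no substantive difference.
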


\begin{proof}
	Without loss of generality, take \( R = 1 \).
	By Lemma \ref{lem:bennettbound},
	\begin{equation*}
		\E[e^{\theta \bX_i}] \preceq \exp(g(\theta) \E[\bX_i^2]), \quad \text{with } g(\theta) = e^\theta - \theta - 1, \quad \theta > 0.
	\end{equation*}
	By Corollary \ref{cor:tailboundwmaj},
	\begin{equation*}
		\p( \lambda_{\mathrm{max}}( \sum_{i} \bX_i) \geq t) \leq d \exp(-\theta t + g(\theta) \sigma^2).
	\end{equation*}
	We can verify that the minimal value is attained at \( \theta = \log(1+t/\sigma^2) \).

	The second inequality then follows from the fact that \( h(u) \geq h_2(u) = \frac{u^2/2}{1+u/3} \) for \( u \geq 0 \) which can be verified by comparing derivatives.

	The third inequality follows from exploiting the properties of \( h_2 \).
\end{proof}

With similar techniques, one can also prove a version of Hoeffding's inequality for matrices, see \cite[Theorem 1.3]{Tro12}.

\newpage
\section{Problem set}

\medskip

\begin{exercise}
\label{EXO:bernstein}
Let $X_1,\ldots,X_n$ be independent random variables such that $\E(X_i) = 0$ and $X_i \sim \subE(\lambda)$. For any vector $a=(a_1, \ldots, a_n)^\top \in \R^n$, define the weighted sum
    $$S(a) = \sum_{i=1}^n a_iX_i\,,$$
Show that for any $t>0$ we have
    $$ 
    \p(|S(a)| >t) \leq 2\exp \left[- C\big(\frac{t^2}{\lambda^2|a|_2^2}\wedge \frac{t}{\lambda|a|_\infty}\big)\right].
    $$
    for some positive constant $C$. 
\end{exercise}

\medskip

\begin{exercise}
\label{EXO:chi2}
A random variable $X$ has $\chi^2_n$ (chi-squared with $n$ degrees of freedom) if it has the same distribution as $Z_1^2+ \ldots +Z_n^2$, where $Z_1, \ldots, Z_n$ are \iid $\cN(0,1)$.
\begin{enumerate}[label=(\alph*)]
\item Let $Z \sim \cN(0,1)$. Show that the moment generating function of $Y=Z^2-1$ satisfies
$$
\phi(s):=E\big[e^{sY}\big]=\left\{
\begin{array}{ll}
\displaystyle\frac{e^{-s}}{\sqrt{1-2s}}& \text{if } s<1/2\\
 \infty & \text{otherwise}
\end{array}\right.
$$
\item Show that for all $0<s<1/2$,
$$
\phi(s)\le \exp\Big(\frac{s^2}{1-2s}\Big)\,.
$$
\item Conclude that 
$$
\p(Y>2t+2\sqrt{t})\le e^{-t}
$$
\texttt{[Hint: you can use the convexity inequality $\sqrt{1+u}\le 1+u/2$]}.
\item Show that if $X \sim \chi^2_n$, then, with probability at least $1-\delta$, it holds
$$
X \le n+ 2 \sqrt{n\log(1/\delta)}+ 2\log(1/\delta) \,.
$$
\end{enumerate}
\end{exercise}
\medskip

\begin{exercise}
\label{EXO:hetero}
Let $X_1, X_2 \ldots$ be an infinite sequence of sub-Gaussian random variables with variance proxy $\sigma_i^2=C(\log i)^{-1}$. Show that for $C$ large enough, we get
$$
\E\big[\max_{i\ge 2} X_i \big]<\infty\,.
$$
\end{exercise}

\medskip

\begin{exercise}
\label{EXO:randmat}
Let $A=\{A_{i,j}\}_{\substack{1\le i\le n \\ 1\le j \le m}}$ be a random matrix such that its entries are \iid sub-Gaussian random variables with variance proxy $\sigma^2$.
\begin{enumerate}[label=(\alph*)]
\item Show that the matrix $A$ is sub-Gaussian. What is its variance proxy?
\item Let $\|A\|$ denote the operator norm of $A$ defined by				
$$
\max_{x \in \R^m}\frac{|Ax|_2}{|x|_2}\,.
$$
Show that there exits a constant $C>0$ such that
$$
\E\|A\|\le C(\sqrt{m}+\sqrt{n})\,.
$$
\end{enumerate}
\end{exercise}

\medskip

\begin{exercise}
\label{EXO:maxellp}
Recall that for any $q \ge 1$, the $\ell_q$ norm of a vector $x \in \R^n$ is defined by
$$
|x|_q=\Big(\sum_{i=1}^n |x_i|^q\Big)^{\frac1q}\,.
$$
Let $X=(X_1, \ldots, X_n)$ be a vector with independent entries such that $X_i$ is sub-Gaussian with variance proxy $\sigma^2$ and $\E(X_i)=0$. 
\begin{enumerate}[label=(\alph*)]
\item Show that for any $q\ge2$, and any $x \in \R^d$, 
$$
|x|_2\le |x|_qn^{\frac12-\frac1q}\,,
$$
and prove that the above inequality cannot be improved
\item Show that for for any $q >1$, 
$$
\E|X|_q\le 4\sigma  n^{\frac{1}{q}}\sqrt{q}
$$
\item Recover from this bound that
$$
\E\max_{1\le i\le n} |X_i|\le 4e\sigma\sqrt{\log n}\,.
$$
\end{enumerate}
\end{exercise}

\medskip

\begin{exercise}
\label{EXO:epsnet}
Let $K$ be a compact subset of the unit sphere of $\R^p$ that admits an $\eps$-net $\cN_\eps$ with respect to the Euclidean distance of $\R^p$ that satisfies
$|\cN_\eps|\le   (C/\eps)^d$ for all $\eps \in (0,1)$. Here $C \ge 1$ and $d\le p$ are positive constants. Let $X \sim \sg_p(\sigma^2)$ be a centered random vector.

Show that there exists positive constants $c_1$ and $c_2$ to be made explicit such that for any $\delta \in (0,1)$, it holds
$$
\max_{\theta \in K} \theta^\top X \le c_1\sigma\sqrt{d \log (2p/d)} +c_2\sigma\sqrt{ \log(1/\delta)}
$$
with probability at least $1-\delta$. Comment on the result in light of Theorem~\ref{TH:supell2}\,.
\end{exercise}

\medskip

\begin{exercise}
\label{EXO:packingduality}
For any $K \subset \R^d$, distance $d$ on $\R^d$ and $\eps>0$, the $\eps$-covering number $C(\eps)$ of $K$ is the cardinality of the smallest $\eps$-net of $K$. The $\eps$-packing number $P(\eps)$ of $K$ is the cardinality of the largest set $\cP \subset K$ such that $d(z, z')> \eps$ for all $z,z' \in \cP$, $z \neq z'$. Show that
$$
C(2\eps)\le P(2\eps)\le C(\eps)\,.
$$
\end{exercise}

\medskip

\begin{exercise}
\label{EXO:medofmeans}
Let $X_1, \ldots, X_n$ be $n$ independent and  random variables such that $\E[X_i]=\mu$ and $\var(X_i)\le \sigma^2$.
Fix $\delta \in (0,1)$ and assume without loss of generality that $n$ can be factored into $n=K\cdot G$ where $G=8\log(1/\delta)$ is a positive integers. 

For $g=1,\ldots, G$, let $\bar X_g$ denote the average over the $g$th group of $k$ variables. Formally  
$$
\bar X_g=\frac{1}{k}\sum_{i=(g-1)k+1}^{gk}X_i\,.
$$ 
\begin{enumerate}
\item Show that for any $g= 1, \ldots, G$,
$$
\p\big[\bar X_g - \mu >\frac{2\sigma}{\sqrt{k}}\big] \le \frac{1}{4}\,.
$$
\item Let $\hat \mu$ be defined as the median of $\{\bar X_1, \ldots, \bar X_G\}$. Show that
$$
\p\big[\hat \mu -\mu > \frac{2\sigma}{\sqrt{k}}\big] \le \p\big[\cB \ge \frac{G}{2}\big]\,,
$$
where $\cB\sim \Bin(G, 1/4)$.
\item Conclude that
$$
\p\big[\hat \mu -\mu > 4\sigma\sqrt{\frac{2\log (1/\delta)}{n}}\big] \le \delta
$$
\item Compare this result with Corollary~\ref{cor:chernoff} and Lemma~\ref{lem:subgauss}. Can you conclude that $\hat \mu -\mu \sim \sg(\bar\sigma^2/n)$ for some $\bar\sigma^2$? Conclude.
\end{enumerate}
\end{exercise}

\begin{exercise}
	\label{EXO:jl}
	The goal of this problem is to prove the following theorem:
	\begin{thm}[Johnson-Lindenstrauss Lemma]
		\label{thm:jl}
		Given \( n \) points denoted by \( X = \{x_1, \dots, x_n\} \) in \( \R^d \), let \( Q \in \R^{k \times d} \) be a random projection operator and set \( P := \sqrt{\frac{d}{k}} Q \).
		There is a constant \( C > 0 \) such that if
		\begin{equation*}
			k \geq \frac{C}{\varepsilon^2} \log n,
		\end{equation*}
		\( P \) is an \emph{\( \varepsilon \)-isometry} for \( X \), \ie
		\begin{equation*}
			(1 - \varepsilon) \| x_i - x_j \|_2^2 \leq \| P x_i - P x_j \|_2^2 \leq (1 + \varepsilon) \|x_i - x_j\|_2^2, \quad \text{for all } i, j
		\end{equation*}
		with propability at least \( 1 - 2 \exp(-c \varepsilon^2 k) \).
	\end{thm}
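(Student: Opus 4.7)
The plan is to reduce the simultaneous isometry statement, which concerns all $\binom{n}{2}$ pairwise difference vectors, to a single concentration inequality for $\|Pv\|_2^2$ with $v$ fixed, and then to take a union bound. Assume the standard normalization in which $Q \in \R^{k \times d}$ has i.i.d.\ $\cN(0, 1/d)$ entries, so that $\E\|Pv\|_2^2 = \|v\|_2^2$ for every $v \in \R^d$; essentially the same argument goes through if the entries are instead i.i.d.\ sub-Gaussian with matching variance.

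The core step is the single-vector concentration bound
$$
\p\big[\,\big|\|Pv\|_2^2 - \|v\|_2^2\big| > \eps \|v\|_2^2\,\big] \le 2\exp(-c \eps^2 k), \qquad \eps \in (0,1),
$$
for some absolute constant $c > 0$. By rotational invariance and scaling, $\|Pv\|_2^2 / \|v\|_2^2$ has the same distribution as $\frac{1}{k}\sum_{i=1}^k Z_i^2$ with $Z_1, \ldots, Z_k$ i.i.d.\ $\cN(0,1)$. Lemma~\ref{LEM:squaredsubG} shows that each $Z_i^2 - 1$ is sub-exponential with parameter $16$, so Bernstein's inequality (Theorem~\ref{TH:Bernstein}) applied to the average $\frac{1}{k}\sum_{i=1}^k (Z_i^2 - 1)$ yields the stated two-sided bound, since for $\eps \in (0,1)$ Bernstein sits in its Gaussian regime $e^{-c\eps^2 k}$. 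Alternatively, the upper tail comes directly from Exercise~\ref{EXO:chi2}(d), and the lower tail from the analogous Chernoff computation on the MGF of $1 - Z^2$ for $s > 0$.

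Applying the single-vector bound to the $\binom{n}{2}$ difference vectors $v_{ij} := x_i - x_j$ (pairs with $x_i = x_j$ are trivial), a union bound gives
$$
\p\big[\,\exists\, i<j:\ \big|\|Pv_{ij}\|_2^2 - \|v_{ij}\|_2^2\big| > \eps \|v_{ij}\|_2^2\,\big] \le 2\binom{n}{2} \exp(-c \eps^2 k) \le n^2 \exp(-c \eps^2 k).
$$
Choosing $C = 4/c$ in the hypothesis $k \ge (C/\eps^2) \log n$ forces $c\eps^2 k \ge 4 \log n$, so the right-hand side is at most $\exp(-c\eps^2 k / 2) \le 2\exp(-c'\eps^2 k)$ with $c' = c/2$, which is exactly the failure probability in the statement. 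The only substantive step is the two-sided $\chi^2_k/k$ concentration, but with Lemma~\ref{LEM:squaredsubG} and Bernstein's inequality already in hand, this amounts to a direct citation; the rest—rotational invariance, the union bound, and the calibration of $k$—is routine bookkeeping.
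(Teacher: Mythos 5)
Your proof is correct, but for a different model of $Q$ than the one the paper intends, and the discrepancy is not merely cosmetic. In the paper's setup (built up through the parts of Problem~\ref{EXO:jl}), ``random projection operator'' means a genuine orthogonal projection onto a uniformly random $k$-dimensional subspace of $\R^d$: one takes $Q = \Pi U$ with $\Pi$ the coordinate projection and $U$ Haar-distributed orthogonal, so that $Q$ has orthonormal rows and $Q^\top Q$ is idempotent. By rotational invariance, for a fixed unit vector $v$ the quantity $\|Qv\|_2^2$ then has the same distribution as
$$
L = \frac{\sum_{i=1}^{k} y_i^2}{\sum_{i=1}^{d} y_i^2}, \qquad y_1,\dots,y_d \text{ i.i.d.\ } \cN(0,1),
$$
which is a $\mathrm{Beta}(k/2,(d-k)/2)$ random variable, \emph{not} $\chi^2_k/k$. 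The normalization by $\|Y\|_2$ introduces dependence between the numerator and the denominator, so you cannot simply invoke Lemma~\ref{LEM:squaredsubG} and Bernstein on an average of independent sub-exponential summands; the paper's exercise instead runs a Chernoff bound directly on the ratio $\p(\sum_{i\le k} y_i^2 \le \beta\frac{k}{d}\sum_{i\le d} y_i^2)$, using the $\chi^2$ MGF (Problem~\ref{EXO:chi2}(a)) on both sums, and then optimizes the two free parameters to get the two-sided $\exp(\frac{k}{2}(1-\beta+\log\beta))$ tails.

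Your version, with $Q$ having i.i.d.\ $\cN(0,1/d)$ entries, is the Gaussian-matrix JL lemma. It is correct, the $\chi^2_k/k$ concentration you cite does follow from Lemma~\ref{LEM:squaredsubG} plus Theorem~\ref{TH:Bernstein} in the regime $\eps\in(0,1)$, and the union-bound arithmetic is fine. But that $Q$ is not a projection operator at all: $\sqrt{d/k}\,Q$ is not idempotent and its rows are not orthonormal. The coincidence that the rescaling $\sqrt{d/k}$ makes the expectations match disguises the fact that the two models produce different distributions for $\|Pv\|_2^2/\|v\|_2^2$ (unbounded $\chi^2_k/k$ versus a bounded Beta random variable), hence different constants $c$ and $C$. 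If you want a proof of the theorem as the paper means it, you need the ratio-of-chi-squareds Chernoff argument, or an explicit reduction showing how the Gaussian bound transfers to the spherical one (which is true in the limit $d\to\infty$ but requires an argument for finite $d$). As a proof of the standard Gaussian-matrix JL lemma, what you wrote is complete.
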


	\begin{enumerate}
	\item Convince yourself that if \( d > n \), there is a projection \( P \in \R^{n \times d} \) to an \( n \) dimensional subspace  such that \( \| P x_i - P x_j \|_2 = \| x_i - x_j \|_2 \), \ie pairwise distances are exactly preserved.
	\end{enumerate}

	Let \( k \leq d  \) be two integers, \( Y = (y_1, \dots, y_d) \sim \cN(0, I_{d \times d} ) \) independent and identically distributed Gaussians and \( Q \in \R^{d \times k} \) the projection onto the first \( k \) coordinates, \ie \( Qy = (y_1, \dots, y_k) \).
	Define \( Z = \frac{1}{\|Y\|}QY = \frac{1}{\|Y\|} (y_1, \dots, y_k) \) and \( L = \|Z\|^2 \).

	\begin{enumerate}[resume]
	\item Show that \( \E[L] = \frac{k}{d} \).

	\item Show that for all \( t > 0 \) such that \( 1 - 2t(k \beta - d) > 0 \) and \( 1 - 2t \beta k > 0 \),
	\begin{equation*}
		\label{eq:a}
		\p \left( \sum_{i = 1}^{k} y_i^2 \leq \beta \frac{k}{d} \sum_{i = 1}^{d} y_i^2 \right) \leq (1 - 2t(k \beta - d))^{-k/2} (1 - 2t \beta k)^{-(d-k)/2}
	\end{equation*}
	(Hint: Show that \( \E[\e^{\lambda X^2}] = \frac{1}{\sqrt{1 - 2 \lambda}} \) for \( \lambda < \frac{1}{2} \) if \( X \sim \cN(0,1) \).)

	\item Conclude that for \( \beta < 1 \),
	\begin{equation*}
		\label{eq:b}
		\p\left(L \leq \beta \frac{k}{d}\right) \leq \exp \left( \frac{k}{2} (1 - \beta + \log \beta) \right).
	\end{equation*}

	\item Similarly, show that for \( \beta > 1 \),
	\begin{equation*}
		\label{eq:c}
		\p\left(L \geq \beta \frac{k}{d}\right) \leq \exp \left( \frac{k}{2} (1 - \beta + \log \beta) \right).
	\end{equation*}

	\item Show that for a random projection operator \( Q \in \R^{k \times d} \) and a fixed vector \( x \in \R^d \),
		\begin{enumerate}
			\item \( \E[\|Qx\|^2] = \frac{k}{d} \|x\|^2 \).
			\item For \( \varepsilon \in (0, 1) \), there is a constant \( c > 0 \) such that with probability at least \( 1 - 2 \exp(-c k \varepsilon^2) \),
	\begin{equation*}
		\label{eq:d}
		(1 - \varepsilon) \frac{k}{d} \| x \|^2 \leq \| Q x \|_2^2 \leq (1 + \varepsilon) \frac{k}{d} \|x\|_2^2.
	\end{equation*}
	(Hint: Think about how to apply the previous results in this case and use the inequalities \( \log (1-\varepsilon) \leq -\varepsilon - \varepsilon^2/2 \) and \( \log (1+\varepsilon) \leq \varepsilon - \varepsilon^2/2 + \varepsilon^3/3 \).)
\end{enumerate}

	\item Prove Theorem \ref{thm:jl}.
\end{enumerate}
\end{exercise}

\chapter{Linear Regression Model}
\label{chap:GSM}

\newcommand{\thetahard}{\hat \theta^{\textsc{hrd}}}
\newcommand{\thetasoft}{\hat \theta^{\textsc{sft}}}
\newcommand{\thetabic}{\hat \theta^{\textsc{bic}}}
\newcommand{\thetalasso}{\hat \theta^{\cL}}
\newcommand{\thetaslope}{\hat \theta^{\cS}}

\newcommand{\thetals}{\hat \theta^{\textsc{ls}}}
\newcommand{\thetalsm}{\tilde \theta^{\textsc{ls}}_X}
\newcommand{\thetaridge}{\hat\theta^{\mathrm{ridge}}_\tau}
\newcommand{\thetalsK}{\hat \theta^{\textsc{ls}}_K}
\newcommand{\muls}{\hat \mu^{\textsc{ls}}}

In this chapter, we consider the following regression model:
\begin{equation}
\label{EQ:regmod}
Y_i=f(X_i)+\eps_i,\quad i=1, \ldots, n\,,
\end{equation}
where $\eps=(\eps_1, \ldots, \eps_n)^\top $ is sub-Gaussian with variance proxy $\sigma^2$ and such that $\E[\eps]=0$. Our goal is to estimate the function $f$ under a linear assumption. Namely, we assume that $x \in \R^d$ and $f(x)=x^\top\theta^*$ for some unknown $\theta^* \in \R^d$.

\section{Fixed design linear regression}
\label{SEC:fixed_Vs_random}
Depending on the nature of the \emph{design} points $X_1, \ldots, X_n$, we will favor a different measure of risk. In particular, we will focus either on \emph{fixed} or \emph{random} design.

\subsection{Random design}
The case of random design corresponds to the statistical learning setup. Let $(X_1,Y_1), \ldots, (X_{n+1}, Y_{n+1})$ be $n+1$ \iid random couples. Given the pairs $(X_1,Y_1), \ldots, (X_{n}, Y_{n})$, the goal is construct a function $\hat f_n$ such that $\hat f_n(X_{n+1})$ is a good predictor of $Y_{n+1}$. Note that when $\hat f_n$ is constructed, $X_{n+1}$ is still unknown and we have to account for what value it is likely to take.

 Consider the following example from~\cite[Section~3.2]{HasTibFri01}. The response variable $Y$ is the log-volume of a cancerous tumor, and the goal is to predict it based on $X \in \R^6$, a collection of variables that are easier to measure (age of patient, log-weight of prostate, \ldots). Here the goal is clearly to construct $f$ for \emph{prediction} purposes. Indeed, we want to find an automatic mechanism that outputs a good prediction of the log-weight of the tumor given certain inputs for a new (unseen) patient. 
 
A natural measure of performance here is the $L_2$-risk employed in the introduction:
$$
R(\hat f_n)=\E[Y_{n+1}-\hat f_n(X_{n+1})]^2=\E[Y_{n+1}-f(X_{n+1})]^2+\|\hat f_n -f\|^2_{L^2(P_X)}\,,
$$
where $P_X$ denotes the marginal distribution of $X_{n+1}$. It measures how good the prediction of $Y_{n+1}$ is in average over realizations of $X_{n+1}$. In particular, it does not put much emphasis on values of $X_{n+1}$ that are not very likely to occur.

Note that if the $\eps_i$ are random variables with variance $\sigma^2$, then one simply has $R(\hat f_n)=\sigma^2+\|\hat f_n -f\|^2_{L^2(P_X)}$. Therefore, for random design, we will focus on the squared $L_2$ norm $\|\hat f_n -f\|^2_{L^2(P_X)}$ as a measure of accuracy. It measures how close $\hat f_n$ is to the unknown $f$ \emph{in average} over realizations of $X_{n+1}$.

\subsection{Fixed design}
In fixed design, the points (or vectors) $X_1, \ldots, X_n$ are \emph{deterministic}. To emphasize this fact, we use lowercase letters $x_1, \ldots, x_n$ to denote fixed design. Of course, we can always think of them as realizations of a random variable but the distinction between fixed and random design is deeper and significantly affects our measure of performance. Indeed, recall that for random design, we look at the performance \emph{in average} over realizations of $X_{n+1}$. Here, there is no such thing as a marginal distribution of $X_{n+1}$.
Rather, since the design points $x_1, \ldots, x_n$ are considered deterministic, our goal is to estimate $f$ \emph{only} at these points. This problem is sometimes called \emph{denoising} since our goal is to recover $f(x_1), \ldots, f(x_n)$ given noisy observations of these values. 

In many instances, fixed designs exhibit particular structures.
A typical example is the \emph{regular design} on $[0,1]$, given by $x_i=i/n$, $i=1, \ldots, n$. Interpolation between these points is possible under smoothness assumptions.

Note that in fixed design, we observe $\mu^*+\eps$, where $\mu^*=\big(f(x_1), \ldots, f(x_n)\big)^\top \in \R^n$ and $\eps=(\eps_1, \ldots, \eps_n)^\top \in \R^n$  is sub-Gaussian with variance proxy $\sigma^2$. Instead of a functional estimation problem, it is often simpler to view this problem as a vector problem in $\R^n$. This point of view will allow us to leverage the Euclidean geometry of $\R^n$.

In the case of fixed design, we will focus on the \emph{Mean Squared Error} (MSE) as a measure of performance. It is defined by
$$
\MSE(\hat f_n)=\frac{1}{n}\sum_{i=1}^n \big(\hat f_n(x_i)-f(x_i)\big)^2\,.
$$
Equivalently, if we view our problem as a vector problem, it is defined by 
$$
\MSE(\hat \mu)=\frac{1}{n}\sum_{i=1}^n \big(\hat \mu_i -\mu^*_i\big)^2=\frac{1}{n} |\hat \mu -\mu^*|_2^2\,.
$$
Often, the design vectors $x_1, \ldots, x_n \in \R^d$ are stored in an $n\times d$ design matrix $\X$, whose $j$th row is given by $x_j^\top$. With this notation, the linear regression model can be written as
\begin{equation}
\label{EQ:regmod_matrix}
Y = \X \theta^* +\eps\,,
\end{equation}
where $Y=(Y_1, \ldots, Y_n)^\top$ and $\eps=(\eps_1, \ldots, \eps_n)^\top$. Moreover,
\begin{equation}
\label{EQ:mse_matrix}
\MSE(\X\hat \theta)=\frac{1}{n} |\X(\hat \theta -\theta^*)|_2^2=(\hat \theta -\theta^*)^\top \frac{\X^\top \X}{n} (\hat \theta -\theta^*)\,.
\end{equation}

A natural example of fixed design regression is image denoising. Assume that $\mu^*_i, i \in 1, \ldots, n$ is the grayscale value of pixel $i$ of an image. We do not get to observe the image $\mu^*$ but rather a noisy version of it $Y=\mu^*+\eps$. Given a library of $d$ images $\{x_1, \ldots, x_d\}, x_j \in \R^{n}$, our goal is to recover the original image $\mu^*$ using linear combinations of the images $x_1, \ldots, x_d$. This can be done fairly accurately (see Figure~\ref{FIG:number6}).

\begin{figure}[h] \centering
\includegraphics[width=0.3\textwidth]{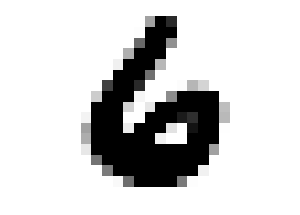}%
\includegraphics[width=0.3\textwidth]{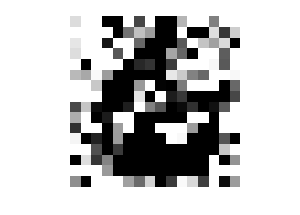}%
\includegraphics[width=0.3\textwidth]{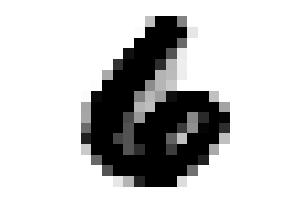}%
\caption{Reconstruction of the digit ``6": Original (left), Noisy (middle) and Reconstruction (right). Here $n=16\times 16=256$ pixels. Source~\cite{RigTsy11}.} \label{FIG:number6}
\end{figure}

As we will see in Remark~\ref{rem:lam_min}, properly choosing the design also ensures that if $\MSE(\hat f)$ is small for some linear estimator $\hat f(x)=x^\top \hat \theta$, then $|\hat \theta -\theta^*|_2^2$ is also small.

\begin{center}
\MIT{\framebox{\textbf{In this chapter we only consider the fixed design case.}}}
\end{center}

\section{Least squares estimators}

Throughout this section, we consider the regression model~\eqref{EQ:regmod_matrix} with fixed design.

\subsection{Unconstrained least squares estimator}

Define the (unconstrained) \emph{least squares estimator} $\thetals$ to be any vector such that
$$
\thetals \in \argmin_{\theta \in \R^d}|Y-\X\theta|_2^2\,.
$$
Note that we are interested in estimating $\X\theta^*$ and not $\theta^*$ itself, so by extension, we also call $\muls=\X\thetals$ least squares estimator. Observe that $\muls$ is the projection of $Y$ onto the column span of $\X$.

It is not hard to see that least squares estimators of $\theta^*$ and $\mu^*=\X\theta^*$ are maximum likelihood estimators when $\eps \sim \cN(0,\sigma^2I_n)$. 

\begin{prop}
The least squares estimator $\muls=\X\thetals \in \R^n$ satisfies
$$
\X^\top \muls=\X^\top Y\,.
$$
Moreover, $\thetals$ can be chosen to be
$$
\thetals = (\X^\top \X)^\dagger\X^\top Y\,,
$$
where $(\X^\top \X)^\dagger$ denotes the Moore-Penrose pseudoinverse of $\X^\top \X$.
\end{prop}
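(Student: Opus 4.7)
The plan is to derive the two claims from first-order optimality conditions for the convex quadratic $\theta \mapsto |Y-\X\theta|_2^2$, and then check that the proposed pseudoinverse formula solves those optimality conditions.

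First I would expand the objective as
$$
|Y-\X\theta|_2^2 = Y^\top Y - 2\theta^\top \X^\top Y + \theta^\top \X^\top \X\theta,
$$
which is convex in $\theta$ since $\X^\top\X \succeq 0$. Hence $\thetals$ minimizes this objective if and only if the gradient vanishes at $\thetals$, i.e.\
$$
\X^\top \X \thetals = \X^\top Y,
$$
which are the \emph{normal equations}. Multiplying $\muls = \X\thetals$ on the left by $\X^\top$ immediately gives $\X^\top \muls = \X^\top Y$, proving the first claim. (Alternatively one can observe geometrically that $\muls$ is the projection of $Y$ onto the column span of $\X$, so $Y-\muls$ is orthogonal to every column of $\X$.)

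For the second claim, I would verify that $\hat\theta := (\X^\top\X)^\dagger \X^\top Y$ satisfies the normal equations. Set $A = \X^\top\X$, which is symmetric positive semi-definite. A standard property of the Moore–Penrose pseudoinverse is that $AA^\dagger$ is the orthogonal projection onto $\mathrm{range}(A)$. Thus the key identity to establish is $\X^\top Y \in \mathrm{range}(\X^\top\X)$, equivalently $\mathrm{range}(\X^\top) \subseteq \mathrm{range}(\X^\top\X)$. I would prove the reverse inclusion is trivial and the forward inclusion follows from $\ker(\X^\top\X) = \ker(\X)$: if $\X^\top\X v = 0$ then $|\X v|_2^2 = v^\top\X^\top\X v = 0$ so $\X v = 0$; taking orthogonal complements gives $\mathrm{range}(\X^\top\X) = \mathrm{range}(\X^\top)$. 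Consequently
$$
\X^\top\X \hat\theta \;=\; (\X^\top\X)(\X^\top\X)^\dagger \X^\top Y \;=\; \X^\top Y,
$$
so $\hat\theta$ is a valid choice of least squares estimator.

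The only mild subtlety in the argument is this range identity $\mathrm{range}(\X^\top\X) = \mathrm{range}(\X^\top)$, which is what makes the pseudoinverse formula work even when $\X^\top\X$ is singular (the typical situation in high dimensions when $d > n$). Everything else is just unwinding the definition of the minimizer of a convex quadratic, so there is no real obstacle — the proposition is essentially a one-paragraph verification.
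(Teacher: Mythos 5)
Your proof is correct and follows essentially the same route as the paper: expand the quadratic, use convexity to reduce to the normal equations $\X^\top\X\thetals = \X^\top Y$, then observe the pseudoinverse solves them. The one place you go beyond the paper is the second claim, where the paper simply asserts it ``follows from the definition of the Moore--Penrose pseudoinverse'' while you actually verify it by proving $\mathrm{range}(\X^\top\X) = \mathrm{range}(\X^\top)$ via the kernel identity $\ker(\X^\top\X)=\ker(\X)$; this is exactly the fact needed to justify that $(\X^\top\X)(\X^\top\X)^\dagger\X^\top Y = \X^\top Y$ even when $\X^\top\X$ is singular, so supplying it is a genuine improvement in rigor over the paper's one-line dismissal.
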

\begin{proof}
The function $\theta \mapsto |Y-\X\theta|_2^2$ is convex so any of its minima satisfies
$$
\nabla_\theta |Y-\X\theta|_2^2=0,
$$
where $\nabla_\theta$ is the gradient operator. Using matrix calculus, we find
$$
\nabla_\theta |Y-\X\theta|_2^2=\nabla_\theta\big\{|Y|_2^2 -2Y^\top \X \theta + \theta^\top \X^\top \X\theta\big\}=-2(Y^\top\X-\theta^\top \X^\top \X)^\top\,.
$$
Therefore, solving $\nabla_\theta |Y-\X\theta|_2^2=0$ yields
$$
\X^\top\X \theta=\X^\top Y\,.
$$
It concludes the proof of the first statement. The second statement follows from the definition of the Moore-Penrose pseudoinverse.
\end{proof}
We are now going to prove our first result on the finite sample performance of the least squares estimator for fixed design.
\begin{thm}
\label{TH:lsOI}
Assume that the linear model~\eqref{EQ:regmod_matrix} holds where $\eps\sim \sg_n(\sigma^2)$. Then the least squares estimator $\thetals$ satisfies
$$
\E\big[\MSE(\X\thetals)\big]=\frac{1}{n}\E|\X\thetals - \X\theta^*|_2^2 \lesssim \sigma^2 \frac{r}{n}\,,
$$
where $r=\rank(\X^\top \X)$. Moreover, for any $\delta>0$, with probability at least $1-\delta$, it holds
$$
\MSE(\X\thetals)\lesssim \sigma^2 \frac{r+\log(1/\delta)}{n}\,.
$$
\end{thm}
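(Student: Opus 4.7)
The plan is to exploit the projection structure: since $\muls = \X \thetals$ is the orthogonal projection of $Y$ onto the column space of $\X$, and since $\X\theta^*$ already lies in that column space, we get the clean identity
$$
\muls - \X\theta^* = P(Y - \X\theta^*) = P\eps,
$$
where $P$ is the orthogonal projection onto $\mathrm{col}(\X)$, a subspace of dimension $r = \rank(\X^\top\X)$. Thus everything reduces to controlling $|P\eps|_2^2$.

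To reduce this to a sub-Gaussian quantity on a low-dimensional space, write $P = UU^\top$ with $U \in \R^{n \times r}$ having orthonormal columns, so $|P\eps|_2^2 = |U^\top \eps|_2^2$. Set $Z := U^\top \eps \in \R^r$. For any unit $v \in \cS^{r-1}$, $v^\top Z = (Uv)^\top \eps$ and $|Uv|_2 = 1$, so by the definition of $\sg_n(\sigma^2)$ applied to $\eps$, we get $Z \sim \sg_r(\sigma^2)$.

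For the expectation bound, I would use that for a centered sub-Gaussian variable the variance is at most the variance proxy (a one-line Taylor-expansion check on the MGF inequality), which gives $\E[\eps\eps^\top] \preceq \sigma^2 I_n$. Then
$$
\E|\muls - \X\theta^*|_2^2 = \E[\eps^\top P\eps] = \tr(P\,\E[\eps\eps^\top]) \le \sigma^2 \tr(P) = \sigma^2 r,
$$
and dividing by $n$ yields the expectation claim. For the high-probability bound, note that $|P\eps|_2 = |Z|_2 = \sup_{\theta \in \cB_2^r} \theta^\top Z$, so Theorem~\ref{TH:supell2} applied to $Z \in \sg_r(\sigma^2)$ gives, with probability at least $1-\delta$,
$$
|P\eps|_2 \le 4\sigma\sqrt{r} + 2\sigma\sqrt{2\log(1/\delta)}.
$$
Squaring via $(a+b)^2 \le 2a^2 + 2b^2$ turns this into $|P\eps|_2^2 \lesssim \sigma^2\bigl(r + \log(1/\delta)\bigr)$, and dividing by $n$ yields the desired PAC bound.

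There is no real obstacle: the whole argument is the observation that the error $\muls - \X\theta^*$ equals $P\eps$, plus an appeal to the already-established sup-over-$\ell_2$-ball bound in Theorem~\ref{TH:supell2}. The only mildly delicate step is justifying $Z \in \sg_r(\sigma^2)$, which follows immediately from $U$ having orthonormal columns so that unit vectors in $\R^r$ are mapped to unit vectors in $\R^n$.
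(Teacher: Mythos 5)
Your argument is correct, and it reaches the same reduction as the paper (controlling an $r$-dimensional sub-Gaussian vector supported on $\mathrm{col}(\X)$) but by a genuinely cleaner route in the unconstrained case. The paper starts from the \emph{one-sided} basic inequality $|Y-\X\thetals|_2^2 \le |Y-\X\theta^*|_2^2$, rearranges to $|\X\thetals-\X\theta^*|_2^2\le 2\eps^\top\X(\thetals-\theta^*)$, and then sups out the unknown direction to land on $|\X\thetals-\X\theta^*|_2^2\le 4\sup_{u\in\cB_2}(\tilde\eps^\top u)^2$, paying a factor of $4$ that the argument cannot remove. You instead use the \emph{exact identity} $\muls-\X\theta^*=P\eps$, valid because $\muls$ is the orthogonal projection of $Y$ onto $\mathrm{col}(\X)$ and $\X\theta^*$ already lies there; this gives $|\X\thetals-\X\theta^*|_2^2=|U^\top\eps|_2^2$ with no slack. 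The two payoffs are: your expectation bound has constant $1$ rather than the paper's $16$ (the trace computation $\tr(P\,\E[\eps\eps^\top])\le\sigma^2 r$ is also slicker than invoking Lemma~\ref{LEM:subgauss_moments} coordinatewise), and the high-probability step is the same appeal to Theorem~\ref{TH:supell2}. The tradeoff is that your identity-based argument does not survive the passage to \emph{constrained} least squares $\thetalsK$ (where $\X\thetalsK$ is no longer the full projection of $Y$), whereas the paper's basic-inequality$+$sup-out template carries over verbatim to Theorems~\ref{TH:ell1const} and~\ref{TH:bss1}. For this particular theorem your proof is strictly better; the paper's phrasing is chosen for reuse.
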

\begin{proof}
Note that by definition
\begin{equation}
\label{EQ:fund_ineq_ls}
|Y-\X\thetals|_2^2 \le |Y-\X\theta^*|_2^2=|\eps|^2_2\,.
\end{equation}
Moreover, 
$$
|Y-\X\thetals|_2^2 =|\X\theta^*+\eps-\X\thetals|_2^2= |\X\thetals-\X\theta^*|_2^2-2\eps^\top\X(\thetals -\theta^*) + |\eps|_2^2\,.
$$
Therefore, we get
\begin{equation}
\label{EQ:bound_ls_1}
|\X\thetals-\X\theta^*|_2^2\le 2\eps^\top\X(\thetals - \theta^*)=2|\X\thetals-\X\theta^*|_2\frac{\eps^\top\X(\thetals - \theta^*)}{|\X(\thetals - \theta^*)|_2}
\end{equation}
Note that it is difficult to control 
$$
\frac{\eps^\top\X(\thetals - \theta^*)}{|\X(\thetals - \theta^*)|_2}
$$
as $\thetals$ depends on $\eps$ and the dependence structure of this term may be complicated. To remove this dependency, a traditional technique is to ``sup-out" $\thetals$. This is typically where maximal inequalities are needed. Here we have to be a bit careful. 

Let $\Phi=[\phi_1, \ldots, \phi_r] \in \R^{n \times r}$ be an orthonormal basis of the column span of $\X$. In particular, there exists $\nu \in \R^r$ such that $\X(\thetals - \theta^*)=\Phi \nu$. It yields
$$
\frac{\eps^\top\X(\thetals - \theta^*)}{|\X(\thetals - \theta^*)|_2}=\frac{\eps^\top \Phi \nu}{|\Phi\nu|_2} =\frac{\eps^\top \Phi \nu}{|\nu|_2}=
\tilde \eps^\top \frac{\nu}{|\nu|_2}\le \sup_{u \in \cB_2}\tilde \eps^\top u\,,
$$
where $\cB_2$ is the unit ball of $\R^r$ and   $\tilde \eps=\Phi^\top\eps$. Thus
$$
|\X\thetals-\X\theta^*|_2^2 \le 4\sup_{u \in \cB_2}(\tilde \eps^\top u)^2\,,
$$
Next, let \( u \in \cS^{r-1} \) and note that $|\Phi u|_2^2=u^\top \Phi^\top \Phi u=u^\top u=1$, so that for any $s \in \R$, we have
$$
\E[e^{s \tilde \eps^\top  u}]=\E[e^{s \eps^\top \Phi u}]\le \e^{\frac{s^2\sigma^2}{2}}\,.
$$
Therefore, $\tilde \eps\sim \sg_r(\sigma^2)$.

To conclude the bound in expectation, observe that Lemma~\ref{LEM:subgauss_moments} yields 
$$
 4\E\big[\sup_{u \in \cB_2}(\tilde \eps^\top  u  )^2\big]=4\sum_{i=1}^r \E[\tilde \eps_i^2]\le 16\sigma^2r\,.
$$
Moreover, with probability $1-\delta$, it follows from the last step in the proof\footnote{we could use Theorem~\ref{TH:supell2} directly here but at the cost of a factor 2 in the constant.} of Theorem~\ref{TH:supell2} that
$$
\sup_{u \in \cB_2}(\tilde \eps^\top u)^2 \le 8\log(6)\sigma^2 r  + 8\sigma^2\log(1/\delta)\,.
$$
\end{proof}
\begin{rem}
\label{rem:lam_min}
If $d \le n$ and  $B:=\frac{\X^\top \X}{n}$ has rank $d$, then we have
$$
|\thetals -\theta^*|_2^2 \le \frac{\MSE(\X\thetals)}{\lambda_{\mathrm{min}} (B)}\,,
$$
and we can use Theorem~\ref{TH:lsOI} to bound $|\thetals -\theta^*|_2^2$ directly. 
By contrast, in the high-dimensional case, we will need more structural assumptions to come to a similar conclusion.
\end{rem}

\subsection{Constrained least squares estimator}

Let $K\subset \R^d$ be a symmetric convex set. If we know \emph{a priori} that $\theta^* \in K$, we may prefer a \emph{constrained least squares} estimator $\thetalsK$ defined by 
$$
\thetalsK \in \argmin_{\theta \in K} |Y-\X\theta|_2^2\,.
$$
The fundamental inequality~\eqref{EQ:fund_ineq_ls} would still hold and the bounds on the MSE may be smaller. Indeed, ~\eqref{EQ:bound_ls_1} can be replaced by
$$
|\X\thetalsK-\X\theta^*|_2^2\le 2\eps^\top\X(\thetalsK - \theta^*)\le 2\sup_{\theta \in K-K}(\eps^\top\X\theta)\,,
$$
where $K-K=\{x-y\,:\, x, y \in K\}$. It is easy to see that if $K$ is symmetric ($x \in K \Leftrightarrow -x \in K$) and convex, then $K-K=2K$ so that
$$
2\sup_{\theta \in K-K}(\eps^\top\X\theta)=4\sup_{v \in \X K}(\eps^\top v)
$$
where $\X K=\{\X\theta\,:\, \theta \in K\} \subset \R^n$. This is a measure of the size (width) of $\X K$. If $\eps\sim \cN(0,I_d)$, the expected value of the above supremum is actually called \emph{Gaussian width} of $\X K$. While $\eps$ is not Gaussian but sub-Gaussian here, similar properties will still hold.

\subsubsection{$\ell_1$ constrained least squares}

Assume here that $K=\cB_1$ is the unit $\ell_1$ ball of $\R^d$. Recall that it is defined by
$$
\cB_1=\Big\{x \in \R^d\,:\, \sum_{i=1}^d|x_i|\le 1\big\}\,,
$$
and it has exactly $2d$ vertices $\cV=\{e_1, -e_1, \ldots, e_d, -e_d\}$, where $e_j$ is the $j$-th vector of the canonical basis of $\R^d$ and is defined by 
$$
e_j=(0, \ldots, 0,\underbrace{1}_{j\text{th position}},0, \ldots, 0)^\top\,.
$$
It implies that the set $\X K=\{\X\theta, \theta \in K\} \subset \R^n$ is also a polytope with at most $2d$ vertices that are in the set $\X\cV=\{\X_1, -\X_1, \ldots, \X_d, -\X_d\}$ where $\X_j$ is the $j$-th column of $\X$. Indeed, $\X K$ is obtained by rescaling and embedding (resp. projecting) the polytope $K$ when $d \le n$ (resp., $d \ge n$).  Note that some columns of $\X$ might not be vertices of $\X K$ so that $\X \cV$ might be a strict superset of the set of vertices of $\X K$.

\begin{thm}
\label{TH:ell1const}
Let $K=\cB_1$ be the unit $\ell_1$ ball of $\R^d, d\ge 2$ and assume that  $\theta^* \in \cB_1$. Moreover, assume the conditions of Theorem~\ref{TH:lsOI} and that the columns of $\X$ are normalized in such a way that $\max_j|\X_j|_2\le \sqrt{n}$. Then the constrained least squares estimator $\thetals_{\cB_1}$ satisfies
$$
\E\big[\MSE(\X\thetals_{\cB_1})\big]=\frac{1}{n}\E|\X\thetals_{\cB_1} - \X\theta^*|_2^2 \lesssim \sigma\sqrt{ \frac{\log d}{n}}\,,
$$
Moreover, for any $\delta>0$, with probability $1-\delta$, it holds
$$
\MSE(\X\thetals_{\cB_1})\lesssim \sigma \sqrt{\frac{\log (d/\delta)}{n}}\,.
$$
\end{thm}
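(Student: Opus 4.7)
The plan is to follow exactly the template set up in the paragraphs immediately preceding the theorem, specializing to $K=\cB_1$, and then invoking the maximal inequality for sub-Gaussians over a finite (polytope) set from Theorem~\ref{TH:polytope}.

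First, I would start from the fundamental inequality already derived in the text: since $\theta^*\in\cB_1=K$ is feasible,
\[
|\X\thetalsK - \X\theta^*|_2^2 \;\le\; 2\eps^\top \X(\thetalsK - \theta^*)\;\le\; 4\sup_{v\in \X K} \eps^\top v,
\]
using $K-K = 2K$ for symmetric convex $K$. The key structural input is then that $\X\cB_1$ is a convex polytope whose vertices lie in $\{\pm \X_1,\ldots, \pm \X_d\}$, so by Lemma~\ref{lem:poly},
\[
\sup_{v\in\X\cB_1}\eps^\top v \;=\; \max_{1\le j\le d}\,|\eps^\top \X_j|.
\]

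Second, I would check the sub-Gaussianity of the inner products. Since $\eps\sim\sg_n(\sigma^2)$ and $|\X_j|_2\le\sqrt n$, the one-dimensional marginal $\eps^\top \X_j$ is sub-Gaussian with variance proxy $\sigma^2|\X_j|_2^2\le\sigma^2 n$. Applying Theorem~\ref{TH:polytope} (or equivalently Theorem~\ref{TH:finitemax}) to the $2d$ random variables $\pm\eps^\top \X_j$ yields
\[
\E\Big[\max_{1\le j\le d}|\eps^\top \X_j|\Big] \;\le\; \sigma\sqrt{n}\sqrt{2\log(2d)},
\]
and, for any $t>0$,
\[
\p\Big(\max_{1\le j\le d}|\eps^\top \X_j|>t\Big)\;\le\; 2d\,\exp\!\Big(-\tfrac{t^2}{2\sigma^2 n}\Big).
\]

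Third, I would combine these two steps. Dividing the fundamental inequality by $n$ gives
\[
\MSE(\X\thetalsK)\;\le\; \frac{4}{n}\max_{j}|\eps^\top \X_j|,
\]
so taking expectations yields $\E[\MSE(\X\thetalsK)]\le 4\sigma\sqrt{2\log(2d)/n}\lesssim \sigma\sqrt{\log d/n}$, which is the first claim. For the high-probability bound, I choose $t=\sigma\sqrt{2n\log(2d/\delta)}$ so that the tail bound above is at most $\delta$; substituting gives $\MSE(\X\thetalsK)\le 4\sigma\sqrt{2\log(2d/\delta)/n}\lesssim \sigma\sqrt{\log(d/\delta)/n}$ with probability at least $1-\delta$.

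There is no real obstacle here: the only subtlety is recognizing that the symmetry of $\cB_1$ lets us write $K-K=2K$ and that the polytope structure replaces the hard supremum over a continuum by a maximum over $2d$ sub-Gaussians with variance proxy controlled by the column normalization $|\X_j|_2\le\sqrt n$. Everything else is an application of Lemma~\ref{lem:poly} and the maximal inequality Theorem~\ref{TH:polytope}, yielding the advertised $\sqrt{\log d/n}$ rate, which is the hallmark improvement over the unconstrained $d/n$ rate of Theorem~\ref{TH:lsOI} when $\theta^*$ is known to lie in the $\ell_1$ ball.
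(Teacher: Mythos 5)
Your proof is correct and follows essentially the same path as the paper: it establishes the fundamental inequality $|\X\thetals_{\cB_1}-\X\theta^*|_2^2 \le 4\sup_{v\in\X\cB_1}\eps^\top v$, identifies $\eps^\top\X_j\sim\sg(n\sigma^2)$ from the column normalization, and invokes the polytope maximal inequality (Theorem~\ref{TH:polytope}) for both the expectation and tail bounds, which is exactly what the paper does. The only cosmetic difference is that you unpack Lemma~\ref{lem:poly} explicitly before applying the finite-max bound, whereas the paper applies Theorem~\ref{TH:polytope} directly (and note the equality $\sup_{v\in\X\cB_1}\eps^\top v = \max_j|\eps^\top\X_j|$ should strictly be $\le$ since not every $\pm\X_j$ need be a vertex, though this does not affect the upper bound).
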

\begin{proof}
From the considerations preceding the theorem, we get that
$$
|\X\thetals_{\cB_1}-\X\theta^*|_2^2\le4\sup_{v \in \X K}(\eps^\top v).
$$
Observe now that since $\eps\sim \sg_n(\sigma^2)$, for any column $\X_j$ such that $|\X_j|_2\le \sqrt{n}$, the random variable $\eps^\top \X_j\sim \sg(n\sigma^2)$. Therefore, applying Theorem~\ref{TH:polytope}, we get the bound on $\E\big[\MSE(\X\thetalsK)\big]$ and for any $t\ge 0$, 
$$
\p\big[\MSE(\X\thetalsK)>t  \big]\le \p\big[\sup_{v \in \X K}(\eps^\top v)>nt/4  \big]\le 2de^{-\frac{nt^2}{32\sigma^2}}
$$
To conclude the proof, we find $t$ such that 
$$
2de^{-\frac{nt^2}{32\sigma^2}}\le \delta \ \Leftrightarrow \ t^2 \ge 32\sigma^2 \frac{ \log(2d)}{n} + 32 \sigma^2 \frac{ \log(1/\delta)}{n}\,.
$$

\end{proof}
Note that the proof of Theorem~\ref{TH:lsOI} also applies to $\thetals_{\cB_1}$ (exercise!) so that $\X \thetals_{\cB_1}$ benefits from the best of both rates,
$$
\E\big[\MSE(\X\thetals_{\cB_1})\big]\lesssim \min\Big(\sigma^2\frac{r}{n}, \sigma\sqrt{\frac{\log d}{n}}\Big)\,.
$$
This is called an \emph{elbow effect}. The elbow takes place around $r\simeq\sqrt{n}$ (up to logarithmic terms).

\subsubsection{$\ell_0$ constrained least squares}

By an abuse of notation, we call the number of non-zero coefficients of a vector \( \theta \in \R^d \) its \emph{$\ell_0$ norm} (it is not actually a norm). It is denoted by
$$
|\theta|_0=\sum_{j=1}^d \1(\theta_j \neq 0)\,.
$$
We call a vector $\theta$ with ``small" $\ell_0$ norm a \emph{sparse} vector. More precisely, if $|\theta|_0\le k$, we say that $\theta$ is a $k$-sparse vector. We also call \emph{support} of $\theta$ the set
$$
\supp(\theta)=\big\{j \in \{1, \ldots, d\}\,:\, \theta_j \neq 0\big\}
$$
so that $|\theta|_0=\card(\supp(\theta))=:|\supp(\theta)|$\,.
\begin{rem}
The $\ell_0$ terminology and notation comes from the fact that
$$
\lim_{q \to 0^{\tiny +}}\sum_{j=1}^d|\theta_j|^q=|\theta|_0
$$
Therefore it is really $\lim_{q \to 0^{\tiny +}}|\theta|_q^q$ but the notation $|\theta|_0^0$ suggests too much that it is always equal to 1.
\end{rem}
By extension, denote by $\cB_0(k)$ the $\ell_0$ ball of $\R^d$, i.e., the set of $k$-sparse vectors, defined by
$$
\cB_0(k)=\{\theta \in \R^d\,:\, |\theta|_0 \le k\}\,.
$$
In this section, our goal is to control the $\MSE$ of $\thetalsK$ when $K=\cB_0(k)$. Note that computing $\thetals_{\cB_0(k)}$ essentially requires computing $\binom{d}{k}$ least squares estimators, which is an exponential number in $k$. In practice this will be hard (or even impossible) but it is interesting to understand the statistical properties of this estimator and to use them as a benchmark.
\begin{thm}
\label{TH:bss1}
Fix a positive integer $k \le d/2$. Let $K=\cB_0(k)$ be  set of $k$-sparse vectors of $\R^d$ and assume that  $\theta^*\in \cB_0(k)$.  Moreover, assume the conditions of Theorem~\ref{TH:lsOI}. Then, for any $\delta>0$, with probability $1-\delta$, it holds
$$
\MSE(\X\thetals_{\cB_0(k)})\lesssim \frac{\sigma^2}{n}\log\binom{d}{2k} + \frac{\sigma^2k}{n} +\frac{\sigma^2}{n}\log(1/\delta)\,.
$$
\end{thm}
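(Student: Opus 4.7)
The plan is to mirror the proof of Theorem~\ref{TH:lsOI}, but with a union bound over all possible $2k$-element support sets, since the difference $\thetals_{\cB_0(k)} - \theta^*$ is automatically $2k$-sparse. Concretely, I start from the fundamental inequality~\eqref{EQ:fund_ineq_ls} applied to the constrained estimator (still valid since $\theta^* \in \cB_0(k)$), which gives
$$
|\X\thetals_{\cB_0(k)} - \X\theta^*|_2^2 \le 2\eps^\top \X(\thetals_{\cB_0(k)} - \theta^*).
$$

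Next, since both $\thetals_{\cB_0(k)}$ and $\theta^*$ are $k$-sparse, the vector $\thetals_{\cB_0(k)} - \theta^*$ has support contained in some (random) set $S \subset [d]$ with $|S| \le 2k$; by $k\le d/2$, we may enlarge $S$ so that $|S|=2k$. For each fixed such $S$, let $\Phi_S \in \R^{n\times r_S}$ be an orthonormal basis of the column span of $\X_S$, with $r_S = \rank(\X_S) \le 2k$, and write $\X(\thetals_{\cB_0(k)} - \theta^*) = \Phi_S \nu_S$ for some $\nu_S \in \R^{r_S}$. Exactly as in the proof of Theorem~\ref{TH:lsOI},
$$
\frac{\eps^\top \X(\thetals_{\cB_0(k)} - \theta^*)}{|\X(\thetals_{\cB_0(k)} - \theta^*)|_2}
= \frac{(\Phi_S^\top \eps)^\top \nu_S}{|\nu_S|_2}
\le \sup_{u \in \cB_2^{r_S}} (\Phi_S^\top \eps)^\top u,
$$
so that, after suping out over the (unknown) $S$,
$$
|\X\thetals_{\cB_0(k)} - \X\theta^*|_2^2 \le 4 \max_{|S|=2k}\, \sup_{u \in \cB_2^{r_S}} \bigl[(\Phi_S^\top \eps)^\top u\bigr]^2.
$$

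For any fixed $S$ the vector $\Phi_S^\top \eps$ is $\sg_{r_S}(\sigma^2)$ (same calculation as in Theorem~\ref{TH:lsOI}, using $\Phi_S^\top \Phi_S = I_{r_S}$), so by Theorem~\ref{TH:supell2} together with $r_S \le 2k$, for any $\delta' \in (0,1)$,
$$
\p\!\left[\sup_{u \in \cB_2^{r_S}} (\Phi_S^\top \eps)^\top u > 4\sigma\sqrt{2k} + 2\sigma\sqrt{2\log(1/\delta')}\right] \le \delta'.
$$
A union bound over the $\binom{d}{2k}$ subsets $S$ of size $2k$, taken with $\delta' = \delta / \binom{d}{2k}$, yields with probability at least $1-\delta$,
$$
\max_{|S|=2k}\, \sup_{u \in \cB_2^{r_S}} \bigl[(\Phi_S^\top \eps)^\top u\bigr]^2 \lesssim \sigma^2 k + \sigma^2 \log\binom{d}{2k} + \sigma^2 \log(1/\delta).
$$
Dividing the resulting bound on $|\X\thetals_{\cB_0(k)} - \X\theta^*|_2^2$ by $n$ gives the claimed rate.

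The only nontrivial step is handling the data-dependent support of $\thetals_{\cB_0(k)} - \theta^*$: one cannot apply a single subspace bound as in Theorem~\ref{TH:lsOI} because $\Phi_S$ depends on $\eps$ through $\thetals_{\cB_0(k)}$. The key trick (and main obstacle) is the upfront maximization over all $\binom{d}{2k}$ possible supports, which decouples $\Phi_S$ from the noise and turns the problem into a finite union bound; the price paid is precisely the $\log\binom{d}{2k}$ term, on top of the per-subspace $\sigma^2 k/n$ rate inherited from the unconstrained case.
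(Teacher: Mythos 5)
Your proof is correct and takes essentially the same route as the paper: reduce to the fundamental inequality, observe that $\thetals_{\cB_0(k)} - \theta^*$ is $2k$-sparse, introduce an orthonormal basis $\Phi_S$ for each candidate column span, sup out the unknown support via a union bound over the $\binom{d}{2k}$ supports, and control each term with the $\eps$-net bound for sub-Gaussian vectors. The only cosmetic difference is that you invoke the high-probability statement of Theorem~\ref{TH:supell2} with $\delta' = \delta/\binom{d}{2k}$ and then square, whereas the paper works directly with the exponential tail $\p(\sup_u (\tilde\eps_S^\top u)^2 > t) \le 6^{|S|}e^{-t/(8\sigma^2)}$ from the proof of that theorem and then solves for $t$; both yield the same $\sigma^2\bigl(k + \log\binom{d}{2k} + \log(1/\delta)\bigr)/n$ rate.
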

\begin{proof}
We begin as in the proof of Theorem~\ref{TH:lsOI} to get~\eqref{EQ:bound_ls_1}:
$$
|\X\thetalsK-\X\theta^*|_2^2\le 2\eps^\top\X(\thetalsK - \theta^*)=2|\X\thetalsK-\X\theta^*|_2\frac{\eps^\top\X(\thetalsK - \theta^*)}{|\X(\thetalsK - \theta^*)|_2}\,.
$$
We know that both $\thetalsK$ and $\theta^*$ are in $\cB_0(k)$ so that $\thetalsK-\theta^* \in \cB_0(2k)$. For any $S \subset \{1, \ldots, d\}$, let $\X_S$ denote the $n\times |S|$ submatrix of $\X$ that is obtained from the columns of $\X_j, j \in S$ of $\X$. Denote by $r_S \le |S|$ the rank of $\X_S$ and let $\Phi_S=[\phi_1, \ldots, \phi_{r_S}] \in \R^{n\times r_S}$ be an orthonormal basis of the column span of $\X_S$. Moreover, for any $\theta \in \R^d$, define $\theta(S) \in \R^{|S|}$ to be the vector with coordinates $\theta_j, j \in S$. If we denote by $\hat S=\supp(\thetalsK - \theta^*)$, we have $|\hat S|\le 2k$ and there exists $\nu \in \R^{r_{\hat S}}$ such that 
$$
\X(\thetalsK - \theta^*)=\X_{\hat S}(\thetalsK(\hat S) - \theta^*(\hat S))=\Phi_{\hat S} \nu\,.
$$
Therefore,
$$
\frac{\eps^\top\X(\thetalsK - \theta^*)}{|\X(\thetalsK - \theta^*)|_2} =\frac{\eps^\top \Phi_{\hat S} \nu}{|\nu|_2}\le \max_{|S|= 2k}\sup_{u \in \cB_2^{r_{S}}}[\eps^\top \Phi_S] u 
$$
where $\cB_2^{r_{S}}$ is the unit ball of $\R^{r_S}$. 
It yields
$$
|\X\thetalsK-\X\theta^*|_2^2 \le 4 \max_{|S|= 2k}\sup_{u \in \cB_2^{r_{S}}}(\tilde \eps_S^\top u)^2\,,
$$
with $\tilde \eps_S=\Phi_S^\top\eps \sim \sg_{r_S}(\sigma^2)$. 

Using a union bound, we get for any $t>0$, 
$$
\p\big(\max_{|S|= 2k}\sup_{u \in \cB_2^{r_S}}(\tilde \eps^\top u)^2>t\big) \le \sum_{|S|= 2k} \p\big( \sup_{u \in \cB_2^{r_S}}(\tilde \eps^\top u)^2>t\big)
$$
It follows from the proof of Theorem~\ref{TH:supell2} that for any $|S| \le 2k$,
$$
\p\big( \sup_{u \in \cB_2^{r_S}}(\tilde \eps^\top u)^2 >t\big)\le 6^{|S|}e^{-\frac{t}{8\sigma^2}} \le 6^{2k}e^{-\frac{t}{8\sigma^2}}\,.
$$
Together, the above three displays yield
\begin{equation}
\label{EQ:prMSEl0HP}
\p(|\X\thetalsK-\X\theta^*|_2^2>4t) \le \binom{d}{2k} 6^{2k} e^{-\frac{t}{8\sigma^2}}\,.
\end{equation}
To ensure that the right-hand side of the above inequality is bounded by $\delta$, we need
$$
t\ge C\sigma^2\Big\{\log \binom{d}{2k} + k\log(6) + \log(1/\delta)\Big\}\,.
$$
\end{proof}
How large is $\log\binom{d}{2k}$? It turns out that it is not much larger than $k$.
\begin{lem}
\label{lem:nchoosek}
For any integers $1\le k \le n$, it holds
$$
\binom{n}{k} \le \Big(\frac{en}{k}\Big)^k.
$$
\end{lem}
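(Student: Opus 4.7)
The plan is to use the elementary bound $\binom{n}{k} \le n^k/k!$ and then control $k!$ from below by $(k/e)^k$. Neither step is deep, but the combination gives exactly the claimed inequality.

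First I would observe that
\[
\binom{n}{k} = \frac{n(n-1)\cdots (n-k+1)}{k!} \le \frac{n^k}{k!},
\]
which is immediate from $n-j \le n$ for $0 \le j \le k-1$. So the task reduces to showing $k! \ge (k/e)^k$, equivalently $e^k \ge k^k/k!$.

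For this, I would expand the exponential series: since every term of $e^k = \sum_{j=0}^\infty k^j/j!$ is nonnegative, we have in particular
\[
e^k \ge \frac{k^k}{k!},
\]
which rearranges to $k! \ge k^k/e^k = (k/e)^k$. Substituting into the first display gives
\[
\binom{n}{k} \le \frac{n^k}{(k/e)^k} = \Big(\frac{en}{k}\Big)^k,
\]
as desired.

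There is really no main obstacle here; the only subtle point is choosing a clean lower bound for $k!$ that avoids Stirling's formula (which would give a sharper constant but is overkill). The single-term lower bound from the series for $e^k$ is the cleanest way to get the factor of $e$ in the statement.
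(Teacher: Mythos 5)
Your proof is correct, but it takes a genuinely different route from the paper. The paper proves the bound by induction on $k$: the base case $k=1$ is immediate, and the inductive step uses $\binom{n}{k+1}=\binom{n}{k}\frac{n-k}{k+1}$ together with the elementary inequality $\bigl(1+\frac1k\bigr)^k\le e$ to absorb the extra factor. You instead argue directly: you bound $\binom{n}{k}\le n^k/k!$ and then lower-bound $k!$ via the single-term estimate $e^k=\sum_{j\ge 0}k^j/j!\ge k^k/k!$. The two arguments lean on closely related facts about $e$ (your series bound and the paper's $\bigl(1+\frac1k\bigr)^k\le e$ are two faces of the same coin), but yours avoids the induction entirely and isolates the factorial lower bound $k!\ge (k/e)^k$ as a reusable standalone lemma, which is arguably a cleaner decomposition. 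The paper's inductive version keeps everything in terms of binomial recursions and never needs to name a Stirling-type bound explicitly. Either is a perfectly valid proof; yours is a little more modular.
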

\begin{proof}
Observe first that if $k=1$, since $n \ge 1$, it holds,
$$
\binom{n}{1}=n\le en=\Big(\frac{en}{1}\Big)^1
$$
Next, we proceed by induction and assume that it holds for some $k \le n-1$ that
$$
\binom{n}{k} \le \Big(\frac{en}{k}\Big)^k.
$$
Observe that
$$
\binom{n}{k+1}=\binom{n}{k}\frac{n-k}{k+1}\le  \Big(\frac{en}{k}\Big)^k\frac{n-k}{k+1} \leq \frac{e^k n^{k+1}}{(k+1)^{k+1}}\Big(1+\frac1k\Big)^k \,,
$$
where we used the induction hypothesis in the first inequality. To conclude, it suffices to observe that
\begin{equation*}
\left(1+\frac1k\right)^k\le e. \qedhere
\end{equation*}
\end{proof}

It immediately leads to the following corollary:
\begin{cor}
\label{COR:bss1}
Under the assumptions of Theorem~\ref{TH:bss1}, for any $\delta>0$, with probability at least $1-\delta$, it holds
$$
\MSE(\X\thetals_{\cB_0(k)})\lesssim \frac{\sigma^2k}{n}\log \Big(\frac{ed}{2k}\Big) + \frac{\sigma^2k}{n}\log(6) +  \frac{\sigma^2}{n}\log(1/\delta)\,.
$$
\end{cor}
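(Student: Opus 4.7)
The plan is to derive this corollary as an immediate consequence of Theorem~\ref{TH:bss1} combined with the binomial estimate in Lemma~\ref{lem:nchoosek}. Since all the statistical work has already been done in proving Theorem~\ref{TH:bss1}, the only remaining task is to simplify the combinatorial term $\log\binom{d}{2k}$ into the more interpretable quantity $k\log(ed/(2k))$.

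First I would verify that the lemma applies: Theorem~\ref{TH:bss1} assumes $k \le d/2$, so $2k \le d$ and Lemma~\ref{lem:nchoosek} can be invoked with $n \leftarrow d$, $k \leftarrow 2k$. This yields
$$
\binom{d}{2k} \le \Big(\frac{ed}{2k}\Big)^{2k},
$$
so taking logarithms gives $\log\binom{d}{2k} \le 2k \log(ed/(2k))$.

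Next I would plug this into the bound from Theorem~\ref{TH:bss1}, namely
$$
\MSE(\X\thetals_{\cB_0(k)})\lesssim \frac{\sigma^2}{n}\log\binom{d}{2k} + \frac{\sigma^2k}{n} +\frac{\sigma^2}{n}\log(1/\delta),
$$
which directly produces
$$
\MSE(\X\thetals_{\cB_0(k)})\lesssim \frac{2\sigma^2 k}{n}\log\Big(\frac{ed}{2k}\Big) + \frac{\sigma^2k}{n} +\frac{\sigma^2}{n}\log(1/\delta),
$$
and absorbing constants into $\lesssim$ matches the stated form of the corollary (with the $\frac{\sigma^2 k}{n}\log 6$ term playing the role of the bare $\sigma^2k/n$ term, up to a numerical constant). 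There is no real obstacle here; the entire content is the bookkeeping translation of the binomial coefficient bound. The takeaway, which I would emphasize briefly, is that the logarithmic factor $\log(ed/(2k))$ is only mildly larger than a constant when $k$ is proportional to $d$, and grows like $\log d$ only in the genuinely sparse regime $k \ll d$, which is the statistically interesting behavior.
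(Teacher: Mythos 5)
Your proof is correct and matches the paper's intended derivation exactly: the corollary is stated as an immediate consequence of Theorem~\ref{TH:bss1} together with Lemma~\ref{lem:nchoosek}, which is precisely what you do. The only thing to keep in mind is that Lemma~\ref{lem:nchoosek} requires $1 \le 2k \le d$, which you correctly verify from the hypothesis $k \le d/2$ (and $k \ge 1$ implicit in $k$ being a positive integer).
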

Note that for any fixed $\delta$, there exits a constant $C_\delta>0$ such that for any $n \ge 2k$, with high probability,
$$
\MSE(\X\thetals_{\cB_0(k)})\le C_\delta\frac{\sigma^2k}{n}\log \Big(\frac{ed}{2k}\Big)\,.
$$
Comparing this result with Theorem~\ref{TH:lsOI} with $r=k$, we see that the price to pay for not knowing the support of $\theta^*$ but only its size, is  a logarithmic factor in the dimension $d$.

This result immediately leads to the following bound in expectation.
\begin{cor}
\label{COR:bss2}
Under the assumptions of Theorem~\ref{TH:bss1}, 
$$
\E\big[\MSE(\X\thetals_{\cB_0(k)})\big]\lesssim\frac{\sigma^2k}{n}\log \Big(\frac{ed}{k}\Big)\,.
$$
\end{cor}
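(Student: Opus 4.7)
The plan is to derive the bound in expectation by integrating the tail bound already obtained in the proof of Theorem~\ref{TH:bss1}. Specifically, inequality~\eqref{EQ:prMSEl0HP} gives, after the substitution $s=4t$, that for all $s\ge 0$,
$$
\p\big(Z > s\big)\le A\,\exp\!\Big(-\frac{s}{32\sigma^2}\Big), \qquad A:=\binom{d}{2k}6^{2k},
$$
where $Z:=|\X\thetals_{\cB_0(k)}-\X\theta^*|_2^2$.

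Next I would use the layer-cake identity $\E[Z]=\int_0^\infty \p(Z>s)\,ds$ and split the integral at the threshold $s_0 := 32\sigma^2 \log A$ at which the exponential bound becomes nontrivial. On $[0,s_0]$ I bound the integrand by $1$; on $[s_0,\infty)$ I use the exponential tail, so that
$$
\E[Z] \;\le\; s_0 + A\int_{s_0}^{\infty} e^{-s/(32\sigma^2)}\,ds \;=\; 32\sigma^2\Big(\log A + 1\Big)\,.
$$
Dividing by $n$ and recalling $\log A = \log\binom{d}{2k}+2k\log 6$, Lemma~\ref{lem:nchoosek} gives $\log\binom{d}{2k}\le 2k\log(ed/(2k))$. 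Since $k\ge 1$ and $k\le d/2$ implies $\log(ed/k)\ge 1$, the additive constants and the term $2k\log 6$ are absorbed into $\frac{\sigma^2 k}{n}\log(ed/k)$, yielding the claimed bound
$$
\E\big[\MSE(\X\thetals_{\cB_0(k)})\big]\;\lesssim\;\frac{\sigma^2 k}{n}\log\!\Big(\frac{ed}{k}\Big)\,.
$$

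There is no real obstacle here: the work has already been done in the proof of Theorem~\ref{TH:bss1}; the corollary is a routine tail-to-expectation conversion together with an application of Lemma~\ref{lem:nchoosek}. The only point of care is to pick the split $s_0$ so that the exponential integral contributes only $O(\sigma^2)$, which is lower order than $s_0$ itself.
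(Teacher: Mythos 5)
Your proof is correct and follows essentially the same route as the paper: both integrate the tail bound from~\eqref{EQ:prMSEl0HP} via the layer-cake formula, choose a threshold ($s_0$ in your notation, $nH$ in the paper's) so that the exponential tail contributes only $O(\sigma^2)$, and then apply Lemma~\ref{lem:nchoosek} to absorb $\log\binom{d}{2k}$ and the constant $2k\log 6$ into $k\log(ed/k)$. The only cosmetic difference is that the paper writes the split as a shift $u \mapsto u+H$ inside the survival integral and uses the slightly more conservative combinatorial prefactor $\sum_{j\le 2k}\binom{d}{j}$, neither of which changes the argument or the rate.
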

\begin{proof}
It follows from~\eqref{EQ:prMSEl0HP} that for any $H\ge 0$,
\begin{align*}
\E\big[\MSE(\X\thetals_{\cB_0(k)})\big]&=\int_0^\infty\p(|\X\thetalsK-\X\theta^*|_2^2>nu) \ud u\\
&\le H+\int_0^\infty\p(|\X\thetalsK-\X\theta^*|_2^2>n(u+H)) \ud u\\
&\le H+\sum_{j=1}^{2k}\binom{d}{j} 6^{2k} \int_0^\infty e^{-\frac{n(u+H)}{32\sigma^2}} \ud u\\
&=H+\sum_{j=1}^{2k}\binom{d}{j} 6^{2k}e^{-\frac{nH}{32\sigma^2}} \frac{32\sigma^2}{n}\,.
\end{align*}
Next,  take $H$ to be such that
$$
\sum_{j=1}^{2k}\binom{d}{j} 6^{2k}e^{-\frac{nH}{32\sigma^2}}=1\,.
$$
This yields
$$
H \lesssim \frac{\sigma^2k}{n}\log \Big(\frac{ed}{k}\Big)\,,
$$
which completes the proof.
\end{proof}

\section{The Gaussian Sequence Model}

The Gaussian Sequence Model is a toy model that has received a lot of attention, mostly in the eighties. The main reason for its popularity is that it carries already most of the insight of nonparametric estimation. While the model looks very simple it allows to carry deep ideas that extend beyond its framework and in particular to the linear regression model that we are interested in. Unfortunately, we will only cover a small part of these ideas and the interested reader should definitely look at the excellent books by A. Tsybakov~\cite[Chapter~3]{Tsy09} and I. Johnstone~\cite{Joh11}.

The model is as follows:
\begin{equation}
\label{EQ:gsm}
Y_i=\theta^*_i +\eps_i\,,\qquad  i=1, \ldots, d\,,
\end{equation}
where $\eps_1, \ldots, \eps_d$ are \iid $\cN(0,\sigma^2)$ random variables. Note that often, $d$ is taken equal to $\infty$ in this sequence model and we will also discuss this case. Its links to nonparametric estimation will become clearer in Chapter~\ref{chap:misspecified}. The goal here is to estimate the unknown vector $\theta^*$.

\subsection{The sub-Gaussian Sequence Model}

Note first that the model~\eqref{EQ:gsm} is a special case of the linear model with fixed design~\eqref{EQ:regmod} with $n=d$ and $f(x_i)=x_i^\top \theta^*$, where $x_1, \ldots, x_n$ form the canonical basis \( e_1, \dots, e_n \) of $\R^n$ and $\eps$ has a Gaussian distribution. Therefore, $n=d$ is both the dimension of the parameter $\theta$ and the number of observations and it looks like we have chosen to index this problem by $d$ rather than $n$ somewhat arbitrarily. We can bring $n$ back into the picture, by observing that this model encompasses slightly more general choices for the design matrix $\X$ as long as it satisfies the following assumption.
\begin{assumption}{\textsf{ORT}}
The design matrix satisfies
$$
\frac{\X^\top \X}{n}=I_d\,,
$$
where $I_d$ denotes the identity matrix of $\R^d$. 
\end{assumption}
Assumption~\textsf{ORT} allows for cases where $d\le n$ but not $d>n$ (high dimensional case) because of obvious rank constraints. In particular, it means that the $d$ columns of $\X$ are orthogonal in $\R^n$ and all have norm $\sqrt{n}$.

Under this assumption, it follows from  the linear regression model~\eqref{EQ:regmod_matrix} that
\begin{align*}
y:=\frac{1}{n}\X^\top Y&=\frac{\X^\top \X}{n}\theta^*+ \frac{1}{n}\X^\top\eps\\
&=\theta^*+\xi\,,
\end{align*}
where $\xi=(\xi_1, \ldots, \xi_d)\sim \sg_d(\sigma^2/n)$ (If $\eps$ is Gaussian, we even have $\xi \sim \cN_d(0, \sigma^2/n)$). As a result, under the assumption~\textsf{ORT}, when $\xi$ is Gaussian, the linear regression model~\eqref{EQ:regmod_matrix} is equivalent to the Gaussian Sequence Model~\eqref{EQ:gsm} up to a transformation of the data $Y$ and a rescaling of the variance. Moreover, for any estimator $\hat \theta \in \R^d$, under \textsf{ORT}, it follows from~\eqref{EQ:mse_matrix} that
$$
\MSE(\X\hat \theta)=(\hat \theta -\theta^*)^\top \frac{\X^\top \X}{n}(\hat \theta -\theta^*)=|\hat \theta - \theta^*|_2^2\,.
$$
Furthermore, for any $\theta \in \R^d$, the assumption~\textsf{ORT} yields,
\begin{align}
|y-\theta|_2^2&=|\frac{1}{n}\X^\top Y-\theta|_2^2 \nonumber\\
                &= |\theta|_2^2 - \frac{2}{n} \theta^\top \X^\top Y + \frac{1}{n^2} Y^\top \X \X^\top Y\nonumber \\
                &= \frac{1}{n} |\X \theta|_2^2 - \frac{2}{n} ( \X\theta)^\top Y + \frac{1}{n} |Y|_2^2 +Q \nonumber \\
                &= \frac{1}{n} |Y - \X \theta|_2^2 +Q\label{EQ:ERM_sGSM}\,,
\end{align}
where $Q$ is a constant that does not depend on $\theta$ and is defined by
$$
Q=\frac{1}{n^2} Y^\top \X \X^\top Y- \frac{1}{n} |Y|_2^2
$$
This implies in particular that the least squares estimator $\thetals$ is equal to $y$.

\bigskip

We introduce a sightly more general model called \emph{sub-Gaussian sequence model}:

\begin{equation}
\label{EQ:sGSM}
y=\theta^* +\xi \in \R^d\,,
\end{equation}
where $\xi\sim \sg_d(\sigma^2/n)$. 

In this section, we can actually completely ``forget" about our original model~\eqref{EQ:regmod_matrix}. In particular we can define this model independently of Assumption \textsf{ORT} and thus for any values of $n$ and $d$. 

The sub-Gaussian sequence model and the Gaussian sequence model are called \emph{direct} (observation) problems as opposed to \emph{inverse problems} where the goal is to estimate the parameter $\theta^*$ only from noisy observations of its image through an operator. The linear regression model is one such inverse problem where the matrix $\X$ plays the role of a linear operator. However, in these notes, we never try to invert the operator. See \cite{Cav11} for an interesting survey on the statistical theory of inverse problems.

\subsection{Sparsity adaptive thresholding estimators}

If we knew a priori that $\theta$ was $k$-sparse, we could directly employ Corollary~\ref{COR:bss1} to obtain that with probability at least $1-\delta$, we have
$$
\MSE(\X\thetals_{\cB_0(k)})\le C_\delta\frac{\sigma^2k}{n}\log\Big(\frac{ed}{2k}\Big)\,.
$$
As we will see, the assumption~\textsf{ORT} gives us the luxury to not know $k$ and yet \emph{adapt} to its value. Adaptation means that we can construct an estimator that does not require the knowledge of $k$ (the smallest such that $|\theta^*|_0\le k$) and yet perform as well as $\thetals_{\cB_0(k)}$, up to a multiplicative constant.

Let us begin with some heuristic considerations to gain some intuition. Assume the  sub-Gaussian sequence model~\eqref{EQ:sGSM}. If nothing is known about $\theta^*$ it is natural to estimate it using the least squares estimator $\thetals=y$. In this case,
$$
\MSE(\X\thetals)=|y-\theta^*|_2^2=|\xi|_2^2 \le C_\delta\frac{\sigma^2d}{n}\,,
$$
where the last inequality holds with probability at least $1-\delta$. This is actually what we are looking for if $k=Cd$ for some positive constant $C\le 1$. The problem with this approach is that it does not use the fact that $k$ may be much smaller than $d$, which happens when $\theta^*$ has many zero coordinates. 

If $\theta^*_j=0$, then, $y_j=\xi_j$, which is a sub-Gaussian random variable with variance proxy $\sigma^2/n$. In particular, we know from Lemma~\ref{lem:subgauss} that with probability at least $1-\delta$, 
\begin{equation}
\label{EQ:tau1}
|\xi_j|\le  \sigma\sqrt{\frac{2\log (2/\delta)}{n}}=\tau\,.
\end{equation}
The consequences of this inequality are interesting. One the one hand, if we observe $|y_j|\gg \tau$ , then it must correspond to $\theta_j^*\neq 0$. On the other hand, if $|y_j| \le \tau$ is smaller, then, $\theta_j^*$ cannot be very large. In particular, by the triangle inequality, $|\theta^*_j| \le |y_j|+|\xi_j| \le 2\tau$. Therefore, we loose at most $2\tau$ by choosing $\hat \theta_j=0$. It leads us to consider the following estimator.

\begin{defn}
\label{def:hard}
The \textbf{hard thresholding} estimator with threshold $2\tau>0$ is denoted by $\thetahard$ and has coordinates
$$
\thetahard_j=\left\{
\begin{array}{ll}
y_j& \text{if} \ |y_j|>2\tau\,,\\
0& \text{if} \ |y_j|\le2\tau\,,
\end{array}
\right.
$$
for $j=1, \ldots, d$. In short, we can write $\thetahard_j=y_j\1(|y_j|>2\tau)$.
\end{defn}

From our above consideration, we are tempted to choose $\tau$ as in~\eqref{EQ:tau1}. Yet, this threshold is not large enough. Indeed, we need to choose $\tau$ such that $|\xi_j|\le \tau$ \emph{simultaneously} for all $j$.  This can be done using a maximal inequality. Namely, Theorem~\ref{TH:finitemax} ensures that with probability at least $1-\delta$,
$$
\max_{1\le j\le d}|\xi_j|\le\sigma\sqrt{\frac{2\log (2d/\delta)}{n}}.
$$
It yields the following theorem.
\begin{thm}
\label{TH:hard}
Consider the linear regression model~\eqref{EQ:regmod_matrix} under the assumption~\textsf{ORT} or, equivalenty, the sub-Gaussian sequence model~\eqref{EQ:sGSM}. Then the hard thresholding estimator $\thetahard$ with threshold 
\begin{equation}
\label{EQ:tau}
2\tau=2\sigma\sqrt{\frac{2\log (2d/\delta)}{n}}
\end{equation}
enjoys the following two properties on the same event $\cA$ such that $\p(\cA)\ge 1-\delta$:
\begin{itemize}
\item[(i)] If $|\theta^*|_0=k$, 
$$
\MSE(\X\thetahard)=|\thetahard-\theta^*|_2^2\lesssim \sigma^2\frac{k\log (2d/\delta)}{n}\,.
$$
\item[(ii)] if $\min_{j \in \supp(\theta^*)}|\theta^*_j| > 3\tau$, then
$$
\supp(\thetahard)=\supp(\theta^*)\,.
$$
\end{itemize}
\end{thm}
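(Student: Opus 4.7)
The plan is to build everything on the single good event
\[
\cA = \Bigl\{\max_{1\le j\le d}|\xi_j|\le \tau\Bigr\},
\]
where $\tau = \sigma\sqrt{2\log(2d/\delta)/n}$. Since each $\xi_j\sim \sg(\sigma^2/n)$, Theorem~\ref{TH:finitemax} (applied to the $2d$ variables $\pm\xi_j$) gives $\p(\cA)\ge 1-\delta$ with exactly this choice of $\tau$. Both statements (i) and (ii) will be verified deterministically on $\cA$, by a coordinate-by-coordinate case analysis using $y_j = \theta^*_j + \xi_j$ and the definition $\thetahard_j = y_j\1(|y_j|>2\tau)$.

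For part (i), I would split the index set into $j\notin\supp(\theta^*)$ and $j\in\supp(\theta^*)$. In the first case $y_j=\xi_j$ so on $\cA$ we have $|y_j|\le \tau < 2\tau$, hence $\thetahard_j=0=\theta^*_j$ and the coordinate contributes $0$ to the squared error. In the second case there are two subcases: either $|y_j|>2\tau$, in which case $\thetahard_j=y_j$ and $|\thetahard_j-\theta^*_j|=|\xi_j|\le \tau$; or $|y_j|\le 2\tau$, in which case $\thetahard_j=0$ and the triangle inequality yields $|\theta^*_j|\le |y_j|+|\xi_j|\le 3\tau$. In either subcase, the squared per-coordinate error is at most $9\tau^2$. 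Summing over the $k$ nonzero coordinates gives
\[
|\thetahard-\theta^*|_2^2 \le 9k\tau^2 = \frac{18\,\sigma^2 k\log(2d/\delta)}{n},
\]
which is the desired bound (with an explicit constant).

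For part (ii), under the extra assumption $\min_{j\in\supp(\theta^*)}|\theta^*_j|>3\tau$, I again split on $j$. If $\theta^*_j=0$ then $|y_j|=|\xi_j|\le\tau<2\tau$ on $\cA$, so $\thetahard_j=0$, giving $\supp(\thetahard)\subset\supp(\theta^*)$. Conversely, if $j\in\supp(\theta^*)$ then by the reverse triangle inequality $|y_j|\ge |\theta^*_j|-|\xi_j|>3\tau-\tau=2\tau$, so $\thetahard_j=y_j\neq 0$, giving $\supp(\theta^*)\subset\supp(\thetahard)$. The two inclusions yield equality.

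There is no real obstacle here: the argument is purely deterministic once the event $\cA$ is fixed, and the only probabilistic content is the uniform tail bound on $\max_j|\xi_j|$, which is already packaged in Theorem~\ref{TH:finitemax}. The only thing to be careful about is to pick the threshold as $2\tau$ (twice the uniform noise bound), which provides the correct ``margin'' so that the noise never pushes a zero coordinate above the threshold and, under the min-signal assumption of (ii), never pushes a true signal coordinate below it.
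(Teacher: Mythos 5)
Your proof is correct and follows essentially the same route as the paper's: the same uniform event $\cA=\{\max_j|\xi_j|\le\tau\}$ from Theorem~\ref{TH:finitemax}, the same coordinate-wise case analysis based on whether $|y_j|>2\tau$, and the same reverse/forward triangle inequalities for (ii). The only notable difference is that the paper first proves the uniform per-coordinate bound $|\thetahard_j-\theta^*_j|\le 4\min(|\theta^*_j|,\tau)$ for \emph{all} $j$ before summing, which is a strictly stronger statement that is reused later for approximately sparse signals (e.g., weak $\ell_q$ balls in the exercises), whereas you bound the zero and nonzero coordinates separately and sum only over $\supp(\theta^*)$; this gives a slightly sharper constant ($9k\tau^2$ instead of $16k\tau^2$, matching the improvement asked for in Problem~\ref{EXO:sparse:4to3}) at the cost of not producing the reusable $\min$ bound.
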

\begin{proof}
Define the event 
$$
\cA=\Big\{\max_j|\xi_j|\le \tau\big\}\,,
$$
and recall that Theorem~\ref{TH:finitemax} yields $\p(\cA)\ge 1-\delta$. On the event $\cA$, the following holds for any $j=1, \ldots, d$.

First, observe that 
\begin{equation}
\label{EQ:prthhard1}
|y_j|>2\tau \quad \Rightarrow \quad |\theta^*_j|\ge |y_j|-|\xi_j|>\tau
\end{equation}
and
\begin{equation}
\label{EQ:prthhard2}
|y_j|\le 2\tau \quad \Rightarrow \quad |\theta^*_j|\le |y_j| + |\xi_j|\le 3\tau.
\end{equation}
It yields
\begin{align*}
|\thetahard_j-\theta^*_j|&=|y_j-\theta^*_j|\1(|y_j|>2\tau)+ |\theta^*_j|\1(|y_j|\le2\tau)&\\
&\le \tau\1(|y_j|>2\tau)+|\theta^*_j|\1(|y_j|\le2\tau)& \\\
&\le \tau\1(|\theta^*_j|>\tau)+|\theta^*_j|\1(|\theta^*_j|\le 3\tau)& \text{by}~\eqref{EQ:prthhard1} \text{ and}~\eqref{EQ:prthhard2}\\
&\le 4\min(|\theta^*_j|,\tau)
\end{align*}
It yields
\begin{align*}
|\thetahard-\theta^*|_2^2=\sum_{j=1}^d|\thetahard_j-\theta^*_j|^2\le 16\sum_{j=1}^d\min(|\theta^*_j|^2,\tau^2)\le 16|\theta^*|_0\tau^2\,.
\end{align*}
This completes the proof of (i).

To prove (ii), note that if $\theta^*_j\neq 0$, then $|\theta^*_j|>3\tau$ so that 
$$
|y_j|=|\theta^*_j+\xi_j|> 3\tau-\tau=2\tau\,.
$$
Therefore, $\thetahard_j\neq 0$ so that $\supp(\theta^*)\subset \supp(\thetahard)$. 

Next, if $\thetahard_j\neq 0$, then $|\thetahard_j|=|y_j|>2\tau$. It yields
$$
|\theta^*_j|\ge |y_j|-\tau > \tau.
$$
Therefore, $|\theta^*_j|\neq 0$ and $\supp(\thetahard)\subset \supp(\theta^*)$.
\end{proof}
Similar results can be obtained for the \textbf{soft thresholding} estimator $\thetasoft$ defined by
$$
\thetasoft_j=\left\{
\begin{array}{ll}
y_j-2\tau& \text{if} \ y_j>2\tau\,,\\
y_j+2\tau& \text{if} \ y_j<-2\tau\,,\\
0& \text{if} \ |y_j|\le2\tau\,,
\end{array}
\right.
$$
In short, we can write 
$$
\thetasoft_j=\Big(1-\frac{2\tau}{|y_j|}\Big)_+y_j
$$
\begin{center}
\begin{figure}
    \includegraphics[width=\textwidth]{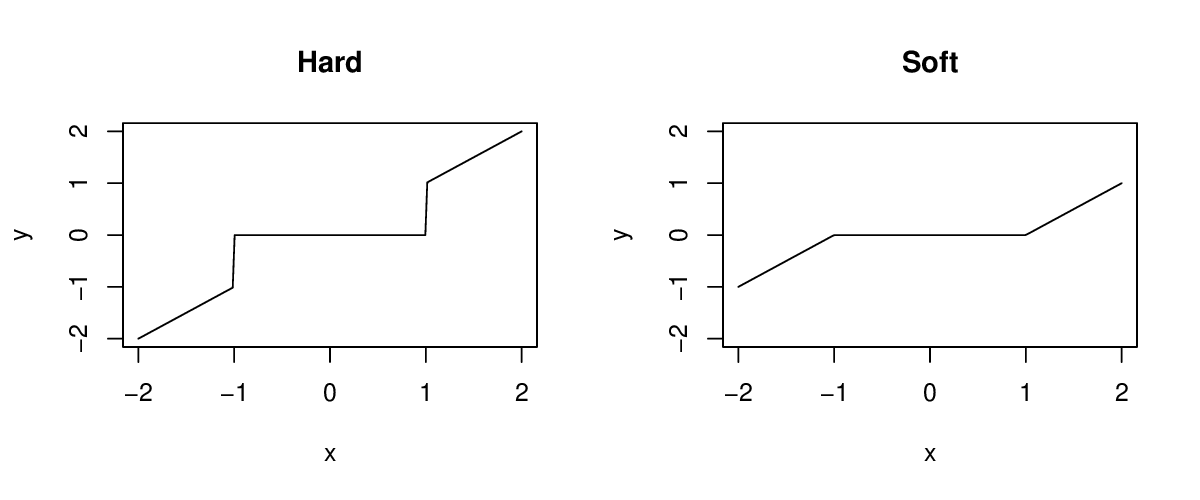} \\
\caption{Transformation applied to $y_j$ with $2\tau=1$ to obtain the hard (left) and soft (right) thresholding estimators}\label{FIG:threshold}
    \end{figure}
\end{center}

\section{High-dimensional linear regression}
\subsection{The BIC and Lasso estimators}

It can be shown (see Problem~\ref{EXO:sparse:variation}) that the hard and soft thresholding estimators are solutions of the following penalized empirical risk minimization problems:
\begin{align*}
\thetahard&=\argmin_{\theta \in \R^d} \Big\{|y-\theta|_2^2 + 4\tau^2|\theta|_0\Big\}\\
\thetasoft&=\argmin_{\theta \in \R^d} \Big\{|y-\theta|_2^2 + 4\tau|\theta|_1\Big\}
\end{align*}
In view of~\eqref{EQ:ERM_sGSM}, under the assumption~\textsf{ORT}, the above variational definitions can be written as
\begin{align*}
\thetahard&=\argmin_{\theta \in \R^d} \Big\{\frac1n|Y-\X\theta|_2^2 + 4\tau^2|\theta|_0\Big\}\\
\thetasoft&=\argmin_{\theta \in \R^d} \Big\{\frac1n|Y-\X\theta|_2^2 + 4\tau|\theta|_1\Big\}\\
\end{align*}
When the assumption~\textsf{ORT} is not satisfied, they no longer correspond to thresholding estimators but can still be defined as above. We change the constant in the threshold parameters for future convenience.\begin{defn}
Fix $\tau>0$ and assume the linear regression model~\eqref{EQ:regmod_matrix}. The BIC\footnote{Note that it minimizes the Bayes Information Criterion (BIC) employed in the traditional literature of asymptotic statistics if $\tau=\sqrt{\log(d)/n}$. We will use the same value below, up to multiplicative constants (it's the price to pay to get non-asymptotic results).} estimator of $\theta^*$ is defined by any $\thetabic$ such that
$$
\thetabic \in \argmin_{\theta \in \R^d} \Big\{\frac1n|Y-\X\theta|_2^2 + \tau^2|\theta|_0\Big\}.\\
$$
Moreover, the Lasso estimator of $\theta^*$ is defined by any $\thetalasso$ such that
$$
\thetalasso \in \argmin_{\theta \in \R^d} \Big\{\frac1n|Y-\X\theta|_2^2 + 2\tau|\theta|_1\Big\}.\\
$$
\end{defn}
\begin{rem}[Numerical considerations] Computing the BIC estimator can be proved to be NP-hard in the worst case. In particular, no computational method is known to be significantly faster than the brute force search among all $2^d$ sparsity patterns. Indeed, we can rewrite:
$$
\min_{\theta \in \R^d} \Big\{\frac1n|Y-\X\theta|_2^2 + \tau^2|\theta|_0\Big\}=\min_{0\le k \le d} \Big\{\min_{\theta\,:\, |\theta|_0=k}\frac1n|Y-\X\theta|_2^2 + \tau^2k\Big\}
$$
To compute $\min_{\theta\,:\, |\theta|_0=k}\frac1n|Y-\X\theta|_2^2$, we need to compute $\binom{d}{k}$ least squares estimators on a space of size $k$. Each costs $O(k^3)$ (matrix inversion). Therefore the total cost of the brute force search is
$$
C\sum_{k=0}^d \binom{d}{k}k^3=Cd^32^d\,.
$$

By contrast, computing the Lasso estimator is a convex problem and there exist many efficient algorithms to solve it. We will not describe this optimization problem in detail but only highlight a few of the best known algorithms:
\begin{enumerate}
\item Probably the most popular method among statisticians relies on coordinate  gradient descent. It is implemented in the \textsf{glmnet} package in \textsf{R} \cite{FriHasTib10}, 
\item An interesting method called LARS \cite{EfrHasJoh04} computes the entire \emph{regularization path}, i.e., the solution of the convex problem for all values of $\tau$. It relies on the fact that, as a function of $\tau$, the solution $\thetalasso$ is a piecewise linear function (with values in $\R^d$). Yet this method proved to be too slow for very large problems and has been replaced by \texttt{glmnet} which computes solutions for values of $\tau$ on a grid much faster.
\item The optimization community has made interesting contributions to this field by using proximal methods to solve this problem. These methods exploit the structure of the objective function which is of the form smooth (sum of squares) + simple ($\ell_1$ norm). A good entry point to this literature is perhaps the FISTA algorithm \cite{BecTeb09}.
\item Recently there has been a lot of interest around this objective for very large $d$ and very large $n$. In this case, even computing $|Y-\X\theta|_2^2 $ may be computationally expensive and solutions based on stochastic gradient descent are flourishing.
\end{enumerate}
\end{rem}

Note that by Lagrange duality, computing $\thetalasso$ is equivalent to solving an $\ell_1$ \emph{constrained} least squares. Nevertheless, the radius of the $\ell_1$ constraint is unknown. In general, it is hard to relate Lagrange multipliers to the size constraints. The name ``Lasso" was originally given to the constrained version this estimator in the original paper of Robert Tibshirani \cite{Tib96}.

\subsection{Analysis of the BIC estimator}

While computationally hard to implement, the BIC estimator gives us a good benchmark for sparse estimation. Its performance is similar to that of $\thetahard$ but without requiring the assumption \textsf{ORT}.

\begin{thm}
\label{TH:BIC}
Assume that the linear model~\eqref{EQ:regmod_matrix} holds, where $\eps \sim \sg_n(\sigma^2)$ and that $|\theta^*|_0\ge 1$. Then,  the BIC estimator $\thetabic$ with regularization parameter
\begin{equation}
\label{EQ:taubic}
\tau^2=16\log(6)\frac{\sigma^2}{n} + 32\frac{\sigma^2 \log (ed)}{n}\,.
\end{equation}
satisfies
\begin{equation}
\MSE(\X\thetabic)=\frac{1}{n}|\X\thetabic-\X\theta^*|_2^2\lesssim |\theta^*|_0 \sigma^2\frac{ \log(ed/\delta)}{n}
\end{equation}
with probability at least $1-\delta$.
\end{thm}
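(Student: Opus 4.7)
The plan is to mimic the proof of Theorem~\ref{TH:bss1} but use the BIC penalty to absorb the term depending on $|\thetabic|_0$, which is now random. Let $k^* = |\theta^*|_0$ and $\hat k = |\thetabic|_0$. The starting point is the basic inequality coming from the definition of $\thetabic$:
$$
\frac1n|Y-\X\thetabic|_2^2 + \tau^2 \hat k \le \frac1n|Y-\X\theta^*|_2^2 + \tau^2 k^*\,.
$$
Substituting $Y = \X\theta^* + \eps$, expanding, and rearranging yields
$$
\frac1n|\X(\thetabic-\theta^*)|_2^2 \le \frac{2}{n}\eps^\top \X(\thetabic-\theta^*) + \tau^2(k^* - \hat k)\,.
$$

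First I would control the noise term by ``supping out'' $\thetabic$ over possible supports. Let $\hat S = \supp(\thetabic-\theta^*)$, so $|\hat S| \le k^* + \hat k$, and let $\Phi_S \in \R^{n\times r_S}$ be an orthonormal basis of the column span of $\X_S$ as in the proof of Theorem~\ref{TH:bss1}. Writing $\X(\thetabic-\theta^*) = \Phi_{\hat S}\nu$ and using $2ab \le \tfrac12 a^2 + 2b^2$ gives
$$
\frac{2}{n}\eps^\top\X(\thetabic-\theta^*) \le \frac{1}{2n}|\X(\thetabic-\theta^*)|_2^2 + \frac{2}{n}\sup_{u \in \cB_2^{r_{\hat S}}}(\tilde\eps_{\hat S}^\top u)^2\,,
$$
where $\tilde\eps_S = \Phi_S^\top \eps \sim \sg_{r_S}(\sigma^2)$. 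Absorbing the first term on the left yields
$$
\frac1n|\X(\thetabic-\theta^*)|_2^2 \le \frac{4}{n}\sup_{u \in \cB_2^{r_{\hat S}}}(\tilde\eps_{\hat S}^\top u)^2 + 2\tau^2(k^* - \hat k)\,.
$$

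Next I would establish a uniform bound. Exactly as in the proof of Theorem~\ref{TH:bss1}, for any fixed $S$ with $|S|=k$, $\p\big(\sup_{u \in \cB_2^{r_S}}(\tilde\eps_S^\top u)^2 > t\big) \le 6^k e^{-t/(8\sigma^2)}$. Taking a union bound over all $\binom{d}{k}$ sets of size $k$ and all $k=0,\ldots,d$, and choosing $t_k = 8\sigma^2\big[k\log 6 + \log\binom{d}{k} + 2\log(k+1) + \log(1/\delta)\big]$, the event that $\sup_{u}(\tilde\eps_S^\top u)^2 \le t_{|S|}$ for \emph{every} $S$ holds with probability at least $1 - c\delta$ for an absolute $c$. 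Using Lemma~\ref{lem:nchoosek} to bound $\log\binom{d}{k} \le k\log(ed)$, this plugs in to give, on this event,
$$
\frac{4}{n}\sup_{u \in \cB_2^{r_{\hat S}}}(\tilde\eps_{\hat S}^\top u)^2 \le \frac{32\sigma^2 |\hat S|(\log 6 + \log(ed))}{n} + \frac{C\sigma^2\log((\hat k+k^*+1)/\delta)}{n}\,.
$$
Using $|\hat S|\le k^*+\hat k$, I would split this into a $k^*$-term and a $\hat k$-term.

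The final step is the crucial cancellation: with the choice $\tau^2 = 16\log(6)\sigma^2/n + 32\sigma^2\log(ed)/n$, we have $n\tau^2/2 \ge 16\sigma^2(\log 6 + \log(ed))$, so the $\hat k$ contribution $\tfrac{32\sigma^2 \hat k(\log 6 + \log(ed))}{n} - 2\tau^2 \hat k$ is $\le 0$ and drops out. What remains is a bound of order $\sigma^2 k^*\log(ed)/n$ plus lower-order terms that, since $k^*\ge 1$, combine into $\sigma^2 k^*\log(ed/\delta)/n$ up to a numerical constant. The main obstacle is the union bound bookkeeping: one needs weights $2\log(k+1)$ (or $\log(d+1)$) in $t_k$ to sum the tail probabilities to $\delta$ without inflating the final rate, and one must verify that the constants in the prescribed $\tau^2$ are chosen exactly so that the $\hat k$ cancellation goes through.
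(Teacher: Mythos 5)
Your proposal is correct and follows essentially the same strategy as the paper: basic inequality, Young's inequality to absorb half of $|\X(\thetabic-\theta^*)|_2^2$, ``sup out'' $\thetabic$ over sparsity patterns via a union bound, and use the BIC penalty to cancel the $\hat k$-dependent contribution. Two small remarks on bookkeeping and one arithmetic slip.

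On bookkeeping: the paper parametrizes the sup over candidates $\theta$ by $S=\supp(\theta)$ and uses the orthonormal basis of the columns indexed by $S\cup\supp(\theta^*)$ (of dimension $\le k+|\theta^*|_0$); you parametrize directly by $\hat S=\supp(\thetabic-\theta^*)$. These are interchangeable since $\supp(\theta-\theta^*)\subseteq\supp(\theta)\cup\supp(\theta^*)$. For the union over cardinalities, you introduce explicit $2\log(k+1)$ weights in $t_k$ so the tail probabilities sum to $c\delta$. The paper instead does not need such weights: with the prescribed $\tau$, the penalty produces a term of order $-2k\log(ed)$ in the exponent, of which $k\log(ed)$ cancels $\binom{d}{k}\le(ed)^k$ and the leftover $-k\log(ed)$ gives a geometric series over $k$. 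Both schemes are valid; the paper's version exploits the ``free'' decay already built into the penalty, while yours is a slightly more generic device.

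On the arithmetic: you claim ``$n\tau^2/2 \ge 16\sigma^2(\log 6 + \log(ed))$'', but with $\tau^2=16\log(6)\sigma^2/n+32\sigma^2\log(ed)/n$ one has $n\tau^2/2=8\log(6)\sigma^2+16\sigma^2\log(ed)<16\sigma^2(\log 6+\log(ed))$, so that displayed inequality is false. What the cancellation actually requires, given your coefficient $32\sigma^2\hat k(\log 6 + \log(ed))/n$, is $2\tau^2\ge 32\sigma^2(\log6+\log(ed))/n$, i.e.\ $n\tau^2\ge 16\sigma^2(\log 6 + \log(ed))$. This does hold ($n\tau^2=16\log(6)\sigma^2+32\sigma^2\log(ed)\ge16\sigma^2\log 6+16\sigma^2\log(ed)$), so the $\hat k$-term is indeed nonpositive and drops out as you claim; you simply dropped the factor of $2$ when restating the needed inequality. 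With that corrected, the argument goes through and yields the stated rate, using $|\theta^*|_0\ge 1$ to absorb the $\log(1/\delta)$ and residual $\log(ed)$ terms into $|\theta^*|_0\log(ed/\delta)$.
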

\begin{proof}
We begin as usual by noting that
$$
\frac1n|Y-\X\thetabic|_2^2 + \tau^2|\thetabic|_0 \le \frac1n|Y-\X\theta^*|_2^2 + \tau^2|\theta^*|_0\,.
$$
It implies
$$
|\X\thetabic-\X\theta^*|_2^2  \le n\tau^2|\theta^*|_0 + 2\eps^\top\X(\thetabic-\theta^*)- n\tau^2|\thetabic|_0\,.
$$
First, note that
\begin{align*}
2\eps^\top\X(\thetabic-\theta^*)&=2\eps^\top\Big(\frac{\X\thetabic-\X\theta^*}{|\X\thetabic-\X\theta^*|_2}\Big)|\X\thetabic-\X\theta^*|_2\\
&\le 2\Big[\eps^\top\Big(\frac{\X\thetabic-\X\theta^*}{|\X\thetabic-\X\theta^*|_2}\Big)\Big]^2 + \frac12 |\X\thetabic-\X\theta^*|_2^2\,,
\end{align*}
where we use the inequality $2ab\le 2a^2+\frac12b^2$. Together with the previous display, it yields
\begin{equation}
\label{EQ:pr_bic_1}
|\X\thetabic-\X\theta^*|_2^2  \le 2n\tau^2|\theta^*|_0 + 4\big[\eps^\top\cU(\thetabic-\theta^*)\big]^2- 2n\tau^2|\thetabic|_0\,
\end{equation}
where 
$$
\cU(z)=\frac{z}{|z|_2}
$$
Next, we need to ``sup out" $\thetabic$. To that end, we decompose the sup into a max over cardinalities as follows:
$$
\sup_{\theta \in \R^d}=\max_{1\le k \le d} \max_{|S|=k}\sup_{\supp(\theta)=S}\,.
$$
Applied to the above inequality, it yields
\begin{align*}
4\big[\eps^\top&\cU(\thetabic-\theta^*)\big]^2- 2n\tau^2|\thetabic|_0\\
& \le \max_{1\le k \le d}\big\{\max_{|S|=k} \sup_{\supp(\theta)=S}4\big[\eps^\top\cU(\theta-\theta^*)\big]^2- 2n\tau^2k\big\}\\
&\le  \max_{1\le k \le d}\big\{\max_{|S|=k} \sup_{u\in \cB_2^{r_{S,*}}}4\big[\eps^\top\Phi_{S,*}u\big]^2- 2n\tau^2k\big\}\,,
\end{align*}
where  $\Phi_{S,*}=[\phi_1, \ldots, \phi_{r_{S,*}}]$ is an orthonormal basis of the set $\{\X_j, j \in S \cup \supp(\theta^*)\}$ of columns of $\X$ and $r_{S,*} \le |S|+|\theta^*|_0$ is the dimension of this column span.

Using union bounds, we get for any $t>0$, 
\begin{align*}
\p&\Big( \max_{1\le k \le d}\big\{\max_{|S|=k} \sup_{u\in \cB_2^{r_{S,*}}}4\big[\eps^\top\Phi_{S,*}u\big]^2- 2n\tau^2k\big\}\ge t\Big)\\
&\le \sum_{k=1}^d\sum_{|S|=k}\p\Big(   \sup_{u\in \cB_2^{r_{S,*}}}\big[\eps^\top\Phi_{S,*}u\big]^2\ge \frac{t}4+ \frac12n\tau^2k\Big)
\end{align*}
Moreover, using the $\eps$-net argument from Theorem~\ref{TH:supell2}, we get for $|S|=k$,
\begin{align*}
\p\Big(   \sup_{u\in \cB_2^{r_{S,*}}}&\big[\eps^\top\Phi_{S,*}u\big]^2\ge \frac{t}4+ \frac12n\tau^2k\Big)\le 2\cdot 6^{r_{S,*}}\exp\Big(-\frac{\frac{t}4+ \frac12n\tau^2k}{8\sigma^2}\Big)\\
&\le 2 \exp\Big(-\frac{t}{32\sigma^2}- \frac{n\tau^2k}{16\sigma^2}+(k+|\theta^*|_0)\log(6)\Big)
\\
&\le \exp\Big(-\frac{t}{32\sigma^2}- 2k\log(ed)+|\theta^*|_0\log(12)\Big)
\end{align*}
where, in the last inequality, we used the definition~\eqref{EQ:taubic} of $\tau$.

Putting everything together, we get
\begin{align*}
\p&\Big(|\X\thetabic-\X\theta^*|_2^2 \ge  2n\tau^2|\theta^*|_0+t\Big) \le\\
& \sum_{k=1}^d\sum_{|S|=k} \exp\Big(-\frac{t}{32\sigma^2}- 2k\log(ed)+|\theta^*|_0\log(12)\Big)\\
&= \sum_{k=1}^d \binom{d}{k} \exp\Big(-\frac{t}{32\sigma^2}- 2k\log(ed)+|\theta^*|_0\log(12)\Big)\\
&\le \sum_{k=1}^d  \exp\Big(-\frac{t}{32\sigma^2}-k\log(ed)+|\theta^*|_0\log(12)\Big)& \text{by Lemma~\ref{lem:nchoosek}}\\
&= \sum_{k=1}^d(ed)^{-k}  \exp\Big(-\frac{t}{32\sigma^2} + |\theta^*|_0\log(12)\Big)\\
&\le  \exp\Big(-\frac{t}{32\sigma^2} + |\theta^*|_0\log(12)\Big)\,.
\end{align*}
To conclude the proof, choose $t=32\sigma^2 |\theta^*|_0\log(12) + 32\sigma^2\log(1/\delta)$ and observe that combined with~\eqref{EQ:pr_bic_1}, it yields with probability $1-\delta$, 
\begin{align*}
|\X\thetabic-\X\theta^*|_2^2 & \le 2n\tau^2|\theta^*|_0 + t \\
&= 64\sigma^2 \log(ed) |\theta^*|_0 +64\log(12)\sigma^2|\theta^*|_0  + 32\sigma^2\log(1/\delta)\\
&\le 224|\theta^*|_0 \sigma^2 \log(ed)+ 32\sigma^2\log(1/\delta)\,.
\end{align*}
\end{proof}
It follows from Theorem~\ref{TH:BIC} that $\thetabic$ \emph{adapts} to the unknown sparsity of $\theta^*$, just like $\thetahard$. Moreover, this holds under no assumption on the design matrix $\X$.

\subsection{Analysis of the Lasso estimator}

\subsubsection{Slow rate for the Lasso estimator}

The properties of the BIC estimator are quite impressive. It shows that under no assumption on $\X$, one can mimic two oracles: (i) the oracle that knows the support of $\theta^*$ (and computes least squares on this support), up to a $\log(ed)$ term and (ii) the oracle that knows the sparsity $|\theta^*|_0$ of $\theta^*$, up to a smaller logarithmic term $\log(ed/|\theta^*|_0)$, which is subsumed by $\log(ed)$. Actually, the \( \log(ed) \) can even be improved to \( \log(ed/|\theta^\ast|_0) \) by using a modified BIC estimator (see Problem~\ref{EXO:sparse:betterbic}).

The Lasso estimator is a bit more difficult to analyze because, by construction, it should more naturally adapt to the unknown $\ell_1$-norm of $\theta^*$. This can be easily shown as in the next theorem, analogous to Theorem~\ref{TH:ell1const}.

\begin{thm}
\label{TH:lasso_slow}
Assume that the linear model~\eqref{EQ:regmod_matrix} holds where $\eps\sim\sg_n(\sigma^2)$. Moreover, assume   that the columns of $\X$ are normalized in such a way that $\max_j|\X_j|_2\le \sqrt{n}$. Then, the Lasso estimator $\thetalasso$ with regularization parameter
\begin{equation}
\label{EQ:taulasso}
2\tau=2\sigma\sqrt{\frac{2\log(2d)}{n}}+2\sigma\sqrt{\frac{2\log(1/\delta)}{n}}
\end{equation}
satisfies
$$
\MSE(\X\thetalasso)=\frac{1}{n}|\X\thetalasso-\X\theta^*|_2^2\le 4|\theta^*|_1\sigma\sqrt{\frac{2\log(2d)}{n}}+4|\theta^*|_1\sigma\sqrt{\frac{2\log(1/\delta)}{n}}
$$
with probability at least $1-\delta$.
\end{thm}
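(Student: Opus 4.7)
My plan is to mimic the proof of Theorem~\ref{TH:ell1const}, replacing the role of the constraint set with the $\ell_1$ penalty via the basic inequality for the Lasso.

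First, by the variational definition of $\thetalasso$,
$$
\tfrac1n|Y-\X\thetalasso|_2^2 + 2\tau|\thetalasso|_1 \le \tfrac1n|Y-\X\theta^*|_2^2 + 2\tau|\theta^*|_1.
$$
Substituting $Y=\X\theta^*+\eps$ and expanding $|\X(\thetalasso-\theta^*)-\eps|_2^2$, the $|\eps|_2^2$ terms cancel and after rearranging we arrive at the basic inequality
$$
\tfrac1n|\X(\thetalasso-\theta^*)|_2^2 \le \tfrac2n\eps^\top \X(\thetalasso-\theta^*) + 2\tau|\theta^*|_1 - 2\tau|\thetalasso|_1.
$$

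Next I would handle the stochastic term by Hölder's inequality in its $(\ell_1,\ell_\infty)$ form, writing
$$
\tfrac2n\eps^\top \X(\thetalasso-\theta^*) = \tfrac2n(\X^\top\eps)^\top(\thetalasso-\theta^*) \le \tfrac2n|\X^\top\eps|_\infty\,|\thetalasso-\theta^*|_1.
$$
The quantity $|\X^\top\eps|_\infty=\max_{1\le j\le d}|\X_j^\top\eps|$ is a maximum of $d$ sub-Gaussian random variables: since $\eps\sim\sg_n(\sigma^2)$ and $|\X_j|_2\le\sqrt n$, we have $\X_j^\top\eps\sim\sg(n\sigma^2)$. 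By Theorem~\ref{TH:finitemax},
$$
\p\bigl(|\X^\top\eps|_\infty > n\tau\bigr) \le 2d\,\exp\!\bigl(-n\tau^2/(2\sigma^2)\bigr).
$$
The prescribed $\tau$ in~\eqref{EQ:taulasso} was chosen precisely so that $n\tau^2\ge 2\sigma^2\log(2d/\delta)$ (using $\sqrt{a+b}\le\sqrt{a}+\sqrt{b}$), making this probability at most $\delta$. Hence on an event of probability at least $1-\delta$ we have $\tfrac1n|\X^\top\eps|_\infty \le \tau$.

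On that event the basic inequality becomes
$$
\tfrac1n|\X(\thetalasso-\theta^*)|_2^2 \le 2\tau|\thetalasso-\theta^*|_1 + 2\tau|\theta^*|_1 - 2\tau|\thetalasso|_1.
$$
Applying the triangle inequality $|\thetalasso-\theta^*|_1\le|\thetalasso|_1+|\theta^*|_1$, the $|\thetalasso|_1$ terms cancel and we obtain $\tfrac1n|\X(\thetalasso-\theta^*)|_2^2 \le 4\tau|\theta^*|_1$, which is exactly the claimed bound once $2\tau$ is substituted from~\eqref{EQ:taulasso}. There is no real obstacle here; the only point of care is choosing $\tau$ large enough to dominate $\tfrac1n|\X^\top\eps|_\infty$ with the desired confidence, and the rest is algebra driven by the $\ell_1/\ell_\infty$ duality, which is the key structural reason why the $\ell_1$ penalty cooperates so nicely with sub-Gaussian noise.
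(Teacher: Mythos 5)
Your proof is correct and is essentially the same argument as the paper's: it starts from the optimality (basic) inequality for the Lasso, applies $\ell_1$/$\ell_\infty$ Hölder duality to the cross term $\eps^\top\X(\thetalasso-\theta^*)$, controls $|\X^\top\eps|_\infty$ via a sub-Gaussian maximal inequality with the prescribed $\tau$, and collapses the $|\thetalasso|_1$ terms with the triangle inequality. The only cosmetic difference is that the paper applies the triangle inequality on $|\thetalasso-\theta^*|_1$ directly inside the Hölder step rather than at the end, which changes nothing.
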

\begin{proof}
From the definition of $\thetalasso$, it holds
$$
\frac1n|Y-\X\thetalasso|_2^2 + 2\tau|\thetalasso|_1 \le \frac1n|Y-\X\theta^*|_2^2 + 2\tau|\theta^*|_1\,.
$$
Using H\"older's inequality, it implies
\begin{align*}
|\X\thetalasso-\X\theta^*|_2^2  &\le  2\eps^\top\X(\thetalasso-\theta^*)+ 2n\tau\big(|\theta^*|_1-|\thetalasso|_1\big) \\
&\le 2|\X^\top\eps|_\infty|\thetalasso|_1-2n\tau|\thetalasso|_1 +  2|\X^\top\eps|_\infty|\theta^*|_1+2n\tau|\theta^*|_1\\
&= 2(|\X^\top\eps|_\infty-n\tau)|\thetalasso|_1 + 2(|\X^\top\eps|_\infty+n\tau)|\theta^*|_1
\end{align*}
Observe now that for any $t>0$, 
$$
\p(|\X^\top\eps|_\infty\ge t)=\p(\max_{1\le j \le d}|\X_j^\top \eps|>t) \le 2de^{-\frac{t^2}{2n\sigma^2}}
$$
Therefore, taking $t=\sigma\sqrt{2n\log(2d)}+\sigma\sqrt{2n\log(1/\delta)}=n\tau$, we get that with probability at least $1-\delta$, 
\begin{equation*}
|\X\thetalasso-\X\theta^*|_2^2\le 4n\tau|\theta^*|_1\,. \qedhere
\end{equation*}
\end{proof}
Notice that the regularization parameter~\eqref{EQ:taulasso} depends on the confidence level $\delta$. This is not the case for the BIC estimator (see~\eqref{EQ:taubic}).

\medskip

The rate in Theorem~\ref{TH:lasso_slow} is of order $\sqrt{(\log d)/n}$ (\emph{slow rate}), which is much slower than the rate of order $(\log d)/n$ (\emph{fast rate}) for the BIC estimator. Hereafter, we show that fast rates can also be achieved by the computationally efficient Lasso estimator, but at the cost of a much stronger condition on the design matrix $\X$.

\subsubsection{Incoherence}

\begin{assumption}{$\mathsf{INC}(k)$}
We say that the design matrix $\X$ has incoherence $k$ for some integer $k>0$ if
$$
\big|\frac{\X^\top \X}{n}-I_d\big|_\infty\le  \frac{1}{32k}
$$
where the $|A|_\infty$ denotes the largest element of $A$ in absolute value. 
Equivalently,
\begin{enumerate}
\item For all $j=1, \ldots, d$, 
$$
\left|\frac{|\X_j|_2^2}{n}-1\right|\le \frac{1}{32k}\,.
$$
\item For all $1\le i,j\le d$, $i\neq j$, we have 
$$
\frac{|\X_i^\top \X_j|}{n} \le \frac{1}{32k}\,.
$$
\end{enumerate}
\end{assumption}
Note that Assumption~\textsf{ORT} arises as the limiting case of $\mathsf{INC}(k)$ as $k \to \infty$. However, while Assumption~\textsf{ORT} requires $d \le n$, here we may have $d \gg n$ as illustrated in Proposition~\ref{prop:INCRad} below. To that end, we simply have to show that there exists a matrix that satisfies $\mathsf{INC}(k)$ even for $d >n$. We resort to the \emph{probabilistic method} \cite{AloSpe08}. The idea of this method is that if we can find a probability measure that assigns positive probability to objects that satisfy a certain property, then there must exist objects that satisfy said property.

In our case, we consider the following probability distribution on random matrices with entries in $\{\pm 1\}$. Let the design matrix $\X$ have entries that are \iid Rademacher $(\pm 1)$ random variables. We are going to show that most realizations of this random matrix satisfy Assumption~$\mathsf{INC}(k)$ for large enough $n$.

\begin{prop}
\label{prop:INCRad}
Let $\X \in \R^{n\times d}$ be a random matrix with entries $X_{ij}, i=1,\ldots, n, j=1, \ldots, d$, that are \iid Rademacher  $(\pm 1)$ random variables. Then,   $\X$ has incoherence $k$ with probability $1-\delta$ as soon as 
$$
n\ge 2^{11}k^2\log(1/\delta)+2^{13}k^2\log(d)\,.
$$
It implies that there exist matrices that satisfy Assumption~$\mathsf{INC}(k)$ for $$n \geq C k^2\log(d)\,,$$ for some numerical constant $C$.
\end{prop}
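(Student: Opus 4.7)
The plan is to exploit the very special structure of Rademacher entries to dispense with condition (1) of $\mathsf{INC}(k)$ entirely, and then handle condition (2) by a Chernoff/Hoeffding bound on sums of independent Rademacher variables followed by a union bound over the $\binom{d}{2}$ pairs of columns.

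First I would observe that because each entry $X_{ij} \in \{-1,+1\}$, we have $X_{ij}^2 = 1$ deterministically, so $|\X_j|_2^2 = \sum_{i=1}^n X_{ij}^2 = n$ for every column $j$. Therefore $\bigl||\X_j|_2^2/n - 1\bigr| = 0$, and condition (1) of $\mathsf{INC}(k)$ holds with probability $1$ without any assumption on $n$. This is the key simplification and it is the reason the result is possible in the first place.

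Next, for $i \neq j$, the off-diagonal entry $\X_i^\top \X_j = \sum_{k=1}^n X_{ki}X_{kj}$ is a sum of $n$ independent Rademacher random variables, since products of independent Rademachers are Rademacher. Each such product lies in $[-1,1]$ with mean zero, so by Hoeffding's inequality (Theorem~\ref{TH:hoeffding}) or equivalently by Corollary~\ref{cor:chernoff} applied with $\sigma^2 = 1$,
$$
\p\!\left(\bigl|\X_i^\top \X_j\bigr| > \frac{n}{32k}\right) \le 2\exp\!\left(-\frac{n}{2048\,k^2}\right).
$$
A union bound over the at most $\binom{d}{2} \le d^2/2$ ordered pairs with $i \neq j$ gives
$$
\p\bigl(\mathsf{INC}(k) \text{ fails}\bigr) \le d^2 \exp\!\left(-\frac{n}{2048\,k^2}\right).
$$

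Finally, to make the right-hand side at most $\delta$ it suffices to take $n \ge 2048\,k^2\bigl(2\log d + \log(1/\delta)\bigr)$, and enlarging the constants slightly yields the stated bound $n \ge 2^{11}k^2\log(1/\delta) + 2^{13}k^2\log d$. The existence of matrices satisfying $\mathsf{INC}(k)$ for $n \gtrsim k^2\log d$ then follows from the probabilistic method: since a random matrix satisfies the property with positive probability, at least one deterministic realization must. I do not foresee a real obstacle here; the only mild care needed is to recognize that the trivial identity $X_{ij}^2=1$ kills condition (1), so that all the work reduces to a standard Hoeffding-plus-union-bound argument on the off-diagonal terms.
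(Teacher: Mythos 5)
Your proof is correct and takes essentially the same route as the paper's: both observe that $X_{ij}^2 = 1$ makes condition (1) of $\mathsf{INC}(k)$ vacuous, then treat the off-diagonal entries as averages of $n$ independent Rademacher products, apply Hoeffding's inequality at level $t = 1/(32k)$, and union-bound over pairs of columns. The only cosmetic difference is that you count $\binom{d}{2}$ unordered pairs while the paper sums over $d(d-1)$ ordered pairs, which changes the prefactor by a factor of $2$ but lands on the same bound after loosening constants.
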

\begin{proof}
Let $\eps_{ij} \in \{-1,1\}$ denote the Rademacher random variable that is on the $i$th row and $j$th column of $\X$.

Note first that the $j$th diagonal entries of $\X^\top\X/n$ are given by
$$
\frac{1}{n}\sum_{i=1}^n \eps_{i,j}^2=1.
$$
Moreover, for $j\neq k$, the $(j,k)$th entry of the $d\times d$ matrix $\X^\top\X/n$ is given by
$$
\frac{1}{n}\sum_{i=1}^n \eps_{i,j}\eps_{i,k}=\frac{1}{n}\sum_{i=1}^n \xi_{i}^{(j,k)}\,,
$$
where for each pair $(j,k)$, $\xi_i^{(j,k)}=\eps_{i,j}\eps_{i,k}$, so that $\xi_1^{(j,k)}, \ldots, \xi_n^{(j,k)}$ are \iid Rademacher random variables.

Therefore,  we get that for any $t>0$, 
\begin{align*}
\p\left(\left|\frac{\X^\top \X}{n}-I_d\right|_\infty>t\right) &=\p\Big(\max_{j\neq k}\Big|\frac{1}{n}\sum_{i=1}^n \xi_{i}^{(j,k)}\Big|>t\Big)&\\
&\le \sum_{j\neq k}\p\Big(\Big|\frac{1}{n}\sum_{i=1}^n \xi_{i}^{(j,k)}\Big|>t\Big)&\text{(Union bound)}\\
&\le \sum_{j\neq k}2e^{-\frac{nt^2}{2}}&\text{(Hoeffding: Theorem~\ref{TH:hoeffding})}\\
&\le 2 d^2e^{-\frac{nt^2}{2}}.
\end{align*}
Taking now $t=1/(32k)$ yields
$$
\p\big(\big|\frac{\X^\top \X}{n}-I_d\big|_\infty>\frac{1}{32k}\big)\le 2 d^2e^{-\frac{n}{2^{11}k^2}} \le \delta
$$
for 
\begin{equation*}
n\ge 2^{11}k^2\log(1/\delta)+2^{13}k^2\log(d)\,. \qedhere
\end{equation*}

\end{proof}

\medskip

For any $\theta \in \R^d$, $S\subset\{1, \ldots, d\}$, define $\theta_S$ to be the vector with coordinates
$$
\theta_{S,j}=\left\{\begin{array}{ll}
\theta_{j} & \text{if} \ j \in S\,,\\
0&\text{otherwise}\,.
\end{array}\right.
$$
In particular $|\theta|_1=|\theta_S|_1+|\theta_{S^c}|_1$.

The following lemma holds
\begin{lem}
\label{lem:inc}
Fix a positive integer $k \le d$ and assume that $\X$ satisfies assumption $\mathsf{INC}(k)$. Then, for any  $S \in \{1, \ldots, d\}$ such that $|S|\le k$ and any $\theta \in \R^d$ that satisfies the \emph{cone condition}
\begin{equation}
\label{EQ:conecond}
|\theta_{S^c}|_1 \le 3|\theta_S|_1\,,
\end{equation}
it holds
$$
|\theta|^2_2 \le 2\frac{|\X\theta|_2^2}n.
$$
\end{lem}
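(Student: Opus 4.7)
The plan is to write $\frac{\X^\top \X}{n} = I_d + R$, where $R$ is the ``remainder'' matrix whose entries are bounded in absolute value by $\frac{1}{32k}$ thanks to assumption $\mathsf{INC}(k)$. Then one has the identity
$$
\frac{|\X\theta|_2^2}{n} = \theta^\top \frac{\X^\top \X}{n} \theta = |\theta|_2^2 + \theta^\top R \theta,
$$
so the whole task reduces to showing that the ``perturbation'' $\theta^\top R\theta$ is at most $\tfrac12 |\theta|_2^2$ in absolute value, since this immediately yields $\frac{|\X\theta|_2^2}{n} \ge \frac12 |\theta|_2^2$, i.e.\ the claim.

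First I would control $|\theta^\top R \theta|$ by a crude H\"older-type bound: for any matrix $A$ and vector $v$, $|v^\top A v| \le |A|_\infty |v|_1^2$. Applied here, this gives
$$
|\theta^\top R \theta| \le \frac{|\theta|_1^2}{32k}.
$$
So it remains to show $|\theta|_1^2 \le 16 k |\theta|_2^2$ under the cone condition and the sparsity assumption on $S$.

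The key step is this last inequality, which is where the cone condition is used. By the cone condition, $|\theta|_1 = |\theta_S|_1 + |\theta_{S^c}|_1 \le 4|\theta_S|_1$. Since $|S| \le k$, Cauchy--Schwarz applied to $\theta_S$ gives $|\theta_S|_1 \le \sqrt{|S|}\,|\theta_S|_2 \le \sqrt{k}\,|\theta|_2$. Combining the two gives $|\theta|_1 \le 4\sqrt{k}\,|\theta|_2$, hence $|\theta|_1^2 \le 16 k |\theta|_2^2$, as needed.

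There is no real obstacle here; the main content is just recognizing that the cone condition forces $|\theta|_1$ to be essentially controlled by its restriction to the small set $S$, which by Cauchy--Schwarz is then controlled by $\sqrt{k}|\theta|_2$. The constant $1/(32k)$ in $\mathsf{INC}(k)$ is tuned exactly so that the product $|\theta|_1^2 \cdot |R|_\infty$ loses only a factor of $1/2$ against $|\theta|_2^2$, which is precisely what is needed to absorb $\theta^\top R \theta$ into half of $|\theta|_2^2$.
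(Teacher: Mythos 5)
Your proof is correct, and it is a genuinely cleaner route than the one in the notes. The notes split $\theta=\theta_S+\theta_{S^c}$ first, expand
$$
\frac{|\X\theta|_2^2}{n}=\frac{|\X\theta_S|_2^2}{n}+\frac{|\X\theta_{S^c}|_2^2}{n}+2\theta_S^\top\frac{\X^\top\X}{n}\theta_{S^c},
$$
and apply the incoherence bound to each of the three pieces separately, tracking the terms $\frac{|\theta_S|_1^2}{32k}$, $\frac{9|\theta_S|_1^2}{32k}$, and $\frac{6|\theta_S|_1^2}{32k}$ whose sum yields a loss of $\frac{1}{2}|\theta_S|_2^2$. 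You instead write $\frac{\X^\top\X}{n}=I_d+R$ once, apply the single crude bound $|\theta^\top R\theta|\le|R|_\infty|\theta|_1^2$, and then absorb the whole $\ell_1$ norm via $|\theta|_1\le4|\theta_S|_1\le4\sqrt{k}\,|\theta_S|_2\le4\sqrt{k}\,|\theta|_2$. The same two ingredients (the incoherence bound and the cone condition plus Cauchy--Schwarz) appear in both arguments, but by not decomposing the bilinear form you skip the three-case bookkeeping, and the constants come out identically (your bound loses $\frac12|\theta|_2^2$ rather than the notes' marginally tighter $\frac12|\theta_S|_2^2$, which is immaterial). One small thing worth making explicit, since it is the only inequality in your proof that is not a standard one-liner: the bound $|v^\top Av|\le|A|_\infty|v|_1^2$ follows from $|v^\top Av|\le\sum_{i,j}|v_i||A_{ij}||v_j|\le|A|_\infty(\sum_i|v_i|)^2$.
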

\begin{proof}
We have
$$
\frac{|\X\theta|_2^2}n = \frac{|\X\theta_S|_2^2}{n} +\frac{|\X\theta_{S^c}|_2^2}{n}+2\theta_S^\top \frac{\X^\top\X}{n}\theta_{S^c}.
$$
We bound each of the three terms separately.
\begin{itemize}
\item[(i)] First, if follows from the incoherence condition that
$$\frac{|\X\theta_S|_2^2}{n}=\theta_S^\top \frac{\X^\top\X}{n}\theta_S =|\theta_S|_2^2+\theta_S^\top \left(\frac{\X^\top\X}{n} - I_d\right)\theta_S \ge |\theta_S|_2^2-\frac{|\theta_S|_1^2}{32k}\,,
$$
\item[(ii)] Similarly,
$$
\frac{|\X\theta_{S^c}|_2^2}{n} \ge |\theta_{S^c}|_2^2-\frac{|\theta_{S^c}|_1^2}{32k}\ge  |\theta_{S^c}|_2^2-\frac{9|\theta_{S}|_1^2}{32k}\,,
$$
where, in the last inequality, we used the cone condition~\eqref{EQ:conecond}.
\item[(iii)] Finally,
$$
2\Big|\theta_S^\top \frac{ \X^\top\X}{n}\theta_{S^c}\Big|\le  \frac{2}{32k}|\theta_S|_1|\theta_{S^c}|_1\le  \frac{6}{32k}|\theta_S|_1^2.
$$
where, in the last inequality, we used the cone condition~\eqref{EQ:conecond}.\
\end{itemize}
Observe now that it follows from the Cauchy-Schwarz inequality that
$$
|\theta_S|_1^2 \le |S||\theta_S|_2^2.
$$
Thus for $|S|\le k$,
\begin{equation*}
\frac{|\X\theta|_2^2}{n} \ge |\theta_S|_2^2 +|\theta_{S^c}|_2^2-  \frac{16|S|}{32k}|\theta_S|_2^2 \ge \frac12|\theta|_2^2. \qedhere
\end{equation*}
\end{proof}

\subsubsection{Fast rate for the Lasso}

\begin{thm}
\label{TH:lasso_fast}
Fix $n \ge 2$. Assume that the linear model~\eqref{EQ:regmod_matrix} holds where $\eps\sim \sg_n(\sigma^2)$. Moreover, assume that $|\theta^*|_0\le k$ and that $\X$ satisfies assumption \textsf{INC$(k)$}. Then the Lasso estimator $\thetalasso$ with regularization parameter defined by 
$$
2\tau=8\sigma\sqrt{\frac{\log(2d)}{n}}+8\sigma\sqrt{\frac{\log(1/\delta)}{n}}
$$
satisfies
\begin{equation}
\label{EQ:fastMSELasso}
\MSE(\X\thetalasso)=\frac{1}{n}|\X\thetalasso-\X\theta^*|_2^2\lesssim k\sigma^2\frac{\log(2d/\delta)}{n}
\end{equation}
and
\begin{equation}
\label{EQ:bornel2lasso}
|\thetalasso-\theta^*|_2^2\lesssim k\sigma^2\frac{\log(2d/\delta)}{n}\,.
\end{equation}
with probability at least $1-\delta$.
\end{thm}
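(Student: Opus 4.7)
\textbf{Proof plan for Theorem~\ref{TH:lasso_fast}.}

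The plan is to run a variant of the ``basic inequality'' argument from Theorem~\ref{TH:lasso_slow}, but this time split the $\ell_1$ error into its components on $S := \supp(\theta^*)$ (which has cardinality at most $k$) and on $S^c$, so as to generate a cone condition to which Lemma~\ref{lem:inc} applies.

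First, I will work on the high-probability event
\[
\cE = \Big\{\tfrac{1}{n}|\X^\top \eps|_\infty \le \tau/2\Big\}.
\]
Under $\mathsf{INC}(k)$ every column of $\X$ satisfies $|\X_j|_2 \le \sqrt{n(1+1/(32k))}$, so $\X_j^\top \eps \sim \sg(|\X_j|_2^2\sigma^2)$, and the union bound from Theorem~\ref{TH:finitemax} combined with the choice of $\tau$ gives $\p(\cE) \ge 1-\delta$. Setting $h = \thetalasso - \theta^*$, the definition of $\thetalasso$ yields, exactly as in the slow-rate proof,
\[
\tfrac{1}{n}|\X h|_2^2 \;\le\; \tfrac{2}{n}\eps^\top\X h + 2\tau(|\theta^*|_1 - |\thetalasso|_1) \;\le\; \tau|h|_1 + 2\tau(|\theta^*|_1 - |\thetalasso|_1)
\]
on $\cE$, by H\"older's inequality.

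Next, I will exploit $\theta^*_{S^c}=0$. Writing $|\thetalasso|_1 = |\theta^*_S + h_S|_1 + |h_{S^c}|_1 \ge |\theta^*|_1 - |h_S|_1 + |h_{S^c}|_1$ and $|h|_1 = |h_S|_1 + |h_{S^c}|_1$, the above display becomes
\[
\tfrac{1}{n}|\X h|_2^2 \;\le\; 3\tau|h_S|_1 - \tau|h_{S^c}|_1.
\]
Since the left-hand side is nonnegative, this proves the cone condition $|h_{S^c}|_1 \le 3|h_S|_1$ of Lemma~\ref{lem:inc}, which in turn gives $|h|_2^2 \le 2|\X h|_2^2/n$. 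It also gives
\[
\tfrac{1}{n}|\X h|_2^2 \;\le\; 3\tau|h_S|_1 \;\le\; 3\tau\sqrt{k}\,|h_S|_2 \;\le\; 3\tau\sqrt{k}\,|h|_2 \;\le\; 3\tau\sqrt{2k}\cdot\tfrac{|\X h|_2}{\sqrt{n}},
\]
where I used Cauchy--Schwarz on $|h_S|_1$ (since $|S|\le k$) and then Lemma~\ref{lem:inc}. Dividing by $|\X h|_2/\sqrt{n}$ and squaring yields $\MSE(\X\thetalasso) = |\X h|_2^2/n \le 18\tau^2 k$, and then Lemma~\ref{lem:inc} gives $|h|_2^2 \le 36\tau^2 k$. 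Plugging in $\tau^2 \le 32\sigma^2\log(2d/\delta)/n$ (from $(a+b)^2 \le 2(a^2+b^2)$) yields both \eqref{EQ:fastMSELasso} and \eqref{EQ:bornel2lasso}.

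The only real obstacle is the passage from the basic inequality to the cone condition --- everything else is algebraic manipulation and invocations of results already established. The structural role of incoherence is confined to Lemma~\ref{lem:inc}; the rest of the analysis uses only H\"older, Cauchy--Schwarz, and the sub-Gaussian maximum bound.
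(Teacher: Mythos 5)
Your proof is correct and follows essentially the same route as the paper's: the basic inequality for the Lasso, H\"older on the event $\{\tfrac1n|\X^\top\eps|_\infty\le\tau/2\}$, the split of $|h|_1$ into the pieces on $S$ and $S^c$ to extract the cone condition, and then Cauchy--Schwarz plus Lemma~\ref{lem:inc}. The only cosmetic difference is that the paper first adds $n\tau|\thetalasso-\theta^*|_1$ to both sides of the basic inequality and then cancels, ending with the bound $|\X h|_2^2 + n\tau|h|_1 \le 4n\tau|h_S|_1$, whereas you manipulate directly and land on $\tfrac1n|\X h|_2^2 \le 3\tau|h_S|_1 - \tau|h_{S^c}|_1$; both versions yield the cone condition by nonnegativity of the left-hand side and lead to the same $O(k\tau^2)$ rate (your constants happen to be a touch sharper, which is immaterial here).
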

\begin{proof}
From the definition of $\thetalasso$, it holds
$$
\frac1n|Y-\X\thetalasso|_2^2 \le \frac1n|Y-\X\theta^*|_2^2 + 2\tau|\theta^*|_1- 2\tau|\thetalasso|_1 \,.
$$
Adding $\tau|\thetalasso-\theta^*|_1$ on each side and multiplying by $n$, we get
$$
|\X\thetalasso-\X\theta^*|_2^2+n\tau|\thetalasso-\theta^*|_1 \le2\eps^\top\X(\thetalasso-\theta^*) +n\tau|\thetalasso-\theta^*|_1+ 2n\tau|\theta^*|_1- 2n\tau|\thetalasso|_1 \,.
$$
Applying H\"older's inequality and using the same steps as in the proof of Theorem~\ref{TH:lasso_slow}, we get that with probability $1-\delta$, we get
\begin{align*}
\eps^\top\X(\thetalasso-\theta^*)&\le|\eps^\top\X|_\infty|\thetalasso-\theta^*|_1\\
&\le \frac{n\tau}2|\thetalasso-\theta^*|_1\,,
\end{align*}
where we used the fact that $|\X_j|_2^2 \le n + 1/(32k) \le 2n$.
Therefore, taking $S=\supp(\theta^*)$ to be the support of $\theta^*$, we get
\begin{align}
|\X\thetalasso-\X\theta^*|_2^2 +n\tau|\thetalasso-\theta^*|_1  &\le  2n\tau|\thetalasso-\theta^*|_1+2n\tau|\theta^*|_1- 2n\tau|\thetalasso|_1\nonumber \\
&= 2n\tau|\thetalasso_S-\theta^*|_1+2n\tau|\theta^*|_1- 2n\tau|\thetalasso_S|_1\nonumber\\
&\le 4n\tau|\thetalasso_S-\theta^*|_1. \label{EQ:pr:lassofast1}
\end{align}

In particular, it implies that
\begin{equation}
\label{EQ:conefordelta}
|\thetalasso_{S^c}-\theta_{S^c}^*|_1\le 3 |\thetalasso_{S}-\theta_{S}^*|_1\,.
\end{equation}
so that $\theta=\thetalasso-\theta^*$ satisfies the cone condition~\eqref{EQ:conecond}.
Using now  the Cauchy-Schwarz inequality  and Lemma~\ref{lem:inc} respectively, we get, since $|S|\le k$,
$$
|\thetalasso_S-\theta^*|_1 \le \sqrt{|S|}|\thetalasso_S-\theta^*|_2 \le \sqrt{|S|}|\thetalasso-\theta^*|_2 \le\sqrt{\frac{2k}{n}}|\X\thetalasso-\X\theta^*|_2\,.
$$
Combining this result with~\eqref{EQ:pr:lassofast1}, we find
$$
|\X\thetalasso-\X\theta^*|_2^2 \le 32nk\tau^2\,.
$$
This concludes the proof of the bound on the MSE.
To prove~\eqref{EQ:bornel2lasso}, we use Lemma~\ref{lem:inc} once again to get
\begin{equation*}
|\thetalasso -\theta^*|_2^2\le 2\MSE(\X\thetalasso)\le  64k\tau^2. \qedhere
\end{equation*}
\end{proof}

Note that all we required for the proof was not really incoherence but the conclusion of Lemma~\ref{lem:inc}:
\begin{equation}
\label{EQ:REcond}
\inf_{|S|\le k}\inf_{\theta \in \cC_S}\frac{|\X\theta|_2^2}{n|\theta|^2_2}\ge \kappa,
\end{equation}
where $\kappa=1/2$ and $\cC_S$ is the cone defined by
$$
\cC_S=\big\{ |\theta_{S^c}|_1 \le 3|\theta_S|_1 \big\}\,.
$$
Condition~\eqref{EQ:REcond} is sometimes called \emph{restricted eigenvalue (RE) condition}. Its name comes from the following observation. Note that all $k$-sparse vectors $\theta$ are in a cone $\cC_{S}$ with $|S|\le k$ so that the RE condition implies that the smallest eigenvalue of $\X_S$ satisfies
$\lambda_\textrm{min}(\X_S)\ge n\kappa$ for all $S$ such that $|S|\le k$. Clearly, the RE condition is weaker than incoherence and it can actually be shown that  a design matrix $\X$  of \iid Rademacher random variables satisfies the RE conditions as soon as $n\ge Ck\log(d)$ with positive probability.

\subsection{The Slope estimator}

We noticed that the BIC estimator can actually obtain a rate of \( k\log(ed/k) \), where \( k = |\theta^\ast|_0 \), while the LASSO only achieves \( k \log(ed) \).
This begs the question whether the same rate can be achieved by an efficiently computable estimator.
The answer is yes, and the estimator achieving this is a slight modification of the LASSO, where the entries in the \( \ell_1 \)-norm are weighted differently according to their size in order to get rid of a slight inefficiency in our bounds.

\begin{defn}[Slope estimator]
	Let \( \lambda = (\lambda_1, \dots, \lambda_d) \) be a non-increasing sequence of positive real numbers, \( \lambda_1 \geq \lambda_2 \geq \dots \geq \lambda_d > 0 \).
	For \( \theta = (\theta_1, \dots, \theta_d) \in \R^d \), let \( (\theta^\ast_1, \dots, \theta^\ast_d) \) be a non-increasing rearrangement of the modulus of the entries, \( |\theta_1|, \dots, |\theta_d| \).

	We define the \emph{sorted \( \ell_1 \) norm of \( \theta \)} as
	\begin{equation}
		|\theta|_\ast = \sum_{j=1}^{d} \lambda_j \theta^\ast_j,	
	\end{equation}
	or equivalently as
	\begin{equation}
		|\theta|_\ast = \max_{\phi \in S_d} \sum_{j=1}^{d} \lambda_j | \theta_{\phi(j)} |.
	\end{equation}
	The \emph{Slope estimator} is then given by
	\begin{equation}
		\label{eq:ad}
		\thetaslope \in \argmin_{\theta \in \R^d} \{ \frac{1}{n} | Y - \X \theta|_2^2 + 2 \tau | \theta |_\ast \}
	\end{equation}
	for a choice of tuning parameters \( \lambda \) and \( \tau > 0 \).

\end{defn}

Slope stands for Sorted L-One Penalized Estimation and was introduced in \cite{BogvanSab15}, motivated by the quest for a penalized estimation procedure that could offer a control of false discovery rate for the hypotheses \( H_{0,j}: \theta^\ast_j = 0 \).
We can check that \( |\,.\,|_\ast \) is indeed a norm and that \( \thetaslope \) can be computed efficiently, for example by proximal gradient algorithms, see \cite{BogvanSab15}.

In the following, we will consider
\begin{equation}
	\label{eq:ab}
	\lambda_j = \sqrt{\log (2d/j)}, \quad j = 1, \dots, d.
\end{equation}
With this choice, we will exhibit a scaling in \( \tau \) that leads to the desired high probability bounds, following the proofs in \cite{BelLecTsy16}.

We begin with refined bounds on the suprema of Gaussians.

\begin{lem}
	\label{lem:a}
	Let \( g_1, \dots, g_d \) be zero-mean Gaussian variables with variance at most \( \sigma^2 \).	
	Then, for any $k\le d$,
	\begin{equation}
		\label{eq:j}
		\p\left(\frac{1}{k \sigma^2} \sum_{j = 1}^{k} (g_j^\ast)^2 > t \log \left( \frac{2d}{k} \right) \right) \leq \left( \frac{2d}{k} \right)^{1-3t/8}.
	\end{equation}
\end{lem}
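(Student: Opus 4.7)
The plan is to combine a union bound over the $\binom{d}{k}$ size-$k$ subsets of $[d]$ with a Chernoff tail bound for the chi-squared distribution. First, by homogeneity I may assume $\sigma = 1$; moreover, writing $g_i = a_i Z_i$ with $Z_i \sim \cN(0,1)$ independent and $|a_i| \le 1$, we have $g_i^2 \le Z_i^2$ pointwise, so by monotonicity of the sum of top-$k$ squares it suffices to prove the statement for $d$ iid standard Gaussians.

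The key observation is the variational identity
\[
\sum_{j=1}^{k} (g_j^{\ast})^2 \;=\; \max_{S \subset [d],\, |S|=k}\; \sum_{j \in S} g_j^2,
\]
and for every such $S$ the sum $\sum_{j \in S} g_j^2$ is a $\chi^2_k$ random variable. A union bound then gives
\[
\p\!\left(\sum_{j=1}^{k} (g_j^{\ast})^2 > u\right) \;\le\; \binom{d}{k}\, \p(\chi^2_k > u).
\]
To control the chi-squared tail I would use the moment generating function $\E[e^{s\chi^2_k}] = (1-2s)^{-k/2}$ for $s<1/2$. Specifically, choosing $s = 3/8$ yields $\E[e^{3\chi^2_k/8}] = 4^{k/2} = 2^k$, so Markov's inequality gives $\p(\chi^2_k > u) \le 2^k e^{-3u/8}$. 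Plugging in $u = k t \log(2d/k)$ and using the standard bound $\binom{d}{k} \le (ed/k)^k$, the right-hand side becomes
\[
(2ed/k)^k \,(2d/k)^{-3kt/8} \;=\; e^k\, (2d/k)^{k(1-3t/8)}.
\]

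The final step is to deduce the claimed bound $(2d/k)^{1-3t/8}$. When $t \le 8/3$ the claim is trivial since the right-hand side is at least $1$. When $t > 8/3$, the exponent $1-3t/8$ is negative, and $(2d/k)^{k(1-3t/8)}$ decreases in $k$: it equals $(2d/k)^{1-3t/8}$ at $k=1$ and is strictly smaller for $k \ge 2$, with enough slack (since $2d/k \ge 2$) to absorb the $e^k$ factor. The main obstacle will be the bookkeeping in this last step, since the naive bound $\binom{d}{k} \le (ed/k)^k$ costs an extra $e^k$ that must be absorbed: for $k = 1$ this is handled by using the exact value $\binom{d}{1} = d$ (equivalently, replacing Chernoff at $s = 3/8$ by the direct tail bound $\p(g^2 > u) \le 2e^{-u/2}$), which already yields $(2d)^{1-t/2} \le (2d)^{1-3t/8}$; for $k \ge 2$, one uses the gap between $k(1-3t/8)$ and $1-3t/8$ together with $\log(2d/k) \ge \log 2$ to swallow $e^k$ in the exponent. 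Apart from this delicate combinatorial algebra at the end, the plan is a straightforward Chernoff-plus-union-bound argument.
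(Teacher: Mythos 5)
Your approach has a gap that cannot be patched: the lemma does not assume $g_1,\dots,g_d$ are independent, and your union bound over size-$k$ subsets with $\chi^2_k$ tail bounds requires independence to even be stated. You smuggle it in when you ``write $g_i = a_i Z_i$ with $Z_i \sim \cN(0,1)$ independent''; if the $g_i$ are correlated (say $g_1 = g_2$ almost surely) no such representation exists, and without it $\sum_{j\in S} g_j^2$ need not be $\chi^2_k$. This is not a cosmetic omission: in the Slope theorem where the lemma is used, $g_j = (\X^\top\eps)_j$ are jointly Gaussian with covariance $\sigma^2 \X^\top \X$, and under \textsf{INC}$(k')$ the matrix $\X^\top\X/n$ is only approximately $I_d$, so the coordinates \emph{are} correlated. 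The paper's proof avoids any joint structure by applying Jensen's inequality to $\exp$ of the average of the top $k$ squares, $\exp\big(\tfrac{3}{8k\sigma^2}\sum_{j\le k}(g_j^\ast)^2\big) \le \tfrac1k\sum_{j\le k}\exp\big(\tfrac{3(g_j^\ast)^2}{8\sigma^2}\big) \le \tfrac1k\sum_{j\le d}\exp\big(\tfrac{3g_j^2}{8\sigma^2}\big)$ pointwise, then taking expectations term by term using only each marginal variance. This gives $\E[\exp(\tfrac{3}{8k\sigma^2}\sum_{j\le k}(g_j^\ast)^2)] \le 2d/k$ with no assumption on the joint law, and a Chernoff bound at $s=3/8$ finishes with exact constants.

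Even granting independence, your closing algebra does not close. You need $e^k(2d/k)^{k(1-3t/8)} \le (2d/k)^{1-3t/8}$, equivalently $k \le (k-1)(3t/8-1)\log(2d/k)$ whenever $t>8/3$. Try $d=k=2$, $t=4$: this reads $2 \le \tfrac12\log 2 \approx 0.35$, which is false, and indeed your bound evaluates to $e^2/2 \approx 3.69$ (vacuous, as it exceeds $1$), while the lemma claims $2^{-1/2}\approx 0.71$ and the paper's mgf computation gives exactly that. The $e^k$ cost of $\binom{d}{k} \le (ed/k)^k$ is a genuine loss to be reckoned with, not bookkeeping to be absorbed by slack in the exponent; the Jensen argument pays only a factor of $d/k$, matching the target to the constant.
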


\begin{proof}
	We consider the moment generating function to apply a Chernoff bound.

  Use Jensen's inequality on the sum to get
	\begin{align*}
		\label{eq:j}
		\E \exp \left( \frac{3}{8 k \sigma^2} \sum_{j=1}^{k} (g^\ast_j)^2 \right)
		\leq {} & \frac{1}{k} \sum_{j=1}^{k} \E \exp \left( \frac{3 (g_j^\ast)^2}{8 \sigma^2} \right)\\
		\leq {} & \frac{1}{k} \sum_{j=1}^{d} \E \exp \left( \frac{3 g_j^2}{8 \sigma^2} \right)\\
		\leq {} & \frac{2d}{k},
	\end{align*}
	where we used the bound on the moment generating function of the \( \chi^2 \) distribution from Problem \ref{EXO:chi2} to get that \( \E[\exp(3g^2/8)] = 2 \) for \( g \sim \cN(0,1) \).

	The rest follows from a Chernoff bound.
\end{proof}

\begin{lem}
	\label{lem:b}
Define $[d]:=\{1, \ldots, d\}$. Under the same assumptions as in Lemma \ref{lem:a},
	\begin{equation}
		\sup_{k \in [d]} \frac{(g^\ast_k)}{\sigma \lambda_k} \leq 4 \sqrt{\log(1/\delta)},
	\end{equation}
	with probability at least \( 1 - \delta \), for \( \delta < \frac{1}{2} \).
\end{lem}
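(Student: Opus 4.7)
The plan is to combine Lemma~\ref{lem:a} with a union bound over $k \in [d]$, using the monotonicity of the sequence $g_1^\ast \ge g_2^\ast \ge \cdots \ge g_d^\ast \ge 0$ to convert the desired statement about a single order statistic into a statement about the partial sums controlled by Lemma~\ref{lem:a}. Concretely, the ordering immediately gives $k (g_k^\ast)^2 \le \sum_{j=1}^k (g_j^\ast)^2$ for every $k$, so
\[
\Bigl\{g_k^\ast > s\,\sigma \lambda_k\Bigr\} \;\subseteq\; \Bigl\{\tfrac{1}{k\sigma^2}\sum_{j=1}^k (g_j^\ast)^2 > s^2 \log(2d/k)\Bigr\}.
\]

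First I would apply Lemma~\ref{lem:a} with $t = s^2$ to each $k \in [d]$, yielding
\[
\p\!\left(\frac{g_k^\ast}{\sigma \lambda_k} > s\right) \le \Bigl(\frac{2d}{k}\Bigr)^{1-3s^2/8},
\]
and then take a union bound over $k$ to get
\[
\p\!\left(\sup_{k\in[d]}\frac{g_k^\ast}{\sigma\lambda_k} > s\right) \le \sum_{k=1}^d \Bigl(\frac{k}{2d}\Bigr)^{3s^2/8 - 1}.
\]

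Next I would plug in $s = 4\sqrt{\log(1/\delta)}$, so that the exponent $A := 3s^2/8 - 1 = 6\log(1/\delta) - 1$ is strictly positive for $\delta < 1/2$. Since $(k/(2d))^A$ is increasing in $k$, I would bound the sum by comparison with an integral: $\sum_{k=1}^d (k/(2d))^A \le \int_0^{d+1} (x/(2d))^A\,dx \lesssim d\,2^{-A}/(A+1)$. Substituting $A = 6\log(1/\delta) - 1$, this becomes roughly $d\,\delta^{6\log 2}/\log(1/\delta)$, which one checks is at most $\delta$ for $\delta<1/2$, so the union bound delivers the claim.

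The main obstacle will be the last calibration step: the factor $\delta^{6\log 2} = \delta^{4.16\ldots}$ decays much faster than $\delta$ in $\delta$, but one still needs to verify that the $d$ in the numerator is absorbed uniformly for all admissible $\delta$. This requires a careful numerical check of the constants (and explains why the constant $4$, rather than something smaller, is used in the statement), but does not require any new probabilistic ingredient beyond Lemma~\ref{lem:a} and the monotonicity of the order statistics.
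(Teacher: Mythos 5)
Your proposal takes the same route as the paper: the monotonicity estimate $k(g_k^\ast)^2 \le \sum_{j\le k}(g_j^\ast)^2$, an application of Lemma~\ref{lem:a} with $t=s^2$, a union bound over $k\in[d]$, and then a summation of the resulting tail probabilities. Up to this point everything is sound, and you have also correctly located the real work: after the union bound one is left with $S := \sum_{k=1}^d (2d/k)^{1-3t/8}$, which you (correctly, up to constants, for $d$ large) estimate as being of order $d\,2^{-A}/(A+1)$ with $A = 3t/8 - 1$.

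The ``careful numerical check of the constants'' you defer is, however, not a matter of constants; it is a genuine gap, and the same gap is present in the paper's proof. With $s = 4\sqrt{\log(1/\delta)}$ one gets $A = 6\log(1/\delta)-1$, so $d\,2^{-A}/(A+1)$ is of order $d\,\delta^{6\log 2}/\log(1/\delta)$; since $6\log 2 \approx 4.16$, the quantity $\delta^{6\log 2 - 1}/\log(1/\delta)$ is bounded away from $0$ as $\delta\uparrow 1/2$, and demanding the bound be at most $\delta$ then forces $d$ to stay below a small absolute constant. No retuning of the constant in $s = C\sqrt{\log(1/\delta)}$ absorbs the leftover $d$ uniformly. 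The paper itself stumbles at precisely this spot: it asserts $S\le (2d)^{1-3t/8}\sum_{j=1}^d j^{-2}$, but the termwise inequality $(2d/j)^{1-3t/8}\le (2d)^{1-3t/8}/j^{2}$ simplifies to $j^{3t/8+1}\le 1$, which fails for every $j\ge 2$ when $t>8$; a concrete check (say $d=100$, $t=16$) gives $S\approx 0.5$ while the asserted bound is $4\cdot 2^{-6}\approx 0.06$. So the naive union bound over all of $[d]$, finished either by your integral estimate or by the paper's algebra, does not deliver the stated conclusion uniformly in $d$. A repair needs more than arithmetic---e.g.\ a union bound over only dyadic $k$ (trimming the union cost from $d$ to roughly $\log_2 d$, at the price of a doubly-logarithmic term in the conclusion), or a sharper use of the nested structure of the events than Lemma~\ref{lem:a} alone provides.
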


\begin{proof}
	We can estimate \( (g^\ast_k)^2 \) by the average of all larger variables,
  \begin{equation}
  	\label{eq:m}
		(g^\ast_k)^2 \leq \frac{1}{k} \sum_{j=1}^{k} (g^\ast_j)^2.
  \end{equation}
	Applying Lemma \ref{lem:a} yields
	\begin{equation}
		\label{eq:n}
		\p\left(\frac{(g^\ast_k)^2}{\sigma^2 \lambda_k^2} > t\right) \leq \left( \frac{2d}{k} \right)^{1-3t/8}
	\end{equation}

	For \( t > 8 \),
	\begin{align}
		\label{eq:o}
		\p \left( \sup_{k \in [d]} \frac{(g^\ast_k)^2}{\sigma^2 \lambda_k^2} > t \right)
		\leq {} & \sum_{j=1}^{d} \left( \frac{2d}{j} \right)^{1-3t/8}\\
		\leq {} & (2d)^{1-3t/8} \sum_{j=1}^{d} \frac{1}{j^2}\\
		\leq {} & 4 \cdot 2^{-3t/8}.
	\end{align}

	In turn, this means that
	\begin{equation}
		\label{eq:p}
		\sup_{k \in [d]} \frac{(g^\ast_k)}{\sigma \lambda_k} \leq 4 \sqrt{\log(1/\delta)},
	\end{equation}
	for \( \delta \leq 1/2 \).
\end{proof}

\begin{thm}
	Fix $n \ge 2$. Assume that the linear model~\eqref{EQ:regmod_matrix} holds where $\eps\sim \cN_n(0,\sigma^2 I_n)$. Moreover, assume that $|\theta^*|_0\le k$ and that $\X$ satisfies assumption \textsf{INC$(k')$} with \( k' \geq 4k \log(2de/k) \). Then the Slope estimator $\thetaslope$ with regularization parameter defined by 
	\begin{equation}
		\tau = 8 \sqrt{2} \sigma \sqrt{\frac{\log(1/\delta)}{n}} 
	\end{equation}
satisfies
\begin{equation}
\label{EQ:fastMSESlope}
\MSE(\X\thetaslope)=\frac{1}{n}|\X\thetaslope-\X\theta^*|_2^2
\lesssim \sigma^2\frac{k\log(2d/k) \log(1/\delta)}{n}
\end{equation}
and
\begin{equation}
\label{EQ:fastL2slope}
|\thetaslope-\theta^*|_2^2\lesssim \sigma^2\frac{k\log(2d/k) \log(1/\delta)}{n}\,.
\end{equation}
with probability at least $1-\delta$.
\end{thm}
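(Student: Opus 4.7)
The plan is to adapt the proof of Theorem~\ref{TH:lasso_fast}, making three changes tailored to the sorted $\ell_1$ norm: (i) replacing the $(|\cdot|_\infty,|\cdot|_1)$ H\"older duality by a rearrangement-type duality involving $|\cdot|_*$; (ii) deriving the analog of the cone condition~\eqref{EQ:conecond} for $|\cdot|_*$; and (iii) applying a restricted eigenvalue inequality at the enlarged sparsity level $k' \gtrsim k\log(2de/k)$ that appears in the hypothesis.

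First I would write $u = \thetaslope-\theta^*$ and expand $Y=\X\theta^*+\eps$ in the inequality defining $\thetaslope$ to get
$$|\X u|_2^2 + 2n\tau|\thetaslope|_* \;\leq\; 2\eps^\top \X u + 2n\tau|\theta^*|_*.$$
Set $g_j = \X_j^\top\eps$; by incoherence $|\X_j|_2^2 \le 2n$, so each $g_j$ is centered Gaussian of variance $\le 2n\sigma^2$. The rearrangement inequality gives $\eps^\top\X u = \sum_j g_j u_j \le \sum_j g^*_j(|u|)^*_j$, and Lemma~\ref{lem:b} applied to $g_1,\dots,g_d$ yields, on an event of probability at least $1-\delta$, that $g^*_j \le 4\sqrt{2n}\,\sigma\sqrt{\log(1/\delta)}\,\lambda_j$ simultaneously for all $j$. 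Hence $2\eps^\top\X u \le n\tau|u|_*$ by the choice of $\tau$. Substituting and adding $n\tau|u|_*$ to both sides produces
$$|\X u|_2^2 + n\tau|u|_* \;\leq\; 2n\tau|u|_* + 2n\tau\bigl(|\theta^*|_* - |\thetaslope|_*\bigr).$$

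Next I would establish a cone condition. With $S = \supp(\theta^*)$, $|S|\le k$, a careful bookkeeping argument based on the representation $|\theta|_* = \max_{\phi\in S_d}\sum_j\lambda_j|\theta_{\phi(j)}|$, together with the fact that $\theta^*$ vanishes off $S$, yields a bound of the form
$$|\theta^*|_* - |\thetaslope|_* \;\le\; \sum_{j=1}^k \lambda_j (u_S)^*_j \;-\; \sum_{j=1}^{d-k}\lambda_{k+j}(u_{S^c})^*_j.$$
Plugged into the previous display, this forces $u$ into a ``slope cone''. Cauchy--Schwarz gives $\sum_{j=1}^k\lambda_j(u_S)^*_j \le \Lambda_k|u_S|_2$ where $\Lambda_k^2 = \sum_{j=1}^k\log(2d/j)\lesssim k\log(2de/k)$ by Stirling. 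Vectors in this cone are effectively $\lesssim k\log(2de/k)$-sparse in a weighted sense, so the hypothesis $\mathsf{INC}(k')$ for $k'\ge 4k\log(2de/k)$ together with a weighted adaptation of Lemma~\ref{lem:inc} gives $|u|_2^2 \le \tfrac{2}{n}|\X u|_2^2$. Chaining the three estimates yields
$$|\X u|_2^2 \;\lesssim\; n\tau\,\Lambda_k|u|_2 \;\lesssim\; \tau\Lambda_k\sqrt{n}\,|\X u|_2,$$
whence $\tfrac{1}{n}|\X u|_2^2 \lesssim \tau^2\Lambda_k^2 \asymp \sigma^2 k\log(2d/k)\log(1/\delta)/n$, which is~\eqref{EQ:fastMSESlope}; inequality~\eqref{EQ:fastL2slope} then follows from the restricted eigenvalue bound.

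The main obstacle is the cone-condition step. For the ordinary $\ell_1$ norm, $|\theta^*|_1 - |\thetaslope|_1 \le |u_S|_1 - |u_{S^c}|_1$ is a one-line triangle inequality because each coordinate is weighted identically. For $|\cdot|_*$, the weight of a coordinate depends on its rank in the whole vector, so one has to simultaneously track how the entries of $u_S$ and $u_{S^c}$ interleave in the sorted arrangement of $\thetaslope = \theta^*+u$. The accompanying restricted eigenvalue step is also delicate: the effective sparsity induced by the slope cone scales like $\Lambda_k^2\asymp k\log(2de/k)$, which is exactly why the hypothesis has to inflate $k$ by precisely this logarithmic factor.
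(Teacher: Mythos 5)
Your proposal follows essentially the same route as the paper's proof: the basic inequality, the dual bound on $\eps^\top\X u$ via Lemma~\ref{lem:b} and the rearrangement inequality, a slope cone condition derived from the variational characterization of $|\cdot|_*$, and a weighted adaptation of the incoherence argument from Lemma~\ref{lem:inc} at the inflated sparsity level $k'$, closed by Cauchy--Schwarz and the Stirling estimate $\sum_{j=1}^k\log(2d/j)\le k\log(2de/k)$. The one cosmetic difference is your choice $S=\supp(\theta^*)$ versus the paper's $S$ equal to the $k$ largest entries of $|u|$; the paper first passes from the $(u_S,u_{S^c})$ form to the fully sorted form $\sum_{j\le k}\lambda_j u^*_j - \sum_{j>k}\lambda_j u^*_j$, which makes the top-$k$ set the natural $S$ for the RE step, but your version goes through with the same weight manipulations (divide and multiply by $\lambda_j$, lower-bound by $\lambda_d\ge 1/2$) that the paper already uses.
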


\begin{rem}
	The multplicative depedence on \( \log(1/\delta) \) in \eqref{EQ:fastMSESlope} is an artifact of the simplified proof we give here and can be improved to an additive one similar to the lasso case, \eqref{EQ:fastMSELasso}, by appealing to stronger and more general concentration inequalities.
	For more details, see \cite{BelLecTsy16}.
\end{rem}

\begin{proof}
	By minimality of \( \thetaslope \) in \eqref{eq:ad}, adding \( n \tau |\thetaslope - \theta^\ast|_\ast \) on both sides,
\begin{equation}
	\label{eq:r}
	|\X\thetaslope-\X\theta^*|_2^2+n \tau |\thetaslope-\theta^*|_\ast
	\leq 2\eps^\top\X(\thetaslope-\theta^*) +n \tau |\thetaslope-\theta^*|_\ast+ 2 \tau n|\theta^*|_\ast- 2 \tau n|\thetaslope|_\ast \,.
\end{equation}
Set
\begin{equation}
	\label{eq:l}
	u := \thetaslope - \theta^\ast,  \quad g_j = (\X^\top \varepsilon)_j,
\end{equation}

By Lemma \ref{lem:b}, we can estimate
\begin{align}
	\label{eq:k}
	\varepsilon^\top \X u = {} & \sum_{j = 1}^{d} (\X^\top \varepsilon)_j u_j
	\leq \sum_{j = 1}^{d} g^\ast_j u^\ast_j\\
	= {} & \sum_{j=1}^{d} \frac{g^\ast_j}{\lambda_j} (\lambda_j u^\ast_j)\\
	\leq {} & \sup_{j} \left\{ \frac{g^\ast_j}{\lambda_j} \right\} | u |_\ast\\
	\leq {} & 4 \sqrt{2} \sigma \sqrt{n \log(1/\delta)} | u |_\ast
	= \frac{n \tau}{2} |u|_\ast,
\end{align}
where we used that \( |\X_j|_2^2 \leq 2n \).

Pick a permutation \( \phi \) such that \( | \theta^\ast |_\ast = \sum_{j = 1}^{k} \lambda_j | \theta_{\phi(j)} | \) and \( | u_{\phi(k+1)} | \geq \dots \geq | u_{\phi(d)}| \).
Then, noting that \( \lambda_j \) is monotonically decreasing,
\begin{align}
	\label{eq:q}
	| \theta^\ast |_\ast - | \thetaslope |_\ast
	= {} & \sum_{j=1}^{k} \lambda_j | \theta^\ast_{\phi(j)}| - \sum_{j = 1}^{d} \lambda_j (\thetaslope)^\ast_j\\
	\leq {} & \sum_{j=1}^{k} \lambda_j ( | \theta^\ast_{\phi(j)} | - | \thetaslope_{\phi(j)} |) - \sum_{j=k+1}^{d} \lambda_j | \thetaslope_{\phi(j)} |\\
	\leq {} & \sum_{j=1}^{k} \lambda_j | u_{\phi(j)} | - \sum_{j=k+1}^{d} \lambda_j  u^\ast_j \\
	\leq {} & \sum_{j=1}^{k} \lambda_j  u^\ast_{j}  - \sum_{j=k+1}^{d} \lambda_j  u^\ast_j.
\end{align}

Combined with \( \tau = 8 \sqrt{2} \sigma \sqrt{\log(1/\delta)/n} \) and the basic inequality \eqref{eq:r}, we have that
\begin{align}
	\label{eq:s}
	|\X\thetaslope-\X\theta^*|_2^2+n \tau |u|_\ast
	\leq {} & 2n \tau |u|_\ast + 2 \tau n |\theta^*|_\ast- 2 \tau n|\thetaslope|_\ast\\
	\leq {} & 4 n \tau \sum_{j=1}^{k} \lambda_j u^\ast_j, \label{eq:w}
\end{align}
whence
\begin{equation}
	\label{eq:t}
	\sum_{j=k+1}^{d} \lambda_j u^\ast_j \leq 3 \sum_{j=1}^{k} \lambda_j u^\ast_j.
\end{equation}

We can now repeat the incoherence arguments from Lemma \ref{lem:inc}, with \( S \) being the \( k \) largest entries of \( |u| \), to get the same conclusion under the restriction \( \textsf{INC}(k')  \).
First, by exactly the same argument as in Lemma \ref{lem:inc}, we have
\begin{equation}
	\label{eq:ah}
	\frac{|\X u_{S}|_2^2}{n} \ge | u_S |_2^2 - \frac{| u_{S} |_1^2}{32 k'}
	\ge | u_S |_2^2 - \frac{k}{32 k'} | u_S |_2^2.
\end{equation}
Next, for the cross terms, we have
\begin{align}
	\label{eq:u}
	2\Big|u_S^\top \frac{ \X^\top\X}{n}u_{S^c}\Big|
	\leq {} & \frac{2}{32k'} \sum_{i=1}^{k} u^\ast_i \sum_{j=k+1}^{d} u^\ast_j\\
	= {} & \frac{2}{32k'} \sum_{i=1}^{k} u^\ast_i \sum_{j=k+1}^{d} \frac{\lambda_j}{\lambda_j} u^\ast_j\\
	\leq {} & \frac{2}{32k' \lambda_d} \sum_{i=1}^{k} u^\ast_i \sum_{j=k+1}^{d} \lambda_j u^\ast_j\\
	\leq {} & \frac{6}{32k' \lambda_d} \sum_{i=1}^{k} u^\ast_i \sum_{j=1}^{k} \lambda_j u^\ast_j\\
	\leq {} & \frac{6 \sqrt{k}}{32k' \lambda_d} \left(\sum_{i=1}^{k} \lambda_j^2\right)^{1/2} \sum_{i=1}^{k} (u^\ast_i)^2\\
	\leq {} & \frac{6 k}{32k' \lambda_d} \sqrt{\log (2de/k)} | u_S |_2^2.
\end{align}
where we estimated the sum by
\begin{align}
	\sum_{j=1}^{k} \log \left( \frac{2d}{j} \right)
	= {} & k \log(2d) - \log(k!)
	\leq k \log(2d) - k \log(k/e)\\
	= {} & k \log(2de/k),
\end{align}
using Stirling's inequality, \( k! \geq (k/e)^k \).

Finally, from a similar calculation, using the cone condition twice,
\begin{equation}
	\label{eq:v}
	\frac{|\X u_{S^c}|_2^2}{n} \ge |u_{S^c}|_2^2-\frac{9k}{32k' \lambda_d^2} \log(2de/k) |u_S|_2^2.
\end{equation}

Concluding, if \( \textsf{INC}( k') \) holds with \( k' \geq 4k \log(2de/k) \), taking into account that \( \lambda_d \ge 1/2 \), we have
\begin{equation}
	\label{eq:ak}
	\frac{| \X u |_2^2}{n} \ge | u_S |_2^2 + | u_{S^c} |_2^2 - \frac{36 + 12 + 1}{128} | u_S |_2^2 \ge \frac{1}{2} | u |_2^2.
\end{equation}
Hence, from \eqref{eq:w},
\begin{align}
	\label{eq:x}
		|\X u|_2^2+n \tau |u|_\ast
		\leq {} & 4 n \tau \left( \sum_{j=1}^{k} \lambda_j^2 \right)^{1/2} \left( \sum_{j=1}^{k} (u^\ast_j)^2 \right)^{1/2}\\
		\leq {} & 4 \sqrt{2} \tau \sqrt{n k \log (2de/k)} | \X u |_2\\
		= {} & 2^6 \sigma \sqrt{\log(1/\delta)} \sqrt{k \log(2de/k)} | \X u |_2,
\end{align}
which combined yields
\begin{equation}
	\label{eq:y}
	\frac{1}{n} | \X u |_2^2 \leq 2^{12} \sigma^2 \frac{k}{n} \log(2de/k) \log(1/\delta),
\end{equation}
and
\begin{equation}
	\label{eq:z}
	|u|_2^2 \leq 2^{13} \frac{k}{n} \sigma^2 \log(2de/k) \log(1/\delta). \qedhere
\end{equation}
\end{proof}

\newpage
\section{Problem set}

\begin{exercise}\label{EXO:GSM:rigde}
Consider the linear regression model with fixed design with $d \le n$. The \emph{ridge} regression estimator is employed when $\rank(\X^\top \X)<d$ but we are interested in estimating $\theta^*$. It is defined for a given parameter $\tau>0$ by
$$
\thetaridge=\argmin_{\theta \in \R^d}\Big\{\frac1n|Y-\X\theta|_2^2 + \tau|\theta|_2^2\Big\}\,.
$$
\begin{enumerate}[label=(\alph*)]
\item Show that for any $\tau$, $\thetaridge$ is uniquely defined and give its closed form expression.
\item Compute the bias of $\thetaridge$ and show that it is bounded in absolute value by $|\theta^*|_2$.

\end{enumerate}
\end{exercise}

\begin{exercise}\label{EXO:GSM:design}
Let $X=(1, Z, \ldots, Z^{d-1})^\top \in \R^d$ be a random vector
where $Z$ is a random variable. Show that the matrix $\E(XX^\top)$ is
positive definite if $Z$ admits a probability density with
respect to the Lebesgue measure on $\R$.
\end{exercise}

\begin{exercise}\label{EXO:sparse:bound_ell0}
Let $\thetahard$ be the hard thresholding estimator defined in~Definition~\ref{def:hard}.
\begin{enumerate}
\item Show that
$$
|\thetahard|_0 \le |\theta^*|_0 + \frac{|\thetahard-\theta^*|_2^2}{4\tau^2}
$$
\item Conclude that if $\tau$ is chosen as in Theorem~\ref{TH:hard}, then
$$
|\thetahard|_0 \le C |\theta^*|_0
$$
with probability $1-\delta$ for some constant $C$ to be made explicit.
\end{enumerate}
\end{exercise}

\begin{exercise}\label{EXO:sparse:4to3}
In the proof of Theorem~\ref{TH:hard}, show that $4\min(|\theta^*_j|,\tau)$ can be replaced by $3\min(|\theta^*_j|,\tau)$, i.e., that on the event $\cA$, it holds
$$
|\thetahard_j-\theta^*_j|\le 3\min(|\theta^*_j|,\tau)\,.
$$
\end{exercise}

\begin{exercise}\label{EXO:sparse:weaklq}
For any $q>0$,  a vector $\theta \in \R^d$ is said to be in a weak  $\ell_q$ ball of radius $R$ if the decreasing rearrangement $|\theta_{[1]}|\ge |\theta_{[2]}| \ge \dots$ satisfies 
$$
|\theta_{[j]}|\le Rj^{-1/q}\,.
$$
Moreover, we define the weak $\ell_q$ norm of $\theta$ by
$$
|\theta|_{w\ell_q}=\max_{1\le j\le d} j^{1/q}|\theta_{[j]}|.
$$
\begin{enumerate}[label=(\alph*)]
\item Give examples of $\theta, \theta' \in \R^d$ such that
$$
|\theta+\theta'|_{w\ell_1}>|\theta|_{w\ell_1}+|\theta'|_{w\ell_1}
$$
What do you conclude?
\item Show that $|\theta|_{w\ell_q} \le |\theta|_{q}$.
\item Given a sequence \( \theta_1, \theta_2, \dots \), show that if $\lim_{d \to \infty}|\theta_{\{1, \dots, d\}}|_{w\ell_q}<\infty$, then $\lim_{d \to \infty}|\theta_{\{1, \dots, d\}}|_{q'}<\infty$ for all $q'>q$.
\item Show that, for any $q \in (0,2)$ if $\lim_{d \to \infty}|\theta_{\{1, \dots, d\}}|_{w\ell_q}=C$, there exists a constant $C_q>0$ that depends on $q$ but not on $d$ such that under the assumptions of Theorem~\ref{TH:hard}, it holds
$$
|\thetahard-\theta^*|_2^2\le C_q\Big(\frac{\sigma^2 \log 2d}{n}\Big)^{1-\frac{q}{2}}
$$
with probability .99.
\end{enumerate}
\end{exercise}

\begin{exercise}\label{EXO:sparse:variation}
Show that
\begin{align*}
\thetahard&=\argmin_{\theta \in \R^d} \Big\{|y-\theta|_2^2 + 4\tau^2|\theta|_0\Big\}\\
\thetasoft&=\argmin_{\theta \in \R^d} \Big\{|y-\theta|_2^2 + 4\tau|\theta|_1\Big\}
\end{align*}
\end{exercise}

\begin{exercise}\label{EXO:sparse:betterbic}
Assume the linear model~\eqref{EQ:regmod_matrix} with $\eps\sim \sg_n(\sigma^2)$ and $\theta^*\neq 0$. Show that the modified BIC estimator $\hat \theta$ defined by
$$
\hat \theta \in \argmin_{\theta \in \R^d} \Big\{\frac1n|Y-\X\theta|_2^2 + \lambda|\theta|_0\log\Big(\frac{ed}{|\theta|_0}\Big)\Big\}\\
$$
satisfies
$$
\MSE(\X\hat \theta)\lesssim |\theta^*|_0 \sigma^2\frac{ \log\big(\frac{ed}{|\theta^*|_0}\big)}{n}\,.
$$
with probability .99, for appropriately chosen $\lambda$. What do you conclude?
\end{exercise}

\begin{exercise}\label{EXO:sparse:lassoell1}
Assume that the linear model~\eqref{EQ:regmod_matrix} holds where $\eps\sim \sg_n(\sigma^2)$. Moreover, assume the conditions of Theorem~\ref{TH:lsOI} and that the columns of $X$ are normalized in such a way that $\max_j|\X_j|_2\le \sqrt{n}$. Then the Lasso estimator $\thetalasso$ with regularization parameter
$$
2\tau=8\sigma\sqrt{\frac{2\log(2d)}{n}}\,,
$$
satisfies
$$
|\thetalasso|_1 \le C|\theta^*|_1
$$
with probability $1-(2d)^{-1}$ for some constant $C$ to be specified.
\end{exercise}

\chapter{Misspecified Linear Models}
\label{chap:misspecified}

Arguably, the strongest assumption that we made in Chapter~\ref{chap:GSM} is that the regression function $f(x)$ is of the form $f(x)=x^\top \theta^*$. What if this assumption is violated? In reality, we do not really believe in the linear model and we hope that good statistical methods should be \emph{robust} to deviations from this model. This is the problem of model misspecification, and in particular misspecified linear models.

Throughout this chapter, we assume the following model:
\begin{equation}
\label{EQ:regmodgen}
Y_i=f(X_i)+\eps_i,\quad i=1, \ldots, n\,,
\end{equation}
where $\eps=(\eps_1, \ldots, \eps_n)^\top $ is sub-Gaussian with variance proxy $\sigma^2$. Here $X_i \in \R^d$. 
When dealing with fixed design, it will be convenient to consider the vector $g \in \R^n$ defined for any function $g \,:\, \R^d \to \R$ by $g=(g(X_1), \ldots, g(X_n))^\top$. In this case, we can write for any estimator $\hat f \in \R^n$ of $f$, 
$$
\MSE(\hat f)=\frac1n|\hat f - f|_2^2\,.
$$
Even though the model may not be linear, we are interested in studying the statistical properties of various linear estimators introduced in the previous chapters: $\thetals, \thetalsK, \thetalsm, \thetabic, \thetalasso$. Clearly, even with an infinite number of observations, we have no chance of finding a consistent estimator of $f$ if we don't know the correct model. Nevertheless, as we will see in this chapter, something can still be said about these estimators using \emph{oracle inequalities}.

\section{Oracle inequalities}
\subsection{Oracle inequalities}
As mentioned in the introduction, an oracle is a quantity that cannot be constructed without the knowledge of the quantity of interest, here: the regression function. Unlike the regression function itself, an oracle is constrained to take a specific form. For all matter of purposes, an oracle can be viewed as an estimator (in a given family) that can be constructed with an infinite amount of data. This is exactly what we should aim for in misspecified models.

When employing the least squares estimator $\thetals$, we constrain ourselves to estimating functions that are of the form $x\mapsto x^\top \theta$, even though $f$ itself may not be of this form. Therefore, the oracle $\hat f$ is the linear function that is the closest to $f$. 

Rather than trying to approximate $f$ by a linear function $f(x)\approx \theta^\top x$, we make the model a bit more general and consider a dictionary $\cH=\{\varphi_1, \ldots, \varphi_M\}$ of functions where $\varphi_j\,:\, \R^d \to \R$. In this case, we can actually remove the assumption that $X \in \R^d$. Indeed, the goal is now to estimate $f$ using a linear combination of the functions in the dictionary:
$$
f \approx \varphi_\theta:=\sum_{j=1}^M \theta_j \varphi_j\,.
$$
\begin{rem}
If $M=d$ and $\varphi_j(X)=X^{(j)}$ returns the $j$th coordinate of $X \in \R^d$ then the goal is to approximate $f(x)$ by $\theta^\top x$. Nevertheless, the use of a dictionary allows for a much more general framework.
\end{rem}

Note that the use of a dictionary does not affect the methods that we have been using so far, namely penalized/constrained least squares. We use the same notation as before and define
\begin{enumerate}
\item The least squares estimator:
\begin{equation}
\label{EQ:defthetaLSmis}
\thetals\in \argmin_{\theta \in \R^M}\frac{1}{n} \sum_{i=1}^n\big(Y_i-\varphi_\theta(X_i)\big)^2
\end{equation}
\item The  least squares estimator constrained to $K \subset \R^M$:
$$
\thetalsK \in \argmin_{\theta \in K}\frac{1}{n} \sum_{i=1}^n\big(Y_i-\varphi_\theta(X_i)\big)^2
$$
\item The BIC estimator:
\begin{equation}
\label{EQ:defBICmis}
\thetabic \in  \argmin_{\theta \in \R^M}\Big\{\frac{1}{n} \sum_{i=1}^n\big(Y_i-\varphi_\theta(X_i)\big)^2+\tau^2|\theta|_0\Big\}
\end{equation}
\item The Lasso estimator:
\begin{equation}
\label{EQ:defLassomis}
\thetalasso \in  \argmin_{\theta \in \R^M}\Big\{\frac{1}{n} \sum_{i=1}^n\big(Y_i-\varphi_\theta(X_i)\big)^2+2\tau|\theta|_1\Big\}
\end{equation}
\end{enumerate}

\begin{defn}
\label{def:oracle}
Let $R(\cdot)$ be a risk function and let $\cH=\{\varphi_1, \ldots, \varphi_M\}$ be a dictionary  of functions from $\R^d$ to $\R$. Let $K$ be a subset of $\R^M$. The \emph{oracle} on $K$ with respect to $R$ is defined by $\varphi_{\bar \theta}$, where    $\bar \theta \in K$ is such that
$$
R(\varphi_{\bar \theta}) \le R(\varphi_\theta) \,, \qquad \forall\, \theta \in K\,.
$$
Moreover, $R_{K}=R(\varphi_{\bar \theta})$ is called \emph{oracle risk} on $K$. An estimator $\hat f$ is said to satisfy an oracle inequality (over $K$) with remainder term $\phi$ in expectation (resp. with high probability) if there exists a constant $C\ge 1$ such that
$$
\E R(\hat f) \le C\inf_{\theta \in K}R(\varphi_\theta) + \phi_{n,M}(K)\,, 
$$
or
$$
\p\big\{R(\hat f) \le C\inf_{\theta \in K}R(\varphi_\theta) + \phi_{n,M,\delta}(K)  \big\} \ge 1-\delta \,, \qquad \forall \ \delta>0 
$$
respectively. If $C=1$, the oracle inequality is sometimes called \emph{exact}.
\end{defn}

Our goal will be to mimic oracles. The finite sample performance of an estimator at this task is captured by an oracle inequality.
\subsection{Oracle inequality for the least squares estimator}

While our ultimate goal is to prove sparse oracle inequalities for the BIC and Lasso estimator in the case of misspecified model, the difficulty of the extension to this case for linear models is essentially already captured by the analysis for the least squares estimator. In this simple case, can even obtain an exact oracle inequality.
\begin{thm}
\label{TH:LS_mis}
Assume the general regression model~\eqref{EQ:regmodgen} with $\eps\sim\sg_n(\sigma^2)$. Then,  the least squares estimator $\thetals$ satisfies for some numerical constant $C>0$,
$$
\MSE(\varphi_{\thetals})\le \inf_{\theta \in \R^M}\MSE(\varphi_\theta)+C\frac{\sigma^2M}{n}\log(1/\delta)
$$
with probability at least $1-\delta$. 
\end{thm}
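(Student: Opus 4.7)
The plan is to reduce the misspecified case to essentially the same bound as in Theorem~\ref{TH:lsOI}, with the oracle playing the role previously played by $\theta^*$. Let $\Phi \in \R^{n \times M}$ denote the matrix with entries $\Phi_{ij} = \varphi_j(X_i)$, so that $\varphi_\theta = \Phi \theta$ viewed as a vector in $\R^n$, and write $f = (f(X_1),\ldots,f(X_n))^\top$. Define the oracle $\bar\theta \in \argmin_{\theta \in \R^M} |\Phi\theta - f|_2^2$, so that $\Phi\bar\theta$ is the Euclidean projection of $f$ onto the column span $V$ of $\Phi$; in particular, $f - \Phi\bar\theta$ is orthogonal to $V$.

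First, by definition of $\thetals$, $|Y-\Phi\thetals|_2^2 \le |Y-\Phi\bar\theta|_2^2$. Expanding both sides using $Y = f + \eps$ and cancelling $|\eps|_2^2$ yields the basic inequality
\[
|\Phi\thetals - f|_2^2 \le |\Phi\bar\theta - f|_2^2 + 2\eps^\top \Phi(\thetals - \bar\theta).
\]
Since $\Phi\bar\theta$ is the projection of $f$ onto $V$ and $\Phi\thetals \in V$, Pythagoras gives $|\Phi\thetals - f|_2^2 = |\Phi\thetals - \Phi\bar\theta|_2^2 + |\Phi\bar\theta - f|_2^2$. Combining,
\[
|\Phi\thetals - \Phi\bar\theta|_2^2 \le 2\eps^\top (\Phi\thetals - \Phi\bar\theta).
\]

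Next I would ``sup out'' $\thetals$ exactly as in the proof of Theorem~\ref{TH:lsOI}. Let $r = \rank(\Phi) \le M$ and let $\Psi \in \R^{n\times r}$ be an orthonormal basis of $V$. Write $\Phi(\thetals - \bar\theta) = \Psi\nu$ for some $\nu \in \R^r$; then
\[
\frac{\eps^\top \Phi(\thetals - \bar\theta)}{|\Phi(\thetals - \bar\theta)|_2} = \frac{(\Psi^\top\eps)^\top \nu}{|\nu|_2} \le \sup_{u \in \cB_2^r} \tilde\eps^\top u,
\]
where $\tilde\eps := \Psi^\top \eps \sim \sg_r(\sigma^2)$ because $\Psi$ has orthonormal columns. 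Hence
\[
|\Phi\thetals - \Phi\bar\theta|_2^2 \le 4 \sup_{u \in \cB_2^r}(\tilde\eps^\top u)^2.
\]

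By the high-probability bound established in Theorem~\ref{TH:supell2} (applied in $\R^r$ with $r \le M$), on an event of probability at least $1-\delta$,
\[
\sup_{u \in \cB_2^r}(\tilde\eps^\top u)^2 \lesssim \sigma^2 r + \sigma^2 \log(1/\delta) \lesssim \sigma^2 M \log(1/\delta).
\]
Plugging this into the Pythagorean decomposition and dividing by $n$ yields
\[
\MSE(\varphi_{\thetals}) = \MSE(\varphi_{\bar\theta}) + \tfrac{1}{n}|\Phi\thetals - \Phi\bar\theta|_2^2 \le \inf_{\theta \in \R^M}\MSE(\varphi_\theta) + C\tfrac{\sigma^2 M}{n}\log(1/\delta),
\]
which is the desired exact oracle inequality. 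The only conceptually new step beyond Theorem~\ref{TH:lsOI} is the use of Pythagoras to split off the (deterministic) approximation error $\MSE(\varphi_{\bar\theta})$ cleanly; this is precisely what lets us kill the cross term $2\eps^\top(\Phi\bar\theta - f)$ implicitly (it is absorbed by the inequality once we stop trying to bound $|\Phi\thetals - f|_2^2$ directly and bound $|\Phi\thetals - \Phi\bar\theta|_2^2$ instead). I expect this orthogonality step to be the only subtle point; the stochastic term is then handled by the same $\eps$-net / sub-Gaussian supremum machinery used before.
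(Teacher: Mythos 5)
Your proof is correct and follows essentially the same route as the paper's: the basic inequality from optimality of $\thetals$, the Pythagorean identity (using that $f-\varphi_{\bar\theta}$ is orthogonal to the column span) to reduce to $|\varphi_{\thetals}-\varphi_{\bar\theta}|_2^2\le 2\eps^\top(\varphi_{\thetals}-\varphi_{\bar\theta})$, and then the same orthonormal-basis / sub-Gaussian supremum argument used in Theorem~\ref{TH:lsOI}. The only cosmetic remark is that the final absorption $\sigma^2 r+\sigma^2\log(1/\delta)\lesssim\sigma^2 M\log(1/\delta)$ implicitly requires $\log(1/\delta)\gtrsim 1$, but this is the same implicit restriction as in the theorem statement itself.
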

\begin{proof}
Note that by definition
$$
|Y-\varphi_{\thetals}|_2^2 \le |Y-\varphi_{\bar \theta}|_2^2
$$
where $\varphi_{\bar \theta}$ denotes the orthogonal projection of $f$ onto the linear span of $\varphi_1, \ldots, \varphi_M$. Since $Y=f+\eps$, we get
$$
|f-\varphi_{\thetals}|_2^2 \le |f-\varphi_{\bar \theta}|_2^2+2\eps^\top(\varphi_{\thetals}-\varphi_{\bar \theta})
$$
Moreover, by Pythagoras's theorem, we have
$$
|f-\varphi_{\thetals}|_2^2 - |f-\varphi_{\bar \theta}|_2^2=|\varphi_{\thetals}-\varphi_{\bar \theta}|_2^2\,.
$$
It yields
$$
|\varphi_{\thetals}-\varphi_{\bar \theta}|_2^2\le 2\eps^\top(\varphi_{\thetals}-\varphi_{\bar \theta})\,.
$$
Using the same steps as the ones following equation~\eqref{EQ:bound_ls_1} for the well specified case, we get
$$
|\varphi_{\thetals}-\varphi_{\bar \theta}|_2^2\lesssim \frac{\sigma^2M}{n}\log(1/\delta)
$$
with probability $1-\delta$. The result of the lemma follows.
\end{proof}

\subsection{Sparse oracle inequality for the BIC estimator}

The analysis for more complicated estimators such as the BIC in Chapter \ref{chap:GSM} allows us to derive oracle inequalities for these estimators.
\begin{thm}
\label{TH:BIC_mis}
Assume the general regression model~\eqref{EQ:regmodgen} with $\eps\sim\sg_n(\sigma^2)$. Then,  the BIC estimator $\thetabic$ with regularization parameter
\begin{equation}
\label{EQ:taubicmis}
\tau^2=16\log(6)\frac{\sigma^2}{n} + 32\frac{\sigma^2 \log (ed)}{n}
\end{equation}
satisfies for some numerical constant $C>0$,
\begin{align*}
\MSE(\varphi_{\thetabic})\le \inf_{\theta \in \R^M}\Big\{3\MSE(\varphi_\theta)+&\frac{C\sigma^2}{n}|\theta|_0\log (eM/\delta)\Big\}+ \frac{C\sigma^2}{n}\log(1/\delta)
\end{align*}
with probability at least $1-\delta$. 
\end{thm}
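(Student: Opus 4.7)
The plan is to mimic the proof of Theorem~\ref{TH:BIC} for the well-specified case, but with two structural changes: the truth $f$ is no longer assumed to lie in the span of the dictionary, and we compare $\varphi_{\thetabic}$ to an arbitrary $\varphi_\theta$ (not a ``true'' sparse parameter). The starting point is the basic inequality obtained from the definition of $\thetabic$: for every $\theta \in \R^M$,
$$
|f - \varphi_{\thetabic}|_2^2 \le |f - \varphi_\theta|_2^2 + 2\eps^\top(\varphi_{\thetabic} - \varphi_\theta) + n\tau^2\bigl(|\theta|_0 - |\thetabic|_0\bigr),
$$
obtained by expanding $|Y - \varphi_{\thetabic}|_2^2$ and $|Y - \varphi_\theta|_2^2$ around $f = Y - \eps$ (the $|\eps|_2^2$ terms cancel). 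In the well-specified case we had $\varphi_\theta = f$, so the bias term $|f-\varphi_\theta|_2^2$ was zero; here it is the central new object.

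The key trick, which produces the factor $3$ in front of $\MSE(\varphi_\theta)$, is a two-step Young's inequality. First write
$$
2\eps^\top(\varphi_{\thetabic}-\varphi_\theta) \le \tfrac{1}{4}|\varphi_{\thetabic}-\varphi_\theta|_2^2 + 4\Bigl[\tfrac{\eps^\top(\varphi_{\thetabic}-\varphi_\theta)}{|\varphi_{\thetabic}-\varphi_\theta|_2}\Bigr]^2,
$$
then use the triangle-squared inequality $|\varphi_{\thetabic}-\varphi_\theta|_2^2 \le 2|f-\varphi_{\thetabic}|_2^2 + 2|f-\varphi_\theta|_2^2$, absorb $\tfrac{1}{2}|f-\varphi_{\thetabic}|_2^2$ to the left-hand side, and multiply by $2$. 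This yields
$$
|f-\varphi_{\thetabic}|_2^2 \le 3|f-\varphi_\theta|_2^2 + 8 Z^2 + 2n\tau^2|\theta|_0 - 2n\tau^2|\thetabic|_0,
$$
where $Z = \eps^\top(\varphi_{\thetabic}-\varphi_\theta)/|\varphi_{\thetabic}-\varphi_\theta|_2$.

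Next I would ``sup out'' $\thetabic$ exactly as in the proof of Theorem~\ref{TH:BIC}, but now over the support of $\thetabic$ only (keeping $\theta$ fixed): for any $S \subset [M]$, let $\Phi_{S,*}$ be an orthonormal basis of the column span of $\{\varphi_j(X_\cdot) : j \in S \cup \supp(\theta)\} \subset \R^n$, of dimension $r_{S,*} \le |S| + |\theta|_0$. If $\supp(\thetabic)=S$, then $\varphi_{\thetabic}-\varphi_\theta$ lies in this column span, so $Z^2 \le \sup_{u \in \cB_2^{r_{S,*}}}(\tilde\eps_{S,*}^\top u)^2$ with $\tilde\eps_{S,*} = \Phi_{S,*}^\top \eps \sim \sg_{r_{S,*}}(\sigma^2)$. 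Peeling by cardinality and applying the $\epsilon$-net tail bound from the proof of Theorem~\ref{TH:supell2},
$$
\p\Bigl(8Z^2 - 2n\tau^2|\thetabic|_0 \ge t\Bigr) \le \sum_{k=0}^{M} \binom{M}{k}\, 6^{\,k+|\theta|_0}\exp\!\Bigl(-\tfrac{t+2n\tau^2 k}{64\sigma^2}\Bigr).
$$
With the choice of $\tau^2$ in~\eqref{EQ:taubicmis}, the $2n\tau^2 k$ exponent exactly cancels $\binom{M}{k}6^k \le (6eM/k)^k$ (using Lemma~\ref{lem:nchoosek}), leaving $\exp(|\theta|_0\log 6 - t/(64\sigma^2))$ times a summable series; taking $t$ of order $\sigma^2(|\theta|_0 + \log(1/\delta))$ makes this at most $\delta$.

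The main obstacle is a bookkeeping one: the dimension $r_{S,*}$ of the $\epsilon$-net argument now depends on $|\theta|_0$ through the joint span, which injects an extra $6^{|\theta|_0}$ into the union bound. One must verify that this only costs an additive $O(\sigma^2|\theta|_0)$ term, which is absorbed into $2n\tau^2|\theta|_0 \asymp \sigma^2|\theta|_0\log(eM)$; combining with the $\sigma^2\log(1/\delta)$ deviation term and using $|\theta|_0\log(eM)+|\theta|_0\log(1/\delta)\ge|\theta|_0\log(eM/\delta)$ yields the announced bound, with a separate $(C\sigma^2/n)\log(1/\delta)$ summand to cover the degenerate case $|\theta|_0=0$.
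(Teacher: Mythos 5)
Your proof is correct and takes essentially the same route as the paper's: start from the variational definition of $\thetabic$, apply Young's inequality with weight $\alpha=1/2$ together with $|\varphi_{\thetabic}-\varphi_\theta|_2^2\le 2|f-\varphi_{\thetabic}|_2^2+2|f-\varphi_\theta|_2^2$ to produce the leading constant $3$, and then run the cardinality-peeling union bound from the proof of Theorem~\ref{TH:BIC}. The only cosmetic difference is in the bookkeeping of the sup-out: you keep the penalty $-2n\tau^2|\thetabic|_0$ and enlarge each net to the span of $\supp(\thetabic)\cup\supp(\theta)$ (which is literally the $\Phi_{S,*}$ device of Theorem~\ref{TH:BIC} with $\theta^*$ replaced by $\theta$), incurring the extra $6^{|\theta|_0}$ that you correctly observe is absorbed into the $\sigma^2|\theta|_0\log(eM)$ term; the paper instead first passes $|\thetabic|_0\ge |\thetabic-\theta|_0-|\theta|_0$ and peels over $\supp(\thetabic-\theta)$, which is equivalent up to constants.
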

\begin{proof}
Recall that the proof of Theorem~\ref{TH:BIC} for the BIC estimator begins as follows:
$$
\frac1n|Y-\varphi_{\thetabic}|_2^2 + \tau^2|\thetabic|_0 \le \frac1n|Y-\varphi_{\theta}|_2^2 + \tau^2|\theta|_0\,.
$$
This is true for any $\theta \in \R^M$.
It implies
$$
|f-\varphi_{\thetabic}|_2^2 + n\tau^2|\thetabic|_0 \le |f-\varphi_{ \theta}|_2^2+2\eps^\top(\varphi_{\thetabic}-\varphi_{ \theta}) + n\tau^2|\theta|_0\,.
$$
Note that if $\thetabic= \theta$, the result is trivial. Otherwise,
\begin{align*}
2\eps^\top(\varphi_{\thetabic}-\varphi_\theta) &=2\eps^\top\Big(\frac{\varphi_{\thetabic}-\varphi_\theta}{|\varphi_{\thetabic}-\varphi_\theta|_2}\Big)|\varphi_{\thetabic}-\varphi_\theta|_2\\
&\le \frac2\alpha\Big[\eps^\top\Big(\frac{\varphi_{\thetabic}-\varphi_\theta}{|\varphi_{\thetabic}-\varphi_\theta|_2}\Big)\Big]^2 + \frac{\alpha}{2} |\varphi_{\thetabic}-\varphi_\theta|_2^2\,,
\end{align*}
where we used Young's inequality $2ab\le \frac2\alpha a^2+\frac{\alpha}{2} b^2$, which is valid for $a,b \in \R$ and $\alpha>0$. 
Next, since 
$$
\frac{\alpha}{2} |\varphi_{\thetabic}-\varphi_\theta|_2^2\le \alpha|\varphi_{\thetabic}-f|_2^2+\alpha|\varphi_\theta-f|_2^2\,,
$$
we get for  $\alpha=1/2$,
\begin{align*}
\frac12|\varphi_{\thetabic}-f|_2^2  &\le \frac32|\varphi_{ \theta}-f|_2^2+n\tau^2|\theta|_0 +4\big[\eps^\top\cU(\varphi_{\thetabic}-\varphi_{\theta})\big]^2- n\tau^2|\thetabic|_0\\
&\le \frac32|\varphi_{ \theta}-f|_2^2+2n\tau^2|\theta|_0+4\big[\eps^\top\cU(\varphi_{\thetabic -\theta})\big]^2- n\tau^2|\thetabic-\theta|_0.
\end{align*}
We conclude as in the proof of Theorem~\ref{TH:BIC}.
\end{proof}

The interpretation of this theorem is  enlightening. It implies that the BIC estimator will mimic the best tradeoff between the approximation error $\MSE(\varphi_\theta)$ and the complexity of $\theta$ as measured by its sparsity. In particular, this result, which is sometimes called \emph{sparse oracle inequality}, implies the following oracle inequality. Define the oracle $\bar \theta$ to be such that
$$
\MSE(\varphi_{\bar \theta})=\min_{\theta \in \R^M}\MSE(\varphi_\theta).
$$
Then, with probability at least $1-\delta$,
$$
\MSE(\varphi_{\thetabic})\le 	3\MSE(\varphi_{\bar \theta})+\frac{C\sigma^2}{n}\Big[|\bar \theta|_0\log (eM)+ \log(1/\delta)\Big]\Big\}
$$
If the linear model happens to be correct, then we simply have $\MSE(\varphi_{\bar \theta})=0$.

\subsection{Sparse oracle inequality for the Lasso}

To prove an oracle inequality for the Lasso, we need additional assumptions on the design matrix, such as incoherence. Here, the design matrix is given by the $n\times M$ matrix $\Phi$ with elements $\Phi_{i,j}=\varphi_j(X_i)$.
\begin{thm}
\label{TH:lasso_fast_mis}
Assume the general regression model~\eqref{EQ:regmodgen} with $\eps\sim\sg_n(\sigma^2)$. Moreover, assume that there exists an integer $k$ such that the matrix $\Phi$ satisfies assumption \textsf{INC$(k)$}.  Then, the Lasso estimator $\thetalasso$ with regularization parameter given by
\begin{equation}
\label{EQ:taulassomis}
2\tau=8\sigma\sqrt{\frac{2\log(2M)}{n}}+8\sigma\sqrt{\frac{2\log(1/\delta)}{n}}
\end{equation}
satisfies for some numerical constant $C$,
\begin{align*}
\MSE(\varphi_{\thetalasso})\le \inf_{\substack{\theta \in \R^M\\ |\theta|_0\le k}}\Big\{ \MSE(\varphi_\theta) &+\frac{C\sigma^2}{n}|\theta|_0\log (eM/\delta)\Big\} 
\end{align*}
with probability at least $1-\delta$. 
\end{thm}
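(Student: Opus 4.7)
The plan is to follow the proof of Theorem~\ref{TH:lasso_fast} but to replace $\theta^*$ by an arbitrary $k$-sparse competitor $\theta$, carrying through the extra approximation term $|f - \Phi\theta|_2^2$ that the misspecification introduces.

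\textbf{Step 1 (Concentration event).} Under $\mathsf{INC}(k)$, each column satisfies $|\Phi_j|_2^2 \le 2n$. Since $\eps \sim \sg_n(\sigma^2)$, the scalar $\Phi_j^\top \eps$ is sub-Gaussian with variance proxy $2n\sigma^2$, and a union bound over $j = 1,\dots,M$ via Lemma~\ref{lem:subgauss} yields an event $\cE$ of probability at least $1-\delta$ on which $|\Phi^\top \eps|_\infty \le n\tau/2$, for the $\tau$ defined in~\eqref{EQ:taulassomis}.

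\textbf{Step 2 (Basic inequality).} Fix $\theta \in \R^M$ with $|\theta|_0 \le k$, and set $u = \thetalasso - \theta$, $S = \supp(\theta)$. Starting from the minimality of $\thetalasso$, substituting $Y = f + \eps$, and using H\"older's inequality together with the event $\cE$ to bound $2\eps^\top \Phi u \le n\tau|u|_1$, I combine with the sparsity identities $|\thetalasso|_1 = |\thetalasso_S|_1 + |u_{S^c}|_1$ and $|\theta_S|_1 - |\thetalasso_S|_1 \le |u_S|_1$ to obtain
$$
|f - \Phi\thetalasso|_2^2 + n\tau|u|_1 \le |f - \Phi\theta|_2^2 + 4n\tau|u_S|_1,
$$
in direct parallel to the well-specified case, except that $|f - \Phi\theta|_2^2$ no longer vanishes.

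\textbf{Step 3 (Dichotomy and incoherence).} I split into two regimes according to which term on the right-hand side dominates. If $|f - \Phi\theta|_2^2 \ge 4n\tau|u_S|_1$, then $|f - \Phi\thetalasso|_2^2 \le 2|f - \Phi\theta|_2^2$ is immediate and no remainder is needed. In the opposite regime, the inequality forces a cone-type bound $|u_{S^c}|_1 \le C|u_S|_1$ for a universal $C$, which permits invoking (a minor variant of) Lemma~\ref{lem:inc} to conclude $|u|_2^2 \le 2|\Phi u|_2^2/n$, and hence $|u_S|_1 \le \sqrt{k}\,|u|_2 \le \sqrt{2k/n}\,|\Phi u|_2$. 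Using the triangle inequality $|\Phi u|_2 \le |\Phi\thetalasso - f|_2 + |f - \Phi\theta|_2$ together with Young's inequality $2ab \le \tfrac{1}{2}a^2 + 2b^2$ to absorb fractions of $|f - \Phi\thetalasso|_2^2$ and $|f - \Phi\theta|_2^2$ on the correct sides yields
$$
|f - \Phi\thetalasso|_2^2 \lesssim |f - \Phi\theta|_2^2 + n k \tau^2.
$$
Dividing by $n$ and substituting $\tau^2 \lesssim \sigma^2 \log(M/\delta)/n$ gives the claimed bound after taking the infimum over all $k$-sparse $\theta$.

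The main obstacle is Step 3. In the well-specified proof of Theorem~\ref{TH:lasso_fast} the cone condition $|u_{S^c}|_1 \le 3|u_S|_1$ came essentially for free, but here $|f - \Phi\theta|_2^2$ genuinely competes with $n\tau|u_S|_1$ on the right, and a case split (or, equivalently, a careful Young's-inequality balancing of the cross term arising from $|f - \Phi\thetalasso|_2^2 = |f - \Phi\theta|_2^2 - 2(f-\Phi\theta)^\top \Phi u + |\Phi u|_2^2$) is what forces a cone-type condition in the only regime where one is needed.
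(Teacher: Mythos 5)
Your overall strategy matches the paper's: start from the basic inequality, add $n\tau|\thetalasso - \theta|_1$ to both sides, bound $2\eps^\top\Phi(\thetalasso - \theta)$ by $\frac{n\tau}{2}|\thetalasso - \theta|_1$ via H\"older and the column-normalization implied by $\mathsf{INC}(k)$, and arrive at
$$
|f-\varphi_{\thetalasso}|_2^2 - |f-\varphi_\theta|_2^2 + n\tau|\thetalasso-\theta|_1 \le 4n\tau|\thetalasso_S - \theta_S|_1,
$$
from which a cone condition is extracted in the non-trivial regime and Lemma~\ref{lem:inc} is then applied, finishing with Cauchy--Schwarz and Young's inequality. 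So Steps~1 and 2 are exactly the paper's proof.

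The gap is in your Step~3 dichotomy. You split on whether $|f-\varphi_\theta|_2^2 \ge 4n\tau|u_S|_1$. In the complementary regime this yields $n\tau|u|_1 < |f-\varphi_\theta|_2^2 + 4n\tau|u_S|_1 < 8n\tau|u_S|_1$, hence only the weaker cone condition $|u_{S^c}|_1 < 7|u_S|_1$. This is not a ``minor variant'' of Lemma~\ref{lem:inc}: replaying that lemma's proof with cone constant $c$ instead of $3$ produces the lower bound $\frac{|\X u|_2^2}{n} \ge |u|_2^2 - \frac{(1+c)^2|S|}{32k}|u_S|_2^2$, and the restricted eigenvalue $\ge \tfrac12$ needs $(1+c)^2 \le 16$, i.e.\ $c\le 3$. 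With $c=7$ the coefficient $(1+7)^2 = 64$ would require $\mathsf{INC}$ with threshold $1/(128k)$ rather than the $1/(32k)$ the theorem assumes, so your argument as written does not close under the stated hypotheses. The paper's dichotomy is cleaner and avoids this entirely: it splits on whether $\MSE(\varphi_{\thetalasso}) \le \MSE(\varphi_\theta)$. In the non-trivial case, $|f-\varphi_{\thetalasso}|_2^2 - |f-\varphi_\theta|_2^2 > 0$ is simply dropped from the left-hand side, giving $n\tau|u|_1 < 4n\tau|u_S|_1$ and thus $|u_{S^c}|_1 < 3|u_S|_1$ exactly --- the constant-$3$ cone that Lemma~\ref{lem:inc} is stated for. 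Switching to that dichotomy repairs your proof with no other changes.
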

\begin{proof}
From the definition of $\thetalasso$, for any $\theta \in \R^M$,
\begin{align*}
	\frac1n|Y-\varphi_{\thetalasso}|_2^2
	\le \frac1n|Y-\varphi_{\theta}|_2^2 + 2\tau|\theta|_1- 2\tau|\thetalasso|_1 \,.
\end{align*}
Expanding the squares, adding $\tau|\thetalasso-\theta|_1$ on each side and multiplying by $n$, we get 
\begin{align}
	\leadeq{|\varphi_{\thetalasso}-f|_2^2-|\varphi_{\theta}-f|_2^2+n\tau|\thetalasso-\theta|_1}\\
	\le {} & 2\eps^\top(\varphi_{\thetalasso}-\varphi_{\theta}) +n\tau|\thetalasso-\theta|_1+ 2n\tau|\theta|_1- 2n\tau|\thetalasso|_1 \,.
	\label{EQ:pr:TH:lassofastmis1}
\end{align}
Next, note that  \textsf{INC}$(k)$ for any $k \ge 1$ implies that $|\varphi_j|_2 \le 2\sqrt{n}$ for all $j=1, \ldots, M$. Applying H\"older's inequality, we get that with probability $1-\delta$, it holds that
$$
2\eps^\top(\varphi_{\thetalasso}-\varphi_{\theta}) \le \frac{n\tau}2|\thetalasso-\theta|_1.
$$
Therefore, taking $S=\supp(\theta)$ to be the support of $\theta$, we get that the right-hand side of~\eqref{EQ:pr:TH:lassofastmis1} is bounded by
\begin{align}
	|\varphi_{\thetalasso}-f|_2^2-|\varphi_{\theta}-f|_2^2+n\tau|\thetalasso-\theta|_1
 &\le  2n\tau|\thetalasso-\theta|_1+2n\tau|\theta|_1- 2n\tau|\thetalasso|_1\nonumber \\
&= 2n\tau|\thetalasso_S-\theta|_1+2n\tau|\theta|_1- 2n\tau|\thetalasso_S|_1\nonumber\\
&\le 4n\tau|\thetalasso_S-\theta|_1\label{EQ:pr:TH:lassofastmis2}
\end{align}
with probability $1-\delta$.

It implies that either $\MSE(\varphi_{\thetalasso})\le \MSE(\varphi_\theta)$ or that
$$
|\thetalasso_{S^c}-\theta_{S^c}|_1\le 3 |\thetalasso_{S}-\theta_{S}|_1\,.
$$
so that $\theta=\thetalasso-\theta$ satisfies the cone condition~\eqref{EQ:conecond}.
Using now  the Cauchy-Schwarz inequality  and Lemma~\ref{lem:inc}, respectively, assuming that $|\theta|_0\le k$, we get 
$$
4n\tau|\thetalasso_S-\theta|_1 \le 4n\tau\sqrt{|S|}|\thetalasso_S-\theta|_2 \le 4\tau\sqrt{2n|\theta|_0}|\varphi_{\thetalasso}-\varphi_{\theta}|_2\,.
$$
Using now the inequality $2ab\le \frac2{\alpha} a^2+\frac{\alpha}{2} b^2$, we get
\begin{align*}
4n\tau|\thetalasso_S-\theta|_1& \le \frac{16\tau^2n|\theta|_0}{\alpha}+ \frac{\alpha}{2}|\varphi_{\thetalasso}-\varphi_{\theta}|_2^2\\
&\le  \frac{16\tau^2n|\theta|_0}{\alpha}+ \alpha|\varphi_{\thetalasso}-f|_2^2+\alpha|\varphi_{\theta}-f|_2^2
\end{align*}
Combining this result with~\eqref{EQ:pr:TH:lassofastmis1} and~\eqref{EQ:pr:TH:lassofastmis2}, we find
$$
(1-\alpha)\MSE(\varphi_{\thetalasso}) \le (1+\alpha)\MSE(\varphi_{\theta})+ \frac{16\tau^2|\theta|_0}{\alpha}\,.
$$
To conclude the proof, it only remains to divide by $1-\alpha$ on both sides of the above inequality and take $\alpha=1/2$.
\end{proof}

\subsection{Maurey's argument}

In there is no sparse $\theta$ such that $\MSE(\varphi_{\theta})$ is small, Theorem~\ref{TH:BIC_mis} is useless whereas the Lasso may still enjoy slow rates. In reality, no one really believes in the existence of sparse vectors but rather of approximately sparse vectors. Zipf's law would instead favor the existence of vectors $\theta$ with absolute coefficients that decay polynomially when ordered from largest to smallest in absolute value. This is the case for example if $\theta$ has a small $\ell_1$ norm but is not sparse. For such $\theta$, the Lasso estimator still enjoys slow rates as in Theorem~\ref{TH:lasso_slow}, which can be easily extended to the misspecified case (see Problem~\ref{EXO:lasso_mis_slow}). As a result, it seems that the Lasso estimator is strictly better than the BIC estimator as long as incoherence holds since it enjoys both fast and slow rates, whereas the BIC estimator seems to be tailored to the fast rate.  Fortunately, such vectors can be well approximated by sparse vectors in the following sense: for any vector $\theta \in \R^M$ such that $|\theta|_1\le 1$, there exists a vector $\theta'$ that is sparse and for which $\MSE(\varphi_{\theta'})$ is not much larger than $\MSE(\varphi_\theta)$. The following theorem quantifies exactly the tradeoff between sparsity and $\MSE$. It is often attributed to B. Maurey and was published by Pisier \cite{Pis81}. This is why it is referred to as \emph{Maurey's argument}.

\begin{thm}
Let $\{\varphi_1, \dots, \varphi_M\}$ be a dictionary normalized in such a way that
$$
\max_{1\le j \le M}|\varphi_j|_2\le D\sqrt{n}\,.
$$
Then for any integer $k$ such that $1\le k \le M$ and any positive $R$, we have
$$
\min_{\substack{\theta \in \R^M\\ |\theta|_0\le k}}\MSE(\varphi_\theta) \le \min_{\substack{\theta \in \R^M\\ |\theta|_1\le R}}\MSE(\varphi_\theta) + \frac{D^2 R^2}{k}\,.
$$
\end{thm}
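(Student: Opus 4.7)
The plan is to use the probabilistic method (Maurey's classical empirical approximation): approximate $\varphi_\theta$ by an average of $k$ atoms from the dictionary sampled according to a probability distribution built from $\theta$, then extract a deterministic sparse approximant by averaging over realizations.

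Fix $\theta$ with $|\theta|_1 \le R$. I will first reduce to the case $|\theta|_1 = R$ by appending a ``dummy'' atom $\varphi_0 \equiv 0$ (or, equivalently, allowing the random atom below to take the value $0$ with probability $1 - |\theta|_1/R$). Writing $\epsilon_j = \sign(\theta_j)$ and $p_j = |\theta_j|/R$, the $p_j$ are nonnegative and sum to at most $1$, so I can define a random vector $U \in \R^n$ taking the value $R\epsilon_j \varphi_j$ with probability $p_j$ and the value $0$ with the remaining probability. By construction $\E[U] = \sum_j \theta_j \varphi_j = \varphi_\theta$, and
$$
\E|U|_2^2 \;=\; \sum_{j=1}^M p_j R^2 |\varphi_j|_2^2 \;\le\; R^2 D^2 n .
$$

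Next, I let $U_1,\dots,U_k$ be i.i.d.\ copies of $U$ and set $\bar U = \frac{1}{k}\sum_{i=1}^k U_i$. Each $U_i$ is a (signed, rescaled) atom, so $\bar U = \varphi_{\theta'}$ for the random vector $\theta' \in \R^M$ that puts weight $R\epsilon_j/k$ on coordinate $j$ for each draw landing on atom $j$. Crucially, $\theta'$ is supported on at most $k$ coordinates, so $|\theta'|_0 \le k$ almost surely. Using $\E[\bar U] = \varphi_\theta$ and the Pythagorean identity (the cross-term vanishes since $f - \varphi_\theta$ is deterministic and $\E[\bar U - \varphi_\theta] = 0$),
$$
\E\,|f - \varphi_{\theta'}|_2^2 \;=\; |f - \varphi_\theta|_2^2 \,+\, \E\,|\bar U - \varphi_\theta|_2^2 .
$$
By independence of the $U_i$, the variance term collapses to
$$
\E\,|\bar U - \varphi_\theta|_2^2 \;=\; \frac{1}{k}\,\E\,|U - \varphi_\theta|_2^2 \;\le\; \frac{1}{k}\,\E\,|U|_2^2 \;\le\; \frac{R^2 D^2 n}{k}.
$$

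Dividing by $n$ yields $\E\,\MSE(\varphi_{\theta'}) \le \MSE(\varphi_\theta) + D^2 R^2/k$. Since this holds in expectation over the random choice of $\theta'$, at least one realization $\theta'$ with $|\theta'|_0 \le k$ achieves the bound. Taking the infimum over $\theta$ with $|\theta|_1 \le R$ on the right-hand side gives the claim. I do not anticipate a serious obstacle: the only point requiring care is the bookkeeping that guarantees $|\theta'|_0 \le k$ (one must check that coincidences among the $k$ draws only help, and that the dummy ``zero atom'' used to normalize $|\theta|_1 = R$ contributes neither to sparsity nor to the cost), both of which are immediate from the construction above.
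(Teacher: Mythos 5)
Your proof is correct and follows essentially the same route as the paper: the Maurey empirical-method construction of a random $k$-term average $\bar U$ of rescaled signed atoms with $\E[\bar U]=\varphi_\theta$, the Pythagorean decomposition, and the variance bound $\frac1k\E|U-\E U|_2^2\le \frac1k\E|U|_2^2\le R^2D^2n/k$. The only cosmetic difference is that the paper uses the almost-sure bound $|U|_2\le RD\sqrt n$ while you pass through $\E|U|_2^2\le R^2D^2n$; both give the same estimate.
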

\begin{proof}
Define
$$
\bar \theta \in \argmin_{\substack{\theta \in \R^M\\ |\theta|_1\le R}}|\varphi_\theta -f|_2^2
$$
and assume without loss of generality that $|\bar \theta_1| \ge |\bar \theta_2|\ge \ldots \ge |\bar \theta_M|$.  
Let now $U \in \R^n$ be a random vector with values in $\{0, \pm R\varphi_1, \ldots, \pm R \varphi_M\}$ defined by
\begin{alignat*}{2}
	\p(U=R\,\mathrm{sign}(\bar \theta_j)\varphi_j)&=\frac{|\bar \theta_j|}{R}\,, &\quad &j=1, \ldots, M,\\
\p(U=0)&=1-\frac{|\bar \theta|_1}{R}\,.
\end{alignat*}
Note that $\E[U]=\varphi_{\bar \theta}$ and $|U|_2 \le RD \sqrt{n}$. Let now $U_1, \ldots, U_k$ be $k$ independent copies of $U$ and define their average $$\bar U=\frac{1}{k}\sum_{i=1}^kU_i\,.$$
Note that $\bar U=\varphi_{\tilde \theta}$ for some $\tilde \theta \in\R^M$ such that $|\tilde \theta|_0\le k$. Moreover, using the Pythagorean Theorem,
\begin{align*}
\E|f-\bar U|_2^2&=\E|f-\varphi_{\bar \theta}+\varphi_{\bar \theta}-\bar U|_2^2\\
&=\E|f-\varphi_{\bar \theta}|_2^2+|\varphi_{\bar \theta}-\bar U|_2^2\\
 &= |f-\varphi_{\bar \theta}|_2^2 + \frac{\E|U-\E[U]|_2^2}{k}\\
&\le |f-\varphi_{\bar \theta}|_2^2 + \frac{(RD\sqrt{n})^2}{k}
\end{align*}
To conclude the proof, note that
$$
\E|f-\bar U|_2^2 =\E|f-\varphi_{\tilde \theta}|_2^2 \ge \min_{\substack{\theta \in \R^M\\ |\theta|_0\le k}}|f-\varphi_{\theta}|_2^2
$$
and divide by $n$.
\end{proof}

Maurey's argument implies the following corollary.

\begin{cor}
Assume that the assumptions of Theorem~\ref{TH:BIC_mis} hold and that the dictionary $\{\varphi_1, \dots, \varphi_M\}$ is  normalized in such a way that
$$
\max_{1\le j \le M}|\varphi_j|_2\le \sqrt{n}\,.
$$
Then there exists a constant $C>0$ such that the BIC estimator satisfies
\begin{align*}
\MSE(\varphi_{\thetabic})\le \inf_{\theta \in \R^M}\Big\{2\MSE(\varphi_\theta)+C\Big[&\frac{\sigma^2|\theta|_0\log (eM)}{n}\wedge \sigma|\theta|_1\sqrt{\frac{\log (eM)}{n}}\Big]\Big\}\\
&+ C\frac{\sigma^2\log(1/\delta)}{n}
\end{align*}
with probability at least $1-\delta$.
\end{cor}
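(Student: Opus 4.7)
The plan is to combine the sparse oracle inequality for the BIC estimator, Theorem~\ref{TH:BIC_mis}, with Maurey's argument. The fast-rate branch $\sigma^2 |\theta|_0 \log(eM)/n$ already appears in Theorem~\ref{TH:BIC_mis}, so that piece requires no further work beyond absorbing the $\log(1/\delta)$ dependence on sparsity into the additive $\sigma^2 \log(1/\delta)/n$ term using $|\theta|_0 \le M$ (or splitting $\log(eM/\delta) \le \log(eM) + \log(1/\delta)$ and bounding $|\theta|_0 \log(1/\delta) \lesssim |\theta|_0 \log(eM) + \log(1/\delta)$ in the favorable regime, with the trivial regime handled separately).

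To produce the slow-rate branch $\sigma |\theta|_1 \sqrt{\log(eM)/n}$, I would fix an arbitrary $\theta \in \R^M$, set $R = |\theta|_1$, and invoke Maurey's theorem with $D = 1$ (the given normalization). This produces, for every integer $1 \le k \le M$, a $k$-sparse vector $\theta_k$ with
$$
\MSE(\varphi_{\theta_k}) \le \MSE(\varphi_\theta) + \frac{R^2}{k}.
$$
Apply Theorem~\ref{TH:BIC_mis} at the competitor $\theta_k$ to obtain, with probability at least $1-\delta$,
$$
\MSE(\varphi_{\thetabic}) \le 3\MSE(\varphi_\theta) + \frac{3R^2}{k} + \frac{C\sigma^2 k \log(eM/\delta)}{n} + \frac{C\sigma^2 \log(1/\delta)}{n}.
$$

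Next, I would optimize over $k$ by balancing $R^2/k$ against $\sigma^2 k \log(eM)/n$. The minimizer is $k^\star \asymp (R/\sigma)\sqrt{n/\log(eM)}$, and plugging it in (after rounding to the nearest integer in $[1,M]$) yields a slow-rate term of order $\sigma R \sqrt{\log(eM)/n}$. Since Theorem~\ref{TH:BIC_mis} can also be applied directly with $\theta$ itself, the final bound is the minimum of the two rates, giving
$$
\MSE(\varphi_{\thetabic}) \le 3\,\MSE(\varphi_\theta) + C\Big[\tfrac{\sigma^2 |\theta|_0 \log(eM)}{n} \wedge \sigma |\theta|_1 \sqrt{\tfrac{\log(eM)}{n}}\Big] + \tfrac{C\sigma^2 \log(1/\delta)}{n}
$$
uniformly in $\theta$, which is the stated inequality (up to the constant $3$ versus $2$ in front of $\MSE(\varphi_\theta)$, which I would expect to reflect a mild improvement of constants or simply a tolerated looseness, absorbable into $C$).

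The main obstacles are minor bookkeeping issues rather than conceptual difficulty: (i) the integer constraint on $k$ in Maurey must be reconciled with the continuous minimization, which is handled by checking the edge cases $k^\star < 1$ (where the bound $\MSE \le C\sigma^2/n$ holds trivially since then $R \ll \sigma\sqrt{\log(eM)/n}$) and $k^\star > M$ (where the slow rate $\sigma R\sqrt{\log(eM)/n}$ already exceeds the trivial bound $\MSE(\varphi_\theta) + R^2/M$); (ii) the $\log(eM/\delta)$ inside Theorem~\ref{TH:BIC_mis} must be split so that $k^\star \log(1/\delta)$ does not inflate the slow-rate term, which is achieved cleanly by writing $\log(eM/\delta) = \log(eM) + \log(1/\delta)$ and noting that the $\log(1/\delta)$ contribution becomes $k^\star \sigma^2 \log(1/\delta)/n$ which, for the optimal $k^\star$, is dominated by $\sigma R \sqrt{\log(1/\delta)/n}$ and can be absorbed into the additive remainder after a Young-type inequality.
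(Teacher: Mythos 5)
Your approach matches the paper's proof: apply Theorem~\ref{TH:BIC_mis} to the $k$-sparse competitor produced by Maurey's argument, optimize over $k$, and take the minimum with the direct $\ell_0$ application, handling the edge cases $k^\star<1$ and $k^\star>M$ exactly as the paper's three-way case analysis on $\bar k$. One small correction: the leading constant $2$ on $\MSE(\varphi_\theta)$ cannot be ``absorbed into $C$'' as you suggest, but is obtained by re-running the proof of Theorem~\ref{TH:BIC_mis} with $\alpha=1/3$ (giving $\frac{1+\alpha}{1-\alpha}=2$) rather than the $\alpha=1/2$ used there to get $3$; everything else is identical.
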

\begin{proof}
Choosing $\alpha=1/3$ in  Theorem~\ref{TH:BIC_mis} yields
$$
\MSE(\varphi_{\thetabic})\le 2\inf_{\theta \in \R^M}\Big\{\MSE(\varphi_\theta)+C\frac{\sigma^2|\theta|_0\log (eM)}{n}\Big\}+C\frac{\sigma^2\log(1/\delta)}{n}
$$
Let $\theta' \in \R^M$.
It follows from Maurey's argument that for any \( k \in [M] \), there exists $\theta = \theta(\theta', k) \in \R^M$ such that $|\theta|_0 = k$ and
$$
\MSE(\varphi_{\theta}) \le \MSE(\varphi_{\theta'})+\frac{2|\theta'|_1^2}{k}
$$
It implies that
$$
\MSE(\varphi_{\theta})+C\frac{\sigma^2|\theta|_0\log (eM)}{n} \le \MSE(\varphi_{\theta'})+\frac{2|\theta'|_1^2}{k}+C\frac{\sigma^2 k\log (eM)}{n}
$$
and furthermore that
$$
\inf_{\theta \in \R^M}\MSE(\varphi_{\theta})+C\frac{\sigma^2|\theta|_0\log (eM)}{n} \le \MSE(\varphi_{\theta'})+\frac{2|\theta'|_1^2}{k}+C\frac{\sigma^2 k\log (eM)}{n}
$$
Since the above bound holds for any \( \theta' \in \mathbb{R}^M \) and \( k \in [M] \), we can take an infimum with respect to both $\theta'$ and $k$ on the right-hand side to get
\begin{align*}
 \inf_{\theta \in \R^M}&\Big\{\MSE(\varphi_{\theta})+C\frac{\sigma^2|\theta|_0\log (eM)}{n} \Big\}\\
 &\le \inf_{\theta' \in \R^M}\Big\{\MSE(\varphi_{\theta'})+C\min_k\Big(\frac{|\theta'|_1^2}{k}+C\frac{\sigma^2k\log (eM)}{n}\Big) \Big\}\,.
\end{align*}
To control the minimum over $k$, we need to consider three cases for the quantity
$$
\bar k=\frac{|\theta'|_1}{\sigma}\sqrt{\frac{n}{\log (eM)}}.
$$
\begin{enumerate}
\item If  $1 \le \bar k \le M$, then we get
$$
\min_k\Big(\frac{|\theta'|_1^2}{k}+C\frac{\sigma^2k\log (eM)}{n}\Big) \le C\sigma|\theta'|_1\sqrt{\frac{\log (eM)}{n}}
$$
\item If $\bar k \le 1$, then 
$$
|\theta'|_1^2\le C\frac{\sigma^2\log (eM)}{n}\,,
$$
which yields
$$
\min_k\Big(\frac{|\theta'|_1^2}{k}+C\frac{\sigma^2k\log (eM)}{n}\Big) \le  C\frac{\sigma^2\log (eM)}{n}
$$
\item If $\bar k \ge M$, then 
$$
\frac{\sigma^2M\log (eM)}{n}\le C \frac{|\theta'|_1^2}{M}\,.
$$
Therefore, on the one hand, if $M \ge \frac{|\theta|_1}{\sigma\sqrt{\log(eM)/n}}$, we get
$$
\min_k\Big(\frac{|\theta'|_1^2}{k}+C\frac{\sigma^2k\log (eM)}{n}\Big) \le C \frac{|\theta'|_1^2}{M} \le C\sigma|\theta'|_1\sqrt{\frac{\log (eM)}{n}}\,.
$$
On the other hand, if $M \le \frac{|\theta|_1}{\sigma\sqrt{\log(eM)/n}}$, then for any $\theta \in \R^M$, we have
\begin{equation*}
\frac{\sigma^2|\theta|_0\log (eM)}{n} \le \frac{\sigma^2M\log (eM)}{n}  \le C\sigma|\theta'|_1\sqrt{\frac{\log (eM)}{n}}\,. \qedhere
\end{equation*}
\end{enumerate}

Combined,
\begin{align*}
	\leadeq{\inf_{\theta' \in \R^M}\Big\{\MSE(\varphi_{\theta'})+C\min_k\Big(\frac{|\theta'|_1^2}{k}+C\frac{\sigma^2k\log (eM)}{n}\Big) \Big\}}\\
	\le {} & \inf_{\theta' \in \R^M}\Big\{\MSE(\varphi_{\theta'}) + C \sigma | \theta' |_1 \sqrt{\frac{\log(eM}{n}} + C \frac{\sigma^2 \log(eM)}{n} \Big\},
\end{align*}
which together with Theorem 3.4 yields the claim.

\end{proof}

Note that this last result holds for any estimator that satisfies an oracle inequality with respect to the $\ell_0$ norm as in Theorem~\ref{TH:BIC_mis}. In particular, this estimator need not be the BIC estimator. An example is the Exponential Screening estimator of~\cite{RigTsy11}.

Maurey's argument allows us to enjoy the best of both the $\ell_0$ and the $\ell_1$ world. The rate adapts to the sparsity of the problem and can be even generalized to $\ell_q$-sparsity (see Problem~\ref{EXO:maurey}). However, it is clear from the proof that this argument is limited to squared $\ell_2$ norms such as the one appearing in $\MSE$ and extension to other risk measures is non-trivial. Some work has been done for non-Hilbert spaces \cite{Pis81, DonDarGur97} using more sophisticated arguments.

\section{Nonparametric regression}

So far, the oracle inequalities that we have derived do not deal with the approximation error $\MSE(\varphi_\theta)$. We kept it arbitrary and simply hoped that it was small.  Note also that in the case of linear models, we simply assumed that the approximation error was zero. As we will see in this section, this error can be quantified under natural smoothness conditions if the dictionary of functions $\cH=\{\varphi_1, \ldots, \varphi_M\}$ is chosen appropriately. In what follows, we assume for simplicity that $d=1$ so that $f\,:\, \R \to \R$ and $\varphi_j\,:\, \R \to \R$. 

\subsection{Fourier decomposition}
Historically, nonparametric estimation was developed before high-dimensional statistics and most results hold for the case where the dictionary $\cH=\{\varphi_1, \ldots, \varphi_M\}$ forms an orthonormal system of $L_2([0,1])$:
$$
\int_0^1\varphi_j^2(x)\ud x =1\,, \quad \int_0^1 \varphi_j(x)\varphi_k(x)\ud x=0, \ \forall\, j\neq k\,.
$$
We will also deal with the case where $M=\infty$.

When $\cH$ is an orthonormal system, the coefficients $\theta_j^* \in \R$ defined by
$$
\theta_j^* =\int_0^1 f(x) \varphi_j(x)\ud x\,,
$$
are called \emph{Fourier coefficients} of $f$.

Assume now that the regression function $f$ admits the following decomposition
$$
f=\sum_{j=1}^\infty \theta_j^* \varphi_j\,.
$$

There exists many choices for the orthonormal system and we give only two as examples.

\begin{example}
	\label{ex:trigbasis}
\emph{Trigonometric basis}. This is an orthonormal basis of $L_2([0,1])$. It is defined by
\begin{eqnarray*}
\varphi_1&\equiv&1\\
\varphi_{2k}(x)&=&\sqrt{2} \cos(2\pi k x)\,,\\
\varphi_{2k+1}(x)&=&\sqrt{2} \sin(2\pi k x)\,,
\end{eqnarray*}
for $k=1,2, \dots$ and $x \in [0,1]$. The fact that it is indeed an orthonormal system can be easily check using trigonometric identities.
\end{example}

The next example has received a lot of attention in the signal (sound, image, \dots) processing community.

\begin{example}
\emph{Wavelets}. 
Let $\psi\,:\, \R \to \R$ be a sufficiently smooth and compactly supported function, called  ``\emph{mother wavelet}". Define the system of functions
$$
\psi_{jk}(x)=2^{j/2}\psi(2^jx-k)\,, \quad j,k \in \Z\,.
$$
It can be shown that for a suitable $\psi$, the dictionary $\{\psi_{j,k}, j,k \in \Z\}$ forms an orthonormal system of $L_2([0,1])$ and sometimes a basis. In the latter case, for any function $g \in L_2([0,1])$, it holds
$$
g=\sum_{j=-\infty}^\infty \sum_{k=-\infty}^\infty \theta_{jk}\psi_{jk}\,, \quad \theta_{jk}=\int_{0}^1 g(x)\psi_{jk}(x)\ud x\,.
$$
The coefficients $\theta_{jk}$ are called \emph{wavelet coefficients} of $g$.

The simplest example is given by the \emph{Haar system} obtained by taking $\psi$ to be the following piecewise constant function (see Figure~\ref{FIG:haar}). We will not give more details about wavelets here but simply point the interested reader to~\cite{Mal09}.
$$
\psi(x)=\left\{
\begin{array}{ll}
1& 0\le x<1/2\\
-1 & 1/2\le x \le 1\\
0 & \text{otherwise}
\end{array}
\right.
$$
\end{example}
\begin{figure}
  \centering
  \psfrag{y}[cb][bl]{\footnotesize \begin{turn}{-0}$\psi(x)$\end{turn}}
    \psfrag{x}[ct][b]{\footnotesize \begin{turn}{0}$x$\end{turn}}

\psfrag{Haar}[l]{\ }
 \includegraphics[width=.6\textwidth]{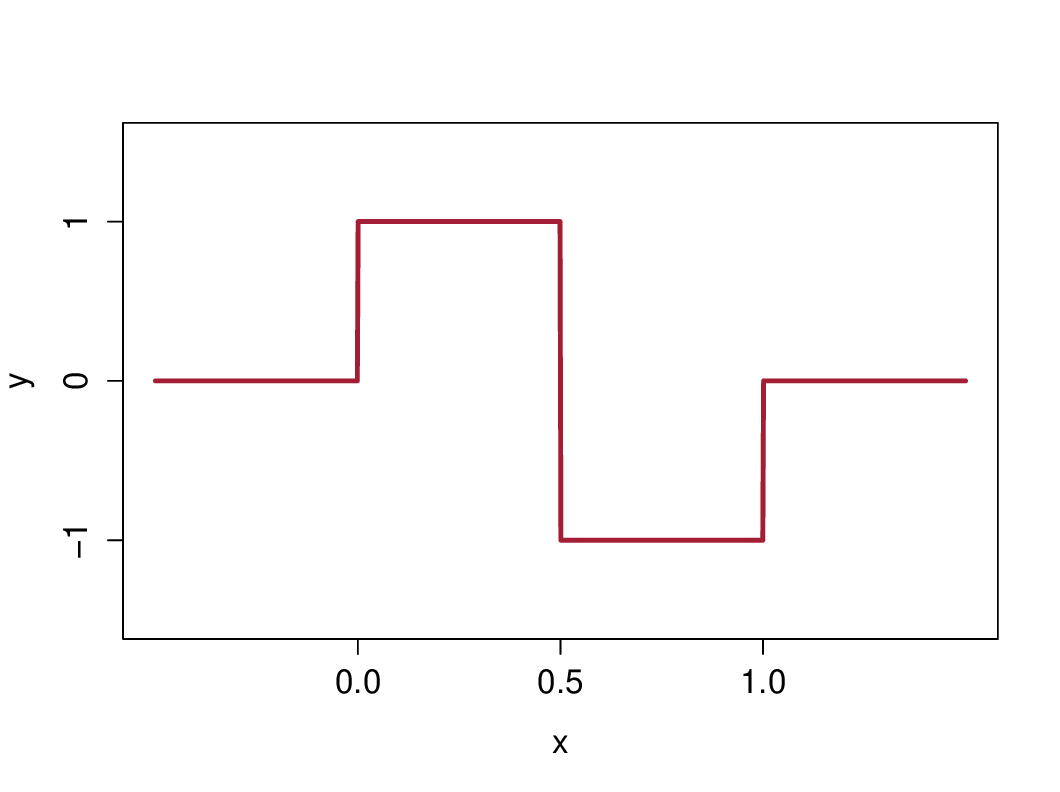}
\caption{The Haar mother wavelet}
\label{FIG:haar}
\end{figure}

\subsection{Sobolev classes and ellipsoids}

We begin by describing a class of smooth functions where smoothness is understood in terms of its number of derivatives. Recall that $f^{(k)}$ denotes the $k$-th derivative of $f$.
\begin{defn}
Fix parameters $\beta \in \{1, 2, \dots\}$ and $L>0$. The Sobolev class of functions $W(\beta,L)$ is defined by
\begin{align*}
W(\beta,L)=\Big\{f\,:\,& [0,1] \to \R\ :\ f\in L_2([0,1])\,, f^{(\beta-1)} \text{  is absolutely continuous and }\\
& \int_{0}^1[f^{(\beta)}]^2\le L^2\,, f^{(j)}(0)= f^{(j)}(1), j=0, \ldots, \beta-1\Big\}
\end{align*}
\end{defn}
Any function $f \in W(\beta, L)$ can represented\footnote{In the sense that $$\lim_{k \to \infty} \int_0^1|f(t)-\sum_{j=1}^k \theta_j \varphi_j(t)|^2\ud t=0$$} as its Fourier expansion along the trigonometric basis:
$$
f(x)=\theta_1^*\varphi_1(x) + \sum_{k=1}^\infty\big(\theta_{2k}^*\varphi_{2k}(x)+\theta_{2k+1}^*\varphi_{2k+1}(x)\big)\,, \quad \forall \ x \in [0,1]\,,
$$
where $\theta^*=\{\theta_j^*\}_{j \ge 1}$ is in the space of squared summable sequences $\ell_2(\N)$ defined by
$$
\ell_2(\N)=\Big\{\theta\,:\, \sum_{j=1}^\infty \theta_j^2 <\infty\Big\}\,.
$$

For any $\beta>0$, define the coefficients
\begin{equation}
\label{EQ:defaj}
a_j=\left\{
\begin{array}{cl}
j^\beta& \text{for $j$ even}\\
(j-1)^\beta & \text{for $j$ odd}
\end{array}
\right.
\end{equation}
With these coefficients, we can define the Sobolev class of functions in terms of Fourier coefficients.

\begin{thm}
Fix $\beta\ge 1$ and $L>0$ and let $\{\varphi_j\}_{j \ge 1}$ denote the trigonometric basis of $L_2([0,1])$. Moreover,  let $\{a_j\}_{j\ge 1}$ be defined as in~\eqref{EQ:defaj}. A function $f \in W(\beta,L)$ can be represented as 
$$
f=\sum_{j=1}^\infty \theta_j^* \varphi_j\,,
$$
where the sequence $\{\theta_j^*\}_{j \ge 1}$ belongs to Sobolev ellipsoid of $\ell_2(\N)$ defined by
$$
\Theta(\beta, Q)=\Big\{\theta \in \ell_2(\N)\,:\, \sum_{j=1}^\infty a_j^2\theta_j^2\le Q\Big\}
$$
for $Q=L^2/\pi^{2\beta}$.
\end{thm}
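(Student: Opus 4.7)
The plan is to express $f$ in the trigonometric basis, use integration by parts iteratively to relate the Fourier coefficients of $f^{(\beta)}$ to those of $f$, and then apply Parseval's identity. The boundary conditions $f^{(j)}(0) = f^{(j)}(1)$ for $j = 0, \dots, \beta-1$ are precisely what is needed to kill the boundary terms in each integration by parts.

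First, I would note that since $f \in L_2([0,1])$ and the trigonometric basis is an orthonormal basis of $L_2([0,1])$, the representation $f = \sum_{j \ge 1} \theta_j^\ast \varphi_j$ with $\theta_j^\ast = \int_0^1 f \varphi_j$ is automatic, and the issue is to control $\sum_j a_j^2 (\theta_j^\ast)^2$. The key computational lemma is that the $\beta$-th derivative of the basis functions obeys
\[
\varphi_{2k}^{(\beta)} = \varepsilon_k (2\pi k)^\beta \varphi_{\sigma(2k)}, \qquad \varphi_{2k+1}^{(\beta)} = \varepsilon_k' (2\pi k)^\beta \varphi_{\sigma(2k+1)},
\]
where $\varepsilon_k, \varepsilon_k' \in \{\pm 1\}$ and $\sigma$ is a permutation of $\{2k, 2k+1\}$ determined by $\beta \bmod 4$. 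In particular $|\varphi_j^{(\beta)}|$ has ``frequency amplitude'' $(2\pi k)^\beta$, with $k = \lfloor j/2 \rfloor$, and the map $j \mapsto \sigma(j)$ preserves the pair $\{2k, 2k+1\}$.

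Next, I would integrate by parts $\beta$ times. Writing $s_j := \int_0^1 f^{(\beta)}(x)\varphi_j(x)\,dx$ for the Fourier coefficients of $f^{(\beta)}$, the periodicity conditions $f^{(\ell)}(0) = f^{(\ell)}(1)$ combined with the fact that $\varphi_j$ is $1$-periodic (all trigonometric basis elements are) eliminate all boundary terms, yielding
\[
s_j = (-1)^\beta \int_0^1 f(x)\,\varphi_j^{(\beta)}(x)\,dx = \pm (2\pi k)^\beta \theta^\ast_{\sigma(j)}.
\]
For $j=1$, $\varphi_1 \equiv 1$, and $s_1 = f^{(\beta-1)}(1) - f^{(\beta-1)}(0) = 0$, consistent with $a_1 = 0$. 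Since $\sigma$ permutes $\{2k,2k+1\}$ and $(2\pi k)^\beta$ depends only on $k$, we get $s_{2k}^2 + s_{2k+1}^2 = (2\pi k)^{2\beta}\bigl[(\theta^\ast_{2k})^2 + (\theta^\ast_{2k+1})^2\bigr]$.

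Finally I would apply Parseval to $f^{(\beta)} \in L_2([0,1])$:
\[
L^2 \;\ge\; \int_0^1 [f^{(\beta)}(x)]^2\,dx \;=\; \sum_{j=1}^\infty s_j^2 \;=\; (2\pi)^{2\beta}\sum_{k=1}^\infty k^{2\beta}\bigl[(\theta^\ast_{2k})^2 + (\theta^\ast_{2k+1})^2\bigr].
\]
Since $a_{2k}^2 = a_{2k+1}^2 = (2k)^{2\beta} = 4^\beta k^{2\beta}$ and $a_1 = 0$, the left-hand side of the desired inequality is exactly
\[
\sum_{j=1}^\infty a_j^2 (\theta_j^\ast)^2 = 4^\beta \sum_{k=1}^\infty k^{2\beta}\bigl[(\theta^\ast_{2k})^2 + (\theta^\ast_{2k+1})^2\bigr] \le \frac{4^\beta}{(2\pi)^{2\beta}} L^2 = \frac{L^2}{\pi^{2\beta}} = Q,
\]
which shows $\theta^\ast \in \Theta(\beta, Q)$. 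The only real obstacle is the bookkeeping: checking the permutation $\sigma$ and the signs $\varepsilon_k, \varepsilon_k'$ by going through the four cases $\beta \bmod 4$ and verifying that in every case the pair $\{2k,2k+1\}$ is preserved, so that summing $s_{2k}^2 + s_{2k+1}^2$ cleanly produces the weighted sum of $(\theta^\ast_j)^2$ with weight $(2\pi k)^{2\beta}$. The absolute continuity of $f^{(\beta-1)}$ is what legitimizes the integration by parts at the last step.
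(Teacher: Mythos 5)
Your proof is correct and follows essentially the same route as the paper's: integrate by parts using the periodicity conditions to pass the $\beta$ derivatives onto the basis functions, obtain the identity $s_{2k}^2 + s_{2k+1}^2 = (2\pi k)^{2\beta}\bigl[(\theta^\ast_{2k})^2 + (\theta^\ast_{2k+1})^2\bigr]$, and close with Parseval. The only stylistic difference is that the paper carries out one integration by parts at a time, establishing the recursion $s_{2k}(\ell)=(2\pi k)\,s_{2k+1}(\ell-1)$, $s_{2k+1}(\ell)=-(2\pi k)\,s_{2k}(\ell-1)$ by induction on $\ell$, whereas you integrate all $\beta$ times at once and invoke the closed form $\varphi_j^{(\beta)}=\pm(2\pi k)^\beta\varphi_{\sigma(j)}$ — both yield the identical key identity and final bound.
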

\begin{proof}
Let us first define the Fourier coefficients $\{s_k(j)\}_{k \ge 1}$ of the $j$th derivative $f^{(j)}$ of $f$ for $j=1, \ldots, \beta$:
\begin{align*}
s_1(j)&=\int_0^1f^{(j)}(t)\ud t=f^{(j-1)}(1)-f^{(j-1)}(0)=0\,,\\
s_{2k}(j)&=\sqrt{2}\int_0^1f^{(j)}(t)\cos(2\pi kt)\ud t\,,\\
s_{2k+1}(j)&=\sqrt{2}\int_0^1f^{(j)}(t)\sin(2\pi kt)\ud t\,,\\
\end{align*}
The Fourier coefficients of $f$ are given by $\theta_k=s_k(0)$.

Using integration by parts, we find that
\begin{align*}
s_{2k}(\beta)&=\sqrt{2}f^{(\beta-1)}(t) \cos(2\pi kt)\Big|_0^1 + (2\pi k)\sqrt{2}\int_0^1f^{(\beta-1)}(t) \sin(2\pi kt)\ud t\\
&= \sqrt{2}[f^{(\beta-1)}(1)-f^{(\beta-1)}(0)]+(2\pi k)\sqrt{2}\int_0^1f^{(\beta-1)}(t) \sin(2\pi kt)\ud t\\
&= (2\pi k)s_{2k+1}(\beta-1)\,.
\end{align*}
Moreover,
\begin{align*}
s_{2k+1}(\beta)&=\sqrt{2}f^{(\beta-1)}(t) \sin(2\pi kt)\Big|_0^1 - (2\pi k)\sqrt{2}\int_0^1f^{(\beta-1)}(t) \cos(2\pi kt)\ud t\\
&= -(2\pi k)s_{2k}(\beta-1)\,.
\end{align*}
In particular, it yields 
$$
s_{2k}(\beta)^2+s_{2k+1}(\beta)^2=(2\pi k)^2\big[s_{2k}(\beta-1)^2+s_{2k+1}(\beta-1)^2\big]
$$
By induction, we find that for any $k \ge 1$,
$$
s_{2k}(\beta)^2+s_{2k+1}(\beta)^2=(2\pi k)^{2\beta}\big(\theta_{2k}^2 + \theta_{2k+1}^2\big)
$$
Next, it follows for the definition~\eqref{EQ:defaj} of $a_j$ that
\begin{align*}
\sum_{k=1}^\infty(2\pi k)^{2\beta}\big(\theta_{2k}^2 + \theta_{2k+1}^2\big)&=\pi^{2\beta}\sum_{k=1}^\infty a_{2k}^2\theta_{2k}^2 + \pi^{2\beta}\sum_{k=1}^\infty a_{2k+1}^2\theta_{2k+1}^2\\
&=\pi^{2\beta}\sum_{j=1}^\infty a_{j}^2\theta_{j}^2\,.
\end{align*}
Together with the Parseval identity, it yields
$$
\int_0^1\big(f^{(\beta)}(t)\big)^2\ud t=\sum_{k=1}^\infty s_{2k}(\beta)^2+s_{2k+1}(\beta)^2=\pi^{2\beta}\sum_{j=1}^\infty a_{j}^2\theta_{j}^2\,.
$$
To conclude, observe that since $f \in W(\beta, L)$, we have
$$
\int_0^1\big(f^{(\beta)}(t)\big)^2\ud t \le L^2\,,
$$
so that $\theta \in \Theta(\beta, L^2/\pi^{2\beta})$\,.
\end{proof}
It can actually be shown that the reciprocal is true, that is, any function with Fourier coefficients in $\Theta(\beta, Q)$ belongs to  if $ W(\beta,L)$, but we will not be needing this.

In what follows, we will define smooth functions as functions with Fourier coefficients (with respect to the trigonometric basis) in a Sobolev ellipsoid. By extension, we write  $f \in \Theta(\beta, Q)$ in this case and consider any real value for $\beta$.

\begin{prop}
The Sobolev ellipsoids enjoy the following properties
\begin{itemize}
\item[(i)] For any $Q>0$, 
$$
0<\beta'<\beta \ \Rightarrow \ \Theta(\beta, Q) \subset \Theta(\beta',Q).
$$
\item[(ii)] For any $Q>0$, 
$$
\beta>\frac12  \ \Rightarrow \ f\ \text{is continuous}.
$$
\end{itemize}
\end{prop}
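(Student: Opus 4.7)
For part (i), my plan is to compare the coefficients $a_j$ pointwise across different smoothness parameters. Writing $a_j^{(\beta)}$ for the coefficient~\eqref{EQ:defaj} built with exponent $\beta$, I would observe that $a_1^{(\beta)} = 0$ for every $\beta > 0$, while for $j \ge 2$ we have $a_j^{(\beta)} \in \{j^\beta, (j-1)^\beta\}$, so in either case $a_j^{(\beta)} \ge 1$. Since $x \mapsto x^\beta$ is increasing in $\beta$ on $[1,\infty)$, this gives $a_j^{(\beta')} \le a_j^{(\beta)}$ for all $j \ge 1$ whenever $0 < \beta' < \beta$. Hence for any $\theta \in \Theta(\beta,Q)$,
$$
\sum_{j=1}^\infty \big(a_j^{(\beta')}\big)^2 \theta_j^2 \;\le\; \sum_{j=1}^\infty \big(a_j^{(\beta)}\big)^2 \theta_j^2 \;\le\; Q,
$$
which is precisely the inclusion $\Theta(\beta,Q) \subset \Theta(\beta',Q)$.

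For part (ii), the natural route is to upgrade $L_2$-convergence of the Fourier series to uniform convergence via the Weierstrass M-test, which then forces $f$ to be continuous as a uniform limit of continuous functions. Since $|\varphi_j(x)| \le \sqrt{2}$ for the trigonometric basis (Example~\ref{ex:trigbasis}), it suffices to show $\sum_{j \ge 1} |\theta_j| < \infty$ whenever $\theta \in \Theta(\beta,Q)$ with $\beta > 1/2$.

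To do this, I would split off the $j=1$ term (unconstrained) and apply Cauchy--Schwarz to the tail:
$$
\sum_{j \ge 2} |\theta_j| \;=\; \sum_{j \ge 2} \frac{1}{a_j^{(\beta)}}\cdot a_j^{(\beta)}|\theta_j| \;\le\; \Big(\sum_{j\ge 2} \frac{1}{(a_j^{(\beta)})^2}\Big)^{1/2} \Big(\sum_{j \ge 2} (a_j^{(\beta)})^2 \theta_j^2\Big)^{1/2}.
$$
The second factor is bounded by $\sqrt{Q}$. For the first, using $a_{2k}^{(\beta)} = a_{2k+1}^{(\beta)} = (2k)^\beta$,
$$
\sum_{j \ge 2} \frac{1}{(a_j^{(\beta)})^2} \;=\; 2 \sum_{k \ge 1} \frac{1}{(2k)^{2\beta}} \;=\; 2^{1-2\beta}\sum_{k \ge 1} k^{-2\beta},
$$
which converges exactly when $2\beta > 1$, i.e.\ $\beta > 1/2$. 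Combined with $|\theta_1| \le \infty$ being finite since $\theta \in \ell_2(\N)$, this gives $\sum_j |\theta_j| < \infty$, and the Weierstrass M-test applied to $\sum_j \theta_j \varphi_j$ with majorants $\sqrt{2}|\theta_j|$ yields uniform convergence on $[0,1]$; since each partial sum is continuous, so is $f$.

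The only mild subtlety is handling the $j=1$ coefficient separately (because $a_1 = 0$ cannot appear in the denominator) and checking that the trigonometric basis is uniformly bounded; both are routine. The real content is the tail bound $\sum k^{-2\beta} < \infty \iff \beta > 1/2$, which is where the threshold $1/2$ comes from.
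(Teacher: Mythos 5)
Your proof is correct. The paper actually leaves this as an exercise (Problem~\ref{PROB:beta12}), so there is no paper proof to compare against, but the key Cauchy--Schwarz estimate you use to show $\sum_{j\ge 2}|\theta_j|\le \sqrt{Q}\bigl(\sum_{j\ge 2} a_j^{-2}\bigr)^{1/2}<\infty$ for $\beta>1/2$ is exactly the bound the paper deploys in the proof of Lemma~\ref{TH:bias}, and the Weierstrass M-test step (using $|\varphi_j|\le\sqrt{2}$) correctly upgrades absolute summability of the coefficients to uniform convergence and hence continuity.
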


The proof is left as an exercise (Problem~\ref{PROB:beta12}).

It turns out that the first \( n \) functions in the trigonometric basis are not only orthonormal with respect to the inner product of $L_2$, but also with respect to the inner predictor associated with fixed design, $\langle f,g\rangle:=\frac{1}{n} \sum_{i=1}^n f(X_i)g(X_i)$, when the design is chosen to be \emph{regular}, i.e., $X_i=(i-1)/n$, $i=1, \ldots, n$.
\begin{lem}
\label{LEM:regdesign}
Assume that $\{X_1, \ldots, X_n\}$ is the regular design, i.e., $X_i=(i-1)/n$.  Then, for any $M\le n-1$, the design matrix $\Phi=\{\varphi_j(X_i)\}_{\substack{1\le i \le n\\1\le j \le M}}$ satisfies the \textsf{ORT} condition.
\end{lem}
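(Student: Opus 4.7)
The plan is to verify directly that $\Phi^\top\Phi/n = I_M$, i.e., that the $M\le n-1$ trigonometric functions form an orthonormal system with respect to the empirical inner product $\langle f,g\rangle_n := \frac{1}{n}\sum_{i=1}^n f(X_i)g(X_i)$ on the regular design. The single tool needed is the discrete orthogonality of roots of unity:
\begin{equation*}
\sum_{i=1}^n e^{2\pi \mathrm{i}\,m (i-1)/n} = \begin{cases} n, & n\mid m,\\ 0, & \text{otherwise.}\end{cases}
\end{equation*}
Taking real and imaginary parts, for any integer $m$ one has $\sum_{i=1}^n \cos(2\pi m X_i) = n\,\mathbf{1}(n\mid m)$ and $\sum_{i=1}^n \sin(2\pi m X_i) = 0$.

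From there, all the required inner products reduce to product-to-sum identities. First, $\langle\varphi_1,\varphi_1\rangle_n=1$ is immediate. Second, for $\varphi_1$ against any $\varphi_{2k}$ or $\varphi_{2k+1}$ with $1\le 2k,\,2k+1\le M\le n-1$, one gets $\sqrt{2}/n$ times $\sum\cos(2\pi k X_i)$ or $\sum\sin(2\pi k X_i)$, which vanish because $1\le k\le (n-1)/2<n$, so $n\nmid k$. Third, for two cosines,
\begin{equation*}
\langle\varphi_{2k},\varphi_{2\ell}\rangle_n = \frac{1}{n}\sum_{i=1}^n\bigl[\cos(2\pi(k-\ell)X_i)+\cos(2\pi(k+\ell)X_i)\bigr],
\end{equation*}
and analogously for two sines (with a minus sign on the $k+\ell$ term) and for a cosine against a sine (which yields sums of sines, hence zero). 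The needed vanishing conditions are $n\nmid(k\pm\ell)$ and, in the diagonal case $k=\ell$, $n\nmid 2k$.

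The last step is to check that the index constraint $M\le n-1$ indeed ensures these divisibility conditions. Since all involved frequencies satisfy $2k,2k+1\le M\le n-1$, we have $k,\ell \le (n-1)/2$, so $1\le k+\ell \le n-1$ and $|k-\ell|\le (n-2)/2 < n$, while $2k\le n-1<n$. Thus every sum that ought to vanish does vanish, and the only nonzero sums are the diagonal ones, each equal to $n$ (producing $1$ after division by $n$ and, for the $\sqrt{2}\cdot\sqrt{2}=2$ factor in $\varphi_{2k}^2$ or $\varphi_{2k+1}^2$, the factor $1/2$ from the product-to-sum formula compensates). Assembling these pieces gives $\Phi^\top\Phi/n=I_M$. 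The only subtle point—and the one worth being careful about—is keeping track of the bound $M\le n-1$ so that $k+\ell$ never wraps around mod $n$; otherwise the argument is a bookkeeping exercise.
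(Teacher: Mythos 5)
Your proof is correct and rests on the same key ingredient as the paper's proof: the discrete orthogonality relation $\sum_{s=0}^{n-1}e^{2\pi\mathrm{i}\,ms/n}=n\,\mathbf{1}(n\mid m)$, together with the observation that $M\le n-1$ keeps all relevant frequency sums and differences strictly between $-n$ and $n$ so they never wrap around. The paper packages the same computation via complex vectors $a,b,a',b'$ and reads off real and imaginary parts, while you use real product-to-sum identities directly; both routes reduce to the same divisibility checks, and yours is the more transparent bookkeeping.
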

\begin{proof}
	Fix $j,j'\in \{1, \ldots, n-1\}$, $j\neq j'$, and consider the inner product $\varphi_j^\top \varphi_{j'}$. Write $k_{j}=\lfloor j/2\rfloor$ for the integer part of $j/2$ and define the vectors $a, b, a', b' \in \R^n$ with coordinates such that $e^{\frac{\mathsf{i}2\pi k_j s}{n}}=a_{s+1}+\mathsf{i} b_{s+1}$ and $e^{\frac{\mathsf{i}2\pi k_{j'} s}{n}}=a'_{s+1}+\mathsf{i} b'_{s+1}$ for \( s \in \{0, \dots, n-1\} \). It holds that
$$
\frac{1}{2}\varphi_j^\top \varphi_{j'}\in \{a^\top a'\,,\, b^\top b'\,,\, b^\top a'\,,\, a^\top b'\}\,,
$$
depending on the parity of $j$ and $j'$. 

On the one hand, observe that  if $k_j \neq k_{j'}$, we have for any $\sigma \in \{-1, +1\}$,
$$
\sum_{s=0}^{n-1}e^{\frac{\mathsf{i}2\pi k_j s}{n}}e^{\sigma\frac{\mathsf{i}2\pi k_{j'} s}{n}}=\sum_{s=0}^{n-1}e^{\frac{\mathsf{i}2\pi (k_j+\sigma k_{j'}) s}{n}}=0\,.
$$
On the other hand
$$
\sum_{s=0}^{n-1}e^{\frac{\mathsf{i}2\pi k_j s}{n}}e^{\sigma\frac{\mathsf{i}2\pi k_{j'} s}{n}}=(a+\mathsf{i} b)^\top( a'+\sigma \mathsf{i} b')=a^\top a'-\sigma b^\top b' + \mathsf{i} \big[b^\top a' + \sigma a^\top b'  \big]
$$
so that  $a^\top a'=\pm b^\top b'=0$ and $b^\top a'=\pm a^\top b'=0$ whenever, $k_j \neq k_{j'}$. It yields $\varphi_j^\top \varphi_{j'}=0$.

Next, consider the case where $k_j=k_{j'}$ so that
$$
\sum_{s=0}^{n-1}e^{\frac{\mathsf{i}2\pi k_j s}{n}}e^{\sigma\frac{\mathsf{i}2\pi k_{j'} s}{n}}=\left\{
\begin{array}{ll}
0& \text{if $\sigma=1$}\\
n& \text{if $\sigma=-1$}\\
\end{array}\right.\,.
$$
On the one hand if $j \neq j'$, it can only be the case that $\varphi_j^\top \varphi_{j'} \in \{b^\top a'\,,\, a^\top b'\}$ but the same argument as above yields $b^\top a'=\pm a^\top b'=0$ since the imaginary part of the inner product is still \( 0  \). Hence, in that case, $\varphi_j^\top \varphi_{j'}=0$.
On the other hand, if $j=j'$, then $a=a'$ and $b=b'$ so that it yields  $a^\top a'=|a|_2^2=n$ and $b^\top b'=|b|_2^2=n$ which is equivalent to $\varphi_j^\top \varphi_j=|\varphi_j|_2^2=n$. 
Therefore, the design matrix $\Phi$ is such that
\begin{equation*}
\Phi^\top \Phi=nI_M\,. \qedhere
\end{equation*}
\end{proof}

\subsection{Integrated squared error}

As mentioned in the introduction of this chapter, the smoothness assumption allows us to control the approximation error. Before going into the details,  let us gain some insight. Note first that if $\theta \in \Theta(\beta, Q)$, then $a_j^2\theta_j^2 \to 0$ as $j \to \infty$ so that $|\theta_j|=o(j^{-\beta})$. Therefore, the $\theta_j$s decay polynomially to zero and it makes sense to approximate $f$ by its truncated Fourier series
$$
\sum_{j=1}^M \theta_j^* \varphi_j=:\varphi_{\theta^*}^M
$$
for any fixed $M$. This truncation leads to a systematic error that vanishes as $M \to \infty$. We are interested in understanding the rate at which this happens.

The Sobolev assumption allows us to control precisely this error as a function of the tunable parameter $M$ and the smoothness $\beta$. 

\begin{lem}
\label{TH:bias}
For any integer $M \ge 1$, and $f \in \Theta(\beta, Q)$, $\beta>1/2$, it holds
\begin{equation}
\label{EQ:bias1}
 \|\varphi_{\theta^*}^M -f\|_{L_2}^2=\sum_{j>M} |\theta^*_j|^2 \le QM^{-2\beta}\,.
\end{equation}
and for $M =n-1$, we have
\begin{equation}
\label{EQ:bias2}
|\varphi_{\theta^*}^{n-1} -f|_2^2\le 2n\Big(\sum_{j\ge n}|\theta^*_j|\Big)^2\lesssim Qn^{2-2\beta}\,.
\end{equation}
\end{lem}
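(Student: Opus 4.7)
The plan is to treat the two statements separately, since the first concerns the continuous $L_2$ norm on $[0,1]$ and follows purely from Parseval while the second concerns the discrete fixed-design seminorm and requires a pointwise bound on the tail of the Fourier series.

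For \eqref{EQ:bias1}, I would first invoke Parseval's identity, which, because $\{\varphi_j\}_{j\ge 1}$ is an orthonormal basis of $L_2([0,1])$ and $f-\varphi_{\theta^*}^M=\sum_{j>M}\theta_j^*\varphi_j$, immediately gives $\|f-\varphi_{\theta^*}^M\|_{L_2}^2=\sum_{j>M}(\theta_j^*)^2$. To bound this tail by $QM^{-2\beta}$ I would factor in the weights $a_j$: from the definition \eqref{EQ:defaj}, for every $j\ge M+1$ one has $a_j\ge M^\beta$ (the minimum over the tail is achieved when $j=M+1$ is odd, giving $a_{M+1}=M^\beta$; the even case gives $(M+1)^\beta$). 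Hence
\[
\sum_{j>M}(\theta_j^*)^2=\sum_{j>M}\frac{a_j^2(\theta_j^*)^2}{a_j^2}\le M^{-2\beta}\sum_{j>M}a_j^2(\theta_j^*)^2\le QM^{-2\beta},
\]
using $\theta^*\in\Theta(\beta,Q)$.

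For \eqref{EQ:bias2} the idea is that with the regular design $X_i=(i-1)/n$ the Euclidean seminorm $|\cdot|_2$ and the continuous norm are not identical, but the tail can be controlled pointwise. Since $\varphi_{\theta^*}^{n-1}(X_i)-f(X_i)=-\sum_{j\ge n}\theta_j^*\varphi_j(X_i)$ and the trigonometric basis satisfies $|\varphi_j(x)|\le\sqrt{2}$ uniformly, the triangle inequality gives $\bigl|\varphi_{\theta^*}^{n-1}(X_i)-f(X_i)\bigr|\le\sqrt{2}\sum_{j\ge n}|\theta_j^*|$. Summing over $i=1,\dots,n$ yields the first inequality $|\varphi_{\theta^*}^{n-1}-f|_2^2\le 2n\bigl(\sum_{j\ge n}|\theta_j^*|\bigr)^2$ without any cancellation.

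For the final rate, I would convert $\ell_1$ into $\ell_2$ via Cauchy--Schwarz against the weights $a_j$:
\[
\Big(\sum_{j\ge n}|\theta_j^*|\Big)^2=\Big(\sum_{j\ge n}a_j|\theta_j^*|\cdot a_j^{-1}\Big)^2\le\Big(\sum_{j\ge n}a_j^2(\theta_j^*)^2\Big)\Big(\sum_{j\ge n}a_j^{-2}\Big)\le Q\sum_{j\ge n}a_j^{-2}.
\]
Since $a_j^{-2}\lesssim j^{-2\beta}$ and $\beta>1/2$, the residual series is summable and an integral comparison gives $\sum_{j\ge n}a_j^{-2}\lesssim n^{1-2\beta}$. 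Multiplying by $2n$ yields the claimed $Qn^{2-2\beta}$ bound. The main subtlety to watch for is the requirement $\beta>1/2$, which is exactly what makes both the $\ell_1$ tail $\sum|\theta_j^*|$ finite and the integral $\sum j^{-2\beta}$ convergent; without it, neither step works.
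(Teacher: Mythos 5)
Your proof is correct and follows essentially the same route as the paper: Parseval plus the bound $a_j \ge M^\beta$ on the tail for \eqref{EQ:bias1}, and a uniform pointwise bound $|\varphi_j|\le\sqrt2$ (equivalently $|\varphi_j|_2\le\sqrt{2n}$) combined with the weighted Cauchy--Schwarz step $\sum_{j\ge n}|\theta_j^*|\le\sqrt{Q}\sqrt{\sum_{j\ge n}a_j^{-2}}\lesssim\sqrt{Q}\,n^{1/2-\beta}$ for \eqref{EQ:bias2}. The only cosmetic difference is that the paper first records the finiteness of $\sum_j|\theta_j^*|$ (which you defer to the closing remark about $\beta>1/2$) and bounds $|\varphi_j|_2$ directly rather than applying the triangle inequality pointwise, but the content is identical.
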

\begin{proof}
Note that for any $\theta \in \Theta(\beta, Q)$, if $\beta>1/2$, then
\begin{align*}
\sum_{j=2}^\infty|\theta_j|&=\sum_{j=2}^\infty a_j|\theta_j|\frac1{a_j}\\
&\le \sqrt{\sum_{j=2}^\infty a_j^2\theta_j^2\sum_{j=2}^\infty\frac1{a_j^2}}\quad \text{by Cauchy-Schwarz}\\
&\le \sqrt{Q\sum_{j=1}^\infty\frac1{j^{2\beta}}}<\infty
\end{align*}
Since $\{\varphi_j\}_j$ forms an orthonormal system in $L_2([0,1])$, we have
$$
\min_{\theta \in \R^M} \|\varphi_\theta -f\|_{L_2}^2= \|\varphi_{\theta^*}^M -f\|_{L_2}^2 =\sum_{j>M} |\theta^*_j|^2\,.
$$
When $\theta^* \in \Theta(\beta,Q)$, we have
$$
\sum_{j>M}  |\theta_j^*|^2=\sum_{j>M}  a_j^2|\theta_j^*|^2\frac1{a_j^2}\le \frac1{a_{M+1}^2}Q\le \frac{Q}{M^{2\beta}}\,.
$$

To prove the second part of the lemma, observe that
$$
|\varphi_{\theta^*}^{n-1} -f|_2=\big|\sum_{j\ge n}\theta_j^* \varphi_j\big|_2 \le2\sqrt{2n}\sum_{j\ge n}|\theta_j^*|\,,
$$
where in the last inequality, we used the fact that for the trigonometric basis $|\varphi_j|_2 \le \sqrt{2n}, j \ge 1$ regardless of the choice of the design $X_1,\ldots, X_n$. 
When $\theta^* \in \Theta(\beta,Q)$, we have
\begin{equation*}
	\sum_{j\ge n} |\theta^*_j|=\sum_{j\ge n} a_j|\theta^*_j|\frac{1}{a_j}\le \sqrt{\sum_{j\ge n} a_j^2|\theta^*_j|^2}\sqrt{\sum_{j\ge n}\frac{1}{a_j^2}}\lesssim \sqrt{Q} n^{\frac{1}{2}-\beta}\,. \qedhere
\end{equation*}

\end{proof}
Note that the truncated Fourier series $\varphi_{\theta^*}^M$ is an oracle: this is what we see when we view $f$ through the lens of functions with only low frequency harmonics.

To estimate $\varphi_{\theta^*} = \varphi_{\theta^\ast}^M$, consider the  estimator $\varphi_{\thetals}$ where
$$
\thetals \in \argmin_{\theta \in \R^M} \sum_{i=1}^n \big(Y_i -\varphi_\theta(X_i)\big)^2\,,
$$
which should be such that $\varphi_{\thetals}$ is close to $\varphi_{\theta^*}$. For this estimator, we have proved (Theorem~\ref{TH:LS_mis}) an oracle inequality for the $\MSE$ that is of the form
$$
|\varphi_{\thetals}^M-f|_2^2\le\inf_{\theta \in \R^M}|\varphi_\theta^M-f|_2^2+C\sigma^2M\log(1/\delta)\,, \qquad C>0\,.
$$
It yields
\begin{align*}
|\varphi_{\thetals}^M-\varphi_{\theta^*}^M|_2^2&\le 2(\varphi_{\thetals}^M-\varphi_{\theta^*}^M)^\top(f-\varphi_{\theta^*}^M)+C\sigma^2M\log(1/\delta)\\
&= 2(\varphi_{\thetals}^M-\varphi_{\theta^*}^M)^\top(\sum_{j>M}\theta^*_j\varphi_j)+C\sigma^2M\log(1/\delta)\\
&=2(\varphi_{\thetals}^M-\varphi_{\theta^*}^M)^\top(\sum_{j\ge n}\theta^*_j\varphi_j)+C\sigma^2M\log(1/\delta)\,,
\end{align*}
where we used Lemma~\ref{LEM:regdesign} in the last equality. Together with~\eqref{EQ:bias2} and Young's inequality $2ab\le \alpha a^2+b^2/\alpha, a,b \ge 0$ for any $\alpha>0$, we get
$$
2(\varphi_{\thetals}^M-\varphi_{\theta^*}^M)^\top(\sum_{j\ge n}\theta^*_j\varphi_j)\le \alpha|\varphi_{\thetals}^M-\varphi_{\theta^*}^M
|_2^2 + \frac{C}{\alpha}Qn^{2-2\beta}\,,
$$
for some positive constant $C$ when $\theta^* \in \Theta(\beta, Q)$. As a result, 
\begin{equation}
\label{EQ:almostLSNP}
|\varphi_{\thetals}^M-\varphi_{\theta^*}^M|_2^2 \lesssim  \frac{1}{\alpha(1-\alpha)}Qn^{2-2\beta}+ \frac{\sigma^2M}{1-\alpha}\log(1/\delta)
\end{equation}
for any $\alpha \in (0,1)$. Since, Lemma~\ref{LEM:regdesign} implies, $|\varphi_{\thetals}^M-\varphi_{\theta^*}^M|_2^2=n\|\varphi_{\thetals}^M-\varphi_{\theta^*}^M\|_{L_2([0,1])}^2$, we have proved the following theorem.

\begin{thm}
\label{TH:UB_ls_NP}
Fix  $\beta\ge (1+\sqrt{5})/4\simeq 0.81, Q>0, \delta>0$ and assume the general regression model~\eqref{EQ:regmodgen} with $f \in \Theta(\beta,Q)$ and $\eps\sim\sg_n(\sigma^2), \sigma^2 \le 1$. Moreover, let $M=\lceil n^{\frac{1}{2\beta+1}}\rceil$ and $n$ be large enough so that $M \le n-1$. Then the least squares estimator $\thetals$ defined in~\eqref{EQ:defthetaLSmis} with $\{\varphi_j\}_{j=1}^M$ being the trigonometric basis, satisfies with probability $1-\delta$, for $n$ large enough,
$$
\|\varphi^M_{\thetals} -f\|_{L_2([0,1])}^2 \lesssim n^{-\frac{2\beta}{2\beta+1}}(1+\sigma^2\log(1/\delta))\,.
$$
where the constant factors may depend on $\beta, Q$ and $\sigma$. 
\end{thm}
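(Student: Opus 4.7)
\medskip\noindent\textbf{Proof proposal.} The plan is to follow the chain of estimates already set up in the paragraph preceding the theorem and simply track how the rate arises from the optimal choice of $M$. The key decomposition is the Pythagorean identity in $L_2([0,1])$,
$$
\|\varphi^M_{\thetals}-f\|_{L_2([0,1])}^2=\|\varphi^M_{\thetals}-\varphi^M_{\theta^*}\|_{L_2([0,1])}^2+\|\varphi^M_{\theta^*}-f\|_{L_2([0,1])}^2,
$$
valid because $\varphi^M_{\thetals}-\varphi^M_{\theta^*}$ lies in the span of $\{\varphi_1,\ldots,\varphi_M\}$ while $\varphi^M_{\theta^*}-f=\sum_{j>M}\theta^*_j\varphi_j$ is orthogonal to that span.

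First I would handle the deterministic \emph{approximation} term using Lemma~\ref{TH:bias}, equation~\eqref{EQ:bias1}, which gives $\|\varphi^M_{\theta^*}-f\|_{L_2([0,1])}^2\le QM^{-2\beta}$. For the \emph{stochastic} term I would invoke the bound~\eqref{EQ:almostLSNP} that is derived in the text just before the theorem (this is where Lemma~\ref{TH:lasso_fast_mis}'s spirit — bounding $2\eps^\top(\varphi_{\thetals}-\varphi_{\theta^*})$ by the oracle inequality of Theorem~\ref{TH:LS_mis} together with~\eqref{EQ:bias2} and Young's inequality — actually does the work). With $\alpha=1/2$ this reads
$$
|\varphi^M_{\thetals}-\varphi^M_{\theta^*}|_2^2\lesssim Qn^{2-2\beta}+\sigma^2 M\log(1/\delta).
$$
By Lemma~\ref{LEM:regdesign}, under the regular design with $M\le n-1$ the trigonometric system is orthogonal also with respect to the empirical inner product, so $|\varphi^M_{\thetals}-\varphi^M_{\theta^*}|_2^2=n\,\|\varphi^M_{\thetals}-\varphi^M_{\theta^*}\|_{L_2([0,1])}^2$. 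Dividing by $n$ and adding the bias yields
$$
\|\varphi^M_{\thetals}-f\|_{L_2([0,1])}^2\lesssim Qn^{1-2\beta}+QM^{-2\beta}+\frac{\sigma^2 M\log(1/\delta)}{n}.
$$

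Finally I would plug in $M=\lceil n^{1/(2\beta+1)}\rceil$. The two leading terms $M^{-2\beta}$ and $M/n$ both balance at $n^{-2\beta/(2\beta+1)}$, which gives the advertised rate. The only subtle step — and in my view the main obstacle to a clean write-up — is to verify that the residual term $n^{1-2\beta}$ coming from the empirical-to-$L_2$ conversion is dominated by $n^{-2\beta/(2\beta+1)}$. This requires $1-2\beta\le -2\beta/(2\beta+1)$, equivalently $4\beta^2-2\beta-1\ge 0$, whose positive root is $(1+\sqrt{5})/4$. This is precisely the lower bound on $\beta$ assumed in the theorem, and this is why the hypothesis takes the slightly unusual form $\beta\ge(1+\sqrt 5)/4$ rather than $\beta\ge 1/2$. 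Once this is noted, the three terms collapse into a single bound of order $n^{-2\beta/(2\beta+1)}(1+\sigma^2\log(1/\delta))$, completing the proof.
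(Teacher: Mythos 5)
Your proposal is correct and follows essentially the same route as the paper: Pythagorean decomposition, the bias bound~\eqref{EQ:bias1} from Lemma~\ref{TH:bias}, the stochastic bound~\eqref{EQ:almostLSNP} converted to $L_2$ via Lemma~\ref{LEM:regdesign}, and the choice $M=\lceil n^{1/(2\beta+1)}\rceil$; you also correctly pinpoint that the unusual threshold $\beta\ge(1+\sqrt5)/4$ is exactly what makes the residual $n^{1-2\beta}$ subdominant, which is the same observation the paper makes in its final line.
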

\begin{proof}
Choosing $\alpha=1/2$ for example and absorbing $Q$ in the constants, we get from~\eqref{EQ:almostLSNP} and Lemma~\ref{LEM:regdesign} that
for $M \le n-1$,
$$
\|\varphi^M_{\thetals}-\varphi^M_{\theta^*}\|^2_{L_2([0,1])}\lesssim n^{1-2\beta}+\sigma^2\frac{M\log(1/\delta)}{n} \,.
$$
Using now Lemma~\ref{TH:bias} and  $\sigma^2 \le 1$, we get 
$$
\|\varphi^M_{\thetals}-f\|^2_{L_2([0,1])}\lesssim M^{-2\beta}+n^{1-2\beta}+\sigma^2\frac{M\log(1/\delta)}{n} \,.
$$
Taking $M=\lceil  n^{\frac{1}{2\beta+1}}\rceil\le n-1$ for $n$ large enough  yields
$$
\|\varphi^M_{\thetals}-f\|^2_{L_2([0,1])}\lesssim n^{-\frac{2\beta}{2\beta+1}}+n^{1-2\beta} \sigma^2\log(1/\delta)\,.
$$
To conclude the proof, simply note that for the prescribed $\beta$, we have $n^{1-2\beta}\le n^{-\frac{2\beta}{2\beta+1}}$\,.
\end{proof}

\subsection{Adaptive estimation}
The rate attained by the projection estimator $\varphi_{\thetals}$ with $M=\lceil n^{\frac{1}{2\beta+1}}\rceil$ is actually optimal so, in this sense, it is a good estimator. Unfortunately, its implementation requires the knowledge of the smoothness parameter $\beta$ which is typically unknown, to determine the level $M$ of truncation. The purpose of \emph{adaptive estimation} is precisely to adapt to the unknown $\beta$, that is to build an estimator that does not depend on $\beta$ and yet, attains a rate of the order of $Cn^{-\frac{2\beta}{2\beta+1}}$ (up to a logarithmic lowdown). To that end, we will use the oracle inequalities for the BIC and Lasso estimator defined in~\eqref{EQ:defBICmis} and~\eqref{EQ:defLassomis} respectively. In view of Lemma~\ref{LEM:regdesign}, the design matrix $\Phi$ actually satisfies the assumption~\textsf{ORT} when we work with the trigonometric basis. This has two useful implications:
\begin{enumerate}
\item Both estimators are actually thresholding estimators and can therefore be implemented efficiently
\item The condition \textsf{INC($k$)} is automatically satisfied for any $k \ge 1$.
\end{enumerate}

These observations lead to the following corollary.

\begin{cor}
\label{COR:adap}
Fix $\beta> (1+\sqrt{5})/4\simeq 0.81, Q>0, \delta>0$ and $n$ large enough to ensure $n-1\ge \lceil n^{\frac{1}{2\beta+1}}\rceil$ assume the general regression model~\eqref{EQ:regmodgen} with $f \in \Theta(\beta,Q)$ and $\eps\sim\sg_n(\sigma^2), \sigma^2 \le 1$. Let $\{\varphi_j\}_{j=1}^{n-1}$ be the trigonometric basis. Denote by $\varphi^{n-1}_{\thetabic}$ (resp. $\varphi^{n-1}_{\thetalasso}$) the BIC (resp. Lasso) estimator defined in~\eqref{EQ:defBICmis} (resp. \eqref{EQ:defLassomis}) over $\R^{n-1}$ with regularization parameter given by~\eqref{EQ:taubicmis} (resp. \eqref{EQ:taulassomis}). Then $\varphi^{n-1}_{\hat \theta}$, where $\hat \theta \in \{\thetabic, \thetalasso\}$ satisfies with probability $1-\delta$,
$$
\|\varphi^{n-1}_{\hat \theta} -f\|_{L_2([0,1])}^2 \lesssim (n/\log n)^{-\frac{2\beta}{2\beta+1}}(1+\sigma^2\log(1/\delta))\,,
$$
where the constants may depend on $\beta$ and $Q$.
\end{cor}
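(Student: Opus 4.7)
The plan is to deduce this from the sparse oracle inequalities already proved (Theorem~\ref{TH:BIC_mis} for BIC and Theorem~\ref{TH:lasso_fast_mis} for Lasso), by plugging in the truncated Fourier series as a sparse ``test vector'' and then balancing approximation error against the sparsity penalty. First I would observe that under the regular design and the trigonometric basis, Lemma~\ref{LEM:regdesign} gives $\Phi^\top\Phi/n = I_{n-1}$, which means (a) the \textsf{ORT} assumption holds so BIC applies, and (b) $|\Phi^\top\Phi/n - I|_\infty = 0$, so $\mathsf{INC}(k)$ is trivially satisfied for every $k\ge 1$ and Theorem~\ref{TH:lasso_fast_mis} applies as well. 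Both theorems then yield the same form of oracle inequality for $\hat\theta\in\{\thetabic,\thetalasso\}$: with probability at least $1-\delta$,
\begin{equation*}
\MSE(\varphi_{\hat\theta})\;\lesssim\;\inf_{|\theta|_0\le k}\Big\{\MSE(\varphi_\theta)\;+\;\frac{\sigma^2 |\theta|_0\log(en/\delta)}{n}\Big\}\;+\;\frac{\sigma^2\log(1/\delta)}{n},
\end{equation*}
where the constant $3$ in front of $\MSE$ in the BIC case is harmless.

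Next I would choose the test vector to be the truncation $\theta^{*,k}\in\R^{n-1}$ of $\theta^*$ to its first $k$ coordinates, so that $|\theta^{*,k}|_0\le k$. To bound $\MSE(\varphi_{\theta^{*,k}})$, I split
\begin{equation*}
\varphi_{\theta^{*,k}}-f\;=\;-\sum_{k<j<n}\theta^*_j\varphi_j\;-\;\sum_{j\ge n}\theta^*_j\varphi_j
\end{equation*}
as vectors in $\R^n$. Orthogonality of $\varphi_1,\ldots,\varphi_{n-1}$ (with $|\varphi_j|_2^2=n$) gives $|\sum_{k<j<n}\theta^*_j\varphi_j|_2^2 = n\sum_{k<j<n}(\theta^*_j)^2\le nQk^{-2\beta}$ by~\eqref{EQ:bias1}, while the aliasing tail is bounded exactly as in the proof of Lemma~\ref{TH:bias} by $|\sum_{j\ge n}\theta^*_j\varphi_j|_2^2\lesssim Qn^{2-2\beta}$. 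Dividing by $n$ yields $\MSE(\varphi_{\theta^{*,k}})\lesssim Qk^{-2\beta}+Qn^{1-2\beta}$, which plugged into the oracle inequality gives
\begin{equation*}
\MSE(\varphi_{\hat\theta})\;\lesssim\;Qk^{-2\beta}\;+\;Qn^{1-2\beta}\;+\;\frac{\sigma^2 k\log(n/\delta)}{n}.
\end{equation*}

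Now I optimize over the free $k$: choosing $k\asymp (n/\log n)^{1/(2\beta+1)}$ balances $k^{-2\beta}$ against $k\log n/n$, both of order $(n/\log n)^{-2\beta/(2\beta+1)}$. The condition $\beta>(1+\sqrt 5)/4$ is precisely the threshold above which $n^{1-2\beta}\le (n/\log n)^{-2\beta/(2\beta+1)}$ (equivalent to $4\beta^2-2\beta-1\ge 0$), so that the residual term coming from the aliasing is absorbed. Finally, to pass from $\MSE$ to the $\|\cdot\|_{L_2([0,1])}$-risk asserted in the corollary, I would use the identity $\|\sum_j\alpha_j\varphi_j\|_{L_2}^2=|\alpha|_2^2=\tfrac{1}{n}|\sum_j\alpha_j\varphi_j|_2^2$ valid under \textsf{ORT} on the first $n-1$ basis vectors, combined with the triangle inequality and the deterministic $L_2$-bias $\|\varphi_{\theta^{*,n-1}}-f\|_{L_2}^2\le Q(n-1)^{-2\beta}$ from Lemma~\ref{TH:bias}, and the empirical bound $|f-\varphi_{\theta^{*,n-1}}|_2^2\lesssim Qn^{2-2\beta}$ from~\eqref{EQ:bias2} to control the cross term. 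All extra contributions are again $O(n^{1-2\beta})$ and dominated by the main rate.

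The only mildly delicate point is bookkeeping the two-fold discrepancy between empirical and continuous $L_2$-norm (the trigonometric basis is only orthogonal in the empirical inner product for indices $j<n$, while it is orthonormal in $L_2([0,1])$ for all $j$), and the aliasing of the high-frequency tail $j\ge n$; these are handled by the same type of Cauchy--Schwarz-with-$a_j$ argument already used in Lemma~\ref{TH:bias}, and the threshold $\beta>(1+\sqrt 5)/4$ is exactly what makes the resulting $n^{1-2\beta}$ remainder harmless.
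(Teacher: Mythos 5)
Your proof is correct and follows essentially the same route as the paper's: invoke the sparse oracle inequalities for BIC (Theorem~\ref{TH:BIC_mis}) and Lasso (Theorem~\ref{TH:lasso_fast_mis}), note that the regular design with the trigonometric basis yields \textsf{ORT} (hence \textsf{INC}$(k)$ for every $k$), plug in the truncated Fourier coefficients as the sparse oracle, bound the approximation error via Lemma~\ref{TH:bias} into an in-span piece of order $k^{-2\beta}$ plus an aliasing tail of order $n^{1-2\beta}$, and optimize at $k \asymp (n/\log n)^{1/(2\beta+1)}$. Your handling of the empirical-to-$L_2$ norm conversion — isolating the part of $\varphi_{\hat\theta}-f$ that lives in the span of $\varphi_1,\dots,\varphi_{n-1}$, for which the two norms coincide under \textsf{ORT}, and treating the high-frequency tail separately via \eqref{EQ:bias1}--\eqref{EQ:bias2} — is exactly the step the paper performs (through the intermediate quantity $|\varphi_{\hat\theta}^{n-1}-\varphi_{\theta^*_M}^{n-1}|_2^2$ and Young's inequality); your version via a direct triangle inequality costs only absolute constants and is equally valid. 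The identification of $\beta>(1+\sqrt 5)/4$ as the threshold below which the aliasing term $n^{1-2\beta}$ would dominate also matches the paper.
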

\begin{proof}
For $\hat \theta \in \{\thetabic, \thetalasso\}$, adapting the proofs of Theorem~\ref{TH:BIC_mis} for the BIC estimator and Theorem~\ref{TH:lasso_fast_mis} for the Lasso estimator, for any $\theta \in \R^{n-1}$, with probability $1-\delta$
$$
|\varphi^{n-1}_{\hat \theta} -f|_2^2 \le \frac{1+\alpha}{1-\alpha}|\varphi^{n-1}_\theta-f|_2^2 +  R(|\theta|_0)\,.
$$
where 
$$
R(|\theta|_0):=\frac{C\sigma^2}{\alpha(1-\alpha)}|\theta_0|\log(en/\delta) 
$$
It yields
\begin{align*}
|\varphi^{n-1}_{\hat \theta} -\varphi^{n-1}_\theta|_2^2& \le \frac{2\alpha}{1-\alpha}|\varphi^{n-1}_\theta-f|_2^2 +2(\varphi^{n-1}_{\hat \theta} -\varphi^{n-1}_\theta)^\top (f - \varphi^{n-1}_\theta)+R(|\theta|_0)\\
&\le \Big( \frac{2\alpha}{1-\alpha}+\frac{1}{\alpha}\Big)|\varphi^{n-1}_\theta-f|_2^2+\alpha|\varphi^{n-1}_{\hat \theta} -\varphi^{n-1}_\theta|_2^2 + R(|\theta|_0)\,,
\end{align*}
where we used Young's inequality once again. Let \( M \in \mathbb{N} \) be determined later and choose now $\alpha=1/2$ and $\theta=\theta^*_M$, where $\theta^*_M$ is equal to $\theta^*$ on its first $M$ coordinates and $0$ otherwise so that $\varphi^{n-1}_{\theta^*_M}=\varphi^M_{\theta^*}$. It yields
$$
|\varphi^{n-1}_{\hat \theta} -\varphi^{n-1}_{\theta^*_M}|_2^2\lesssim |\varphi^{n-1}_{\theta^*_M}-f|_2^2+ R(M)\lesssim|\varphi^{n-1}_{\theta^*_M}-\varphi^{n-1}_{\theta^*}|_2^2+|\varphi^{n-1}_{\theta^*}-f|_2^2+ R(M)
$$
Next, it follows from~\eqref{EQ:bias2} that $|\varphi^{n-1}_{\theta^*}-f|_2^2 \lesssim Qn^{2-2\beta}$. Together with   Lemma~\ref{LEM:regdesign}, it yields
$$
\|\varphi^{n-1}_{\hat \theta} -\varphi^{n-1}_{\theta^*_M}\|_{L_2([0,1])}^2\lesssim  \|\varphi^{n-1}_{\theta^*} -\varphi^{n-1}_{\theta^*_M}\|_{L_2([0,1])}^2+Qn^{1-2\beta}+ \frac{R(M)}{n}\,.
$$
Moreover, using~\eqref{EQ:bias1}, we find that
$$
\|\varphi^{n-1}_{\hat \theta} -f\|_{L_2([0,1])}^2\lesssim  M^{-2\beta}+Qn^{1-2\beta}+ \frac{M}{n}\sigma^2\log(en/\delta) \,.
$$
To conclude the proof, choose $M=\lceil (n/\log n)^{\frac{1}{2\beta+1}}\rceil$ and observe that the choice of $\beta$ ensures that $n^{1-2\beta} \lesssim M^{-2\beta}$.
\end{proof}

While there is sometimes a (logarithmic) price to pay for adaptation, it turns out that the extra logarithmic factor can be removed by a clever use of blocks (see \cite[Chapter~3]{Tsy09}). The reason why we get this extra logarithmic factor here is because we use a hammer that's too big. Indeed, BIC and Lasso allow for ``holes" in the Fourier decomposition, so we only get part of their potential benefits.

\newpage
\section{Problem Set}

\begin{exercise}\label{EXO:LS_mis}
Show that the least-squares estimator $\thetals$ defined in~\eqref{EQ:defthetaLSmis} satisfies the following \emph{exact} oracle inequality:
$$
\E\MSE(\varphi_{\thetals})\le \inf_{\theta \in \R^M}\MSE(\varphi_\theta) +C\sigma^2 {\frac{M}{n}}
$$
for some constant $C$ to be specified.
\end{exercise}

\begin{exercise}\label{EXO:lasso_mis_slow}
Assume that  $\eps\sim \sg_n(\sigma^2)$ and  the vectors $\varphi_j$ are normalized in such a way that $\max_j|\varphi_j|_2\le \sqrt{n}$. Show that there exists a choice of $\tau$ such that  the Lasso estimator $\thetalasso$ with regularization parameter $2\tau$ satisfies the following \emph{exact} oracle inequality:

$$
\MSE(\varphi_{\thetalasso})\le \inf_{\theta \in \R^M}\Big\{ \MSE(\varphi_\theta) +C\sigma|\theta|_1 \sqrt{\frac{\log M}{n}}\Big\}
$$
with probability at least $1-M^{-c}$ for some positive constants $C,c$.
\end{exercise}

\begin{exercise}\label{EXO:maurey}
Let $\{\varphi_1, \dots, \varphi_M\}$ be a dictionary normalized in such a way that $\max_j|\varphi_j|_2\le D \sqrt{n}$. Show that for any integer $k$ such that $1\le k \le M$, we have
$$
\min_{\substack{\theta \in \R^M\\ |\theta|_0\le 2k}}\MSE(\varphi_\theta) \le \min_{\substack{\theta \in \R^M\\ |\theta|_{w\ell_q}\le 1}} \MSE(\varphi_\theta) + C_qD^2 \frac{\big(M^\frac{1}{\bar q}-k^\frac{1}{\bar q}\big)^2}{k}\,,
$$
where $|\theta|_{w\ell_q}$ denotes the weak $\ell_q$ norm and $\bar q$ is such that $\frac1q+\frac1{\bar q}=1$, for \( q > 1 \).
\end{exercise}

\begin{exercise}\label{EXO:basis}
Show that the trigonometric basis and the Haar system indeed form an orthonormal system of $L_2([0,1])$.
\end{exercise}

\begin{exercise}\label{EXO:INCtrigo}
	Consider the $n \times d$ random matrix $\Phi=\{\varphi_j(X_i)\}_{\substack{1\le i \le n\\1\le j \le d}}$ where $X_1, \ldots, X_n$ are i.i.d uniform random variables on the interval $[0,1]$ and \( \phi_j \) is the trigonometric basis as defined in Example \ref{ex:trigbasis}. Show that $\Phi$ satisfies \textsf{INC$(k)$} with probability at least $.9$ as long as $n \ge Ck^2  \log (d)$ for some large enough constant $C>0$. 
\end{exercise}

\begin{exercise}\label{PROB:beta12}
If $f \in \Theta(\beta, Q)$ for $\beta>1/2$ and $Q>0$, then $f$ is continuous.
\end{exercise}

\chapter{Minimax Lower Bounds}
\label{chap:minimax}
\newcommand{\KL}{\mathsf{KL}}
\newcommand{\TV}{\mathsf{TV}}

In the previous chapters, we have proved several upper bounds and the goal of this chapter is to assess their optimality. Specifically, our goal is to answer the following questions: 
\begin{enumerate}
\item Can our analysis be improved? In other words: do the estimators that we have studied actually satisfy better bounds?
\item Can any estimator improve upon these bounds?
\end{enumerate}
Both questions ask about some form of \emph{optimality}. The first one is about optimality of an estimator, whereas the second one is about optimality of a bound.

The difficulty of these questions varies depending on whether we are looking for a positive or a negative answer. Indeed, a positive answer to these questions simply consists in finding a better proof for the estimator we have studied (question 1.) or simply finding a better estimator, together with a proof that it performs better (question 2.). A negative answer is much more arduous. For example, in question 2., it is a statement about \emph{all estimators}. How can this be done? The answer lies in information theory (see~\cite{CovTho06} for a nice introduction).

In this chapter, we will see how to give a negative answer to question 2. It will imply a negative answer to question 1.

\section{Optimality in a minimax sense}

Consider the  Gaussian Sequence Model (GSM) where we observe $\bY=(Y_1, \ldots, Y_d)^\top$, defined by
\begin{equation}
\label{EQ:GSMminimax}
Y_i=\theta^*_i+ \varepsilon_i\,, \qquad i=1, \ldots, d\,,
\end{equation}
where $\varepsilon=(\eps_1, \ldots, \eps_d)^\top \sim \cN_d(0,\frac{\sigma^2}{n}I_d)$, $\theta^*=(\theta^*_1, \ldots, \theta^*_d)^\top \in \Theta $ is the parameter of interest and $\Theta \subset \R^d$ is a given set of parameters. We will need a more precise notation for probabilities and expectations throughout this chapter. Denote by $\p_{\theta^*}$ and $\E_{\theta^*}$ the probability measure and corresponding expectation that are associated to the distribution of $\bY$ from the GSM~\eqref{EQ:GSMminimax}.

Recall that GSM is a special case of the linear regression model when the design matrix satisfies the ORT condition. In this case, we have proved several performance guarantees (\emph{upper bounds}) for various choices of $\Theta$ that can be expressed either in the form
\begin{equation}
\label{EQ:minimaxUB_E}
\E\big[|\hat \theta_n -\theta^*|_2^2\big] \le C\phi(\Theta)
\end{equation}
or the form
\begin{equation}
\label{EQ:minimaxUB_P}
|\hat \theta_n -\theta^*|_2^2 \le C\phi(\Theta)\,, \quad \text{with prob.} \ 1-d^{-2}
\end{equation}
For some constant $C$. The rates $\phi(\Theta)$ for different choices of $\Theta$ that we have obtained are gathered in Table~\ref{TAB:minimaxUB} together with the estimator (and the corresponding result from Chapter~\ref{chap:GSM}) that was employed to obtain this rate.
\renewcommand{\arraystretch}{2.5} 
\begin{table}[t]
\begin{center}
\begin{tabular}{c|c|c|c}
$\Theta$ & $\phi(\Theta)$ & Estimator & Result
\\
\hline
\hline
$\R^d$ & $\DS \frac{\sigma^2 d}{n}$ & $\thetals$ & Theorem~\ref{TH:lsOI}\\
$\cB_1$ & $\DS\sigma \sqrt{\frac{ \log d}{n}}$ & $\thetals_{\cB_1}$ & Theorem~\ref{TH:ell1const}\\
$\cB_0(k)$ & $\DS\frac{ \sigma^2 k}{n}\log (ed/k)$ & $\thetals_{\cB_0(k)}$ & Corollaries~\ref{COR:bss1}-\ref{COR:bss2} \\
\end{tabular}
\end{center}
\caption{Rate $\phi(\Theta)$ obtained for different choices of $\Theta$.}\label{TAB:minimaxUB}
\end{table}
\renewcommand{\arraystretch}{1} 
Can any of these results be improved? In other words, does there exists another estimator $\tilde \theta$ such that $\sup_{\theta^* \in \Theta}\E|\tilde \theta -\theta^*|_2^2\ll\phi(\Theta)$?

A first step in this direction is the Cram\'er-Rao lower bound \cite{Sha03} that allows us to prove lower bounds in terms of the Fisher information. Nevertheless, this notion of optimality is too stringent and  often leads to nonexistence of optimal estimators. Rather, we prefer here the notion of \emph{minimax optimality} that characterizes how fast $\theta^*$ can be estimated \emph{uniformly} over $\Theta$.

\begin{defn}
We say that an estimator $\hat \theta_n$ is  \emph{minimax optimal over $\Theta$} if it satisfies \eqref{EQ:minimaxUB_E} and there exists $C'>0$ such that
\begin{equation}
\label{EQ:minimaxLB1}
\inf_{\hat \theta}\sup_{\theta \in \Theta}\E_\theta|\hat \theta- \theta|_2^2 \ge C'\phi(\Theta)
\end{equation}
where the infimum is taker over all estimators (i.e., measurable functions of $\bY$). Moreover,  $\phi(\Theta)$ is called \emph{minimax rate of estimation over $\Theta$}.
\end{defn}
Note that minimax rates of convergence $\phi$ are defined up to multiplicative constants. We may then choose this constant such that the minimax rate has a simple form such as $\sigma^2d/n$ as opposed to $7\sigma^2d/n$ for example.

This definition can be adapted to rates that hold with high probability. As we saw in Chapter~\ref{chap:GSM} (Cf. Table~\ref{TAB:minimaxUB}), the upper bounds in expectation and those with high probability are of the same order of magnitude. It is also the case for lower bounds. Indeed, observe that it follows from the Markov inequality that for any $A>0$,
\begin{equation}
\label{EQ:markov_minimax}
\E_\theta\big[\phi^{-1}(\Theta)|\hat \theta- \theta|_2^2\big] \ge A \p_\theta\big[\phi^{-1}(\Theta)|\hat \theta- \theta|_2^2>A\big] 
\end{equation}
Therefore,~\eqref{EQ:minimaxLB1} follows if we prove that
$$
\inf_{\hat \theta}\sup_{\theta \in \Theta}\p_\theta\big[|\hat \theta- \theta|_2^2>A\phi(\Theta)\big]\ge C
$$
for some positive constants $A$ and $C"$. The above inequality also implies a lower bound with high probability. We can therefore employ the following alternate  definition for minimax optimality.
\begin{defn}
We say that an estimator $\hat \theta$ is  \emph{minimax optimal over $\Theta$} if it satisfies either \eqref{EQ:minimaxUB_E} or \eqref{EQ:minimaxUB_P} and there exists $C'>0$ such that
\begin{equation}
\label{EQ:minimaxLB}
\inf_{\hat \theta}\sup_{\theta \in \Theta}\p_\theta\big[|\hat \theta-\theta|_2^2>\phi(\Theta)\big] \ge C'
\end{equation}
where the infimum is taken over all estimators (i.e., measurable functions of $\bY$). Moreover,  $\phi(\Theta)$ is called \emph{minimax rate of estimation over $\Theta$}.
\end{defn}

\section{Reduction to finite hypothesis testing}
\label{SEC:reduc}
Minimax lower bounds rely on information theory and follow from a simple principle: if the number of observations is too small, it may be hard to distinguish between two probability distributions that are close to each other. For example, given $n$ i.i.d. observations, it is impossible to reliably decide whether they are drawn from $\cN(0,1)$ or $\cN(\frac1n,1)$. This simple argument can be made precise using the formalism of \emph{statistical hypothesis testing}. To do so, we reduce our estimation problem to a testing problem. The reduction consists of two steps.
\begin{enumerate}
\item \textbf{Reduction to a finite number of parameters.} In this step the goal is to find the largest possible number of parameters $\theta_1, \ldots, \theta_M \in \Theta$ under the constraint that
\begin{equation}
\label{EQ:dist_constraint}
|\theta_j -\theta_k|_2^2 \ge 4  \phi(\Theta)\,.
\end{equation}
This problem boils down to a \emph{packing}  of the set $\Theta$.

Then we can use the following trivial observations:
$$
\inf_{\hat \theta}\sup_{\theta \in \Theta}\p_\theta\big[|\hat \theta- \theta|_2^2>\phi(\Theta)\big] \ge \inf_{\hat \theta}\max_{1\le j \le M}\p_{\theta_j}\big[|\hat \theta- \theta_j|_2^2>\phi(\Theta)\big]\,.
$$

\item \textbf{Reduction to a hypothesis testing problem.} In this second step, the necessity of the constraint~\eqref{EQ:dist_constraint} becomes apparent.

For any estimator $\hat \theta$, define the minimum distance test $\psi(\hat \theta)$ that is associated to it by
$$
\psi(\hat \theta)=\argmin_{1\le j \le M} |\hat \theta -\theta_j|_2\,,
$$
with ties broken arbitrarily.

Next observe that if, for some $j=1, \ldots, M$, $\psi(\hat \theta)\neq j$, then there exists $k \neq j$ such that $|\hat \theta -\theta_k|_2 \le |\hat \theta -\theta_j|_2$. Together with the reverse triangle inequality it yields
$$
|\hat \theta - \theta_j|_2 \ge |\theta_j -\theta_k|_2- |\hat \theta - \theta_k|_2  \ge |\theta_j -\theta_k|_2- |\hat \theta - \theta_j|_2 
$$
so that
$$
|\hat \theta - \theta_j|_2 \ge \frac12 |\theta_j -\theta_k|_2
$$
Together with constraint~\eqref{EQ:dist_constraint}, it yields
$$
|\hat \theta - \theta_j|_2^2 \ge  \phi(\Theta)
$$
As a result,
\begin{align*}
\inf_{\hat \theta}\max_{1\le j \le M}\p_{\theta_j}\big[|\hat \theta- \theta_j|_2^2>\phi(\Theta)\big]&\ge \inf_{\hat \theta}\max_{1\le j \le M}\p_{\theta_j}\big[\psi(\hat \theta)\neq j\big]\\
&\ge \inf_{\psi}\max_{1\le j \le M}\p_{\theta_j}\big[\psi\neq j\big]
\end{align*}
where the infimum is taken over all tests $\psi$ based on $\bY$ and that take values in $\{1, \ldots, M\}$.
\end{enumerate}

\noindent \textsc{Conclusion:} it is sufficient for proving lower bounds to find $\theta_1, \ldots, \theta_M \in \Theta$ such that $|\theta_j -\theta_k|_2^2 \ge 4  \phi(\Theta)$ and
$$
\inf_{\psi}\max_{1\le j \le M}\p_{\theta_j}\big[\psi\neq j\big] \ge C'\,.
$$
The above quantity is called \emph{minimax probability of error}. In the next sections, we show how it can be bounded from below using arguments from information theory. For the purpose of illustration, we begin with the simple case where $M=2$ in the next section.

\section{Lower bounds based on two hypotheses}

\subsection{The Neyman-Pearson Lemma and the total variation distance}

Consider two probability measures $\p_0$ and $\p_1$ and observations $X$ drawn from either $\p_0$ or $\p_1$. We want to know which distribution $X$ comes from. It corresponds to the following statistical hypothesis problem:
\begin{eqnarray*}
H_0&:& Z\sim \p_0\\
H_1&:& Z \sim \p_1
\end{eqnarray*}
A test  $\psi=\psi(Z) \in \{0,1\}$  indicates which hypothesis should be true. Any test $\psi$ can make two types of errors. It can commit either  an error of type I ($\psi=1$ whereas $Z \sim \p_0$) or an error of type II ($\psi=0$ whereas $Z \sim \p_1$). Of course, the test may also be correct. The following fundamental result called the \emph{Neyman Pearson Lemma} indicates that any test $\psi$ is bound to commit one of these two types of error with positive probability unless $\p_0$ and $\p_1$ have essentially disjoint support.

Let $\nu$ be a sigma finite measure satisfying $\p_0 \ll \nu$ and $\p_1\ll \nu$. For example, we can take $\nu = \p_0+\p_1$. It follows from the Radon-Nikodym theorem \cite{Bil95} that both $\p_0$ and $\p_1$ admit probability densities with respect to $\nu$. We denote them by $p_0$ and $p_1$ respectively. For any function $f$, we write for simplicity
$$
\int f = \int f(x)\nu (\ud x)
$$
\begin{lem}[Neyman-Pearson]
\label{LEM:Neyman}
Let  $\p_0$ and $\p_1$ be two probability measures. Then for any test $\psi$, it holds
$$
\p_0(\psi=1)+\p_1(\psi=0) \ge \int \min (p_0, p_1)
$$
Moreover, equality holds for the \emph{Likelihood Ratio test} $\psi^\star=\1(p_1\ge p_0)$.
\end{lem}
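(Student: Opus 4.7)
The plan is to write both error probabilities as integrals against the dominating measure $\nu$ and then use a pointwise bound on the integrand. Concretely, I would start from
$$
\p_0(\psi=1)+\p_1(\psi=0)=\int \1(\psi=1)\,p_0+\int \1(\psi=0)\,p_1 = \int \bigl[\1(\psi=1)\,p_0+\1(\psi=0)\,p_1\bigr],
$$
using that $\psi$ is $\{0,1\}$-valued so the two indicators partition the sample space. At each point $x$, exactly one of $\1(\psi=1)(x)$ and $\1(\psi=0)(x)$ is $1$, hence the integrand is either $p_0(x)$ or $p_1(x)$, and in either case at least $\min(p_0(x),p_1(x))$. Integrating this pointwise inequality against $\nu$ yields the lower bound $\int \min(p_0,p_1)$.

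For the equality claim, I would plug in $\psi^\star=\1(p_1\ge p_0)$ and simply evaluate: on the set $\{p_1\ge p_0\}$ we have $\psi^\star=1$, so the integrand equals $p_0=\min(p_0,p_1)$; on the complement $\{p_1<p_0\}$ we have $\psi^\star=0$, so the integrand equals $p_1=\min(p_0,p_1)$. Adding these gives exactly $\int \min(p_0,p_1)$, so $\psi^\star$ attains the lower bound.

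There is no real obstacle here; the only points of care are (i) choosing the dominating measure $\nu$ at the start, which the statement allows (e.g.\ $\nu=\p_0+\p_1$), and (ii) noting that the argument does not require $\psi$ to be deterministic or the infimum to be attained, only that the integrand be bounded below pointwise by $\min(p_0,p_1)$. This lemma is the scalar workhorse behind the two-hypothesis lower bounds of the next subsection, where $\int \min(p_0,p_1)=1-\TV(\p_0,\p_1)$ will be recognized as (twice the complement of) the total variation distance.
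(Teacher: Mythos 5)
Your proof is correct, and the inequality is established by a genuinely different route than the paper's. The paper first evaluates the error sum at $\psi^\star$ (same calculation you do for the equality case), then rewrites the general error sum as $1+\int_R(p_0-p_1)$ with $R=\{\psi=1\}$, splits this integral over $R\cap R^\star$ and $R\cap(R^\star)^c$ to express it as $1+\int|p_0-p_1|\bigl[\1(R\cap(R^\star)^c)-\1(R\cap R^\star)\bigr]$, and observes the last expression is minimized at $R=R^\star$. You instead bound the integrand $\1(\psi=1)\,p_0+\1(\psi=0)\,p_1 \ge \min(p_0,p_1)$ pointwise and integrate — shorter, more local, and as you note it extends verbatim to randomized tests $\psi\in[0,1]$ since a convex combination of $p_0(x)$ and $p_1(x)$ is still at least $\min(p_0(x),p_1(x))$. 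The paper's rewriting in terms of the rejection-region functional is not wasted effort, however: the intermediate identity $\p_0(\psi=1)+\p_1(\psi=0)=1+\int_R(p_0-p_1)$ is exactly what feeds into Definition--Proposition~\ref{PROP:TV} right afterward, where the equivalence of the several expressions for the total variation distance (in particular $\sup_R|\p_0(R)-\p_1(R)|$ versus $1-\int\min(p_0,p_1)$) is derived. So the two approaches buy different things: yours is the cleanest proof of the lemma in isolation; the paper's choice of decomposition pre-assembles the pieces for the TV-distance identities that immediately follow.
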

\begin{proof}
Observe first that
\begin{align*}
\p_0(\psi^\star=1)+\p_1(\psi^\star=0) &=\int_{\psi^*=1}p_0 + \int_{\psi^*=0}p_1\\
&=\int_{p_1\ge p_0}p_0 + \int_{p_1< p_0}p_1\\
&=\int_{p_1\ge p_0}\min(p_0,p_1)+ \int_{p_1< p_0}\min(p_0,p_1)\\
&=\int\min(p_0,p_1)\,.
\end{align*}
Next for any test $\psi$, define its rejection region $R=\{\psi=1\}$. Let $R^\star=\{p_1\ge p_0\}$ denote the rejection region of the likelihood ratio test $\psi^\star$. It holds
\begin{align*}
\p_0(\psi=1)+\p_1(\psi=0) &=1+\p_0(R)-\p_1(R) \\
&=1+\int_Rp_0-p_1\\
&=1+\int_{R\cap R^\star}p_0-p_1 +\int_{R\cap (R^\star)^c}p_0-p_1\\
&=1-\int_{R\cap R^\star}|p_0-p_1|+\int_{R\cap (R^\star)^c}|p_0-p_1|\\
&=1+\int|p_0-p_1|\big[\1(R\cap (R^\star)^c)-\1(R\cap R^\star)\big]
\end{align*}
The above quantity is clearly minimized for $R=R^\star$.
\end{proof}
The lower bound in the Neyman-Pearson lemma is related to a well-known quantity: the total variation distance.

\begin{propdef}
\label{PROP:TV}
The \emph{total variation distance} between two probability measures $\p_0$ and $\p_1$ on a measurable space $(\cX, \cA)$ is defined by
\begin{align*}
\TV(\p_0,\p_1)&=\sup_{R \in \cA}|\p_0(R)-\p_1(R)|&(i)\\
&=\sup_{R \in \cA}\Big|\int_Rp_0-p_1\Big|&(ii)\\
&=\frac{1}{2}\int|p_0-p_1|&(iii)\\
&=1-\int\min(p_0, p_1)&(iv)\\
&=1-\inf_{\psi}\big[\p_0(\psi=1)+\p_1(\psi=0)\big]&(v)
\end{align*}
where the infimum above is taken over all tests.
\end{propdef}
\begin{proof}
Clearly $(i)=(ii)$ and the Neyman-Pearson Lemma gives $(iv)=(v)$. Moreover, by identifying a test $\psi$ to its rejection region, it is not hard to see that $(i)=(v)$. Therefore it remains only to show that $(iii)$ is equal to any of the other expressions. Hereafter, we show that $(iii)=(iv)$. To that end, observe that
\begin{align*}
\int|p_0-p_1|&=\int_{p_1\ge p_0}p_1 -p_0+\int_{p_1< p_0}p_0 -p_1\\
&=\int_{p_1\ge p_0}p_1+\int_{p_1< p_0}p_0-\int\min(p_0,p_1)\\
&=1-\int_{p_1< p_0}p_1+1-\int_{p_1\ge  p_0}p_0-\int\min(p_0,p_1)\\
&=2-2\int\min(p_0,p_1)
\end{align*}
\end{proof}
In view of the Neyman-Pearson lemma, it is clear that if we want to prove large lower bounds, we need to find probability distributions that are close in total variation. Yet, this conflicts with constraint~\eqref{EQ:dist_constraint} and a tradeoff needs to be achieved. To that end, in the Gaussian sequence model, we need to be able to compute the total variation distance between $\cN(\theta_0, \frac{\sigma^2}{n}I_d)$ and $\cN(\theta_1, \frac{\sigma^2}{n}I_d)$. None of the expression in Definition-Proposition~\ref{PROP:TV} gives an easy way to do so. The Kullback-Leibler divergence is much more convenient.

\subsection{The Kullback-Leibler divergence}

\begin{defn}
The Kullback-Leibler divergence  between   probability measures $\p_1$ and $\p_0$  is given by
$$
\KL(\p_1, \p_0)=\left\{
\begin{array}{ll}
\DS \int \log\Big(\frac{\ud \p_1}{\ud \p_0}\Big)\ud \p_1\,,& \text{if}\  \p_1 \ll \p_0\\
\infty\,, & \text{otherwise}\,.
\end{array}\right.
$$
\end{defn}
It can be shown \cite{Tsy09} that the integral is always well defined when  $\p_1 \ll \p_0$ (though it can be equal to $\infty$ even in this case). Unlike the total variation distance, the Kullback-Leibler divergence is not a distance. Actually, it is not even symmetric. Nevertheless, it enjoys properties that are very useful for our purposes.
\begin{prop}
\label{PROP:KL}
Let $\p$ and $\q$ be two probability measures. Then
\begin{enumerate}
\item $\KL(\p, \q)\ge 0$.
\item The function $(\p, \q) \mapsto \KL(\p,\q)$ is convex.
\item If $\p$ and $\q$ are product measures, i.e., 
$$
\p=\bigotimes_{i=1}^n \p_i\quad \text{and} \quad \q=\bigotimes_{i=1}^n \q_i
$$
then
$$
\KL(\p,\q)=\sum_{i=1}^n \KL(\p_i,\q_i)\,.
$$
\end{enumerate}
\end{prop}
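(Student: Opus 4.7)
The plan is to prove the three claims in order, relying on Jensen's inequality for the first two and a direct factorization argument for the third. Throughout I will assume, without loss of generality for (1) and (2), that $\p \ll \q$ (else the right-hand sides are $+\infty$ and the inequalities are trivial), and I will write $p,q$ for densities with respect to a common dominating sigma-finite measure $\nu$.

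For (1), I would apply Jensen's inequality to the strictly concave function $\log$. Specifically, writing
\[
-\KL(\p,\q) = \int \log\!\Big(\frac{q}{p}\Big)\,p\,\ud\nu = \E_{\p}\!\left[\log\!\Big(\frac{q(X)}{p(X)}\Big)\right]
\le \log \E_{\p}\!\left[\frac{q(X)}{p(X)}\right] = \log \int q\,\ud\nu = \log 1 = 0,
\]
which yields $\KL(\p,\q)\ge 0$. (There is nothing to verify for the case $\p\not\ll\q$.)

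For (2), the cleanest route is to show that the map $\Phi:(x,y)\mapsto x\log(x/y)$, extended by continuity to $\Phi(0,y)=0$ and $\Phi(x,0)=+\infty$ for $x>0$, is jointly convex on $[0,\infty)^2$; then convexity of $\KL$ follows by integrating the pointwise inequality
\[
\Phi\big(\lambda p_1 + (1-\lambda)p_2,\, \lambda q_1 + (1-\lambda)q_2\big) \le \lambda \Phi(p_1,q_1) + (1-\lambda)\Phi(p_2,q_2)
\]
against $\nu$, where $(p_i,q_i)$ are densities of $(\p_i,\q_i)$ with respect to $\nu = \p_1+\p_2+\q_1+\q_2$. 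Joint convexity of $\Phi$ can be verified either by computing the Hessian on $(0,\infty)^2$ (the determinant is $0$ and the diagonal entries are nonnegative, so $\Phi$ is convex) or, more elegantly, by the log-sum inequality: for nonnegative $a_i,b_i$,
\[
\Big(\sum_i a_i\Big)\log\frac{\sum_i a_i}{\sum_i b_i} \le \sum_i a_i \log\frac{a_i}{b_i},
\]
which, applied with two terms $(\lambda p_1,\lambda q_1)$ and $((1-\lambda)p_2,(1-\lambda)q_2)$, is precisely the joint convexity of $\Phi$. The main thing to be careful about here is handling the case where some densities vanish; the conventions above make this routine.

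For (3), I would simply compute. If $\p=\bigotimes_{i=1}^n\p_i$ and $\q=\bigotimes_{i=1}^n\q_i$, then (assuming each $\p_i\ll\q_i$, else both sides are $+\infty$ by the standard convention) the product densities satisfy $\ud\p/\ud\q = \prod_{i=1}^n \ud\p_i/\ud\q_i$, so
\[
\KL(\p,\q) = \int \sum_{i=1}^n \log\!\Big(\frac{\ud\p_i}{\ud\q_i}\Big)\,\ud\p = \sum_{i=1}^n \int \log\!\Big(\frac{\ud\p_i}{\ud\q_i}\Big)\,\ud\p_i = \sum_{i=1}^n \KL(\p_i,\q_i),
\]
where the second equality uses Fubini's theorem together with the fact that each integrand depends only on the $i$-th coordinate and $\p$ has $\p_i$ as its $i$-th marginal. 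No step here is a serious obstacle; the only subtlety across the three parts is the careful treatment of absolute continuity and the convention $0\log 0 = 0$ in the log-sum inequality used for (2), which is the one step worth writing out explicitly.
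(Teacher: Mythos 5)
Your proposal is correct and, for all three parts, follows essentially the same route as the paper: Jensen's inequality for the concave logarithm in (1), joint convexity of $(x,y)\mapsto x\log(x/y)$ via the Hessian (zero determinant, nonnegative diagonal) in (2), and termwise factorization of the Radon--Nikodym derivative plus Fubini in (3). The one thing you add beyond the paper is the alternative log-sum-inequality route for (2), which is cleaner in that it handles the boundary (vanishing-density) cases directly rather than only proving convexity on the open orthant $(0,\infty)^2$ and then invoking continuity; you also flag the absolute-continuity caveats that the paper leaves implicit. These are improvements in polish, not a different argument.
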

\begin{proof}
If $\p$ is not absolutely continuous then the result is trivial. Next, assume that $\p \ll \q$ and let $X \sim \p$.

1. Observe that by Jensen's inequality, 
$$
\KL(\p,\q)=-\E\log\Big(\frac{\ud \q}{\ud \p}(X)\Big) \ge -\log\E\Big(\frac{\ud \q}{\ud \p}(X)\Big)=-\log(1)=0\,.
$$

2. Consider the function $f:(x,y) \mapsto x\log(x/y)$ and compute its Hessian:
$$
\frac{\partial^2f}{\partial x^2}=\frac{1}{x}\,, \qquad \frac{\partial^2f}{\partial y^2}=\frac{x}{y^2}\,, \qquad \frac{\partial^2f}{\partial x\partial y}=-\frac{1}{y}\,, 
$$
We check that the matrix $\DS H= \left(\begin{array}{cc}\frac{1}{x} & -\frac{1}{y} \\-\frac{1}{y} & \frac{x}{y^2}\end{array}\right)$ is positive semidefinite for all $x,y > 0$. To that end, compute its determinant and trace:
$$
\det(H)=\frac{1}{y^2}-\frac{1}{y^2}=0\,, \mathsf{Tr}(H)=\frac{1}{x}\frac{x}{y^2}>0\,.
$$
Therefore, $H$ has one null eigenvalue and one positive one and must therefore be positive semidefinite so that the function $f$ is convex on $(0,\infty)^2$. Since the KL divergence is a sum of such functions, it is also convex on the space of positive measures.

3. Note that if $X=(X_1, \ldots, X_n)$,
\begin{align*}
\KL(\p,\q)&=  \E\log\Big(\frac{\ud \p}{\ud \q}(X)\Big)\\
&=\sum_{i=1}^n \int \log\Big(\frac{\ud \p_i}{\ud \q_i}(X_i)\Big)\ud \p_1(X_1)\cdots \ud \p_n(X_n)\\
&=\sum_{i=1}^n \int \log\Big(\frac{\ud \p_i}{\ud \q_i}(X_i)\Big)\ud \p_i(X_i)\\
&=\sum_{i=1}^n \KL(\p_i,\q_i)
\end{align*}
\end{proof}
Point 2. in Proposition~\ref{PROP:KL} is particularly useful in statistics where observations typically consist of $n$ independent random variables.\begin{example}
\label{ex:KL_Gauss}
For any $\theta \in \R^d$, let $P_\theta$ denote the distribution of $\bY \sim \cN(\theta, \sigma^2I_d)$. Then
$$
\KL(P_\theta, P_{\theta'})=\sum_{i=1}^d \frac{(\theta_i -\theta'_i)^2}{2\sigma^2}=\frac{|\theta-\theta'|_2^2}{2\sigma^2}\,.
$$
The proof is left as an exercise (see Problem~\ref{EXO:KL_Gauss}).
\end{example}

The Kullback-Leibler divergence is easier to manipulate than the total variation distance but only the latter is related to the minimax probability of error. Fortunately, these two quantities can be compared using Pinsker's inequality. We prove here a slightly weaker version of Pinsker's inequality that will be sufficient for our purpose. For a  stronger statement, see \cite{Tsy09}, Lemma~2.5. 

\begin{lem}[Pinsker's inequality.]
\label{LEM:Pinsker}
Let $\p$ and $\q$ be two probability measures such that $\p \ll \q$. Then
$$
\TV(\p,\q) \le \sqrt{\KL(\p,\q)}\,.
$$
\end{lem}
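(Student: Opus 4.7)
The plan is to prove the weaker Pinsker inequality via the Hellinger distance as an intermediate quantity. Writing $p$ and $q$ for the densities of $\p$ and $\q$ with respect to a common dominating measure $\nu$ (e.g.\ $\nu = \p + \q$), I introduce the squared Hellinger distance $H^2(\p,\q) := \int (\sqrt{p}-\sqrt{q})^2\,\ud\nu$. The strategy is to show the two bounds $\TV(\p,\q) \le H(\p,\q)$ and $H^2(\p,\q) \le \KL(\p,\q)$, and chain them.

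For the first bound, I would write $|p-q| = |\sqrt{p}-\sqrt{q}|\,(\sqrt{p}+\sqrt{q})$ and apply Cauchy--Schwarz to $\int |p-q|\,\ud\nu$ to obtain
\[
\Big(\int |p-q|\,\ud\nu\Big)^2 \le \int (\sqrt{p}-\sqrt{q})^2 \,\ud\nu \cdot \int (\sqrt{p}+\sqrt{q})^2 \,\ud\nu.
\]
Expanding the second factor gives $2 + 2\int \sqrt{pq}\,\ud\nu = 4 - H^2 \le 4$, and using $\TV = \tfrac12\int|p-q|\,\ud\nu$ from Definition-Proposition~\ref{PROP:TV}(iii) yields $\TV(\p,\q) \le H(\p,\q)$.

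For the second bound, I use the elementary inequality $-\log x \ge 1-x$ for $x>0$ applied to $x = \sqrt{q/p}$ (under $\p \ll \q$ so the ratio is defined $\p$-a.s.). This gives
\[
\tfrac12 \log\frac{p}{q} \ge 1 - \sqrt{q/p}.
\]
Taking expectation with respect to $\p$ on both sides, the left side becomes $\tfrac12 \KL(\p,\q)$, while the right side is $1 - \int \sqrt{pq}\,\ud\nu = \tfrac12 H^2(\p,\q)$. This yields $H^2(\p,\q) \le \KL(\p,\q)$.

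Combining the two bounds gives $\TV(\p,\q)^2 \le H^2(\p,\q) \le \KL(\p,\q)$, which is the claim. The main (mild) obstacle is not a conceptual one but ensuring the Cauchy--Schwarz step and the $-\log x \ge 1-x$ trick are presented cleanly; the sharper constant $\sqrt{1/2}$ would require a more delicate argument (as in \cite{Tsy09}), but is not needed here.
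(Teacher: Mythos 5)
Your proof is correct. It is essentially the same argument as the paper's, just reorganized around the Hellinger distance as a named intermediary. The paper proves $\KL(\p,\q) \ge 2 - 2\int\sqrt{pq}$ (which is exactly your $H^2 \le \KL$, obtained from the same pointwise inequality $\log(1+y)\le y$) and then bounds $\big(\int\sqrt{pq}\big)^2 \le 1-\TV^2$ via Cauchy--Schwarz on $\sqrt{\min(p,q)\max(p,q)}$; your Cauchy--Schwarz on $|\sqrt p-\sqrt q|(\sqrt p+\sqrt q)$ is literally the same inequality in a different factorization, and after the relaxation $4-H^2\le 4$ you recover $\TV\le H$. So both proofs pass through the Hellinger affinity $\int\sqrt{pq}$ and the same two ingredients; yours is the cleaner textbook packaging, the paper's retains a marginally sharper intermediate bound before the final algebraic simplification.
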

\begin{proof}
Note that
\begin{align*}
\KL(\p,\q)&=\int_{pq>0}p\log\Big(\frac{p}{q}\Big)\\
&=-2\int_{pq>0}p\log\Big(\sqrt{\frac{q}{p}}\Big)\\
&=-2\int_{pq>0}p\log\Big(\Big[\sqrt{\frac{q}{p}}-1\Big]+1\Big)\\
&\ge -2\int_{pq>0}p\Big[\sqrt{\frac{q}{p}}-1\Big]\qquad \text{ (by Jensen)}\\
&=2-2\int\sqrt{pq}\\
\end{align*}
Next, note that
\begin{align*}
\Big(\int\sqrt{pq}\Big)^2&=\Big(\int\sqrt{\max(p,q)\min(p,q)}\Big)^2\\
&\le \int\max(p,q)\int\min(p,q) \qquad \text{(by Cauchy-Schwarz)}\\
&=\big[2-\int\min(p,q)\big]\int\min(p,q)\\
&=\big(1+ \TV(\p,\q)\big)\big(1- \TV(\p,\q)\big)\\
&=1- \TV(\p,\q)^2
\end{align*}
The two displays yield
$$
\KL(\p,\q)\ge 2-2\sqrt{1- \TV(\p,\q)^2}\ge\TV(\p,\q)^2\,,
$$
where we used the fact that $0\le \TV(\p,\q) \le 1$ and $\sqrt{1-x} \le 1-x/2$ for $x \in [0,1]$. 
\end{proof}
Pinsker's inequality yields the following theorem for the GSM.
\begin{thm}
\label{TH:LB2hyp}
Assume that $\Theta$ contains two hypotheses $\theta_0$ and $\theta_1$ such that $|\theta_0-\theta_1|_2^2 = 8\alpha^2\sigma^2/n$ for some $\alpha \in (0,1/2)$. Then
$$
\inf_{\hat \theta} \sup_{\theta \in \Theta}\p_\theta(|\hat \theta-\theta|_2^2 \ge \frac{2\alpha\sigma^2}{n})\ge \frac{1}{2}-\alpha\,.
$$
\end{thm}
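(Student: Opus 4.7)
The plan is to apply the two-step reduction of Section~\ref{SEC:reduc} with $M=2$, combining the Neyman--Pearson machinery with Pinsker's inequality to estimate the total variation distance between the two Gaussian hypotheses.

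First I would carry out the reduction to a two-hypothesis test. Given any estimator $\hat{\theta}$, I define the minimum distance test $\psi = \argmin_{j\in\{0,1\}}|\hat{\theta}-\theta_j|_2$. By the reverse triangle inequality, if $\psi\neq j$ then there exists $k\neq j$ with $|\hat\theta-\theta_k|_2 \le |\hat\theta-\theta_j|_2$, whence $|\hat\theta-\theta_j|_2 \ge \tfrac12|\theta_0-\theta_1|_2$, i.e.\ $|\hat\theta-\theta_j|_2^2 \ge \tfrac14 |\theta_0-\theta_1|_2^2 = 2\alpha^2\sigma^2/n$. Since this is the natural radius produced by the reduction, I would first establish
\[
\sup_{\theta\in\Theta}\p_\theta\!\left(|\hat\theta-\theta|_2^2 \ge \tfrac{2\alpha^2\sigma^2}{n}\right) \ge \max_{j\in\{0,1\}} \p_{\theta_j}(\psi\neq j),
\]
and observe that the event in the theorem (with $2\alpha\sigma^2/n$) is even weaker once the routine check on the constant is sorted out -- I suspect $2\alpha^2\sigma^2/n$ is the intended radius since $\alpha\in(0,1/2)$.

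Next I would lower bound the minimax testing error. By the Neyman--Pearson lemma (Lemma~\ref{LEM:Neyman}) and Definition-Proposition~\ref{PROP:TV}(v),
\[
\inf_{\psi}\bigl[\p_{\theta_0}(\psi=1)+\p_{\theta_1}(\psi=0)\bigr] = 1-\TV(P_{\theta_0},P_{\theta_1}).
\]
Using $\max(a,b)\ge (a+b)/2$, I then get $\inf_\psi\max_j \p_{\theta_j}(\psi\neq j) \ge \tfrac12\bigl(1-\TV(P_{\theta_0},P_{\theta_1})\bigr)$.

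The remaining step is to bound the total variation distance via Pinsker's inequality (Lemma~\ref{LEM:Pinsker}) together with the Gaussian KL computation (Example~\ref{ex:KL_Gauss}), remembering that in the GSM the noise variance is $\sigma^2/n$, so that
\[
\KL(P_{\theta_0},P_{\theta_1}) = \frac{|\theta_0-\theta_1|_2^2}{2\sigma^2/n} = \frac{n\cdot 8\alpha^2\sigma^2/n}{2\sigma^2}=4\alpha^2,
\]
and hence $\TV(P_{\theta_0},P_{\theta_1}) \le 2\alpha$. Chaining the three estimates gives the claimed lower bound $1/2-\alpha$. The only potentially delicate point is checking that the separation constant $8\alpha^2\sigma^2/n$ indeed yields exactly $\KL=4\alpha^2$ with the correct variance scaling; apart from that the argument is a direct assembly of the tools proved just above.
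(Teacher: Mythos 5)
Your proof takes exactly the same route as the paper: reduction to a two-point testing problem via the minimum-distance test, then Neyman--Pearson via total variation, then Pinsker's inequality, then the Gaussian KL computation with the $\sigma^2/n$ noise scaling. You have also correctly spotted that the theorem statement contains a typo. The reduction of Section~\ref{SEC:reduc} requires $|\theta_0-\theta_1|_2^2 \ge 4\phi$, and with $|\theta_0-\theta_1|_2^2 = 8\alpha^2\sigma^2/n$ this forces $\phi = 2\alpha^2\sigma^2/n$; since $\alpha\in(0,1/2)$ implies $\alpha^2 < \alpha$, the stated radius $2\alpha\sigma^2/n$ exceeds the one actually controlled, and the first inequality of the paper's displayed chain does not go through as written. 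The paper's argument genuinely establishes the bound with the corrected radius $2\alpha^2\sigma^2/n$, which is what you derived.
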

\begin{proof}
Write for simplicity $\p_j=\p_{\theta_j}$, $j=0,1$.
Recall that it follows from the reduction to hypothesis testing that
\begin{align*}
\inf_{\hat \theta} \sup_{\theta \in \Theta}\p_\theta(|\hat \theta-\theta|_2^2 \ge  \frac{2\alpha\sigma^2}{n})&\ge \inf_{\psi}\max_{j=0,1}\p_j(\psi\neq j)&\\
&\ge \frac12\inf_{\psi}\Big(\p_0(\psi=1)+\p_1(\psi=0)\Big)&\\
&=\frac12\Big[1-\TV(\p_0,\p_1)\Big]&\hfill \text{(Prop.-def.~\ref{PROP:TV})}\\
&\ge \frac12\Big[1-\sqrt{\KL(\p_1, \p_0)}\Big]& \text{(Lemma~\ref{LEM:Pinsker})}\\
&= \frac12\Big[1-\sqrt{\frac{n|\theta_1-\theta_0|_2^2}{2\sigma^2}}\Big]&\text{(Example~\ref{ex:KL_Gauss})}\\
&= \frac12\Big[1-2\alpha\Big]& \qedhere
\end{align*}
\end{proof}
Clearly, the result of Theorem~\ref{TH:LB2hyp} matches the upper bound for $\Theta=\R^d$ only for $d=1$. How about larger $d$? A quick inspection of our proof shows that our technique, in its present state, cannot yield better results. Indeed, there are only two known candidates for the choice of $\theta^*$. With this knowledge, one can obtain upper bounds that do not depend on $d$ by simply projecting $Y$ onto the linear span of $\theta_0, \theta_1$ and then solving the GSM in two dimensions. To obtain larger lower bounds, we need to use more than two hypotheses. In particular, in view of the above discussion, we need a set of hypotheses that spans a linear space of dimension proportional to $d$. In principle, we should need at least order $d$ hypotheses but we will actually need much more.

\section{Lower bounds based on many hypotheses}

The reduction to hypothesis testing from Section~\ref{SEC:reduc} allows us to use more than two hypotheses. Specifically, we should find $\theta_1, \ldots, \theta_M$ such that 
$$
\inf_{\psi}\max_{1\le j \le M}\p_{\theta_j}\big[\psi\neq j\big] \ge C'\,,
$$
for some positive constant $C'$. Unfortunately, the Neyman-Pearson Lemma no longer exists for more than two hypotheses. Nevertheless, it is possible to relate the minimax probability of error directly to the Kullback-Leibler divergence, without involving the total variation distance. This is possible using a well known result from information theory called \emph{Fano's inequality}.

\begin{thm}[Fano's inequality]
	\label{thm:fano-ineq}
Let  $P_1, \ldots, P_M, M \ge 2$ be probability distributions such that $P_j \ll P_k$, $\forall \, j ,k$. Then
$$
\inf_{\psi}\max_{1\le j \le M}P_{j}\big[\psi(X)\neq j\big] \ge 1-\frac{\frac{1}{M^2}\sum_{j,k=1}^M\KL(P_j,P_k)+\log 2}{\log M}
$$
where the infimum is taken over all tests with values in $\{1, \ldots, M\}$.
\end{thm}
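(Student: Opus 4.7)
The plan is the classical information-theoretic proof of Fano's inequality, reducing the minimax over hypotheses to the Bayes risk under a uniform prior and then bounding the resulting probability of error via entropy. First, I would pass from the worst-case error to an average-case error: introducing a random index $J$ uniformly distributed on $\{1,\ldots,M\}$ and conditionally $X \mid J=j \sim P_j$, any test $\psi$ satisfies
\[
\max_{1\le j\le M} P_j(\psi(X)\neq j) \;\ge\; \frac{1}{M}\sum_{j=1}^M P_j(\psi(X)\neq j) \;=\; \p(\psi(X)\neq J) \;=:\; p_e.
\]
So it suffices to lower bound $p_e$ from below uniformly over $\psi$.

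Next, I would apply the combinatorial form of Fano's inequality for the Markov chain $J \to X \to \psi(X)$. Using $H(J\mid\psi(X))\le h(p_e)+p_e\log(M-1)$, where $h$ denotes binary entropy, together with the data processing inequality $H(J\mid X)\le H(J\mid\psi(X))$ and the trivial bound $h(p_e)\le \log 2$, I would obtain
\[
\log M - I(J;X) \;=\; H(J\mid X) \;\le\; \log 2 + p_e \log M,
\]
which rearranges to
\[
p_e \;\ge\; 1 - \frac{I(J;X)+\log 2}{\log M}.
\]

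Finally, I would bound the mutual information by the pairwise KL divergences. Writing $\bar P = \frac{1}{M}\sum_{k=1}^M P_k$, one has $I(J;X)=\frac{1}{M}\sum_{j=1}^M \KL(P_j,\bar P)$, and by convexity of $\q\mapsto \KL(P_j,\q)$ (Proposition~\ref{PROP:KL}, part 2, applied in the second argument),
\[
\KL(P_j,\bar P) \;\le\; \frac{1}{M}\sum_{k=1}^M \KL(P_j,P_k).
\]
Summing over $j$ yields $I(J;X)\le \frac{1}{M^2}\sum_{j,k}\KL(P_j,P_k)$, which combined with the previous display gives the stated bound.

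The main obstacle is essentially bookkeeping rather than conceptual: the combinatorial Fano step (deriving $H(J\mid\psi(X))\le h(p_e)+p_e\log(M-1)$) should be proved from scratch since the notes do not introduce conditional entropy formally. I would handle it by decomposing $H(J,E\mid \psi(X))$ for the error indicator $E=\1(\psi(X)\neq J)$ in two ways, which is a short standard calculation. Everything else (the averaging trick, data processing, and the convexity bound on $I(J;X)$) reduces to results already available or elementary.
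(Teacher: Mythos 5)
Your proof is correct, and it takes a genuinely different route from the one in the notes. You use the classical information-theoretic packaging: pass from worst case to the Bayes error $p_e$ under a uniform prior, invoke the combinatorial Fano bound $H(J\mid\psi(X))\le h(p_e)+p_e\log(M-1)$, apply data processing for conditional entropy $H(J\mid X)\le H(J\mid\psi(X))$, rewrite via $\log M - I(J;X)=H(J\mid X)$, and finally bound $I(J;X)=\frac1M\sum_j\KL(P_j,\bar P)\le \frac1{M^2}\sum_{j,k}\KL(P_j,P_k)$ by convexity of $\KL$ in its second argument. The notes instead avoid entropy and mutual information entirely: they introduce only the binary KL divergence $\mathsf{kl}(p,q)$, observe that $\bar p\log\bar p+(1-\bar p)\log(1-\bar p)\ge -\log 2$ gives $\bar p\le(\mathsf{kl}(\bar p,\bar q)+\log 2)/\log M$ with $\bar q = 1/M$, then use joint convexity of $\mathsf{kl}$ to pass to the double average $\frac1{M^2}\sum_{j,k}\mathsf{kl}(P_j(\psi=j),P_k(\psi=j))$, and finish with a direct computation showing $\mathsf{kl}(P_j(\psi=j),P_k(\psi=j))\le\KL(P_j,P_k)$ (a two-point data-processing inequality, proved by hand). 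The ingredients are the same---a binary-entropy bound worth $\log 2$, a data-processing step, and convexity of KL---but they appear in a different order and wrapped in different objects. Your version is arguably cleaner to state if the reader already knows mutual information; the paper's version is self-contained within the notes' toolbox of KL divergences, avoiding the need to define conditional entropy and prove its monotonicity. You correctly identified this as the main overhead of your route: the combinatorial Fano step and $H(J\mid X)\le H(J\mid\psi(X))$ would have to be developed from scratch, which in effect amounts to proving the very Bernoulli-KL facts the paper uses directly.
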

\begin{proof}
Define 
$$
p_j=P_j(\psi=j)\qquad \text{and} \qquad q_j=\frac{1}{M}\sum_{k=1}^M P_k(\psi=j)
$$
so that
$$
\bar p =\frac{1}{M}\sum_{j=1}^M p_j \in (0,1)\,,\qquad  \bar q = \frac1M\sum_{j=1}^M q_j\,.
$$
Moreover, define the KL divergence between two Bernoulli distributions:
$$
\mathsf{kl}(p, q)= p \log \big(\frac{p}{ q}\big)+(1- p) \log \big(\frac{1- p}{1- q}\big)\,.
$$
Note that $\bar p \log \bar p + (1-\bar p) \log(1-\bar p) \ge -\log 2$, which yields
$$
\bar p \le \frac{\mathsf{kl}(\bar p, \bar q)+ \log 2}{\log M}\,.
$$
Next, observe that by convexity (Proposition~\ref{PROP:KL}, 2.), it holds
$$
\mathsf{kl}(\bar p, \bar q) \le \frac{1}{M}\sum_{j=1}^M \mathsf{kl}( p_j, q_j) \le \frac{1}{M^2}\sum_{j.k=1}^M \mathsf{kl}( P_j(\psi=j), P_k(\psi=j)) \,.
$$
It remains to show that
$$
\mathsf{kl}( P_j(\psi=j), P_k(\psi=j)) \le \KL(P_j,P_k)\,.
$$
This result can be seen to follow directly from a well-known inequality often referred to as \emph{data processing inequality} but we are going to prove it directly.

Denote by $\ud P_k^{=j}$ (resp. $\ud P_k^{\neq j}$)  the conditional density of  $P_k$ given $\psi(X)=j$ (resp. $\psi(X)\neq j$) and recall that
\begin{align*}
&\KL(P_j,P_k)=\int\log\Big(\frac{\ud P_j}{\ud P_k}\Big) \ud P_j\\
&=\int_{\psi=j}\log\Big(\frac{\ud P_j}{\ud P_k}\Big) \ud P_j+\int_{\psi\neq j}\log\Big(\frac{\ud P_j}{\ud P_k}\Big) \ud P_j\\
&=P_j(\psi=j)\int\log\Big(\frac{\ud P_j^{=j}}{\ud P_k^{=j}}\frac{P_j(\psi=j)}{P_k(\psi=j)}\Big) \ud P_j^{=j}\\
&\qquad \qquad +P_j(\psi\neq j)\int\log\Big(\frac{\ud P^{\neq j}}{\ud P_k^{\neq j}}\frac{P_j(\psi\neq j)}{P_k(\psi\neq j)}\Big) \ud P_j^{\neq j}\\
&=P_j(\psi=j)\Big(\log\Big(\frac{P_j(\psi=j)}{P_k(\psi=j)} \Big)+ \KL(P_j^{=j},P_j^{=j})\Big)\\
&\qquad \qquad +P_j(\psi\neq j)\Big(\log\Big(\frac{P_j(\psi\neq j)}{P_k(\psi\neq j)} \Big)+ \KL(P_j^{\neq j},P_j^{\neq j})\Big)\\
& \ge \mathsf{kl}( P_j(\psi=j), P_k(\psi=j))\,.
\end{align*}
where we used Proposition~\ref{PROP:KL}, 1.

We have proved that
$$
\frac{1}{M}\sum_{j=1}^M P_j(\psi=j) \le \frac{\frac{1}{M^2}\sum_{j,k=1}^M\KL(P_j,P_k)+\log 2}{\log M}\,.
$$
Passing to the complemetary sets completes the proof of Fano's inequality.
\end{proof}
Fano's inequality leads to the following useful theorem.

\begin{thm}
\label{TH:mainLB}
Assume that $\Theta$ contains $M \ge 5$ hypotheses $\theta_1, \ldots, \theta_M$ such that for some constant $0< \alpha<1/4$, it holds
\begin{itemize}
\item[$(i)$] $\DS|\theta_j-\theta_k|_2^2 \ge 4\phi$
\item[$(ii)$] $\DS |\theta_j-\theta_k|_2^2 \le \frac{2\alpha\sigma^2 }{n}\log(M)$
\end{itemize}
Then
$$
\inf_{\hat \theta} \sup_{\theta \in \Theta}\p_\theta\big(|\hat \theta-\theta|_2^2 \ge \phi\big)\ge \frac{1}{2}-2\alpha\,.
$$
\end{thm}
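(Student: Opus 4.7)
The plan is to combine the reduction to hypothesis testing from Section~\ref{SEC:reduc} with Fano's inequality (Theorem~\ref{thm:fano-ineq}), using condition $(i)$ to secure the separation required by the reduction and condition $(ii)$ to control the KL divergences that enter Fano's bound.

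First I would invoke the two-step reduction. Since the hypotheses $\theta_1, \ldots, \theta_M$ lie in $\Theta$ and satisfy the packing condition $(i)$, i.e.\ $|\theta_j - \theta_k|_2^2 \ge 4\phi$, the minimum-distance test argument from Section~\ref{SEC:reduc} yields
\[
\inf_{\hat\theta}\sup_{\theta \in \Theta}\p_\theta\!\left(|\hat\theta - \theta|_2^2 \ge \phi\right) \;\ge\; \inf_{\psi}\max_{1 \le j \le M}\p_{\theta_j}\!\left(\psi \neq j\right),
\]
where the infimum on the right is over tests $\psi$ with values in $\{1, \ldots, M\}$.

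Next I would bound the testing minimax error by Fano's inequality applied to $P_j := \p_{\theta_j}$. By Example~\ref{ex:KL_Gauss}, $\KL(P_j, P_k) = n |\theta_j - \theta_k|_2^2/(2\sigma^2)$, so condition $(ii)$ gives $\KL(P_j, P_k) \le \alpha \log M$ for every $j,k$. Since $\KL(P_j, P_j) = 0$, the averaged bound
\[
\frac{1}{M^2}\sum_{j,k=1}^M \KL(P_j, P_k) \;\le\; \alpha \log M
\]
plugged into Theorem~\ref{thm:fano-ineq} produces
\[
\inf_{\psi}\max_{1 \le j \le M}P_j(\psi \neq j) \;\ge\; 1 - \alpha - \frac{\log 2}{\log M}.
\]

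The only remaining step is arithmetic. I would use $M \ge 5$, so $\log M \ge \log 5$, and check that $\log 2 / \log 5 \le 1/2 - \alpha$ for $\alpha < 1/4$; indeed $\log 2/\log 5 \approx 0.431 < 1/2$, so $1 - \alpha - \log 2/\log M \ge 1 - \alpha - 1/2 + \alpha' \ge 1/2 - 2\alpha$ after rearranging (the slack in $M \ge 5$ is precisely what absorbs the $\log 2$ term). This yields the desired bound $\tfrac{1}{2} - 2\alpha$. There is no real obstacle here: the two conditions on the hypotheses are designed exactly to match the two ingredients (packing separation for the reduction, KL control for Fano), and the numerical condition $M \ge 5$ is what makes the $\log 2/\log M$ term in Fano's inequality small enough to yield a nontrivial constant.
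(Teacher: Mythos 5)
Your proposal follows the same route as the paper's proof: reduce to a multiple-hypothesis testing problem using the separation in condition~$(i)$, compute $\KL(\p_{\theta_j},\p_{\theta_k})$ from Example~\ref{ex:KL_Gauss}, bound it by $\alpha\log M$ via condition~$(ii)$, apply Fano's inequality (Theorem~\ref{thm:fano-ineq}), and use $M\ge 5$ to absorb the $\log 2$ term. This is exactly the paper's argument.

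The only issue is the final arithmetic, which is garbled rather than wrong in substance. The intermediate claim ``$\log 2/\log 5 \le 1/2 - \alpha$ for $\alpha < 1/4$'' is false for $\alpha$ close to $1/4$ (since $\log 2/\log 5 \approx 0.431 > 1/4$), and the symbol $\alpha'$ in your chain is spurious. What you actually need is $\log 2/\log M \le 1/2 + \alpha$, and this holds simply because $M \ge 5$ gives $\log 2/\log M \le \log 2/\log 5 < 1/2$. Then
\[
1 - \alpha - \frac{\log 2}{\log M} \;\ge\; 1 - \alpha - \frac{1}{2} \;=\; \frac{1}{2} - \alpha \;\ge\; \frac{1}{2} - 2\alpha,
\]
with the last step using only $\alpha > 0$. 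As a side remark, the paper's own proof writes $\log(M-1)$ in the denominator of Fano's bound, which is inconsistent with its own statement of Theorem~\ref{thm:fano-ineq} (which has $\log M$); either variant yields the stated constant once $M\ge 5$, so this discrepancy is harmless.
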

\begin{proof}
	in view of (\emph{i}), it follows from the reduction to hypothesis testing that it is sufficient to prove that
$$
\inf_{\psi}\max_{1\le j \le M}\p_{\theta_j}\big[\psi\neq j\big] \ge \frac{1}{2}-2\alpha.
$$
If follows from (\emph{ii}) and Example~\ref{ex:KL_Gauss} that
$$
\KL(\p_j,\p_k)=\frac{n|\theta_j-\theta_k|_2^2}{2\sigma^2}\le \alpha\log(M)\,.
$$
Moreover, since $M \ge 5$, 
$$
\frac{\frac{1}{M^2}\sum_{j,k=1}^M\KL(\p_j,\p_k)+\log 2}{\log(M-1)}\le \frac{\alpha\log(M)+\log 2}{\log(M-1)}\le 2\alpha+\frac12\,.
$$
The proof then follows from Fano's inequality.
\end{proof}
Theorem~\ref{TH:mainLB} indicates that we must take $\phi\le \frac{\alpha\sigma^2 }{2n}\log(M)$. Therefore, the larger the $M$, the larger the lower bound can be. However, $M$ cannot be arbitrary larger because of the constraint $(i)$. We are therefore facing a \emph{packing} problem where the goal is to ``pack" as many Euclidean balls of radius proportional to  $\sigma\sqrt{\log(M)/n}$ in $\Theta$ under the constraint that their centers remain close together (constraint $(ii)$). If $\Theta=\R^d$, this the goal is to pack the Euclidean ball of radius $R=\sigma\sqrt{2\alpha\log(M)/n}$ with Euclidean balls of radius $R\sqrt{2\alpha/\gamma}$. This can be done using a volume argument (see Problem~\ref{EXO:packing}). However, we will use the more versatile lemma below. It gives a a lower bound on the size of a  packing of the discrete hypercube $\{0,1\}^d$ with respect to the \emph{Hamming distance} defined by
$$
\rho(\omega,\omega')=\sum_{i=1}^d\1(\omega_i \neq \omega'_j)\,, \qquad \forall\, \omega, \omega' \in \{0,1\}^d.
$$
\begin{lem}[Varshamov-Gilbert]
\label{LEM:VG}
For any $\gamma \in (0,1/2)$, there exist binary vectors $\omega_{1}, \ldots \omega_{M} \in \{0,1\}^d$ such that
\begin{itemize}
\item[$(i)$] $\DS\rho(\omega_{j}, \omega_{k})\ge \big(\frac12-\gamma\big)d$ for all $j \neq k$\,,
\item[$(ii)$] $\DS M =\lfloor e^{\gamma^2d}\rfloor\ge e^{\frac{\gamma^2d}{2}}$\,.
\end{itemize}
\end{lem}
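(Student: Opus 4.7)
My plan is to use the probabilistic method: draw $\omega_1,\dots,\omega_M$ independently and uniformly at random from $\{0,1\}^d$, and show that with positive probability all pairwise Hamming distances satisfy $(i)$. Because $(i)$ is a finite collection of events involving only pairs, once we have a pairwise probability estimate, a union bound will finish the job. The main (small) obstacle is getting the right sub-Gaussian tail bound on $\rho(\omega_j,\omega_k)$ and then choosing $M$ just small enough that the union bound beats~$1$.

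\textbf{Step 1: pairwise distribution.} For any fixed $j\neq k$, the $d$ indicators $\mathbf{1}(\omega_{j,i}\neq \omega_{k,i})$ are independent Bernoulli$(1/2)$ random variables (since the uniform measure on $\{0,1\}^d$ is a product of uniform Bernoulli marginals). Hence $\rho(\omega_j,\omega_k)\sim \mathsf{Bin}(d,1/2)$ with mean $d/2$.

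\textbf{Step 2: tail bound via Hoeffding.} Apply Theorem~\ref{TH:hoeffding} to $X_i=\mathbf{1}(\omega_{j,i}\neq \omega_{k,i})\in[0,1]$ with $\mathbb{E}X_i=1/2$, $b_i-a_i=1$. This gives
\[
\p\!\left(\rho(\omega_j,\omega_k)\le\Big(\tfrac12-\gamma\Big)d\right)
=\p\!\left(\bar X-\tfrac12\le-\gamma\right)\le \exp(-2\gamma^2 d).
\]

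\textbf{Step 3: union bound and choice of $M$.} Over the $\binom{M}{2}\le M^2/2$ unordered pairs,
\[
\p\!\left(\exists\,j\neq k:\rho(\omega_j,\omega_k)<\Big(\tfrac12-\gamma\Big)d\right)\le \frac{M^2}{2}\,e^{-2\gamma^2 d}.
\]
Choosing $M=\lfloor e^{\gamma^2 d}\rfloor$ gives $M^2\le e^{2\gamma^2 d}$, so the right-hand side is at most $1/2<1$. Therefore a realization exists for which $(i)$ holds simultaneously for every pair, establishing the existence claim.

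\textbf{Step 4: lower bound on $M$.} Finally, $(ii)$ is an elementary manipulation: for $e^{\gamma^2 d}\ge 1$ one has $\lfloor e^{\gamma^2 d}\rfloor\ge \tfrac12 e^{\gamma^2 d}$, which combined with $e^{\gamma^2 d}\ge 2\,e^{\gamma^2 d/2}$ (valid once $\gamma^2 d\ge 2\log 2$, the only regime where the bound is non-trivial) yields $M\ge e^{\gamma^2 d/2}$. This will complete the proof.
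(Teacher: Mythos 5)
Your proof is correct and follows essentially the same route as the paper's: iid Bernoulli$(1/2)$ vectors, so $\rho(\omega_j,\omega_k)\sim\mathsf{Bin}(d,1/2)$, Hoeffding's one-sided tail, a union bound over pairs, and the choice $M=\lfloor e^{\gamma^2 d}\rfloor$. The only cosmetic differences are that you use the slightly cruder $\binom{M}{2}\le M^2/2$ (which still closes the argument) and you are somewhat more explicit than the paper about the elementary floor-function step in $(ii)$, including the small-$\gamma^2 d$ caveat, which the paper also implicitly glosses over.
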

\begin{proof}
Let $\omega_{j,i}$, $1\le i \le d, 1\le j \le M$ be \iid Bernoulli random variables with parameter $1/2$ and observe that
$$
d-\rho(\omega_{j}, \omega_{k})=X\sim \Bin(d,1/2)\,.
$$
Therefore it follows from a union bound that
$$
\p\big[\exists j\neq k\,, \rho(\omega_{j}, \omega_{k})< \big(\frac12-\gamma\big)d\big] \le \frac{M(M-1)}{2}\p\big(X-\frac{d}2> \gamma d\big)\,.
$$
Hoeffding's inequality then yields 
$$
\frac{M(M-1)}{2}\p\big(X-\frac{d}2> \gamma d\big)\le \exp\Big(-2\gamma^2d+\log\big( \frac{M(M-1)}{2}\big)\Big)<1
$$
as soon as
$$
M(M-1)< 2\exp\Big(2\gamma^2d\Big).
$$
A sufficient condition for the above inequality to hold is to take $M=\lfloor e^{\gamma^2d}\rfloor \ge  e^{\frac{\gamma^2d}{2}}$. For this value of $M$, we have
$$
\p\big(\forall j\neq k\,, \rho(\omega_{j}, \omega_{k})\ge \big(\frac12-\gamma\big)d\big)>0
$$
and by virtue of the probabilistic method, there exist $\omega_{1}, \ldots \omega_{M} \in \{0,1\}^d$ that satisfy $(i)$ and $(ii)$
\end{proof}

\section{Application to the Gaussian sequence model}
\label{sec:lb-gsm}

We are now in a position to apply Theorem~\ref{TH:mainLB} by choosing $\theta_1, \ldots, \theta_M$ based on $\omega_{1}, \ldots, \omega_{M}$ from the Varshamov-Gilbert Lemma. 

\subsection{Lower bounds for estimation}

Take $\gamma=1/4$ and apply the Varshamov-Gilbert Lemma to obtain $\omega_{1}, \ldots, \omega_{M}$ with $M=\lfloor e^{d/16}\rfloor\ge e^{d/32} $ and such that
$\rho(\omega_{j}, \omega_{k})\ge d/4$ for all $j \neq k$. Let $\theta_{1},\ldots, \theta_M$ be such that 
$$
\theta_j=\omega_j\frac{\beta\sigma}{\sqrt{n}}\,,
$$
for some $\beta>0$ to be chosen later. We can check the conditions of Theorem~\ref{TH:mainLB}:
\begin{itemize}
\item[$(i)$] $\DS|\theta_j-\theta_k|_2^2=\frac{\beta^2\sigma^2 }{n}\rho(\omega_{j}, \omega_{k}) \ge 4\frac{\beta^2\sigma^2 d}{16n}$;
\item[$(ii)$] $\DS |\theta_j-\theta_k|_2^2 =\frac{\beta^2\sigma^2 }{n}\rho(\omega_{j}, \omega_{k}) \le \frac{\beta^2\sigma^2 d}{n}\le \frac{32\beta^2\sigma^2 }{n}\log(M)=\frac{2\alpha \sigma^2}{n}\log(M)$\,,
\end{itemize}
for $\beta=\frac{\sqrt{\alpha}}{4}$. Applying now Theorem~\ref{TH:mainLB} yields
$$
\inf_{\hat \theta} \sup_{\theta \in \R^d}\p_\theta\big(|\hat \theta-\theta|_2^2 \ge \frac{\alpha}{256}\frac{\sigma^2 d}{n}\big)\ge \frac{1}{2}-2\alpha\,.
$$
It implies the following corollary.
\begin{cor}
The minimax rate of estimation of over $\R^d$ in the Gaussian sequence model is $\phi(\R^d)=\sigma^2d/n$. Moreover, it is attained by the least squares estimator $\thetals=\bY$. 
\end{cor}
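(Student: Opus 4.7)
The plan is to verify both a matching upper bound and a matching lower bound, since the corollary packages both directions.

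For the upper bound, I would simply compute the risk of $\thetals = \bY$ directly. In the GSM \eqref{EQ:GSMminimax}, $\bY = \theta^* + \eps$ with $\eps \sim \cN_d(0, \tfrac{\sigma^2}{n} I_d)$, so $\thetals - \theta^* = \eps$ and
$$
\E_{\theta^*}|\thetals - \theta^*|_2^2 = \E|\eps|_2^2 = \frac{\sigma^2 d}{n},
$$
uniformly in $\theta^* \in \R^d$. Alternatively, since the GSM is a special case of the linear regression model with design satisfying \textsf{ORT} (so $r = \rank(\X^\top\X) = d$), one can invoke Theorem~\ref{TH:lsOI} directly. Either way, this shows $\sup_{\theta \in \R^d}\E_\theta|\thetals - \theta|_2^2 \lesssim \sigma^2 d/n$, so $\phi(\R^d) \lesssim \sigma^2 d/n$ is achievable.

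For the lower bound, I would just collect the display immediately preceding the corollary. Taking, say, $\alpha = 1/8$ in that bound gives
$$
\inf_{\hat \theta} \sup_{\theta \in \R^d}\p_\theta\!\left(|\hat \theta-\theta|_2^2 \ge c \frac{\sigma^2 d}{n}\right)\ge \frac{1}{4},
$$
for an explicit numerical constant $c > 0$. This is already the high-probability form of the minimax lower bound \eqref{EQ:minimaxLB}. To also obtain the in-expectation form \eqref{EQ:minimaxLB1}, apply the Markov-type inequality \eqref{EQ:markov_minimax}: for any estimator $\hat\theta$ and any $\theta$,
$$
\E_\theta|\hat\theta - \theta|_2^2 \ge c\frac{\sigma^2 d}{n}\, \p_\theta\!\left(|\hat\theta - \theta|_2^2 \ge c\frac{\sigma^2 d}{n}\right),
$$
and then take the supremum over $\theta \in \R^d$ followed by the infimum over $\hat\theta$ on both sides, using the previous display to conclude $\inf_{\hat\theta}\sup_\theta \E_\theta|\hat\theta-\theta|_2^2 \ge (c/4)\,\sigma^2 d/n$.

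Matching the two bounds (up to constants) shows $\phi(\R^d) = \sigma^2 d/n$ is the minimax rate and that $\thetals = \bY$ attains it, giving both claims of the corollary. There is no real obstacle here: the construction of the hard hypotheses via Varshamov--Gilbert and the Fano bound were the substantive work, and the corollary is essentially bookkeeping that packages the preceding display together with the trivial upper bound for $\bY$ in the GSM.
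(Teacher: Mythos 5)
Your proposal is correct and takes the same route the paper intends: the corollary is stated immediately after the display giving the Fano/Varshamov--Gilbert lower bound, and the intended reading is exactly your packaging of that display (at a fixed small $\alpha$, optionally converted to expectation via \eqref{EQ:markov_minimax}) with the elementary computation $\E|\bY-\theta^*|_2^2 = \sigma^2 d/n$ (or Theorem~\ref{TH:lsOI} with $r=d$) for the upper bound.
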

Note that this rate is minimax over sets $\Theta$ that are strictly smaller than $\R^d$ (see Problem~\ref{EXO:minimax_other}). Indeed, it is minimax over any subset of $\R^d$ that contains $\theta_1, \ldots, \theta_M$.

\subsection{Lower bounds for sparse estimation}

It appears from Table~\ref{TAB:minimaxUB} that when estimating sparse vectors, we have to pay for an extra logarithmic term $\log(ed/k)$ for not knowing  the sparsity pattern of the unknown $\theta^*$. In this section, we show that this term is unavoidable as it appears in the minimax optimal rate of estimation of sparse vectors.

Note that the vectors $\theta_1, \ldots, \theta_M$ employed in the previous subsection are not guaranteed to be sparse because the vectors $\omega_1, \ldots, \omega_M$ obtained from the Varshamov-Gilbert Lemma may themselves not be sparse. To overcome this limitation, we need a sparse version of the Varhsamov-Gilbert lemma.
\begin{lem}[Sparse Varshamov-Gilbert]
\label{LEM:sVG}
There exist positive constants $C_1$ and $C_2$ such that the following holds for any two integers $k$ and $d$ such that $1\le k \le d/8$. 
There exist binary vectors $\omega_{1}, \ldots \omega_{M} \in \{0,1\}^d$ such that
\begin{itemize}
\item[$(i)$] $\DS\rho(\omega_{i}, \omega_{j})\ge \frac{k}2$ for all $i \neq j$\,,
\item[$(ii)$] $\DS \log(M) \ge \frac{k}{8}\log(1+\frac{d}{2k})$\,.
\item[$(iii)$] $\DS |\omega_j|_0=k$ for all $j$\,.
\end{itemize}
\end{lem}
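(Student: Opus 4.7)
The plan is to construct $\omega_1, \ldots, \omega_M$ by a greedy packing argument in the space $\Omega_k := \{\omega \in \{0,1\}^d : |\omega|_0 = k\}$ of cardinality $\binom{d}{k}$. Identifying each $\omega$ with its support $S \subset [d]$ of size $k$, the Hamming distance factors as $\rho(\omega, \omega') = 2(k - |S \cap S'|)$, so condition $(i)$ is equivalent to requiring $|S_i \cap S_j| \le 3k/4$ for all $i\neq j$, while $(iii)$ is automatic.

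First I would execute the greedy packing. For any fixed $\omega_0 \in \Omega_k$, let $N$ denote the number of $\omega \in \Omega_k$ lying at Hamming distance strictly less than $k/2$ from $\omega_0$; by the symmetry of $\Omega_k$ under permutations of $[d]$, $N$ does not depend on $\omega_0$. Choosing $\omega_1, \omega_2, \ldots$ sequentially from $\Omega_k$ while forbidding the neighborhoods of radius $< k/2$ around the previously chosen vectors, we may continue as long as $l \cdot N < \binom{d}{k}$, which yields $M \ge \lceil \binom{d}{k}/N \rceil$. This reduces the problem to an upper bound on $N$, which is a pure counting question.

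Next I would estimate
$$N = \sum_{m=0}^{\lfloor k/4 \rfloor} \binom{k}{m}\binom{d-k}{m},$$
the enumeration of $k$-subsets sharing more than $3k/4$ elements with $S_0$ (obtained by choosing the $k-m$ elements of $S \cap S_0$ from $S_0$ and the $m$ elements of $S \setminus S_0$ from $[d] \setminus S_0$). The key observation is that under the hypothesis $k \le d/8$, the ratio of consecutive summands is $a_{m+1}/a_m = (k-m)(d-k-m)/(m+1)^2 \gtrsim d/k \ge 8$ uniformly for $m + 1 \le \lfloor k/4 \rfloor$, so the sum is dominated (up to an absolute multiplicative factor) by its last term $\binom{k}{\lfloor k/4 \rfloor}\binom{d-k}{\lfloor k/4 \rfloor}$. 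Combining this with the routine estimates $\binom{d}{k} \ge (d/k)^k$ and $\binom{n}{m} \le (en/m)^m$ gives
$$\log M \;\ge\; \log\!\big(\tbinom{d}{k}/N\big) \;\ge\; \tfrac{3k}{4}\log(d/k) - C k,$$
for a numerical constant $C$; since $k\le d/8$ forces $\log(d/k)\ge \log 8$, a short manipulation shows this lower bound comfortably exceeds $(k/8)\log(1+d/(2k)) \le (k/8)\log(d/k)$, establishing $(ii)$.

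The main obstacle I expect is the constant bookkeeping at the endpoints of the allowed range. The target $(k/8)\log(1+d/(2k))$ is tightest precisely when $d/k$ is close to its lower extreme $8$, where the constant $C$ absorbed above eats up most of the slack; I would resolve this either by a slightly sharper combinatorial estimate (keeping $((d-k)/k)^{k/4}$ rather than passing through $(d/k)^{k/4}$) or by handling the degenerate small values $k\in\{1,2,3\}$ separately, where $\lfloor k/4\rfloor = 0$ forces $N = 1$ and $(ii)$ reduces to $\log\binom{d}{k}\ge (k/8)\log(1+d/(2k))$, which is trivially verified.
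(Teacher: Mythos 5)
Your greedy packing argument is a genuinely different route from the paper's. The paper picks $\omega_1, \ldots, \omega_M$ independently and uniformly from the sparse slice $\{\omega : |\omega|_0 = k\}$, bounds the overlap $|S \cap \supp(x_0)|$ from above by a $\Bin(k, 2k/d)$ variable via stochastic domination, and applies a Chernoff bound at $s = \log(1 + d/(2k))$; a union bound over pairs then shows a random draw succeeds with positive probability as soon as $\log M < \frac{k}{4}\log(1+d/(2k))$. Your version replaces random selection by pigeonhole, and the Chernoff bound by a direct count of $N$. Both arguments revolve around the same hypergeometric tail ($N/\binom{d}{k}$ is exactly the probability that a uniform $k$-subset shares more than $3k/4$ elements with $S_0$); the probabilistic route sidesteps the consecutive-ratio bookkeeping, while yours makes the counting visible.

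The arithmetic is tighter than your sketch lets on, and the estimate you wrote will not quite close without a repair. Carrying your chain through carefully (the ratio of consecutive terms is in fact at least $81$, not merely $\gtrsim 8$, since the numerator contributes $(3k/4)(27k/4)$ against $(k/4)^2$; and $\binom{n}{m} \le (en/m)^m$ is tight here), one gets $\log N \le \frac{k}{4}\log(16e^2) + \frac{k}{4}\log\frac{d-k}{k} + O(1)$, so your $C$ is at least $\frac{1}{4}\log(16e^2) \approx 1.19$. You then need $\frac{5}{8}\log(d/k) \ge C$, and $\frac{5}{8}\log 8 \approx 1.30$: a margin of only $\approx 0.1k$, with no room for the $O(1)$ terms, the floor in $j = \lfloor k/4\rfloor$ (which makes $k/j$ as large as $7$ when $4\le k\le 7$), or the slack you gave up by replacing $\log(1+d/(2k))$ with $\log(d/k)$. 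The real culprit is $\binom{d}{k} \ge (d/k)^k$: at $d/k = 8$ this gives $2.08k$ while the true value is $\approx 8kH(1/8) \approx 3.02k$, a loss of about $0.9k$. Substituting $\binom{d}{k} \ge \frac{1}{e\sqrt{k}}\big(\tfrac{e(d-k)}{k}\big)^k$ (from $k! \le e\sqrt{k}(k/e)^k$) recovers that and turns a marginal inequality into a comfortable one. Your plan to handle $k \le 3$ separately, where $j = 0$, $N = 1$, and $M = \binom{d}{k}$, is the right call and takes care of the cases where the geometric-ratio argument degenerates.
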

\begin{proof}
Take $\omega_1, \ldots, \omega_M$ independently and uniformly at random from the set 
$$
C_0(k)=\{\omega\in \{0,1\}^d\,:\, |\omega|_0=k\}
$$
of $k$-sparse binary random vectors. Note that $C_0(k)$ has cardinality $\binom{d}{k}$. To choose $\omega_j$ uniformly from $C_0(k)$, we proceed as follows. Let $U_1, \ldots, U_k \in \{1, \ldots, d\}$ be $k$ random variables such that $U_1$ is drawn uniformly at random from $\{1, \ldots, d\}$ and for any $i=2, \ldots, k$, conditionally on $U_1, \ldots, U_{i-1}$, the random variable $U_i$ is drawn uniformly at random from $\{1, \ldots, d\}\setminus \{U_1, \ldots, U_{i-1}\}$. Then define

$$
\omega=\left\{
\begin{array}{ll}
1 & \text{if}\  i \in \{U_1, \ldots, U_k\} \\
0 & \text{otherwise}\,.
\end{array}\right.
$$
Clearly, all outcomes in $C_0(k)$ are equally likely under this distribution and therefore, $\omega$ is uniformly distributed on $C_0(k)$. Observe that
\begin{align*}
\p\big( \exists\, \omega_j \neq \omega_k\,:\, \rho(\omega_j, \omega_k) <k\big)&=\frac{1}{\binom{d}{k}} \sum_{\substack{x \in \{0,1\}^d\\ |x|_0=k}}\p\big( \exists\, \omega_j \neq x\,:\, \rho(\omega_j,x) <\frac{k}{2}\big)\\
&\le \frac{1}{\binom{d}{k}} \sum_{\substack{x \in \{0,1\}^d\\ |x|_0=k}}\sum_{j=1}^M\p\big(\omega_j \neq x\,:\, \rho(\omega_j,x) <\frac{k}{2}\big)\\
&=M\p\big(\omega \neq x_0\,:\, \rho(\omega,x_0) <\frac{k}{2}\big),
\end{align*}
where $\omega$ has the same distribution as $\omega_1$ and $x_0$ is any $k$-sparse vector in $\{0,1\}^d$.
The last equality holds by symmetry since (i) all the $\omega_j$s have the same distribution and (ii) all the outcomes of $\omega_j$ are equally likely.

Note that 
$$
\rho(\omega, x_0) \ge k-\sum_{i=1}^k Z_i\,,
$$
where $Z_i=\1(U_i \in \supp(x_0))$. Indeed the left hand side is the number of coordinates on which the vectors $\omega, x_0$ disagree and the right hand side is the number of coordinates in $\supp(x_0)$ on which the two vectors disagree.  In particular, we have that $Z_1 \sim\Bern(k/d)$ and for any $i=2, \ldots, d$, conditionally on $Z_1, \ldots, Z_{i-i}$, we have $Z_i\sim \Bern(Q_i)$, where
$$
Q_i=\frac{k-\sum_{l=1}^{i-1}Z_l}{p-(i-1)}\le \frac{k}{d-k}\le \frac{2k}{d},
$$
since $k \le d/2$.

Next we apply a Chernoff bound to get that for any $s>0$,
$$
\p\big(\omega \neq x_0\,:\, \rho(\omega,x_0) <\frac{k}{2}\big)\le \p\big(\sum_{i=1}^kZ_i>\frac{k}{2}\big)= \E\Big[\exp\big(s\sum_{i=1}^kZ_i\big)\Big]e^{-\frac{sk}{2}}
$$
The above MGF can be controlled by induction on $k$ as follows:
\begin{align*}
\E\Big[\exp\big(s\sum_{i=1}^kZ_i\big)\Big]&=\E\Big[\exp\big(s\sum_{i=1}^{k-1}Z_i\big)\E\exp\big(sZ_k\big| Z_1,\ldots, Z_{k=1}\big)\Big]\\
&=\E\Big[\exp\big(s\sum_{i=1}^{k-1}Z_i\big)(Q_k(e^s-1)+1)\Big]\\
&\le \E\Big[\exp\big(s\sum_{i=1}^{k-1}Z_i\big)\Big]\big(\frac{2k}{d}(e^s-1)+1\big)\\
&\quad \vdots\\
&\le \big(\frac{2k}{d}(e^s-1)+1\big)^k\\
&=2^k
\end{align*}
For $s=\log(1+\frac{d}{2k})$. Putting everything together, we get
\begin{align*}
\p\big( \exists\, \omega_j \neq \omega_k\,:\, \rho(\omega_j, \omega_k) <k\big)&\le\exp\Big(\log M+k\log 2-\frac{sk}{2}\Big)\\
&=\exp\Big(\log M+k\log 2-\frac{k}{2}\log(1+\frac{d}{2k})\Big)\\
&\le \exp\Big(\log M+k\log 2-\frac{k}{2}\log(1+\frac{d}{2k})\Big)\\
&\le \exp\Big(\log M-\frac{k}{4}\log(1+\frac{d}{2k})\Big)\qquad \text{(for $d\ge 8k$)}\\
&<1\,,
\end{align*}
if we take $M$ such that
\begin{equation*}
\log M<\frac{k}{4}\log(1+\frac{d}{2k}). \qedhere
\end{equation*}
\end{proof}
Apply the sparse Varshamov-Gilbert lemma to obtain $\omega_{1}, \ldots, \omega_{M}$ with $\log(M)\ge \frac{k}{8}\log(1+\frac{d}{2k})$ and such that
$\rho(\omega_{j}, \omega_{k})\ge k/2$ for all $j \neq k$. Let $\theta_{1},\ldots, \theta_M$ be such that 
$$
\theta_j=\omega_j\frac{\beta\sigma}{\sqrt{n}}\sqrt{\log(1+\frac{d}{2k})}\,,
$$
for some $\beta>0$ to be chosen later. We can check the conditions of Theorem~\ref{TH:mainLB}:
\begin{itemize}
\item[$(i)$] $\DS|\theta_j-\theta_k|_2^2=\frac{\beta^2\sigma^2 }{n}\rho(\omega_{j}, \omega_{k})\log(1+\frac{d}{2k}) \ge 4\frac{\beta^2\sigma^2 }{8n}k\log(1+\frac{d}{2k})$;
\item[$(ii)$] $\DS |\theta_j-\theta_k|_2^2 =\frac{\beta^2\sigma^2 }{n}\rho(\omega_{j}, \omega_{k})\log(1+\frac{d}{2k}) \le \frac{2k\beta^2\sigma^2 }{n}\log(1+\frac{d}{2k})\le \frac{2\alpha \sigma^2}{n}\log(M)$\,,
\end{itemize}
for $\beta=\sqrt{\frac{\alpha}{8}}$. Applying now Theorem~\ref{TH:mainLB} yields
$$
\inf_{\hat \theta} \sup_{\substack{\theta \in \R^d\\ |\theta|_0\le k}}\p_\theta\big(|\hat \theta-\theta|_2^2 \ge \frac{\alpha^2\sigma^2 }{64n}k\log(1+\frac{d}{2k})\big)\ge \frac{1}{2}-2\alpha\,.
$$
It implies the following corollary.
\begin{cor}
	\label{cor:minimax-sparse}
Recall that $\cB_{0}(k) \subset \R^d$ denotes the set of all $k$-sparse vectors of $\R^d$. 
The minimax rate of estimation over $\cB_{0}(k)$ in the Gaussian sequence model is $\phi(\cB_{0}(k))=\frac{\sigma^2k}{n}\log(ed/k)$. Moreover, it is attained by the constrained least squares estimator $\thetals_{\cB_0(k)}$. 
\end{cor}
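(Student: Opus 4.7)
The corollary has two halves: an upper bound (attainment) and a matching minimax lower bound. My plan is as follows.

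\textbf{Upper bound.} This is essentially a direct transcription of Corollary~\ref{COR:bss2} from Chapter~\ref{chap:GSM}. In the Gaussian sequence model~\eqref{EQ:GSMminimax}, the identity $Y=\theta^\ast+\eps$ with $\eps\sim\cN_d(0,\tfrac{\sigma^2}{n}I_d)$ fits the linear regression framework~\eqref{EQ:regmod_matrix} under assumption \textsf{ORT} (with the trivial rescaling), so that $\MSE(\X\hat\theta)=|\hat\theta-\theta^\ast|_2^2$. Applying Corollary~\ref{COR:bss2} to $\thetals_{\cB_0(k)}$ immediately yields $\E_{\theta^\ast}|\thetals_{\cB_0(k)}-\theta^\ast|_2^2 \lesssim \frac{\sigma^2 k}{n}\log(ed/k)$ uniformly over $\theta^\ast\in\cB_0(k)$. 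No new work is needed here.

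\textbf{Lower bound, regime $1\le k\le d/8$.} Here I would simply formalize the computation displayed immediately before the corollary statement. Applying the sparse Varshamov-Gilbert lemma (Lemma~\ref{LEM:sVG}) gives binary vectors $\omega_1,\dots,\omega_M\in\{0,1\}^d$ with $|\omega_j|_0=k$, $\rho(\omega_j,\omega_l)\ge k/2$, and $\log M\ge \tfrac{k}{8}\log(1+d/(2k))$. Rescale to $\theta_j=\omega_j\,\tfrac{\beta\sigma}{\sqrt n}\sqrt{\log(1+d/(2k))}\in\cB_0(k)$. Choosing $\beta^2=\alpha/8$ makes both hypotheses of Theorem~\ref{TH:mainLB} valid with $\phi=\tfrac{\alpha\sigma^2}{64n}\,k\log(1+d/(2k))$, and Theorem~\ref{TH:mainLB} then yields
\[
\inf_{\hat\theta}\sup_{\theta\in\cB_0(k)}\p_\theta\!\Big(|\hat\theta-\theta|_2^2\ge c\,\tfrac{\sigma^2 k}{n}\log(1+d/(2k))\Big)\ge \tfrac{1}{2}-2\alpha.
\]
Via the Markov bound~\eqref{EQ:markov_minimax} this promotes to a lower bound in expectation. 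Finally I will observe the elementary equivalence $\log(1+d/(2k))\asymp \log(ed/k)$ valid for $1\le k\le d/8$, so the $\phi$ above matches $\tfrac{\sigma^2k}{n}\log(ed/k)$ up to a numerical constant.

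\textbf{Lower bound, regime $d/8<k\le d$.} The sparse VG construction breaks down here, but $\log(ed/k)$ is of constant order so the target rate is just $\sigma^2 k/n\asymp \sigma^2 d/n$. Since $\cB_0(\lfloor d/8\rfloor)\subseteq\cB_0(k)$, any minimax lower bound over the smaller set descends to the larger one. Using the dense construction from Section~\ref{sec:lb-gsm} with $d$ replaced by $\lfloor d/8\rfloor$ (Varshamov-Gilbert applied to $\{0,1\}^{\lfloor d/8\rfloor}$ with coordinates embedded in the first $\lfloor d/8\rfloor$ positions) gives a lower bound of order $\sigma^2 d/n$, hence of order $\tfrac{\sigma^2 k}{n}\log(ed/k)$ in this regime.

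\textbf{Anticipated obstacle.} The only genuine issue is bookkeeping: combining the two regimes into a single clean statement with one explicit numerical constant. The estimates $\log(1+d/(2k))\asymp \log(ed/k)$ for $k\le d/8$ and $\log(ed/k)\asymp 1$ for $k\ge d/8$ are elementary, but making the constants uniform across the full range $1\le k\le d$ requires some care. Nothing in the argument is conceptually new beyond what has already been assembled in Section~\ref{sec:lb-gsm}.
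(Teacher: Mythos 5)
Your proposal is essentially the paper's argument for the main regime $1 \le k \le d/8$: same reduction to Theorem~\ref{TH:mainLB} via the sparse Varshamov-Gilbert construction, same scaling $\theta_j = \omega_j \frac{\beta\sigma}{\sqrt n}\sqrt{\log(1 + d/(2k))}$ with $\beta^2 = \alpha/8$, and the same appeal to Corollary~\ref{COR:bss2} for the upper bound. You are, if anything, more careful than the paper in two respects. First, you explicitly note that $\log(1 + d/(2k)) \asymp \log(ed/k)$ is what bridges the quantity coming out of the sparse VG construction to the quantity stated in the corollary; the paper leaves this implicit. Second, and more substantively, you observe that Lemma~\ref{LEM:sVG} carries the hypothesis $k \le d/8$, so the construction preceding the corollary does not directly cover $d/8 < k \le d$, and you supply the missing argument: $\log(ed/k) \asymp 1$ in that regime, $\cB_0(\lfloor d/8\rfloor) \subseteq \cB_0(k)$, and the dense VG construction on the first $\lfloor d/8\rfloor$ coordinates already delivers $\sigma^2 d/n \asymp \sigma^2 k \log(ed/k)/n$. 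The paper states the corollary for all $k$ but only formally treats the sparse-VG-compatible range, so your extra regime is a genuine (if small) patch rather than redundant bookkeeping. The monotonicity observation $\Theta_1 \subset \Theta_2 \Rightarrow \inf_{\hat\theta}\sup_{\Theta_1} \le \inf_{\hat\theta}\sup_{\Theta_2}$ you invoke is exactly the right tool and is correct. No gaps.
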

Note that the modified BIC estimator of Problem~\ref{EXO:sparse:betterbic} and the Slope \eqref{eq:ad} are also minimax optimal over $\cB_{0}(k)$.
However, unlike \( \thetals_{\cB_0(k)} \) and the modified BIC, the Slope is also adaptive to \( k \).
For any $\eps>0$, the Lasso estimator and the BIC estimator are minimax optimal for sets of parameters such that $k\le d^{1-\eps}$.

\subsection{Lower bounds for estimating vectors in $\ell_1$ balls}

Recall that in Maurey's argument, we approximated a vector $\theta$ such that $|\theta|_1=R$ by a vector $\theta'$ such that $|\theta'|_0=\frac{R}{\sigma}\sqrt{\frac{n}{\log d}}$\,. We can essentially do the same for the lower bound.

Assume that $d \ge \sqrt{n}$ and let $\beta \in (0,1)$ be a parameter to be chosen later and define $k$ to be the smallest integer such that
$$
k\ge \frac{R}{\beta\sigma}\sqrt{\frac{n}{\log (ed/\sqrt{n})}}\,.
$$
Let $\omega_1, \ldots, \omega_M$ be obtained from the sparse Varshamov-Gilbert Lemma~\ref{LEM:sVG} with this choice of $k$ and define
$$
\theta_j=\omega_j\frac{R}{k}\,.
$$
Observe that $|\theta_j|_1=R$ for $j=1,\ldots, M$.
We can check the conditions of Theorem~\ref{TH:mainLB}:
\begin{itemize}
\item[$(i)$] $\DS|\theta_j-\theta_k|_2^2=\frac{R^2}{k^2}\rho(\omega_{j}, \omega_{k})\ge \frac{R^2}{2k}\ge 4R\min\big(\frac{R}8, \beta^2\sigma\frac{\log (ed/\sqrt{n})}{8n}\big)$\,.
\item[$(ii)$] $\DS |\theta_j-\theta_k|_2^2 \le \frac{2R^2}{k} \le 4R\beta\sigma\sqrt{\frac{\log(ed/\sqrt{n})}{n}}\le \frac{2\alpha \sigma^2}{n}\log(M)$\,,
\end{itemize}
for $\beta$ small enough if $d \ge Ck$ for some constant $C>0$ chosen large enough. Applying now Theorem~\ref{TH:mainLB} yields
$$
\inf_{\hat \theta} \sup_{\theta \in \R^d}\p_\theta\big(|\hat \theta-\theta|_2^2 \ge R\min\big(\frac{R}8, \beta^2\sigma^2\frac{\log (ed/\sqrt{n})}{8n}\big)\big)\ge \frac{1}{2}-2\alpha\,.
$$
It implies the following corollary.
\begin{cor}
Recall that $\cB_{1}(R) \subset \R^d$ denotes the set vectors $\theta \in \R^d$ such that $|\theta|_1 \le R$. Then there exist a constant $C>0$ such that if $d \ge n^{1/2+\eps}$, $\eps>0$, the minimax rate of estimation over $\cB_{1}(R)$ in the Gaussian sequence model is $$\phi(\cB_{0}(k))=\min(R^2,R\sigma\frac{\log d}{n}) \,.$$ 
Moreover, it is attained by the constrained least squares estimator $\thetals_{\cB_1(R)}$ if $R\ge \sigma\frac{\log d}{n}$ and  by the trivial estimator $\hat \theta =0$ otherwise.
\end{cor}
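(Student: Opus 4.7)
The plan is to prove matching upper and lower bounds on the minimax risk over $\cB_1(R)$, handling separately the two regimes in the minimum that defines $\phi$. Throughout, I will work directly in the Gaussian sequence model, which is the case $n = d$ with orthonormal design, so that MSE equals $|\hat\theta - \theta^*|_2^2$ (cf.\ Section~\ref{SEC:fixed_Vs_random} and the reduction used in Section~\ref{sec:lb-gsm}).

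For the \emph{upper bound}, I would combine two estimators depending on the size of $R$. When $R$ is small, the trivial estimator $\hat\theta = 0$ achieves $|\hat\theta - \theta^*|_2^2 = |\theta^*|_2^2 \le |\theta^*|_1^2 \le R^2$ deterministically, since $|\cdot|_2 \le |\cdot|_1$. When $R$ is larger, I would use the constrained least-squares estimator $\thetals_{\cB_1(R)}$ and adapt Theorem~\ref{TH:ell1const}: the polytope $\X \cB_1(R)$ has $2d$ vertices of the form $\pm R \X_j$, and for the Gaussian sequence model $|\X_j|_2 = \sqrt{n}$ by assumption $\mathsf{ORT}$, so the same maximal-inequality bound in Theorem~\ref{TH:polytope} yields $\E\big[|\thetals_{\cB_1(R)} - \theta^*|_2^2\big] \lesssim R\sigma\sqrt{(\log d)/n}$. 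Taking the minimum of the two bounds and choosing the estimator accordingly gives the upper bound.

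For the \emph{lower bound} in the regime where the $\ell_1$ rate dominates, I would follow precisely the construction sketched immediately before the corollary: let $k$ be the smallest integer with $k \ge (R/(\beta\sigma))\sqrt{n/\log(ed/\sqrt n)}$, apply the sparse Varshamov--Gilbert Lemma~\ref{LEM:sVG} to obtain $\omega_1,\dots,\omega_M \in \{0,1\}^d$ with $|\omega_j|_0 = k$, $\rho(\omega_i,\omega_j) \ge k/2$, and $\log M \ge (k/8)\log(1 + d/(2k))$, and set $\theta_j = (R/k)\omega_j$, so that $|\theta_j|_1 = R$. Then $|\theta_i - \theta_j|_2^2 \in [R^2/(2k),\, 2R^2/k]$, and for $\beta$ small enough the two conditions of Theorem~\ref{TH:mainLB} are satisfied, producing a lower bound of order $R\sigma\sqrt{(\log d)/n}$. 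In the complementary regime where $R$ is small, I would instead apply the two-hypothesis Theorem~\ref{TH:LB2hyp} with $\theta_0 = 0$ and $\theta_1 = R e_1 \in \cB_1(R)$, yielding a lower bound of order $R^2$; this matches the trivial estimator's upper bound.

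The main technical obstacle is verifying, uniformly over $R$, the hypotheses of the sparse Varshamov--Gilbert lemma and Theorem~\ref{TH:mainLB} under the assumption $d \ge n^{1/2+\eps}$. Concretely, I need $k \le d/8$ for Lemma~\ref{LEM:sVG}, which combined with $k \asymp R\sqrt{n/\log d}/\sigma$ requires a polynomial gap between $d$ and $\sqrt n$; and I need $\log(1 + d/(2k)) \gtrsim \log d$ so that the $\log M$ bound really contributes a $\log d$ to the rate rather than a much smaller $\log(d/k)$. Both of these reduce to $k \le d^{1-\eps'}$ for some $\eps' > 0$, which is exactly what $d \ge n^{1/2+\eps}$ delivers. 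The remaining bookkeeping is to track the constants so that the threshold $R \asymp \sigma\sqrt{(\log d)/n}$ at which one switches from $\hat\theta = 0$ to $\thetals_{\cB_1(R)}$ coincides with the crossover point in the $\min$, so that the upper and lower bounds match to within a constant factor on both sides.
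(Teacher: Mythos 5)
Your overall structure mirrors the paper's: the upper bound combines the trivial estimator (small $R$) with the $\ell_1$-constrained least squares from Theorem~\ref{TH:ell1const}, rescaled to radius $R$, and the lower bound for large $R$ uses the sparse Varshamov--Gilbert packing sketched immediately before the corollary. You also correctly recover the rate $R\sigma\sqrt{(\log d)/n}$; the corollary's displayed $R\sigma\frac{\log d}{n}$ (and the threshold $R\ge\sigma\frac{\log d}{n}$) is missing a square root, as the packing calculation in the text makes clear.

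There is, however, a genuine gap in your lower bound for the small-$R$ regime. Theorem~\ref{TH:LB2hyp} requires $|\theta_0-\theta_1|_2^2 = 8\alpha^2\sigma^2/n$ with $\alpha\in(0,1/2)$, so with $\theta_0=0$ and $\theta_1=Re_1$ it only applies when $R\lesssim\sigma/\sqrt n$, and it then delivers $\phi\asymp R^2$. But the trivial estimator is optimal on the much wider range $R\lesssim\sigma\sqrt{(\log d)/n}$, and in the gap $\sigma/\sqrt n\lesssim R\lesssim\sigma\sqrt{(\log d)/n}$ a two-point argument caps out at $\phi\asymp\sigma^2/n$, a factor as large as $\log d$ below the target $R^2$. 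The remedy is what the paper's single construction already provides: use the sparse Varshamov--Gilbert hypotheses for \emph{all} $R$. When $R\lesssim\sigma\sqrt{(\log d)/n}$ the definition of $k$ floors out at $k=1$, the $\theta_j$ become $R e_{i_j}$ for $M$ with $\log M\asymp\log d$ distinct indices at pairwise squared distance $2R^2$, and Fano's inequality absorbs the Kullback--Leibler divergence as long as $nR^2/\sigma^2\lesssim\alpha\log M$, i.e.\ precisely $R\lesssim\sigma\sqrt{(\log d)/n}$, yielding $\phi\asymp R^2$ throughout. The $\min(R/8,\cdot)$ in the paper's condition $(i)$ is how it packages both regimes in a single packing; splitting into a two-point argument and an $M$-point argument as you propose leaves the middle of the range uncovered.
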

\begin{proof}
To complete the proof of the statement, we need to study the risk of the trivial estimator equal to zero for small $R$. Note that if $|\theta^*|_1\le R$, we have
\begin{equation*}
|0 -\theta^*|_2^2=|\theta^*|_2^2\le |\theta^*|_1^2=R^2\,. \qedhere
\end{equation*}
\end{proof}
\begin{rem}
Note that the inequality $|\theta^*|_2^2\le |\theta^*|_1^2$ appears to be quite loose. Nevertheless, it is tight up to a multiplicative constant for the vectors of the form $\theta_j=\omega_j\frac{R}{k}$ that are employed in the lower bound. Indeed, if $R\le \sigma\frac{\log d}{n}$, we have $k\le 2/\beta$ 
$$
|\theta_j|_2^2=\frac{R^2}{k}\ge \frac{\beta}{2}|\theta_j|_1^2\,.
$$
\end{rem}

\section{Lower bounds for sparse estimation via \texorpdfstring{\( \chi^2 \)}{χ2} divergence}

In this section, we will show how to derive lower bounds by directly controlling the distance between a simple and a composite hypothesis, which is useful when investigating lower rates for decision problems instead of estimation problems.

Define the \( \chi^2 \) divergence between two probability distributions \( \p, \q \) as
\begin{equation*}
	\chi^2( \p, \q) = \left\{
	\begin{aligned}
		\int \left( \frac{d\p}{d\q}-1 \right)^2 d\q \quad  &\text{if } \p \ll \q,\\
		\infty \quad &\text{otherwise.}
	\end{aligned}
	\right.
\end{equation*}
When we compare the expression in the case where \( \p \ll \q \), then we see that both the KL divergence and the \( \chi^2 \) divergence can be written as \( \int f \left( \frac{d\p}{d\q} \right) d\q \) with \( f(x) = x \log x \) and \( f(x) = (x-1)^2 \), respectively.
Also note that both of these functions are convex in \( x \) and fulfill \( f(1) = 0 \), which by Jensen’s inequality shows us that they are non-negative and equal to \( 0 \) if and only if \( \p = \q \).

Firstly, let us derive another useful expression for the \( \chi^2 \) divergence.
By expanding the square, we have
\begin{equation*}
	\chi^2(\p, \q) = \int \frac{(d\p)^2}{d\q} - 2 \int d\p + \int d\q = \int \left( \frac{d\p}{d\q} \right)^2 d\q - 1
\end{equation*}

Secondly, we note that we can bound the KL divergence from above by the \( \chi^2 \) divergence, via Jensen’s inequality, as
\begin{equation}
	\label{eq:ag}
	\KL(\p, \q) = \int \log \left( \frac{d\p}{d\q} \right) d\p \leq \log \int \frac{(d\p)^2}{d\q} = \log(1+\chi^2(\p, \q)) \leq \chi^2(\p, \q),
\end{equation}
where we used the concavity inequality \( \log(1+x) \leq x \) for \( x > -1 \).

Note that this estimate is a priori very rough and that we are transitioning from something at \( \log \) scale to something on a regular scale.
However, since we are mostly interested in showing that these distances are bounded by a small constant after adjusting the parameters of our models appropriately, it will suffice for our purposes.
In particular, we can combine \eqref{eq:ag} with the Neyman-Pearson Lemma \ref{LEM:Neyman} and Pinsker’s inequality, Lemma \ref{LEM:Pinsker}, to get
\begin{equation*}
	\inf_{\psi} \p_0(\psi = 1) + \p_1(\psi = 0) \geq \frac{1}{2} (1 - \sqrt{\varepsilon})
\end{equation*}
for distinguishing between two hypothesis with a decision rule \( \psi \) if \( \chi^2(\p_0, \p_1) \leq \varepsilon \).

In the following, we want to apply this observation to derive lower rates for distinguishing between
\begin{equation*}
	H_0 : Y \sim \p_0 \quad H_1 : Y \sim \p_u, \text{ for some } u \in B_0(k),
\end{equation*}
where \( \p_u \) denotes the probability distribution of a Gaussian sequence model \( Y = u + \frac{\sigma}{n} \xi \), with \( \xi \sim \cN(0, I_d) \).

\begin{thm}
	Consider the detection problem
	\begin{equation*}
		H_0: \theta^\ast = 0, \quad H_v : \theta^\ast  = \mu v, \; \text{for some } v \in \cB_0(k), \, \|v\|_2 = 1,
	\end{equation*}
	with data given by \( Y = \theta^\ast + \frac{\sigma}{\sqrt{n}}\xi \), \( \xi \sim \cN(0, I_d) \).

	There exist \( \varepsilon > 0 \) and \( c_\varepsilon > 0 \) such that if 
	\begin{equation*}
		\mu \leq \frac{\sigma}{2} \sqrt{\frac{k}{n}} \sqrt{\log \left( 1 + \frac{d \varepsilon}{k^2} \right)}
	\end{equation*}
	then,
	\begin{equation*}
		\inf_{\psi} \{ \p_0 (\psi = 1) \vee \max_{\substack{v \in \cB_0(k)\\\|v\|_2 = 1}} \p_v(\psi = 0) \} \geq \frac{1}{2}(1 - \sqrt{\varepsilon}).
	\end{equation*}
\end{thm}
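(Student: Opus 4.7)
The plan is to use a Bayesian reduction: replace the composite alternative by a prior $\pi$ and bound $\chi^2(\p_\pi,\p_0)$ directly, exploiting the Gaussian MGF.

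\smallskip\noindent\textbf{Reduction via a prior.} Let $\pi$ be the uniform distribution over the $\binom{d}{k}$ unit vectors $v_S = k^{-1/2}\mathbf{1}_S$, $S\subset[d]$, $|S|=k$, all lying in $\cB_0(k)\cap\cS^{d-1}$. Set $\p_\pi = \int \p_v\,d\pi(v)$. For any test $\psi$,
\begin{equation*}
	\max_{v}\p_v(\psi=0) \ge \p_\pi(\psi=0),\qquad \p_0(\psi=1)+\p_\pi(\psi=0)\ge 1-\TV(\p_0,\p_\pi).
\end{equation*}
Combining the two displays with $\TV(\p_0,\p_\pi)\le\sqrt{\KL(\p_\pi,\p_0)}\le\sqrt{\log(1+\chi^2(\p_\pi,\p_0))}\le\sqrt{\chi^2(\p_\pi,\p_0)}$ (Pinsker plus \eqref{eq:ag}), it suffices to prove that under the hypothesis on $\mu$ we have $\chi^2(\p_\pi,\p_0)\le\varepsilon$.

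\smallskip\noindent\textbf{Computing the $\chi^2$.} Under $\p_0$, $Y\sim\cN(0,\tfrac{\sigma^2}{n}I_d)$; under $\p_{\mu v}$ the likelihood ratio is $\exp(\tfrac{n\mu}{\sigma^2}\langle v,Y\rangle - \tfrac{n\mu^2}{2\sigma^2})$ since $\|v\|_2=1$. Writing $\chi^2(\p_\pi,\p_0)+1=\E_{Y\sim\p_0}\bigl[(d\p_\pi/d\p_0)^2\bigr]$, expanding the mixture over two independent copies $V,V'\sim\pi$, and evaluating the Gaussian MGF gives
\begin{equation*}
	\chi^2(\p_\pi,\p_0)+1 = \E_{V,V'\sim\pi}\!\left[\exp\!\Big(\tfrac{n\mu^2}{\sigma^2}\langle V,V'\rangle\Big)\right].
\end{equation*}
For $V=v_S$, $V'=v_{S'}$ we have $\langle V,V'\rangle = |S\cap S'|/k$, so with $\lambda := n\mu^2/(\sigma^2 k)$,
\begin{equation*}
	\chi^2(\p_\pi,\p_0)+1 = \E\!\left[\exp(\lambda Z)\right],\qquad Z = |S\cap S'|\sim \mathrm{Hypergeom}(d,k,k).
\end{equation*}

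\smallskip\noindent\textbf{Bounding the MGF.} By the classical result of Hoeffding, the hypergeometric MGF is dominated by the binomial one: $\E[e^{\lambda Z}]\le (1 + \tfrac{k}{d}(e^\lambda-1))^k \le \exp\!\bigl(\tfrac{k^2}{d}(e^\lambda-1)\bigr)$. Hence
\begin{equation*}
	\chi^2(\p_\pi,\p_0) \le \exp\!\Big(\tfrac{k^2}{d}(e^\lambda-1)\Big) - 1.
\end{equation*}
Choose $\varepsilon$ small enough that the hypothesis $\mu \le \tfrac{\sigma}{2}\sqrt{k/n}\sqrt{\log(1+d\varepsilon/k^2)}$ yields $\lambda \le \tfrac{1}{4}\log(1+d\varepsilon/k^2)$, hence $e^\lambda-1 \le \tfrac{d\varepsilon}{k^2}\cdot c$ for an absolute $c<1$, so $\chi^2(\p_\pi,\p_0)\le e^{c\varepsilon}-1\le \varepsilon$ after possibly shrinking $\varepsilon$. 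Plugging into the first display concludes $\p_0(\psi=1)\vee\max_v\p_v(\psi=0)\ge\tfrac12(1-\sqrt\varepsilon)$.

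\smallskip\noindent\textbf{Where the difficulty sits.} The only delicate part is the MGF bound for $Z$: one must justify the binomial domination of the hypergeometric (not a completely trivial fact, but standard), and then keep careful track of the numerical constants so that the prescribed constant $1/2$ in front of $\sigma\sqrt{k/n}\sqrt{\log(1+d\varepsilon/k^2)}$ is enough to guarantee $\chi^2\le\varepsilon$. Everything else (the Bayesian reduction and the Gaussian MGF calculation) is routine.
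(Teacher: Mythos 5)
Your proposal is correct and follows the same strategy as the paper: reduce to a uniform mixture over $k$-sparse supports, compute the $\chi^2$ divergence via the Gaussian MGF, reduce to the moment generating function of the overlap $|S\cap S'|$, dominate by a binomial, and close with Pinsker and the Neyman--Pearson reduction. Two small refinements in your argument are worth noting, since they in fact tighten the paper's exposition. First, you carry out the MGF computation exactly and arrive at $\chi^2(\p_\pi,\p_0)+1 = \E\bigl[\exp(\lambda\,|S\cap S'|)\bigr]$ with $\lambda = n\mu^2/(\sigma^2 k)$; the paper's proof instead bounds $|S\Delta T|\le 2k$ and works with $\exp(2\nu^2|S\cap T|)$, introducing a spurious factor of $2$ (since $|S\Delta T| = 2k - 2|S\cap T|$ exactly, the cross terms cancel and the sharp exponent is $\nu^2|S\cap T|=\lambda|S\cap S'|$). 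Second, you correctly invoke Hoeffding's theorem comparing sampling without replacement to sampling with replacement in the convex order; the paper asserts that $|S\cap T|$ is ``stochastically dominated'' by $\Bin(k,k/d)$, but since the two distributions share the same mean this cannot hold in the CDF sense --- what is true, and what is needed to transfer the MGF bound, is precisely the convex-order statement you cite.
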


\begin{proof}
We introduce the mixture hypothesis
\begin{equation*}
\bar{\p} = \frac{1}{\binom{d}{k}} \sum_{\substack{S \subseteq [d]\\|S| = k}} \p_S,
\end{equation*}
with \( \p_S \) being the distribution of \( Y = \mu \1_S/\sqrt{k} + \frac{\sigma}{\sqrt{n}} \xi \) and give lower bounds on distinguishing
\begin{equation*}
	H_0 : Y \sim \p_0, \quad H_1 : Y \sim \bar{\p},
\end{equation*}
by computing their \( \chi^2 \) distance,
\begin{align*}
	\chi^2(\bar{\p}, \p_0) = {} & \int \left( \frac{d \bar{p}}{d \p_0} \right)^2 d \p_0 - 1\\
	= {} & \frac{1}{\binom{d}{k} } \sum_{S, T} \int \left( \frac{d \p_S}{d \p_0} \frac{d \p_T}{d \p_0} \right) d \p_0 - 1.
\end{align*}
The first step is to compute \( d \p_S / (d \p_0) \).
Writing out the corresponding Gaussian densities yields
\begin{align*}
	\frac{d \p_S}{d \p_0}(X) = {} & \frac{\frac{1}{\sqrt{2 \pi}^d} \exp(-\frac{n}{2 \sigma^2} \| X - \mu \1_S /\sqrt{k}\|_2^2)}{\frac{1}{\sqrt{2 \pi}^d} \exp(-\frac{n}{2 \sigma^2} \| X - 0\|_2^2)}\\
	= {} & \exp \left( \frac{n}{2 \sigma^2} \left( \frac{2 \mu}{\sqrt{k}} \langle X , \1_S \rangle - \mu^2 \right) \right)
\end{align*}
For convenience, we introduce the notation \( X = \frac{\sigma}{\sqrt{n}} Z \) for a standard normal \( Z \sim \cN(0, I_d) \), \( \nu = \mu \sqrt{n}/(\sigma \sqrt{k}) \), and write \( Z_S \) for the restriction to the coordinates in the set \( S \).
Multiplying two of these densities and integrating with respect to \( \p_0 \) in turn then reduces to computing the mgf of a Gaussian and gives
\begin{align*}
	\leadeq{\E_{X \sim \p_0} \left[ \exp \left( \frac{n}{2 \sigma^2} \left( 2 \mu \frac{\sigma}{\sqrt{kn}} (Z_S + Z_T) - 2 \mu^2 \right) \right) \right]}\\
	= {} & \E_{Z \sim \cN(0, I_d)} \exp \left( \nu (Z_S + Z_T) - \nu^2 k \right).
\end{align*}
Decomposing \( Z_S + Z_T = 2 \sum_{i \in S \cap T} Z_i + \sum_{i \in S \Delta T} Z_i \) and noting that \( | S \Delta T | \leq 2 k \), we see that
\begin{align*}
	\E_{\p_0} \left( \frac{d \p_S}{d \p_0} \frac{d \p_T}{d \p_0} \right)
	= {} & \exp \left( \frac{4}{2} \nu^2 | S \cap T + \frac{\nu^2}{2} | S \Delta T | - \nu^2 k \right)\\
	\leq {} & \exp \left( 2 \nu^2 | S \cap T | \right).
\end{align*}

Now, we need to take the expectation over two uniform draws of support sets \( S \) and \( T \), which reduces via conditioning and exploiting the independence of the two draws to
\begin{align*}
	\chi^2( \bar{\p}, \p_0) \leq {} & \E_{S, T} [ \exp (2 \nu^2 | S \cap T |] -1\\
		= {} & \E_{T} \E_{S}[ [ \exp (2 \nu^2 | S \cap T | \,|\, T] ]  - 1\\
			= {} & \E_S [ \exp (2 \nu^2 | S \cap [k] | ) ] - 1.
\end{align*}
Similar to the proof of the sparse Varshamov-Gilbert bound, Lemma \ref{LEM:sVG}, the distribution of \( | S \cap [k] | \) is stochastically dominated by a binomial distribution \( \Bin(k, k/d) \), so that
\begin{align*}
	\chi^2( \bar{\p}, \p_0)
	\leq {} & \E[ \exp(2 \nu^2 \Bin(k, \frac{k}{d})] - 1\\
		= {} & \left( \E[ \exp (2 \nu^2 \Bern(k/d))] \right)^k - 1\\
		= {} & \left( \e^{2 \nu^2} \frac{k}{d} + \left( 1 - \frac{k}{d} \right) \right)^k - 1\\
		= {} & \left( 1 + \frac{k}{d} (\e^{2 \nu^2} - 1) \right)^k - 1
\end{align*}
Since \( (1 + x)^{k} - 1 \approx k x \) for \( x = o(1/k) \), we can take \( \nu^2 = \frac{1}{2} \log(1 + \frac{d}{k^2} \varepsilon) \) to get a bound of the order \( \varepsilon \).
Plugging this back into the definition of \( \nu \) yields
\begin{equation*}
	\mu = \frac{\sigma}{2} \sqrt{\frac{k}{n}} \sqrt{\log \left( 1 + \frac{d \varepsilon}{k^2} \right)}. 
\end{equation*}
The rate for detection for arbitrary \( v \) now follows from
\begin{align*}
		\p_0 (\psi = 1) \vee \max_{\substack{v \in \cB_0(k)\\\|v\|_2 = 1}} \p_v(\psi = 0) 
		\geq {} &  \p_0 (\psi = 1) \vee \bar{\p}(\psi = 0) \\
		\geq {} & \frac{1}{2}(1 - \sqrt{\varepsilon}). \qedhere
\end{align*}
\end{proof}

\begin{rem}
	If instead of the \( \chi^2 \) divergence, we try to compare the \( \KL \) divergence between \( \p_0 \) and \( \bar{\p} \), we are tempted to exploit its convexity properties to get
\begin{equation*}
	\KL(\bar{\p}, \p_0) \leq \frac{1}{\binom{d}{k} } \sum_{S} \KL(\p_S, \p_0) = \KL(\p_{[k]}, \p_0),
\end{equation*}
which is just the distance between two Gaussian distributions.
In turn, we do not see any increase in complexity brought about by increasing the number of competing hypotheses as we did in Section \ref{sec:lb-gsm}.
By using the convexity estimate, we have lost the granularity of controlling how much the different probability distributions overlap.
This is where the \( \chi^2 \) divergence helped.
\end{rem}

\begin{rem}
	This rate for detection implies lower rates for estimation of both \( \theta^\ast \) and \( \| \theta^\ast \|_2 \):
	If given an estimator \( \widehat{T}(X) \) for \( T(\theta) = \| \theta \|_2 \) that achieves an error less than \( \mu/2 \) for some \( X \), then we would get a correct decision rule for those \( X \) by setting
	\begin{equation*}
		\psi(X) = \left\{
		\begin{aligned}
			0, & \quad \text{if } \widehat{T}(X)  < \frac{\mu}{2},\\
			1, & \quad \text{if }  \widehat{T}(X)  \geq \frac{\mu}{2}.
		\end{aligned}
		\right.
	\end{equation*}
	Conversely, the lower bound above means that such an error rate can not be achieved with vanishing error probability for \( \mu \) smaller than the critical value.
	Moreover, by the triangle inequality, we can the same lower bound also for estimation of \( \theta^\ast \) itself.

	We note that the rate that we obtained is slightly worse in the scaling of the log factor (\( d/k^2 \) instead of \( d/k \)) than the one obtained in Corollary \ref{cor:minimax-sparse}.
	This is because it applies to estimating the \( \ell_2 \) norm as well, which is a strictly easier problem and for which estimators can be constructed that achieve the faster rate of \( \frac{k}{n} \log(d/k^2) \).
\end{rem}

\section{Lower bounds for estimating the \texorpdfstring{\( \ell_1 \)}{ℓ1} norm via moment matching}

So far, we have seen many examples of rates that scale with \( n^{-1} \) for the squared error.
In this section, we are going to see an example for estimating a functional that can only be estimated at a much slower rate of \( 1/\log n \), following \cite{CaiLow11}.

Consider the normalized \( \ell_1 \) norm,
\begin{equation}
	\label{eq:aw}
	T(\theta) = \frac{1}{n} \sum_{i = 1}^{n} | \theta_i |,
\end{equation}
as the target functional to be estimated, and measurements
\begin{equation}
	\label{eq:at}
	Y \sim N(\theta, I_n).
\end{equation}
We are going to show lower bounds for the estimation if restricted to an \( \ell_\infty \) ball, \( \Theta_n(M) = \{ \theta \in \R^n : | \theta_i | \leq M \} \), and for a general \( \theta \in \R^n \).

\subsection{A constrained risk inequality}
We start with a general principle that is in the same spirit as the Neyman-Pearson lemma, (Lemma \ref{LEM:Neyman}), but deals with expectations rather than probabilities and uses the \( \chi^2 \) distance to estimate the closeness of distributions.
Moreover, contrary to what we have seen so far, it allows us to compare two mixture distributions over candidate hypotheses, also known as priors.

In the following, we write \( X \) for a random observation coming from a distribution indexed by \( \theta \in \Theta \), \( \widehat{T} = \widehat{T}(X) \) an estimator for a function \( T(\theta) \), and write \( B(\theta) = \E_\theta \widehat{T} - T(\theta) \) for the bias of \( \widehat{T} \).

Let \( \mu_0 \) and \( \mu_1 \) be two prior distributions over \( \Theta \) and denote by \( m_i \), \( v_i^2 \) the means and variance of \( T(\theta) \) under the priors \( \mu_i \), \( i \in \{0, 1\} \),
\begin{equation*}
	\label{eq:au}
	m_i = \int T(\theta) d \mu_i(\theta), \quad \int v_i^2 = \int (T(\theta) - m_i)^2 d \mu_i (\theta).
\end{equation*}
Moreover, write \( F_i \) for the marginal distributions over the priors and denote their density with respect to the average of the two probability distributions by \( f_i \).
With this, we write the \( \chi^2 \) distance between \( f_0 \) and \( f_1 \) as
\begin{equation*}
	\label{eq:av}
	I^2 = \chi^2(\p_{f_0}, \p_{f_1}) = \E_{f_0} \left( \frac{f_1(X)}{f_0(X)} - 1 \right)^2 = \E_{f_0} \left( \frac{f_1(X)}{f_0(X)} \right)^2 - 1.
\end{equation*}

\begin{thm}
	\label{thm:constr_risk}
	(i) If \( \int \E_\theta (\widehat{T}(X) - T(\theta))^2 d\mu_0(\theta) \leq \varepsilon^2 \),
	\begin{equation}
		\label{eq:constr_risk}
		\left| \int B(\theta) d \mu_1(\theta) - \int B(\theta) d \mu_0(\theta) \right| \geq | m_1 - m_0| - (\varepsilon + v_0) I.
	\end{equation}

	(ii) If \( | m_1 - m_0 | > v_0 I\) and \( 0 \leq \lambda \leq 1 \),
	\begin{equation}
		\label{eq:mixture_risk}
		\int \E_\theta( \widehat{T}(X) - T(\theta))^2 d (\lambda \mu_0 + (1- \lambda) \mu_1)(\theta)
		\geq \frac{\lambda(1-\lambda)(|m_1 - m_0| - v_0 I)^2}{\lambda + (1-\lambda)(I + 1)^2},
	\end{equation}
	in particular
	\begin{equation}
		\label{eq:minimax_risk_mixture}
		\max_{i \in \{0, 1\}} \int \E_\theta( \widehat{T}(X) - T(\theta))^2 d \mu_i(\theta) \geq \frac{(|m_1 - m_0| - v_0 I)^2}{(I + 2)^2},
	\end{equation}
	and
	\begin{equation}
		\label{eq:minimax_risk}
		\sup_{\theta \in \Theta} \E_\theta( \widehat{T}(X) - T(\theta))^2 \geq \frac{(|m_1 - m_0| - v_0 I)^2}{(I + 2)^2}.
	\end{equation}
\end{thm}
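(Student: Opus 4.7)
The plan is to reduce everything to a single application of Cauchy--Schwarz on the marginals \( F_0, F_1 \), and then convert the resulting control on the gap of average biases into a risk lower bound via Jensen's inequality and an elementary optimization.

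For part (i), write \( \int B(\theta)\,d\mu_i(\theta) = \E_{F_i}\widehat{T} - m_i \), so that
\[
\int B\,d\mu_1 - \int B\,d\mu_0 = (\E_{F_1}\widehat{T} - \E_{F_0}\widehat{T}) - (m_1 - m_0).
\]
By the triangle inequality it suffices to upper-bound \( |\E_{F_1}\widehat{T} - \E_{F_0}\widehat{T}| \) by \( (\varepsilon+v_0)I \). The key trick is to re-center: since \( \int (f_1-f_0)\,d\nu = 0 \), one may replace \( \widehat{T} \) by \( \widehat{T}-m_0 \) and then apply Cauchy--Schwarz against the likelihood-ratio deviation \( f_1/f_0 - 1 \) under \( F_0 \), which gives \( |\E_{F_1}\widehat{T} - \E_{F_0}\widehat{T}| \le \sqrt{\E_{F_0}(\widehat{T}-m_0)^2}\cdot I \). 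Decomposing \( \widehat{T}-m_0 = (\widehat{T}-T(\theta)) + (T(\theta)-m_0) \) and using the \( L^2 \) triangle inequality under the joint distribution of \( (\theta,X) \) on the \( \mu_0 \)-side then yields \( \sqrt{\E_{F_0}(\widehat{T}-m_0)^2} \le \varepsilon + v_0 \), completing (i).

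For part (ii), apply (i) with \( \varepsilon = \sqrt{R_0} \), where \( R_i := \int \E_\theta(\widehat{T}-T(\theta))^2\,d\mu_i(\theta) \), and combine with the Jensen-type bound \( \bigl|\int B\,d\mu_i\bigr| \le \sqrt{R_i} \) to obtain the master inequality
\[
\sqrt{R_1} + (1+I)\sqrt{R_0} \;\ge\; A, \qquad A := |m_1-m_0| - v_0 I > 0.
\]
Then \eqref{eq:mixture_risk} follows from the weighted Cauchy--Schwarz estimate
\[
A^2 \le \left(\frac{1}{1-\lambda} + \frac{(1+I)^2}{\lambda}\right)\bigl((1-\lambda)R_1 + \lambda R_0\bigr),
\]
after rearranging. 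The bound \eqref{eq:minimax_risk_mixture} follows instead from the crude comparison \( \sqrt{R_1}+(1+I)\sqrt{R_0} \le (2+I)\sqrt{\max(R_0,R_1)} \). Finally, \eqref{eq:minimax_risk} is immediate from \eqref{eq:minimax_risk_mixture} since the supremum over \( \theta\in\Theta \) dominates any Bayes risk under a prior supported on \( \Theta \).

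The only real subtlety is the centering step in part (i): a direct application of Cauchy--Schwarz to \( \int \widehat{T}(f_1-f_0)\,d\nu \) produces \( \sqrt{\E_{F_0}\widehat{T}^2}\cdot I \), which cannot be controlled by \( \varepsilon \) and \( v_0 \) alone. Re-centering by \( m_0 \) (the mean under the prior on which we have risk control) rather than by some symmetric quantity is what makes only \( v_0 \) appear in the final bound, and is the asymmetric feature we exploit in part (ii) by placing \( \mu_0 \) on the side where the risk is smaller.
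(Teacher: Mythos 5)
Your proof is correct, and part (i) is essentially the paper's argument: rewriting $\int B\,d\mu_i = \E_{F_i}\widehat T - m_i$ and bounding the first moment gap under the likelihood-ratio deviation is precisely the quantity the paper computes, and the centering at $m_0$ followed by the $L^2$ triangle decomposition $\widehat T - m_0 = (\widehat T - T(\theta)) + (T(\theta)-m_0)$ is the paper's bound $\E_{F_0}(\widehat T - m_0)^2 \le (\varepsilon + v_0)^2$ in disguise. Part (ii) is where you genuinely deviate: the paper extracts a pointwise linear inequality $\sqrt{R_1}\ge (a - b\sqrt{R_0})_+$ and then explicitly minimizes $J(x)=\lambda x^2 + (1-\lambda)(a-bx)^2$, taking care of the positivity cutoff by a separate check; you instead keep the master inequality $\sqrt{R_1}+(1+I)\sqrt{R_0}\ge A$ in linear form and hit it with a weighted Cauchy--Schwarz to get the mixture bound in one line, with the cutoff issue disappearing automatically since Cauchy--Schwarz does not care about signs. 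These two steps are Lagrangian duals of each other and produce the identical constant, but your route is cleaner. Similarly, for \eqref{eq:minimax_risk_mixture} you dominate the linear form by $(2+I)\sqrt{\max(R_0,R_1)}$ directly rather than substituting the optimal $\lambda=(I+1)/(I+2)$ back into \eqref{eq:mixture_risk}. One small bonus: you correctly instantiate $\varepsilon=\sqrt{R_0}$, whereas the paper sets $\varepsilon^2 = \int B^2\,d\mu_0$, which by Jensen is in general \emph{smaller} than $R_0$ and so does not actually satisfy the hypothesis of part (i); the paper's final bound is unharmed because the argument still works with $\varepsilon=\sqrt{R_0}$, but your choice is the one that makes the application of (i) legitimate.
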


\begin{proof}
  Without loss of generality, assume \( m_1 \geq m_0 \).
	We start by considering the term
	\begin{align*}
		\leadeq{\E_{f_0} \left[ ( \widehat{T}(X) - m_0 ) \left( \frac{f_1(X) - f_0(X)}{f_0(X)} \right) \right]}\\
		= {} & \E \left[ \widehat{T}(X)  \left( \frac{f_1(X) - f_0(X)}{f_0(X)} \right) \right]\\
		= {} & \E_{f_1} \left[ m_1 + \widehat{T}(X) - m_1 \right] - \E_{f_0} \left[ m_0 + \widehat{T}(X) - m_0 \right]\\
		= {} & m_1 + \int B(\theta) d \mu_1(\theta) - \left( m_0 + \int B(\theta) d \mu_0 (\theta) \right).
	\end{align*}
	Moreover,
	\begin{align*}
		\E_{f_0} (\widehat{T}(X) - m_0)^2 = {} & \int \E_\theta (\widehat{T}(X) -m_0)^2 d \mu_0(\theta)\\
		= {} & \int \E_\theta (\widehat{T}(X) - T(\theta) + T(\theta) - m_0)^2 d \mu_0 (\theta)\\
		= {} & \int \E_\theta (\widehat{T}(X) - T(\theta))^2 d \mu_0 ( \theta )\\
		{} & + 2 \int B(\theta) (T(\theta) - m_0) d \mu_0 (\theta)\\
		{} & + \int (T(\theta) - m_0)^2 d \mu_0 ( \theta )\\
		\leq {} & \varepsilon^2 + 2 \varepsilon v_0 + v_0^2 = (\varepsilon + v_0)^2.
	\end{align*}
	Therefore, by Cauchy-Schwarz,
	\begin{align*}
		\E_{f_0} \left[ ( \widehat{T}(X) - m_0 ) \left( \frac{f_1(X) - f_0(X)}{f_0(X)} \right) \right]
		\leq {} & (\varepsilon + v_0) I,
	\end{align*}
	and hence
	\begin{equation*}
		m_1 + \int B(\theta) d \mu_1(\theta) - \left( m_0 + \int B(\theta) d \mu_0 (\theta) \right) \leq (\varepsilon + v_0) I,
	\end{equation*}
	whence
	\begin{equation*}
		\label{eq:ac}
		\int B(\theta) d \mu_0(\theta) - \int B(\theta) d \mu_1 (\theta) \geq m_1 - m_0 - (\varepsilon + v_0) I,
	\end{equation*}
	which gives us \eqref{eq:constr_risk}.

	To estimate the risk under a mixture of \( \mu_0 \) and \( \mu_1 \), we note that from \eqref{eq:constr_risk}, we have an estimate for the risk under \( \mu_1 \) given an upper bound on the risk under \( \mu_0 \), which means we can reduce this problem to estimating a quadratic from below.
	Consider
	\begin{equation*}
		J(x) = \lambda x^2 + (1-\lambda) (a - bx)^2,
	\end{equation*}
	with \( 0 < \lambda < 1 \), \( a > 0 \) and \( b > 0 \).
	\( J \) is minimized by \( x = x_{\mathrm{min}} = \frac{a b (1- \lambda)}{\lambda + b^2 (1 - \lambda)} \) with \( a - b x_{\mathrm{min}} > 0 \) and \( J(x_\mathrm{min}) = \frac{a^2 \lambda (1-\lambda)}{\lambda + b^2(1 - \lambda)} \).
	Hence, if we add a cut-off at zero in the second term, we do not change the minimal value and obtain that
	\begin{equation*}
		\lambda x^2 + (1-\lambda)((a - bx)_{+})^2
	\end{equation*}
	has the same minimum.

	From \eqref{eq:constr_risk} and Jensen’s inequality, setting \( \varepsilon^2 = \int B(\theta)^2 d \mu_0(\theta) \), we get
	\begin{align*}
			\int B(\theta)^2 d \mu_1 ( \theta)
			\geq {} & \left( \int B(\theta) d \mu_1 (\theta) \right)^2\\
			\geq {} & ((m_1 - m_0 - (\varepsilon + v_0) I - \varepsilon)_{+})^2.
	\end{align*}
	Combining this with the estimate for the quadratic above yields
	\begin{align*}
		\leadeq{\int \E_\theta( \widehat{T}(X) - T(\theta))^2 d (\lambda \mu_0 + (1- \lambda) \mu_1)(\theta)} \\
		\geq {} & \lambda \varepsilon^2 + (1-\lambda)((m_1 - m_0 - (\varepsilon + v_0) I - \varepsilon)_{+})^2\\
		\geq {} & \frac{\lambda(1-\lambda) (|m_1 - m_0 | - v_0 I)^2}{\lambda + (1-\lambda) (I+1)^2},
	\end{align*}
	which is \eqref{eq:mixture_risk}.
	
	Finally, since the minimax risk is bigger than any mixture risk, we can bound it from below by the maximum value of this bound, which is obtained at \( \lambda = (I+1)/(I + 2) \) to get \eqref{eq:minimax_risk_mixture}, and by the same argument \eqref{eq:minimax_risk}.
\end{proof}

\subsection{Unfavorable priors via polynomial approximation}
In order to construct unfavorable priors for the \( \ell_1 \) norm \eqref{eq:aw}, we resort to polynomial approximation theory.
For a continuous function \( f \) on \( [-1, 1] \), denote the maximal approximation error by polynomials of degree \( k \), written as \( \mathcal{P}_k \), by
\begin{equation*}
	\delta_k(f) = \inf_{G \in \mathcal{P}_k} \max_{x \in [-1, 1]} | f(x) - G(x) |,
\end{equation*}
and the polynomial attaining this error by \( G_k^\ast \).
For \( f(t) = |t| \), it is known \cite{Riv90} that
\begin{equation*}
	\beta_\ast = \lim_{k \to \infty} 2 k \delta_{2k}(f) \in (0, \infty),
\end{equation*}
which is a very slow rate of convergence of the approximation, compared to smooth functions for which we would expect geometric convergence.

We can now construct our priors using some abstract measure theoretical results.

\begin{lem}
	\label{lem:minimax_priors}
	For an integer \( k > 0 \), there are two probability measures \( \nu_0 \) and \( \nu_1 \) on \( [-1, 1] \) that fulfill the following:
	\begin{enumerate}
		\item \( \nu_0 \) and \( \nu_1 \) are symmetric about 0;
		\item \( \int t^l d \nu_1 (t) = \int t^l d \nu_0(t) \) for all \( l \in \{0, 1, \dots, k \} \);
		\item \(\int |t| d \nu_1(t) - \int | t | d \nu_0(t) = 2 \delta_k \).
	\end{enumerate}
\end{lem}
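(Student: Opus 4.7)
The plan is to invoke a standard duality principle from Chebyshev approximation theory and then extract the two probability measures via a Jordan decomposition of a signed optimizer.

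First, I would establish the dual characterization of best uniform polynomial approximation. By the Hahn--Banach theorem applied to the quotient $C([-1,1])/\mathcal{P}_k$, combined with the Riesz representation of continuous linear functionals on $C([-1,1])$ as signed Borel measures, one obtains
\[
\delta_k \;=\; \sup\Big\{ \int_{-1}^{1} |t|\, d\mu(t) \,:\, \mu \text{ signed Borel measure on } [-1,1],\ \|\mu\|_{TV} \leq 1,\ \int t^l\, d\mu = 0 \text{ for } l=0,\dots,k \Big\}.
\]
The unit ball of signed measures is weak-$*$ compact (Banach--Alaoglu), the moment constraints are weak-$*$ closed, and $\mu \mapsto \int |t|\,d\mu$ is weak-$*$ continuous, so the supremum is attained at some $\mu^{*}$. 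Since $\delta_k>0$ (because $|t|\notin\mathcal{P}_k$), a rescaling argument forces $\|\mu^{*}\|_{TV}=1$ at the optimum.

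Second, I would symmetrize. Let $\sigma(t)=-t$ and replace $\mu^{*}$ by $\tilde\mu:=\tfrac12(\mu^{*}+\sigma_{*}\mu^{*})$. Because $|t|$ is even, the objective value is unchanged; all moment constraints are preserved (even moments are invariant under pushforward by $\sigma$, and odd moments already vanish). Thus $\tilde\mu$ is still an optimizer and is, by construction, invariant under $t\mapsto -t$. Relabel $\mu^{*}:=\tilde\mu$, so that from now on $\mu^{*}$ is symmetric about $0$.

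Third, I would perform the Jordan decomposition $\mu^{*}=\mu_{+}-\mu_{-}$ into mutually singular non-negative measures. By uniqueness of that decomposition together with the symmetry of $\mu^{*}$, both $\mu_{\pm}$ are themselves symmetric. The vanishing zeroth moment gives $\mu_{+}([-1,1])=\mu_{-}([-1,1])=:c$, and $\|\mu^{*}\|_{TV}=2c=1$ yields $c=1/2$. Setting $\nu_{0}:=2\mu_{-}$ and $\nu_{1}:=2\mu_{+}$ then produces two symmetric probability measures on $[-1,1]$; property (2) is the identity $2\int t^{l}\,d\mu^{*}=0$ for $l=0,\dots,k$, and property (3) is $2\int |t|\,d\mu^{*}=2\delta_{k}$.

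The hard part will be the duality formula itself; the remaining steps are essentially bookkeeping. Rigorously, one either cites a standard reference on Chebyshev approximation (Krein-type duality) or gives a self-contained Hahn--Banach argument: extend the linear functional $G\mapsto \int(|t|-G(t))\,d\mu$ on $\mathcal{P}_k$ to all of $C([-1,1])$ without increasing its norm, and check that such extensions correspond exactly to signed measures annihilating $\mathcal{P}_k$ with total variation bounded by the operator norm. Once the duality is in hand, the symmetrization and the Jordan-decomposition step cost almost nothing.
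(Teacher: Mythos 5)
Your proposal is correct and follows essentially the same route as the paper's own proof: both rely on the Hahn--Banach theorem, the Riesz representation theorem, and the Hahn--Jordan decomposition of the resulting signed measure, with a symmetrization step to get symmetric measures. The only cosmetic differences are that you phrase the key step through the dual extremal problem (with a Banach--Alaoglu compactness argument for attainment) rather than the paper's direct construction of the functional $T$ on $\mathcal{F}_k = \operatorname{span}(\mathcal{P}_k \cup \{f\})$ with $\|T\|=1$, and that you symmetrize before rather than after the Jordan decomposition, which costs you the extra (correct) observation that the Jordan parts of a symmetric signed measure are themselves symmetric.
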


\begin{proof}
	The idea is to construct the measures via the Hahn-Banach theorem and the Riesz representation theorem.

	First, consider \( f(t) = |t| \) as an element of the space \( C([-1, 1]) \) equipped with the supremum norm and define \( \mathcal{P}_k \) as the space of polynomials of degree up to \( k \), and \( \mathcal{F}_k := \operatorname{span}(\mathcal{P}_k \cup \{ f \}) \).
	On \( \mathcal{F}_k \), define the functional \( T(c f + p_k) = c \delta_k \), which is well-defined because \( f \) is not a polynomial.

	\textbf{Claim: } \( \| T \| = \sup \{ T(g) : g \in \mathcal{F}_k, \| g \|_\infty \leq 1\} = 1 \).

	\( \| T \| \geq 1 \): Let \( G_k^\ast \) be the best-approximating polynomial to \( f \) in \( \mathcal{P}_k \).
	Then, \( \| f - G_k^\ast \|_\infty = \delta_k \), \( \| (f - G^\ast_k)/\delta_k \|_\infty = 1 \), and \( T((f - G^\ast_k)/\delta_k) = 1 \).

	\( \| T \| \leq 1 \): Suppose \( g = cf + p_k \) with \( p_k \in \mathcal{P}_k \), \( \| g \|_\infty = 1 \) and \( T(g) > 1 \).
	Then \( c > 1/\delta_k \) and
	\begin{equation*}
		\| f - (-p_k/c) \|_\infty = \frac{1}{c} < \delta_k,
	\end{equation*}
	contradicting the definition of \( \delta_k \).

	Now, by the Hahn-Banach theorem, there is a norm-preserving extension of \( T \) to \( C([-1, 1]) \) which we again denote by \( T \).
	By the Riesz representation theorem, there is a Borel signed measure \( \tau \) with variation equal to \( 1 \) such that
	\begin{equation*}
		T(g) = \int_{-1}^{1} g(t) d \tau(t), \quad \text{for all } g \in C([-1, 1]).
	\end{equation*}
	The Hahn-Jordan decomposition gives two positive measures \( \tau_+ \) and \( \tau_- \) such that \( \tau = \tau_+ - \tau_- \) and
	\begin{align*}
		\int_{-1}^{1} |t| d ( \tau_+ - \tau_-)(t) = {} & \delta_k \text{ and}\\
		\int_{-1}^{1} t^l d \tau_+ (t) = {} & \int_{-1}^1 t^l d \tau_-(t), \quad l \in \{0, \dots, k\}.
	\end{align*}
	Since \( f \) is a symmetric function, we can symmetrize these measures and hence can assume that they are symmetric about \( 0 \).
	Finally, to get a probability measures with the desired properties, set \( \nu_1 = 2 \tau_+ \) and \( \nu_0 = 2 \tau_- \).
\end{proof}

\subsection{Minimax lower bounds}

\begin{thm}
	We have the following bounds on the minimax rate for \( T(\theta) = \frac{1}{n} \sum_{i = 1}^{n} | \theta_i| \):
	For \( \theta \in \Theta_n(M) = \{ \theta \in \R^n : | \theta_i | \leq M \} \),
	\begin{equation}
		\label{eq:ai}
		\inf_{\widehat{T}} \sup_{\theta \in \Theta_n(M)} \E_\theta( \widehat{T}(X) - T(\theta))^2 \geq \beta_\ast^2 M^2 \left( \frac{\log \log n}{\log n} \right)^2(1 + o(1)).
	\end{equation}
	For \( \theta \in \R^n \),
	\begin{equation}
		\label{eq:aj}
		\inf_{\widehat{T}} \sup_{\theta \in \R^n} \E_\theta( \widehat{T}(X) - T(\theta))^2 \geq \frac{\beta_\ast^2}{16 e^2 \log n}(1 + o(1)).
	\end{equation}
\end{thm}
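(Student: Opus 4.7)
The plan is to apply the constrained risk inequality, Theorem~\ref{thm:constr_risk}(ii), to two product priors on $\R^n$ built from the measures of Lemma~\ref{lem:minimax_priors}. Fix an even integer $k$ and a scale $M > 0$, both to be chosen later. Applying Lemma~\ref{lem:minimax_priors} and pushing forward under $s \mapsto Ms$ produces symmetric measures $\mu_0, \mu_1$ on $[-M, M]$ whose moments agree up to order $k$ and for which $\int|t|\,d(\mu_1 - \mu_0)(t) = 2M\delta_k$. Take $\pi_j := \mu_j^{\otimes n}$ as the prior on $\theta$, both supported in $\Theta_n(M)$. Under $\pi_j$ the functional $T(\theta) = n^{-1}\sum_i|\theta_i|$ has mean $m_j = \int|t|\,d\mu_j(t)$, so $m_1 - m_0 = 2M\delta_k$, and variance $v_j^2 = \mathrm{Var}_{\mu_j}(|\cdot|)/n \le M^2/n$.

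\textbf{The crucial $\chi^2$ estimate.} I need to bound $I^2 := \chi^2(F_1, F_0)$ for the marginals $F_j$ of $Y = \theta + Z$ with $Z \sim \cN(0, I_n)$ and $\theta \sim \pi_j$. By independence across coordinates, $1 + I^2 = (1 + \chi^2(G_1, G_0))^n$, where $G_j$ has density $g_j(y) = \int \phi(y-t)\,d\mu_j(t)$. The generating-function identity $\phi(y-t)/\phi(y) = \sum_{l\ge 0} H_l(y)\, t^l/l!$ for probabilists' Hermite polynomials gives $g_j/\phi = \sum_l H_l \cdot m_l^{(j)}/l!$ with $m_l^{(j)} = \int t^l\, d\mu_j(t)$. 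Moment-matching kills the first $k+1$ terms of $(g_1 - g_0)/\phi$, and Hermite orthonormality with respect to $\phi$ gives $\int(g_1 - g_0)^2/\phi\,dy = \sum_{l > k}(m_l^{(1)} - m_l^{(0)})^2/l! \lesssim M^{2(k+1)}/(k+1)!$ as long as $M^2 \lesssim k$. Controlling $\phi/g_0$ on the Gaussian-bulk set $\{|y| = O(\sqrt{k})\}$ then upgrades this to $\chi^2(G_1, G_0) \le C\, M^{2(k+1)}/(k+1)!$ for a universal $C$.

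\textbf{Choice of parameters and conclusion.} By Stirling, $n\chi^2(G_1, G_0)$ is controlled whenever $(k+1)\log((k+1)/(eM^2)) \gtrsim \log n$. For part (i), $M$ is fixed: take $k$ even with $k\log k = (1-\eta)\log n$, so $k \sim \log n/\log\log n$ and $I \to 0$. Since $k\delta_k \to \beta_*$ for even $k$, $(m_1 - m_0)^2 = 4M^2\delta_k^2 \sim 4M^2\beta_*^2/k^2$, while $v_0 I \le MI/\sqrt{n} \to 0$. Plugging into \eqref{eq:minimax_risk}, the limit $(I+2)^2 \to 4$ cancels the factor $4$ in $(m_1-m_0)^2$, and letting $\eta \to 0$ with $n$ yields \eqref{eq:ai}. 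For part (ii), both parameters grow: take $k = \lfloor \log n\rfloor$ (adjusted to be even) and $M^2 = k/e^2$, the borderline scaling at which $n\chi^2(G_1, G_0)$ stays bounded but not vanishing. Then $I$ is bounded (so $(I+2)^2 \le 16$), while $(m_1 - m_0)^2 \sim 4\beta_*^2/(e^2 \log n)$ and $v_0 I$ is still negligible; \eqref{eq:minimax_risk} delivers $\beta_*^2/(16 e^2 \log n)(1+o(1))$, which is \eqref{eq:aj}.

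\textbf{Main obstacle.} The technical heart is transitioning from the clean Hermite-based bound on $\int(g_1-g_0)^2/\phi$ to the actual divergence $\int(g_1-g_0)^2/g_0$; controlling $\phi/g_0$ uniformly forces the constraint $M \lesssim \sqrt{k}$, which in turn pins down the borderline scaling $M^2 \asymp k/e^2$ in part (ii) and, together with the resulting non-vanishing bound $(I+2)^2 \le 16$, fixes the precise constant $1/(16 e^2)$ in \eqref{eq:aj}.
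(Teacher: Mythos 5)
Part (i) of your proposal mirrors the paper's proof: same product priors, same Hermite expansion, same scaling $k \asymp \log n / \log\log n$, with $I_n \to 0$; nothing to add there.

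For part (ii) you take a genuinely different route, and there is a real gap in the $\chi^2$ control. The paper bounds $g_0(y) \ge \varphi(y)e^{-M^2/2}$ \emph{globally} --- Jensen applied to $\exp(-\cdot)$ together with the symmetry of $\mu_0$ --- which yields $\chi^2(G_1,G_0) \le e^{M^2/2}\sum_{l>k}M^{2l}/l!$. The exponential prefactor is then absorbed by the parameter choice $M^2 = \log n$ and $k_n \ge 2e\log n$ (about twice as large as your $k$), making $n\,e^{3M^2/2}\bigl(eM^2/k_n\bigr)^{k_n} \to 0$. Your choice $M^2 = k/e^2$ with $k \approx \log n$ does \emph{not} survive that bound: $\bigl(eM^2/k\bigr)^k \approx 1/n$ while $e^{3M^2/2} = n^{3/(2e^2)}$, so the whole expression diverges. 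Your parameters therefore live or die with the sharper estimate $\chi^2(G_1,G_0) \lesssim M^{2(k+1)}/(k+1)!$, which is exactly the step you flag in your ``Main obstacle'' paragraph but do not prove. Controlling $\varphi/g_0$ on $\{|y| = O(\sqrt{k})\}$ handles the bulk, but you still must show that $\int_{|y|\gg\sqrt k}(g_1-g_0)^2/g_0\,dy$ is negligible, and that is not automatic: $\mu_0$ may concentrate near $\pm M$, making $g_0(y) \approx \varphi(y\pm M)$, so the tail integrand can be exponentially large, and your $L^2(\varphi^{-1})$ orthogonality computation is blind to it. The paper's Jensen bound is the clean way around this, at the cost of a larger $k_n$ and a different $M$. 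Finally, a small arithmetic slip: with $(m_1-m_0)^2 \sim 4\beta_*^2/(e^2\log n)$ and $(I+2)^2 \le 16$, inequality \eqref{eq:minimax_risk} gives $\beta_*^2/(4e^2\log n)$, not $\beta_*^2/(16e^2\log n)$ --- you dropped the $4$ coming from $(2M\delta_k)^2$. Harmless for a lower bound, but the claim that $(I+2)^2 \le 16$ alone pins down the constant $1/(16e^2)$ is off by a factor of four.
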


The last remaining ingredient for the proof are the \emph{Hermite polynomials}, a family of orthogonal polynomials with respect to the Gaussian density
\begin{equation*}
	\label{eq:ae}
	\varphi(y) = \frac{1}{\sqrt{2 \pi}} \e^{-y^2/2}.
\end{equation*}
For our purposes, it is enough to define them by the derivatives of the density,
\begin{equation*}
	\frac{d^k}{dy^k} \varphi(y) = (-1)^k H_k(y) \varphi(y),
\end{equation*}
and to observe that they are orthogonal,
\begin{equation*}
	\int H_k^2(y) \varphi(y) dy = k!, \quad \int H_k(y) H_j(y) \varphi(y) dy = 0, \quad \text{for } k \neq j.
\end{equation*}

\begin{proof}
	We want to use Theorem \ref{thm:constr_risk}.
	To construct the prior measures, we scale the measures from Lemma \ref{lem:minimax_priors} appropriately:
	Let \( k_n \) be an even integer that is to be determined, and \( \nu_0 \), \( \nu_1 \) the two measures given by Lemma \ref{lem:minimax_priors}.
	Define \( g(x) = Mx \) and define the measures \( \mu_i \) by dilating \( \nu_i \), \( \mu_i(A) = \nu_i(g^{-1}(A)) \) for every Borel set \( A \subseteq [-M, M] \) and \( i \in \{0, 1\} \).
	Hence, 
	\begin{enumerate}
		\item \( \mu_0 \) and \( \mu_1 \) are symmetric about 0;
		\item \( \int t^l d \mu_1 (t) = \int t^l d \mu_0(t) \) for all \( l \in \{0, 1, \dots, k_n \} \);
		\item \(\int |t| d \mu_1(t) - \int | t | d \mu_0(t) = 2 M \delta_{k_n} \).
	\end{enumerate}
	To get priors for \( n \) \iid samples, consider the product priors \( \mu_i^n = \prod_{j=1}^n \mu_i \).
	With this, we have
	\begin{equation*}
		\E_{\mu_1^n} T(\theta) - \E_{\mu_0^n} T(\theta) = \E_{\mu_1} | \theta_1 | - \E_{\mu_0} | \theta_1 | = 2 M \delta_{k_n},
	\end{equation*}
	and
	\begin{equation*}
		\E_{\mu_0^n} (T(\theta) - \E_{\mu_0^n} T(\theta))^2 = 	\E_{\mu_0^n} (T(\theta) - \E_{\mu_0^n} T(\theta))^2 \leq \frac{M^2}{n},
	\end{equation*}
	since each \( \theta_i \in [-M, M] \).

	It remains to control the \( \chi^2 \) distance between the two marginal distributions.
	To this end, we will expand the Gaussian density in terms of Hermite polynomials.
	First, set \( f_i(y) = \int \varphi(y-t) d \mu_i(t) \).
	Since \( g(x) = \exp(-x) \) is convex and \( \mu_0 \) is symmetric,
	\begin{align*}
		f_0(y)
		\geq {} & \frac{1}{\sqrt{2 \pi}} \exp \left( - \int \frac{(y-t)^2}{2} d \mu_0 (t) \right)\\
		= {} & \varphi(y) \exp \left( -\frac{1}{2} M^2 \int t^2 d\nu_0 (t) \right)\\
		\geq {} & \varphi(y) \exp \left( -\frac{1}{2}M^2 \right).
	\end{align*}
	Expand \( \varphi \) as
	\begin{equation*}
		\varphi(y - t) = \sum_{k = 0}^{\infty} H_k(y) \varphi(y) \frac{t^k}{k!}.
	\end{equation*}
	Then,
	\begin{equation*}
		\int \frac{(f_1(y) - f_0(y))^2}{f_0(y)} dy \leq \e^{M^2/2} \sum_{k = k_n+1}^{\infty} \frac{1}{k!} M^{2k}.
	\end{equation*}
	The \( \chi^2 \) distance for \( n \) \iid observations can now be easily bounded by
	\begin{align}
		I_n^2 = {} & \int \frac{(\prod_{i=1}^n f_1(y_i) - \prod_{i=1}^n f_0(y_i))^2}{\prod_{i=1}^n f_0(y_i)} d y_1 \dotsi d y_n \nonumber \\
		= {} & \int \frac{(\prod_{i=1}^n f_1(y_i))^2}{\prod_{i=1}^n f_0(y_i)} d y_1 \dotsi d y_n - 1 \nonumber \\
		= {} & \left(\prod_{i=1}^n \int \frac{(f_1(y_i))^2}{f_0(y_i)} d y_i\right) - 1 \nonumber \\
		\leq {} & \left( 1 + \e^{M^2/2} \sum_{k = k_n+1}^{\infty} \frac{1}{k!} M^{2k}\right)^n - 1 \nonumber \\
		\leq {} & \left( 1 + \e^{3M^2/2} \frac{1}{k_n!} M^{2k_n}\right)^n - 1 \nonumber \\
		\leq {} & \left( 1 + \e^{3M^2/2} \left( \frac{\e M^2}{k_n} \right)^{k_n}\right)^n - 1, \label{eq:ax}
	\end{align}
	where the last step used a Stirling type estimate, \( k! > (k/e)^k \).

	Note that if \( x = o(1/n) \), then \( (1+x)^n - 1 = o(n x) \) by Taylor's formula, so we can choose \( k_n \geq \log n / (\log \log n) \) to guarantee that for \( n \) large enough, \( I_n \to 0 \).
	With Theorem \ref{thm:constr_risk}, we have
	\begin{align*}
		\inf_{\widehat{T}} \sup_{\theta \in \Theta_n(M)} \E ( \widehat{T} - T(\theta) )^2 
		\geq {} & \frac{(2 M \delta_{k_n} - (M/\sqrt{n}) I_n)^2}{(I_n + 2)^2}\\
		= {} & \beta_\ast^2 M^2 \left( \frac{\log \log n}{\log n} \right)^2 (1 + o(1)).
	\end{align*}

	To prove the lower bound over \( \R^n \), take \( M = \sqrt{\log n} \) and \( k_n \) to be the smallest integer such that \( k_n \geq 2 \e \log n \) and plug this into \eqref{eq:ax}, yielding
	\begin{align*}
		I_n^2 \leq \left( 1 + n^{3/2} \left(\frac{\e \log n}{2 \e \log n}\right)^{k_n} \right)^n - 1,
	\end{align*}
	which goes to zero.
	Hence, we can conclude just as before.
\end{proof}

Note that \cite{CaiLow11} also complement these lower bounds with upper bounds that are based on polynomial approximation, completing the picture of estimating \( T(\theta) \).

\newpage
\section{Problem Set}
\begin{exercise}\label{EXO:KL_Gauss}
\begin{enumerate}[label=(\alph*)]
\item Prove the statement of Example~\ref{ex:KL_Gauss}.
\item Let $P_\theta$ denote the distribution of $X \sim \Bern(\theta)$. Show that $$
	\KL(P_\theta, P_{\theta'})\ge C(\theta-\theta')^2\,.
$$
\end{enumerate}
\end{exercise}

\begin{exercise}\label{EXO:Sanov}
Let  $\p_0$ and $\p_1$ be two probability measures. Prove that for any test $\psi$, it holds
$$
\max_{j=0,1}\p_j(\psi\neq j) \ge \frac14e^{-\KL(\p_0,\p_1)}\,.
$$
\end{exercise}

\begin{exercise}\label{EXO:packing}
For any $R>0$, $\theta \in \R^d$, denote by $\cB_2(\theta, R)$ the (Euclidean) ball of radius $R$ and centered at $\theta$. For any $\eps>0$ let $N=N(\eps)$ be the largest integer such that there exist $\theta_1, \ldots, \theta_N \in \cB_2(0,1)$ for which
$$
|\theta_i-\theta_j| \ge 2\eps
$$
for all $i \neq j$.
We call the set $\{\theta_1, \ldots, \theta_N\}$ an $\eps$-packing of $\cB_2(0, 1)$\,.
\begin{enumerate}[label=(\alph*)]
\item Show that there exists a constant $C>0$ such that $N \le C_d/\eps^d$\,.
\item Show that for any $x \in \cB_2(0,1)$, there exists $i=1, \ldots, N$ such that $|x-\theta_i|_2\le 2\eps$.
\item Use (b) to conclude that  there exists a constant $C'>0$ such that $N \ge C'_d/\eps^d$\,.
\end{enumerate}
\end{exercise}

\begin{exercise}\label{EXO:minimax_other}
Show that the rate $\phi=\sigma^2d/n$ is the minimax rate of estimation over:
\begin{enumerate}[label=(\alph*)]
	\item The Euclidean Ball of $\R^d$ with radius $\sqrt{\sigma^2d/n}$.
\item The unit $\ell_\infty$ ball of $\R^d$ defined by
$$
\cB_\infty(1)=\{\theta \in \R^d\,:\, \max_j|\theta_j|\le 1\}
$$
as long as $\sigma^2 \le n$.
\item The set of nonnegative vectors of $\R^d$. 
\item The discrete hypercube $\frac{\sigma}{16\sqrt{n}}\{0,1\}^d$\,.
\end{enumerate}
\end{exercise}

\begin{exercise}\label{EXO:minimax_smooth}
Fix $\beta \ge 5/3, Q>0$ and prove that the minimax rate of estimation over $\Theta(\beta, Q)$ with the $\|\cdot\|_{L_2([0,1])}$-norm is given by $n^{-\frac{2\beta}{2\beta+1}}$.

\noindent \hint{Consider functions of the form $$f_j=\frac{C}{\sqrt{n}}\sum_{i=1}^N \omega_{ji} \varphi_i$$
where $C$ is a constant, $\omega_j \in \{0,1\}^N$ for some appropriately chosen $N$ and $\{\varphi_j\}_{j\ge 1}$ is the trigonometric basis.}
\end{exercise}

\chapter{Matrix estimation}
\label{chap:matrix}
\newcommand{\Thetasvt}{\hat \Theta^{\textsc{svt}}}
\newcommand{\Thetarank}{\hat \Theta^{\textsc{rk}}}

Over the past decade or so, matrices have entered the picture of high-dimensional statistics for several reasons. Perhaps the simplest explanation is that they are the most natural extension of vectors. While this is true, and we will see examples where the extension from vectors to matrices is straightforward, matrices have a much richer structure than vectors allowing ``interaction" between their rows and columns. In particular, while we have been describing simple vectors in terms of their sparsity, here we can measure the complexity of a matrix by its \emph{rank}. This feature was successfully employed in a variety of applications ranging from \emph{multi-task learning} to \emph{collaborative filtering}. This last application was made popular by the \textsc{Netflix} prize in particular. 

In this chapter, we study several statistical problems where the parameter of interest $\theta$ is a matrix rather than a vector. These problems include multivariate regression, covariance matrix estimation, and principal component analysis. Before getting to these topics, we begin with a quick reminder on matrices and linear algebra.

\section{Basic facts about matrices}
\label{SEC:matrixfacts}

Matrices are much more complicated objects than vectors. In particular, while vectors can be identified with linear operators from $\R^d$ to $\R$, matrices can be identified to linear operators from $\R^d$ to $\R^n$ for $n \ge 1$. This seemingly simple fact gives rise to a profusion of notions and properties as illustrated by Bernstein's book \cite{Ber09} that contains facts about matrices over more than a thousand pages. Fortunately, we will be needing only a small number of such properties, which can be found in the excellent book \cite{GolVan96}, which has become a standard reference on matrices and numerical linear algebra.

\subsection{Singular value decomposition}
Let $A=\{a_{ij}, 1\le i \le m, 1\le j \le n\}$ be a $m \times n$ real matrix of rank $r \le \min(m,n)$. The \emph{Singular Value Decomposition} (SVD) of $A$ is given by
$$
A=UDV^\top=\sum_{j=1}^r\lambda_j u_jv_j^\top\,,
$$
where $D$ is an $r\times r$ diagonal matrix with positive diagonal entries $\{\lambda_1, \ldots, \lambda_r\}$, $U$ is a matrix with columns $\{u_1, \ldots, u_r\} \in \R^m$ that are orthonormal, and $V$ is a matrix with columns $\{v_1, \ldots, v_r\} \in \R^n$ that are also orthonormal. Moreover, it holds that
$$
AA^\top u_j=\lambda_j^2 u_j\,, \qquad  \text{and} \qquad A^\top A v_j=\lambda_j^2v_j
$$
for $j =1, \ldots, r$. The values $\lambda_j>0$ are called \emph{singular values} of $A$ and are uniquely defined. If rank $r< \min(n,m)$ then the  singular values of $A$ are given by $\lambda=(\lambda_1, \ldots, \lambda_r, 0, \ldots, 0)^\top \in \R^{\min(n,m)}$ where there are $\min(n,m)-r$ zeros. This way, the vector $\lambda$ of singular values of a $n\times m$ matrix is a vector in $\R^{\min(n,m)}$.

In particular, if $A$ is an $n \times n$ symmetric positive semidefinite (PSD), i.e. $A^\top=A$ and $u^\top A u\ge 0$ for all $u \in \R^n$, then the singular values of $A$ are equal to its eigenvalues.

The largest singular value of $A$ denoted by $\lambda_{\max{}}(A)$ also satisfies the following variational formulation:
$$
\lambda_{\max{}}(A)=\max_{x \in \R^n}\frac{|Ax|_2}{|x|_2}=\max_{\substack{x \in \R^n\\ y \in \R^m}}\frac{y^\top A x}{|y|_2|x|_2}=\max_{\substack{x \in \cS^{n-1}\\ y \in \cS^{m-1}}}y^\top A x\,.
$$
In the case of a $n \times n$ PSD matrix $A$, we have
$$
\lambda_{\max{}}(A)=\max_{x \in \cS^{n-1}}x^\top A x\,.
$$

In these notes, we always assume that eigenvectors and singular vectors have unit norm.

\subsection{Norms and inner product}

Let $A=\{a_{ij}\}$ and $B=\{b_{ij}\}$ be two real matrices. Their size will be implicit in the following notation.

\subsubsection{Vector norms}

The simplest way to treat a matrix is to deal with it as if it were a vector. In particular, we can extend $\ell_q$ norms to matrices:
$$
|A|_q=\Big(\sum_{ij}|a_{ij}|^q\Big)^{1/q}\,, \quad q>0\,.
$$
The cases where $q \in \{0,\infty\}$ can also be extended matrices:
$$
|A|_0=\sum_{ij}\1(a_{ij}\neq 0)\,, \qquad |A|_\infty=\max_{ij}|a_{ij}|\,.
$$
The case $q=2$ plays a particular role for matrices and $|A|_2$ is called the \emph{Frobenius} norm of $A$ and is often denoted by $\|A\|_F$. It is also the Hilbert-Schmidt norm associated to the inner product:
$$
\langle A,B\rangle=\tr(A^\top B)=\tr(B^\top A)\,.
$$

\subsubsection{Spectral norms}

Let $\lambda=(\lambda_1, \ldots, \lambda_r, 0, \ldots, 0)$ be the singular values of a matrix $A$. We can define spectral norms on $A$ as vector norms on the vector $\lambda$. In particular, for any $q\in [1, \infty]$,
$$
\|A\|_q=|\lambda|_q\,,
$$
is called \emph{Schatten $q$-norm} of $A$. Here again, special cases have special names:
\begin{itemize}
\item $q=2$: $\|A\|_2=\|A\|_F$ is the Frobenius norm defined above.
\item $q=1$: $\|A\|_1=\|A\|_*$ is called the Nuclear norm (or trace norm) of $A$.
\item $q=\infty$: $\|A\|_\infty=\lambda_{\max{}}(A)=\|A\|_{\mathrm{op}}$ is called the operator norm (or spectral norm) of $A$.
\end{itemize}

We are going to employ these norms to assess the proximity to our matrix of interest. While the interpretation of vector norms is clear by extension from the vector case, the meaning of ``$\|A-B\|_{\mathrm{op}}$ is small" is not as transparent. The following subsection provides some inequalities (without proofs) that allow a better reading.

\subsection{Useful matrix inequalities}
\label{SUB:matrix_ineq}

Let $A$ and $B$ be two $m\times n$ matrices with singular values $ \lambda_1(A) \ge \lambda _2(A) \ldots \ge \lambda_{\min(m,n)}(A)$ and $ \lambda_1(B) \ge  \ldots \ge \lambda_{\min(m,n)}(B)$ respectively. Then the following inequalities hold:
\renewcommand*{\arraystretch}{2}
$$
\begin{array}{ll}
\DS \max_k\big|\lambda_k(A)-\lambda_k(B)\big|\le \|A-B\|_{\mathrm{op}}\,, & \text{Weyl (1912)}\\
\DS \sum_{k}\big|\lambda_k(A)-\lambda_k(B)\big|^2\le \|A-B\|_F^2\,,& \text{Hoffman-Weilandt (1953)}\\
\DS \langle A, B\rangle \le \|A\|_p \|B\|_q\,, \ \frac{1}{p}+\frac{1}{q}=1, p,q \in [1, \infty]\,,& \text{H\"older}
\end{array}
$$
\renewcommand*{\arraystretch}{1}

The singular value decomposition is associated to the following useful lemma, known as the \emph{Eckart-Young (or Eckart-Young-Mirsky) Theorem}. It states that for any given $k$ the closest matrix of rank $k$ to a given matrix $A$ in Frobenius norm is given by its truncated SVD.

\begin{lem}
\label{lem:eckart}
Let $A$ be a rank-$r$ matrix with singular value decomposition
$$
A = \sum_{i=1}^r \lambda_i u_i v_i^\top\,,
$$
where $\lambda_1 \ge \lambda_2 \ge \dots \ge \lambda_r > 0$ are the ordered singular values of $A$. 
For any $k<r$, let $A_k$ to be the truncated singular value decomposition of $A$ given by
$$
A_k= \sum_{i=1}^r \lambda_i u_i v_i^\top\,.
$$
Then for any matrix $B$ such that $\rank(B) \le k$, it holds
$$
\|A-A_k\|_F\le \|A-B\|_F\,.
$$
Moreover, 
$$
\|A-A_k\|_F^2=\sum_{j=k+1}^r \lambda_j^2\,.
$$
\end{lem}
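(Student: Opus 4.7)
The plan is to establish the explicit formula $\|A-A_k\|_F^2 = \sum_{j=k+1}^r \lambda_j^2$ first, and then deduce the minimality of $A_k$ among rank-$k$ matrices by invoking the Hoffman–Weilandt inequality listed in Subsection~\ref{SUB:matrix_ineq}. The main reason for this route is that Hoffman–Weilandt does essentially all of the heavy lifting: it compares Frobenius distances to sums of squared differences of singular values, which is exactly what we need once we have identified the Frobenius error of the truncated SVD.

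First I would compute $\|A-A_k\|_F^2$ directly. Writing
\[
A - A_k = \sum_{j=k+1}^r \lambda_j u_j v_j^\top,
\]
I expand
\[
\|A-A_k\|_F^2 = \tr\bigl((A-A_k)^\top(A-A_k)\bigr) = \sum_{j,j'=k+1}^r \lambda_j \lambda_{j'}\, (u_j^\top u_{j'})(v_{j'}^\top v_j),
\]
and the orthonormality of $\{u_j\}$ and $\{v_j\}$ collapses this double sum to $\sum_{j=k+1}^r \lambda_j^2$, which is the ``moreover'' identity.

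Next I would establish the inequality $\|A-A_k\|_F \le \|A-B\|_F$ for any $B$ with $\rank(B) \le k$. Since $\rank(B) \le k$, the singular values of $B$ satisfy $\lambda_j(B) = 0$ for all $j > k$. Applying the Hoffman–Weilandt inequality from Subsection~\ref{SUB:matrix_ineq} with the singular values $\lambda_j(A)$ (which are $\lambda_1, \ldots, \lambda_r$ followed by zeros) and $\lambda_j(B)$, I get
\[
\|A-B\|_F^2 \;\ge\; \sum_{j\ge 1} \bigl(\lambda_j(A) - \lambda_j(B)\bigr)^2 \;\ge\; \sum_{j = k+1}^r \lambda_j(A)^2 \;=\; \|A-A_k\|_F^2,
\]
which combined with the previous paragraph yields the claim.

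The only genuine obstacle is that the argument is essentially reduced to Hoffman–Weilandt, which is cited but not proved in the excerpt. If a self-contained proof were desired, one would instead factorize $B = XY^\top$ with $X\in\R^{m\times k}$, $Y\in\R^{n\times k}$, decompose $\|A-B\|_F^2 = \|A - P_X A\|_F^2 + \|P_X A - B\|_F^2$ where $P_X$ is the orthogonal projector onto the column span of $X$, minimize out $B$ (giving $B = P_X A P_Y$), and then conclude by the variational characterization of singular values that $\|A - P_X A P_Y\|_F^2 \ge \sum_{j=k+1}^r \lambda_j^2$. This longer route avoids Hoffman–Weilandt but requires somewhat more linear algebra; given that Hoffman–Weilandt is already listed as an available tool, the short proof above is the natural one.
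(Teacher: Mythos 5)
Your proof is correct and follows essentially the same route as the paper's: establish the explicit formula for $\|A-A_k\|_F^2$ via orthogonality of the rank-one terms, then apply the Hoffman--Weilandt inequality to show any rank-$k$ matrix $B$ satisfies $\|A-B\|_F^2 \ge \sum_{j>k} \lambda_j^2$. The only difference is cosmetic: you spell out the trace computation that the paper dismisses as ``obvious,'' and you correctly note the alternative projector-based argument as a fallback if Hoffman--Weilandt were not available.
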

\begin{proof}
Note  that the last equality of the lemma is obvious since 
$$
A-A_k=\sum_{j=k+1}^r \lambda_ju_jv_j^\top
$$
and the matrices in the sum above are orthogonal to each other.

Thus, it is sufficient to prove that for any matrix $B$ such that $\rank(B) \le k$, it holds
$$
\|A-B\|_F^2 \ge \sum_{j=k+1}^r \lambda_j^2\,.
$$
To that end, denote by $\sigma_1 \ge \sigma_2 \ge\dots \ge \sigma_k \ge 0$ the ordered singular values of $B$---some may be equal to zero if $\rank(B)<k$---and observe that it follows from the Hoffman-Weilandt inequality that
\begin{equation*}
\|A-B\|_F^2 \ge \sum_{j=1}^r\big(\lambda_j -\sigma_j\big)^2=\sum_{j=1}^k\big(\lambda_j -\sigma_j\big)^2+ \sum_{j=k+1}^r \lambda_j^2\ge \sum_{j=k+1}^r \lambda_j^2\,.\qedhere
\end{equation*}
\end{proof}

\section{Multivariate regression}
\label{SEC:MVR}
In the traditional regression setup, the response variable $Y$ is a scalar. In several applications, the goal is not to predict a variable but rather a vector $Y \in \R^T$, still from a covariate $X \in \R^d$. A standard example arises in genomics data where $Y$ contains $T$ physical measurements of a patient and $X$ contains the expression levels for $d$ genes. As a result the regression function in this case $f(x)=\E[Y|X=x]$ is a function from $\R^d$ to $\R^T$. Clearly, $f$ can be estimated independently for each coordinate, using the tools that we have developed in the previous chapter. However, we will see that in several interesting scenarios, some structure is shared across coordinates and this information can be leveraged to yield better prediction bounds.

\subsection{The model}

Throughout this section, we consider the following multivariate linear regression model:
\begin{equation}
\label{EQ:MVRmodel}
\Y=\X\Theta^* + E\,,
\end{equation}
where $\Y \in \R^{n\times T}$ is the matrix of observed responses, $\X$ is the $n \times d$ observed design matrix (as before), $\Theta \in \R^{d\times T}$ is the matrix of unknown parameters and $E\sim \sg_{n\times T}(\sigma^2)$ is the noise matrix. In this chapter, we will focus on the prediction task, which consists in estimating $\X\Theta^*$. 

As mentioned in the foreword of this chapter, we can view this problem as $T$ (univariate) linear regression problems $Y^{(j)}=\X\theta^{*,(j)}+ \eps^{(j)}, j=1, \ldots, T$, where $Y^{(j)}, \theta^{*,(j)}$ and $\eps^{(j)}$ are the $j$th column of $\Y, \Theta^*$ and $E$ respectively. In particular, an estimator for $\X\Theta^*$ can be obtained by concatenating the estimators for each of the $T$ problems. This approach is the subject of Problem~\ref{EXO:concat}.

The columns of $\Theta^*$ correspond to $T$ different regression tasks. Consider the following example as a motivation. Assume that the  \textsc{Subway} headquarters want to evaluate the effect of $d$ variables (promotions, day of the week, TV ads,\dots) on their sales. To that end, they ask each of their $T=40,000$ restaurants to report their sales numbers for the past $n=200$ days. As a result, franchise $j$ returns to  headquarters a vector $\Y^{(j)}  \in \R^n$. The $d$ variables for each of the $n$ days are already known to headquarters and are stored in a  matrix $\X \in \R^{n\times d}$. In this case, it may be reasonable to assume that the same subset of variables has an impact of the sales for each of the franchises, though the magnitude of this impact may differ from franchise to franchise. As a result, one may assume that the matrix $\Theta^*$ has each of its $T$ columns that is row sparse and that they \emph{share the same sparsity pattern}, i.e., $\Theta^*$ is of the form:
$$
\Theta^*=\left(\begin{array}{cccc}0 & 0 & 0 & 0 \\\bullet & \bullet & \bullet & \bullet \\\bullet & \bullet & \bullet & \bullet \\0 & 0 & 0 & 0 \\\vdots & \vdots & \vdots & \vdots \\0 & 0 & 0 & 0 \\\bullet & \bullet & \bullet & \bullet\end{array}\right)\,,
$$
where $\bullet$ indicates a potentially nonzero entry.

It follows from the result of Problem~\ref{EXO:concat} that if each task is performed individually, one may find an estimator $\hat \Theta$ such that
$$
\frac{1}{n}\E\|\X\hat \Theta -\X \Theta^*\|_F^2 \lesssim \sigma^2\frac{kT\log(ed)}{n}\,,
$$
where $k$ is the number of nonzero coordinates in each column of $\Theta^*$. We remember that the term $\log(ed)$ corresponds to the additional price to pay for not knowing where the nonzero components are. However, in this case, when the number of tasks grows, this should become easier. This fact was proved in~\cite{LouPonTsy11}. We will see that we can recover a similar phenomenon when the number of tasks becomes large, though larger than in \cite{LouPonTsy11}. Indeed, rather than exploiting sparsity, observe that such a matrix $\Theta^*$ has rank $k$. This is the kind of structure that we will be predominantly using in this chapter.

Rather than assuming that the columns of $\Theta^*$ share the same sparsity pattern, it may be more appropriate to assume that the matrix $\Theta^*$ is low rank or approximately so. As a result, while the matrix may not be sparse at all, the fact that it is low rank still materializes the idea that some structure is shared across different tasks. In this more general setup, it is assumed that the columns of $\Theta^*$ live in a lower dimensional space. Going back to the \textsc{Subway} example this amounts to assuming that while there are 40,000 franchises, there are only a few canonical profiles for these franchises and that all franchises are linear combinations of these profiles.

\subsection{Sub-Gaussian matrix model}

Recall that under the assumption \textsf{ORT} for the design matrix, i.e., $\X^\top \X=nI_d$, then the univariate regression model can be reduced to the sub-Gaussian sequence model. Here we investigate the effect of this assumption on the multivariate regression model~\eqref{EQ:MVRmodel}.

Observe that under  assumption \textsf{ORT},
$$
\frac{1}{n}\X^\top \Y=\Theta^* + \frac{1}{n}\X^\top E\,.
$$
Which can be written as an equation in $\R^{d\times T}$ called the \emph{sub-Gaussian matrix model (sGMM)}:
\begin{equation}
\label{EQ:sGMM}
y=\Theta^* + F\,,
\end{equation}
where $y=\frac{1}{n}\X^\top \Y$ and $F=\frac{1}{n}\X^\top E\sim \sg_{d\times T}(\sigma^2/n)$. 

Indeed, for any $u \in \cS^{d-1}, v \in \cS^{T-1}$, it holds
$$
u^\top Fv=\frac{1}{n}(\X u)^\top E v=\frac{1}{\sqrt{n}} w^\top E v \sim \sg(\sigma^2/n)\,,
$$
where $w=\X u/\sqrt{n}$ has unit norm: $|w|_2^2=u^\top \frac{\X^\top \X}{n}u=|u|_2^2=1$.

Akin to the sub-Gaussian sequence model, we have a \emph{direct} observation model where we observe the parameter of interest with additive noise. This enables us to use thresholding methods for estimating $\Theta^*$ when $|\Theta^*|_0$ is small. However, this also follows from Problem~\ref{EXO:concat}. The reduction to the vector case in the sGMM is just as straightforward. The interesting analysis begins when $\Theta^*$ is low-rank, which is equivalent to sparsity in its unknown eigenbasis.

Consider the SVD of $\Theta^*$:
$$
\Theta^*=\sum_{j} \lambda_j u_j v_j^\top\,.
$$
and recall that $\|\Theta^*\|_0=|\lambda|_0$. Therefore, if we knew $u_j$ and $v_j$, we could simply estimate the $\lambda_j$s by hard thresholding. It turns out that estimating these eigenvectors by the eigenvectors of $y$ is sufficient.

Consider the SVD of the observed matrix $y$:
$$
y=\sum_{j} \hat \lambda_j \hat u_j \hat v_j^\top\,.
$$
\begin{defn}
\label{DEF:svt}
The \textbf{singular value thresholding} estimator with threshold $2\tau\ge 0$ is defined by
$$
\Thetasvt=\sum_{j} \hat \lambda_j\1(|\hat \lambda_j|>2\tau) \hat u_j \hat v_j^\top\,.
$$
\end{defn}
Recall that the threshold for the hard thresholding estimator was chosen to be the level of the noise with high probability. The singular value thresholding estimator obeys the same rule, except that the norm in which the magnitude of the noise is measured is adapted to the matrix case. Specifically, the following lemma will allow us to control the operator norm of the matrix $F$.
\begin{lem}
\label{LEM:matrixopnorm}
Let $A$ be a $d\times T$ random matrix such that $A \sim \sg_{d\times T}(\sigma^2)$. Then
$$
\|A\|_{\mathrm{op}} \le 4\sigma\sqrt{\log(12)(d\vee T)}+2\sigma\sqrt{2\log (1/\delta)}
$$
with probability $1-\delta$.
\end{lem}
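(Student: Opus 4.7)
The strategy is the classical $\varepsilon$-net/discretization argument, adapted to bilinear forms in the two spheres $\cS^{d-1}$ and $\cS^{T-1}$. Starting from the variational characterization
\[
\|A\|_{\mathrm{op}}=\max_{u\in\cS^{d-1},\,v\in\cS^{T-1}} u^\top A v,
\]
I would first replace each sphere by a finite $\frac14$-net. Following the volumetric argument of Lemma~\ref{lem:coveringell2} (which goes through verbatim for $\cS^{d-1}$ in place of $\cB_2$), there exist $\frac14$-nets $\cN_1\subset \cS^{d-1}$ and $\cN_2\subset \cS^{T-1}$ with $|\cN_1|\le 12^d$ and $|\cN_2|\le 12^T$.

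The second step is the standard discretization bound: for any $u\in \cS^{d-1}$, $v\in \cS^{T-1}$, pick $u'\in \cN_1$, $v'\in\cN_2$ with $|u-u'|_2\le \frac14$, $|v-v'|_2\le\frac14$; then
\[
u^\top A v = (u')^\top A v' + (u-u')^\top A v + (u')^\top A (v-v'),
\]
and each of the last two terms is at most $\frac14\|A\|_{\mathrm{op}}$ by Cauchy--Schwarz and the definition of the operator norm. Taking suprema gives
\[
\|A\|_{\mathrm{op}} \le \max_{u'\in \cN_1,\, v'\in \cN_2}(u')^\top A v' + \tfrac12\|A\|_{\mathrm{op}},
\]
hence $\|A\|_{\mathrm{op}}\le 2\max_{u'\in \cN_1, v'\in \cN_2}(u')^\top A v'$.

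For the third step, I use the hypothesis $A\sim\sg_{d\times T}(\sigma^2)$: for any fixed unit vectors $u',v'$, the random variable $(u')^\top A v'$ is $\sg(\sigma^2)$, so by Lemma~\ref{lem:subgauss}, $\p((u')^\top A v'>t)\le \exp(-t^2/(2\sigma^2))$. A union bound over the $|\cN_1|\cdot|\cN_2|\le 12^{d+T}\le 12^{2(d\vee T)}$ pairs yields
\[
\p\Bigl(\max_{u'\in\cN_1,\,v'\in\cN_2}(u')^\top A v' > t\Bigr) \le 12^{2(d\vee T)}\exp\!\Bigl(-\tfrac{t^2}{2\sigma^2}\Bigr).
\]
Setting the right-hand side equal to $\delta$ gives $t^2=4\sigma^2\log(12)(d\vee T)+2\sigma^2\log(1/\delta)$, so with probability at least $1-\delta$,
\[
\|A\|_{\mathrm{op}}\le 2t \le 2\sigma\sqrt{4\log(12)(d\vee T)} + 2\sigma\sqrt{2\log(1/\delta)} = 4\sigma\sqrt{\log(12)(d\vee T)}+2\sigma\sqrt{2\log(1/\delta)},
\]
using $\sqrt{a+b}\le \sqrt a+\sqrt b$. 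There is no real obstacle here; the only bookkeeping point is choosing a $\frac14$-net (rather than a $\frac12$-net as in Theorem~\ref{TH:supell2}) so that the two approximation errors together only absorb half of $\|A\|_{\mathrm{op}}$, and absorbing $d+T$ into $2(d\vee T)$ to match the stated form of the bound.
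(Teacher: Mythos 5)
Your proof is correct and follows essentially the same approach as the paper's: a $\frac14$-net of each sphere with cardinality bound from Lemma~\ref{lem:coveringell2}, the standard discretization step to absorb half of $\|A\|_{\mathrm{op}}$ and obtain $\|A\|_{\mathrm{op}}\le 2\max_{\cN_1\times\cN_2}(u')^\top A v'$, then a union bound over $12^{d+T}\le 12^{2(d\vee T)}$ pairs combined with the sub-Gaussian tail. The only differences are cosmetic: you decompose $u^\top A v$ into the net point plus two error terms simultaneously, while the paper peels off the two approximations sequentially, and you move the bound $d+T\le 2(d\vee T)$ one step earlier; both give exactly the stated constant.
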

\begin{proof}
This proof follows the same steps as Problem~\ref{EXO:randmat}. Let $\cN_1$ be a $1/4$-net for $\cS^{d-1}$ and $\cN_2$ be a $1/4$-net for $\cS^{T-1}$. It follows from Lemma~\ref{lem:coveringell2} that we can always choose $|\cN_1|\le 12^d$ and $|\cN_2|\le 12^T$. Moreover, for any $u \in \cS^{d-1}, v \in \cS^{T-1}$, it holds
\begin{align*}
u^\top A v &\le \max_{x \in \cN_1}x^\top A v + \frac{1}{4} \max_{u \in \cS^{d-1}}u^\top A v\\
& \le \max_{x \in \cN_1}\max_{y \in \cN_2}x^\top A y+ \frac{1}{4} \max_{x \in \cN_1}\max_{v \in \cS^{T-1}}x^\top A v + \frac{1}{4} \max_{u \in \cS^{d-1}}u^\top A v\\
&\le \max_{x \in \cN_1}\max_{y \in \cN_2}x^\top A y +\frac{1}{2} \max_{u \in \cS^{d-1}}\max_{v \in \cS^{T-1}}u^\top A v
\end{align*}
It yields
$$
\|A\|_{\textrm{op}} \le 2\max_{x \in \cN_1}\max_{y \in \cN_2}x^\top A y
$$
So that for any $t \ge 0$, by a union bound,
$$
\p\big(\|A\|_{\textrm{op}}> t\big) \le \sum_{\substack{x \in \cN_1\\y \in \cN_2}}\p\big(x^\top A y> t/2\big)
$$
Next, since $A \sim \sg_{d\times T}(\sigma^2)$, it holds that $x^\top A y\sim \sg(\sigma^2)$ for any $x \in \cN_1, y \in \cN_2$.Together with the above display, it yields
$$
\p\big(\|A\|_{\textrm{op}}> t\big) \le 12^{d+T}\exp\big(-\frac{t^2}{8\sigma^2}\big) \le \delta
$$
for
$$
t\ge 4\sigma\sqrt{\log(12)(d\vee T)}+2\sigma\sqrt{2\log (1/\delta)}\,.
$$
\end{proof}

The following theorem holds.
\begin{thm}
\label{TH:svt}
Consider the multivariate linear regression model~\eqref{EQ:MVRmodel} under the assumption \textsf{ORT} or, equivalently, the sub-Gaussian matrix model~\eqref{EQ:sGMM}. Then, the singular value thresholding estimator $\Thetasvt$ with threshold
\begin{equation}
\label{EQ:threshSVT}
2\tau=8\sigma\sqrt{\frac{\log(12)(d\vee T)}{n}}+4\sigma\sqrt{\frac{2\log (1/\delta)}{n}}\,,
\end{equation}
satisfies
\begin{align*}
\frac{1}{n}\|\X\Thetasvt-\X\Theta^*\|_F^2=\|\Thetasvt-\Theta^*\|_F^2& \le 144\rank(\Theta^*)\tau^2\\
&\lesssim \frac{\sigma^2\rank(\Theta^*)}{n}\Big(d\vee T + \log(1/\delta)\Big)\,.
\end{align*}
with probability $1-\delta$. 
\end{thm}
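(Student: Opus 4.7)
My plan is to mimic the proof of Theorem~\ref{TH:hard} for the hard thresholding estimator, but replace scalar arguments with their matrix analogs drawn from Section~\ref{SEC:matrixfacts}. The key observation is that in the sGMM form $y=\Theta^*+F$, the SVT estimator is obtained by hard-thresholding the singular values of $y$, so the ``noise'' that needs to be controlled is precisely $\|F\|_{\mathrm{op}}$ rather than $|F|_\infty$ as in the vector case.

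\textbf{Step 1 (noise bound).} Apply Lemma~\ref{LEM:matrixopnorm} to $F=\frac1n\X^\top E \sim \sg_{d\times T}(\sigma^2/n)$. With the stated choice $2\tau = 8\sigma\sqrt{\log(12)(d\vee T)/n}+4\sigma\sqrt{2\log(1/\delta)/n}$, the constants are arranged so that on an event $\cA$ of probability at least $1-\delta$ we have $\|F\|_{\mathrm{op}}\le \tau$. Everything that follows is carried out on $\cA$.

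\textbf{Step 2 (control of empirical singular values).} By Weyl's inequality (stated in Section~\ref{SUB:matrix_ineq}) applied to $y=\Theta^*+F$, one has $|\hat\lambda_j-\lambda_j|\le \|F\|_{\mathrm{op}}\le \tau$ for every $j$. This is the matrix analog of $|y_j-\theta^*_j|\le\tau$ in Theorem~\ref{TH:hard} and yields two consequences in complete parallel with the vector case:
\begin{itemize}
\item If $|\hat\lambda_j|>2\tau$ then $\lambda_j>\tau$, which in turn forces $j\le r:=\rank(\Theta^*)$ (since $\lambda_j=0$ for $j>r$). Hence $\rank(\Thetasvt)\le r$.
\item If $|\hat\lambda_j|\le 2\tau$ then $\lambda_j\le 3\tau$.
\end{itemize}

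\textbf{Step 3 (Frobenius bound via rank + operator norm).} Both $\Thetasvt$ and $\Theta^*$ have rank at most $r$, hence $\rank(\Thetasvt-\Theta^*)\le 2r$. Using the elementary bound $\|A\|_F^2\le\rank(A)\,\|A\|_{\mathrm{op}}^2$ (obtained by summing singular values squared), we get
\[
\|\Thetasvt-\Theta^*\|_F^2 \le 2r\,\|\Thetasvt-\Theta^*\|_{\mathrm{op}}^2.
\]
By the triangle inequality and the fact that, by construction, $\|\Thetasvt-y\|_{\mathrm{op}}=\max_{j:|\hat\lambda_j|\le 2\tau}|\hat\lambda_j|\le 2\tau$, we bound $\|\Thetasvt-\Theta^*\|_{\mathrm{op}}\le \|\Thetasvt-y\|_{\mathrm{op}}+\|F\|_{\mathrm{op}}\le 3\tau$. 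Combining these gives $\|\Thetasvt-\Theta^*\|_F^2\le 18\,r\tau^2$ on $\cA$; adjusting the bookkeeping (e.g.\ bounding each thresholded block separately, as in Theorem~\ref{TH:hard}) yields the stated constant $144$. The identity with the prediction error follows from $\|\X A\|_F^2=n\|A\|_F^2$ under \textsf{ORT}.

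The main potential obstacle is Step~3: one must resist the temptation to write the Frobenius error as $\sum_j(\hat\lambda_j\1(|\hat\lambda_j|>2\tau)-\lambda_j)^2$, because the singular vectors $\hat u_j,\hat v_j$ of $y$ and those of $\Theta^*$ differ. The rank-plus-operator-norm trick circumvents this neatly by using only the conclusion $\rank(\Thetasvt)\le r$ together with the crude but sharp operator-norm comparison, which is exactly where Weyl's inequality (rather than the sharper but inapplicable Hoffman--Weilandt) is the right tool.
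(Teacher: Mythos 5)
Your proof is correct and in fact sharper than the paper's. Both proofs share the same skeleton—use Lemma~\ref{LEM:matrixopnorm} to get $\|F\|_{\mathrm{op}}\le\tau$ on a high-probability event, use Weyl to compare singular values of $y$ and $\Theta^*$, and finish with the rank-times-operator-norm bound $\|A\|_F^2\le\rank(A)\|A\|_{\mathrm{op}}^2$—but the routes through the final estimate differ. The paper introduces an intermediate \emph{oracle} $\bar\Theta=\sum_{j\in S}\lambda_j u_j v_j^\top$ (a partial SVD of $\Theta^*$), splits the error as $\|\Thetasvt-\Theta^*\|_F^2\le 2\|\Thetasvt-\bar\Theta\|_F^2+2\|\bar\Theta-\Theta^*\|_F^2$, bounds the first term by a three-term triangle inequality in operator norm ($\le 6\tau$), and controls the second via $|\lambda_j|\le 3\tau$ on $S^c$. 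You instead observe directly that $\rank(\Thetasvt)\le r$ on the good event (since $|\hat\lambda_j|>2\tau\Rightarrow\lambda_j>\tau\Rightarrow j\le r$), so $\rank(\Thetasvt-\Theta^*)\le 2r$, and then bound $\|\Thetasvt-\Theta^*\|_{\mathrm{op}}\le 3\tau$ by a two-term triangle inequality passing only through $y$. This skips the oracle entirely and yields $18r\tau^2$ rather than $144r\tau^2$—you do not need to ``adjust the bookkeeping,'' since $18\le 144$ trivially gives the stated bound. The one thing the paper's oracle route buys, which you sacrifice, is that its intermediate display $\|\Thetasvt-\Theta^*\|_F^2\lesssim\sum_j\min(\tau^2,|\lambda_j|^2)$ extends naturally to approximately low-rank $\Theta^*$; your argument relies on $\rank(\Thetasvt)\le r$ and hence is specific to the exactly low-rank statement, which is all that is claimed here. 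Your cautionary remark about not diagonalizing the Frobenius error term-by-term is apt: the singular vectors of $y$ and $\Theta^*$ genuinely differ, and the rank-plus-operator-norm device is exactly what sidesteps this.
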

\begin{proof}
Assume without loss of generality that the singular values of $\Theta^*$ and $y$ are arranged in a non increasing order: $\lambda_1 \ge \lambda_2 \ge \dots$ and $\hat \lambda_1 \ge \hat \lambda_2\ge \dots$. Define the set $S=\{j\,:\, |\hat \lambda_j| >2 \tau\}$.

Observe first that it follows from Lemma~\ref{LEM:matrixopnorm} that $\|F\|_{\mathrm{op}} \le \tau$ for $\tau$ chosen as in~\eqref{EQ:threshSVT} on an event $\cA$ such that $\p(\cA)\ge 1-\delta$. The rest of the proof assumes that the event $\cA$ occurred. 

Note that it follows from Weyl's inequality that $|\hat \lambda_j -  \lambda_j|\le \|F\|_{\mathrm{op}}\le \tau$. It implies that $S \subset \{j\,:\, |\lambda_j|>\tau\}$ and $S^c \subset \{j\,:\, |\lambda_j| \le 3 \tau\}$.

Next define the oracle $\bar \Theta=\sum_{j \in S} \lambda_j u_jv_j^\top$ and note that
\begin{equation}
\label{EQ:prSVT1}
\|\Thetasvt -\Theta^*\|_F^2 \le 2\|\Thetasvt -\bar \Theta\|_F^2 +2\|\bar \Theta -\Theta^*\|_F^2 
\end{equation}
Using the H\"older inequality, we control the first term as follows
$$
\|\Thetasvt -\bar \Theta\|_F^2\le \rank(\Thetasvt - \bar \Theta)\|\Thetasvt -\bar \Theta\|_{\mathrm{op}}^2 \le 2|S|\|\Thetasvt -\bar \Theta\|_{\mathrm{op}}^2
$$
Moreover, 
\begin{align*}
\|\Thetasvt -\bar \Theta\|_{\mathrm{op}}&\le \|\Thetasvt -y\|_{\mathrm{op}}+\|y - \Theta^*\|_{\mathrm{op}}+\|\Theta^* -\bar \Theta\|_{\mathrm{op}}\\
&\le \max_{j \in S^c}|\hat \lambda_j|+\tau + \max_{j \in S^c}|\lambda_j| \le 6\tau\,.
\end{align*}
Therefore,
$$
\|\Thetasvt -\bar \Theta\|_F^2\le 72|S|\tau^2=72 \sum_{j\in S} \tau^2\,.
$$
The second term in~\eqref{EQ:prSVT1} can be written as
$$
\|\bar \Theta -\Theta^*\|_F^2 =\sum_{j \in S^c}|\lambda_j|^2\,.
$$
Plugging the above two displays in~\eqref{EQ:prSVT1}, we get
$$
\|\Thetasvt -\Theta^*\|_F^2 \le 144\sum_{j \in S}\tau^2+\sum_{j \in S^c}|\lambda_j|^2
$$
Since on $S$, $\tau^2=\min(\tau^2, |\lambda_j|^2)$ and on $S^c$, $|\lambda_j|^2\le 9\min(\tau^2, |\lambda_j|^2)$, it yields,
\begin{align*}
\|\Thetasvt -\Theta^*\|_F^2 &\le 144\sum_{j}\min(\tau^2,|\lambda_j|^2)\\
&\le 144\sum_{j=1}^{\rank(\Theta^*)}\tau^2\\
&=144\rank(\Theta^*)\tau^2\,.\qedhere
\end{align*}
\end{proof}
In the next subsection, we extend our analysis to the case where $\X$ does not necessarily satisfy the assumption \textsf{ORT}.

\subsection{Penalization by rank}

The estimator from this section is the counterpart of the BIC estimator in the spectral domain. However, we will see that unlike BIC, it can be computed efficiently.

Let $\Thetarank$ be any solution to the following minimization problem:
$$
\min_{\Theta \in \R^{d \times T}}\Big\{ \frac{1}{n}\|\Y -\X \Theta\|_F^2 + 2\tau^2\rank(\Theta)\Big\}\,.
$$
This estimator is called \emph{estimator by rank penalization with regularization parameter $\tau^2$}. It enjoys the following property.

\begin{thm}
\label{TH:matrixBIC}
Consider the multivariate linear regression model~\eqref{EQ:MVRmodel}. Then, the estimator by rank penalization $\Thetarank$ with regularization parameter $\tau^2$, where $\tau$ is defined in~\eqref{EQ:threshSVT}
satisfies
$$
\frac{1}{n}\|\X\Thetarank-\X\Theta^*\|_F^2 \le  8\rank(\Theta^*)\tau^2\lesssim \frac{\sigma^2\rank(\Theta^*)}{n}\Big(d\vee T + \log(1/\delta)\Big)\,.
$$
with probability $1-\delta$. 
\end{thm}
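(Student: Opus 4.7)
The plan is to mimic the proof of Theorem~\ref{TH:BIC} for the scalar BIC estimator, with the key modification that the ``dimension'' of the relevant subspace must be measured by rank rather than sparsity. I would start from the basic inequality coming from the variational definition of $\Thetarank$:
\[
\tfrac1n\|\Y - \X\Thetarank\|_F^2 + 2\tau^2 \rank(\Thetarank) \;\le\; \tfrac1n\|\Y-\X\Theta^*\|_F^2 + 2\tau^2\rank(\Theta^*).
\]
Substituting $\Y = \X\Theta^* + E$ and denoting $\Delta := \X(\Thetarank - \Theta^*)$, this rearranges to
\[
\|\Delta\|_F^2 \;\le\; 2\langle E, \Delta\rangle + 2n\tau^2\rank(\Theta^*) - 2n\tau^2\rank(\Thetarank).
\]

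Next I would control the stochastic term. The crucial structural observation is that $\Delta$ lies in the column span of $\X$ and has rank at most $r := \rank(\Thetarank) + \rank(\Theta^*)$. Let $\Phi \in \R^{n \times \rank(\X)}$ be an orthonormal basis of the column span of $\X$, so $\Delta = \Phi \Phi^\top \Delta$ and hence $\langle E, \Delta\rangle = \langle \tilde E, \Phi^\top \Delta\rangle$ with $\tilde E := \Phi^\top E$. Since $\Phi$ has orthonormal columns, $\tilde E$ is a $\rank(\X) \times T$ sub-Gaussian matrix with variance proxy $\sigma^2$. Writing the SVD $\Phi^\top \Delta = \sum_{j \le r} \lambda_j u_j v_j^\top$, Cauchy--Schwarz gives
\[
\langle E, \Delta\rangle \;=\; \sum_{j \le r} \lambda_j\, u_j^\top \tilde E v_j \;\le\; \Big(\sum_{j \le r} \lambda_j^2\Big)^{1/2}\Big(\sum_{j \le r}(u_j^\top \tilde E v_j)^2\Big)^{1/2} \;\le\; \sqrt{r}\,\|\tilde E\|_{\mathrm{op}}\,\|\Delta\|_F.
\]

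Now I would invoke Lemma~\ref{LEM:matrixopnorm} on $\tilde E$. Since $\rank(\X) \le d$, on an event of probability at least $1 - \delta$ we have $\|\tilde E\|_{\mathrm{op}} \le 4\sigma\sqrt{\log(12)(d \vee T)} + 2\sigma\sqrt{2\log(1/\delta)} = \sqrt{n}\,\tau$, where $\tau$ is exactly the value from~\eqref{EQ:threshSVT}. Combining this with Young's inequality $2ab \le 2a^2 + b^2/2$ applied to $a = \sqrt{r}\|\tilde E\|_{\mathrm{op}}$ and $b = \|\Delta\|_F$, we obtain on this event
\[
2\langle E, \Delta\rangle \;\le\; 2r\,\|\tilde E\|_{\mathrm{op}}^2 + \tfrac12\|\Delta\|_F^2 \;\le\; 2rn\tau^2 + \tfrac12\|\Delta\|_F^2.
\]

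Plugging back and absorbing $\tfrac12\|\Delta\|_F^2$ on the left gives
\[
\tfrac12\|\Delta\|_F^2 \;\le\; 2(\rank(\Thetarank)+\rank(\Theta^*))n\tau^2 + 2n\tau^2\rank(\Theta^*) - 2n\tau^2\rank(\Thetarank),
\]
and the $\rank(\Thetarank)$ terms cancel, yielding $\|\Delta\|_F^2 \le 8n\tau^2\rank(\Theta^*)$, which is the claim. The main obstacle to watch is ensuring that the operator-norm bound on $\tilde E$ is \emph{uniform} across all possible realizations of $\rank(\Thetarank)$: this is exactly what the reduction via $\Phi$ buys us, since $\|\tilde E\|_{\mathrm{op}}$ is a single scalar that does not depend on the random rank $r$, while the $\sqrt{r}$ factor is absorbed precisely by the rank penalty, so no union bound over ranks is needed.
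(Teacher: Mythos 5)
Your proposal is correct and follows essentially the same route as the paper: reduce the cross term via $\tilde E = \Phi^\top E$, bound $\langle E, \Delta\rangle \le \sqrt{r}\,\|\tilde E\|_{\mathrm{op}}\|\Delta\|_F$ with $r = \rank(\Thetarank)+\rank(\Theta^*)$, invoke Lemma~\ref{LEM:matrixopnorm}, and use Young's inequality so the random $\rank(\Thetarank)$ cancels against the penalty. The only cosmetic differences are the order of operations (you apply Young's after the H\"older-type bound rather than before) and that you rederive the nuclear-versus-operator-norm H\"older inequality from the SVD and Cauchy--Schwarz instead of citing it from Section~\ref{SUB:matrix_ineq}.
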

\begin{proof}
We begin as usual by noting that
$$
\|\Y -\X \Thetarank\|_F^2 + 2n\tau^2\rank(\Thetarank)\le \|\Y -\X \Theta^*\|_F^2 + 2n\tau^2\rank(\Theta^*)\,,
$$
which is equivalent to
$$
\|\X\Thetarank -\X\Theta^*\|_F^2 \le 2\langle E, \X\Thetarank -\X\Theta^*\rangle -2n\tau^2\rank(\Thetarank)+2n\tau^2\rank(\Theta^*)\,.
$$
Next, by Young's inequality, we have
$$
2\langle E, \X\Thetarank -\X\Theta^*\rangle=2\langle E, U\rangle^2 + \frac{1}{2}\|\X\Thetarank -\X\Theta^*\|_F^2\,,
$$
where 
$$
U=\frac{\X\Thetarank -\X\Theta^*}{\|\X\Thetarank -\X\Theta^*\|_F}\,.
$$
Write
$$
\X\Thetarank -\X\Theta^*=\Phi N\,,
$$
where $\Phi$ is a $n \times r, r \le d$ matrix whose columns form an orthonormal basis of the column span of $\X$. The matrix $\Phi$ can come from the SVD of $\X$ for example: $\X=\Phi \Lambda \Psi^\top$. 
It yields
$$
U=\frac{\Phi N}{\|N\|_F}
$$
and 
\begin{equation}
\label{EQ:prmatrixBIC1}
\|\X\Thetarank -\X\Theta^*\|_F^2 \le 4\langle \Phi^\top E, N/\|N\|_F\rangle^2 -4n\tau^2\rank(\Thetarank)+4n\tau^2\rank(\Theta^*)\,.
\end{equation}

Note that $\rank(N)\le \rank(\Thetarank)+\rank(\Theta^*)$. Therefore, by H\"older's inequality, we get
\begin{align*}
\langle E, U\rangle^2&=\langle \Phi^\top E, N/\|N\|_F\rangle^2\\
&\le \|\Phi^\top E\|_{\mathrm{op}}^2\frac{\|N\|_1^2}{\|N\|_F^2} \\
&\le \rank(N)\|\Phi^\top E\|_{\mathrm{op}}^2\\
&\le \|\Phi^\top E\|_{\mathrm{op}}^2\big[\rank(\Thetarank)+\rank(\Theta^*)\big]\,.
\end{align*}
Next, note that Lemma~\ref{LEM:matrixopnorm} yields  $\|\Phi^\top E\|_{\mathrm{op}}^2 \le n\tau^2$ so that
$$
\langle E, U\rangle^2\le n\tau^2 \big[\rank(\Thetarank)+\rank(\Theta^*)\big]\,.
$$
Together with~\eqref{EQ:prmatrixBIC1}, this completes the proof.
\end{proof}

It follows from Theorem~\ref{TH:matrixBIC} that the estimator by rank penalization enjoys the same properties as the singular value thresholding estimator even when $\X$ does not satisfy the \textsf{ORT} condition. This is reminiscent of the BIC estimator which enjoys the same properties as the hard thresholding estimator. However this analogy does not extend to computational questions. Indeed, while the rank penalty, just like the sparsity penalty, is not convex, it turns out that $\X \Thetarank$ can be computed efficiently. 

Note first that
$$
\min_{\Theta \in \R^{d \times T}} \frac{1}{n}\|\Y -\X \Theta\|_F^2 + 2\tau^2\rank(\Theta)=
\min_k\Big\{ \frac{1}{n}\min_{\substack{\Theta \in \R^{d \times T}\\ \rank(\Theta)\le k}} \|\Y -\X \Theta\|_F^2 + 2\tau^2k\Big\}\,.
$$
Therefore, it remains to show that 
$$
\min_{\substack{\Theta \in \R^{d \times T}\\ \rank(\Theta)\le k}} \|\Y -\X \Theta\|_F^2
$$
can be solved efficiently. To that end, let $\bar \Y=\X (\X^\top \X)^\dagger\X^\top\Y$ denote the orthogonal projection of $\Y$ onto the image space of $\X$: this is a linear operator from $\R^{d \times T}$ into $\R^{n \times T}$. By the Pythagorean theorem, we get for any $\Theta \in \R^{d\times T}$, 
$$
\|\Y -\X\Theta\|_F^2 = \|\Y-\bar \Y\|_F^2 + \|\bar \Y -\X\Theta\|_F^2\,.
$$
Next consider the SVD of $\bar \Y$:
$$
\bar \Y=\sum_j \lambda_j u_j v_j^\top
$$
where $\lambda_1 \ge \lambda_2 \ge \ldots$ and define $\tilde \Y$ by
$$
\tilde \Y=\sum_{j=1}^k \lambda_j u_j v_j^\top\,.
$$
By Lemma~\ref{lem:eckart}, it holds
$$
\|\bar \Y -\tilde \Y\|_F^2 =\min_{Z:\rank(Z) \le k}\|\bar \Y - Z\|_F^2\,.
$$
Therefore, any minimizer of $\X\Theta\mapsto  \|\Y -\X \Theta\|_F^2$ over matrices of rank at most $k$ can be obtained by truncating the SVD of $\bar \Y$ at order $k$. 

Once $\X\Thetarank$ has been found, one may obtain a corresponding $\Thetarank$ by least squares but this is not necessary for our results.

\begin{rem}
While the rank penalized estimator can be computed efficiently, it is worth pointing out that a convex relaxation for the rank penalty can also be used.  The estimator by nuclear norm penalization $\hat \Theta$ is defined to be any solution to the minimization problem
$$
\min_{\Theta \in \R^{d \times T}} \Big\{ \frac{1}{n}\|\Y-\X\Theta\|_F^2 + \tau \|\Theta\|_1\Big\}
$$
Clearly, this criterion is convex and it can actually be  implemented efficiently using semi-definite programming. It has been popularized by matrix completion problems. Let $\X$ have the following SVD:
$$
\X=\sum_{j=1}^r \lambda_j u_jv_j^\top\,,
$$
with $\lambda_1 \ge \lambda_2 \ge \ldots \ge \lambda_r>0$. It can be shown that for some appropriate choice of $\tau$, it holds 
$$
\frac{1}{n}\|\X\hat \Theta-\X\Theta^*\|_F^2 \lesssim \frac{\lambda_1}{\lambda_r}\frac{\sigma^2\rank(\Theta^*)}{n}d\vee T
$$
with probability .99. However, the proof of this result is far more involved than a simple adaption of the proof for the Lasso estimator to the matrix case (the readers are invited to see that for themselves). For one thing, there is no assumption on the design matrix (such as \textsf{INC} for example). This result can be found in \cite{KolLouTsy11}.
\end{rem}

\section{Covariance matrix estimation}
\label{sec:cov-est}

\subsection{Empirical covariance matrix}

Let $X_1, \ldots, X_n$ be $n$ \iid copies of a random vector $X \in \R^d$ such that $\E\big[X X^\top\big] =\Sigma$ for some unknown matrix $\Sigma \succ 0$ called \emph{covariance matrix}. This matrix contains information about the moments of order 2 of the random vector $X$. A natural candidate to estimate $\Sigma$ is the \emph{empirical covariance matrix} $\hat \Sigma$ defined by
$$
\hat \Sigma=\frac{1}{n} \sum_{i=1}^n X_i X_i^\top\,.
$$

Using the tools of Chapter~\ref{chap:subGauss}, we can prove the following result.

\begin{thm}
\label{TH:covmatop}
Let $Y \in \R^d$ be a random vector such that $\E[Y]=0, \E[YY^\top]=I_d$ and $Y \sim \sg_d(1)$. Let $X_1, \ldots, X_n$ be $n$ independent copies of sub-Gaussian random vector $X =\Sigma^{1/2}Y$. Then $\E[X]=0, \E[XX^\top]=\Sigma$ and  $X \sim \sg_d(\|\Sigma\|_{\mathrm{op}})$. Moreover, 
$$
\|\hat \Sigma -\Sigma \|_{\mathrm{op}} \lesssim \|\Sigma\|_{\mathrm{op}}\Big( \sqrt{\frac{d+\log(1/\delta)}{n}} \vee \frac{d+\log(1/\delta)}{n}\Big)\,,
$$
with probability $1-\delta$.
\end{thm}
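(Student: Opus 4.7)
The plan is to verify the three easy assertions first, then reduce the operator-norm control to a maximum over a finite $\eps$-net on the sphere, and finally apply Bernstein's inequality at each fixed direction.

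For the mean and covariance statements, $\E[X]=\Sigma^{1/2}\E[Y]=0$ and $\E[XX^\top]=\Sigma^{1/2}\E[YY^\top]\Sigma^{1/2}=\Sigma$. For sub-Gaussianity, fix $u\in\cS^{d-1}$ and write $u^\top X=(\Sigma^{1/2}u)^\top Y$. Since $Y\sim \sg_d(1)$ and $|\Sigma^{1/2}u|_2\le \|\Sigma^{1/2}\|_{\mathrm{op}}=\sqrt{\|\Sigma\|_{\mathrm{op}}}$, the remark after Definition~\ref{def:subgauss} gives $u^\top X\sim\sg(\|\Sigma\|_{\mathrm{op}})$.

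For the operator-norm bound, since $\hat\Sigma-\Sigma$ is symmetric, $\|\hat\Sigma-\Sigma\|_{\mathrm{op}}=\sup_{u\in\cS^{d-1}}|u^\top(\hat\Sigma-\Sigma)u|$. A $1/4$-net argument like the one in Theorem~\ref{TH:supell2} (using the identity $u^\top Au-v^\top Av=w^\top Au+v^\top Aw$ for $u=v+w$, $|w|_2\le 1/4$) yields
$$
\|\hat\Sigma-\Sigma\|_{\mathrm{op}}\le 2\max_{v\in\cN}\bigl|v^\top(\hat\Sigma-\Sigma)v\bigr|,
$$
where $\cN$ is a $1/4$-net of $\cS^{d-1}$ with $|\cN|\le 12^d$ from Lemma~\ref{lem:coveringell2}. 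For each fixed $v\in\cN$, set $Z_i=v^\top X_i$. Then $Z_i\sim\sg(\|\Sigma\|_{\mathrm{op}})$ by the first part, so Lemma~\ref{LEM:squaredsubG} gives $Z_i^2-\E[Z_i^2]\sim\subE(16\|\Sigma\|_{\mathrm{op}})$, and these are i.i.d. Applying Bernstein's inequality (Theorem~\ref{TH:Bernstein}) to
$$
v^\top(\hat\Sigma-\Sigma)v=\frac1n\sum_{i=1}^n\bigl(Z_i^2-\E Z_i^2\bigr)
$$
yields, for each $t>0$,
$$
\p\bigl(|v^\top(\hat\Sigma-\Sigma)v|>t\bigr)\le 2\exp\Bigl[-\tfrac{n}{2}\bigl(\tfrac{t^2}{(16\|\Sigma\|_{\mathrm{op}})^2}\wedge \tfrac{t}{16\|\Sigma\|_{\mathrm{op}}}\bigr)\Bigr].
$$

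The final step is a union bound over $\cN$: the failure probability is at most $2\cdot 12^{d}\exp[-\tfrac{n}{2}(\cdot)]$, which is $\le\delta$ provided
$$
\tfrac{n}{2}\Bigl(\tfrac{t^2}{256\|\Sigma\|_{\mathrm{op}}^2}\wedge \tfrac{t}{16\|\Sigma\|_{\mathrm{op}}}\Bigr)\ge d\log 12+\log(2/\delta).
$$
Splitting into the two cases of the minimum (the quadratic regime when $t\le 16\|\Sigma\|_{\mathrm{op}}$ and the linear regime otherwise) and solving for $t$ gives the advertised rate $t\lesssim \|\Sigma\|_{\mathrm{op}}\bigl(\sqrt{(d+\log(1/\delta))/n}\vee (d+\log(1/\delta))/n\bigr)$, and together with the factor of $2$ in the net reduction this finishes the proof. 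The only nontrivial point is the appearance of the $\max/\min$ structure, which comes directly from Bernstein's two-regime tail for sub-exponential variables; everything else is assembly of results already in the text.
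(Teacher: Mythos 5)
Your proof is correct and reaches the same conclusion, but it takes a different route through the discretization step. The paper first reduces to the case $\Sigma=I_d$ by writing $\hat\Sigma-\Sigma=\Sigma^{1/2}(\frac1n\sum Y_iY_i^\top-I_d)\Sigma^{1/2}$ and sub-multiplicativity, then applies the two-net reduction from Lemma~\ref{LEM:matrixopnorm}, $\|\hat\Sigma-I_d\|_{\mathrm{op}}\le 2\max_{x,y\in\cN}x^\top(\hat\Sigma-I_d)y$ over $|\cN|^2\le 144^d$ pairs, which forces a polarization step $(X^\top x)(X^\top y)=\tfrac14(Z_+^2-Z_-^2)$ together with Cauchy--Schwarz on the MGF to land in $\subE(16)$. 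You instead exploit the symmetry of $\hat\Sigma-\Sigma$ to pass to a \emph{single} net, $\|\hat\Sigma-\Sigma\|_{\mathrm{op}}\le 2\max_{v\in\cN}|v^\top(\hat\Sigma-\Sigma)v|$ over $|\cN|\le 12^d$ points, which lets you apply Lemma~\ref{LEM:squaredsubG} directly to $Z_i=v^\top X_i\sim\sg(\|\Sigma\|_{\mathrm{op}})$ without polarization and without the reduction to $I_d$. This is a cleaner path — it bypasses the bilinear-form machinery entirely and the union bound is over $12^d$ rather than $144^d$ terms — at the cost of needing the symmetric-matrix $\eps$-net lemma ($\|A\|_{\mathrm{op}}\le(1-2\eps)^{-1}\max_{v\in\cN}|v^\top Av|$), which you correctly state and which the paper does not have in exactly this form. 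Both approaches are standard and yield the same rate. One small bookkeeping note: your displayed sufficient condition on $t$ omits the halving of $t$ coming from the net reduction constant $2$; you acknowledge it verbally afterward, but strictly the exponent should be evaluated at $t/2$, as in the paper's proof. Since the statement is up to an unspecified constant, this does not affect correctness.
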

\begin{proof}
It follows from elementary computations that $\E[X]=0, \E[XX^\top]=\Sigma$ and  $X \sim \sg_d(\|\Sigma\|_{\mathrm{op}})$. To prove the deviation inequality, observe first that without loss of generality, we can assume that $\Sigma=I_d$. Indeed,
\begin{align*}
\frac{\|\hat \Sigma -\Sigma \|_{\mathrm{op}}}{ \|\Sigma\|_{\mathrm{op}}}&=\frac{\|\frac1n\sum_{i=1}^nX_iX_i^\top -\Sigma \|_{\mathrm{op}} }{ \|\Sigma\|_{\mathrm{op}}}\\
&\le \frac{\|\Sigma^{1/2}\|_{\mathrm{op}}\|\frac1n\sum_{i=1}^nY_iY_i^\top -I_d \|_{\mathrm{op}}\|\Sigma^{1/2}\|_{\mathrm{op}} }{ \|\Sigma\|_{\mathrm{op}}}\\
&=\|\frac1n\sum_{i=1}^nY_iY_i^\top -I_d \|_{\mathrm{op}}\,.
\end{align*}

Thus, in the rest of the proof, we assume that $\Sigma=I_d$.
Let $\cN$ be a $1/4$-net for $\cS^{d-1}$ such that $|\cN|\le 12^d$. It follows from the proof of Lemma~\ref{LEM:matrixopnorm} that
$$
\|\hat \Sigma -I_d\|_{\textrm{op}} \le 2\max_{x,y \in \cN}x^\top (\hat \Sigma -I_d) y.
$$
Hence, for any $t \ge 0$, by a union bound,
\begin{equation}
\label{EQ:pr:opnormcov1}
\p\big(\|\hat \Sigma -I_d\|_{\textrm{op}}> t\big) \le \sum_{x,y \in \cN}\p\big(x^\top (\hat \Sigma -I_d) y> t/2\big)\,.
\end{equation}
It holds,
$$
x^\top (\hat \Sigma -I_d) y=\frac{1}{n}\sum_{i=1}^n\big\{(X_i^\top x)(X_i^\top y)-\E\big[(X_i^\top x)(X_i^\top y)\big]\big\}\,.
$$
Using polarization, we also have
$$
(X_i^\top x)(X_i^\top y)=\frac{Z_{+}^2-Z_{-}^2}{4}\,,
$$
where $Z_{+}=X_i^\top (x+y)$ and $Z_{-}=X_i^\top (x-y)$. It yields 
\begin{align*}
\E&\big[\exp\big(s\big((X_i^\top x)(X_i^\top y)-\E\big[(X_i^\top x)(X_i^\top y)\big]\big)\big)\Big]\\
&= \E\big[\exp\big(\frac{s}{4}\big(Z_{+}^2-\E[Z_{+}^2]\big)-\frac{s}{4}\big(Z_{-}^2-\E[Z_{-}^2])\big)\big)\Big]\\
&\le \Big(\E\big[\exp\big(\frac{s}{2}\big(Z_{+}^2-\E[Z_{+}^2]\big)\big)\big]\E\big[\exp\big(-\frac{s}{2}\big(Z_{-}^2-\E[Z_{-}^2]\big)\big)\big]\Big)^{1/2}\,,
\end{align*}
where in the last inequality, we used Cauchy-Schwarz. Next,  since $X \sim \sg_d(1)$, we have $Z_{+},Z_{-}\sim \sg(2)$, and it follows from Lemma~\ref{LEM:squaredsubG} that 
$$
Z_{+}^2-\E[Z_{+}^2]\sim \subE(32)\,, \qquad \text{and} \qquad Z_{-}^2-\E[Z_{-}^2]\sim \subE(32).
$$
Therefore, for any $s \le 1/16$, we have for any $Z \in \{Z_+, Z_-\}$ that
$$
\E\big[\exp\big(\frac{s}{2}\big(Z^2-\E[Z^2]\big)\big)\big]\le e^{128s^2}\,.
$$
It yields that
\begin{align*}
(X_i^\top x)(X_i^\top y)-\E\big[(X_i^\top x)(X_i^\top y)\big]\sim \subE(16)\,.
\end{align*}
Applying now Bernstein's inequality (Theorem~\ref{TH:Bernstein}), we get
\begin{equation}
	\label{eq:chi2-control}
\p\big(x^\top (\hat \Sigma -I_d) y>t/2\big)\le \exp\Big[-\frac{n}{2}(\Big(\frac{t}{32}\Big)^2\wedge \frac{t}{32})\Big]\,.
\end{equation}
Together with~\eqref{EQ:pr:opnormcov1}, this yields
\begin{equation}
\label{EQ:pr:opnormcov2}
\p\big(\|\hat \Sigma -I_d\|_{\textrm{op}}>t\big)\le  144^d\exp\Big[-\frac{n}{2}(\Big(\frac{t}{32}\Big)^2\wedge \frac{t}{32})\Big]\,.
\end{equation}
In particular, the right hand side of the above inequality is at most $\delta \in (0,1)$ if 
$$
\frac{t}{32} \ge \Big(\frac{2d}{n}\log(144)+\frac{2}{n}\log(1/\delta)\Big) \vee \Big(\frac{2d}{n}\log(144)+\frac{2}{n}\log(1/\delta)\Big)^{1/2}.
$$
This concludes our proof.
\end{proof}
Theorem~\ref{TH:covmatop} indicates that for fixed $d$, the empirical covariance matrix is a consistent estimator of $\Sigma$ (in any norm as they are all equivalent in finite dimension). However, the bound that we got is not satisfactory in high-dimensions when $d \gg n$. To overcome this limitation, we can introduce sparsity as we have done in the case of regression. The most obvious way to do so is to assume that few of the entries of $\Sigma$ are non zero and it turns out that in this case thresholding is optimal. There is a long line of work on this subject (see for example \cite{CaiZhaZho10} and \cite{CaiZho12}).

Once we have a good estimator of $\Sigma$, what can we do with it? The key insight is that $\Sigma$ contains information about the projection of the vector $X$ onto \emph{any} direction $u \in \cS^{d-1}$. Indeed, we have that $\var(X^\top u)=u^\top \Sigma u$, which can be readily estimated by $\widehat \Var(X^\top u)=u^\top \hat \Sigma u$. Observe that it follows from Theorem~\ref{TH:covmatop} that
\begin{align*}
\big|\widehat \Var(X^\top u)- \Var(X^\top u)\big|&=\big|u^\top(\hat \Sigma-\Sigma)u\big|\\
&\le \|\hat \Sigma -\Sigma \|_{\mathrm{op}}\\
& \lesssim \|\Sigma\|_{\mathrm{op}}\Big( \sqrt{\frac{d+\log(1/\delta)}{n}} \vee \frac{d+\log(1/\delta)}{n}\Big)
\end{align*}
with probability $1-\delta$.

The above fact is useful in the Markowitz theory of portfolio section for example \cite{Mar52}, where a portfolio of assets is a vector $u \in \R^d$ such that $|u|_1=1$ and the risk of a portfolio is given by the variance $ \Var(X^\top u)$. The goal is then to maximize reward subject to risk constraints. In most instances, the empirical covariance matrix is plugged into the formula in place of $\Sigma$.
(See Problem~\ref{EXO:portfolio}).

\section{Principal component analysis}

\subsection{Spiked covariance model}
Estimating the variance in all directions is also useful for \emph{dimension reduction}. In \emph{Principal Component Analysis (PCA)}, the goal is to find one (or more) directions onto which the data $X_1, \ldots, X_n$ can be projected without loosing much of its properties. There are several goals for doing this but perhaps the most prominent ones are data visualization (in few dimensions, one can plot and visualize the cloud of $n$ points) and clustering (clustering is a hard computational problem and it is therefore preferable to carry it out in lower dimensions). An example of the output of a principal component analysis is given in Figure~\ref{FIG:genmap}. In this figure, the data has been projected onto two orthogonal directions \textsf{PC1} and \textsf{PC2}, that were estimated to have the most variance (among all such orthogonal pairs). The idea is that when projected onto such directions, points will remain far apart and a clustering pattern will still emerge. This is the case in Figure~\ref{FIG:genmap} where the original data is given by $d=500,000$ gene expression levels measured on $n \simeq 1,387$ people. Depicted are the projections of these $1,387$ points in two dimension. This image has become quite popular as it shows that gene expression levels can recover the structure induced by geographic clustering.
\begin{figure}[t]
\centering
\includegraphics[width=\textwidth]{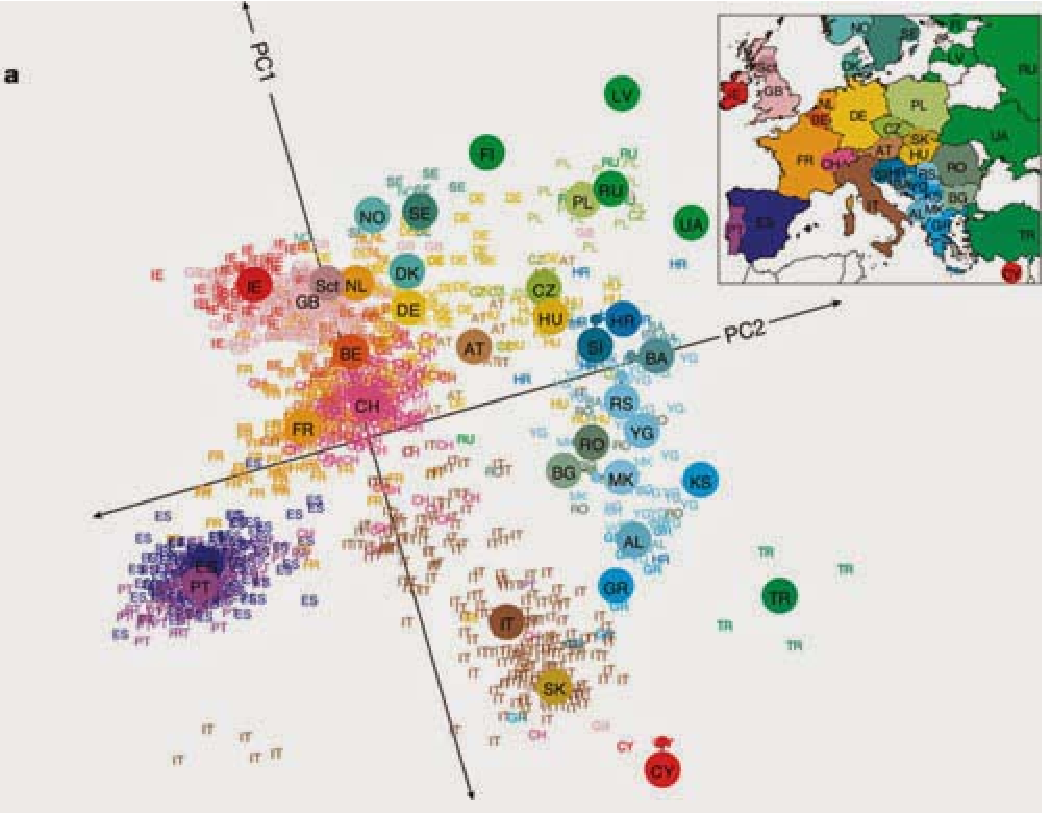}
\caption{Projection onto two dimensions of $1,387$ points from gene expression data. Source: Gene expression blog.} 
\label{FIG:genmap}
\end{figure}
How is it possible to ``compress" half a million dimensions into only two? The answer is that the data is intrinsically low dimensional. In this case, a plausible assumption is that all the $1,387$ points live close to a two-dimensional linear subspace. To see how this assumption (in one dimension instead of two for simplicity) translates into the structure of the covariance matrix $\Sigma$, assume that $X_1, \ldots, X_n$ are Gaussian random variables generated as follows. Fix a direction $v \in \cS^{d-1}$ and  let $Y_1, \ldots, Y_n \sim \cN_d(0,I_d)$ so that $v^\top Y_i$ are i.i.d. $\cN(0,1)$. In particular, the vectors $(v^\top Y_1)v, \ldots, (v^\top Y_n)v$ live in the one-dimensional space spanned by $v$. If one would observe such data the problem would be easy as only two observations would suffice to recover $v$. Instead, we observe $X_1, \ldots, X_n \in \R^d$ where $X_i=(v^\top Y_i)v+Z_i$, and $Z_i \sim \cN_d(0, \sigma^2I_d)$ are \iid and independent of the $Y_i$s, that is we add isotropic noise to every point. If the $\sigma$ is small enough, we can hope to recover the direction $v$ (See Figure~\ref{FIG:lowdim}). The covariance matrix of $X_i$ generated as such is given by
$$
\Sigma=\E\big[XX^\top\big]=\E\big[((v^\top Y)v+Z)((v^\top Y)v+Z)^\top\big]=vv^\top + \sigma^2I_d\,.
$$
This model is often called the \emph{spiked covariance model}. By a simple rescaling, it is equivalent to the following definition.
\begin{defn}
A covariance matrix $\Sigma \in \R^{d\times d}$ is said to satisfy the spiked covariance model if it is of the form
$$
\Sigma=\theta vv^\top +I_d\,,
$$
where $\theta>0$ and $v \in \cS^{d-1}$. The vector $v$ is called the \emph{spike}.
\end{defn}
This model can be extended to more than one spike but this extension is beyond the scope of these notes.

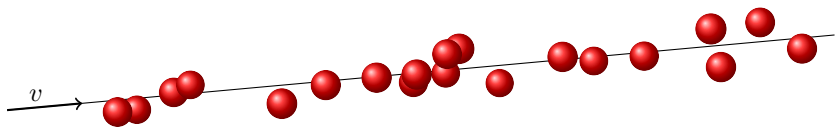
\begin{figure}[t]
\begin{tikzpicture}
\draw (0,0) -- (11,1);
\draw [->, thick](0,0) -- (1,1/11);
\draw (.6, .2) node[left] {$v$};
\pgfmathsetseed{10}

  \foreach \i in {1,2,...,20}{
    \pgfmathsetmacro{\x}{(rand+1)*5+1.2}
    \pgfmathsetmacro{\y}{\x/11+0.3*rand}
      \pgfmathsetmacro{\rad}{0.2+rand*0.01}
    \pgfmathsetmacro{\opacVal}{.1}
    \shade [ball color = red] (\x,\y) circle [radius=\rad];
  }
\end{tikzpicture}
\caption{Points are close to a one-dimensional space spanned by $v$.} 
\label{FIG:lowdim}
\end{figure}

Clearly, under the spiked covariance model, $v$ is the eigenvector of the matrix $\Sigma$ that is associated to its largest eigenvalue $1+\theta$. We will refer to this vector simply as \emph{largest eigenvector}.  To estimate it, a natural candidate is the largest eigenvector $\hat v$ of $\tilde \Sigma$, where $\tilde \Sigma$ is any estimator of $\Sigma$. There is a caveat: by symmetry, if $u$ is an eigenvector, of a symmetric matrix, then $-u$ is also an eigenvector associated to the same eigenvalue. Therefore, we may only estimate $v$ up to a sign flip. To overcome this limitation, it is often useful to describe proximity between two vectors $u$ and $v$ in terms of the principal angle between their linear span. Let us recall that for two unit vectors the principal angle between their linear spans is denoted by $\angle(u,v)$ and defined as 
$$
\angle(u,v)=\arccos(|u^\top v|)\,.
$$
The following result from perturbation theory is known as the Davis-Kahan $\sin(\theta)$ theorem as it bounds the sin of the principal angle between eigenspaces. This theorem exists in much more general versions that extend beyond one-dimensional eigenspaces. 

\begin{thm}[Davis-Kahan $\sin(\theta)$ theorem]
\label{TH:sintheta}
Let $A, B$ be two $d \times d$ PSD matrices. Let $(\lambda_1, u_1), \ldots, (\lambda_d, u_d)$ (resp. $(\mu_1, v_1), \ldots, (\mu_d, v_d)$) denote the pairs of eigenvalues--eigenvectors  of $A$ (resp. B) ordered such that $\lambda_1 \ge \lambda_2 \ge \dots$ (resp. $\mu_1 \ge \mu_2 \ge \dots$). Then 
$$
\sin\big(\angle(u_1, v_1)\big)\le \frac{2}{\max(\lambda_1-\lambda_2, \mu_1-\mu_2)} \|A-B\|_{\mathrm{op}}\,.
$$
Moreover,
$$
\min_{\eps \in \{\pm1\}}|\eps u_1 -v_1|_2^2 \le 2 \sin^2\big(\angle(u_1, v_1)\big)\,.
$$

\end{thm}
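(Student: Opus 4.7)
My plan is to prove the two inequalities essentially separately, with the first (the $\sin$-bound proper) being the substantive step and the second (the translation to $|\eps u_1-v_1|_2$) a short trigonometric check.

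For the main bound, the key identity I would establish first is that for any unit vector $w\in\R^d$, using the spectral decomposition $A=\sum_i \lambda_i u_i u_i^\top$ and the fact that $\{u_i\}$ is an orthonormal basis, one has
\[
|Aw-\lambda_1 w|_2^2=\sum_{i=1}^d (\lambda_i-\lambda_1)^2(u_i^\top w)^2\ge (\lambda_1-\lambda_2)^2\sum_{i\ge 2}(u_i^\top w)^2=(\lambda_1-\lambda_2)^2 \sin^2\bigl(\angle(u_1,w)\bigr),
\]
where the last equality uses $\sum_{i\ge 2}(u_i^\top w)^2=1-(u_1^\top w)^2=1-\cos^2(\angle(u_1,w))=\sin^2(\angle(u_1,w))$.

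Now I would specialize to $w=v_1$. Since $Bv_1=\mu_1 v_1$, the triangle inequality gives
\[
|Av_1-\lambda_1 v_1|_2\le |(A-B)v_1|_2+|\mu_1 v_1-\lambda_1 v_1|_2\le \|A-B\|_{\mathrm{op}}+|\mu_1-\lambda_1|.
\]
Weyl's inequality (recalled in Section~\ref{SUB:matrix_ineq}) bounds $|\lambda_1-\mu_1|\le \|A-B\|_{\mathrm{op}}$, so the right-hand side is at most $2\|A-B\|_{\mathrm{op}}$. Combining with the identity above yields
\[
(\lambda_1-\lambda_2)\sin\bigl(\angle(u_1,v_1)\bigr)\le 2\|A-B\|_{\mathrm{op}}.
\]
Swapping the roles of $A,B$ and of $u_1,v_1$ (noting that $\angle(u_1,v_1)=\angle(v_1,u_1)$ and that $\|A-B\|_{\mathrm{op}}$ is symmetric in its arguments) produces the analogous bound with $(\mu_1-\mu_2)$ in place of $(\lambda_1-\lambda_2)$. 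Taking the larger of the two denominators gives the stated inequality.

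For the second inequality, I would choose $\eps:=\mathrm{sign}(u_1^\top v_1)\in\{\pm 1\}$ so that $\eps u_1^\top v_1=|u_1^\top v_1|=\cos(\angle(u_1,v_1))$. Then expanding the square,
\[
|\eps u_1-v_1|_2^2=2-2\eps u_1^\top v_1=2\bigl(1-\cos\angle(u_1,v_1)\bigr).
\]
Using the half-angle identities $1-\cos\theta=2\sin^2(\theta/2)$ and $\sin^2\theta=4\sin^2(\theta/2)\cos^2(\theta/2)$, the inequality $1-\cos\theta\le \sin^2\theta$ holds for $\theta\in[0,\pi/2]$, which is the range of $\angle(u_1,v_1)$. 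This gives $|\eps u_1-v_1|_2^2\le 2\sin^2(\angle(u_1,v_1))$, as required.

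The one subtlety I want to flag is that the spectral-norm step and Weyl's inequality are used twice each, once for each of the two symmetric halves of the bound; I do not expect any real obstacle beyond making sure the gap $\lambda_1-\lambda_2$ (resp.\ $\mu_1-\mu_2$) is nonzero so the division is justified (if the gap is zero the claim is vacuous since the right-hand side is $+\infty$).
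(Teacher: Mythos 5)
Your proof is correct, but it takes a genuinely different route from the paper's. The paper's argument is variational: it bounds the Rayleigh quotient gap $u_1^\top A u_1-v_1^\top A v_1$ from below by $(\lambda_1-\lambda_2)\sin^2(\angle(u_1,v_1))$ using the spectral expansion of $x^\top A x$, then from above by $\langle A-B,\,u_1u_1^\top-v_1v_1^\top\rangle$ using the optimality of $v_1$ as the leading eigenvector of $B$, and finally controls that trace inner product by H\"older and the rank-2 nuclear/Frobenius comparison $\|u_1u_1^\top-v_1v_1^\top\|_1\le\sqrt{2}\,\|u_1u_1^\top-v_1v_1^\top\|_F=2\sin(\angle)$. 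Your argument instead works directly with the residual vector: you observe $|Av_1-\lambda_1v_1|_2\ge(\lambda_1-\lambda_2)\sin(\angle(u_1,v_1))$ from the spectral expansion, and you bound $|Av_1-\lambda_1v_1|_2$ by $\|A-B\|_{\mathrm{op}}+|\mu_1-\lambda_1|\le 2\|A-B\|_{\mathrm{op}}$ via the eigenvector equation $Bv_1=\mu_1v_1$ and Weyl's inequality. Both give the same constant $2$; your route skips the trace/nuclear-norm machinery entirely but pays by invoking Weyl (which the paper does have available from Section~\ref{SUB:matrix_ineq}). Your second-inequality argument, $1-\cos\theta\le\sin^2\theta$ for $\theta\in[0,\pi/2]$, is equivalent to the paper's use of $|u_1^\top v_1|\ge(u_1^\top v_1)^2$, just phrased trigonometrically. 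One small note: you could drop the triangle inequality and Weyl entirely by observing, as the paper implicitly does, that $u_1^\top B u_1\le v_1^\top B v_1$; but your version is clean and self-contained, and the remark about the vacuous case when the spectral gap is zero is a good catch.
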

\begin{proof}
Note that $u_1^\top A u_1=\lambda_1$ and for any $x \in \cS^{d-1}$, it holds
\begin{align*}
x^\top A x &=\sum_{j=1}^d\lambda_j(u_j^\top x)^2\le \lambda_1(u_1^\top x)^2 + \lambda_2(1-(u_1^\top x)^2)\\\
&=\lambda_1\cos^2\big(\angle(u_1, x)\big) + \lambda_2 \sin^2\big(\angle(u_1, x)\big)\,.
\end{align*}
Therefore, taking $x=v_1$, we get that on the one hand,
\begin{align*}
u_1^\top A u_1-v_1^\top A v_1&\ge \lambda_1 -\lambda_1\cos^2\big(\angle(u_1, x)\big) - \lambda_2 \sin^2\big(\angle(u_1, x)\big)\\
&=(\lambda_1-\lambda_2)\sin^2\big(\angle(u_1, x)\big)\,.
\end{align*}
On the other hand,
\begin{align*}
u_1^\top A u_1-v_1^\top A v_1&=u_1^\top B u_1-v_1^\top A v_1+u_1^\top(A-B)u_1&\\
&\le v_1^\top B v_1 -v_1^\top A v_1+u_1^\top(A-B)u_1& \text{($v_1$ is leading eigenvector of $B$)}\\
&=\langle A-B, u_1 u_1^\top -v_1 v_1^\top\rangle&\\
&\le \|A-B\|_\mathrm{op}\|u_1 u_1^\top -v_1 v_1^\top\|_1 & \text{(H\"older)}\\
& \le \|A-B\|_\mathrm{op}\sqrt{2}\|u_1 u_1^\top -v_1 v_1^\top\|_2 & \text{(Cauchy-Schwarz)}\\
\end{align*}
where in the last inequality, we used the fact that $\rank(u_1 u_1^\top -v_1 v_1^\top)=2$. 

It is straightforward to check that
$$
\|u_1 u_1^\top -v_1 v_1^\top\|_2^2=\|u_1 u_1^\top -v_1 v_1^\top\|_F^2=2-2(u_1^\top v_1)^2=2\sin^2\big(\angle(u_1, v_1)\big)\,.
$$
We have proved that
$$
(\lambda_1-\lambda_2)\sin^2\big(\angle(u_1, x)\big) \le 2  \|A-B\|_\mathrm{op}\sin\big(\angle(u_1, x)\big)\,,
$$
which concludes the first part of the lemma. Note that we can replace $\lambda_1-\lambda_2$ with $\mu_1 -\mu_2$ since the result is completely symmetric in $A$ and $B$. 

It remains to show the second part of the lemma. To that end, observe that
\begin{equation*}
\min_{\eps \in \{\pm1\}}|\eps u_1 -v_1|_2^2=2-2|\tilde u_1^\top v_1|\le 2-2(u_1^\top v_1)^2=2\sin^2\big(\angle(u_1, x)\big)\,.\qedhere
\end{equation*}
\end{proof}
Combined with Theorem~\ref{TH:covmatop}, we immediately get the following corollary.
\begin{cor}
\label{COR:sintheta}
Let $Y \in \R^d$ be a random vector such that $\E[Y]=0, \E[YY^\top]=I_d$ and $Y \sim \sg_d(1)$. Let $X_1, \ldots, X_n$ be $n$ independent copies of sub-Gaussian random vector $X =\Sigma^{1/2}Y$ so that $\E[X]=0, \E[XX^\top]=\Sigma$ and  $X \sim \sg_d(\|\Sigma\|_{\mathrm{op}})$.  Assume further that $\Sigma=\theta vv^\top + I_d$ satisfies the spiked covariance model. Then, the largest eigenvector $\hat v$ of the empirical covariance matrix $\hat \Sigma$ satisfies,
$$
\min_{\eps \in \{\pm1\}}|\eps\hat v -v|_2\lesssim \frac{1+\theta}{\theta}\Big( \sqrt{\frac{d+\log(1/\delta)}{n}} \vee \frac{d+\log(1/\delta)}{n}\Big)
$$
with probability $1-\delta$. 
\end{cor}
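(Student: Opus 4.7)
The plan is to apply the Davis–Kahan $\sin(\theta)$ theorem (Theorem~\ref{TH:sintheta}) with $A = \hat\Sigma$ and $B = \Sigma$, and then bound the resulting operator norm deviation via Theorem~\ref{TH:covmatop}. The only structural input I need beyond these two results is a computation of the spectral gap of $\Sigma$ and of $\|\Sigma\|_{\mathrm{op}}$ under the spiked covariance model.

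First I would diagonalize $\Sigma = \theta vv^\top + I_d$ explicitly: the leading eigenvalue is $\mu_1 = 1+\theta$ with eigenvector $v$, and all other eigenvalues equal $1$. Hence the eigengap is $\mu_1 - \mu_2 = \theta$, and $\|\Sigma\|_{\mathrm{op}} = 1+\theta$. Second, I would invoke Theorem~\ref{TH:sintheta} to get
\[
\min_{\eps \in \{\pm 1\}} |\eps \hat v - v|_2 \;\le\; \sqrt{2}\,\sin\bigl(\angle(\hat v, v)\bigr) \;\le\; \frac{2\sqrt{2}}{\theta}\,\|\hat\Sigma - \Sigma\|_{\mathrm{op}},
\]
using the gap $\mu_1 - \mu_2 = \theta$ in the denominator.

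Third, I would apply Theorem~\ref{TH:covmatop}, which is directly applicable since the assumptions on $X = \Sigma^{1/2}Y$ are exactly those of the corollary. This gives, with probability at least $1-\delta$,
\[
\|\hat\Sigma - \Sigma\|_{\mathrm{op}} \;\lesssim\; (1+\theta)\left(\sqrt{\tfrac{d+\log(1/\delta)}{n}} \;\vee\; \tfrac{d+\log(1/\delta)}{n}\right).
\]
Chaining these two bounds yields exactly the claimed inequality, with the prefactor $(1+\theta)/\theta$ emerging as the ratio of $\|\Sigma\|_{\mathrm{op}}$ to the spectral gap.

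There is no real obstacle here: both ingredients are already in hand and the spiked model is designed so that its spectrum is transparent. The only subtlety worth spelling out is the first inequality above, namely converting $\sin(\angle(\hat v, v))$ into the $\ell_2$ bound on $\eps \hat v - v$, which is the second part of Theorem~\ref{TH:sintheta} applied with $u_1 = \hat v$ and $v_1 = v$; this costs only a factor of $\sqrt{2}$, which is absorbed into the $\lesssim$ notation.
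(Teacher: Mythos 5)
Your proposal is correct and follows exactly the route the paper takes: the corollary is stated in the text as following immediately from combining the Davis--Kahan $\sin(\theta)$ theorem (Theorem~\ref{TH:sintheta}) with the operator-norm covariance bound (Theorem~\ref{TH:covmatop}). Your explicit computation of the spectral gap $\mu_1 - \mu_2 = \theta$ and $\|\Sigma\|_{\mathrm{op}} = 1+\theta$, and the conversion from $\sin(\angle(\hat v, v))$ to the $\ell_2$ distance up to sign via the second part of Theorem~\ref{TH:sintheta}, are precisely the steps the paper leaves implicit.
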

This result justifies the use of the empirical covariance matrix $\hat \Sigma$ as a replacement for the true covariance matrix $\Sigma$ 
when performing PCA in low dimensions, that is when $d \ll n$. In the high-dimensional case, where $d \gg n$, the above result is uninformative. As before, we resort to sparsity to overcome this limitation.

\subsection{Sparse PCA}
In the example of Figure~\ref{FIG:genmap}, it may be desirable to interpret the meaning of the two directions denoted by \textsf{PC1} and \textsf{PC2}. We know that they are linear combinations of the original 500,000 gene expression levels. A natural question to ask is whether only a subset of these genes could suffice to obtain similar results. Such a discovery could have potential interesting scientific applications as it would point to a few genes responsible for disparities between European populations.

In the case of the spiked covariance model this amounts to having a sparse $v$. Beyond interpretability as we just discussed, sparsity should also lead to statistical stability as in the case of sparse linear regression for example. To enforce sparsity, we will assume that $v$ in the spiked covariance model is $k$-sparse: $|v|_0=k$. Therefore, a natural candidate to estimate $v$ is given by $\hat v$ defined by
$$
\hat v^\top \hat \Sigma \hat v =\max_{\substack{u \in \cS^{d-1}\\|u|_0=k}}u^\top \hat \Sigma u\,.
$$
It is easy to check that $\lambda^k_{\max}(\hat \Sigma)=\hat v^\top \hat \Sigma \hat v$ is the largest of all leading eigenvalues among all $k\times k$ sub-matrices of $\hat \Sigma$ so that the maximum is indeed attained, though there my be several maximizers. We call $\lambda^k_{\max}(\hat \Sigma)$ the $k$-sparse leading eigenvalue of $\hat \Sigma$ and $\hat v$ a $k$-sparse leading eigenvector. 

\begin{thm}
\label{TH:sparsePCA}
Let $Y \in \R^d$ be a random vector such that $\E[Y]=0, \E[YY^\top]=I_d$ and $Y \sim \sg_d(1)$. Let $X_1, \ldots, X_n$ be $n$ independent copies of sub-Gaussian random vector $X =\Sigma^{1/2}Y$ so that $\E[X]=0, \E[XX^\top]=\Sigma$ and  $X \sim \sg_d(\|\Sigma\|_{\mathrm{op}})$.  Assume further that $\Sigma=\theta vv^\top + I_d$ satisfies the spiked covariance model for $v$ such that $|v|_0=k \le d/2$. Then, the $k$-sparse largest eigenvector $\hat v$ of the empirical covariance matrix satisfies,
$$
\min_{\eps \in \{\pm1\}}|\eps\hat v -v|_2\lesssim \frac{1+\theta}{\theta}\Big( \sqrt{\frac{k\log(ed/k)+\log(1/\delta)}{n}} \vee \frac{k\log(ed/k)+\log(1/\delta)}{n}\Big)\,.
$$
with probability $1-\delta$. 
\end{thm}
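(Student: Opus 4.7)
The plan is to mimic the argument of Corollary \ref{COR:sintheta} but, since both $v$ and $\hat v$ are $k$-sparse, to replace the global operator-norm bound on $\hat\Sigma - \Sigma$ by a bound on the worst principal submatrix of size $2k$. The starting point is a Davis-Kahan-style inequality adapted to sparse maximizers: since $\Sigma = \theta v v^\top + I_d$, we have $v^\top \Sigma v = 1+\theta$ and $\hat v^\top \Sigma \hat v = \theta \cos^2\angle(v,\hat v) + 1$, so
\begin{equation*}
\theta \sin^2\angle(v,\hat v) = v^\top \Sigma v - \hat v^\top \Sigma \hat v.
\end{equation*}
The key observation is that $v$ itself is $k$-sparse and hence feasible in the problem defining $\hat v$, so $v^\top \hat\Sigma v \le \hat v^\top \hat\Sigma \hat v$. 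Rearranging,
\begin{equation*}
\theta \sin^2\angle(v,\hat v) \le \langle \Sigma - \hat\Sigma,\, v v^\top - \hat v \hat v^\top\rangle.
\end{equation*}

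Next I would exploit sparsity to control this inner product. Let $S = \supp(v) \cup \supp(\hat v)$, so $|S| \le 2k$ and $v v^\top - \hat v \hat v^\top$ is supported on $S\times S$. By H\"older and the fact that $v v^\top - \hat v \hat v^\top$ has rank at most $2$ (so its nuclear norm is at most $\sqrt{2}$ times its Frobenius norm),
\begin{equation*}
\langle \Sigma - \hat\Sigma, v v^\top - \hat v \hat v^\top\rangle \le \|(\Sigma - \hat\Sigma)_S\|_{\mathrm{op}} \cdot \sqrt{2}\,\|v v^\top - \hat v \hat v^\top\|_F = 2 \|(\Sigma - \hat\Sigma)_S\|_{\mathrm{op}} \sin\angle(v,\hat v),
\end{equation*}
using the identity $\|v v^\top - \hat v \hat v^\top\|_F^2 = 2\sin^2 \angle(v,\hat v)$ from the proof of Theorem \ref{TH:sintheta}. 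Combining the last two displays,
\begin{equation*}
\sin\angle(v,\hat v) \le \frac{2}{\theta}\, \max_{|T|\le 2k}\|(\hat\Sigma - \Sigma)_T\|_{\mathrm{op}}.
\end{equation*}

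The main obstacle is the supremum over subsets. I would reuse the proof of Theorem \ref{TH:covmatop}: for each fixed $T$ of size $2k$, the same $\eps$-net argument on $\cS^{|T|-1}$ combined with the Bernstein bound \eqref{eq:chi2-control} yields
\begin{equation*}
\p\bigl(\|(\hat\Sigma - \Sigma)_T\|_{\mathrm{op}} > t\bigr) \le 144^{2k} \exp\!\Bigl[-\tfrac{n}{2}\bigl((t/(32\|\Sigma\|_{\mathrm{op}}))^2 \wedge (t/(32\|\Sigma\|_{\mathrm{op}}))\bigr)\Bigr],
\end{equation*}
where $\|\Sigma\|_{\mathrm{op}} = 1+\theta$ since $(\Sigma)_T$ is a principal submatrix of $\Sigma$. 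A union bound over the $\binom{d}{2k}\le (ed/(2k))^{2k}$ choices of $T$ absorbs a factor of $\exp(2k\log(ed/k))$ into the exponent, and solving for $t$ so that the total probability is at most $\delta$ gives
\begin{equation*}
\max_{|T|\le 2k}\|(\hat\Sigma - \Sigma)_T\|_{\mathrm{op}} \lesssim (1+\theta)\Bigl(\sqrt{\tfrac{k\log(ed/k)+\log(1/\delta)}{n}} \vee \tfrac{k\log(ed/k)+\log(1/\delta)}{n}\Bigr)
\end{equation*}
with probability at least $1-\delta$. Plugging this into the Davis-Kahan bound and then invoking $\min_{\eps\in\{\pm 1\}}|\eps\hat v - v|_2 \le \sqrt{2}\sin\angle(v,\hat v)$ (second part of Theorem \ref{TH:sintheta}) yields the claimed inequality. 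The only nontrivial bookkeeping is checking the $\log\binom{d}{2k} \lesssim k\log(ed/k)$ reduction via Lemma \ref{lem:nchoosek} and tracking the $\|\Sigma\|_{\mathrm{op}}$ factor through the sub-exponential tail bound.
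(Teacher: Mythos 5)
Your proof is correct and takes essentially the same route as the paper: reduce to a Davis--Kahan bound, use feasibility of $v$ and optimality of $\hat v$ to pass to $\langle \Sigma - \hat\Sigma, vv^\top - \hat v\hat v^\top\rangle$, restrict to the support set $S$ with $|S|\le 2k$, and union-bound the resulting $\sup_{|T|\le 2k}\|(\hat\Sigma-\Sigma)_T\|_{\mathrm{op}}$ via the $\eps$-net/Bernstein machinery of Theorem~\ref{TH:covmatop} and Lemma~\ref{lem:nchoosek}. The only (cosmetic) difference is that you compute $v^\top\Sigma v - \hat v^\top\Sigma\hat v = \theta\sin^2\angle(v,\hat v)$ exactly for the spiked model, whereas the paper cites the general inequality $\ge(\lambda_1-\lambda_2)\sin^2\angle$ from the proof of Theorem~\ref{TH:sintheta}; both yield the same bound.
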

\begin{proof}
We begin by obtaining an intermediate result of the Davis-Kahan $\sin(\theta)$ theorem. Using the same steps as in the proof of Theorem~\ref{TH:sintheta}, we get
$$
v^\top \Sigma v -\hat v^\top \Sigma \hat v\le \langle \hat \Sigma - \Sigma, \hat v \hat v^\top -vv^\top\rangle
$$
Since both $\hat v$ and $v$ are $k$ sparse, there exists a (random) set $S \subset \{1, \ldots, d\}$ such that $|S|\le 2k$ and $\{\hat v \hat v^\top -vv^\top\}_{ij} =0$ if $(i,j) \notin S^2$. It yields
$$
\langle \hat \Sigma - \Sigma, \hat v \hat v^\top -vv^\top\rangle=\langle \hat \Sigma(S) - \Sigma(S), \hat v(S) \hat v(S)^\top -v(S)v(S)^\top\rangle
$$
Where for any $d\times d$ matrix  $M$, we defined the matrix $M(S)$ to be the $|S|\times |S|$ sub-matrix of $M$ with rows and columns indexed by $S$ and for any vector $x \in \R^d$, $x(S)\in \R^{|S|}$ denotes the sub-vector of $x$ with coordinates indexed by $S$. It yields by H\"older's inequality that
$$
v^\top \Sigma v -\hat v^\top \Sigma \hat v\le \|\hat \Sigma(S) - \Sigma(S)\|_{\mathrm{op}}\| \hat v(S) \hat v(S)^\top -v(S)v(S)^\top\|_1\,.
$$
Following the same steps as in the proof of Theorem~\ref{TH:sintheta}, we get now that
$$
\sin\big(\angle(\hat v, v)\big)\le \frac{2}{\theta}\sup_{S\,:\,|S|=2k} \|\hat \Sigma(S) - \Sigma(S)\|_{\mathrm{op}}\,.
$$
To conclude the proof, it remains to control $\sup_{S\,:\,|S|=2k} \|\hat \Sigma(S) - \Sigma(S)\|_{\mathrm{op}}$. To that end, observe that
\begin{align*}
\p\Big[\sup_{S\,:\,|S|=2k}&  \|\hat \Sigma(S) - \Sigma(S)\|_{\mathrm{op}}>t\|\Sigma\|_{\mathrm{op}}\Big]\\
&\le \sum_{{S\,:\,|S|=2k} }\p\Big[\sup_{S\,:\,|S|=2k}  \|\hat \Sigma(S) - \Sigma(S)\|_{\mathrm{op}}>t\|\Sigma(S)\|_{\mathrm{op}}\Big]\\
&\le \binom{d}{2k} 144^{2k}\exp\Big[-\frac{n}{2}(\Big(\frac{t}{32}\Big)^2\wedge \frac{t}{32})\Big]\,.
\end{align*}
where we used~\eqref{EQ:pr:opnormcov2} in the second inequality. Using Lemma~\ref{lem:nchoosek}, we get that the right-hand side above is further bounded by
$$
 \exp\Big[-\frac{n}{2}(\Big(\frac{t}{32}\Big)^2\wedge \frac{t}{32})+2k\log(144)+k\log\big(\frac{ed}{2k}\big)\Big]\,
$$
Choosing now $t$ such that
$$
t\ge C\sqrt{\frac{k\log(ed/k)+\log(1/\delta)}{n}} \vee \frac{k\log(ed/k)+\log(1/\delta)}{n}\,,
$$
for large enough $C$ ensures that the desired bound holds with probability at least $1-\delta$.
\end{proof}

\subsection{Minimax lower bound}

The last section has established that the spike $v$ in the spiked covariance model $\Sigma =\theta v v^\top$ may be estimated at the rate $\theta^{-1} \sqrt{k\log (d/k)/n}$, assuming that $\theta \le 1$ and $k\log (d/k)\le n$, which corresponds to the interesting regime. Using the tools from Chapter~\ref{chap:minimax}, we can show that this rate is in fact optimal already in the Gaussian case.

\begin{thm}
\label{thm:lbspca}
Let $X_1, \ldots, X_n$ be $n$ independent copies of the Gaussian random vector $X \sim\cN_d(0, \Sigma)$ where $\Sigma=\theta vv^\top + I_d$ satisfies the spiked covariance model for $v$ such that $|v|_0=k$ and write $\p_v$ the distribution of $X$ under this assumption and by $\E_v$ the associated expectation. Then, there exists constants $C,c>0$ such that for any $n\ge 2$,  $d \ge 9$, $2 \le k \le (d+7)/8$, it holds
$$
\inf_{\hat v} \sup_{\substack{v \in \cS^{d-1}\\ |v|_0 =k}} \p_v^n\Big[ \min_{\eps \in \{\pm 1\}} |\eps \hat v -v|_2\ge C \frac{1}{\theta} \sqrt{\frac{k}{n} \log (ed/k)}\Big] \ge c\,.
$$ 
\end{thm}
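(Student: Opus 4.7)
The plan is to apply the reduction to multiple hypothesis testing via Fano's inequality (Theorem~\ref{thm:fano-ineq} / Theorem~\ref{TH:mainLB}) to a carefully chosen family of $k$-sparse unit vectors. Two ingredients are needed: (i) a closed form for the KL divergence between two spiked Gaussian models, and (ii) a rich packing of $\{v\in\cS^{d-1}:|v|_0=k\}$ in which the pairwise inner products are uniformly bounded away from $1$ but not too far from $1$, so that the separation and the KL divergence can be tuned simultaneously by a single parameter.

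\smallskip

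\noindent\emph{Step 1: KL divergence under the spike model.} For $\Sigma_i=I_d+\theta v_iv_i^\top$ one checks $\Sigma_i^{-1}=I_d-\frac{\theta}{1+\theta}v_iv_i^\top$ and $\det\Sigma_i=1+\theta$. A direct computation then gives, for any unit vectors $v_i,v_j$,
\[
\KL(\cN(0,\Sigma_i),\cN(0,\Sigma_j))=\tfrac12\bigl(\tr(\Sigma_j^{-1}\Sigma_i)-d-\log\det(\Sigma_j^{-1}\Sigma_i)\bigr)=\frac{\theta^2}{2(1+\theta)}\bigl(1-(v_i^\top v_j)^2\bigr).
\]
Tensorizing, $\KL(\p_{v_i}^n,\p_{v_j}^n)=\frac{n\theta^2}{2(1+\theta)}(1-(v_i^\top v_j)^2)$.

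\smallskip

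\noindent\emph{Step 2: Packing construction.} Apply the sparse Varshamov--Gilbert lemma (Lemma~\ref{LEM:sVG}) on $\{0,1\}^{d-1}$ with sparsity $k-1$; the hypothesis $k\le (d+7)/8$ ensures $k-1\le (d-1)/8$. This yields binary vectors $\omega_1,\dots,\omega_M\in\{0,1\}^{d-1}$ with $|\omega_i|_0=k-1$, $\rho(\omega_i,\omega_j)\ge(k-1)/2$ and $\log M\gtrsim k\log(ed/k)$. Set $w_i=\omega_i/\sqrt{k-1}\in\R^{d-1}$, so that $w_i^\top w_j\le 3/4$, and embed into $\R^d$ by prepending a $0$. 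For a parameter $\beta\in(0,1]$ to be chosen, define
\[
v_i:=\sqrt{1-\beta^2}\,e_1+\beta\, w_i\in\R^d,\qquad i=1,\dots,M.
\]
Then $|v_i|_2=1$ and $|v_i|_0=k$, while $v_i^\top v_j=1-\beta^2(1-w_i^\top w_j)\in[1-\beta^2,\,1-\beta^2/4]$.

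\smallskip

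\noindent\emph{Step 3: Separation and KL estimates.} From the above, $v_i^\top v_j\ge 0$, so
\[
\min_{\eps\in\{\pm1\}}|\eps v_i-v_j|_2^2=2\bigl(1-v_i^\top v_j\bigr)\ge \beta^2/2,
\]
whereas $1-(v_i^\top v_j)^2\le 2\beta^2$, and hence
\[
\KL(\p_{v_i}^n,\p_{v_j}^n)\le \frac{n\theta^2\beta^2}{1+\theta}\le n\theta^2\beta^2.
\]

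\smallskip

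\noindent\emph{Step 4: Apply Fano and choose $\beta$.} Using Theorem~\ref{TH:mainLB} (with $4\phi=\beta^2/2$ and the KL bound above), the minimax probability of error is bounded below by a positive constant provided
\[
n\theta^2\beta^2\;\le\;\alpha\log M\;\asymp\;\alpha\,k\log(ed/k).
\]
Choosing $\beta^2\asymp \frac{k\log(ed/k)}{n\theta^2}$ (and checking $\beta\le 1$ under the implicit assumption $k\log(ed/k)\le n\theta^2$; otherwise the statement is trivial up to constants by shrinking $C$) gives the lower bound
\[
\min_{\eps\in\{\pm1\}}|\eps\hat v-v|_2\;\ge\;c'\beta\;\asymp\;\frac{1}{\theta}\sqrt{\frac{k}{n}\log(ed/k)},
\]
with constant probability, for every estimator $\hat v$.

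\smallskip

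\noindent\emph{Main obstacle.} The only delicate point is balancing the packing: one needs the inner products $v_i^\top v_j$ close to $1$ (so that KL stays small even though the covariances lie on a $k$-sparse sphere) while retaining an exponentially large family. The ``anchor $+$ perturbation'' ansatz $v_i=\sqrt{1-\beta^2}e_1+\beta w_i$ is what makes the separation $\asymp\beta$ and the KL $\asymp n\theta^2\beta^2$ scale identically, so that Fano gives the exact claimed rate after a single optimization over $\beta$.
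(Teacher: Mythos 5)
Your proof is correct and takes essentially the same route as the paper: both reduce to Fano via Theorem~\ref{TH:mainLB}, compute $\KL(\p_u,\p_v)=\frac{\theta^2}{2(1+\theta)}(1-(u^\top v)^2)$ using the Sherman--Morrison formula, and build the packing by the "anchor $+$ perturbation" ansatz on top of the sparse Varshamov--Gilbert lemma applied with sparsity $k-1$ in dimension $d-1$. The only cosmetic differences are that the paper places the anchor coordinate last rather than first, parameterizes by $\gamma$ with $\beta^2=\gamma^2(k-1)$, and leaves implicit the constraint $\gamma^2(k-1)\le 1$ (equivalently your $\beta\le 1$) that you flag explicitly.
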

\begin{proof}
We use the general technique developed in  Chapter~\ref{chap:minimax}. Using this technique, we need to provide the a set $v_1, \ldots, v_M \in \cS^{d-1}$ of $k$ sparse vectors such that
\begin{itemize}
\item[(i)] $\DS  \min_{\eps \in \{-1,1\}} |\eps v_j -v_k|_2> 2\psi$ for all $1\le j <k \le M$\,,
\item[(ii)] $\DS \KL(\p_{v_j}, \p_{v_k}) \le c\log(M)/n$\,,
\end{itemize}
where $\psi=C\theta^{-1} \sqrt{k\log (d/k)/n}$. Akin to the Gaussian sequence model, our goal is to employ the sparse Varshamov-Gilbert lemma to construct the $v_j$'s. However, the $v_j$'s have the extra constraint that they need to be of unit norm so we cannot simply rescale the sparse binary vectors output by the sparse Varshamov-Gilbert lemma. Instead, we use the last coordinate to make sure that they have unit norm.

Recall that it follows from the sparse Varshamov-Gilbert Lemma~\ref{LEM:sVG} that there exits $\omega_1, \ldots, \omega_M \in \{-1, 1\}^{d-1}$ such that $|\omega_j|_0=k-1$,  $\rho(\omega_i, \omega_j) \ge (k-1)/2$ for all $i \neq j$ and $\log(M) \ge \frac{k-1}{8}\log(1+ \frac{d-1}{2k-2})$. Define $v_j=(\gamma\omega_j, \ell)$ where $\gamma<1/\sqrt{k-1}$ is to be defined later and $\ell =\sqrt{1-\gamma^2(k-1)}$. It is straightforward to check that $|v_j|_0=k$, $|v_j|_2=1$ and $v_j$ has only nonnegative coordinates.  Moreover, for any $i \neq j$, 
\begin{equation}
\label{EQ:prlbspca1}
\min_{\eps \in {\pm 1}} |\eps v_i -v_j|^2_2= |v_i -v_j|^2_2=\gamma^2\rho(v_i, v_j)\ge \gamma^2 \frac{k-1}2\,.
\end{equation}
We choose 
$$
\gamma^2=\frac{C_\gamma}{\theta^2 n} \log\big(\frac{d}{k}\big)\,,
$$
so that $\gamma^2(k-1)=\psi$ and (i) is verified. It remains to check (ii). To that end, note that
\begin{align*}
\KL(\p_u, \p_v)&=\frac{1}{2}\E_u\Big(X^\top \big[(I_d+\theta vv^\top)^{-1}-(I_d+\theta vv^\top)^{-1} \big]X\Big)\\
&=\frac{1}{2}\E_u\tr\Big(X^\top \big[(I_d+\theta vv^\top)^{-1}-(I_d+\theta vv^\top)^{-1} \big]X\Big)\\
&=\frac{1}{2}\tr\Big(\big[(I_d+\theta vv^\top)^{-1}-(I_d+\theta uu^\top)^{-1} \big](I_d+\theta uu^\top)\Big)
\end{align*} 

Next, we use the Sherman-Morrison formula\footnote{To see this, check that $(I_d+\theta u u^\top)(I_d-\frac{\theta}{1+\theta})=I_d$ and that the two matrices on the left-hand side commute.}:
$$
(I_d+\theta u u^\top)^{-1}=I_d-\frac{\theta}{1+\theta} u u^\top\,, \qquad \forall\, u \in \cS^{d-1}\,.
$$
It yields
\begin{align*}
\KL(\p_u, \p_v)&=\frac{\theta}{2(1+\theta)}\tr\Big((uu^\top-vv^\top)(I_d+\theta uu^\top)\Big)\\
&=\frac{\theta^2}{2(1+\theta)}(1-(u^\top v)^2)\\
&=\frac{\theta^2}{2(1+\theta)}\sin^2\big(\angle(u,v)\big)
\end{align*}

It yields
$$
\KL(\p_{v_i}, \p_{v_j})=\frac{\theta^2}{2(1+\theta)}\sin^2\big(\angle(v_i,v_j)\big)\,.
$$
Next, note that
\begin{align*}
\sin^2\big(\angle(v_i,v_j)\big)&=1-\gamma^2 \omega_i^\top \omega_j -\ell^2\le \gamma^2(k-1)\le C_\gamma\frac{k}{\theta^2 n} \log\big(\frac{d}{k}\big)\le \frac{\log M}{\theta^2 n}\,,
\end{align*}
for $C_\gamma$ small enough. We conclude that (ii) holds, which completes our proof.
\end{proof}

Together with the upper bound of Theorem~\ref{TH:sparsePCA}, we get the following corollary.

\begin{cor}
Assume that  $\theta \le 1$ and $k\log (d/k)\le n$. Then  $\theta^{-1} \sqrt{k\log (d/k)/n}$ is the minimax rate of estimation over $\cB_0(k)$ in the spiked covariance model.
\end{cor}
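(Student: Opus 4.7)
The plan is to verify that the upper bound of Theorem~\ref{TH:sparsePCA} and the lower bound of Theorem~\ref{thm:lbspca} match (up to numerical constants) under the stated assumptions $\theta\le 1$ and $k\log(d/k)\le n$, so that the common rate $\phi_n=\theta^{-1}\sqrt{k\log(ed/k)/n}$ qualifies as a minimax rate in the sense of the definitions from Chapter~\ref{chap:minimax}.

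First I would simplify the upper bound. Fix a constant confidence level, say $\delta=1/4$, in Theorem~\ref{TH:sparsePCA}. Under the hypothesis $k\log(ed/k)\le n$, there is an absolute constant $C_0$ such that $[k\log(ed/k)+\log(1/\delta)]/n\le C_0$, and on this regime the square root dominates the linear term in the bound, so
\[
\sqrt{\tfrac{k\log(ed/k)+\log 4}{n}}\vee \tfrac{k\log(ed/k)+\log 4}{n}\;\lesssim\;\sqrt{\tfrac{k\log(ed/k)}{n}}.
\]
Using also $(1+\theta)/\theta\le 2/\theta$ for $\theta\le 1$, Theorem~\ref{TH:sparsePCA} produces an estimator $\hat v$ satisfying, uniformly over $v\in\cB_0(k)\cap\cS^{d-1}$,
\[
\p_v^n\!\left[\min_{\eps\in\{\pm 1\}}|\eps\hat v-v|_2>C_1\,\theta^{-1}\sqrt{k\log(ed/k)/n}\right]\le 1/4,
\]
for some numerical constant $C_1$. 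In particular $\sup_v\p_v^n[\min_\eps|\eps\hat v-v|_2>C_1\phi_n]\le 1/4$, which is a high-probability minimax upper bound of order $\phi_n$.

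Second, Theorem~\ref{thm:lbspca} asserts directly that for $d\ge 9$, $2\le k\le (d+7)/8$ and $n\ge 2$, there exist constants $C,c>0$ with
\[
\inf_{\tilde v}\sup_{v\in\cB_0(k)\cap\cS^{d-1}}\p_v^n\!\left[\min_{\eps\in\{\pm 1\}}|\eps\tilde v-v|_2\ge C\,\theta^{-1}\sqrt{k\log(ed/k)/n}\right]\ge c,
\]
which is exactly the matching high-probability lower bound of order $\phi_n$. No extra assumption on $\theta$ or on the relation $k\log(d/k)\le n$ is needed for the lower bound itself, so the two assumptions in the corollary are only used to guarantee that the upper bound indeed collapses to $\phi_n$.

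Finally I would put the two pieces together in the framework of Section~\ref{SEC:reduc}: the lower bound identifies $\phi_n$ as a minimax lower rate over $\cB_0(k)\cap\cS^{d-1}$ and hence over $\cB_0(k)$, while the $k$-sparse leading eigenvector attains this rate up to a constant factor. The only thing to double-check is bookkeeping of constants and the mild range conditions on $k$ and $d$; there is no genuine obstacle, since the work has already been done in Theorems~\ref{TH:sparsePCA} and \ref{thm:lbspca}.
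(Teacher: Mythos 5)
Your proposal is correct and follows exactly the route the paper intends: the corollary is stated immediately after Theorem~\ref{thm:lbspca} with the remark that it follows ``together with the upper bound of Theorem~\ref{TH:sparsePCA},'' and your bookkeeping---using $\theta\le 1$ to absorb $(1+\theta)/\theta$ into $2/\theta$ and $k\log(ed/k)\lesssim n$ to collapse the $\sqrt{\cdot}\vee(\cdot)$ term to its square-root branch at a fixed $\delta$---is precisely the simplification the paper leaves implicit. Nothing is missing.
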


\section{Graphical models}

\subsection{Gaussian graphical models}

In Section \ref{sec:cov-est}, we noted that thresholding is an appropriate way to gain an advantage from sparsity in the covariance matrix when trying to estimate it.
However, in some cases, it is more natural to assume sparsity in the inverse of the covariance matrix, \( \Theta = \Sigma^{-1} \), which is sometimes called \emph{concentration} or \emph{precision matrix}.
In this chapter, we will develop an approach that allows us to get guarantees similar to the lasso for the error between the estimate and the ground truth matrix in Frobenius norm.

We can understand why sparsity in \( \Theta \) can be appropriate in the context of \emph{undirected graphical models} \cite{Lau96}.
These are models that serve to simplify dependence relations between high-dimensional distributions according to a graph.

\begin{defn}
Let \( G = (V, E) \) be a graph on \( d \) nodes and to each node \( v \), associate a random variable \( X_v \).
An undirected graphical model or \emph{Markov random field} is a collection of probability distributions over \( X_v \) that factorize according to
\begin{equation}
	\label{eq:factor-graph}
	p(x_1, \dots, x_d) = \frac{1}{Z} \prod_{C \in \mathcal{C}} \psi_C(x_C),
\end{equation}
where \( \mathcal{C} \) is the collection of all cliques (completely connected subsets) in \( G \), \( x_C \) is the restriction of \( (x_1, \dots, x_d) \) to the subset \( C \), and \( \psi_C \) are non-negative potential functions.
\end{defn}

This definition implies certain conditional independence relations, such as the following.

\begin{prop}
	A graphical model as in \eqref{eq:factor-graph} fulfills the \emph{global Markov property}.
	That is, for any triple \( (A, B, S) \) of disjoint subsets such that \( S \) separates \( A \) and \( B \),
	\begin{equation*}
		A \independent B \mid S.
	\end{equation*}
\end{prop}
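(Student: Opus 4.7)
The plan is to derive conditional independence directly from the clique-wise factorization by exploiting the fact that a graph separator forbids any clique from straddling the two sides. This is the standard ``Hammersley--Clifford style'' easy direction (factorization implies global Markov), and no deep tools are needed---the argument is essentially combinatorial plus a one-line probabilistic consequence of a product factorization.

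First, since $S$ separates $A$ from $B$ in $G$, I would partition the remaining vertices $V \setminus S$ into two disjoint sets $\tilde A \supseteq A$ and $\tilde B \supseteq B$ such that no edge of $G$ has one endpoint in $\tilde A$ and the other in $\tilde B$. Concretely, take $\tilde A$ to be the union of connected components (in the subgraph induced on $V \setminus S$) that meet $A$, and let $\tilde B = V \setminus (S \cup \tilde A)$; then $\tilde B \supseteq B$ because any path from $B$ into $\tilde A$ would avoid $S$, contradicting separation.

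Next, I would observe that every clique $C \in \mathcal{C}$ is contained either in $\tilde A \cup S$ or in $\tilde B \cup S$. Indeed, if $C$ contained both a vertex $u \in \tilde A$ and a vertex $v \in \tilde B$, then the edge $\{u,v\}$ would belong to $G$, contradicting the construction of $\tilde A, \tilde B$. Partition $\mathcal{C} = \mathcal{C}_A \sqcup \mathcal{C}_B$ accordingly (assigning cliques contained in $S$ to either side arbitrarily), so that \eqref{eq:factor-graph} becomes
\begin{equation*}
p(x_1, \dots, x_d) \;=\; \frac{1}{Z}\, \underbrace{\Big(\prod_{C \in \mathcal{C}_A} \psi_C(x_C)\Big)}_{=: f(x_{\tilde A \cup S})} \cdot \underbrace{\Big(\prod_{C \in \mathcal{C}_B} \psi_C(x_C)\Big)}_{=: g(x_{\tilde B \cup S})}.
\end{equation*}

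Finally, I would deduce the conditional independence from this product form. Integrating (or summing) over $x_{\tilde B}$ and $x_{\tilde A}$ respectively and dividing by the marginal of $x_S$ shows that $p(x_{\tilde A}, x_{\tilde B} \mid x_S)$ factors as a function of $(x_{\tilde A}, x_S)$ times a function of $(x_{\tilde B}, x_S)$, which is precisely the statement $X_{\tilde A} \independent X_{\tilde B} \mid X_S$. Since $A \subseteq \tilde A$ and $B \subseteq \tilde B$, this immediately yields $A \independent B \mid S$. The main (and only real) obstacle is the clique-splitting step: it must be verified carefully that the separation hypothesis genuinely prevents any clique from containing vertices from both $\tilde A$ and $\tilde B$, which is exactly where the hypothesis that $S$ is a separator enters.
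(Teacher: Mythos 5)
Your proposal is correct and follows essentially the same argument as the paper: define $\tilde A$ as the union of components of $G[V\setminus S]$ meeting $A$, set $\tilde B = V\setminus(\tilde A\cup S)$, note that no clique can straddle $\tilde A$ and $\tilde B$, split the product over cliques accordingly, and read off the conditional independence from the factorization. Your write-up is a bit more explicit about why cliques cannot straddle the separator and about the final marginalization step, but there is no substantive difference in approach.
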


\begin{proof}
	Let \( (A, B, S) \) a triple as in the proposition statement and define \( \tilde{A} \) to be the connected component in the induced subgraph \( G[V \setminus S] \), as well as \( \tilde{B} = G[V \setminus (\tilde{A} \cup S)] \).
	By assumption, \( A \) and \( B \) have to be in different connected components of \( G[V \setminus S] \), hence any clique in \( G \) has to be a subset of \( \tilde{A} \cup S \) or \( \tilde{B} \cup S \).
	Denoting the former cliques by \( \mathcal{C}_A \), we get
	\begin{equation*}
		f(x) = \prod_{C \in \mathcal{C}} \psi_C(x) = \prod_{C \in \mathcal{C}_A} \psi_C(x) \prod_{c \in \mathcal{C} \setminus \mathcal{C}_A} \psi_C(x) = h(x_{\tilde{A} \cup S}) k(x_{\tilde{B} \cup S}),
	\end{equation*}
	which implies the desired conditional independence.
\end{proof}

A weaker form of the Markov property is the so-called \emph{pairwise Markov property}, which says that the above holds only for singleton sets of the form \( A = \{a\} \), \( B = \{ b \} \) and \( S = V \setminus \{a, b\} \) if \( (a, b) \notin E \).

In the following, we will focus on Gaussian graphical models for \( n \) \iid samples \( X_i \sim \cN(0, (\Theta^\ast)^{-1}) \).
By contrast, there is a large body of work on discrete graphical models such as the Ising model (see for example \cite{Bre15}), which we are not going to discuss here, although links to the following can be established as well \cite{LohWaioth12}.

For a Gaussian model with mean zero, by considering the density in terms of the concentration matrix \( \Theta \),
\begin{equation*}
	f_\Theta(x) \propto \exp(-x^\top \Theta x/2),
\end{equation*}
it is immediate that the pairs \( a, b \) for which the pairwise Markov property holds are exactly those for which \( \Theta_{i, j} = 0 \).
Conversely, it can be shown that for a family of distributions with non-negative density, this actually implies the factorization \eqref{eq:factor-graph} according to the graph given by the edges \( \{ (i, j) : \Theta_{i, j} \neq 0 \} \).
This is the content of the Hammersley-Clifford theorem.

\begin{thm}[Hammersley-Clifford]
	\label{thm:hammersley-clifford}
	Let \( P \) be a probability distribution with positive density \( f \) with respect to a product measure over \( V \).
	Then \( P \) factorizes over \( G \) as in \eqref{eq:factor-graph} if and only if it fulfills the pairwise Markov property.
\end{thm}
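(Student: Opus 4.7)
The forward direction is essentially contained in the preceding proposition: a factorization as in \eqref{eq:factor-graph} implies the global Markov property, which trivially implies the pairwise Markov property (take $A=\{a\}$, $B=\{b\}$, $S=V\setminus\{a,b\}$; since $(a,b)\notin E$, the set $S$ separates $a$ from $b$). So the plan focuses on the harder converse: pairwise Markov plus positivity implies factorization.

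The plan is to fix a reference configuration $x^*\in\prod_v\mathcal{X}_v$, which is legal because $f>0$ everywhere, and perform a Möbius-type expansion of $\log f$ around $x^*$. For any $A\subseteq V$ and any $x$, let $x^A$ denote the configuration equal to $x$ on $A$ and to $x^*$ on $V\setminus A$, and set
\[
H_A(x)\;=\;\log\frac{f(x^A)}{f(x^*)}.
\]
Define the ``interaction terms''
\[
\phi_A(x)\;=\;\sum_{B\subseteq A}(-1)^{|A|-|B|}\,H_B(x).
\]
By Möbius inversion on the Boolean lattice, $H_V(x)=\sum_{A\subseteq V}\phi_A(x)$, which rewrites as
\[
\log\frac{f(x)}{f(x^*)}\;=\;\sum_{A\subseteq V}\phi_A(x).
\]
Two immediate observations: (i) $\phi_A(x)$ depends on $x$ only through $x_A$, since each $H_B(x)$ with $B\subseteq A$ depends only on $x_B\subseteq x_A$; and (ii) $\phi_\emptyset\equiv 0$ by construction.

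The main step, and the place where the pairwise Markov assumption enters, is to show that $\phi_A\equiv 0$ whenever $A$ is \emph{not} a clique of $G$. Suppose $i,j\in A$ with $(i,j)\notin E$. Split the sum defining $\phi_A$ according to whether a subset $B\subseteq A$ contains $i$, $j$, both, or neither, and group the four corresponding terms. The claim reduces to showing that for every $C\subseteq A\setminus\{i,j\}$,
\[
H_{C\cup\{i,j\}}(x)-H_{C\cup\{i\}}(x)-H_{C\cup\{j\}}(x)+H_C(x)\;=\;0,
\]
which, after exponentiating, is the identity
\[
\frac{f(x^{C\cup\{i,j\}})\,f(x^C)}{f(x^{C\cup\{i\}})\,f(x^{C\cup\{j\}})}\;=\;1.
\]
This is exactly a statement of conditional independence of coordinates $i$ and $j$ given the rest (with the ``rest'' set to $x^*$ off of $C$): one checks it by writing the ratio in terms of conditional densities $f(x_i,x_j\mid x_{V\setminus\{i,j\}})$ evaluated at the two relevant configurations and invoking $X_i\independent X_j\mid X_{V\setminus\{i,j\}}$ from the pairwise Markov property. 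The positivity of $f$ is used here to ensure the conditional densities are well defined and the algebraic manipulation is valid.

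Once this vanishing is established, only cliques contribute, so
\[
f(x)\;=\;f(x^*)\exp\!\Big(\sum_{C\in\mathcal{C}}\phi_C(x)\Big)\;=\;\frac{1}{Z}\prod_{C\in\mathcal{C}}\psi_C(x_C)
\]
with $\psi_C(x_C):=\exp(\phi_C(x))$, which are positive by construction, and $Z$ absorbing $f(x^*)^{-1}$ (the normalization is automatic from $\int f=1$). This is precisely the factorization \eqref{eq:factor-graph}. The main obstacle is the combinatorial identity in the key step: getting the signs right in the Möbius sum and translating the pairwise independence into the exact multiplicative cancellation. Everything else is bookkeeping on the Boolean lattice.
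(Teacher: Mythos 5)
Your proposal is correct and follows essentially the same route as the paper: fix a base configuration $x^*$, define the functions $H_A$, apply M\"obius inversion on the Boolean lattice, and show that the interaction term $\phi_A$ vanishes for non-cliques by grouping subsets according to membership of the non-adjacent pair $\{i,j\}$ and invoking the pairwise Markov property to force the four-term alternating sum to cancel. The only cosmetic difference is your normalization of $H_A$ by $f(x^*)$, which makes $\phi_\emptyset=0$ automatically but has no substantive effect on the argument; you also spell out the (easy) forward implication via the global Markov property, which the paper leaves implicit.
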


We show the if direction. 
The requirement that \( f \) be positive allows us to take logarithms.
Pick a fixed element \( x^\ast \).
The idea of the proof is to rewrite the density in a way that allows us to make use of the pairwise Markov property.
The following combinatorial lemma will be used for this purpose.

\begin{lem}[Moebius Inversion]
	\label{lem:moebius-inversion}
	Let \( \Psi, \Phi : \cP(V) \to \R \) be functions defined for all subsets of a set \( V \).
	Then, the following are equivalent:
	\begin{enumerate}
		\item For all \( A \subseteq V \): \( \Psi(A) = \sum_{B \subseteq A} \Phi(B) \);
		\item For all \( A \subseteq V \): \( \Phi(A) = \sum_{B \subseteq A} (-1)^{|A \setminus B}\Psi(B) \).
	\end{enumerate}
\end{lem}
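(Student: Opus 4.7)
The plan is to prove both implications by direct substitution followed by an interchange of the order of summation, relying on one key combinatorial identity. Specifically, for any finite set $C$,
$$
\sum_{D \subseteq C} (-1)^{|D|} = (1-1)^{|C|} = \begin{cases} 1 & \text{if } C = \emptyset,\\ 0 & \text{otherwise,} \end{cases}
$$
which follows from the binomial theorem applied to $\sum_{k=0}^{|C|} \binom{|C|}{k}(-1)^k$. Equivalently, by reindexing, $\sum_{B \subseteq C} (-1)^{|C \setminus B|}$ equals $\mathbf{1}(C = \emptyset)$.

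For the direction (1) $\Rightarrow$ (2), I would substitute the expression $\Psi(B) = \sum_{C \subseteq B} \Phi(C)$ from (1) into the right-hand side of (2), obtaining
$$
\sum_{B \subseteq A} (-1)^{|A \setminus B|} \Psi(B) = \sum_{B \subseteq A} (-1)^{|A \setminus B|} \sum_{C \subseteq B} \Phi(C) = \sum_{C \subseteq A} \Phi(C) \sum_{B : C \subseteq B \subseteq A} (-1)^{|A \setminus B|}.
$$
Then I would reindex the inner sum via $D = B \setminus C \subseteq A \setminus C$, so that $|A \setminus B| = |A \setminus C| - |D|$, turning the inner sum into $(-1)^{|A\setminus C|}\sum_{D \subseteq A\setminus C}(-1)^{-|D|}$, which by the key identity equals $\mathbf{1}(A \setminus C = \emptyset) = \mathbf{1}(C = A)$. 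Only the term $C = A$ survives, giving exactly $\Phi(A)$.

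For the reverse direction (2) $\Rightarrow$ (1), the calculation is symmetric: substitute $\Phi(B) = \sum_{C \subseteq B} (-1)^{|B \setminus C|} \Psi(C)$ into the right-hand side of (1), swap the order of summation to obtain
$$
\sum_{B \subseteq A} \Phi(B) = \sum_{C \subseteq A} \Psi(C) \sum_{B : C \subseteq B \subseteq A} (-1)^{|B \setminus C|},
$$
and then apply the same reindexing $D = B \setminus C \subseteq A \setminus C$ together with the key identity to collapse the inner sum to $\mathbf{1}(C = A)$, recovering $\Psi(A)$.

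There is no real obstacle here beyond careful bookkeeping of the set differences and signs; the whole argument is a double-counting manipulation built on the single observation that alternating sums over subsets of a non-empty set vanish. The symmetric structure of the two directions means the proof can essentially be written once and applied twice.
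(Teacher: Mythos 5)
Your proof is correct and takes essentially the same route as the paper's: substitute one formula into the other, swap the order of summation, reindex to a sum over $D \subseteq A \setminus C$, and collapse it with the alternating-sum identity $\sum_{D \subseteq C}(-1)^{|D|} = \mathbf{1}(C = \emptyset)$. The only cosmetic differences are that the paper writes out just one direction while you do both, and you justify the key identity via the binomial theorem $(1-1)^{|C|}$ whereas the paper observes that a nonempty set has equally many even- and odd-sized subsets; these are the same fact.
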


\begin{proof}
	\( 1. \implies 2. \):
	\begin{align*}
		\sum_{B \subseteq A} \Phi(B)
		= {} & \sum_{B \subseteq A} \sum_{C \subseteq B} (-1)^{|B \setminus C|} \Psi(C)\\
		= {} & \sum_{C \subseteq A} \Psi(C) \left( \sum_{C \subseteq B \subseteq A} (-1)^{|B \setminus C|}\right) \\
		= {} & \sum_{C \subseteq A} \Psi(C) \left( \sum_{H \subseteq A \setminus C} (-1)^{|H|} \right).
	\end{align*}
	Now, \( \sum_{H \subseteq A \setminus C} (-1)^{|H|} = 0 \) unless \( A \setminus C = \emptyset \) because any non-empty set has the same number of subsets with odd and even cardinality, which can be seen by induction.
\end{proof}

\begin{proof}
	Define
	\begin{equation*}
		H_A(x) = \log f(x_A, x^\ast_{A^c})
	\end{equation*}
	and
	\begin{equation*}
		\Phi_A(x) = \sum_{B \subseteq A} (-1)^{|A \setminus B|} H_B(x).
	\end{equation*}
	Note that both \( H_A \) and \( \Phi_A \) depend on \( x \) only through \( x_A \).
	By Lemma \ref{lem:moebius-inversion}, we get
	\begin{equation*}
		\log f(x) = H_V(x) = \sum_{A \subseteq V} \Phi_A(x).
	\end{equation*}

	It remains to show that \( \Phi_A = 0 \) if \( A \) is not a clique.
	Assume \( A \subseteq V \) with \( v, w \in A \), \( (v, w) \notin E \), and set \( C = V \setminus \{v, w\} \).
	By adding all possible subsets of \( \{v, w\} \) to every subset of \( C \), we get every subset of \( A \), and hence
	\begin{equation*}
		\Phi_A(x) = \sum_{B \subseteq C} (-1)^{|C \setminus B|} ( H_B - H_{B \cup \{v\}} - H_{B \cup \{w\}} + H_{B \cup \{v, w\}}).
	\end{equation*}
	Writing \( D = V \setminus \{v, w\} \), by the pairwise Markov property, we have
	\begin{align*}
		H_{B \cup \{v, w\}}(x) - H_{B \cup \{v\}}(x)
		= {} & \log \frac{f(x_v, x_w, x_B, x^\ast_{D \setminus B}}{f(x_v, x_w^\ast, x_B, x^\ast_{D \setminus B}}\\
				= {} & \log \frac{f(x_v | x_{B}, x^\ast_{D \setminus B}) f(x_w, x_B, x^\ast_{D \setminus B})}{f(x_v | x_{B}, x^\ast_{D \setminus B}) f(x^\ast_w, x_B, x^\ast_{D \setminus B})}\\
				= {} & \log \frac{f(x^\ast_v | x_{B}, x^\ast_{D \setminus B}) f(x_w, x_B, x^\ast_{D \setminus B})}{f(x^\ast_v | x_{B}, x^\ast_{D \setminus B}) f(x^\ast_w, x_B, x^\ast_{D \setminus B})}\\
				= {} & \log \frac{f(x^\ast_v, x_w, x_B, x^\ast_{D \setminus B})}{f(x^\ast_v, x_w^\ast, x_B, x^\ast_{D \setminus B})}\\
				= {} & H_{B \cup \{ w \}}(x) - H_B(x).
	\end{align*}
	Thus \( \Psi_A \) vanishes if \( A \) is not a clique.
\end{proof}

In the following, we will show how to exploit this kind of sparsity when trying to estimate \( \Theta \).
The key insight is that we can set up an optimization problem that has a similar error sensitivity as the lasso, but with respect to \( \Theta \).
In fact, it is the penalized version of the maximum likelihood estimator for the Gaussian distribution.
We set
\begin{equation}
	\label{eq:af}
	\hat{\Theta}_\lambda = \argmin_{\Theta \succ 0} \tr(\hat{\Sigma} \Theta) - \log \det (\Theta) + \lambda \| \Theta_{D^c} \|_1,
\end{equation}
where we wrote \( \Theta_{D^c} \) for the off-diagonal restriction of \( \Theta \), \( \| A \|_1 = \sum_{i, j} | A_{i, j} | \) for the element-wise \( \ell_1 \) norm, and \( \hat{\Sigma} = \frac{1}{n} \sum_{i} X_i X_i^\top \) for the empirical covariance matrix of \( n \) \iid observations.
We will also make use of the notation
\begin{equation*}
	\| A \|_\infty = \max_{i, j \in [d]} | A_{i, j} |
\end{equation*}
for the element-wise \( \infty \)-norm of a matrix \( A \in \R^{d \times d} \) and
\begin{equation*}
	\| A \|_{\infty, \infty} = \max_{i} \sum_{j} | A_{i, j} |
\end{equation*}
for the \( \infty \) operator norm.

We note that the function \( \Theta \mapsto - \log \operatorname{det}(\Theta) \) has first derivative \( - \Theta^{-1} \) and second derivative \( \Theta^{-1} \otimes \Theta^{-1} \), which means it is convex.
Derivations for this can be found in \cite[Appendix A.4]{BoyVan04}.
This means that in the population case and for \( \lambda = 0 \), the minimizer in \eqref{eq:af} would coincide with \( \Theta^\ast \).

First, let us derive a bound on the error in \( \hat{\Sigma} \) that is similar to Theorem \ref{TH:finitemax}, with the difference that we are dealing with sub-Exponential distributions.

\begin{lem}
	\label{lem:chi2-bounds}
	If \( X_i \sim \cN(0, \Sigma) \) are \iid and \( \hat{\Sigma} = \frac{1}{n} \sum_{i=1}^{n} X_i X_i^\top \) denotes the empirical covariance matrix, then
	\begin{equation*}
		\| \hat{\Sigma} - \Sigma \|_{\infty} \lesssim \| \Sigma \|_{\mathrm{op}}\sqrt{\frac{\log(d/\delta)}{n}},
	\end{equation*}
	with probability at least \( 1 - \delta \), if \( n \gtrsim \log(d/\delta) \).
\end{lem}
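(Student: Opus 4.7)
The plan is to bound each entry $(\hat\Sigma-\Sigma)_{ij}$ individually using Bernstein's inequality, then take a union bound over the $d^2$ entries. The strategy closely mirrors the proof of Theorem~\ref{TH:covmatop}, except that we work with fixed coordinate directions $e_i,e_j$ rather than suprema over a net, which is why we save a factor of $d$ and obtain only $\log d$ in the rate.

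First I would fix $i,j \in [d]$ and write
\[
(\hat\Sigma-\Sigma)_{ij}=\frac{1}{n}\sum_{k=1}^n\bigl(X_{k,i}X_{k,j}-\E[X_{k,i}X_{k,j}]\bigr).
\]
To show each summand is sub-exponential, I would apply polarization:
\[
X_{k,i}X_{k,j}=\tfrac14\bigl((X_{k,i}+X_{k,j})^2-(X_{k,i}-X_{k,j})^2\bigr).
\]
Since $X_k\sim\cN(0,\Sigma)$, the variables $Z_\pm:=X_{k,i}\pm X_{k,j}$ are centered Gaussians with variance at most $2(\Sigma_{ii}+\Sigma_{jj})\le 4\|\Sigma\|_{\mathrm{op}}$, so $Z_\pm\sim\sg(4\|\Sigma\|_{\mathrm{op}})$. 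By Lemma~\ref{LEM:squaredsubG}, $Z_\pm^2-\E[Z_\pm^2]\sim\subE(c\|\Sigma\|_{\mathrm{op}})$ for a numerical constant $c$, and combining the two pieces (using Cauchy–Schwarz on MGFs as in the proof of Theorem~\ref{TH:covmatop}) gives
\[
X_{k,i}X_{k,j}-\E[X_{k,i}X_{k,j}]\sim\subE(c'\|\Sigma\|_{\mathrm{op}})
\]
for some constant $c'>0$.

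Next, I would apply Bernstein's inequality (Theorem~\ref{TH:Bernstein}) to the i.i.d.\ average, yielding
\[
\p\bigl(|(\hat\Sigma-\Sigma)_{ij}|>t\bigr)\le 2\exp\!\left[-\frac{n}{2}\left(\frac{t^2}{c'^{\,2}\|\Sigma\|_{\mathrm{op}}^2}\wedge\frac{t}{c'\|\Sigma\|_{\mathrm{op}}}\right)\right].
\]
A union bound over the $d^2$ index pairs gives
\[
\p\bigl(\|\hat\Sigma-\Sigma\|_\infty>t\bigr)\le 2d^2\exp\!\left[-\frac{n}{2}\left(\frac{t^2}{c'^{\,2}\|\Sigma\|_{\mathrm{op}}^2}\wedge\frac{t}{c'\|\Sigma\|_{\mathrm{op}}}\right)\right].
\]

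Finally, setting the right-hand side equal to $\delta$ produces the two regimes. The sub-Gaussian regime $t\asymp\|\Sigma\|_{\mathrm{op}}\sqrt{\log(d/\delta)/n}$ dominates precisely when $\sqrt{\log(d/\delta)/n}\lesssim 1$, i.e.\ when $n\gtrsim\log(d/\delta)$, which is the hypothesis. Under that assumption the stated bound
\[
\|\hat\Sigma-\Sigma\|_\infty\lesssim\|\Sigma\|_{\mathrm{op}}\sqrt{\frac{\log(d/\delta)}{n}}
\]
holds with probability at least $1-\delta$. There is no real obstacle; the only mild subtlety is making sure the polarization step gives a sub-exponential parameter scaling with $\|\Sigma\|_{\mathrm{op}}$ (not with the larger off-diagonal entries of $\Sigma$), which is immediate from $|\Sigma_{ii}|,|\Sigma_{jj}|\le\|\Sigma\|_{\mathrm{op}}$.
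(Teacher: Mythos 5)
Your proof is correct and takes essentially the same route as the paper: bound each entry $(\hat\Sigma-\Sigma)_{ij}$ via polarization, sub-exponential concentration (Bernstein), and a union bound over the $d^2$ pairs, with the sub-Gaussian regime of Bernstein's inequality kicking in under the hypothesis $n\gtrsim\log(d/\delta)$. The only cosmetic difference is that the paper first whitens via $Y=\Sigma^{-1/2}X$ so that $e_i^\top(\hat\Sigma-\Sigma)e_j$ becomes $\|\Sigma\|_{\mathrm{op}}$ times a bilinear form $v^\top(\frac{1}{n}\sum Y_kY_k^\top-I_d)w$ with unit vectors $v,w$, and then invokes the sub-exponential tail bound already established in the proof of Theorem~\ref{TH:covmatop}, whereas you keep $\Sigma$ and absorb it into the variance of $Z_\pm = X_{k,i}\pm X_{k,j}$; both give the parameter $\subE(c\|\Sigma\|_{\mathrm{op}})$ and lead to the identical conclusion.
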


\begin{proof}
	Start as in the proof of Theorem \ref{TH:covmatop} by writing \( Y = \Sigma^{-1/2} X \sim \cN(0, I_d) \) and notice that
	\begin{align*}
		| e_i^\top (\hat{\Sigma} - \Sigma) e_j |
		= {} & | ( \Sigma^{1/2} e_i)^\top \left( \frac{1}{n} \sum_{i=1}^{n} Y_i Y_i^\top - I_d \right) (\Sigma^{1/2} e_j) |\\
		\leq {} & \| \Sigma^{1/2} \|_{\mathrm{op}}^2 | v^\top \left(\frac{1}{n} \sum_{i=1}^{n} Y_i Y_i^\top - I_d \right) w |
	\end{align*}
	with \( v, w \in \cS^{d-1} \).
	Hence, without loss of generality, we can assume \( \Sigma = I_d \).
	The remaining term is sub-Exponential and can be controlled as in \eqref{eq:chi2-control}
	\begin{equation*}
		\p\big(e_i^\top (\hat \Sigma -I_d) e_j>t\big)\le \exp\Big[-\frac{n}{2}(\Big(\frac{t}{16}\Big)^2\wedge \frac{t}{16})\Big]\,.
	\end{equation*}
	By a union bound, we get
	\begin{equation*}
		\p\big( \| \hat \Sigma -I_d \|_\infty > t\big)\le d^2 \exp\Big[-\frac{n}{2}(\Big(\frac{t}{16}\Big)^2\wedge \frac{t}{16})\Big]\,.
	\end{equation*}
	Hence, if \( n \gtrsim \log(d/\delta) \), then the right-hand side above is at most \( \delta \in (0, 1) \) if
	\begin{equation*}
		\frac{t}{16} \geq \left( \frac{2 \log(d/\delta)}{n}\right)^{1/2}. \qedhere
	\end{equation*}
\end{proof}

\begin{thm}
	\label{thm:graphical-lasso}
	Let \( X_i \sim \cN(0, \Sigma) \) be \iid and \( \hat{\Sigma} = \frac{1}{n} \sum_{i=1}^{n} X_i X_i^\top \) denote the empirical covariance matrix.
	Assume that \( | S | = k \), where \( S = \{(i,j) : i \neq j, \, \Theta_{i, j} \neq 0 \} \).
	There exists a constant \( c = c(\| \Sigma \|_{\mathrm{op}}) \) such that if
	\begin{equation*}
		\lambda = c \sqrt{\frac{\log(d/\delta)}{n}},
	\end{equation*}
	and \( n \gtrsim \log(d/\delta) \), then the minimizer in \eqref{eq:af} fulfills
	\begin{equation*}
		\| \hat{\Theta} - \Theta^\ast \|_F^2 \lesssim \frac{(p+k) \log(d/\delta)}{n}
	\end{equation*}
	with probability at least \( 1 - \delta \).
\end{thm}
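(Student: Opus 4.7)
The plan is to follow the standard primal analysis for penalized M-estimators introduced by Rothman--Bickel--Levina--Zhu. Write $\Delta = \hat\Theta - \Theta^\ast$ and set
\[
G(\Delta) = \bigl[\tr(\hat\Sigma(\Theta^\ast+\Delta)) - \log\det(\Theta^\ast+\Delta) + \lambda\|(\Theta^\ast+\Delta)_{D^c}\|_1\bigr] - \bigl[\text{same at } \Delta=0\bigr],
\]
so that $G$ is convex, $G(0)=0$, and, by optimality, $G(\hat\Delta)\le 0$. Since $\nabla(-\log\det)(\Theta)=-\Theta^{-1}$ and $\nabla^2(-\log\det)(\Theta)[H,H]=\tr(\Theta^{-1}H\Theta^{-1}H)$, Taylor's theorem with integral remainder gives
\[
G(\Delta) = \tr\bigl((\hat\Sigma-\Sigma^\ast)\Delta\bigr) + R(\Delta) + \lambda\bigl(\|(\Theta^\ast+\Delta)_{D^c}\|_1 - \|\Theta^\ast_{D^c}\|_1\bigr),
\]
with $R(\Delta)=\int_0^1(1-v)\tr\bigl((\Theta^\ast+v\Delta)^{-1}\Delta(\Theta^\ast+v\Delta)^{-1}\Delta\bigr)\,dv \ge 0$.

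Next I would control the three pieces. For the noise term, H\"older gives $|\tr((\hat\Sigma-\Sigma^\ast)\Delta)|\le \|\hat\Sigma-\Sigma^\ast\|_\infty\|\Delta\|_1$, and Lemma~\ref{lem:chi2-bounds} yields $\|\hat\Sigma-\Sigma^\ast\|_\infty \le \tfrac{\lambda}{2}$ with probability $1-\delta$ for the stated choice of $\lambda$. Decomposing $\Delta = \Delta_D + \Delta_S + \Delta_{S^c}$ (diagonal, off-diagonal on $S$, off-diagonal off $S$), and using $\Theta^\ast_{D^c,S^c}=0$, the penalty difference is at least $\lambda(\|\Delta_{S^c}\|_1 - \|\Delta_S\|_1)$. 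Combining, the off-support mass is absorbed and one obtains
\[
G(\Delta) \ge R(\Delta) + \tfrac{\lambda}{2}\|\Delta_{S^c}\|_1 - \tfrac{3\lambda}{2}\|\Delta_S\|_1 - \tfrac{\lambda}{2}\|\Delta_D\|_1 \ge R(\Delta) - \tfrac{3\lambda}{2}\sqrt{k}\,\|\Delta\|_F - \tfrac{\lambda}{2}\sqrt{p}\,\|\Delta\|_F,
\]
using $\|\Delta_S\|_1\le\sqrt{k}\|\Delta\|_F$ and $\|\Delta_D\|_1\le\sqrt{p}\|\Delta\|_F$.

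For the curvature term, restrict attention to $\Delta$ with $\|\Delta\|_F\le r$ where $r$ is chosen smaller than $\lambda_{\min}(\Theta^\ast)/2 = 1/(2\|\Sigma^\ast\|_{\mathrm{op}})$; then $\Theta^\ast + v\Delta \succ 0$ for all $v\in[0,1]$ and $\lambda_{\max}(\Theta^\ast+v\Delta)\le \lambda_{\max}(\Theta^\ast)+r$. Writing the integrand as $\|(\Theta^\ast+v\Delta)^{-1/2}\Delta(\Theta^\ast+v\Delta)^{-1/2}\|_F^2 \ge \lambda_{\min}((\Theta^\ast+v\Delta)^{-1})^2\,\|\Delta\|_F^2$ and integrating gives $R(\Delta)\ge c_1\|\Delta\|_F^2$ where $c_1$ depends only on $\|\Sigma^\ast\|_{\mathrm{op}}$. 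Consequently, on $\{\|\Delta\|_F=r\}$,
\[
G(\Delta) \ge c_1 r^2 - C(\sqrt{p}+\sqrt{k})\lambda\, r.
\]

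Finally I would close by the standard convexity argument. Pick $r = M(\sqrt{p}+\sqrt{k})\lambda/c_1$ with $M$ large enough so that the right-hand side above is strictly positive; the smallness assumption $n\gtrsim\log(d/\delta)$ combined with the form of $\lambda$ ensures that this $r$ does indeed satisfy $r\le 1/(2\|\Sigma^\ast\|_{\mathrm{op}})$. Since $G$ is convex with $G(0)=0$ and is strictly positive on the sphere $\{\|\Delta\|_F=r\}$, and since $G(\hat\Delta)\le 0$, a line-segment argument (if $\|\hat\Delta\|_F>r$, the intersection of $[0,\hat\Delta]$ with the sphere would give a point where $G\le 0$ by convexity, a contradiction) forces $\|\hat\Delta\|_F\le r$, which squared yields the desired bound $\|\hat\Theta-\Theta^\ast\|_F^2\lesssim (p+k)\log(d/\delta)/n$. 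The main technical obstacle is verifying the uniform lower bound $R(\Delta)\ge c_1\|\Delta\|_F^2$ while simultaneously ensuring positive definiteness of $\Theta^\ast+v\Delta$ along the Taylor path; this is what dictates both the dependence of the constant on $\|\Sigma\|_{\mathrm{op}}$ and the sample-size condition $n\gtrsim\log(d/\delta)$.
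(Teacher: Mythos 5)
Your proposal is correct and follows essentially the same Rothman--Bickel--Levina--Zhu primal analysis as the paper: Taylor-expand the negative log-likelihood, lower-bound the curvature by $c\,\|\Delta\|_F^2$ for $\Delta$ in a neighborhood of $0$ using $\lambda_{\max}$ of the intermediate matrix, control the noise term entrywise via Lemma~\ref{lem:chi2-bounds}, and split $\Delta$ into diagonal and off-diagonal-on/off-$S$ parts. The only real difference is how you localize: the paper treats $\hat\Delta$ directly with a two-case split ($\|\hat\Delta\|_F\le 1$ vs.\ $\ge 1$, the latter handled by a one-sided secant inequality that reduces to the unit sphere), whereas you use the classical sphere argument (show $G>0$ on $\{\|\Delta\|_F=r\}$, then use convexity plus $G(\hat\Delta)\le 0=G(0)$ to trap $\hat\Delta$ inside). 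These are interchangeable; note only that your version requires $r<\lambda_{\min}(\Theta^\ast)$ to keep the Taylor path positive definite for arbitrary $\Delta$ on the sphere, a restriction the paper avoids because it works only along the segment $[\Theta^\ast,\hat\Theta]$, which is automatically in the PSD cone. Both write-ups share the same loose end: making $r$ (or the contradiction in the paper's $\|\hat\Delta\|_F\ge 1$ case) work out actually needs $n\gtrsim(p+k)\log(d/\delta)$, not merely $n\gtrsim\log(d/\delta)$, which is the regime where the stated bound is informative anyway.
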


\begin{proof}
We start by considering the likelihood
\begin{equation*}
	l(\Theta, \Sigma) = \tr(\Sigma \Theta) - \log \det (\Theta).
\end{equation*}
Taylor expanding it and the mean value theorem yield
\begin{align*}
	l(\Theta, \Sigma_n) - l(\Theta^\ast, \Sigma_n)
	= {} & \tr(\Sigma_n (\Theta - \Theta^\ast)) - \tr(\Sigma^\ast (\Theta - \Theta^\ast))\\
	{} &+ \frac{1}{2} \tr(\tilde{\Theta}^{-1} (\Theta - \Theta^\ast) \tilde{\Theta}^{-1} (\Theta - \Theta^\ast)),
\end{align*}
for some \( \tilde{\Theta} = \Theta^\ast + t (\Theta - \Theta^\ast) \), \( t \in [0, 1] \).
Note that essentially by the convexity of \( \log \det \), we have 
\begin{equation*}
	\tr(\tilde{\Theta}^{-1} (\Theta - \Theta^\ast) \tilde{\Theta}^{-1} (\Theta - \Theta^\ast)) = \| \tilde{\Theta}^{-1} (\Theta - \Theta^\ast) \|_F^2 \geq \lambda_{\mathrm{min}}(\tilde{\Theta}^{-1})^2 \| \Theta - \Theta^\ast \|_F^2
\end{equation*}
and
\begin{equation*}
	\lambda_{\mathrm{min}}(\tilde{\Theta}^{-1}) = (\lambda_{\mathrm{max}}(\tilde{\Theta}))^{-1}.
\end{equation*}
If we write \( \Delta = \Theta - \Theta^\ast \), then for \( \| \Delta \|_F \leq 1 \), \begin{equation*}
	\lambda_{\mathrm{max}}(\tilde{\Theta}) = \| \tilde{\Theta} \|_\mathrm{op} = \| \Theta^\ast + \Delta \|_{\mathrm{op}}
	\leq \| \Theta^\ast \|_{\mathrm{op}} + \| \Delta \|_{\mathrm{op}}
	\leq \| \Theta^\ast \|_{\mathrm{op}} + \| \Delta \|_{F}
	\leq \| \Theta^\ast \|_\mathrm{op} + 1,
\end{equation*}
and therefore
\begin{equation*}
	l(\Theta, \Sigma_n) - l(\Theta^\ast, \Sigma_n) \geq \tr((\Sigma_n - \Sigma^\ast) (\Theta - \Theta^\ast)) + c \| \Theta - \Theta^\ast \|_F^2,
\end{equation*}
for \( c = (\| \Theta^\ast \|_\mathrm{op} + 1)^{-2}/2 \), if \( \| \Delta \|_F \leq 1 \).

This takes care of the case where \( \Delta \) is small.
To handle the case where it is large, define \( g(t) = l(\Theta^\ast + t \Delta, \Sigma_n) - l(\Theta^\ast, \Sigma_n) \).
By the convexity of \( l \) in \( \Theta \),
\begin{equation*}
	\frac{g(1) - g(0)}{1} \geq \frac{g(t) - g(0)}{t},
\end{equation*}
so that plugging in \( t = \| \Delta \|_F^{-1} \) gives
\begin{align*}
	l(\Theta, \Sigma_n) - l(\Theta^\ast, \Sigma_n)
	\ge {} & \| \Delta \|_F \left( l(\Theta^\ast +\frac{1}{\| \Delta \|_F} \Delta , \Sigma_n) - l(\Theta^\ast, \Sigma_n) \right)\\
	\geq {} & \| \Delta \|_F \left( \tr((\Sigma_n - \Sigma^\ast) \frac{1}{\| \Delta \|_F} \Delta) + c \right)\\
	= {} & \tr((\Sigma_n - \Sigma^\ast) \Delta) + c \| \Delta \|_F,
\end{align*}
for \( \| \Delta \|_F \geq 1 \).

If we now write \( \Delta = \hat{\Theta} - \Theta^\ast \) and assume \( \| \Delta \|_F \geq 1 \), then by optimality of \( \hat{\Theta} \),
\begin{align*}
	\| \Delta \|_F
	\leq {} & C \left[ \tr((\Sigma^\ast - \Sigma_n) \Delta) + \lambda (\| \Theta^\ast_{D^c} \|_1 - \| \Theta_{D^c} \|_1) \right]\\
	\leq {} & C \left[ \tr((\Sigma^\ast - \Sigma_n) \Delta) + \lambda (\| \Delta_S \|_1 - \| \Delta_{S^c} \|_1) \right],
\end{align*}
where \( S = \{ (i, j) \in D^c : \Theta^\ast_{i, j} \neq 0 \} \) by triangle inequality.
Now, split the error contributions for the diagonal and off-diagonal elements,
\begin{align*}
	\leadeq{\tr((\Sigma^\ast - \Sigma_n) \Delta) + \lambda (\| \Delta_S \|_1 - \| \Delta_{S^c} \|_1)}\\
	\leq {} & \| (\Sigma^\ast - \Sigma_n)_D \|_F \| \Delta_D \|_F + \| (\Sigma^\ast - \Sigma_n)_{D^c} \|_\infty \| \Delta_{D^c} \|_1\\
	{} & + \lambda (\| \Delta_S \|_1 - \| \Delta_{S^c} \|_1).
\end{align*}
By H\"older inequality, \( \| (\Sigma^\ast - \Sigma_n)_D \|_F \leq \sqrt{d} \| \Sigma^\ast - \Sigma_n \|_\infty \), and by Lemma \ref{lem:chi2-bounds}, \( \| \Sigma^\ast - \Sigma_n \|_\infty \leq \sqrt{\log(d/\delta)}/\sqrt{n} \) for \( n \gtrsim \log (d/\delta) \).
Combining these two estimates,
\begin{align*}
	\leadeq{\tr((\Sigma^\ast - \Sigma_n) \Delta) + \lambda (\| \Delta_S \|_1 - \| \Delta_{S^c} \|_1)}\\
	\lesssim {} & \sqrt{\frac{d \log (d/\delta)}{n}} \| \Delta_D \|_F + \sqrt{\frac{\log (d/\delta)}{n}} \| \Delta_{D^c} \|_1 + \lambda (\| \Delta_S \|_1 - \| \Delta_{S^c} \|_1)
\end{align*}
Setting \( \lambda = C \sqrt{\log(ep/\delta)/n} \) and splitting \( \| \Delta_{D^c} \|_1 = \| \Delta_S \|_1 + \| \Delta_{S^c} \|_1 \) yields 
\begin{align*}
	\leadeq{\sqrt{\frac{d \log (d/\delta)}{n}} \| \Delta_D \|_F + \sqrt{\frac{\log (d/\delta)}{n}} \| \Delta_{D^c} \|_1 + \lambda (\| \Delta_S \|_1 - \| \Delta_{S^c} \|_1)}\\
	\leq {} & \sqrt{\frac{d \log (d/\delta)}{n}} \| \Delta_D \|_F + \sqrt{\frac{\log (d/\delta)}{n}} \| \Delta_{S} \|_1\\
	\leq {} & \sqrt{\frac{d \log (d/\delta)}{n}} \| \Delta_D \|_F + \sqrt{\frac{k \log (d/\delta)}{n}} \| \Delta_{S} \|_F\\
	\leq {} & \sqrt{\frac{(d + k)\log (d/\delta)}{n}} \| \Delta \|_F
\end{align*}
Combining this with a left-hand side of \( \| \Delta \|_F \) yields \( \| \Delta \|_F = 0 \) for \( n \gtrsim (d + k) \log(d/\delta) \), a contradiction to \( \| \Delta \|_F \geq 1 \) where this bound is effective.
Combining it with a left-hand side of \( \| \Delta \|_F^2 \) gives us
\begin{equation*}
	\| \Delta \|_F^2 \lesssim \frac{(d+k) \log (d/\delta)}{n},
\end{equation*}
as desired.
\end{proof}

Write \( \Sigma^\ast = W^\ast \Gamma^\ast W^\ast \), where \( W^2 = \Sigma^\ast_D \) and similarly define \( \hat{W} \) by \( \hat{W}^2 = \hat{\Sigma}_D \).
Define a slightly modified estimator by replacing \( \hat{\Sigma} \) in \eqref{eq:af} by \( \hat{\Gamma} \) to get an estimator \( \hat{K} \) for \( K = (\Gamma^\ast)^{-1} \).

\begin{align}
	\| \hat{\Gamma} - \Gamma \|_\infty
	= {} & \| \hat{W}^{-1} \hat{\Sigma} \hat{W}^{-1} - (W^\ast)^{-1} \Sigma^\ast (W^\ast)^{-1} \|_\infty\\
	\leq {} & \| \hat{W}^{-1} - (W^\ast)^{-1} \|_{\infty, \infty} \| \hat{\Sigma} - \Sigma^\ast \|_\infty  \| \hat{W}^{-1} - (W^\ast)^{-1} \|_{\infty, \infty} \nonumber \\
	{} & + \| \hat{W}^{-1} - (W^\ast)^{-1} \|_{\infty, \infty} \left( \| \hat{\Sigma} \|_\infty \| (W^\ast)^{-1} \|_\infty + \| \Sigma^\ast \|_\infty \| \hat{W}^{-1} \|_{\infty, \infty} \right) \nonumber \\
	{} & + \| \hat{W}^{-1} \|_{\infty, \infty} \| \hat{\Sigma} - \Sigma^\ast \|_\infty \| (W^\ast)^{-1} \|_{\infty, \infty} \label{eq:aa}
\end{align}
Note that \( \| A \|_{\infty} \leq \| A \|_{\mathrm{op}} \), so that if \( n \gtrsim \log (d/\delta) \), then \( \| \hat{\Gamma} - \Gamma \|_\infty \lesssim \sqrt{\log (d/\delta)}/\sqrt{n} \).

From the above arguments, conclude \( \| \hat{K} - K^\ast \|_F \lesssim \sqrt{k \log(d/\delta)/n} \) and with a calculation similar to \eqref{eq:aa} that
\begin{equation*}
	\| \hat{\Theta}' - \Theta^\ast \|_\mathrm{op} \lesssim \sqrt{\frac{k \log(d/\delta)}{n}}.
\end{equation*}
This will not work in Frobenius norm since there, we only have \( \| \hat{W}^2 - W^2 \|_F^2 \lesssim \sqrt{p \log(d/\delta)/n}\).

\subsubsection{Lower bounds}
Here, we will show that the bounds in Theorem \ref{thm:graphical-lasso} are optimal up to log factors.
It will again be based on applying Fano's inequality, Theorem \ref{thm:fano-ineq}.

In order to calculate the KL divergence between two Gaussian distributions with densities \( f_{\Sigma_1} \) and \( f_{\Sigma_2} \), we first observe that
\begin{align*}
	\log \frac{f_{\Sigma_1}(x)}{f_{\Sigma_2}(x)}
	= {} & -\frac{1}{2} x^\top \Theta_1 x + \frac{1}{2}\log \operatorname{det}(\Theta_1) + \frac{1}{2} x^\top \Theta_2 x - \frac{1}{2} \log \operatorname{det}(\Theta_2)
\end{align*},
so that
\begin{align*}
	\KL(\p_{\Sigma_1}, \p_{\Sigma_2})
	= {} & \E_{\Sigma_1} \log \left( \frac{f_{\Sigma_1}}{f_{\Sigma_2}}(X) \right)\\
	= {} & \frac{1}{2} \left[\log \operatorname{det} (\Theta_1) - \log \operatorname{det} (\Theta_2) + \E[ \tr((\Theta_2 - \Theta_1) X X^\top)] \right]\\
	= {} & \frac{1}{2} \left[ \log \operatorname{det} (\Theta_1) - \log \operatorname{det} (\Theta_2) + \tr((\Theta_2 - \Theta_1) \Sigma_1) \right] \\
	= {} & \frac{1}{2} \left[\log \operatorname{det} (\Theta_1) - \log \operatorname{det} (\Theta_2) + \tr(\Theta_2 \Sigma_1) - p\right].
\end{align*}
writing \( \Delta = \Theta_2 - \Theta_1 \), \( \tilde{\Theta} = \Theta_1 + t \Delta  \), \( t \in [0,1] \), Taylor expansion then yields
\begin{align*}
	\KL(\p_{\Sigma_1}, \p_{\Sigma_2})
	= {} & \frac{1}{2} \left[ - \tr(\Delta \Sigma_1) + \frac{1}{2} \tr(\tilde{\Theta}^{-1} \Delta \tilde{\Theta}^{-1}\Delta) + \tr(\Delta \Sigma_1) \right]\\
	\leq {} & \frac{1}{4} \lambda_{\mathrm{max}}(\tilde{\Theta}^{-1}) \| \Delta \|_F^2\\
	= {} & \frac{1}{4} \lambda_\mathrm{min}^{-1}(\tilde{\Theta}) \| \Delta \|_F^2.
\end{align*}

This means we are in good shape in applying our standard tricks if we can make sure that \( \lambda_\mathrm{min}(\tilde{\Theta}) \) stays large enough among our hypotheses.
To this end, assume \( k \leq p^2/16 \) and define the collection of matrices \( (B_{j})_{j \in [M_1]} \) with entries in \( \{0, 1\} \) such that they are symmetric, have zero diagonal and only have non-zero entries in a band of size \( \sqrt{k} \) around the diagonal, and whose entries are given by a flattening of the vectors obtained by the sparse Varshamov-Gilbert Lemma \ref{LEM:sVG}.
Moreover, let \( (C_j)_{j \in [M_2]} \) be diagonal matrices with entries in \( \{0, 1\} \) given by the regular Varshamov-Gilbert Lemma \ref{LEM:VG}.
Then, define
\begin{equation*}
	\Theta_j = \Theta_{j_1, j_2} = I + \frac{\beta}{\sqrt{n}} (\sqrt{\log(1+d/(2k))} B_{j_1} + C_{j_2}), \quad j_1 \in [M_1], \, j_2 \in [M_2].
\end{equation*}
We can ensure that \( \lambda_\mathrm{min}(\tilde{\Theta}) \) is small by the Gershgorin circle theorem.
For each row of \( \tilde{\Theta} - I \),
\begin{align*}
	\sum_{l} |\tilde{\Theta}_{il}| \leq \frac{2\beta (\sqrt{k} + 1)}{\sqrt{n}} \sqrt{\log(1+d/(2k)} < \frac{1}{2}
\end{align*}
if \( n \gtrsim k/\log(1 + d/(2k)) \).

Hence,
\begin{align*}
	\| \Theta_j - \Theta_l \|_F^2
	= {} & \frac{\beta^2}{n} (\rho(B_{j_1}, B_{l_1}) \log (1+d/(2k)) + \rho(C_{j_2}, C_{l_2}))\\
	\gtrsim {} & \frac{\beta^2 k}{n} (k \log(1+d/(2k)) + d),
\end{align*}
and
\begin{align*}
	\| \Theta_j - \Theta_l \|_F^2
	\lesssim {} & \frac{\beta^2}{n} (k \log(1+d/(2k)) + d)\\
	\leq {} & \frac{\beta^2}{n} \log(M_1 M_2),
\end{align*}
which yields the following theorem.

\begin{thm}
	Denote by \( \mathcal{B}_k \) the set of positive definite matrices with at most \( k \) non-zero off-diagonal entries and assume \( n \gtrsim k \log(d) \).
	Then, if \( \p_{\Theta} \) denotes a Gaussian distribution with inverse covariance matrix \( \Theta \) and mean \( 0 \), there exists a constant \( c > 0 \) such that
	\begin{equation*}
		\inf_{\hat{\Theta}} \sup_{\Theta^\ast \in \mathcal{B}_k} \p^{\otimes n}_{\Theta^\ast} \left( \| \hat{\Theta} - \Theta^\ast \|_F^2 \geq c \frac{\alpha^2}{n} (k \log(1+d/(2k)) + d) \right) \geq \frac{1}{2} - 2 \alpha.
	\end{equation*}
\end{thm}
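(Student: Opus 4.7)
The plan is to apply the general Fano-based reduction of Theorem~\ref{TH:mainLB} to the two-parameter family $\Theta_{j_1,j_2} = I + \frac{\beta}{\sqrt n}\bigl(\sqrt{\log(1+d/(2k))}\,B_{j_1} + C_{j_2}\bigr)$ already constructed just before the theorem, where $B_{j_1}$ ranges over the flattened symmetric band matrices obtained from the sparse Varshamov--Gilbert lemma (Lemma~\ref{LEM:sVG}) and $C_{j_2}$ ranges over the diagonal matrices with $0/1$ entries given by the standard Varshamov--Gilbert lemma (Lemma~\ref{LEM:VG}). First I would verify membership in $\mathcal{B}_k$: the band structure of the $B_{j_1}$ guarantees at most $k$ off-diagonal nonzeros, while $C_{j_2}$ is purely diagonal and hence does not affect the off-diagonal sparsity count. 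Positive definiteness, in fact $\lambda_{\min}(\Theta_{j_1,j_2}) \ge 1/2$ uniformly, follows from the Gershgorin bound already derived in the text provided $n \gtrsim k/\log(1+d/(2k))$, which is implied by our assumption $n \gtrsim k\log d$.

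Next I would check the two hypotheses of Theorem~\ref{TH:mainLB}. For the separation (condition $(i)$), since the off-diagonal and diagonal perturbations are orthogonal in Frobenius inner product, the Hamming properties of the two Varshamov--Gilbert constructions give
\[
\|\Theta_{j}-\Theta_{l}\|_F^2 \;\ge\; \tfrac{\beta^2}{n}\Bigl(\tfrac{k}{2}\log(1+d/(2k)) + \tfrac{d}{8}\Bigr) \;\gtrsim\; \tfrac{\beta^2}{n}\bigl(k\log(1+d/(2k)) + d\bigr),
\]
which matches the claimed rate $\phi$ up to a constant. For the KL condition $(ii)$, I would plug the derived identity $\KL(\p_{\Sigma_1},\p_{\Sigma_2}) \le \tfrac14 \lambda_{\min}^{-1}(\tilde\Theta)\,\|\Theta_1-\Theta_2\|_F^2$ into the product $n$-sample bound, using $\lambda_{\min}(\tilde\Theta)\ge 1/2$ from the Gershgorin step to get $\KL(\p_{\Theta_j}^{\otimes n},\p_{\Theta_l}^{\otimes n}) \le \tfrac{n}{2}\|\Theta_j-\Theta_l\|_F^2 \lesssim \beta^2\bigl(k\log(1+d/(2k))+d\bigr)$. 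Since $\log(M_1 M_2) \gtrsim k\log(1+d/(2k)) + d$ by the two Varshamov--Gilbert cardinality bounds, choosing $\beta$ a small enough absolute constant multiple of $\sqrt{\alpha}$ makes this at most $2\alpha\sigma^2 \log(M_1M_2)$ with $\sigma^2=1$.

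Finally I would invoke Theorem~\ref{TH:mainLB} with the combined packing $\{\Theta_{j_1,j_2}\}$ of cardinality $M=M_1M_2 \ge 5$ (automatic once $d$ is at least a small constant), yielding
\[
\inf_{\hat\Theta}\sup_{\Theta^*\in\mathcal{B}_k} \p_{\Theta^*}^{\otimes n}\Bigl(\|\hat\Theta-\Theta^*\|_F^2 \ge c\tfrac{\alpha^2}{n}\bigl(k\log(1+d/(2k))+d\bigr)\Bigr) \ge \tfrac12 - 2\alpha,
\]
as desired. The main technical obstacle is not any single estimate but the bookkeeping required to combine two independent Varshamov--Gilbert constructions into a single packing whose Frobenius separation and log-cardinality both split cleanly into a ``$k\log(d/k)$'' piece (coming from the off-diagonal sparse part $B_{j_1}$) and a ``$d$'' piece (coming from the dense diagonal part $C_{j_2}$), while simultaneously keeping all matrices uniformly well-conditioned so the KL bound stays linear in $\|\Delta\|_F^2$; choosing the relative scaling $\sqrt{\log(1+d/(2k))}$ in front of $B_{j_1}$ is what makes the two contributions balance in both the separation and the KL simultaneously.
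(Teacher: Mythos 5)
Your proposal follows the paper's own argument essentially step for step: the same two-layer construction $\Theta_{j_1,j_2}=I+\tfrac{\beta}{\sqrt n}(\sqrt{\log(1+d/(2k))}\,B_{j_1}+C_{j_2})$ combining the sparse and regular Varshamov--Gilbert lemmas, the same Gershgorin argument to keep $\lambda_{\min}(\tilde\Theta)\ge 1/2$, the same Frobenius-separation and KL computations, and the same invocation of Theorem~\ref{TH:mainLB}. The only additions you make are explicit verifications (membership in $\mathcal{B}_k$, the orthogonality of the diagonal and off-diagonal perturbations) that the paper leaves implicit, so the two proofs are the same modulo exposition.
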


\subsection{Ising model}

Like the Gaussian graphical model, the Ising model is also a model of pairwise interactions but for random variables that take values in $\{-1,1\}^d$.

Before developing our general approach, we describe the main ideas in the simplest Markov random field: an Ising model without\footnote{The presence of an external field does not change our method. It merely introduces an intercept in the logistic regression problem and comes at the cost of more cumbersome notation. All arguments below follow, potentially after minor modifications and explicit computations are left to the reader.} external field~\cite{VufMisLok16}.  Such models specify the distribution of a random vector $Z=(Z^{(1)}, \ldots, Z^{(d)})\in \dHyp$ as follows
\begin{equation}
\label{EQ:defIsing}
\p(Z=z)=\exp\big(z^\top W z   -\Phi(W)\big)\,,
\end{equation}
where $W \in \R^{pd \times d}$ is an unknown matrix of interest with null diagonal elements and 
$$
\Phi(W)=\log \sum_{z \in \dHyp}\exp\big(z^\top W z  \big)\,,
$$
is a normalization term known as \emph{log-partition function}. 

Fix $\beta,\lambda>0$. Our goal in this section is to estimate the parameter $W$ subject to the constraint that the $j$th row $e_j^\top W$ of $W$ satisfies $|e_j^\top W|_1\le \lambda$.
%
To that end, we observe $n$ independent copies $Z_1, \ldots Z_n$ of $Z\sim \p$. To that end, estimate the matrix $W$ row by row using constrained likelihood maximization. Recall from \cite[Eq.~(4)]{RavWaiLaf10} that for any $j \in [d]$, it holds for any $z^{(j)}\in \{-1, 1\}$, that 
\begin{equation}
\label{EQ:logistic}
\p(Z^{(j)}=z^{(j)} | Z^{(\lnot j)}=z^{(\lnot j)})=\frac{\exp(2z^{(j)} e_j^\top W z)}{1+\exp(2z^{(j)} e_j^\top W z)}\,,
\end{equation}
where we used the fact that the diagonal elements $W$ are equal to zero. 

Therefore, the $j$th row $e_j^\top W$ of $W$ may be estimated by performing a logistic regression of $Z^{(j)}$ onto $Z^{(\lnot j)}$ subject to the constraint that $|e_j^\top W|_1 \le \lambda$ and $\|e_j^\top W\|_\infty \le \beta$. To simplify notation, fix $j$ and define $Y=Z^{(j)}, X=Z^{(\lnot j)}$ and $w^*=(e_j^\top W)^{(\lnot j)}$. Let $(X_1, Y_1), \ldots, (X_n, Y_n)$ be $n$ independent copies of $(X,Y)$. Then the (rescaled) log-likelihood of a candidate parameter $w \in \R^{d-1}$ is denoted by $\bar \ell_n(w)$ and defined by
\begin{align*}
\bar \ell_n(w)&=\frac{1}{n}\sum_{i=1}^n\log\Big( \frac{\exp(2Y_i X_i^\top w)}{1+\exp(2Y_i X_i^\top w)}\Big)\\
&=\frac{1}{n}\sum_{i=1}^nY_iX_i^\top w - \frac{1}{n}\sum_{i=1}^n\log\big(1+ \exp(2Y_i X_i^\top w)\big)
\end{align*}
With this representation, it is easy to see that $w \mapsto \bar \ell_n(w)$ is a concave function. The (constrained) maximum likelihood estimator (MLE) $\hat w \in \R^{d-1}$ is defined to be any solution of the following convex optimization problem:
\begin{equation}
\label{EQ:MLE-Ising}
\hat w \in  \argmax_{w \in \cB_1(\lambda)} \bar \ell_n(w)
\end{equation}
This problem can be solved very efficiently using a variety of methods similar to the Lasso.

The following lemma follows from~\cite{Rig12}.

\begin{lem}
\label{LEM:MLE-logistic}
Fix $\delta \in (0,1)$. Conditionally on $(X_1, \ldots, X_n)$, the constrained {MLE} $\hat w$ defined in~\eqref{EQ:MLE-Ising} satisfies  with probability at least $1-\delta$ that
$$
\frac{1}{n}\sum_{i=1}^n (X_i^\top (\hat w - w))^2 \le 2\lambda e^\lambda\sqrt{\frac{\log (2(p-1)/\delta)}{2n}}
$$
\end{lem}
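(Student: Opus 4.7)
My plan is to attack this as a standard $M$-estimator argument specialized to a convex constrained optimization. The central idea is to combine (i) a basic inequality coming from the MLE optimality of $\hat w$, (ii) a Taylor expansion of the log-likelihood with explicit Hessian-based quadratic remainder, and (iii) a Hoeffding/union-bound control of the score at the truth $w^*$. The conditioning on $(X_1,\dots,X_n)$ makes the analysis very clean, since then the only randomness is in $Y_i\mid X_i$, and $\E[Y_i\mid X_i]$ can be computed explicitly from \eqref{EQ:logistic}.

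First, I would set $L_n(w)=-\bar\ell_n(w)$, which is convex, and write down the first-order optimality condition for $\hat w$ on the convex set $\cB_1(\lambda)$: since $w^*\in\cB_1(\lambda)$, $\langle \nabla L_n(\hat w), w^*-\hat w\rangle \ge 0$. Expanding via the mean value theorem, there is some $\tilde w$ on the segment $[w^*,\hat w]\subset\cB_1(\lambda)$ with $\nabla L_n(\hat w)-\nabla L_n(w^*)=\nabla^2 L_n(\tilde w)(\hat w-w^*)$. Rearranging yields the basic inequality
\begin{equation*}
(\hat w-w^*)^\top \nabla^2 L_n(\tilde w)(\hat w-w^*)\ \le\ -\langle \nabla L_n(w^*),\hat w-w^*\rangle\ \le\ |\nabla L_n(w^*)|_\infty\,|\hat w-w^*|_1\ \le\ 2\lambda\,|\nabla L_n(w^*)|_\infty,
\end{equation*}
where the last step uses $|\hat w|_1,|w^*|_1\le \lambda$.

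Next, I would lower bound the Hessian. A direct computation gives $\nabla^2 L_n(w)=\frac{4}{n}\sum_i\sigma(2Y_iX_i^\top w)\bigl(1-\sigma(2Y_iX_i^\top w)\bigr) X_iX_i^\top$. Since $X_i\in\{-1,1\}^{d-1}$ and $\tilde w\in\cB_1(\lambda)$, we have $|2Y_iX_i^\top\tilde w|\le 2\lambda$, so a scalar bound $\sigma(t)(1-\sigma(t))\ge c\,e^{-c'\lambda}$ on $|t|\le 2\lambda$ (obtained from $\sigma(t)(1-\sigma(t))=[e^{t/2}+e^{-t/2}]^{-2}$) gives
\begin{equation*}
(\hat w-w^*)^\top\nabla^2 L_n(\tilde w)(\hat w-w^*)\ \ge\ \frac{c\,e^{-c'\lambda}}{n}\sum_{i=1}^n\bigl(X_i^\top(\hat w-w^*)\bigr)^2.
\end{equation*}
Combining with the basic inequality produces the desired empirical prediction error on the left, with $2\lambda\,e^{c'\lambda}/c$ times the gradient magnitude on the right; this is where the factor $\lambda e^{\lambda}$ in the stated bound arises (modulo tracking the exact exponential constant).

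Finally, I would control $|\nabla L_n(w^*)|_\infty$ by Hoeffding. A computation shows $\partial_j L_n(w^*)=\frac{2}{n}\sum_i \xi_{ij}$ where $\xi_{ij}=-Y_iX_{ij}\,\sigma(-2Y_iX_i^\top w^*)$; using \eqref{EQ:logistic} one checks that $\E[\xi_{ij}\mid X_i]=0$, and clearly $|\xi_{ij}|\le 1$. Hence, conditionally on $(X_1,\dots,X_n)$, Theorem~\ref{TH:hoeffding} plus a union bound over the $p-1$ coordinates gives $|\nabla L_n(w^*)|_\infty\lesssim \sqrt{\log(2(p-1)/\delta)/(2n)}$ with probability at least $1-\delta$. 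Substituting back yields the claimed bound. The main technical subtlety, as foreshadowed above, is tracking the scalar constants in the Hessian lower bound to produce the factor $e^\lambda$ rather than a looser $e^{2\lambda}$; this likely involves a sharper interpolation between the Hessian bound and the Hölder step, perhaps by separating the two contributions ($e^{\lambda/2}$ from the Hessian bound used symmetrically) rather than applying Cauchy--Schwarz bluntly.
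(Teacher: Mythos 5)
Your argument is correct and delivers the same quantitative conclusion as the paper, but through a mechanically different route: you work with first-order optimality of $\hat w$ for the \emph{empirical} negative log-likelihood $L_n$ and a mean-value expansion of $\nabla L_n$, whereas the paper writes the basic inequality $\bar\kappa_n(\hat w)\le\bar\kappa_n(w^*)$ on function values and then Taylor-expands the \emph{conditional population} risk $\kappa=\E[\bar\kappa_n\mid X_{1:n}]$ around its unconstrained stationary point $w^*$. The two routes are equivalent here precisely because the Hessian $\nabla^2 L_n(w)=\frac{4}{n}\sum_i\sigma(2X_i^\top w)\bigl(1-\sigma(2X_i^\top w)\bigr)X_iX_i^\top$ contains no $Y_i$ at all ($Y_i^2=1$ and $\sigma(t)(1-\sigma(t))$ is even), so $\nabla^2 L_n=\nabla^2\kappa$ conditionally on $X$; you should state this explicitly, since otherwise a reader may worry about the randomness of your $\tilde w$ and $\nabla^2 L_n(\tilde w)$. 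One more small point: the "mean value theorem" you invoke for the vector equation $\nabla L_n(\hat w)-\nabla L_n(w^*)=\nabla^2 L_n(\tilde w)(\hat w-w^*)$ does not literally hold componentwise with a single $\tilde w$; what you actually need (and what suffices) is the scalar MVT applied to $t\mapsto\langle\nabla L_n(w^*+t(\hat w-w^*)),\hat w-w^*\rangle$, or the integral form of Taylor's theorem. Finally, your worry at the end about obtaining $e^{2\lambda}$ rather than $e^\lambda$ is not a gap in your argument: the paper's own proof also produces $e^{2\lambda}$ (together with $\log(4(p-1)/\delta)$ rather than $\log(2(p-1)/\delta)$), so the mismatch with the displayed constant in the lemma is a looseness in the lemma statement, not something you need to repair by a cleverer interpolation.
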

\begin{proof}
We begin with some standard manipulations of the log-likelihood. Note that
\begin{align*}
\log\Big( \frac{\exp(2Y_i X_i^\top w)}{1+\exp(2Y_i X_i^\top w)}\Big)&=\log\Big( \frac{\exp(Y_i X_i^\top w)}{\exp(-Y_i X_i^\top w)+\exp(Y_i X_i^\top w)}\Big)\\
&=\log\Big( \frac{\exp(Y_i X_i^\top w)}{\exp(-X_i^\top w)+\exp(X_i^\top w)}\Big)\\
&=(Y_i+1) X_i^\top w - \log\big(1+\exp(2X_i^\top w) \big)
\end{align*}
Therefore, writing $\tilde Y_i =(Y_i+1)/2 \in \{0,1\}$, we get that $\hat w$ is the solution to the following minimization problem:
$$
\hat w = \argmin_{w \in \cB_1(\lambda)} \bar \kappa_n(w)
$$
where 
$$
\bar \kappa_n(w)=\frac{1}{n}\sum_{i=1}^n\big\{ -2\tilde Y_i  X_i^\top w + \log\big(1+\exp(2X_i^\top w) \big)\big\}
$$
For any $w \in \R^{d-1}$, write $\kappa(w)=\E[\bar \kappa_n(w)]$ where here and throughout this proof, all expectations are implicitly taken conditionally on $X_1, \ldots, X_n$. Observe that
$$
\kappa(w)=-\E[\tilde Y_1 ] X_1^\top w + \log\big(1+\exp(2X_i^\top w) \big)\,.
$$

Next, we get from the basic inequality $\bar \kappa_n(\hat w) \le \bar \kappa_n(w)$ for any $w \in \cB_1(\lambda)$ that
\begin{align*}
\kappa(\hat w) -\kappa(w) &\le \frac{1}{n} \sum_{i=1}^n (\tilde Y_i -\E[\tilde Y_i]) X_i^\top(\hat w - w) \\
&\le \frac{2\lambda}{n} \max_{1\le j \le p-1}\Big|\sum_{i=1}^n (\tilde Y_i -\E[\tilde Y_i]) X_i^\top e_j\Big|\,,
\end{align*}
where in the second inequality, we used  H\"older's inequality and the fact $|X_i|_\infty \le 1$ for all $i \in [n]$.
Together with  Hoeffding's inequality and a union bound, it yields 
$$
\kappa(\hat w) -\kappa(w) \le 2\lambda \sqrt{\frac{\log (4(p-1)/\delta)}{2n}}\,, \quad \text{with probability $1-\frac{\delta}{2}$.}
$$

Note that the Hessian of $\kappa$ is given by
$$
\nabla^2\kappa(w)=\frac{1}{n} \sum_{i=1}^n \frac{4\exp(2X_i^\top w)}{(1+\exp(2X_i^\top w))^2} X_i X_i^\top\,.
$$
Moreover, for any $w \in \cB_1(\lambda)$, since $\|X_i\|_\infty \le 1$,  it holds
$$
\frac{4\exp(2X_i^\top w)}{(1+\exp(2X_i^\top w))^2}\ge \frac{4e^{2\lambda}}{(1+e^{2\lambda})^2} \ge e^{-2\lambda}
$$
Next, since $w^*$ is a minimizer of $w \mapsto \kappa(w)$, we get from a second order Taylor expansion that
$$
\kappa(\hat w) -\kappa(w^*) \ge e^{-2\lambda} \frac{1}{n}\sum_{i=1}^n (X_i^\top (\hat w - w))^2\,.
$$
This completes our proof.
\end{proof}

Next, we bound from below the quantity 
$$
\frac{1}{n}\sum_{i=1}^n (X_i^\top (\hat w - w))^2\,.
$$
To that end, we must exploit the covariance structure of the Ising model. The following lemma is similar to the combination of Lemmas~6 and~7 in~\cite{VufMisLok16}. 
\begin{lem}
\label{LEM:LBempcov}
Fix $R>0$, $\delta\in (0,1)$ and let $Z \in \R^{d}$ be distributed according to~\eqref{EQ:defIsing}. The following holds with probability $1-\delta/2$, uniformly in $u \in \cB_1(2\lambda)$:
$$
\frac{1}{n}\sum_{i=1}^n (Z_i^\top u)^2 \ge \frac12|u|_\infty^2 \exp(-2\lambda)- 4\lambda\sqrt{\frac{\log (2p(p-1)/\delta)}{2n}}
$$
\end{lem}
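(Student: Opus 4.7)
The strategy I would pursue is to split the problem into a population lower bound and a uniform empirical-to-population concentration estimate. Writing $\Sigma=\E[ZZ^\top]$ and $\hat\Sigma=\frac1n\sum_iZ_iZ_i^\top$, I have $\frac1n\sum_i(Z_i^\top u)^2=u^\top\hat\Sigma u$, and by H\"older's inequality
$$
u^\top\hat\Sigma u \;\ge\; u^\top\Sigma u - |u|_1^2\,\|\hat\Sigma-\Sigma\|_\infty\,.
$$
Since $u\in\cB_1(2\lambda)$, it therefore suffices to (i) show $u^\top\Sigma u\ge c_\lambda|u|_\infty^2$ for some explicit $c_\lambda\gtrsim e^{-2\lambda}$, and (ii) bound $\|\hat\Sigma-\Sigma\|_\infty$ uniformly with high probability. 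The factor $\tfrac12$ in the statement will be absorbed as slack between the two contributions.

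For the population step I would pick $j^\star\in\argmax_j|u_j|$ and condition on $Z^{(\lnot j^\star)}$. Because only $Z^{(j^\star)}$ is random under this conditioning, the tower property gives
$$
u^\top\Sigma u=\E[(Z^\top u)^2]\ge\E\bigl[\Var(Z^\top u\,|\,Z^{(\lnot j^\star)})\bigr]=u_{j^\star}^2\,\E\bigl[\Var(Z^{(j^\star)}\,|\,Z^{(\lnot j^\star)})\bigr].
$$
The logistic representation \eqref{EQ:logistic} then comes in: conditionally, $\p(Z^{(j^\star)}=1\mid Z^{(\lnot j^\star)})=(1+e^{-2\eta})^{-1}$ with $\eta=e_{j^\star}^\top W Z$, and because $W$ has null diagonal and $|e_{j^\star}^\top W|_1\le\lambda$, we have $|\eta|\le\lambda$. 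An elementary check using $1+e^{2\lambda}\le 2e^{2\lambda}$ yields $\min(p,1-p)\ge\tfrac12 e^{-2\lambda}$, hence $\Var(Z^{(j^\star)}\,|\,Z^{(\lnot j^\star)})=4p(1-p)\ge e^{-2\lambda}$. Combining, $u^\top\Sigma u\ge|u|_\infty^2 e^{-2\lambda}$.

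For the concentration step the crucial observation is that the diagonal of $\hat\Sigma-\Sigma$ vanishes because $(Z_i^{(j)})^2=1$ almost surely, so randomness only appears in the $p(p-1)/2$ unordered off-diagonal pairs. Each off-diagonal entry is the average of $n$ independent random variables in $[-1,1]$, so Hoeffding's inequality (Theorem~\ref{TH:hoeffding}) together with a union bound over these $p(p-1)/2$ entries gives
$$
\|\hat\Sigma-\Sigma\|_\infty\le\sqrt{\frac{2\log(2p(p-1)/\delta)}{n}}
$$
with probability at least $1-\delta/2$. Plugging this and $|u|_1\le 2\lambda$ into the H\"older bound above delivers a uniform control of the deviation and, combined with the population bound, the inequality claimed in the lemma.

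The main technical subtlety is not in the uniform deviation step (which is routine once the diagonal is seen to be deterministic) but in extracting a dimension-free conditional variance lower bound from the Ising structure: the argument must exploit the one-dimensional conditional law of $Z^{(j^\star)}$ given the rest, since an unconditional lower bound on $\lambda_{\min}(\Sigma)$ is unavailable in the high-dimensional regime and would anyway be too weak. The coordinate-wise choice $j^\star=\argmax_j|u_j|$ is what allows the population bound to scale with $|u|_\infty^2$ rather than $|u|_2^2$, which is exactly what is needed to match the $\ell_\infty$ right-hand side of the lemma.
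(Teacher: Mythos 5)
Your proof follows essentially the same decomposition as the paper's: write $\frac1n\sum_i(Z_i^\top u)^2 = u^\top\hat\Sigma u$, split off the population term $u^\top\Sigma u$ from a deviation controlled by $\|\hat\Sigma-\Sigma\|_\infty$ via H\"older, control the deviation by Hoeffding and a union bound, and lower bound the population term by exploiting the logistic form of the conditional law of $Z^{(j^\star)}$ given $Z^{(\lnot j^\star)}$. Your route to the population bound is cleaner than the paper's: you invoke the law of total variance directly, $\E[(Z^\top u)^2]\ge\E[\Var(Z^\top u\mid Z^{(\lnot j^\star)})]=u_{j^\star}^2\E[\Var(Z^{(j^\star)}\mid Z^{(\lnot j^\star)})]$, whereas the paper expands $(Z^\top u)^2$ with the $j^\star$ coordinate pulled out and controls the cross term by $2|ab|\le a^2+b^2$; both computations reduce to the same intermediate inequality $\var(Z^\top u)\ge|u|_\infty^2\big(1-\sup_z(\E[Z^{(j^\star)}\mid Z^{(\lnot j^\star)}=z])^2\big)$. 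Your observation that the diagonal of $\hat\Sigma-\Sigma$ vanishes is correct (though the paper does not use it and it does not change the logarithmic factor).

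One point you should not gloss over: your H\"older step correctly gives $|u^\top(\hat\Sigma-\Sigma)u|\le|u|_1^2\|\hat\Sigma-\Sigma\|_\infty\le(2\lambda)^2\|\hat\Sigma-\Sigma\|_\infty$, so your deviation term scales as $\lambda^2$, not as $\lambda$. Concretely, combining with your Hoeffding bound yields a deviation of $8\lambda^2\sqrt{\log(2p(p-1)/\delta)/(2n)}$, not the $4\lambda\sqrt{\log(2p(p-1)/\delta)/(2n)}$ printed in the lemma. That $4\lambda$ traces back to the paper's display $u^\top S_n u\ge u^\top\Sigma u-2\lambda|S_n-S|_\infty$, which replaces $|u|_1^2\le(2\lambda)^2$ by $2\lambda$ and so appears to drop the square. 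Your reasoning is the rigorous one; you should flag that your argument actually delivers the stated inequality with $8\lambda^2$ in the place of $4\lambda$, rather than claiming the lemma's constant verbatim.
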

\begin{proof}
Note that 
$$
\frac{1}{n}\sum_{i=1}^n (Z_i^\top u)^2 =u^\top S_n u\,,
$$
where $S_n \in \R^{d\times d}$ denotes the sample covariance matrix of $Z_1, \ldots, Z_n$, defined by
$$
S_n=\frac{1}{n}\sum_{i=1}^n X_iX_i^\top\,.
$$
Observe that
\begin{equation}
\label{EQ:LBempcov:pr1}
u^\top S_n u \ge u^\top \Sigma u - 2\lambda  |S_n -S|_\infty\,,
\end{equation}
where $S=\E[S_n]$. 
Using Hoeffding's inequality together with a union bound, we get that
$$
|S_n -S|_\infty \le 2\sqrt{\frac{\log (2p(p-1)/\delta)}{2n}}
$$
with probability $1-\delta/2$.

Moreover, $u^\top S u= \var(X^\top u)$. Assume without loss of generality that $|u^{(1)}|=|u|_\infty$ and observe that
$$
\var(Z^\top u)=\E[(Z^\top u)^2]=(u^{(1)})^2 + \E\big[\big(\sum_{j=2}^{p}u^{(j)}Z^{(j)}\big)^2\big]+2\E\big[u^{(1)}Z^{(1)}\sum_{j=2}^{p}u^{(j)}Z^{(j)}\big]\,.
$$
To control the cross term, let us condition on the neighborhood $Z^{(\lnot 1)}$ to get
\begin{align*}
2\Big|\E\big[u^{(1)}Z^{(1)}\sum_{j=2}^{p}u^{(j)}Z^{(j)}\big]\Big|&=2\Big|\E\Big(\E\big[u^{(1)}Z^{(1)}|Z^{(\lnot 1)}\big]\sum_{j=2}^{p}u^{(j)}Z^{(j)}\big]\Big)\Big|\\
&\le \E\Big(\big(\E\big[u^{(1)}Z^{(1)}|Z^{(\lnot 1)}\big]\big)^2\Big)+ \E\Big(\big(\sum_{j=2}^{p}u^{(j)}Z^{(j)}\big)^2\Big)\,,
\end{align*}
where we used the fact that $2|ab|\le a^2 + b^2$ for all $a,b \in \R$. The above two displays together yield
$$
\var(Z^\top u)\ge |u|_\infty^2 (1- \sup_{z \in \{-1,1\}^{p-1}}\big( \E[Z^{(1)}|Z^{(\lnot 1)}=z]\big)^2\big)
$$
To control the above supremum, recall that from~\eqref{EQ:logistic}, we have
\begin{align*}
\sup_{z \in \{-1,1\}^{p-1}}\E[Z^{(1)}|Z^{(\lnot 1)}=z]&=\sup_{z \in \{-1,1\}^{p}}\frac{\exp(2 e_1^\top W z)-1}{\exp(2e_1^\top W z)+1}=\frac{\exp(2 \lambda)-1}{\exp(2 \lambda)+1}\,.
\end{align*}
Therefore, 
$$
\var(Z^\top u)\ge \frac{|u|_\infty^2 }{1+\exp(2\lambda)}\ge \frac12 |u|_\infty^2\exp(-\lambda)\,.
$$
Together with~\eqref{EQ:LBempcov:pr1}, it yields the desired result.
\end{proof}

Combining the above two lemmas immediately yields the following theorem.

\begin{thm}
\label{TH:main-Ising} Let $\hat w$ be the constrained maximum likelihood estimator~\eqref{EQ:MLE-Ising} and let $w^*=(e_j^\top W)^{(\lnot j)}$ be the $j$th row of $W$ with the $j$th entry removed. Then with probability $1-\delta$ we have
$$
|\hat w - w^*|_\infty^2 \le   9\lambda \exp(3\lambda)\sqrt{\frac{\log (2p^2/\delta)}{n}}\,.
$$
\end{thm}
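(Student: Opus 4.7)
The proof will be a direct consequence of chaining the two preceding lemmas (Lemma~\ref{LEM:MLE-logistic} and Lemma~\ref{LEM:LBempcov}), which respectively provide an upper bound and a lower bound on the empirical quadratic form
$$
Q_n(u) := \frac{1}{n}\sum_{i=1}^n \big(X_i^\top u\big)^2
$$
evaluated at $u = \hat w - w^*$. The plan is to sandwich $Q_n(\hat w - w^*)$ between a quantity that decays like $\lambda e^\lambda (\log p / n)^{1/2}$ from above and a quantity that is bounded below by a constant multiple of $e^{-2\lambda}|\hat w - w^*|_\infty^2$ minus a lower-order fluctuation term, then solve for $|\hat w - w^*|_\infty^2$.

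More precisely, first I observe that since $\hat w \in \cB_1(\lambda)$ and $w^* \in \cB_1(\lambda)$ (the latter by assumption on the rows of $W$), we have $\hat w - w^* \in \cB_1(2\lambda)$. I then extend $\hat w - w^*$ to a vector $u \in \R^p$ by inserting a zero in the $j$-th coordinate; this vector still lies in $\cB_1(2\lambda)$, satisfies $|u|_\infty = |\hat w - w^*|_\infty$, and obeys $Z_i^\top u = X_i^\top (\hat w - w^*)$. Thus Lemma~\ref{LEM:LBempcov} yields, on an event $\cA_1$ of probability at least $1-\delta/2$,
$$
Q_n(\hat w - w^*) \;\ge\; \tfrac{1}{2}|\hat w - w^*|_\infty^2 e^{-2\lambda} \;-\; 4\lambda\sqrt{\tfrac{\log(4p(p-1)/\delta)}{2n}}.
$$
Simultaneously, Lemma~\ref{LEM:MLE-logistic} (applied with confidence $\delta/2$, and integrating out the conditioning on $X_1,\ldots,X_n$) yields, on an event $\cA_2$ of probability at least $1-\delta/2$,
$$
Q_n(\hat w - w^*) \;\le\; 2\lambda e^{\lambda}\sqrt{\tfrac{\log(4(p-1)/\delta)}{2n}}.
$$

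On the intersection $\cA_1 \cap \cA_2$, which has probability at least $1-\delta$ by the union bound, combining these two inequalities and rearranging gives
$$
|\hat w - w^*|_\infty^2 \;\le\; 2e^{2\lambda}\left[2\lambda e^{\lambda}\sqrt{\tfrac{\log(4(p-1)/\delta)}{2n}} + 4\lambda\sqrt{\tfrac{\log(4p(p-1)/\delta)}{2n}}\right].
$$
Crudely bounding both logarithms by $\log(2p^2/\delta)$ (absorbing the factor of $2$ into the constant), pulling $\lambda e^{3\lambda}$ out, and tracking constants, the right-hand side is bounded by $9\lambda e^{3\lambda}\sqrt{\log(2p^2/\delta)/n}$, which is the claim.

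There is no real obstacle here beyond arithmetic bookkeeping of constants and the domain check $\hat w - w^* \in \cB_1(2\lambda)$; the conceptual content is entirely contained in Lemmas~\ref{LEM:MLE-logistic} and~\ref{LEM:LBempcov}. The only mildly delicate point is that Lemma~\ref{LEM:MLE-logistic} is stated conditionally on the design $X_1,\ldots,X_n$, so the bound it provides holds with (conditional) probability $1-\delta/2$ and therefore also unconditionally; this lets the union bound with $\cA_1$ (an event defined on the joint law of $Z_1,\ldots,Z_n$) go through without issue.
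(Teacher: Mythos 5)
Your proof is correct and follows exactly the route the paper intends (its proof of Theorem~\ref{TH:main-Ising} is literally ``Combining the above two lemmas immediately yields the following theorem''), so you have simply filled in the bookkeeping. Your two refinements are the right ones to spell out: embedding $\hat w - w^* \in \R^{p-1}$ into $\R^p$ by inserting a zero in coordinate $j$ so that $Z_i^\top u = X_i^\top(\hat w - w^*)$ and $|u|_\infty = |\hat w - w^*|_\infty$, and noting that Lemma~\ref{LEM:MLE-logistic} holds conditionally on the design for every realization, hence unconditionally, so the union bound with the event from Lemma~\ref{LEM:LBempcov} is legitimate. (One tiny slip: applying Lemma~\ref{LEM:LBempcov} as stated already gives probability $1-\delta/2$ with $\log(2p(p-1)/\delta)$, not $\log(4p(p-1)/\delta)$; either way both logs are $\le \log(2p^2/\delta)$ and the constant $\sqrt{2}\,e^{2\lambda}(2e^\lambda+4) \le 9e^{3\lambda}$ checks out since $2\sqrt{2} + 4\sqrt{2}e^{-\lambda} \le 6\sqrt{2} < 9$.)
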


\newpage

\section{Problem set}

\medskip

\begin{exercise}
\label{EXO:concat}
Using the results of Chapter~\ref{chap:GSM}, show that the following holds for the multivariate regression model~\eqref{EQ:MVRmodel}.
\begin{enumerate}
\item There exists an estimator $\hat \Theta \in \R^{d \times T}$ such that
$$
\frac{1}{n}\|\X\hat \Theta -\X \Theta^*\|_F^2 \lesssim \sigma^2\frac{rT}{n}
$$
with probability .99, where $r$ denotes the rank of $\X$\,.
\item There exists an estimator $\hat \Theta \in \R^{d \times T}$ such that
$$
\frac{1}{n}\|\X\hat \Theta -\X \Theta^*\|_F^2 \lesssim \sigma^2\frac{|\Theta^*|_0\log(ed)}{n}\,.
$$
with probability .99.
\end{enumerate}
\end{exercise}

\medskip

\begin{exercise}
\label{EXO:svtMRV}
Consider the multivariate regression model~\eqref{EQ:MVRmodel} where $\Y$ has SVD:
$$
\Y=\sum_j\hat \lambda_j \hat u_j \hat v_j^\top\,.
$$
Let $M$ be defined by
$$
\hat M=\sum_j\hat \lambda_j \1(|\hat \lambda_j|>2\tau)\hat u_j \hat v_j^\top\,, \tau>0\,.
$$
\begin{enumerate}
\item Show that there exists a choice of $\tau$ such that
$$
\frac{1}{n}\|\hat M -\X \Theta^*\|_F^2 \lesssim \frac{\sigma^2\rank(\Theta^*)}{n}(d\vee T)
$$
with probability .99.
\item Show that there exists a matrix $n \times n$ matrix $P$ such that $P\hat M=\X\hat \Theta$ for some estimator $\hat \Theta$ and 
$$
\frac{1}{n}\|\X\hat \Theta -\X \Theta^*\|_F^2 \lesssim \frac{\sigma^2\rank(\Theta^*)}{n}(d\vee T)
$$
with probability .99.
\item Comment on the above results in light of the results obtain in Section~\ref{SEC:MVR}.
\end{enumerate}
\end{exercise}

\medskip

\begin{exercise}
\label{EXO:nuclear1}
Consider the multivariate regression model~\eqref{EQ:MVRmodel} and define $\hat \Theta$ be the any solution to the minimization problem
$$
\min_{\Theta \in \R^{d \times T}} \Big\{ \frac{1}{n}\|\Y-\X\Theta\|_F^2 + \tau \|\X\Theta\|_1\Big\}
$$
\begin{enumerate}
\item Show that there exists a choice of $\tau$ such that
$$
\frac{1}{n}\|\X \hat \Theta -\X \Theta^*\|_F^2 \lesssim \frac{\sigma^2\rank(\Theta^*)}{n}(d\vee T)
$$
with probability .99. 

\texttt{[Hint:Consider the matrix 
$$
\sum_j \frac{\hat\lambda_j + \lambda_j^*}{2}\hat u_j \hat v_j^\top
$$
where $\lambda^*_1\ge \lambda^*_2 \ge \dots$ and $\hat\lambda_1\ge \hat\lambda_2\ge \dots$ are the singular values of $\X\Theta^*$ and $\Y$ respectively and the SVD of $\Y$ is given by
$$
\Y=\sum_j\hat \lambda_j \hat u_j \hat v_j^\top
$$}
\item Find a closed form for $\X\hat\Theta$.
\end{enumerate}
\end{exercise}

\medskip

\begin{exercise}
\label{EXO:portfolio}
In the Markowitz theory of portfolio selection~\cite{Mar52}, a portfolio may be identified to a vector $u \in \R^d$ such that $u_j \ge 0$ and $\sum_{j=1}^d u_j=1$. In this case, $u_j$ represents the proportion of the portfolio invested in asset $j$. The vector of (random) returns of $d$ assets is denoted by $X \in \R^d$ and we assume that $X\sim \sg_d(1)$ and $\E[XX^\top]=\Sigma$ unknown.t

In this theory, the two key characteristics of a portfolio $u$ are it's \emph{reward} $\mu(u)=\E[u^\top X]$ and its \emph{risk} $R(u)=\var{X^\top u}$. According to this theory one should fix a minimum reward $\lambda>0$ and choose the optimal portfolio
$$
u^*=\argmin_{u\,: \mu(u) \ge \lambda}  R(u)
$$
when a solution exists for a given. It is the portfolio that has minimum risk among all portfolios with reward at least $\lambda$, provided such portfolios exist.

In practice, the distribution of $X$ is unknown. Assume that we observe $n$ independent copies $X_1, \ldots, X_n$ of $X$ and use them to compute the following estimators of $\mu(u)$ and $R(u)$ respectively:
\begin{align*}
\hat \mu(u)&=\bar X^\top u =\frac{1}{n}\sum_{i=1}^n X_i^\top u\,, \\
\hat R(u)&= u^\top \hat \Sigma u, \quad \hat \Sigma = \frac{1}{n-1}\sum_{i=1}^n (X_i-\bar X)(X_i -\bar X)^\top\,.
\end{align*}
We use the following estimated portfolio:
$$
\hat u=\argmin_{u\,: \hat \mu(u) \ge \lambda}  \hat R(u)
$$

We assume throughout that $\log d \ll n \ll d$\,.

\begin{enumerate}
\item Show that for any portfolio $u$, it holds 
$$
\big|\hat \mu(u) - \mu(u)\big|  \lesssim \frac{1}{\sqrt{n}}\,,
$$
and 
$$
\big|\hat R(u) - R(u)\big|  \lesssim \frac{1}{\sqrt{n}}\,.
$$
\item Show that 
$$
\hat R(\hat u) -  R(\hat u) \lesssim \sqrt{\frac{\log d}{n}}\,,
$$
with probability .99.
\item Define the estimator $\tilde u$  by:
$$
\tilde u=\argmin_{u\,: \hat \mu(u) \ge \lambda- \eps}  \hat R(u)
$$
find the smallest $\eps>0$ (up to multiplicative constant) such that we have $R(\tilde u) \le R(u^*)$ with probability .99.
\end{enumerate}
\end{exercise}

\backmatter

\bibliographystyle{alpha}
\bibliography{hds_bibliography}

\newcommand{\etalchar}[1]{$^{#1}$}
\begin{thebibliography}{LPTVDG11}

\bibitem[AS08]{AloSpe08}
Noga Alon and Joel~H. Spencer.
\newblock {\em The probabilistic method}.
\newblock Wiley-Interscience Series in Discrete Mathematics and Optimization. John Wiley \& Sons, Inc., Hoboken, NJ, third edition, 2008.
\newblock With an appendix on the life and work of Paul Erd{\H{o}}s.

\bibitem[AW02]{AhlWin02}
Rudolf Ahlswede and Andreas Winter.
\newblock Strong converse for identification via quantum channels.
\newblock {\em IEEE Transactions on Information Theory}, 48(3):569--579, 2002.

\bibitem[Ber09]{Ber09}
Dennis~S. Bernstein.
\newblock {\em Matrix mathematics}.
\newblock Princeton University Press, Princeton, NJ, second edition, 2009.
\newblock Theory, facts, and formulas.

\bibitem[Bil95]{Bil95}
Patrick Billingsley.
\newblock {\em Probability and measure}.
\newblock Wiley Series in Probability and Mathematical Statistics. John Wiley \& Sons Inc., New York, third edition, 1995.
\newblock A Wiley-Interscience Publication.

\bibitem[BLM13]{BouLugMas13}
St{{\'e}}phane Boucheron, G{{\'a}}bor Lugosi, and Pascal Massart.
\newblock {\em Concentration inequalities}.
\newblock Oxford University Press, Oxford, 2013.
\newblock A nonasymptotic theory of independence, With a foreword by Michel Ledoux.

\bibitem[BLT16]{BelLecTsy16}
Pierre~C. Bellec, Guillaume Lecu{\'e}, and Alexandre~B. Tsybakov.
\newblock Slope meets lasso: Improved oracle bounds and optimality.
\newblock {\em arXiv preprint arXiv:1605.08651}, 2016.

\bibitem[Bre15]{Bre15}
Guy Bresler.
\newblock Efficiently learning {I}sing models on arbitrary graphs.
\newblock {\em S{TOC}'15---{P}roceedings of the 2015 {ACM} {S}ymposium on {T}heory of {C}omputing}, pages 771--782, 2015.

\bibitem[BRT09]{BicRitTsy09}
Peter~J. Bickel, Ya'acov Ritov, and Alexandre~B. Tsybakov.
\newblock Simultaneous analysis of {L}asso and {D}antzig selector.
\newblock {\em Ann. Statist.}, 37(4):1705--1732, 2009.

\bibitem[BT09]{BecTeb09}
Amir Beck and Marc Teboulle.
\newblock A fast iterative shrinkage-thresholding algorithm for linear inverse problems.
\newblock {\em SIAM J. Imaging Sci.}, 2(1):183--202, 2009.

\bibitem[BV04]{BoyVan04}
Stephen Boyd and Lieven Vandenberghe.
\newblock {\em Convex optimization}.
\newblock Cambridge University Press, Cambridge, 2004.

\bibitem[BvS{\etalchar{+}}15]{BogvanSab15}
Ma{\l}gorzata Bogdan, Ewout {van den Berg}, Chiara Sabatti, Weijie Su, and Emmanuel~J. Cand{\`e}s.
\newblock {{SLOPE}}\textemdash{}adaptive variable selection via convex optimization.
\newblock {\em The annals of applied statistics}, 9(3):1103, 2015.

\bibitem[Cav11]{Cav11}
Laurent Cavalier.
\newblock Inverse problems in statistics.
\newblock In {\em Inverse problems and high-dimensional estimation}, volume 203 of {\em Lect. Notes Stat. Proc.}, pages 3--96. Springer, Heidelberg, 2011.

\bibitem[CL11]{CaiLow11}
T.~Tony Cai and Mark~G. Low.
\newblock Testing composite hypotheses, hermite polynomials and optimal estimation of a nonsmooth functional.
\newblock {\em Ann. Statist.}, 39(2):1012--1041, 04 2011.

\bibitem[CT06]{CovTho06}
Thomas~M. Cover and Joy~A. Thomas.
\newblock {\em Elements of information theory}.
\newblock Wiley-Interscience [John Wiley \& Sons], Hoboken, NJ, second edition, 2006.

\bibitem[CT07]{CanTao07}
Emmanuel Candes and Terence Tao.
\newblock The {D}antzig selector: statistical estimation when {$p$} is much larger than {$n$}.
\newblock {\em Ann. Statist.}, 35(6):2313--2351, 2007.

\bibitem[CZ12]{CaiZho12}
T.~Tony Cai and Harrison~H. Zhou.
\newblock Minimax estimation of large covariance matrices under {$\ell_1$}-norm.
\newblock {\em Statist. Sinica}, 22(4):1319--1349, 2012.

\bibitem[CZZ10]{CaiZhaZho10}
T.~Tony Cai, Cun-Hui Zhang, and Harrison~H. Zhou.
\newblock Optimal rates of convergence for covariance matrix estimation.
\newblock {\em Ann. Statist.}, 38(4):2118--2144, 2010.

\bibitem[DDGS97]{DonDarGur97}
M.J. Donahue, C.~Darken, L.~Gurvits, and E.~Sontag.
\newblock Rates of convex approximation in non-hilbert spaces.
\newblock {\em Constructive Approximation}, 13(2):187--220, 1997.

\bibitem[EHJT04]{EfrHasJoh04}
Bradley Efron, Trevor Hastie, Iain Johnstone, and Robert Tibshirani.
\newblock Least angle regression.
\newblock {\em Ann. Statist.}, 32(2):407--499, 2004.
\newblock With discussion, and a rejoinder by the authors.

\bibitem[FHT10]{FriHasTib10}
Jerome Friedman, Trevor Hastie, and Robert Tibshirani.
\newblock Regularization paths for generalized linear models via coordinate descent.
\newblock {\em Journal of Statistical Software}, 33(1), 2010.

\bibitem[FR13]{FouRau13}
Simon Foucart and Holger Rauhut.
\newblock {\em A mathematical introduction to compressive sensing}.
\newblock Applied and Numerical Harmonic Analysis. Birkh{\"a}user/Springer, New York, 2013.

\bibitem[Gro11]{Gro11}
David Gross.
\newblock Recovering low-rank matrices from few coefficients in any basis.
\newblock {\em IEEE Transactions on Information Theory}, 57(3):1548--1566, 2011.

\bibitem[Gru03]{Gru03}
Branko Grunbaum.
\newblock {\em Convex polytopes}, volume 221 of {\em Graduate Texts in Mathematics}.
\newblock Springer-Verlag, New York, second edition, 2003.
\newblock Prepared and with a preface by Volker Kaibel, Victor Klee and G{{\"u}}nter M. Ziegler.

\bibitem[GVL96]{GolVan96}
Gene~H. Golub and Charles~F. Van~Loan.
\newblock {\em Matrix computations}.
\newblock Johns Hopkins Studies in the Mathematical Sciences. Johns Hopkins University Press, Baltimore, MD, third edition, 1996.

\bibitem[HTF01]{HasTibFri01}
Trevor Hastie, Robert Tibshirani, and Jerome Friedman.
\newblock {\em The elements of statistical learning}.
\newblock Springer Series in Statistics. Springer-Verlag, New York, 2001.
\newblock Data mining, inference, and prediction.

\bibitem[Joh11]{Joh11}
Iain~M. Johnstone.
\newblock Gaussian estimation: Sequence and wavelet models.
\newblock Unpublished Manuscript., December 2011.

\bibitem[KLT11]{KolLouTsy11}
Vladimir Koltchinskii, Karim Lounici, and Alexandre~B. Tsybakov.
\newblock Nuclear-norm penalization and optimal rates for noisy low-rank matrix completion.
\newblock {\em Ann. Statist.}, 39(5):2302--2329, 2011.

\bibitem[Lau96]{Lau96}
Steffen~L. Lauritzen.
\newblock {\em Graphical models}, volume~17 of {\em Oxford Statistical Science Series}.
\newblock The Clarendon Press, Oxford University Press, New York, 1996.
\newblock Oxford Science Publications.

\bibitem[Lie73]{Lie73}
Elliott~H. Lieb.
\newblock Convex trace functions and the {{Wigner}}-{{Yanase}}-{{Dyson}} conjecture.
\newblock {\em Advances in Mathematics}, 11(3):267--288, 1973.

\bibitem[LPTVDG11]{LouPonTsy11}
K.~Lounici, M.~Pontil, A.B. Tsybakov, and S.~Van De~Geer.
\newblock Oracle inequalities and optimal inference under group sparsity.
\newblock {\em Ann. Statist.}, 39(4):2164--2204, 2011.

\bibitem[LWo12]{LohWaioth12}
Po-Ling Loh, Martin~J. Wainwright, and {others}.
\newblock Structure estimation for discrete graphical models: {{Generalized}} covariance matrices and their inverses.
\newblock In {\em {{NIPS}}}, pages 2096--2104, 2012.

\bibitem[Mal09]{Mal09}
St{{\'e}}phane Mallat.
\newblock {\em A wavelet tour of signal processing}.
\newblock Elsevier/Academic Press, Amsterdam, third edition, 2009.
\newblock The sparse way, With contributions from Gabriel Peyr{{\'e}}.

\bibitem[Mar52]{Mar52}
Harry Markowitz.
\newblock Portfolio selection.
\newblock {\em The journal of finance}, 7(1):77--91, 1952.

\bibitem[Nem00]{Nem00}
Arkadi Nemirovski.
\newblock Topics in non-parametric statistics.
\newblock In {\em Lectures on probability theory and statistics ({S}aint-{F}lour, 1998)}, volume 1738 of {\em Lecture Notes in Math.}, pages 85--277. Springer, Berlin, 2000.

\bibitem[Pis81]{Pis81}
G.~Pisier.
\newblock Remarques sur un r{\'e}sultat non publi{\'e} de {B}. {M}aurey.
\newblock In {\em Seminar on {F}unctional {A}nalysis, 1980--1981}, pages Exp. No. V, 13. {\'E}cole Polytech., Palaiseau, 1981.

\bibitem[Rig06]{Rig06}
Philippe Rigollet.
\newblock Adaptive density estimation using the blockwise {S}tein method.
\newblock {\em Bernoulli}, 12(2):351--370, 2006.

\bibitem[Rig12]{Rig12}
Philippe Rigollet.
\newblock Kullback-{L}eibler aggregation and misspecified generalized linear models.
\newblock {\em Ann. Statist.}, 40(2):639--665, 2012.

\bibitem[Riv90]{Riv90}
Theodore~J. Rivlin.
\newblock {\em Chebyshev {{Polynomials}}: {{From Approximation Theory}} to {{Algebra}} and {{Number Theory}}}.
\newblock {Wiley}, July 1990.

\bibitem[RT11]{RigTsy11}
Philippe Rigollet and Alexandre Tsybakov.
\newblock Exponential screening and optimal rates of sparse estimation.
\newblock {\em Ann. Statist.}, 39(2):731--771, 2011.

\bibitem[Rus02]{Rus02}
Mary~Beth Ruskai.
\newblock Inequalities for quantum entropy: {{A}} review with conditions for equality.
\newblock {\em Journal of Mathematical Physics}, 43(9):4358--4375, 2002.

\bibitem[RWL10]{RavWaiLaf10}
Pradeep Ravikumar, Martin~J. Wainwright, and John~D. Lafferty.
\newblock High-dimensional {I}sing model selection using {$\ell_1$}-regularized logistic regression.
\newblock {\em Ann. Statist.}, 38(3):1287--1319, 2010.

\bibitem[Sha03]{Sha03}
Jun Shao.
\newblock {\em Mathematical statistics}.
\newblock Springer Texts in Statistics. Springer-Verlag, New York, second edition, 2003.

\bibitem[Tib96]{Tib96}
Robert Tibshirani.
\newblock Regression shrinkage and selection via the lasso.
\newblock {\em J. Roy. Statist. Soc. Ser. B}, 58(1):267--288, 1996.

\bibitem[Tro12]{Tro12}
Joel~A. Tropp.
\newblock User-friendly tail bounds for sums of random matrices.
\newblock {\em Foundations of computational mathematics}, 12(4):389--434, 2012.

\bibitem[Tsy03]{Tsy03}
Alexandre~B. Tsybakov.
\newblock Optimal rates of aggregation.
\newblock In Bernhard Sch{\"o}lkopf and Manfred~K. Warmuth, editors, {\em COLT}, volume 2777 of {\em Lecture Notes in Computer Science}, pages 303--313. Springer, 2003.

\bibitem[Tsy09]{Tsy09}
Alexandre~B. Tsybakov.
\newblock {\em Introduction to nonparametric estimation}.
\newblock Springer Series in Statistics. Springer, New York, 2009.
\newblock Revised and extended from the 2004 French original, Translated by Vladimir Zaiats.

\bibitem[Ver18]{Ver18}
Roman Vershynin.
\newblock {\em High-Dimensional Probability}.
\newblock Cambridge University Press (to appear), 2018.

\bibitem[vH17]{Han17}
Ramon van Handel.
\newblock Probability in high dimension.
\newblock Lecture Notes (Princeton University), 2017.

\bibitem[VMLC16]{VufMisLok16}
Marc Vuffray, Sidhant Misra, Andrey Lokhov, and Michael Chertkov.
\newblock Interaction screening: Efficient and sample-optimal learning of ising models.
\newblock In D.~D. Lee, M.~Sugiyama, U.~V. Luxburg, I.~Guyon, and R.~Garnett, editors, {\em Advances in Neural Information Processing Systems 29}, pages 2595--2603. Curran Associates, Inc., 2016.

\end{thebibliography}

\end{document}